%%%%%%%%%%%%%%%%%%%% author.tex %%%%%%%%%%%%%%%%%%%%%%%%%%%%%%%%%%%
%
% sample root file for your "contribution" to a contributed volume
%
% Use this file as a template for your own input.
%
%%%%%%%%%%%%%%%% Springer %%%%%%%%%%%%%%%%%%%%%%%%%%%%%%%%%%

% RECOMMENDED %%%%%%%%%%%%%%%%%%%%%%%%%%%%%%%%%%%%%%%%%%%%%%%%%%%
\documentclass[11pt,  a4paper]{article}

\RequirePackage{enumerate}
\RequirePackage{verbatim}
\RequirePackage{hyperref}
\hypersetup{pdfpagemode=UseOutlines,colorlinks=false,pdfpagelayout=SinglePage,pdfstartview=FitH,bookmarksopen=false}

\usepackage{latexsym,amscd,amsthm}
\usepackage[all]{xy}
\usepackage{authblk}

% choose options for [] as required from the list
% in the Reference Guide

\usepackage{mathptmx}       % selects Times Roman as basic font
\usepackage{helvet}         % selects Helvetica as sans-serif font
\usepackage{courier}        % selects Courier as typewriter font
%\usepackage{type1cm}        % activate if the above 3 fonts are
                            % not available on your system
%
%\usepackage{makeidx}         % allows index generation
\usepackage{graphicx}        % standard LaTeX graphics tool
                             % when including figure files
\usepackage{multicol}        % used for the two-column index
\usepackage[bottom]{footmisc}% places footnotes at page bottom
\usepackage{amssymb}
\usepackage{amsmath}
\usepackage{pstricks}
\usepackage[all]{xy}
\usepackage{latexsym}
\usepackage{graphicx}
\usepackage{color}
\usepackage{url}
\usepackage{bm}
% see the list of further useful packages
% in the Reference Guide

%\makeindex             % used for the subject index
                       % please use the style svind.ist with
                       % your makeindex program

%\smartqed
%%%%%%%%%%%%%%%%%%%%%%%%%%%%%%%%%%%%%%%%%%%%%%%%%%%%%%%%
\newtheorem{theorem}{Theorem}
\newtheorem{lemma}{Lemma}
\newtheorem{proposition}{Proposition}
\newtheorem{corollary}{Corollary}
\theoremstyle{definition}
\newtheorem{definition}{Definition}
\newtheorem*{acknowledgement}{Acknowledgments}
\newtheorem*{claim}{Claim}

\newenvironment{sketchofproof}{\begin{trivlist}\item[]{\sc Sketch of proof.}}
            { $\Box$ \end{trivlist}}
\theoremstyle{remark}
\newtheorem{example}{Example}
\newtheorem{remark}{Remark}
\newtheorem{question}{Question}

%%%%%%%%%%%%%%%%%%%%%%%%%%%%%%%%%%%%%%%%%%%%%%%%%%%%%%%%%%%%%%%%%%%%%%%%%%%%%%%%%%%%%%%%%
\newcommand{\CH}{\mathcal{CH}}

\newcommand{\R}{\mathbb R}
\newcommand{\hsset}{sSet}
\newcommand{\hssetp}{{sSet_*}}
\newcommand{\Top}{Top}
\newcommand{\hTop}{Top}
\newcommand{\hTopp}{{Top_*}}
\newcommand{\hcdga}{CDGA}
\newcommand{\sset}{sSet}
\newcommand{\cdga}{CDGA}
\newcommand{\hkmod}{\textit{Chain}(k)}
\newcommand{\dgZ}{\textit{Chain}(\mathbb{Z})}
\newcommand{\Mfldn}{\mathrm{Mfld}_n}
\newcommand{\Disk}{\mathrm{Disk}}
\newcommand{\Emb}{\mathrm{Emb}}
\newcommand{\Homeo}{\mathrm{Homeo}}
\newcommand{\RHom}{{\mathbb{R}\text{Hom}}}
\newcommand{\Hom}{{\text{Hom}}}

\newcommand{\com}{\bullet}
\newcommand{\colim}{\mathop{\lim\limits_{\textstyle\longrightarrow}}\limits}
\newcommand{\hocolim}{\mathop{hocolim}\limits}
\newcommand{\holim}{\mathop{holim}\limits}
\newcommand{\It}{{It}}
\newcommand{\Map}{\mathrm{Map}}
\newcommand{\CSS}{\mathcal{CS}e\mathcal{S}p}
\newcommand{\SeSp}{\mathcal{S}e\mathcal{S}p}

\newcommand\leftthreearrows{%
        \mathrel{\vcenter{\mathsurround0pt
                \ialign{##\crcr
                        \noalign{\nointerlineskip}$\leftarrow$\crcr
                        \noalign{\nointerlineskip}$\leftarrow$\crcr
                        \noalign{\nointerlineskip}$\leftarrow$\crcr
                }%
        }}%
}

\let\coprod=\undefined % cetteligne et les 2 suivantes sert � permettre d'ecrire des coprodutis avec le package mathmptx active
\DeclareSymbolFont{cmlargesymbols}{OMX}{cmex}{m}{n}
\DeclareMathSymbol{\coprod}{\mathop}{cmlargesymbols}{"60}

\begin{document}

\title{Notes on factorization algebras, factorization homology and applications}

\author{Gr\'egory Ginot}
% Use \authorrunning{Short Title} for an abbreviated version of
% your contribution title if the original one is too long
\affil{Institut  Math\'ematiques de Jussieu Paris Rive Gauche, \\ Universit\'e Pierre et Marie Curie - Sorbonne Universit\'es} %\email{ginot@math.jussieu.fr}
%\and Name of Second Author \at Name, Address of Institute \email{name@email.address}

%
% Use the package "url.sty" to avoid
% problems with special characters
% used in your e-mail or web address
%

\maketitle

\begin{abstract}These notes are an expanded version of two series of lectures given at the winter school in mathematical physics at les Houches and at the Vietnamese Institute for Mathematical Sciences. They are an introduction to factorization algebras, factorization homology and some of their applications, notably for   studying $E_n$-algebras. 
We give an account of homology theory for manifolds  (and spaces), which give invariant of manifolds but also  invariant of $E_n$-algebras. 
We particularly emphasize  the point of view of factorization algebras (a structure originating from quantum field theory)  which plays, with respect to homology theory for manifolds, the role of sheaves with respect to singular cohomology.  
We mention  some applications to the study of mapping spaces, in particular in  string topology and for (iterated) Bar constructions and study several examples, including some over stratified spaces.
\end{abstract}

\setcounter{tocdepth}{2}
\tableofcontents

\section{Introduction and motivations}

These notes are an introduction to  factorization algebras and factorization homology in the context of topological spaces and manifolds.    The  origin of factorization algebras and factorization homology, as defined by Lurie~\cite{L-HA} and Costello-Gwilliam~\cite{CG}, are to be found in topological quantum field theories and conformal field theories. Indeed, they were largely motivated and influenced by the pioneering work of Beilinson-Drinfeld~\cite{BD} and also  of Segal~\cite{Seg-CFT, Seg-localityCFT}. 
\emph{Factorization homology} is a catchword to describe homology theories \emph{specific} to say oriented manifolds of a fixed dimension $n$. There are also variant  specific to many other classes of structured manifold of fixed dimension. Typically the structure in question would be a  framing\footnote{that is a trivialization of the tangent bundle} or a spin structure or simply no structure at all. 

\emph{Factorization algebras} are algebraic structures which shed many similarities with (co)sheaves and were  introduced to describe  Quantum Field Theories  much in the same way as the structure of a manifold or scheme is described by its sheaf of functions~\cite{BD, CG}. They are related to factorization homology  in the same way as singular cohomology is related to sheaf cohomology.  

 Unlike classical singular homology for which any abelian group can be used as  coefficient of the theory, in order to define factorization homology, one needs a more complicated piece of algebraic data: that of an $E_n$-algebra\footnote{More accurately, $E_n$-algebras are  the piece of data needed in the case of framed manifolds. For other structured manifolds, one needs $E_n$-algebras equipped with additional structure; for instance  an invariance under their natural $SO(n)$-action in the oriented manifold case}. These algebras have been heavily studied in algebraic topology ever since the seventies where they were introduced to study iterated loop spaces and configuration spaces~\cite{BV, May-gils, Seg-ConfSpaces}. 
They have been proved to also have deep significance in  mathematical physics~\cite{K-operad, CG}, string topology~\cite{CS, CV} and (derived) algebraic geometry~\cite{BD, FG, PTTV, L-DAGX}. 
$E_1$-algebras are essentially the same thing as $A_\infty$-algebras, that is homotopy associative algebras. On the other hand, $E_\infty$-algebras are homotopy commutative algebras. In general  the $E_n$- structures  form a hierarchy of  more and more homotopy commutative algebra structures. 
 In fact, an $E_n$-algebra is an homotopy associative algebra whose multiplication $\mu_0$ is commutative up to an homotopy operator $\mu_1$. This operator is itself  commutative  up to an homotopy operator $\mu_2$ and so on up to $\mu_{n-1}$ which is no longer required to be homotopy commutative.

Since factorization homology depends on (some class of) both manifold and $E_n$-algebra, they also give rise to invariant of $E_n$-algebras. These invariants  have proven useful  as we illustrate in \S~\ref{S:Applications}.  For instance, in dimension $n=1$, factorization homology evaluated on a circle is the usual Hochschild homology of algebras (together with its circle action inducing cyclic homology as well).  For $n=\infty$, factorization homology gives rise to an invariant of topological spaces\footnote{and not just manifolds of a fix dimension} (sometimes called higher Hochschild homology~\cite{P}) which we recall in~\S~\ref{S:comAlgebracase}. It is easier to study and  interesting in its own since it is closely related to mapping spaces, their derived analogues and observables of classical topological field theories.  

We give the precise axioms of  homology theory of manifolds in \S~\ref{S:HomologyTheoryMfld}. 
Factorization homology can be computed using 
 \v{C}ech complexes  of  factorization algebras, which, as previously alluded to,  are a kind of \lq\lq{}multiplicative, non-commutative\rq\rq{} analogue of cosheaves. Definitions, properties and many examples of factorization algebras are discussed in~\S~\ref{S:FactAlgebras}. 
Factorization algebras were introduced  to describe observables of Quantum Field Theories~\cite{CG, BD} but they also
 are a very convenient way to encode and study many algebraic structures which arose in algebraic topology and mathematical physics as we illustrate in \S~\ref{S:FactAlgebras}. In particular,  in \S~\ref{S:stratifiedFact}  we study in depth locally constant factorization algebras on stratified spaces and their link with various categories of modules over $E_n$-algebras, giving many examples.
 We also give a detail account of various operations and properties of factorization algebras in \S~\ref{S:OperationsforFact}.
We then (\S~\ref{S:Applications}) review several applications of the formalism of factorization algebras and homology. Notably to cohomology and deformations of $E_n$-algebras, (higher) Deligne conjecture and also  in (higher) string topology and for Bar constructions of iterated loop spaces (and more generally to obtain models for  iterated Bar constructions \emph{with} their algebraic structure).
In \S~\ref{S:ComFactAlgebras} , we consider the case of commutative factorization algebras and prove their theory reduces to the one of cosheaves.  In particular, we cover  the  pedagogical example of classical homology (with twisted coefficient) viewed as factorization homology.

\subsection{Eilenberg-Steenrod axioms for homology theory of spaces}

Factorization homology and factorization algebras generalize  ideas from the axiomatic approach to classical homology  of spaces (and (co)sheaf theory) which we now recall. We then explain how they can be generalized.
The usual (co)homology groups of topological spaces are uniquely determined by a set of axioms.
These are the \emph{Eilenberg-Steenrod axioms} which were formulated in the 40s~\cite{EiSt-axioms}.

Classically they express that an homology theory for spaces   is uniquely determined by (ordinary) functors from the category of pairs $(X,A)$ ($A\subset X$) of spaces to the category of  ($\mathbb{N}$)-graded abelian groups satisfying some axioms.
 Such a functor splits as the direct sum $H_\ast(X,A)=\bigoplus_{i\geq 0} H_i(X,A)$ where $H_i(X,A)$ is the degree $i$ homology groups of the pair.
 This homology group can in fact be defined as the homology of the mapping cone $\textit{cone}(A\hookrightarrow X)$ of the inclusion of the pair. Further, the long exact sequence in homology relating the homology  of the pair to the homology of $A$ and $X$ is induced by a short exact sequence of chain complexes $C_\ast(A)\hookrightarrow C_\ast(X)\twoheadrightarrow C_\ast(X,A)$ (where $C_\ast$ is the singular chain complex). Similarly, the Mayer-Vietoris exact sequence is induced by a short exact sequence of chain complexes.

This suggests that the 
 classical Eilenberg-Steenrod axioms can be lifted at the \emph{chain complex} level. 
That is, we can characterize classical homology as   a functor from the category of spaces (up to homotopy) to the category of chain complexes (up to quasi-isomorphism).

Let us formalize a bit this idea.
A \emph{homology theory} $\mathcal{H}$ for {\em spaces} is  a functor 
$\mathcal{H}: \Top \to \mathop{Chain}(\mathbb{Z})$ from the category $\Top$ of topological spaces\footnote{for simplicity we assume that we consider only spaces homotopy equivalent to CW-complexes}  to the category $\mathop{Chain}(\mathbb{Z})$ of chain complexes over $\mathbb{Z}$ (in other words differential graded abelian groups). 
 This functor has to satisfy the following three axioms.

\begin{enumerate}
\item \emph{(homotopy invariance)} The functor $\mathcal{H}$ shall send homotopies between maps of topological spaces to homotopies between maps of chain complexes.
\item \emph{(monoidal)} The functor $\mathcal{H}$ shall be defined by its value on the connected components of a space. Hence we require it sends disjoint unions of topological spaces to direct sum, that is the canonical map 
$\oplus_{\alpha \in I}\mathcal{F}(X_\alpha) \to
\mathcal{H}(\coprod_{\alpha \in I} X_\alpha)
$ is an homotopy equivalence\footnote{sometimes this map is required to be an actual isomorphism but this is not needed} (here $I$ is any set).
\item \emph{(excision)}
There is another additional property encoding (given the other ones) the classical excision property as well as the Mayer-Vietoris principle. The additional property essentially stipulates the effect of gluing together two CW-complexes along a sub-complex.  Let us formulate it this way:  assume  $i:Z\hookrightarrow X$ and $j:Z\hookrightarrow Y$ are inclusions of  closed sub CW-complex of $X$  and $Y$.
 Let $X\cup_{Z}Y\cong X\coprod Y /( i(z)=j(z), \, z\in Z )$ be the pushout of $X$, $Y$ along $Z$.  The functoriality of $\mathcal{H}$ gives maps $\mathcal{H}(Z) \stackrel{i_*}\to \mathcal{H}(X)$ and
$\mathcal{H}(Z) \stackrel{j_*}\to \mathcal{H}(Y)$; hence a chain complex morphism $\mathcal{H}(Z) \stackrel{i_*-j_*}\to \mathcal{H}(X)\oplus \mathcal{H}(Y)$ . Functoriality also yields a natural map $\mathcal{H}(X)\oplus \mathcal{H}(Y) \to \mathcal{H}(X \cup_{Z} Y)$ whose composition with $i_*-j_*$ is null.  

The \emph{excision axioms requires that the canonical map
$$ \text{cone}\Big(\mathcal{H}(Z) \stackrel{i_*-j_*}\to \mathcal{H}(X)\oplus \mathcal{H}(Y)\Big) \longrightarrow  \mathcal{H}(X\cup_{Z}Y)$$ is an homotopy equivalence}. Here $\text{cone}(f)$ is the mapping cone\footnote{if we know that $f: C_*\to D_*$  is injective, then $cone(f)$ is quasi-isomorphic to the quotient chain complex $D_*/C_*$. See for instance~\cite{Weibel-book} for  mapping cones of general chain maps. } (in $\mathop{Chain}(\mathbb{Z})$) of the map $f$ of chain complexes. 
\end{enumerate}
We can state the following theorem (which follows from Corollary~\ref{C:ordinaryhomologyasFact} and is the (pre-)dual of a result of Mandell~\cite{M1} for cochains).
\begin{theorem}[Eilenberg-Steenrod] \label{T:Eilenberg-Steenrod}
 Let $G$  be an abelian group. Up to natural homotopy equivalence,  there is a unique homology theory for spaces, that is functor $\mathcal{H}:\Top\to \mathop{Chain}(\mathbb{Z})$ satisfying axioms 1, 2 and 3 and further the \emph{dimension} axiom:
$$\mathcal{H}(pt)\stackrel{\simeq}\to G.$$
\end{theorem}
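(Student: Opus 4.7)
The strategy I would follow is to compare any such $\mathcal{H}$ with the standard singular chain functor $C_*(-;G)$, which is easily checked to verify the four axioms, and show that the axioms force a natural quasi-isomorphism $C_*(-;G)\stackrel{\simeq}\to \mathcal{H}(-)$. The general shape is the classical Eilenberg--Steenrod argument, but carried out at the chain-complex level rather than after passing to homology groups.

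First I would pin down $\mathcal{H}$ on the basic building blocks. The dimension axiom gives $\mathcal{H}(\mathrm{pt})\simeq G$; applied to the empty indexing set, the monoidal axiom forces $\mathcal{H}(\emptyset)\simeq 0$. Homotopy invariance then promotes this to $\mathcal{H}(X)\simeq G$ for every contractible $X$, in particular for each standard simplex $\Delta^n$ and each disk $D^n$. Combining excision with the decomposition $S^n\cong D^n\cup_{S^{n-1}} D^n$ and induction on $n$ gives the homology of spheres. More generally, for a relative CW-pair $(X,X^{(n-1)})$ with $X^{(n)}=X^{(n-1)}\cup_{\coprod S^{n-1}}\coprod D^n$, excision shows that $\mathcal{H}(X^{(n)})$ is the mapping cone of a map out of a free $G$-module of rank equal to the number of $n$-cells, i.e.\ exactly the $n$-th piece of cellular chains.

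Next I would construct the natural transformation $\Phi_X:C_*(X;G)\to\mathcal{H}(X)$. A singular $n$-simplex $\sigma:\Delta^n\to X$ gives a map $\mathcal{H}(\sigma):\mathcal{H}(\Delta^n)\to\mathcal{H}(X)$; fixing a quasi-isomorphism $\epsilon_n:G\xrightarrow{\simeq}\mathcal{H}(\Delta^n)$ (coming from the unique map $\Delta^n\to\mathrm{pt}$ together with the dimension axiom and homotopy invariance) and extending $G$-linearly over generators produces $\Phi_X$ on chains; compatibility of the $\epsilon_n$ with face and degeneracy maps via homotopy invariance ensures $\Phi$ is a chain map natural in $X$. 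By construction $\Phi_{\mathrm{pt}}$ is a quasi-isomorphism; by the inductive computation of the previous paragraph together with the five-lemma applied to the excision cone sequence for $(X^{(n-1)}\subset X^{(n)})$, one deduces that $\Phi_{X^{(n)}}$ is a quasi-isomorphism for all $n$. Passing to the filtered colimit $X=\bigcup_n X^{(n)}$, which both $C_*(-;G)$ and $\mathcal{H}(-)$ preserve up to quasi-isomorphism (for $\mathcal{H}$ this follows from writing the colimit as a sequence of pushouts along the skeletal inclusions and applying excision repeatedly), gives the result for CW-complexes, and hence for all spaces under consideration.

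The main obstacle is the step where one builds $\Phi$ and checks that it is a chain map natural in $X$: at the chain level the various quasi-isomorphisms $\epsilon_n:G\xrightarrow{\simeq}\mathcal{H}(\Delta^n)$ are not canonical, only canonical up to coherent homotopy, so one must either make a rigid cosimplicial choice that strictly commutes with face and degeneracy maps or, more honestly, work in the homotopy category of $\mathop{Chain}(\mathbb{Z})$ and invoke a cofibrant-replacement argument. The other delicate point is the preservation of the sequential colimit $X=\operatorname{colim} X^{(n)}$ by $\mathcal{H}$, which is not part of the axioms and has to be extracted by hand from the excision axiom via a mapping-telescope model of the colimit. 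Both subtleties are exactly what is handled cleanly by the factorization-homology viewpoint invoked in Corollary~\ref{C:ordinaryhomologyasFact}, which is presumably why the authors defer the proof to that machinery rather than doing the bookkeeping directly.
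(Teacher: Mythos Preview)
Your argument is essentially correct and is the classical Eilenberg--Steenrod uniqueness proof lifted to the chain level; the two subtleties you flag (coherent choice of the $\epsilon_n$ and preservation of the skeletal colimit) are real and are the places where care is needed, but both can be handled by the telescope/cofibrant-replacement tricks you mention.

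The paper, however, takes a genuinely different route. It does not build a comparison map to singular chains at all; instead it defers to Corollary~\ref{C:ordinaryhomologyasFact}, which is proved via the factorization-homology machinery of \S\ref{S:HomologyTheoryMfld} and \S\ref{S:ComFactAlgebras}. The key observation is that in $(\mathop{Chain}(\mathbb{Z}),\oplus)$ the monoidal structure is the coproduct, so by the Eckmann--Hilton argument of Lemma~\ref{L:EnAlgincoprod} every $\Disk_n^{(X,e)}$-algebra is the trivial commutative algebra on an object, and Proposition~\ref{P:Factwhencoproduct} then shows that homology theories in such a category are classified by objects of $\mathcal{C}$ and automatically extend from manifolds to spaces (via Theorem~\ref{T:Fact=CH}). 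Your approach is more elementary and self-contained, but pays for it with the coherence bookkeeping you identify; the paper's approach absorbs those issues into the general uniqueness theorem for factorization homology (Theorem~\ref{T:UniquenessFactHom}), proved once and for all, at the cost of requiring the entire apparatus of the later sections. In particular the paper never touches CW skeleta or the five-lemma for this statement.
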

The functor in Theorem~\ref{T:Eilenberg-Steenrod} is of course given by the usual singular chain complex with value in $G$. We can even assume in the theorem that $G$ is any chain complex, in which case we recover  extraordinary homology theories\footnote{in this case, the uniqueness is not necessarily true if one works at the homology level instead of chain complexes.}.  

Theorem~\ref{T:Eilenberg-Steenrod} implies that the category of functors satisfying axioms 1, 2, 3 is (homotopy) equivalent to the category of chain complexes; the equivalence being given by the evaluation of a functor at the point.
 To assign to a chain complex $V_\ast$  an homology theory, one consider the functor $X\mapsto C_\ast(X,\mathbb{Z})\otimes V_\ast$.

For a CW-complex $X$, the singular cohomology  $H^\ast(X,G)$ can be computed as sheaf cohomology of $X$ with value in the constant sheaf $G_X$ of locallly constant functions on $X$ with values in $G$. In particular, the singular cochain complex is naturally quasi-isomorphic to  the derived functor $\mathbb{R}\Gamma (G_X)$ of sections of $G_X$. 
Replacing $G_X$ by a locally constant sheaf (with germs $G$) yields cohomology with local coefficient in $G$\footnote{if $G$ is a linear representation of a group $H$ and $X$ is the classifying space of $H$, then one recovers this way the group (co)homology of $H$ with value in $G$}.
 This point of view realizes singular cohomology (with local coefficient)  as a special case of the theory of sheaf/\v{C}ech cohomology which also has other significance and applications in geometry when allowing more general sheaves.

Note that the homotopy invariance axiom  can be reinterpreted as saying that the functor $\mathcal{H}$ is continuous. 
Indeed, there are natural topologies on the morphism sets of both categories. For instance, one can consider the compact-open topology on the set of maps $\Hom_{\Top}(X,Y)$  (see Example~\ref{Ex:Chains(k)} for $\mathop{Chain}(\mathbb{Z})$).  
Any continuous functor, that is a functor $\mathcal{H}$  such that the maps $\Hom_{\Top}(X,Y)\to \Hom_{\dgZ}(\mathcal{H}(X), \mathcal{H}(Y))$ are continuous,    sends homotopies to homotopies (and homotopies between homotopies to homotopies between homotopies and so on).
 
Note also that excision axiom  really identifies $\mathcal{H}(X\cup_{Z}Y)$ with a homotopy colimit. It is precisely the homotopy coequalizer $\mathop{hocoeq}\Big(\mathcal{H}(Z)\underset{j_*}{\stackrel{i_*}\rightrightarrows}\mathcal{H}(X)\oplus \mathcal{H}(Y)\Big)$ which is computed by the mapping cone $\textit{cone}(i_{*}-j_{*})$. Further, in this axiom, we do not need  $\mathcal{H}$ to be precisely the cone but any natural chain complex quasi-isomorphic to it will do the job. This suggests to actually use a more flexible model than topological categories.
A  convenient way to \emph{deal simultaneously with topological categories, homotopy colimits} (in particular homotopy quotients) \emph{and  
identification of chain complexes up to quasi-isomorphism} is to consider  the $\infty$-categories associated to topological spaces and chain complexes and $\infty$-functors between them  (see Appendix~\ref{S:DKL}, Examples~\ref{E:hsset} and~\ref{Ex:Chains(k)}). The passage from topological categories to $\infty$-categories essentially allows to work in categories in which (weak) homotopy equivalences  have been somehow  \lq\lq{}inverted\rq\rq{} but which still retain enough information of the topology of the initial categories.

Furthermore, in the monoidal axiom, we can replace the direct sum  of chain complexes by any symmetric monoidal structure, for instance by the tensor product $\otimes$ of chain complexes. 
This yields the notion of homology theory for spaces with values in $(\mathop{Chain}(\mathbb{Z}), \otimes)$ see~\S~\ref{SSS:AxiomsforHH}. The latter are not determined by a mere chain complex but by a  (homotopy) commutative algebra $A$. This theory is called factorization homology for spaces and commutative algebras and its main properties are detailled in \S~\ref{S:comAlgebracase}. In fact, already at this level, we see that one needs to replace the cone construction in the excision axiom  by an appropriate derived functor. 

To produce invariant of manifolds which are not invariant of spaces, one needs to replace $\Top$ by another topological category of manifolds. For instance, fixing $n\in \mathbb{N}$,  one can consider the category $\Mfldn^{fr}$ whose objects are \emph{framed manifolds of dimension $n$} and whose morphisms are framed embeddings. In that case, an homology theory is completely determined by an $E_n$-algebra. 
 The precise definitions and variants of homology theories for various classes of structured manifolds (including the local coefficient ones) and the appropriate notion of coefficient  is 
the content of \S~\ref{S:HomologyTheoryMfld}.  

The  (variants of) $E_n$-algebras which arise as coefficient of homology theory for manifolds can be seen as a special case of factorization algebras, and more precisely as locally constant factorization algebras, which are to factorization algebras what (acyclic resolutions of) locally constant sheaves are to sheaves. This point of view is detailed  in~\S~\ref{S:EnasFact} and  extended to stratified spaces in \S~\ref{S:stratifiedFact}. The latter case gives simple\footnote{in the sense that the cosheaf condition satisfied by factorization algebras encodes  some topology which, from the classical $E_n$-operad point of view necessitates  an heavier homotopical machinery.} description of several categories of modules over $E_n$-algebras as well as categories of $E_n$-algebras acting on $E_m$-algebras, which is used in the many applications of \S~\ref{S:Applications}.

\subsection{Notation and conventions}

\begin{enumerate} 
\item Let $k$ be a commutative unital ring.  The $\infty$-category of differential graded $k$-modules (\emph{i.e.} chain  complexes) will be denoted $\hkmod$.
The (\emph{derived}) tensor product over $k$ will be denoted $\otimes$.  The $k$-linear dual of $M\in \hkmod$ will be denoted $M^\vee$.
%\item The (na\"ive) categories of simplicial sets and of commutative differential graded algebras  will be respectively denoted by $sSet$ and $CDGA$. The category of commutative graded algebra will be denoted $CGA$. Unless otherwise stated, all algebras will be assumed to be \emph{unital}.
\item All manifolds are assumed to be Hausdorff, second countable,  paracompact and thus metrizable.
 \item We write $\hTop$ for the $\infty$-category of topological spaces (up to homotopy) and $\hTop^{f}$ for its sub $\infty$-category spanned by the (spaces with the homotopy type of) finite CW-complexes. 
We also denote $\hTopp$, resp.  $\hssetp$, the $\infty$-categories of \emph{pointed} topological spaces and simplicial sets.  
 We write \emph{$C_\ast(X)$ and $C^\ast(X)$ for the singular chain and  cochain complex} of a space $X$. 
We write $\hsset$ for the $\infty$-category of simplicial sets (up to homotopy)  which is equivalent to $\hTop$. 
\item The $\infty$-categories of  unital commutative differential graded algebras (up tho homotopy) will be  denoted by  $\hcdga$. We  simply refer to unital  commutative differential graded algebras as CDGAs.
\item Let $n\in \mathbb{N}\cup\{\infty\}$. By an $E_n$-algebra we mean an algebra over an $E_n$-operad. We write $E_n\textbf{-Alg}$ for the $\infty$-category of (unital) $E_n$-algebras (in $\hkmod$). See Appendix:~\S~\ref{S:EnAlg}. We also write
$E_n\textbf{-Mod}_A$, resp. $E_1\textbf{-LMod}_A$, resp. $E_1\textbf{-RMod}_A$ the $\infty$-categories of $E_n$-$A$-modules, resp. left $A$-modules, resp. right $A$-modules.
\item The $\infty$-category of (small) $\infty$-categories will  be denoted $\infty\textbf{-Cat}$. 
\item We work with a cohomological grading (unless otherwise stated)  for all our (co)homology groups and graded spaces, even when we use subscripts to denote the grading (so that our chain complexes have a funny grading). In particular, all differentials are of degree $+1$, of the form $d:A^i\to A^{i+1}$ and the homology groups $H_i(X)$ of a space $X$ are concentrated in non-positive degree. If $(C^\ast,d_C)\in \hkmod$, we denote $C^\ast[n]$ the chain complex given by $(C^\ast[n])^i:= C^{i+n}$ with differential $(-1)^n d_C$.
\item We will denote  $\textbf{PFac}_X$, resp. $\textbf{Fac}_X$, resp. $\textbf{Fac}^{lc}_X $  the $\infty$-categories of prefactorization algebras, resp. factorization algebras, resp. locally constant factorization algebras over $X$. See Definition~\ref{D:CatofFacAlg}.
\item Usually, if $C$ a (topological or simplicial or model) category, we will use the boldface letter $\mathbf{C}$ to denote  the $\infty$-category associated to $C$ (see Appendix~\ref{S:operad}). This is for instance
 the case for the categories of topological spaces or chain complexes or CDGAs mentionned above.
 \item  Despite their names, the values of Hochschild or factorization (co)homology  will be  (co)chain complexes (up to equivalences), \emph{i.e.} objects of  $\hkmod$, or  objects of another $\infty$-category such as $E_\infty\textbf{-Alg}$.
\end{enumerate}

These notes deal mainly with applications of factorization algebras in algebraic topology and homotopical algebra. However, there are very interesting applications to mathematical physics as described in the work of Costello \emph{et al}~\cite{CG, Co, Co-Houches}, \cite{GrGw, Gw-Thesis} and also beautiful applications in algebraic geometry and geometric representation theory, for instance see~\cite{BD, FG, Ga1, Ga2}.

We almost always refer to the existing literature for proofs; though there are some exceptions to this rule, mainly  in  sections \S~\ref{S:OperationsforFact}, \S~\ref{S:stratifiedFact} and \S~\ref{S:ComFactAlgebras}, where we treat several new (or not detailed in the literature) examples and results related to factorization algebras. To help the reader browsing through the examples in \S~\ref{S:OperationsforFact} and \S~\ref{S:stratifiedFact}, the longer proofs are postponed to a dedicated appendix, namely \S~\ref{S:SomeProofs}.
Some other references concerning factorization algebras and factorization homology include~\cite{L-HA, L-TFT} and~\cite{An, AFT, Ca-HDR, F, F2, GTZ2, GTZ3, T-Houches}.

\paragraph{\textbf{About $\infty$-categories:}}
We use $\infty$-categories as a convenient framework for homotopical algebra and in particular as higher categorical \emph{derived categories}.  
In our context, they will typically arise when one  consider a topological category or a category $\mathcal{M}$ with a  notion of (weak homotopy)  equivalence. The $\infty$-category associated to that case  will be a lifting of the homotopy category $Ho(\mathcal{M})$ (the category obtained by formally inverting the equivalences). It has  \emph{spaces} of morphisms and composition and associativity laws are defined up to coherent homotopies. 
We recall some basic examples and definitions in the Appendix: \S~\ref{S:DKL}. 

\emph{Topological  categories and continuous functors between them are actually a model for $\infty$-categories and $(\infty$-)functors between them}. By a topological category we mean a category $\mathcal{C}$ endowed with a \emph{space} of morphisms $\Map_{\mathcal{C}}(x,y)$  between objects such that the composition $\Map_{\mathcal{C}}(x,y)\times \Map_{\mathcal{C}}(y,z) \to \Map_{\mathcal{C}}(x,z)$ is continuous.  
A continuous functor $F: \mathcal{C}\to \mathcal{D}$ between topological categories is a functor (of the underlying categories) such that for all objects $x$, $y$, the map $ \Map_{\mathcal{C}}(x,y)\stackrel{F}\to \Map_{\mathcal{D}}(F(x),F(y))$ is continuous. 
In fact,  every $\infty$-category admits
a strict model, in other word is equivalent to a topological category (though finding a strict model can be hard in practice).
One can also replace, in the previous paragraph, topological categories by simplicially enriched categories, which are the same thing as topological categories where spaces are replaced by simplicial sets  (and continuous maps by maps of simplicial sets).    In practice many topological categories we consider are geometric realization of simplicially enriched categories. 

The reader can thus substitute topological category  to $\infty$-category in every statement of these notes (or also simplicially enriched or even differential graded\footnote{in this  case, we refer to~\cite{To-dgCat} for the needed homotopy categorical framework on dg-categories} ),  but modulo the   fact that one may have to replace  the topological or $\infty$-category in question by  another equivalent topological one. The same remark applies to  functors between $\infty$-categories. Furthermore, many constructions   involving factorization algebras are actually carried out in (topological) categories (which provide concrete models to homotopy equivalent  (derived) $\infty$-category of some algebraic structures). 

If $\mathcal{C}$ is an $\infty$-category, we  will denote $\Map_ {\mathcal{C}}(x,y)$ its space of morphisms from $x$ to $y$ while we will simply write $\Hom_ {\mathcal{D}}(x,y)$ for the morphism \emph{set} of an ordinary category $\mathcal{D}$ (that is a topological category whose space of morphisms are discrete).

Many derived functors of homological algebra have natural extensions to the setting of $\infty$-categories. In that case we will use the usual derived functor notation to denote their canonical lifting to $\infty$-category and to emphasize that they can be computed using the\emph{ usual resolutions} of homological algebra.
 For instance, we will denote $(M,N)\mapsto M\mathop{\otimes}^{\mathbb{L}}_{A} N$ for the  functor $E_1\textbf{-RMod}_A\times E_1\textbf{-LMod}_A \to \hkmod$ lifting the usual tensor product of left and right modules to their $\infty$-categories. 

There is a slight exception to this notational rule. 
We  denote $M\otimes N$ the \emph{derived} tensor products of complexes in $\hkmod$. We do not use a derived tensor product notation since it will be too cumbersome and since in practice it will often be applied in the case where $k$ is a field or $M$, $N$ are projective over $k$.

\begin{acknowledgement}
The author thanks the \emph{Newton Institute for Mathematical Sciences} for its stimulating environment and hospitality during the time when most of these notes were written. The author was partially supported by ANR grant HOGT.

The author deeply thanks Damien Calaque for all the numerous conversations we had about  factorization stuff and also wish to thank 
Kevin Costello, John Francis, Owen Gwilliam, Geoffroy Horel, Fr\'ed\'eric Paugam, Hiro-Lee Tanaka, Thomas Tradler and Mahmoud Zeinalian for several enlightening  discussions related to the topic of theses notes. 
\end{acknowledgement}

%%%%%

\section{Factorization homology for commutative algebras and spaces and derived higher Hochschild homology}\label{S:comAlgebracase}
Factorization homology restricted to commutative algebras is also known as \emph{higher Hochschild homology} and has been studied (in various guise) since at least the end of the 90s (see the approach of~\cite{EKMM, MCSV} to topological Hochschild homology, or, the work of Pirashvili~\cite{P} which is closely related to $\Gamma$-homology). Though its axiomatic description is an easy corollary of the description of $\hTop$ as a symmetric monoidal category with pushouts, it has a lot of nice properties and appealing combinatorial description in characteristic zero (related to rational homotopy theory \`a la Sullivan). We review some of its main properties in this Section. 

\subsection{Homology theory for spaces and derived Hochschild homology}\label{SS:AxiomsforHH}
\subsubsection{Axiomatic presentation}\label{SSS:AxiomsforHH}
Let us first start by defining the axioms of an homology theory for spaces with values in the symmetric monoidal $\infty$-category $(\hkmod,\otimes)$ (instead of $(\text{Chain}(\mathbb{Z}), \oplus)$). The (homotopy) commutative monoids in $(\hkmod, \otimes)$ are the $E_\infty$-algebras (Definition~\ref{D:EinftyAlg}). 
In characteristic zero, one can restrict to   differential graded commutative algebras since the natural functor $\hcdga \to E_\infty\textbf{-Alg}$ is an (homotopy) equivalence.

\smallskip

The $\infty$-category $\hTop$ has a symmetric monoidal structure given by disjoint union of spaces $X\coprod Y$, which  is also the coproduct of $X$ and $Y$ in $\hTop$. The identity map $id_X:X\to X$ yields a canonical map $ X\coprod X \stackrel{\coprod id_X}\longrightarrow X$ which is associative and commutative  in the ordinary category of topological spaces. Hence $X$ is \emph{canonically} a commutative algebra object in $(\hTop, \coprod)$. And so is its image by a symmetric monoidal functor. We thus have:

\begin{lemma} \label{L:monoidalimpliesCom}Let $(\mathcal{C},\otimes)$ be a symmetric monoidal $\infty$-category. Any symmetric monoidal functor $F:\hTop\to \mathcal{C}$ has a canonical lift $\tilde{F}: \hTop \to E_\infty\textbf{-Alg}(\mathcal{C})$. 
\end{lemma}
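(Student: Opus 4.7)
The plan is to exploit the fact that $\coprod$ is the categorical coproduct in $\hTop$: in any symmetric monoidal $\infty$-category whose tensor product coincides with the coproduct, every object carries an essentially unique $E_\infty$-algebra structure. Applied to $(\hTop,\coprod)$, this yields a canonical lift $\mathrm{can}:\hTop \to E_\infty\textbf{-Alg}(\hTop,\coprod)$, which is in fact an equivalence of $\infty$-categories with quasi-inverse the forgetful functor $U_{\hTop}$. Informally, $\mathrm{can}(X)$ equips $X$ with the fold multiplication $\nabla_X:X\coprod X \to X$ induced by $\mathrm{id}_X$ on each summand; all higher coherences are forced by the universal property of $\coprod$.

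The second ingredient is that symmetric monoidal functors preserve commutative algebra objects in a functorial way. Namely, $F:(\hTop,\coprod)\to(\mathcal{C},\otimes)$ induces a functor $F_*:E_\infty\textbf{-Alg}(\hTop,\coprod)\to E_\infty\textbf{-Alg}(\mathcal{C})$ commuting with the forgetful functors down to $\hTop$ and $\mathcal{C}$. Defining $\tilde F := F_*\circ \mathrm{can}$ then produces the desired functor $\tilde F:\hTop\to E_\infty\textbf{-Alg}(\mathcal{C})$, and it is a lift of $F$ because composing with the forgetful functor $U:E_\infty\textbf{-Alg}(\mathcal{C})\to\mathcal{C}$ yields $U\circ F_* \circ \mathrm{can} \simeq F\circ U_{\hTop}\circ \mathrm{can} \simeq F$, since $U_{\hTop}\circ\mathrm{can}\simeq \mathrm{id}_{\hTop}$ by construction.

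The main point to be careful about is the first step, namely the $\infty$-categorical version of the statement that objects of a cocartesian monoidal $\infty$-category are canonically $E_\infty$-algebras. At the level of the homotopy category the multiplication $\nabla_X$ is patently commutative and associative, but upgrading this to a coherent action of the whole $E_\infty$-operad (Appendix~\ref{S:EnAlg}) requires the cocartesian structure to be understood in the genuine $\infty$-categorical sense. This is handled by Lurie's treatment of cocartesian symmetric monoidal $\infty$-categories in \cite{L-HA}; once that equivalence $\hTop \simeq E_\infty\textbf{-Alg}(\hTop,\coprod)$ is in hand, the construction of $\tilde F$ and the verification that it lifts $F$ are purely formal. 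Note that the same argument works verbatim with $\hTop$ replaced by any $\infty$-category admitting finite coproducts, a remark that will be reused elsewhere in the text.
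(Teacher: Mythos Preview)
Your proof is correct and follows essentially the same approach as the paper: the paper observes that the fold map $X\coprod X\to X$ makes every object of $(\hTop,\coprod)$ canonically a commutative algebra, and that this structure is carried to $\mathcal{C}$ by any symmetric monoidal functor. Your write-up is simply a more careful elaboration of this, explicitly factoring the lift as $F_*\circ\mathrm{can}$ and invoking \cite{L-HA} for the coherent $\infty$-categorical statement that the forgetful functor $E_\infty\textbf{-Alg}(\hTop,\coprod)\to\hTop$ is an equivalence.
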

In particular, any homology theory $\hTop\to \hkmod$ shall have a canonical factorization $\hTop  \to E_\infty\textbf{-Alg}$. This motivates the following definition.
\begin{definition} \label{D:axioms} An homology theory for spaces with values in the symmetric monoidal $\infty$-category $(\hkmod, \otimes)$ is an $\infty$-functor 
$\mathcal{CH}: \hTop\times E_\infty\textbf{-Alg} \to E_\infty\textbf{-Alg}$ (denoted $(X,A)\mapsto {CH}_X(A)$ on the objects),
 satisfying the following axioms: 
\begin{enumerate} 
\item[i)] \textbf{(value on a point)} there is a natural equivalence ${CH}_{pt}(A)\stackrel{\simeq}\to A$ in ${E_\infty}\textbf{-Alg}$;
\item[ii)] \textbf{(monoidal)}   the canonical maps (induced by universal property of coproducts)   $$\bigotimes_{i\in I}{CH}_{X_i}(A)   \stackrel{\simeq}\longrightarrow {CH}_{\coprod_{i\in I} X_i}(A) $$ are  equivalences (for any set $I$);
\item[iii)]\textbf{(excision)} The functor $\mathcal{CH}$ commutes with homotopy pushout of spaces, \emph{i.e.}, the canonical maps (induced by the universal property of derived tensor product) 
$${CH}_{X}(A)\mathop{\otimes}\limits_{{CH}_{Z}(A)}^{\mathbb{L}} {CH}_{Y}(A)\stackrel{\simeq} \longrightarrow  {CH}_{X\cup^h_{Z}Y}(A) $$ are natural equivalences.   
\end{enumerate}
\end{definition}
\begin{remark}
If one replace $\hTop$ by $\hTop^{f}$ then axiom \emph{ii)} is equivalent to saying that, the functors $X\mapsto CH_{X}(A)$ are symmetric monoidal. 
\end{remark}
\begin{remark}[\textbf{Homology theory for spaces in an arbitrary symmetric monoidal $\infty$-category}]
{If $(\mathcal{C},\otimes)$ is a symmetric monoidal $\infty$-category, we define an homology theory with values in $(\mathcal{C},\otimes)$ in the same way, simply replacing $\hkmod$ by $\mathcal{C}$ (and thus $E_\infty\textbf{-Alg}$ by $E_\infty\textbf{-Alg}(\mathcal{C})$). 

All results (in particular the existence and uniqueness Theorem~\ref{T:derivedfunctor}) in Section~\ref{SS:AxiomsforHH}, Section~\ref{SS:coHHEinfty} and Section~\ref{SS:HHforCDGAs} still hold by just replacing the monoidal structure of $\hkmod$ by the one of $\mathcal{C}$, provided that $\mathcal{C}$ has colimits and that its monoidal structure commutes with geometric realization.}
\end{remark}

\begin{theorem}\label{T:derivedfunctor} \begin{enumerate}
\item There is an unique\footnote{up to contractible choices} homology theory for spaces (in the sense of Definition~\ref{D:axioms}).  
\item This homology theory is given by \emph{derived Hochschild chains}, \emph{i.e.},   there are natural equivalences 
 $$A\boxtimes X \cong {CH}_X(A)$$ where $A\boxtimes X$ is the tensor of the $E_\infty$-algebra $A$ with the space $X$ (see Remark~\ref{R:Einftyistensored}).  In particular,
\begin{equation}\label{eq:CH=tensored}\Map_{\hTop}\big({X},\Map_{E_\infty\textbf{-Alg}}(A,B)\big) \cong \Map_{E_\infty\textbf{-Alg}}(CH_{X}(A),B). \end{equation}
\item \textbf{(generalized uniqueness)} Let $F:E_\infty\textbf{-Alg}\to E_\infty\textbf{-Alg}$ be a functor. There is an unique functor  $ \hTop\times E_\infty\textbf{-Alg} \to E_\infty\textbf{-Alg}$ satisfying axioms \textbf{ii), iii)} in Definition~\ref{D:axioms} and whose value on a point is  $F(A)$. This functor is  $(X,A)\mapsto {CH}_{X}(F(A))$. 
\end{enumerate}
\end{theorem}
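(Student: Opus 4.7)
The plan is to construct an explicit candidate satisfying all three axioms (this gives part (2)) and then to deduce parts (1) and (3) by a universal-property argument identifying any such functor with that candidate.

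Step 1 (construction). The $\infty$-category $E_\infty\textbf{-Alg}$ is presentable, hence tensored over $\hTop$; write $A\boxtimes X := \mathop{\colim}\nolimits_{X}\underline{A}$ for the colimit of the constant $X$-diagram at $A$. Two basic facts are crucial: the coproduct in $E_\infty\textbf{-Alg}$ agrees with the derived tensor product $\otimes$ of underlying complexes, and homotopy pushouts in $E_\infty\textbf{-Alg}$ are computed as relative (derived) tensor products. With this in hand I will check the axioms for $(X,A)\mapsto A\boxtimes X$: axiom (i) follows because $\mathop{\colim}\nolimits_{pt}=\mathrm{id}$; axiom (ii) follows because $\boxtimes$ commutes with colimits in the space variable and $\coprod$ in $E_\infty\textbf{-Alg}$ is $\otimes$; axiom (iii) follows because $\boxtimes$ sends a homotopy pushout of spaces to a homotopy pushout of algebras. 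The equivalence \eqref{eq:CH=tensored} is then nothing but the defining adjunction of the tensoring.

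Step 2 (uniqueness). Let $\mathcal{H}\colon \hTop\times E_\infty\textbf{-Alg}\to E_\infty\textbf{-Alg}$ satisfy axioms (ii) and (iii) with $\mathcal{H}(pt,A)\simeq B$ for some $B$ (depending functorially on $A$). The universal property of $\hTop$ as the free cocompletion of the point under small (homotopy) colimits implies that, up to contractible choice, there is a unique colimit-preserving $\infty$-functor $\hTop\to E_\infty\textbf{-Alg}$ whose value at $pt$ is $B$, namely $X\mapsto B\boxtimes X$. So I only need to show that the axioms force $\mathcal{H}(-,A)$ to preserve all small homotopy colimits of spaces. This follows because every space is a homotopy colimit of points built out of small coproducts and homotopy pushouts (use either the CW presentation, building skeleta by attaching cells along pushouts $D^n\cup_{S^{n-1}}X_{n-1}$ and taking sequential colimits, or the canonical simplicial resolution $X\simeq|\mathrm{Sing}(X)|$), and axioms (ii)--(iii) force $\mathcal{H}(-,A)$ to commute with these operations. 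Specializing $B=A$ via axiom (i) proves part (1) and simultaneously identifies the unique homology theory with $A\boxtimes X$, giving part (2). Specializing $B=F(A)$ for a general functor $F$ on $E_\infty$-algebras produces the unique extension of Step 3 and identifies it with $(X,A)\mapsto CH_X(F(A))$.

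The main obstacle is the rigorous invocation of the universal property of $\hTop$ as the free colimit completion of the point inside the $\infty$-categorical setting (Lurie's theorem on presentable $\infty$-categories), together with the verification that the two classes of colimits appearing in axioms (ii) and (iii), namely arbitrary small coproducts and homotopy pushouts, suffice to build all small colimits in $\hTop$. The latter is standard but requires a mildly transfinite argument: filtered (and in particular sequential) colimits are recovered from pushouts and coproducts, so cellular or simplicial presentations of a general space assemble $\mathcal{H}(X,A)$ out of $\mathcal{H}(pt,A)$ in only one way up to coherent homotopy. A minor secondary point is checking that the naturality in the algebra variable $A$ is automatic once the identification holds pointwise in $X$, which follows because $\boxtimes$ is a bifunctor of $\infty$-categories.
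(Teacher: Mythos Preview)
The paper states this theorem without proof, deferring to the references and to the background it assembles in Remark~\ref{R:Einftyistensored} (tensoring of $E_\infty\textbf{-Alg}$ over spaces) and Proposition~\ref{P:tensor=coprod} (coproduct $=\otimes$ in $E_\infty\textbf{-Alg}$). Your argument is correct and is exactly the one those ingredients are meant to support: the construction via $A\boxtimes X$ verifies the three axioms for the reasons you give, and uniqueness follows from the universal property of $\hTop\simeq\hsset$ as the free cocompletion of a point together with the fact that small coproducts and pushouts generate all small colimits (your remark that sequential colimits are coequalizers of coproducts, hence pushouts of coproducts, is the right justification here). There is nothing to correct.
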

\begin{remark}
 Theorem~\ref{T:derivedfunctor} still holds with $\hTop^f$ instead of $\hTop$ (and where in axiom \emph{ii)} one restricts to finite sets $I$).
 In that case, it  can be rephrased  as follows: 
\begin{proposition} \label{P:derivedfunctorfinite}The  functor $F \mapsto F(pt)$ from the category  of symmetric monoidal functors $\hTop^{f}\to \hkmod$ satisfying excision\footnote{by Lemma~\ref{L:monoidalimpliesCom}, the excision axiom makes sense for any such functor} to the category of $E_\infty$-algebras is a natural equivalence.
\end{proposition}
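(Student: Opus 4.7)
The plan is to recognize Proposition~\ref{P:derivedfunctorfinite} as essentially a repackaging of Theorem~\ref{T:derivedfunctor} restricted to $\hTop^{f}$, so all the hard work has been done and I only need to verify that evaluation at $pt$ and the assignment $A \mapsto CH_{\bullet}(A)$ are mutually inverse equivalences of $\infty$-categories.

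First I would construct the candidate inverse to $\mathrm{ev}_{pt}:F\mapsto F(pt)$ as
\[
\Phi:E_\infty\textbf{-Alg}\longrightarrow \mathrm{Fun}^{\otimes,\mathrm{exc}}(\hTop^{f},\hkmod),\qquad \Phi(A):=\bigl(X\mapsto CH_{X}(A)\bigr).
\]
By Theorem~\ref{T:derivedfunctor}(2) we have $CH_{X}(A)\simeq A\boxtimes X$, and tensoring of an $E_\infty$-algebra with a space commutes with finite coproducts in the space variable; therefore $\Phi(A)$ is symmetric monoidal on $\hTop^{f}$ (using the remark that, on $\hTop^{f}$, axiom ii) of Definition~\ref{D:axioms} is equivalent to being symmetric monoidal). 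Excision holds by axiom iii) of Definition~\ref{D:axioms}. Finally, axiom i) gives $\Phi(A)(pt)=CH_{pt}(A)\simeq A$ in $E_\infty\textbf{-Alg}$, so $\mathrm{ev}_{pt}\circ\Phi\simeq \mathrm{id}$.

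For the converse composition, let $F:\hTop^{f}\to \hkmod$ be symmetric monoidal and satisfy excision. By Lemma~\ref{L:monoidalimpliesCom}, $F$ canonically lifts to a functor $\widetilde F:\hTop^{f}\to E_\infty\textbf{-Alg}$, and in particular $F(pt)$ carries a canonical $E_\infty$-algebra structure. I now view $\widetilde F$ as a functor from $\hTop^{f}$ (and constant in an auxiliary variable $A$) that satisfies axioms ii) and iii) and whose value at the point is $F(pt)$. Invoking the generalized uniqueness statement Theorem~\ref{T:derivedfunctor}(3) (applied with the constant functor $E_\infty\textbf{-Alg}\to E_\infty\textbf{-Alg}$ at value $F(pt)$, or equivalently invoking the uniqueness statement Theorem~\ref{T:derivedfunctor}(1) pointwise), I get a natural equivalence $\widetilde F(X)\simeq CH_{X}(F(pt))=\Phi(F(pt))(X)$, functorial in $X$. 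Hence $\Phi\circ\mathrm{ev}_{pt}\simeq \mathrm{id}$.

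The step I expect to be most delicate is the second one: I must be careful that the uniqueness of Theorem~\ref{T:derivedfunctor} really is strong enough to identify $\widetilde F$ with $CH_\bullet(F(pt))$ as $\infty$-functors, not merely pointwise — that is, that the natural equivalence at $pt$ propagates coherently across $\hTop^{f}$. This is precisely what the ``generalized uniqueness'' part (3) of Theorem~\ref{T:derivedfunctor} ensures, so the argument reduces to citing it; the subtlety is only in observing that $\widetilde F$ is a legitimate input to that uniqueness statement, which follows from Lemma~\ref{L:monoidalimpliesCom} together with the hypothesis that $F$ satisfies excision. The equivalence of $\infty$-categories then follows by assembling these two natural equivalences.
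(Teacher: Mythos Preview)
Your proposal is correct and follows the same approach as the paper: the paper presents Proposition~\ref{P:derivedfunctorfinite} explicitly as a rephrasing of Theorem~\ref{T:derivedfunctor} restricted to $\hTop^{f}$ (with axiom ii) restricted to finite index sets), without giving any further argument. Your proof simply unpacks this rephrasing by exhibiting $A\mapsto CH_\bullet(A)$ as the inverse to $\mathrm{ev}_{pt}$ and invoking parts (1)--(3) of Theorem~\ref{T:derivedfunctor}, which is exactly the intended content.
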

Similarly, Theorem~\ref{T:derivedfunctor}  can be rephrased in the following way:

\begin{minipage}[c]{11cm}\lq\lq{}\emph{The  functor $F \mapsto F(pt)$ from the category  of  functors $\hTop\to \hkmod$ preserving arbitrary coproducts  and satisfying excision  to the category of $E_\infty$-algebras is an natural equivalence.}\rq\rq{}\end{minipage}
\end{remark}

An immediate consequence of $A\boxtimes X \cong {CH}_X(A)$ and the identity~\eqref{eq:CH=tensored} is the following natural equivalence
\begin{equation}\label{eq:CHprod=prodCH}
 CH_{X\times Y}(A) \cong CH_{X}\big(CH_{Y}(A)\big)
\end{equation}
in $E_\infty\textbf{-Alg}$. This is the \lq\lq{}exponential law\rq\rq{} for derived Hochschild homology. 

Another interesting consequence of~\eqref{eq:CH=tensored} is that,   for any spaces $K$ and $X$ and $E_\infty$-algebra $A$,  the identity map in $\Map_{E_\infty\textbf{-Alg}}(CH_{X\times K}(A), CH_{X\times K}(A))$ yields a canonical element in $\Map_{\hTop}\big(K,\Map_{E_\infty\textbf{-Alg}}(CH_{X}(A),CH_{X\times K}(A))\big)$
hence a canonical map of chain complexes
\begin{equation}\label{eq:canmapCHinproduct}
 \text{tens}:\, C_\ast\big(K\big)\otimes CH_X(A)\longrightarrow CH_{K\times X}(A).
\end{equation}
 Similarly, let  $f:K\times X\to Y$ 
 be a map of topological spaces, then we get a canonical continuous map 
 $K\longrightarrow Map_{E_\infty\text{-}Alg}(CH_X(A),CH_Y(A))$ or equivalently 
 a chain map 
 $f_*:C_\ast(K)\otimes CH_X(A)\longrightarrow CH_Y(A)$ in $\hkmod$ which is just the composition 
 $$C_\ast(K)\otimes CH_X(A)\stackrel{\text{tens}}\longrightarrow CH_{K\times X}(A)\stackrel{f_*}\longrightarrow CH_{Y}(A)$$ where the last map is by functoriality of $\CH$ with respect to maps of topological spaces.

 \begin{remark}[\textbf{Group actions on derived Hochschild homology}] \label{R:groupactionsonCH}Since $\CH$ is a functor of both variables, $CH_X(A)$ has a natural action of the topological monoid $Map_{\Top}(X,X)$ (and thus of the group $Homeo(X)$), \emph{i.e.}, there is a monoid\footnote{here, monoid  means an homotopy monoid, that is  an $E_1$-algebra in the symmetric monoidal category $(\hTop, \times)$} map $Map_{\hTop}(X,X)\to Map_{E_\infty\textbf{-Alg}}(CH_X(A), CH_X(A))$.
 By adjunction, we get a chain map\footnote{and higher homotopy coherences} $C_\ast\big(Map_{\hTop}(X,X)\big)\otimes CH_X(A)\to CH_X(A)$ 
  which exhibits $CH_X(A)$ as a module over  $Map_{\hTop}(X,X)$ in $E_\infty\textbf{-Alg}$ .
\end{remark}

\subsubsection{Derived functor interpretation}\label{SSS:derivedfunctor}
We now explain a derived functor interpretation of derived Hochschild homology.  
Recall (Example~\ref{E:singularchainasEinfty}) that the singular chain functor of a space $X$ has a natural structure of $E_\infty$-coalgebras. In other words, it  is an object  (abusively denoted $C_\ast(X)$) of $\textbf{Fun}^{\otimes}({\mathbf{Fin}}^{op},\hkmod)$ the category of \emph{contravariant} symmetric monoidal functor from finite sets to chain complexes. 

 We can identify an  $E_\infty$-coalgebra $C$, resp. an $E_\infty$-algebra $A$, respectively,  with a  \emph{right} module, resp. \emph{left} module over the ($\infty$-)operad $\mathbb{E}_\infty$; or equivalently with   contravariant, resp.  covariant, symmetric monoidal functors from ${\mathbf{Fin}}$ to $\hkmod$). We can thus   form their (derived) tensor products $C\mathop{\otimes}\limits_{\mathbb{E}_\infty}^\mathbb{L} A \in \hkmod$ which is computed as a (homotopy) coequalizer:
$$C\mathop{\otimes}_{\mathbb{E}_\infty^{\otimes}}^\mathbb{L} A \cong  \mathop{hocoeq} \Big( \coprod_{f:\{1,\dots, q\} \to \{1,\dots, p\}} C^{\otimes p} \otimes \mathbb{E}_\infty(q,p)\otimes A^{\otimes q} \rightrightarrows  \coprod_{n} C^{\otimes n}\otimes A^{\otimes n}\Big) $$
where the maps $f:\{1,\dots, q\} \to \{1,\dots, p\}$ are maps of sets. The upper map  in the coequalizer is induced by the maps $f^*:C^{\otimes p} \otimes \mathbb{E}_\infty(q,p)\otimes A^{\otimes q} \to C^{\otimes q} \otimes A^{\otimes q}$ obtained from the coalgebra structure of $C$ and the lower map is induced by the maps $f_*:C^{\otimes p} \otimes \mathbb{E}_\infty^{\otimes}(q,p)\otimes A^{\otimes q} \to C^{\otimes p} \otimes A^{\otimes p}$ induced by the algebra structure.
One can define similarly $C\mathop{\otimes}\limits^{\mathbb{L}}_{{\mathbf{Fin}}} A$ the derived tensor product of a covariant and contravariant ${\mathbf{Fin}}$-modules. 
\begin{proposition}\label{P-CH-coeq} Let $X$ be a space and $A$ be an $E_\infty$-algebra. There is a natural equivalence (in $\hkmod$)
 $$CH_{X}(A) \;\cong\;   C_\ast(X) \mathop{\otimes}_{\mathbb{E}_\infty}^\mathbb{L} A. $$
If $A$ has a structure of CDGA, then we further have $CH_{X}(A) \;\cong\;   C_\ast(X) \mathop{\otimes}\limits_{{\mathbf{Fin}}}^\mathbb{L} A$
\end{proposition}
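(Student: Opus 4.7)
My plan is to invoke the generalized uniqueness statement (part 3 of Theorem~\ref{T:derivedfunctor}) applied to $F=\mathrm{id}$: it suffices to exhibit on $\mathcal{G}(X,A):=C_*(X)\otimes^{\mathbb{L}}_{\mathbb{E}_\infty}A$ a natural $E_\infty$-algebra structure so that $(X,A)\mapsto \mathcal{G}(X,A)$ satisfies the monoidal and excision axioms of Definition~\ref{D:axioms} and has value $A$ on a point. The $E_\infty$-algebra structure is read off the homotopy coequalizer presentation displayed just above the proposition: the $E_\infty$-algebra structure of $A$ and the Eilenberg--Zilber $E_\infty$-coalgebra structure on $C_*(X)$ (Example~\ref{E:singularchainasEinfty}) assemble, level by level in the underlying simplicial resolution, into an $E_\infty$-algebra structure on the realization, natural in both variables.

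For the value on a point, $C_*(pt)\simeq k$ is the monoidal unit among $E_\infty$-coalgebras, so the unit axiom of the operadic two-sided bar construction gives the natural equivalence $k\otimes^{\mathbb{L}}_{\mathbb{E}_\infty}A\simeq A$ in $E_\infty\textbf{-Alg}$. For the monoidal axiom, disjoint union is the coproduct in $\hTop$ and $C_*$ sends it to the coproduct of $E_\infty$-coalgebras, namely $C_*(X\sqcup Y)\simeq C_*(X)\oplus C_*(Y)$; then a K\"unneth-type argument (analogous to $\mathrm{Sym}(V\oplus W)\simeq \mathrm{Sym}(V)\otimes \mathrm{Sym}(W)$ for the commutative operad) applied level-wise to the coequalizer yields $(C\oplus D)\otimes^{\mathbb{L}}_{\mathbb{E}_\infty}A\simeq (C\otimes^{\mathbb{L}}_{\mathbb{E}_\infty}A)\otimes (D\otimes^{\mathbb{L}}_{\mathbb{E}_\infty}A)$, whence axiom ii).

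The genuine content is excision. I would factor it into two sub-steps: first, that the classical Mayer--Vietoris short exact sequence upgrades to the $E_\infty$-coalgebra setting, providing a natural equivalence $C_*(X\cup^h_Z Y)\simeq C_*(X)\otimes^{\mathbb{L}}_{C_*(Z)}C_*(Y)$ of $E_\infty$-coalgebras (this uses naturality of the Eilenberg--Zilber cooperation with respect to open covers together with stability of $C_*$ under homotopy pushouts of spaces); second, that for fixed $A$ the functor $C\mapsto C\otimes^{\mathbb{L}}_{\mathbb{E}_\infty}A$ preserves homotopy pushouts of $E_\infty$-coalgebras. The latter is the main obstacle, and follows from the bar-construction formula recalled above the proposition: the derived operadic tensor is computed as the geometric realization of a simplicial chain complex each of whose levels is linear in $C$, and both geometric realization and (finite) tensor products commute with homotopy pushouts in $\hkmod$. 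Combining the two sub-steps yields axiom iii), and Theorem~\ref{T:derivedfunctor} concludes the first equivalence.

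For the CDGA statement: when $A$ is a strict CDGA, its left $\mathbb{E}_\infty$-module structure factors through the symmetric monoidal category $\mathbf{Fin}$ via the canonical map of operads $\mathbb{E}_\infty\to \mathbf{Fin}$ (concretely, a CDGA extends to a symmetric monoidal functor $S\mapsto A^{\otimes S}$ on $\mathbf{Fin}$), while $C_*(X)$ is by construction already a contravariant symmetric monoidal functor on $\mathbf{Fin}$. Running the same coequalizer computation but now over $\mathbf{Fin}$ yields a comparison map $C_*(X)\otimes^{\mathbb{L}}_{\mathbf{Fin}}A\to C_*(X)\otimes^{\mathbb{L}}_{\mathbb{E}_\infty}A$ which is a term-wise equivalence of simplicial chain complexes and hence an equivalence in $\hkmod$ after realization, giving the second formula.
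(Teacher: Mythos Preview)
Your overall strategy—verify the axioms of Definition~\ref{D:axioms} and invoke Theorem~\ref{T:derivedfunctor}—is different from the paper's route, which is a direct co-Yoneda identification.  The paper's one substantive observation is that the right $\mathbf{Fin}$-module underlying the $E_\infty$-coalgebra $C_\ast(X_\bullet)$ is $I\mapsto k[\Hom_{Fin}(I,X_\bullet)]$; from there the coend $C_\ast(X)\otimes^{\mathbb L}_{\mathbf{Fin}}A$ evaluates the left-Kan extension of $I\mapsto A^{\otimes I}$ at $X_\bullet$, which is by construction $CH_{X_\bullet}(A)$ (cf.\ equation~\eqref{eq:CHdefinedoverset}).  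No axioms need to be checked; the reference to \cite[Proposition~4]{GTZ2} is exactly this co-Yoneda argument.

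There is, however, a genuine gap in your excision step.  You claim that the levels of the bar resolution displayed just above the proposition are ``linear in $C$'', and then invoke commutation of linear functors with homotopy pushouts.  But inspect the formula: the degree-zero term is $\coprod_n C^{\otimes n}\otimes A^{\otimes n}$ and the degree-one term is $\coprod_{f:q\to p} C^{\otimes p}\otimes \mathbb E_\infty(q,p)\otimes A^{\otimes q}$.  These are manifestly \emph{not} linear in $C$: the functor $C\mapsto C^{\otimes n}$ does not preserve homotopy pushouts in $\hkmod$ for $n\geq 2$.  So the argument as written does not establish excision.  The preceding sub-step is also problematic: you write ``$C_\ast(X)\otimes^{\mathbb L}_{C_\ast(Z)}C_\ast(Y)$ of $E_\infty$-coalgebras'', but $C_\ast(Z)$ is a coalgebra, not an algebra, so there is no relative tensor product to speak of; if you mean the homotopy pushout in $E_\infty$-coalgebras, note that the forgetful functor $E_\infty\text{-coAlg}\to\hkmod$ creates limits, not colimits, so there is no reason the Mayer--Vietoris cone in $\hkmod$ computes a pushout of coalgebras.

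If you want to salvage the axiomatic route, the honest fix is to show directly that $X\mapsto C_\ast(X)\otimes^{\mathbb L}_{\mathbb E_\infty}A$, viewed as a functor out of \emph{spaces}, preserves homotopy colimits into $E_\infty\textbf{-Alg}$.  The cleanest way to see this is precisely the co-Yoneda description above: once you know the functor is left Kan extended from its values on finite sets (where it is $I\mapsto A^{\otimes I}$), colimit-preservation is automatic.  But at that point you have essentially reproduced the paper's argument, and the detour through the uniqueness theorem becomes redundant.
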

\begin{proof}
Note that the $E_\infty$-coalgebra structure on $C_\ast(X_\com)$ is given by the functor ${\mathbf{Fin}}^{op}\to  \hkmod$ defined by $I\mapsto
 k\big[Hom_{Fin}(I, X_\bullet)\big]$. The rest of the proof  is the same as in~\cite[Proposition~4]{GTZ2}. 
\qed \end{proof}

\begin{remark}[Factorization homology of commutative algebras as derived mapping stacks]\label{R:FactasRMap}
There is another  nice interpretation of derived Hochschild homology    in terms of  derived (or homotopical) algebraic geometry. 
Let $\mathbf{dSt}_k$ be the $\infty$-category of derived stacks over the ground ring $k$ described in details 
in~\cite[Section 2.2]{ToVe}. This category admits internal Hom's that we denote by $\mathbb{R}\mathop{Map}(F,G)$ 
following~\cite{ToVe, ToVe2} and further  
 is also an enrichment of the homotopy category of spaces. Indeed, any 
simplicial set $X_\com$  yields a constant simplicial presheaf $E_\infty\text{-}Alg \to \sset$ defined by $R\mapsto X_\com$ 
which, in turn, can be stackified.  We denote $\mathfrak{X}$ the associated stack, \emph{i.e.} the stackification of 
$R\mapsto X_\com$, which depends only on the (weak) homotopy type of $X_\bullet$.
For a (derived) stack $\mathfrak{Y}\in \mathbf{dSt}_k$, we denote $\mathcal{O}_{\mathfrak{Y}}$ its functions, \emph{i.e.}, $\mathcal{O}_{\mathfrak{Y}}:=\mathbb{R}\underline{Hom}(\mathfrak{Y},\mathbb{A}^1)$, (see~\cite{ToVe}). A direct application of Theorem~\ref{T:derivedfunctor} is:
\begin{corollary}[\cite{GTZ2}]\label{C:mappingstack} Let $\mathfrak{R}=\mathbb{R}\mathop{Spec}(R)$ be an affine derived stack (for instance an 
affine stack)~\cite{ToVe} and $\mathfrak{X}$ be the stack associated to a space $X$. Then the Hochschild chains over $X$ with 
coefficients in $R$ represent the mapping stack $\mathbb{R}\mathop{Map}(\mathfrak{X}, \mathfrak{R})$. That is, there are canonical 
equivalences $$\mathcal{O}_{\mathbb{R}\mathop{Map}(\mathfrak{X},\mathfrak{R})}\; \cong \; CH_{X}(R), 
\qquad \mathbb{R}\mathop{Map}(\mathfrak{X},\mathfrak{R}) \;\cong \; \mathbb{R}\mathop{Spec}\big( CH_{X}(R) \big)$$
\end{corollary}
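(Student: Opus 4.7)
The plan is to prove both equivalences via Yoneda in $\mathbf{dSt}_k$, reducing everything to the defining tensoring property~\eqref{eq:CH=tensored} of Theorem~\ref{T:derivedfunctor}. Since $\mathcal{O}_{\mathbb{R}\mathop{Spec}(A)}\simeq A$ for any affine derived stack, the first equivalence $\mathcal{O}_{\mathbb{R}\mathop{Map}(\mathfrak{X},\mathfrak{R})}\cong CH_X(R)$ follows immediately from the second, so it is enough to establish the equivalence of derived stacks $\mathbb{R}\mathop{Map}(\mathfrak{X},\mathfrak{R})\simeq \mathbb{R}\mathop{Spec}(CH_X(R))$.

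For an arbitrary affine derived stack $\mathfrak{S}=\mathbb{R}\mathop{Spec}(S)$, the defining adjunction of the internal Hom in $\mathbf{dSt}_k$ gives
$$\Map_{\mathbf{dSt}_k}\bigl(\mathfrak{S},\mathbb{R}\mathop{Map}(\mathfrak{X},\mathfrak{R})\bigr)\;\cong\;\Map_{\mathbf{dSt}_k}\bigl(\mathfrak{S}\times\mathfrak{X},\mathfrak{R}\bigr).$$
Exploiting that $\mathfrak{X}$ is the stackification of the constant presheaf $R\mapsto X_\bullet$ and that $\mathfrak{R}$ is affine, I would identify the right-hand side with $\Map_{\hTop}\bigl(X,\Map_{\mathbf{dSt}_k}(\mathfrak{S},\mathfrak{R})\bigr)\cong \Map_{\hTop}\bigl(X,\Map_{E_\infty\textbf{-Alg}}(R,S)\bigr)$, using affineness of $\mathfrak{S}$ and $\mathfrak{R}$ at the last step. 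The tensoring identity~\eqref{eq:CH=tensored} from Theorem~\ref{T:derivedfunctor} then converts this into
$$\Map_{E_\infty\textbf{-Alg}}\bigl(CH_X(R),S\bigr)\;\cong\;\Map_{\mathbf{dSt}_k}\bigl(\mathfrak{S},\mathbb{R}\mathop{Spec}(CH_X(R))\bigr).$$
Since these natural equivalences hold for all affine test objects $\mathfrak{S}$, Yoneda delivers the desired equivalence of derived stacks.

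The main obstacle is the middle identification $\Map_{\mathbf{dSt}_k}(\mathfrak{S}\times\mathfrak{X},\mathfrak{R})\cong \Map_{\hTop}(X,\Map_{\mathbf{dSt}_k}(\mathfrak{S},\mathfrak{R}))$, which expresses the fact that, with respect to mapping into an affine target, $\mathfrak{X}$ acts as a \emph{cotensor} by the space $X$. To justify it I would pick a simplicial model $X_\bullet$ and present $\mathfrak{X}$ in $\mathbf{dSt}_k$ as the homotopy colimit $\mathop{hocolim}_{n\in\Delta^{op}} \coprod_{X_n} \mathrm{pt}$ (indexed by the set $X_n$). Then, using that $\Map_{\mathbf{dSt}_k}(-,\mathfrak{R})$ sends homotopy colimits to homotopy limits,
$$\Map_{\mathbf{dSt}_k}(\mathfrak{S}\times\mathfrak{X},\mathfrak{R})\;\simeq\;\mathop{holim}_{n\in\Delta}\prod_{X_n}\Map_{\mathbf{dSt}_k}(\mathfrak{S},\mathfrak{R})\;\simeq\;\Map_{\hTop}\bigl(X,\Map_{\mathbf{dSt}_k}(\mathfrak{S},\mathfrak{R})\bigr),$$
where the last step uses that $X\simeq \mathop{hocolim}_n X_n$ and that $\Map_{\hTop}(-,Y)$ turns such colimits into limits. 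Once this cotensoring step is granted, the rest of the argument is purely formal adjoint-juggling and reduces directly to Theorem~\ref{T:derivedfunctor}.
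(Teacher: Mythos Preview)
Your argument is correct and is essentially the unpacking of what the paper means by ``a direct application of Theorem~\ref{T:derivedfunctor}'': the paper does not give a proof beyond that phrase, and your Yoneda/adjunction chain reducing to the tensoring identity~\eqref{eq:CH=tensored} is exactly the intended direct application. The cotensoring step you single out as the main obstacle is the one nontrivial ingredient, and your justification via the presentation of $\mathfrak{X}$ as a homotopy colimit of points indexed by a simplicial model of $X$ is the standard way to see it.
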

If a group  $G$ acts on $X$, the natural action of $G$ on $CH_{X}(A)$ (Remark~\ref{R:groupactionsonCH}) identifies with the natural one  on $\mathbb{R}\mathop{Map}(\mathfrak{X},\mathfrak{R})$ under the equivalence given by Corollary~\ref{C:mappingstack}.
\end{remark}

\subsection{Pointed spaces and higher Hochschild cohomology}\label{SS:coHHEinfty}

In order to have a dual and relative versions of the construction of \S~\ref{SS:AxiomsforHH}, we consider the ($\infty$-)category $\hTopp$ of \emph{pointed} spaces. 
  Let $\tau:pt\to X$ be a base point of $X \in \hTopp$. 
The map $\tau$ yields a map of $E_\infty$-algebras $ A\cong CH_{pt}(A) \stackrel{\tau_*}\longrightarrow CH_{X}(A)$ 
and thus makes   $CH_{X}(A)$ an $A$-module.  
Let $M$  be an $E_\infty$-module over $A$; for instance,  take $M$ to be a  module  over  a CDGA $A$.  Note that $M$  has  induced left and right modules structures\footnote{Note also that there is an equivalence of $\infty$-categories $E_1\textbf{-LMod}_A\cong E_1\textbf{-RMod}_A$ if $A\in E_\infty\textbf{-Alg}$} over $A$.

\begin{definition} \label{D-coHoch} Let $A$ be an $E_\infty$-algebra and $M$ be an $E_\infty$-module over $A$. 
\begin{itemize}
\item The (derived) \emph{Hochschild cochains} of $A$  with values in $M$ over 
(a pointed topological space) $X$ is given by $$CH^{X}(A,M):= \RHom^{left}_{A}(CH_{X}(A), M),$$
the (derived) chain complex of homomorphisms of underlying  left $E_1$-modules over $A$ (Definition~\ref{D:LandRMod}). 
\item \label{D:HochChainModitem}
The (derived) \emph{Hochschild homology} of  $A$ with values in $M$ over (a pointed space) 
$X$ is defined as $$CH_{X}(A,M):= M\mathop{\otimes}^{\mathbb{L}}_{A} CH_{X_\bullet}(A)$$ the relative tensor product of (a left and a right) $E_1$-modules over $A$.\end{itemize}
\end{definition}
The two definitions above depend on the choice of the base point even though we do not write it explicitly in the definition.

\begin{remark} One can also use the relative tensor products of $E_\infty$-modules over $A$ (as defined, for instance, in~\cite{L-HA, KM}) for defining the Hochschild homology $CH_X(A,M)$. This does not change the computation (and makes Lemma~\ref{L:HochModstructure} below trivial) according to 
 Proposition~\ref{P:EinftyasFact} (or~\cite{L-HA, KM}). The same remark applies to the definition of derived  Hochschild cohomology.
\end{remark}

Since the based point map $\tau_*: A\to CH_{X}(A)$ is a map of $E_\infty$-algebras, the canonical module structure of $CH_{X}(A)$ over itself induces a $CH_{X}(A)$-module structure on $CH_{X}(A,M)$   after tensoring by $A$ (see \cite[Part V]{KM}, \cite{L-HA}):
\begin{lemma}\label{L:HochModstructure}
 Let $M$ be in $E_\infty\textbf{-Mod}_A$, that is, $M$ is an $E_\infty$-$A$-module. Then $CH_{X}(A,M)$ is canonically a  $E_\infty$-module over $CH_{X}(A)$.
\end{lemma}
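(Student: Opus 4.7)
The plan is to obtain the desired module structure by base change along the map of $E_\infty$-algebras $\tau_* : A \to CH_X(A)$ supplied by the basepoint. Concretely, I claim that this is an instance of the general fact that for any map $\phi : A \to B$ of $E_\infty$-algebras, the (derived) relative tensor product
\[
-\otimes^{\mathbb{L}}_A B : E_\infty\textbf{-Mod}_A \longrightarrow E_\infty\textbf{-Mod}_B
\]
is a well-defined $\infty$-functor, and applying it to $M$ with $B = CH_X(A)$ produces an $E_\infty$-$CH_X(A)$-module whose underlying chain complex is $M \otimes^{\mathbb{L}}_A CH_X(A) = CH_X(A,M)$. The existence of this functor is standard and can be cited from~\cite[\S 4.5]{L-HA} or~\cite[Part V]{KM}, so this reduces the lemma to assembling the pieces.

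Concretely, I would proceed in the following steps. First, recall that by Lemma~\ref{L:monoidalimpliesCom} applied to the symmetric monoidal $\infty$-functor $\CH_{-}(A) : \hTop \to \hkmod$ (whose existence is Theorem~\ref{T:derivedfunctor}), the object $CH_X(A)$ canonically lifts to an object of $E_\infty\textbf{-Alg}$; moreover, functoriality in $X$ turns $\tau : pt \to X$ into a morphism $\tau_* : A \simeq CH_{pt}(A) \to CH_X(A)$ of $E_\infty$-algebras. Second, use $\tau_*$ to view $CH_X(A)$ simultaneously as an $E_\infty$-$A$-algebra \emph{and} as an $E_\infty$-$CH_X(A)$-module over itself via its multiplication; this gives $CH_X(A)$ the structure of an $A$-$CH_X(A)$ bimodule in the $E_\infty$ sense (the two actions commute up to coherent homotopy, which is exactly the content of $CH_X(A)$ being $E_\infty$). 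Third, form the relative tensor product $M \otimes^{\mathbb{L}}_A CH_X(A)$: the residual $CH_X(A)$-action on the right tensorand descends through the bar construction computing the relative tensor product because its commuting with the $A$-action is precisely what the bar construction quotients out by.

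The main obstacle is making sure that what I have built is genuinely an $E_\infty$-module structure and not merely an $E_1$-module structure. At the level of naive chain complexes one only sees the left/right module operations, so one has to work in the $\infty$-categorical setting where $E_\infty\textbf{-Mod}_{CH_X(A)}$ can be presented, following~\cite{L-HA, KM}, as modules in a suitable symmetric monoidal $\infty$-category and where the relative tensor product is functorial with respect to this structure. Once this framework is in place, the proof is essentially formal: the functor $-\otimes^{\mathbb{L}}_A CH_X(A)$ is symmetric monoidal from $E_\infty\textbf{-Mod}_A$ to $E_\infty\textbf{-Mod}_{CH_X(A)}$, and applied to the unit $A$ it returns $CH_X(A)$, while applied to $M$ it returns $CH_X(A,M)$ with its $E_\infty$-$CH_X(A)$-module structure, compatible under the natural map $M \to CH_X(A,M)$ with the one on $CH_X(A)$. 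One can also verify this structure agrees, via Proposition~\ref{P:EinftyasFact}, with the module structure coming from viewing everything on the factorization algebra side, which gives an independent check.
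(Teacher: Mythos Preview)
Your proposal is correct and follows essentially the same approach as the paper: the paper simply observes (in the sentence preceding the lemma) that since $\tau_* : A \to CH_X(A)$ is a map of $E_\infty$-algebras, the canonical $CH_X(A)$-module structure on $CH_X(A)$ induces one on $CH_X(A,M) = M\otimes^{\mathbb{L}}_A CH_X(A)$ via base change, citing \cite[Part V]{KM} and \cite{L-HA}, and then remarks that the lemma is obvious when $A$ is a CDGA. Your write-up is more detailed but the underlying argument is identical.
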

The Lemma is obvious when $A$ is a CDGA. 

We have the $\infty$-category $E_\infty\textbf{-Mod}$ of pairs $(A,M)$ with $A$ an $E_\infty$-algebra and $M$ an $A$-module (Definition~\ref{DEnModoverA}). Let $\pi_{E_\infty}:  E_\infty\textbf{-Mod}\to E_\infty\textbf{-Alg}$ be the canonical functor.
\begin{proposition}[\cite{GTZ2, GTZ3}]\label{P-CHfunctorMod} 
\begin{itemize} \item The derived Hochschild chains (Definition~\ref{D:HochChainModitem})
 induces a functor of $\infty$-categories   
 $CH: (X,M) \mapsto CH_{X}(\pi_{E_\infty}(M),M)$ from $\hTopp \times E_\infty\textbf{-Mod}$ to $ E_\infty\textbf{-Mod}$ which fits into a commutative diagram 
 $$\xymatrix{ \hTopp \times E_\infty\textbf{-Mod}   \ar[d]_{for\times \pi_{E_\infty}} \ar[r]^{\;\;\;\;CH}  &  E_\infty\textbf{-Mod}   \ar[d]^{\pi_{E_\infty}} \\
\hTop \times E_\infty\textbf{-Alg} \ar[r]^{\;\;\CH} & E_\infty\textbf{-Alg}. }$$Here $for: \hTopp \to \hTop$ forget the base point.
\item
The derived Hochschild cochains  (Definition~\ref{D-coHoch})
 induces a functor of $\infty$-categories   
 $(X,M) \mapsto CH^{X_\bullet}(A,M)$ from $(\hTopp)^{op} \times E_\infty\textbf{-Mod}_A$ to $ E_\infty\textbf{-Mod}_A$, 
which is further contravariant with respect to $A$.
\end{itemize}
\end{proposition}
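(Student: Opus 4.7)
The plan is to assemble this statement from the functoriality of derived Hochschild chains established in Theorem~\ref{T:derivedfunctor} together with the $\infty$-categorical functoriality of the relative tensor product and internal Hom for module categories (see \cite{L-HA, KM}). The statement is essentially a bookkeeping exercise once one recognizes each ingredient as an $\infty$-functor; the only mild subtlety is making the module structures coherent across variables.

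For the first bullet, I would proceed as follows. By Theorem~\ref{T:derivedfunctor}, $(X,A)\mapsto CH_X(A)$ is an $\infty$-functor $\hTop\times E_\infty\textbf{-Alg}\to E_\infty\textbf{-Alg}$. A morphism in $\hTopp$ is a map of spaces commuting with the basepoint maps $\tau:pt\to X$, so the unit map $A\cong CH_{pt}(A)\stackrel{\tau_*}\to CH_X(A)$ upgrades (by the functoriality just cited) to a natural transformation of $\infty$-functors from $(for\times \mathrm{id}): \hTopp\times E_\infty\textbf{-Alg}\to E_\infty\textbf{-Alg}$ to $CH$. In particular $(X,A)\mapsto (CH_X(A),A\to CH_X(A))$ lands in the $\infty$-category of augmented $E_\infty$-algebras, and forgetting the augmentation recovers $CH_X(A)$, which gives the commutativity of the desired diagram at the level of the algebra component. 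To incorporate the module variable, I use that the derived relative tensor product $(A,M,N)\mapsto M\otimes^{\mathbb L}_A N$ is itself an $\infty$-functor from pairs of a right and left $A$-module to $\hkmod$, and via Lemma~\ref{L:HochModstructure} (or directly via~\cite{L-HA}), when $N$ is an $A$-algebra, $M\otimes^{\mathbb L}_A N$ is naturally an $N$-module. Applying this with $N=CH_X(A)$ supplies the $E_\infty$-$CH_X(A)$-module structure on $CH_X(A,M)$, and the composition of $\infty$-functors yields the required functor $\hTopp\times E_\infty\textbf{-Mod}\to E_\infty\textbf{-Mod}$ covering $CH$.

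For the second bullet, the argument is formally dual. The internal Hom of left $A$-modules gives an $\infty$-functor $\RHom^{left}_A(-,-)\colon (E_1\textbf{-LMod}_A)^{op}\times E_1\textbf{-LMod}_A \to \hkmod$, and when the source module is an $A$-algebra $B$, the target inherits a canonical $B$-module structure (cf.~\cite{L-HA, KM}). Applying this to $B=CH_X(A)$ (viewed as an $A$-algebra via $\tau_*$) produces the $CH_X(A)$-action on $CH^X(A,M)$. Contravariance in $X$ is immediate from the contravariance of $\RHom$ in the first argument and the covariance of $CH_X(A)$ in $X$, while covariance in $M$ is immediate from covariance of $\RHom$ in the second argument. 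For contravariance in $A$: given $\phi\colon A\to A'$ and $M\in E_\infty\textbf{-Mod}_{A'}$, restriction of scalars along $\phi$ makes $M$ an $A$-module and $CH_X(A)\to CH_X(A')$ a map of $A$-modules (the target viewed via $\phi$ and $\tau_*$), so precomposition yields a natural map $\RHom^{left}_{A'}(CH_X(A'),M)\to \RHom^{left}_A(CH_X(A),M)$.

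The only step requiring real care is verifying that all these structure maps — basepoint unit, relative tensor, internal Hom, restriction of scalars — assemble into coherent $\infty$-functors rather than merely point-set or homotopy-level data, but this is exactly what is furnished by the module-category machinery of~\cite{L-HA} (and also the operadic model of~\cite{KM}), so no new homotopical work is needed. The commutativity of the diagram in the first bullet then reduces to the observation that the relative tensor product, viewed through the forgetful functor $\pi_{E_\infty}$, returns the augmentation target $CH_X(A)$ up to canonical equivalence, which holds on the nose from the definition.
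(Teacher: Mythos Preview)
The paper does not actually supply a proof of this proposition: it is attributed to \cite{GTZ2, GTZ3} and stated without argument, so there is no in-paper proof to compare against. Your sketch is essentially the expected one and is correct in outline: you invoke the functoriality of $(X,A)\mapsto CH_X(A)$ from Theorem~\ref{T:derivedfunctor}, promote the basepoint to a natural transformation $A\to CH_X(A)$ of $E_\infty$-algebras, and then feed this into the $\infty$-functorial relative tensor and internal Hom of module categories from \cite{L-HA, KM}. That is precisely the strategy one would extract from the cited references, and the paper's surrounding text (Lemma~\ref{L:HochModstructure} and the functor-homology remark culminating in Proposition~\ref{P:derivedfunctorversionforcoHH}) points in the same direction.

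One small point worth tightening: in the second bullet you want the output to land in $E_\infty\textbf{-Mod}_A$, not merely in $CH_X(A)$-modules. You get the $A$-module structure by restricting along $\tau_*:A\to CH_X(A)$, and this restriction is itself an $\infty$-functor $E_\infty\textbf{-Mod}_{CH_X(A)}\to E_\infty\textbf{-Mod}_A$ natural in $X$ (again by \cite{L-HA}); it would be cleaner to say this explicitly rather than leaving it implicit in ``inherits a canonical $B$-module structure''. Otherwise your argument is sound.
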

In particular, if $M=A$, then we have an natural equivalence $CH_{X}(A,A)\cong CH_X(A)$ in $E_\infty\textbf{-Mod}$\footnote{here we implicitly use the canonical functor $E_\infty\textbf{-Alg} \to  E_\infty\textbf{-Mod}$ which sees an $A$-lagebra as a module over itself}.

\begin{remark}[\textbf{Functor homology point of view}]
There is also a derived functor interpretation of the above functors as in~\S~\ref{SSS:derivedfunctor}.
 Let ${\mathbf{Fin}_*}$ be the $\infty$-category associated to the category of pointed finite sets (Example~\ref{ex:discreteasinfty}).
 If $X$ is pointed, then we have a functor $\tilde{C}_\ast(X):{\mathbf{Fin}_*}^{op}\to \hkmod$ which sends a finite pointed set $I$ to $ C_\ast(\Map_{pointed}(I,X))$ the singular chain on the space of pointed maps from $I$ to $X$.
 Further,  let $M$ be an $E_\infty$-module. 
 Similarly to~\S~\ref{SSS:derivedfunctor} we find a symmetric monoidal functor $\tilde{M}:{\mathbf{Fin}_*}\to \hkmod$. 
 When $M$ is a module over a CDGA $A$, denoting $*$ the base point,  this is simply the functor $\tilde{M}(\{*\}\coprod J) = M\otimes A^{\otimes J}$, see~\cite{G-HHnote}.  This functor actually factors through $E_1\textbf{-LMod}_{A}$.
 
% Let $(E_\infty)_*:= E_\infty\times_{{\mathbf{Fin}}} {\mathbf{Fin}_*}$ be the operad governing $E_\infty$-modules. The equivalence $(E_\infty)_* \to {\mathbf{Fin}_*}$  ensures that we can find   whose composition $(E_\infty)_* \to {\mathbf{Fin}_*}\stackrel{\tilde{M}}\to \hkmod$ is equivalent to $M$ (see~).  
We have a dual version of $\tilde{M}$, that we denote $\mathcal{H}(A,M):  {\mathbf{Fin}_*}^{op}\to \hkmod$, defined as $ \mathcal{H}(A,M)(J):=\Hom_{A}(\tilde{A}(J), M)$ (where $J\mapsto \tilde{A}(J)$ is the functor ${\mathbf{Fin}_*}\to E_1\textbf{-LMod}_{A}$ defined by the canonical $E_\infty$-module structure of $A$). See~\cite{G-HHnote} for an explicit construction when $A$ is a CDGA and $M$ a module. 

A proof similar to the one of Proposition~\ref{P-CH-coeq} yields:
\begin{proposition}\label{P:derivedfunctorversionforcoHH}
There are natural equivalences
$$CH_X(A,M) \cong   \tilde{C}_\ast(X)\mathop{\otimes}_{{\mathbf{Fin}_*}}^{\mathbb{L}} \tilde{M}, \qquad CH^{X}(A,M) \cong \RHom_{{\mathbf{Fin}_*}}(\tilde{C}_\ast(X), \mathcal{H}(A,M)).$$
\end{proposition}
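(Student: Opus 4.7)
The plan is to adapt the proof of Proposition~\ref{P-CH-coeq} to the pointed setting, working with a simplicial model for the space. First I fix a pointed simplicial set $X_\bullet$ modeling $X$, with base point $\tau \in X_0$, so that all constructions are realizations of explicit simplicial chain complexes whose terms I can match by inspection.

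Next I handle the homology equivalence. By Proposition~\ref{P-CH-coeq}, $CH_X(A)$ is the realization of the simplicial object $[n] \mapsto A^{\otimes X_n}$, and the base point $\tau$ induces an $E_\infty$-algebra map $A \hookrightarrow A^{\otimes X_n}$ inserting in the $\tau$-slot. Relative-tensoring with $M$ over $A$ then presents $CH_X(A,M) = M\otimes^{\mathbb{L}}_A CH_X(A)$ as the realization of $[n] \mapsto M \otimes A^{\otimes (X_n\setminus\{\tau\})}$. On the other hand, $\tilde{C}_\ast(X)\otimes^{\mathbb{L}}_{{\mathbf{Fin}_*}} \tilde{M}$ is computed by the two-sided bar construction over $\mathbf{Fin}_*$. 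Unpacking the definitions $\tilde{M}(J\sqcup\{*\}) = M\otimes A^{\otimes J}$ and $\Map_*(J\sqcup\{*\},X_\bullet)\cong \Map(J,X_\bullet)$, the standard bar-resolution-is-acyclic argument (collapsing along the degeneracies coming from the simplicial structure of $X_\bullet$) reduces this bar construction to the same simplicial object $[n]\mapsto M\otimes A^{\otimes (X_n\setminus\{\tau\})}$, establishing the equivalence.

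For the cohomology equivalence I dualize this strategy. Using that the left $A$-module $A^{\otimes X_n}$ is freely generated over $A$ by $A^{\otimes(X_n\setminus\{\tau\})}$ via the base point, $\RHom^{left}_A(CH_X(A),M)$ is the totalization of the cosimplicial complex $[n]\mapsto \Hom(A^{\otimes(X_n\setminus\{\tau\})},M)$. On the other side, the cobar construction computing $\RHom_{{\mathbf{Fin}_*}}(\tilde{C}_\ast(X), \mathcal{H}(A,M))$ unpacks to the same cosimplicial complex via $\mathcal{H}(A,M)(J\sqcup\{*\}) = \Hom_A(A\otimes A^{\otimes J}, M)\cong \Hom(A^{\otimes J}, M)$. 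Alternatively, and more conceptually, the cohomology statement follows formally from the homology statement by combining the tensor-hom adjunction between $\otimes^{\mathbb{L}}_{{\mathbf{Fin}_*}}$ and $\RHom_{{\mathbf{Fin}_*}}$ with the usual $(\otimes^{\mathbb{L}}_A, \RHom^{left}_A)$-adjunction.

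The main obstacle will be the careful bookkeeping of the $A$-module structures and their compatibility with the simplicial face and degeneracy maps. On the functor-homology side, the left $A$-linear structure is encoded combinatorially: the base point of $J\sqcup\{*\}\in \mathbf{Fin}_*$ marks the $A$-factor along which $M$ (or $\Hom(-,M)$) is glued. On the Hochschild side, the same structure comes from the map $\tau_\ast: A \to CH_X(A)$ induced by the base point of $X$. Matching these coherently is essentially an exercise in indexing conventions, but to avoid the combinatorial bookkeeping I would ultimately argue by a pointed analogue of Theorem~\ref{T:derivedfunctor} applied in the $\infty$-category $E_\infty\textbf{-Mod}_A$: both sides define functors $\hTopp \to E_\infty\textbf{-Mod}_A$ that are symmetric monoidal (for the pointed wedge structure on $\hTopp$), satisfy excision with respect to homotopy pushouts of pointed spaces, and take value $M$ on the pointed point; uniqueness then forces them to be naturally equivalent.
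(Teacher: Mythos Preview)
Your proposal is correct and follows essentially the same approach as the paper, which simply states that a proof similar to that of Proposition~\ref{P-CH-coeq} yields the result; you have merely filled in the details of what that pointed adaptation looks like. The identification of both sides with the (co)simplicial object $[n]\mapsto M\otimes A^{\otimes (X_n\setminus\{\tau\})}$ (resp.\ its $\Hom$-dual) is exactly the pointed analogue of the unpointed coend computation.

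One remark on your alternative uniqueness argument: while the spirit is right, the monoidal axiom needs care. The functor $X\mapsto CH_X(A,M)$ is not symmetric monoidal from $(\hTopp,\vee)$ to $(E_\infty\textbf{-Mod}_A,\otimes_A)$ in the na\"ive sense, since $CH_{X\vee Y}(A,M)\cong CH_X(A,M)\otimes_A CH_Y(A)$ rather than $CH_X(A,M)\otimes_A CH_Y(A,M)$; the correct formulation is that $CH_{(-)}(A,M)$ is a module over the monoidal functor $CH_{(-)}(A)$, and the uniqueness statement you would need is the one implicit in Proposition~\ref{P-CHfunctorMod} (characterizing such module-valued extensions). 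This is fixable, but your direct simplicial argument is cleaner and closer to what the paper intends.
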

\end{remark}

\subsection{Explicit model for derived Hochschild chains}\label{SS:explicitsimplicialmodelforHH}\label{SS:HHforCDGAs}
Following Pirashvili~\cite{P}, one can construct rather simple explicit  chain complexes computing  derived Hochschild chains when the input is a CDGA. We mainly deal with the unpointed case, the pointed one being similar and left to the reader.

\emph{In this section, we consider only CDGAs}.
Note that, if we assume $k$ is of characteristic zero, $E_\infty$-algebras are always homotopy equivalent to a CDGA so that we do not loose much generality.  
This construction, using  simplicial sets as models for topological spaces,  provides explicit semi-free resolutions for $CH_X(A)$ which makes them combinatorially appealing.

Let $(A=\bigoplus_{i\in \mathbb{Z}} A^i, d, \mu)$ be a differential graded, associative, 
commutative algebra and 
let $n_+$ be the  set $n_+:=\{0,\dots, n\}$. We define 
$CH_{n_+}(A):=A^{\otimes n+1} \cong  A^{\otimes n_+}$.  Let $f: k_+\to \ell_+$ be any set map,
we denote by $f_*: A^{\otimes k_+}\to A^{\otimes \ell_+}$, the linear map given by
\begin{equation}\label{eq:f*}
 f_*(a_0\otimes a_1\otimes \dots\otimes a_{k})=(-1)^{\epsilon}\cdot b_0\otimes b_1\otimes \dots\otimes b_{\ell},
 \end{equation}
  where $b_{j}=\prod_{i\in f^{-1}(j)} a_i$ (or $b_j=1$ if $f^{-1}(j)=\emptyset$) for $j=0,\dots,{\ell}$. 
The sign $\epsilon$ in equation \eqref{eq:f*} is determined by the usual Koszul sign rule of
 $(-1)^{|x|\cdot |y|}$ whenever $x$ moves across $y$.  In particular, $n_+\mapsto CH_{n_+}(A)$ is functorial. 
Extending the construction by colimit we obtain a 
well-defined functor 
\begin{equation}\label{eq:CHdefinedoverset} Y\mapsto CH_Y(A):= \colim_{Fin \ni K\to Y}  CH_K(A)\end{equation} 
from  sets to differential graded commutative algebras (since the tensor products of CDGAs is a CDGA).
 Now, if  $Y_\bullet$ is a simplicial set, we get  a simplicial CDGA
$CH_{Y_\bullet}(A)$ and by the Dold-Kan construction  a CDGA whose product is induced by the \emph{shuffle product} which is defined (in simplicial degree $p$, $q$) as the composition
\begin{multline}
sh: CH_{Y_p}(A)\otimes CH_{Y_q}(A) \stackrel{sh^{\times}}\longrightarrow CH_{Y_{p+q}}(A)\otimes CH_{Y_{p+q}}(A)\cong CH_{Y_{p+q}}(A\otimes A)\\  \stackrel{\mu_*}\longrightarrow CH_{Y_{p+q}}(A).
\end{multline}
Here $\mu:A\otimes A\to A$ denotes the multiplication in $A$ (which is a map of algebras) and, denoting $s_i$ the degeneracies of the simplicial structure in $CH_{Y_\bullet}(A)$,  
 $$ sh^{\times}(v\otimes w)=\sum_{(\mu,\nu)} sgn(\mu,\nu) (s_{\nu_q}\dots s_{\nu_1}(v)\otimes s_{\mu_p}\dots s_{\mu_1}(w)), $$
where $(\mu,\nu)$ denotes a $(p,q)$-shuffle, i.e. a permutation of $\{0,\dots,p+q-1\}$ mapping $0\leq j\leq p-1$ to $\mu_{j+1}$ and $p\leq j\leq p+q-1$ to $\nu_{j-p+1}$, such that $\mu_1<\dots<\mu_p$ and $\nu_1<\dots<\nu_q$. 
The differential $D: CH_{Y_\bullet}(A)\to  CH_{Y_\bullet}(A)[1]$ is given as follows. The tensor products of chain complexes   $A^{\otimes Y_i}$  have an internal differential which we abusively denote as $d$ since it is induced by the  differential $d: A\to A[1]$. Then,  the differential on $CH_{Y_\bullet}(A)$ is given by the formula:
\begin{eqnarray*}
D\big(\bigotimes_{i\in Y_i} a_{i}\big)&:=& (-1)^i d\big(\bigotimes_{i\in Y_i} a_{i}\big)+\sum_{r=0}^i (-1)^r (d_r)_* \big(\bigotimes_{i\in Y_i} a_{i}\big),
\end{eqnarray*}
where the $(d_r)_*:CH_{Y_i}(A)\to CH_{Y_{i-1}}(A)$ are induced by the corresponding faces $d_r: Y_i\to Y_{i-1}$ of the simplicial set $Y_\bullet$. 

\begin{definition}\label{D:Hoch}Let $Y_\bullet$ be a  simplicial set.   The Hochschild chains over $Y_\bullet$ of $A$ is the commutative differential graded algebra 
$(CH_{Y_\bullet}(A), D, sh)$.%, whose homology, denoted $HH_{Y_\bullet}(A)$, is called higher Hochschild homology of $A$ over $Y_\bullet$.
\end{definition}
The rule $(Y_\bullet,A)\mapsto(CH_{Y_\bullet}(A), D, sh)$ is a bifunctor from the ordinary discrete categories of simplicial sets and CDGA to the ordinary discrete category of CDGA. 

If $Y_\bullet$ is a \emph{pointed} simplicial set, we have a canonical CDGA map $A\stackrel{\sim}\to CH_{pt_\bullet}(A)\to CH_{Y_\bullet}(A)$.  This allows to mimick Definition~\ref{D-coHoch}:
\begin{definition}\label{D:HochChainMod}Let $Y_\bullet$ be a  simplicial set, $A$ a CDGA and $M$ an $A$-module (viewed as a symmetric bimodule).
\begin{itemize}
\item The \emph{ Hochschild chains} of  $A$ with values in  $M$ over 
$Y_\bullet$ are: $$CH_{X_\bullet}(A,M):= M\mathop{\otimes}_{A} CH_{X_\bullet}(A).$$
\item  The \emph{Hochschild cochains} of  $A$ with values in $M$ over 
 $Y_\bullet$ are: $$CH^{X_\bullet}(A,M)= Hom_{A}(CH_{X_\bullet}(A), M).$$\end{itemize}
\end{definition}

The above definition computes the derived Hochschild homology of Theorem~\ref{T:derivedfunctor}.
Indeed,  we have the adjunction $|-|: \sset \stackrel{\sim}{\underset{\sim}{\rightleftarrows}} \Top :\Delta_\bullet(-)$ given by the geometric realization $|Y_\bullet|$ of a simplicial set and the singular set functor: $n\mapsto \Delta_n(X):=\Hom_{\Top}(\Delta^n, X)$ (where $\Delta^n\in \Top$ is the standard $n$-simplex).
 This adjunction is a Quillen adjunction hence induces an equivalence of $\infty$-categories. Further (by unicity Theorem~\ref{T:derivedfunctor}) we have a  commutative diagram (in $\text{Fun}(\hsset\times \hcdga,  E_\infty\textbf{-Alg})$)
  \begin{equation}
\xymatrix{\hsset \times \hcdga    \ar[rrr]^{(Y_\bullet,A)\mapsto CH_{Y_\bullet}(A)}  \ar[d]_{|-|}^{\simeq} &&& \hcdga \ar[d] \\
\hTop\times \hcdga \ar[r] &  \hTop\times E_\infty\textbf{-Alg} \ar[rr]^{\mathcal{CH}} && E_\infty\textbf{-Alg}, }
\end{equation}see~\cite{GTZ2, GTZ3} for more details. From there, we get
\begin{proposition} One has 
 natural equivalences $CH_{X_\bullet}(A) \cong CH_{|X_\bullet|}(A)$  of $E_\infty$-algebras  as well as equivalences
$$ CH_{X_\bullet}(A,M) \cong CH_{|X_\bullet|}(A, M), \qquad CH^{X_\bullet}(A,M) \cong CH^{|X_\bullet|}(A,M)$$ of $ CH_{|X_\bullet|}(A)$-modules.
\end{proposition}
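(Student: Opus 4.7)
The plan is to invoke the uniqueness part of Theorem~\ref{T:derivedfunctor} after verifying that the explicit simplicial construction $(Y_\bullet, A) \mapsto CH_{Y_\bullet}(A)$ satisfies the three axioms of Definition~\ref{D:axioms} once composed with the geometric realization $|{-}|: \hsset \stackrel{\simeq}{\to} \hTop$. The value on a point is clear: if $Y_\bullet = pt_\bullet$ is the constant simplicial point, then $CH_{Y_n}(A) = A$ in every simplicial degree with all face and degeneracy maps being the identity, so the Dold--Kan associated CDGA is quasi-isomorphic to $A$. The monoidal axiom also follows directly from the definition of $CH_{(-)}(A)$ by extension by colimits along $\mathrm{Fin}\to Set$: for finite sets one has $CH_{K\sqcup L}(A) \cong A^{\otimes K}\otimes A^{\otimes L}$, and this extends to arbitrary simplicial sets because both colimits in $\hsset$ and the shuffle realization commute with the tensor product of CDGAs.

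The key step is to verify excision. Given a homotopy pushout $X_\bullet \cup^h_{Z_\bullet} Y_\bullet$, up to equivalence we may assume $Z_\bullet \hookrightarrow X_\bullet$ is a cofibration (levelwise injection) of simplicial sets and replace the homotopy pushout by an ordinary one. Since, for finite sets, $A^{\otimes (-)}:\mathrm{Fin}\to \cdga$ sends coproducts to pushouts of CDGAs (\emph{i.e.} tensor products), the colimit defining $CH_{Y}(A)$ for a set $Y$ preserves pushouts in the argument; applying this in each simplicial degree yields a strict isomorphism $CH_{X_\bullet \cup_{Z_\bullet} Y_\bullet}(A) \cong CH_{X_\bullet}(A)\otimes_{CH_{Z_\bullet}(A)} CH_{Y_\bullet}(A)$ of simplicial CDGAs. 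To identify this strict pushout with the derived one, I would verify that $CH_{Z_\bullet}(A)\to CH_{X_\bullet}(A)$ is a cofibration of (simplicial) CDGAs whenever $Z_\bullet\hookrightarrow X_\bullet$ is, which follows from the observation that in each simplicial degree $CH_{X_n}(A)$ is a polynomial (hence semi-free) CDGA over $CH_{Z_n}(A)$ on the extra factors of $A$ indexed by $X_n\setminus Z_n$; passing to Dold--Kan preserves cofibrancy and ensures the strict tensor product computes the derived one.

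With these three axioms verified and value $A$ at a point, the uniqueness part of Theorem~\ref{T:derivedfunctor} produces a natural equivalence $CH_{X_\bullet}(A) \simeq CH_{|X_\bullet|}(A)$ in $E_\infty\textbf{-Alg}$, giving the first statement (and this equivalence is canonically a CDGA-equivalence through the functor $\hcdga\to E_\infty\textbf{-Alg}$, as indicated by the commutative diagram preceding the proposition). For the module version, the base-point map $\tau_\bullet: pt_\bullet \to X_\bullet$ induces a CDGA map $A \to CH_{X_\bullet}(A)$ which, by naturality, corresponds under the above equivalence to the $E_\infty$-algebra map $A\cong CH_{pt}(A) \to CH_{|X_\bullet|}(A)$. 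Because $CH_{X_\bullet}(A)$ is semi-free (polynomial in each simplicial degree) as an $A$-algebra, the strict tensor product $M\otimes_A CH_{X_\bullet}(A)$ computes the derived tensor product $M\otimes^{\mathbb{L}}_A CH_{|X_\bullet|}(A)$, yielding the equivalence for Hochschild chains with values in $M$; the same semi-freeness makes $\Hom_A(CH_{X_\bullet}(A), M)$ compute $\RHom_A^{left}(CH_{|X_\bullet|}(A), M)$, giving the cochain equivalence. The $CH_{|X_\bullet|}(A)$-module structure is transported along these same equivalences by naturality.

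The main obstacle is the excision verification, precisely the cofibrancy assertion for $CH_{Z_\bullet}(A)\to CH_{X_\bullet}(A)$ which ensures that the strict simplicial pushout models the homotopy pushout in $\hcdga$; this is the technical bridge that allows the elementary combinatorial model to represent the $\infty$-categorical derived functor. The remaining steps, in particular the passage to modules and cochains, are then formal consequences of the semi-freeness of the simplicial model together with the uniqueness theorem.
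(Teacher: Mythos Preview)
Your proposal is correct and follows essentially the same approach as the paper: the paper establishes the commutative diagram relating the simplicial construction to $\mathcal{CH}$ precisely by invoking the uniqueness part of Theorem~\ref{T:derivedfunctor} (citing \cite{GTZ2, GTZ3} for the verification of the axioms), and then deduces the proposition ``from there.'' Your write-up simply makes explicit the axiom checks and the semi-freeness argument for the module and cochain versions that the paper leaves to the references.
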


We now demonstrate the above combinatorial definitions in a few examples (in which we assume, for simplicity, that $A$ has a trivial differential).
\begin{example}[The point and the interval]\label{ex:ptinterval} The point  has a trivial simplicial model  given by the constant simplicial set $pt_n=\{pt\}$. Hence $$(CH_{pt_\bullet}(A),D):=A\stackrel{0}\leftarrow   A\stackrel{id}\leftarrow   A\stackrel{0}\leftarrow   A\stackrel{id}\leftarrow   A\cdots$$ which is a deformation  retract of $A$ (as a CDGA).
A (pointed) simplicial model for the interval $I=[0,1]$ is given by
$I_n=\{\underline{0},1\cdots,n+1\}$, hence in simplicial degree $n$, $CH_{I_n}(A,M)= M\otimes A^{\otimes n+1}$ and  the simplicial face maps are $$d_i(a_0\otimes \cdots a_{n+1})=a_0\otimes \cdots \otimes (a_i a_{i+1}) \otimes\cdots \otimes a_{n+1}.$$ An easy computation shows that $CH_{I_\bullet}(A,M)=Bar(M,A,A)$ is the standard  Bar construction\footnote{the two-sided one, with values in the two $A$-modules $A$ and $M$} which is  quasi-isomorphic to $M$.
\end{example}

\begin{example}[The circle]\label{ex:simplicialS1}
The circle  $S^1\cong I/(0\sim 1)$ has (by Example~\ref{ex:ptinterval}) a simplicial model $S^1_\bullet$ which is the quotient  $S^1_n=I_n/(0\sim n+1) \cong \{0,\dots,n\}$.  One computes that the face maps $d_i:S^1_{n}\to S^1_{n-1}$, for $0\leq i \leq n-1$ are given by $d_i(j)$ is equal to $j$ or $j-1$ depending on $j=0,\dots,i$ or $j=i+1,\dots,n$ and  $d_n(j)$ is equal to $j$ or $0$ depending on $j=0,\dots, n-1$ or $j=n$. For $i=0,\dots,n$, the degeneracies $s_i(j)$ is equal to $j$ or $j+1$ depending on $j=0,\dots,i$ or $j=i+1,\dots, n$. This is the standard simplicial model of $S^1$ cf. \cite[6.4.2]{Lo-book}.
Thus, $CH_{S^1_\bullet}(A)= \bigoplus_{n\geq 0} A\otimes A^{\otimes n}$ and the differential agrees with
the \emph{usual one on the Hochschild chain complex} $C_\bullet(A)$ of $A$ (see~\cite{Lo-book}). 

It can be proved that the $S^1$ action on $CH_{S^1}(A)$ given by Remark~\ref{R:groupactionsonCH} agrees with the canonical mixed complex structure of $CH_{S^1_\bullet}(A)$ (see~\cite{ToVe-DR}).
\end{example}

\begin{example}[The torus]\label{ex:torus} The torus $\mathbb{T}$ is the product $S^1\times S^1$. Thus, by Example~\ref{ex:simplicialS1},  it has a simplicial model given by
 $(S^1\times S^1)_\bullet$  the diagonal simplicial set associated to the bisimplicial set $S^1_\bullet\times S^1_\bullet$, i.e. $(S^1\times S^1)_k=S^1_k\times S^1_k =\{0,\dots, k\}^{2}$.
We may write $(S^1\times S^1)_k=\{(p,q)  \,|\,\, p,q=0,\dots,k\}$ which we equipped with the lexicographical ordering. The face maps $d_i:(S^1\times S^1)_k\to (S^1\times S^1)_{k-1}$ and degeneracies $s_i:(S^1\times S^1)_k\to (S^1\times S^1)_{k+1}$, for $i=0,\dots,k$, are given as the products of the differentials and degeneracies of $S^1_\bullet$, i.e. $d_i(p,q)=(d_i(p), d_i(q))$ and $s_i(p,q)=(s_i(p), s_i(q))$.

We obtain $CH_{(S^1\times S^1)_\bullet}(A,A)=\bigoplus_{k\geq 0} A\otimes A^{\otimes (k^2+2k)}$. 
The face maps $d_i$  can be described more explicitly, when placing the tensor $a_{(0,0)}\otimes \dots\otimes a_{(k,k)}$ in a $(k+1)\times(k+1)$ matrix. For $i=0,\dots,k-1$, we obtain $d_i(a_{(0,0)}\otimes\dots \otimes a_{(k,k)})$ by multiplying the $i$th and $(i+1)$th rows and the $i$th and $(i+1)$th columns simultaneously, \emph{i.e.}, $d_i(a_{(0,0)}\otimes\dots \otimes a_{(k,k)})$ is equal to:
\begin{equation*}
\begin{matrix}
 a_{(0,0)}& \mathbf{\dots }& \mathbf{ (a_{(0,i)}a_{(0,i+1)})}&  \mathbf{\dots }& \mathbf{ a_{(0,k)}} \\  
 \mathbf{ \vdots} && \vdots &&\vdots \\
 \mathbf{ a_{(i-1,0)}} &\dots & (a_{(i-1,i)}a_{(i-1,i+1)})& \dots &\otimes a_{(i-1,k)} \\ 
  \mathbf{(a_{(i,0)} a_{(i+1,0)})}& \dots & (a_{(i,i)}a_{(i,i+1)}a_{(i+1,i)}a_{(i+1,i+1)})& \dots & (a_{(i,k)}a_{(i+1,k)}) \\  
 \mathbf{ a_{(i+2,0)}}& \dots & (a_{(i+2,i)}a_{(i+2,i+1)})& \dots & a_{(i+2,k)} \\  
 \mathbf{ \vdots} && \vdots &&\vdots \\
 \mathbf{ a_{(k,0)}}& \dots & (a_{(k,i)}a_{(k,i+1)})& \dots & a_{(k,k)} \\  
\end{matrix} 
\end{equation*}

The differential $d_k$ is obtained by multiplying the $k$th and $0$th rows and the $k$th and $0$th columns simultaneously, \emph{i.e.}, $d_k(a_{(0,0)}\otimes\dots \otimes a_{(k,k)})$  equals
\begin{equation*}
\begin{matrix}
(a_{(0,0)}a_{(0,k)}a_{(k,0)} a_{(k,k)}) &\mathbf{(a_{(0,1)}a_{(k,1)})} & \mathbf{\dots}& \mathbf{ (a_{(0,k-1)}a_{(k,k-1)})}\\
\mathbf{(a_{(1,0)}a_{(1,k))}} & a_{(1,1)} & \dots &  a_{(1,k-1)}\\
 \mathbf{\vdots} & \vdots & & \vdots\\
\mathbf{(a_{(k-1,0)}a_{(k-1,k)})} & a_{(k-1,1)} & \dots &  a_{(k-1,k-1)}
\end{matrix}
\end{equation*}
\end{example}

\begin{example}[The Riemann sphere $S^2$]\label{ex:2sphere}
The sphere $S^2$ has a simplicial model   $S^2_\bullet=  I^2_\bullet/\partial I^2_\bullet$ i.e. $S^2_n= \{(0,0)\}\coprod \{{1}\cdots n\}^2$.
Thus  $CH_{S^2_\bullet}(A)= \bigoplus_{n\geq 0} A\otimes A^{\otimes n^2}$.

Here the face and degeneracies maps are the diagonal ones as for $(S^1\times S^1)_\bullet$ in  Example~\ref{ex:torus}.
In particular, the $i$th differential is also obtained from the previous examples by setting $d_i^{S^2_\bullet}(p,q)=(0,0)$ in the case that $d_i(p)=0$ or $d_i(q)=0$ (where $d_i$ is the $i^{\text{th}}$-face map of $S^1_\bullet$), or setting otherwise $d_i(p,q)=(d_i(p), d_i(q))$. For $i\leq n-1$, we obtain $d_i(a_{(0,0)}\otimes\dots \otimes a_{(k,k)})$ is equal to:
\begin{equation*}
\begin{matrix}
 a_{(0,0)}&& &  \\  
&  (a_{(i-1,i)}a_{(i-1,i+1)})& \dots & a_{(i-1,n)} \\ 
 &( a_{(i,i)}a_{(i,i+1)}a_{(i+1,i)}a_{(i+1,i+1)})& \dots & (a_{(i,n)}a_{(i+1,n)}) \\  
 & (a_{(i+2,i)}a_{(i+2,i+1)})& \dots & a_{(i+2,n)} \\  
 & \vdots &&\vdots \\
 & (a_{(n,i)}a_{(n,i+1)})& \dots & a_{(n,n)} \\  
\end{matrix} 
\end{equation*}
which is similar to the one of Example~\ref{ex:torus} without the \lq\lq{}boldface\rq\rq{} tensors.
\end{example}

\begin{example}[higher spheres]\label{ex:higherspheres} Similarly to $S^2$, we have the \emph{standard model} $S^d_{\bullet}:= (I_\bullet)^{d}/\partial (I_\bullet)^d \cong S^1_\bullet \wedge \dots
\wedge S^1_\bullet$ ($d$-factors) for the sphere $S^d$. Hence $S^d_n\cong \{\underline{0}\}\coprod \{{1}\cdots n\}^d$  and $CH_{S^d_\bullet}(A)=\bigoplus_{n\geq 0} A\otimes A^{\otimes n^d}$.  
The face operators are similar to those of Example~\ref{ex:2sphere} (except that, instead of a matrix, we have a dimension $d$-lattice) and face maps are obtained by simultaneously multiplying each $i^{\text{th}}$-hyperplane with $(i+1)^{\text{th}}$-hyperplane in each dimension. The last face $d_n$ is obtained by multiplying all tensors of all   $n^{\text{th}}$-hyperplanes with $a_{0}$.
 
We also have the \emph{small model}  ${S^d_{sm}}_\bullet$ which is the simplicial set with exactly two \emph{non-degenerate} simplices, one in
  degree 0 and one in degree $d$. Then ${S^d_{sm}}_n\cong \{1,\dots,   \binom{n}{d}\}$.   Using this model, it is straightforward to check the following computation of the first homology groups of $CH_{S^d}(A)$:
$$H_n(CH_{S^d}(A)) \cong H_n(CH_{{S^d_{sm}}_\bullet}(A)) =\left\{\begin{matrix} = A & \mbox{ if } n=0\\
= 0 & \mbox{ if } 0< n<d\\ 
= \Omega^1_A & \mbox{ if } n=d \end{matrix}\right.$$ where $\Omega^1_A$ is the $A$-module of K\"ahler differentials (see~\cite{Lo-book, Weibel-book}). 
\end{example}

\begin{example}[Hochschild-Kostant-Rosenberg]\label{Ex:HKR}
Let $A$ be a smooth commutative algebra. The classical Hochschild-Kostant-Rosenberg Theorem states that its  (standard) Hochschild homology is given by the algebra of K\"ahler forms $\wedge^{\bullet}_A(\Omega^1_A)\cong S^{\bullet}_A(\Omega^1_A[1])$, where $\Omega^1_A$ is the $A$-module of K\"ahler differentials; here a $i$-form is viewed as having cohomological degree $-i$ and $S_A^\bullet$ is the free graded commutative algebra functor (in the category of graded $A$-modules). This theorem extends to Hochschild homology over all spheres:

\begin{theorem}[Generalized HKR]\label{T:HKRhigher} Let $A$ be a smooth algebra and $X$ be an affine smooth scheme or a smooth manifold. Let $n\geq 1$  and $\Sigma^g$ be a genus $g$ surface.
\begin{enumerate}
\item \textbf{(Pirashvili~\cite{P})} There is a quasi-isomorphism of CDGAs:  $CH_{S^n}(A)\cong S^{\bullet}_A\big(\Omega^1_A[n]\big)$.
\item \textbf{(\cite{GTZ})}  There is an equivalence  $CH_{\Sigma^g}(A)\cong S^{\bullet}_A\big(\Omega^1_A[2]\oplus (\Omega^1_{A}[1])^{\oplus 2g}\big)$ of CDGAs.
\item There  are equivalences $CH_{S^n}(\mathcal{O}_X) \cong  S^{\bullet}_{\mathcal{O}_X}\big(\Omega^1_{X}[n]\big)$ and $$CH_{\Sigma^g}(\mathcal{O}_X)\cong S^{\bullet}_{\mathcal{O}_X}\big(\Omega^1_{X}[2]\oplus (\Omega^1_{X}[1])^{\oplus 2g}\big)$$ of sheaves of CDGAs\footnote{here the differentials on the right hand sides are zero; they are \emph{not} the de Rham differential}.
\end{enumerate}
\end{theorem}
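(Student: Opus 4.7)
The plan is to use the axiomatic characterization of derived Hochschild homology from Theorem~\ref{T:derivedfunctor} (in particular the excision axiom) to reduce everything to the classical $n=1$ HKR theorem and to Koszul-type computations. Part (3) will then follow from parts (1) and (2) by a local-to-global argument.

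For part (1), I would proceed by induction on $n$. The base case $n=1$ is the classical Hochschild-Kostant-Rosenberg theorem, which identifies $CH_{S^1}(A)$ with $\wedge^\bullet_A(\Omega^1_A) = S^\bullet_A(\Omega^1_A[1])$ as CDGAs; concretely this is obtained by inspecting the simplicial model of Example~\ref{ex:simplicialS1} and using smoothness. For the inductive step, I would use that $S^n$ is the (homotopy) pushout $pt \cup^h_{S^{n-1}} pt$, so the excision axiom gives
\[
CH_{S^n}(A) \;\simeq\; A \mathop{\otimes}^{\mathbb{L}}_{CH_{S^{n-1}}(A)} A \;\simeq\; A \mathop{\otimes}^{\mathbb{L}}_{S^\bullet_A(\Omega^1_A[n-1])} A
\]
by the induction hypothesis. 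To compute this, I would build an explicit semi-free Koszul resolution of $A$ over $B:=S^\bullet_A(\Omega^1_A[n-1])$, namely $S^\bullet_A(\Omega^1_A[n-1] \oplus \Omega^1_A[n])$ with the differential of degree $+1$ that is the identity on $\Omega^1_A[n] \to \Omega^1_A[n-1]$ and extended by the Leibniz rule. Here smoothness of $A$, which guarantees projectivity of $\Omega^1_A$, is essential to guarantee that this is indeed a resolution. Tensoring with $A$ over $B$ kills the $\Omega^1_A[n-1]$ factor and the induced differential, yielding $S^\bullet_A(\Omega^1_A[n])$ with zero differential, as claimed.

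For part (2), I would start from a CW decomposition of $\Sigma^g$ with one $0$-cell, $2g$ $1$-cells and one $2$-cell, realizing $\Sigma^g$ as the homotopy pushout of $D^2 \leftarrow S^1 \xrightarrow{\phi} \bigvee_{2g} S^1$, where $\phi$ is the standard product of commutators $\prod_{i=1}^g[a_i,b_i]$. Excision then gives
\[
CH_{\Sigma^g}(A) \;\simeq\; A \mathop{\otimes}^{\mathbb{L}}_{CH_{S^1}(A)} CH_{\bigvee_{2g}S^1}(A).
\]
Iterated excision on the wedge (which is itself an iterated homotopy pushout of points) together with HKR for $S^1$ gives $CH_{\bigvee_{2g}S^1}(A) \simeq S^\bullet_A((\Omega^1_A[1])^{\oplus 2g})$, and the map induced by $\phi$ must be analyzed. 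The key observation is that on first homology the commutator relation abelianizes to zero, so after HKR the map $\phi_*:S^\bullet_A(\Omega^1_A[1]) \to S^\bullet_A((\Omega^1_A[1])^{\oplus 2g})$ factors through the augmentation $\epsilon : S^\bullet_A(\Omega^1_A[1]) \to A$. Resolving $A$ over $S^\bullet_A(\Omega^1_A[1])$ by the Koszul resolution $S^\bullet_A(\Omega^1_A[1]\oplus \Omega^1_A[2])$ as in part (1) and tensoring then gives $S^\bullet_A(\Omega^1_A[2] \oplus (\Omega^1_A[1])^{\oplus 2g})$ with zero induced differential.

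For part (3), I would deduce the sheaf statements from the affine/algebra statements by a local-to-global argument. Since $CH_X(A)$ is functorial in $A$ and since both sides of the claimed equivalences are sheaves of CDGAs on $X$ whose restriction to any affine open $\mathop{Spec}(A)\subset X$ (or a coordinate chart, in the smooth manifold case) reproduces the equivalences of (1) and (2), the result follows from the sheaf condition.

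The main obstacle will be step (2), specifically the precise identification of $\phi_*$ at the level of CDGAs (not merely on cohomology): one must argue that the commutator attaching map is sent, up to a coherent homotopy, to a map factoring through the augmentation, so that the Koszul resolution argument truly produces $S^\bullet_A(\Omega^1_A[2] \oplus (\Omega^1_A[1])^{\oplus 2g})$ with trivial differential. This is where naturality and an explicit (e.g.\ simplicial) model for the attaching map becomes essential, and the details are carried out in~\cite{GTZ}.
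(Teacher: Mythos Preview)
Your proposal is correct and, where comparable, agrees with the paper. Note that the paper itself does not prove parts (1) and (2): it attributes them to Pirashvili~\cite{P} and~\cite{GTZ} respectively, and its only proof content is the single sentence that (3) follows from (1) and (2) ``after sheafifying in an appropriate way the derived Hochschild chains'' --- exactly your local-to-global argument.

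The content you supply for (1) and (2) goes beyond what the paper writes. Your inductive proof of (1) via the pushout $S^n \simeq pt \cup^h_{S^{n-1}} pt$ and a Koszul resolution over $S^\bullet_A(\Omega^1_A[n-1])$ is a clean and correct argument (and is in the spirit of Pirashvili's computation, though his original approach is more combinatorial, working directly with the functor homology description). Your approach to (2) via the standard CW structure on $\Sigma^g$ and excision is precisely the strategy of~\cite{GTZ}, and you have correctly isolated the crux: the attaching map $\phi$ induces, after HKR, a CDGA map that factors through the augmentation because the commutator word is null-homologous. Your caveat about needing this at the chain level (not just on homology) is well placed; this is indeed where the work in~\cite{GTZ} lies.
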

The third assertion  in Theorem~\ref{T:HKRhigher} follows from 1 and 2 after sheafifying in an appropriate way the derived Hochschild chains.
\end{example}

%%%%%%%%%%%%%%%%%%%%%%%%%%%%

\subsection{Relationship with mapping spaces}\label{SS:MappingSpaces}

We have seen the relationship between derived Hochschild chains and derived mapping spaces (Remark~\ref{R:FactasRMap}). It is also classical that the usual Hochschild homology of de Rham forms of a simply connected  manifold $M$ is a model for the de Rham forms on the free loop space $LM:=\Map(S^1,M)$ of $M$ (see~\cite{Ch}). There is a generalization of this result for spaces where the forms are replaced by the singular cochains with their $E_\infty$-algebra structure (\cite{Jo}). These two results extend to derived Hochschild chains in general to provide algebraic models of mapping spaces.

First, we sketch  a generalization of Chen iterated integrals (studied in~\cite{GTZ}).
Let $M$ be a compact, oriented manifold, denote by $\Omega_{dR}^\bullet (M)$ the differential graded algebra of differential forms on $M$, and let  $Y_\bullet$ be a simplicial set with geometric realization $Y:=|Y_\bullet|$.
Denote  $M^Y:=\Map_{sm}(Y,M)$ the space of continuous maps from $Y$ to $M$, which are smooth on the interior of each simplex in $Y$.
Recall from Chen~\cite[Definition 1.2.1]{Ch}, that  a differentiable structure on $M^Y$ is specified by the set of plots $\phi:U\to M^Y$, where $U\subset \R^n$ for some $n$, which are those maps whose adjoint  $\phi_\sharp:U\times Y\to M$ is continuous on $U\times Y$, and smooth on the restriction to the interior of each simplex of $Y$, i.e. $\phi_\sharp|_{U\times (\text{simplex of }Y)^\circ}$ is smooth.
Following \cite[Definition 1.2.2]{Ch}, a $p$-form $\omega\in \Omega^p_{dR}(M^Y)$ on $M^Y$ is given by a $p$-form $\omega_\phi\in \Omega^p_{dR}(U)$ for each plot $\phi:U\to M^Y$, which is invariant with respect to smooth transformations of the domain.

We now define the space of Chen (generalized) \emph{iterated integrals} $\mathcal{C}\mathit{hen} (M^Y)$ of the mapping space $M^Y$. Let $\eta : Y_\bullet \to\Delta_\bullet |Y_\bullet|$ be the  canonical simplicial map (induced by adjunction)  which is given for $i\in Y_k$ by maps $\eta(i):\Delta^k \to Y$ in the following way,
\begin{equation}\label{Chen-map}
\eta (i)(t_1\leq\cdots\leq t_k):= [(t_1\leq\cdots\leq t_k)\times \{i\}]\in \left(\coprod \Delta^\bullet \times Y_\bullet /\sim\right) =Y.
\end{equation}
The map $\eta$ allows to define,  
for any plot $\phi:U\to M^Y$, a map $\rho_\phi:=ev\circ(\phi\times id)$,
\begin{equation}\label{rho_phi}
\rho_\phi: U\times \Delta^k \stackrel{\phi\times id} \longrightarrow M^Y \times \Delta^k\stackrel {ev} \longrightarrow M^{Y_{k}},
\end{equation}
where $ev$ is defined as the evaluation map,
\begin{equation}\label{eval}
ev(\gamma:  Y\to M, \underline{t})(i)= \gamma\big(\eta(i) (\underline{t}) \big).
\end{equation}
Now, if we are given   a form 
 $\bigotimes\limits_{y\in Y_k}a_y \in \big(\Omega_{dR}(M)\big)^{\otimes Y_k}$ (with  only finitely many $a_i\neq 1$), 
  the pullback $(\rho_\phi)^*\big(\bigotimes\limits_{y\in Y_k}a_y\big)\in \Omega^\bullet (U\times \Delta^k)$, may be integrated along the fiber $\Delta^k$, and is denoted by
\begin{equation*}
\left(\int_\mathcal{C} \bigotimes\limits_{y\in Y_k}a_y \right)_{\phi}:=\int_{\Delta^k} (\rho_\phi)^*\big(\mathop{\otimes}\limits_{y\in Y_k}a_y\big)\quad \in \Omega_{dR}^\bullet (U).
\end{equation*}
The resulting $p=(\sum_i deg(a_i) -k)$-form $\int_\mathcal{C} \Big(\mathop{\otimes}\limits_{y\in Y_k}a_y \Big)\in \Omega_{dR}^p(M^Y$ is called the (generalized) {\em  iterated integral} of $a_0,\dots, a_{y_k}$.
The subspace of the space of De Rham forms $\Omega^\bullet(M^Y)$ generated by all iterated integrals is denoted by $\mathcal{C}\mathit{hen}(M^Y)$. In short, we may picture an iterated integral as the pullback composed with the integration along the fiber $\Delta^k$ of a form in $M^{Y_k}$,
\begin{equation*}
\xymatrix{ M^Y&
  M^Y\times \Delta^k \ar[r]^{ev} \ar[l]_{\int_{\Delta^k}} & M^{Y_k}  }
\end{equation*}
\begin{definition}\label{D:Iteratedintegrals}
 We define ${\mathcal{I}t}^{Y_\bullet}:CH_{Y_\bullet}(\Omega_{dR}^\bullet(M))\cong (\Omega_{dR}^\bullet(M))^{\otimes Y_\bullet}\to \mathcal{C}\mathit{hen}(M^Y)$ by \begin{equation} \label{eq:chen-map}{\mathcal{I}t}^{Y_\bullet}\left(\bigotimes\limits_{y\in Y_k} a_y\right):=\int_\mathcal{C} \Big(\mathop{\otimes}\limits_{y\in Y_k}a_y \Big).\end{equation}
\end{definition}
Interesting applications of iterated integrals to study gerbes and higher holonomy are given in~\cite{AbWa, TWZ}.
\begin{theorem}[\cite{GTZ}]\label{T:IteratedIntegrals}
 The iterated integral map ${\mathcal{I}t}^{Y_\bullet}:CH_{Y_\bullet}(\Omega_{dR}^\bullet(M))\to \Omega_{dR}^\bullet(M^Y)$ is a (natural)  map of CDGAs.
 
 Further, assume that $Y=|Y_\bullet|$ is $n$-dimensional, i.e. the highest degree of any non-degenerate simplex is $n$, and assume that $M$ is $n$-connected. Then,
 ${\mathcal{I}t}^{Y_\bullet}$ is a quasi-isomorphism.
\end{theorem}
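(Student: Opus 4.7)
The statement naturally separates into two assertions: the algebraic one, that $\mathcal{I}t^{Y_\bullet}$ is a morphism of CDGAs, and the homotopical one, that under the stated connectivity and dimension bounds it is a quasi-isomorphism.

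For the CDGA part, I would first verify that $\mathcal{I}t^{Y_\bullet}$ commutes with differentials. The total differential on $CH_{Y_\bullet}(\Omega^\bullet_{dR}(M))$ has two pieces: the internal de Rham differential applied termwise to the tensor factors, and the alternating sum $\sum (-1)^r (d_r)_*$ of simplicial-face contributions. Pullback by $\rho_\phi$ commutes with the internal piece on the nose, and the face piece is matched on the target by the boundary term from Stokes' theorem for fiber integration, $d_{dR}\int_{\Delta^k}\rho_\phi^*\omega = \int_{\Delta^k}\rho_\phi^* d_{dR}\omega \;\pm\; \int_{\partial \Delta^k}\rho_\phi^*\omega$, together with the observation that the restriction of $\rho_\phi$ to the $r$-th boundary face of $\Delta^k$ coincides with $\rho_{\phi}$ composed with the evaluation of $d_r : Y_k \to Y_{k-1}$. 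Compatibility with the shuffle product follows from the standard decomposition of $\Delta^p \times \Delta^q$ into a signed sum over $(p,q)$-shuffles of $(p{+}q)$-simplices, combined with the naturality of $\rho_\phi$ in $Y_\bullet$; the resulting signs match exactly the shuffle signs defining the product on $CH_{Y_\bullet}(A)$.

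For the quasi-isomorphism, I would proceed by induction on a CW-filtration $Y^{(0)}\subset Y^{(1)}\subset\cdots\subset Y^{(n)}=Y$ of cell dimension at most $n$. Set $A := \Omega^\bullet_{dR}(M)$. Both functors $Y_\bullet \mapsto CH_{Y_\bullet}(A)$ and $Y \mapsto \Omega^\bullet_{dR}(M^Y)$ send homotopy pushouts of spaces to homotopy pullbacks of CDGAs: for the Hochschild side this is the excision axiom of Theorem~\ref{T:derivedfunctor}, while for the mapping-space side it comes from the exponential law $M^{Y_1\cup_Z Y_2}\simeq M^{Y_1}\times^h_{M^Z} M^{Y_2}$ combined with a de Rham K\"unneth/base-change statement (transported via the de Rham theorem to singular cochains). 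Granting this, attaching a $k$-cell along $S^{k-1}\hookrightarrow D^k$ with $k\leq n$ reduces the inductive step to checking that $\mathcal{I}t^{D^k_\bullet}$ and $\mathcal{I}t^{S^k_\bullet}$ are quasi-isomorphisms; the disc case collapses to the point by homotopy invariance, and the sphere case is verified by matching the generalized HKR description $CH_{S^k}(A)\simeq S^\bullet_A(\Omega^1_A[k])$ from Example~\ref{Ex:HKR} with a Sullivan-style computation of $\Omega^\bullet_{dR}(M^{S^k})$, which under $n$-connectedness of $M$ has precisely this free CDGA shape.

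The main technical obstacle is the de Rham base-change step: one needs that for each relevant pushout the natural map $\Omega^\bullet_{dR}(M^{Y_1})\otimes^{\mathbb{L}}_{\Omega^\bullet_{dR}(M^Z)}\Omega^\bullet_{dR}(M^{Y_2}) \to \Omega^\bullet_{dR}(M^{Y_1}\times^h_{M^Z}M^{Y_2})$ is a quasi-isomorphism. This is exactly where $n$-connectedness of $M$ enters: it guarantees that the homotopy fibers of the restriction maps $M^{Y_i}\to M^Z$ are sufficiently highly connected for the relevant Eilenberg--Moore (or K\"unneth) spectral sequence to converge strongly to the derived tensor product. Tracking how this fiber-connectivity shrinks as one climbs the skeletal filtration, and ensuring that it stays positive throughout the induction up to cells of dimension $n$, is the heart of the argument.
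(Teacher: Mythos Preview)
The paper does not supply a proof of this theorem; it is stated with a citation to \cite{GTZ}. Your overall strategy --- Stokes' theorem for fiber integration together with the shuffle decomposition of $\Delta^p\times\Delta^q$ for the CDGA compatibility, and a cell-by-cell induction driven by an Eilenberg--Moore base-change argument for the quasi-isomorphism --- is indeed the route taken there, and your identification of the convergence of the Eilenberg--Moore spectral sequence as the place where $n$-connectedness of $M$ is consumed is correct.

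One genuine gap: your treatment of the sphere case via Example~\ref{Ex:HKR} does not go through. That HKR statement is for smooth commutative $k$-algebras concentrated in degree zero (or structure sheaves of smooth schemes/manifolds); the de Rham CDGA $\Omega_{dR}^\bullet(M)$ is not smooth in that sense, so the identification $CH_{S^k}(A)\simeq S^\bullet_A(\Omega^1_A[k])$ is not available for $A=\Omega_{dR}^\bullet(M)$. Nor do you need it. Attaching a $k$-cell uses the pushout $Y^{(k)}\simeq Y^{(k-1)}\cup_{S^{k-1}} D^k$, so the auxiliary input required is the statement for $S^{k-1}$ (not $S^k$, as you wrote), and $S^{k-1}\simeq D^{k-1}\cup_{S^{k-2}} D^{k-1}$ is itself a pushout of strictly lower dimension. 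Organise the whole argument as a single induction on dimension with base case a point; spheres are produced along the way by the same Eilenberg--Moore step, and no independent HKR or Sullivan-model matching is required. Even if you did compute both sides for $S^k$ separately, you would still have to verify that the specific map ${\mathcal{I}t}^{S^k_\bullet}$ realises that identification, which is no easier than running the induction.
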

There is also a purely topological and characteristic free analogue of this result using singular cochains instead of forms.
\begin{theorem}[\cite{F2, GTZ3}]\label{T:IteratedIntegralsTop} Let $X,Y$ be topological spaces. There is a natural map of $E_\infty$-algebras 
 $$CH_{Y}(C^\ast(X)) \longrightarrow C^\ast(\Map(Y,X))$$ which is an equivalence when $Y=|Y_\bullet|$ is $n$-dimensional and $X$ is connected, nilpotent with finite homotopy groups in degree less or equal to $n$ (for instance when $X$ is $n$-connected).  
\end{theorem}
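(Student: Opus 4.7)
My plan is to construct the map via the universal property of derived Hochschild chains (Theorem~\ref{T:derivedfunctor}) and then prove it is an equivalence in the stated range by induction on a CW decomposition of $Y$, using the excision axiom combined with an Eilenberg--Moore theorem for cochains on homotopy pullbacks.

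For the construction, I note that for each $y \in Y$ the evaluation $\mathrm{ev}_y : \Map(Y, X) \to X$, $\gamma \mapsto \gamma(y)$, is continuous in $y$; applying the singular cochain functor, which takes values in $E_\infty$-algebras, gives an $E_\infty$-algebra map $\mathrm{ev}_y^\ast : C^\ast(X) \to C^\ast(\Map(Y, X))$ depending continuously on $y$. This assembles into a continuous map
$$Y \;\longrightarrow\; \Map_{E_\infty\textbf{-Alg}}\bigl(C^\ast(X),\, C^\ast(\Map(Y, X))\bigr),$$
which, by the adjunction~\eqref{eq:CH=tensored} together with the identification $CH_Y(A) \simeq A \boxtimes Y$ from Theorem~\ref{T:derivedfunctor}, corresponds to an $E_\infty$-algebra morphism $\phi_{Y,X} : CH_Y(C^\ast(X)) \to C^\ast(\Map(Y, X))$. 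Naturality in $Y$ (covariantly) and in $X$ (contravariantly) is immediate from the construction, and recovers the de Rham map of Theorem~\ref{T:IteratedIntegrals} after pulling back forms along the evaluation.

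To prove that $\phi_{Y, X}$ is an equivalence, I reduce to the case of a finite CW-complex $Y$ of dimension $\leq n$ and induct on its cells. The base case $Y = \mathrm{pt}$ is the \emph{value on a point} axiom of Theorem~\ref{T:derivedfunctor}. For the inductive step, write $Y = Y' \cup_{S^{k-1}} D^k$ with $k \leq n$. The \emph{excision} axiom yields
$$CH_{Y}(C^\ast(X)) \;\simeq\; CH_{Y'}(C^\ast(X)) \mathop{\otimes}\limits^{\mathbb{L}}_{CH_{S^{k-1}}(C^\ast(X))} CH_{D^k}(C^\ast(X)),$$
while $\Map(-,X)$ turns the homotopy pushout defining $Y$ into the homotopy pullback
$$\Map(Y, X) \;\simeq\; \Map(Y', X) \mathop{\times}\limits^{h}_{\Map(S^{k-1}, X)} \Map(D^k, X).$$
By the inductive hypothesis, $\phi$ is an equivalence on $Y'$, on $S^{k-1}$, and on the contractible piece $D^k$; closing the induction thus reduces to showing that $C^\ast$ sends this homotopy pullback to the corresponding derived tensor product of $E_\infty$-algebras.

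The main obstacle is precisely this last step, which is a cochain Eilenberg--Moore theorem in the $E_\infty$ setting. The classical Eilenberg--Moore spectral sequence for the cohomology of a homotopy pullback converges when the $\pi_1$-monodromy on fiber cohomology is sufficiently nilpotent; the hypothesis that $X$ is connected, nilpotent, with finite homotopy groups in degrees $\leq n$ is tailored to ensure that this holds uniformly for every fibration $\Map(D^k, X) \to \Map(S^{k-1}, X)$ (whose homotopy fiber is $\Omega^k X$) arising for $k \leq n = \dim Y$. The $E_\infty$-enhancement, i.e.\ that the resulting equivalence of underlying chain complexes lifts to one of $E_\infty$-algebras, follows from Mandell's cochain recognition principle in the form developed in~\cite{F2, GTZ3}. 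Combined with naturality of $\phi_{Y,X}$, this closes the induction and gives the desired equivalence.
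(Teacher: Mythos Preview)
The paper does not prove this theorem; it is stated with citation to \cite{F2, GTZ3} and no argument is supplied in the text. Your outline matches the strategy of those references: build the comparison map from the evaluation $Y\times\Map(Y,X)\to X$ via the tensoring adjunction~\eqref{eq:CH=tensored}, then establish the equivalence by induction on a cell decomposition of $Y$, using the excision axiom on the $CH$ side and an Eilenberg--Moore theorem on the cochain side. You have correctly isolated the one genuine obstacle---convergence of Eilenberg--Moore for the fibrations $\Map(D^k,X)\to\Map(S^{k-1},X)$ with fiber $\Omega^k X$ for $k\le n$---and correctly tied it to the nilpotence and finiteness hypotheses on $\pi_{\le n}(X)$.

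One small point you glide over: you reduce to a \emph{finite} CW-complex, whereas the statement only assumes $Y$ is $n$-dimensional. Passing from finite subcomplexes to an arbitrary $n$-dimensional $Y$ requires an additional argument, since $C^\ast(\Map(-,X))$ does not obviously commute with the relevant filtered colimit in $Y$; the references handle this, but it is worth flagging that the reduction is not entirely formal.
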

\begin{example}We compute the iterated integral map~\eqref{eq:chen-map}  in the case of $S^1_\bullet$ (Example~\ref{ex:simplicialS1}) and $\mathbb{T}$ (Example~\ref{ex:torus}). 
Since $S^{1}$ is the interval $I=[0,1]$ where the endpoints $0$ and $1$ are identified,  the map $\eta(i):S^1_k=\{0,1\dots, k\}\to \Delta_k(S^1)=\Map(\Delta^k,S^1)$ defined via~\eqref{Chen-map} is given by $\eta(i)(0\leq t_1\leq \dots\leq t_k\leq 1)=t_i$, where we have set $t_0=0$. Thus, the evaluation map~\eqref{eval} becomes $$ev(\gamma:S^1\to M, t_1\leq\dots\leq t_k)=(\gamma(0),\gamma(t_1),\dots,\gamma(t_k))\in M^{k+1}.$$ Furthermore, this recovers the classical Chen iterated integrals $\It^{S^1_\bullet}:CH_\bullet(A,A)\to \Omega^\bullet(M^{S^1})$ as follows. For a plot $\phi:U\to M^{S^1}$ we have,
\begin{multline*}
\It^{S^1_\bullet}(a_0\otimes\dots\otimes a_k)_\phi=\left(\int_\mathcal{C} a_0\dots a_k\right)_{\phi} = \int_{\Delta^k} (\rho_\phi)^*(a_0\otimes\dots\otimes a_k) \\
=(\pi_0)^*(a_0)\wedge \int_{\Delta^k} (\widetilde {\rho_\phi})^*(a_1\otimes\dots\otimes a_k)= (\pi_0)^*(a_0)\wedge \int a_1\dots a_k,
\end{multline*}
where $\widetilde {\rho_\phi}: U\times \Delta^k\stackrel {\phi\times id}\longrightarrow M^{S^1}\times \Delta^k\stackrel {\widetilde{ev}} \to M^k$ is the classical Chen  integral $\int a_1\dots a_k$ from \cite{Ch} and $\pi_0:M^{S^1} \to M$ is the evaluation at the base point $\pi_0:\gamma \mapsto \gamma(0)$.

\smallskip

In the case of the torus   $\mathbb{T}=S^1\times S^1$, the map $\eta(p,q):(S^1\times S^1)_k\to \Map(\Delta^k,S^1\times S^1)$ is given by $\eta(p,q)(0\leq t_1\leq\dots\leq t_k\leq 1)=(t_p,t_q)\in S^1\times S^1$, for $p,q=0,\dots,k$ and $t_0=0$. Thus, the evaluation map~\eqref{eval} becomes
\begin{equation*}
ev(\gamma: \mathbb T\to M, t_1\leq\cdots\leq t_k)=\quad
\left(\begin{matrix}
 \gamma(0,0),\, \gamma(0,t_1),\, \cdots,\, \gamma( 0,t_k),\\ \quad \gamma (t_1,0),\gamma(t_1,t_1),\cdots, \gamma(t_1,t_k),\\ \quad ...\\ \quad
\gamma (t_k,0),\gamma(t_k,t_1),\cdots, \gamma(t_k,t_k)
\end{matrix}\right) \in M^{(k+1)^2}
\end{equation*} 
According to definition \ref{D:Iteratedintegrals}, the iterated integral ${It}^{S^1\times S^1}(a_{(0,0)}\otimes \dots\otimes a_{(k,k)})$ is given by a pullback under the above map $M^{S^1\times S^1}\times \Delta^k\stackrel {ev}\longrightarrow M^{(k+1)^2}$, and integration along the fiber $\Delta^k$.
\end{example}
%%%%%%%%%%%%%%%%%%%%%%%%%%%%%%
\subsection{The \emph{wedge product} of higher Hochschild cohomology}

Let $A\stackrel{f}\to B$ be a map of CDGAs. Note that it makes $B$  into an $A$-algebra as well as an $A\otimes A$-algebra (since the multiplication $A\otimes A\to A$ is an algebra morphism).
The excision axiom (Theorem~\ref{T:derivedfunctor}) implies 
\begin{lemma} \label{L:wedge} Let $M$ be an $A$-module and $X,Y$ be pointed topological spaces.
There is a natural equivalence $$\mu:\, Hom_{A\otimes A}\left( CH_{X}(A)\otimes CH_{Y}(A), M\right)\stackrel{\simeq}\longrightarrow CH^{X\vee Y}(A,M)$$
\end{lemma}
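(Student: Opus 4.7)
The plan is to derive the stated equivalence directly from the excision axiom of Theorem~\ref{T:derivedfunctor} applied to the wedge, combined with standard tensor-Hom adjunctions relating modules over $A$ and $A\otimes A$.

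First, I would observe that the wedge $X\vee Y$ is by definition the (homotopy) pushout in pointed spaces of the span $X\leftarrow pt \to Y$. Applying the excision axiom of Definition~\ref{D:axioms} (which is valid by Theorem~\ref{T:derivedfunctor}), together with the value on a point $CH_{pt}(A)\simeq A$, gives a natural equivalence of $E_\infty$-algebras
\[
CH_{X\vee Y}(A) \;\simeq\; CH_X(A)\mathop{\otimes}\limits^{\mathbb{L}}_{CH_{pt}(A)} CH_Y(A) \;\simeq\; CH_X(A)\mathop{\otimes}\limits^{\mathbb{L}}_{A} CH_Y(A).
\]
Under this equivalence, the structure of $CH_{X\vee Y}(A)$ as an $A$-algebra (via the base point of $X\vee Y$) agrees with the $A$-algebra structure induced on the relative tensor product.

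Second, I would rewrite the relative tensor product over $A$ as a base change from $A\otimes A$. Namely, there is a canonical equivalence
\[
CH_X(A)\mathop{\otimes}\limits^{\mathbb{L}}_{A} CH_Y(A) \;\simeq\; \bigl(CH_X(A)\otimes CH_Y(A)\bigr)\mathop{\otimes}\limits^{\mathbb{L}}_{A\otimes A} A,
\]
where $A$ is viewed as an $(A\otimes A)$-module via the multiplication $\mu:A\otimes A\to A$, and $CH_X(A)\otimes CH_Y(A)$ carries its natural $(A\otimes A)$-algebra structure coming from the base points of $X$ and $Y$. This is the standard identification of a two-sided relative tensor product as base-change along the multiplication.

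Third, applying $\RHom^{left}_A(-,M)$ to the chain of equivalences above and using the tensor-Hom adjunction between left $(A\otimes A)$-modules and left $A$-modules along $\mu:A\otimes A\to A$, I get
\begin{align*}
CH^{X\vee Y}(A,M) &\;=\; \RHom^{left}_A\bigl(CH_{X\vee Y}(A),\,M\bigr) \\
&\;\simeq\; \RHom^{left}_A\Bigl(\bigl(CH_X(A)\otimes CH_Y(A)\bigr)\mathop{\otimes}\limits^{\mathbb{L}}_{A\otimes A} A,\,M\Bigr) \\
&\;\simeq\; \RHom_{A\otimes A}\bigl(CH_X(A)\otimes CH_Y(A),\,\RHom^{left}_A(A,M)\bigr) \\
&\;\simeq\; \RHom_{A\otimes A}\bigl(CH_X(A)\otimes CH_Y(A),\,M\bigr),
\end{align*}
the last step using $\RHom^{left}_A(A,M)\simeq M$.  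Composing these natural equivalences yields the map $\mu$ of the lemma, with naturality in $M$, $X$, $Y$ and in $A$ following from the naturality of each step.

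The one delicate point, and the main obstacle, is checking compatibility of module structures: one must verify that the $A$-module structure on $CH_{X\vee Y}(A)$ used in the Hochschild cohomology (coming from the unique base point of $X\vee Y$) matches the $A$-module structure on $\bigl(CH_X(A)\otimes CH_Y(A)\bigr)\otimes^{\mathbb{L}}_{A\otimes A}A$ induced by the right factor $A$, and that the $(A\otimes A)$-algebra structure on $CH_X(A)\otimes CH_Y(A)$ arising from the two separate base points is the correct one for the base-change formula. Once these bookkeeping checks are in place, the equivalence is simply excision plus adjunction.
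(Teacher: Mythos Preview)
Your proof is correct and is precisely the argument the paper has in mind: the paper merely states that the excision axiom (Theorem~\ref{T:derivedfunctor}) implies the lemma, and you have written out exactly the details of that implication (excision for the wedge, base-change along the multiplication $A\otimes A\to A$, and tensor-Hom adjunction). There is nothing to add.
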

We use Lemma~\ref{L:wedge} to  obtain
\begin{definition}[\cite{G-HHnote}] \label{D:muvee}The  \emph{wedge product} of (derived) Hochschild cochains  is the linear map 
\begin{multline}\label{eq:muvee}
\mu_{\vee}: CH^{X}(A,B) \otimes CH^{Y}(A,B) \longrightarrow Hom_{A\otimes A}\Big( CH_{X}(A) \otimes CH_{Y}(A), B\otimes B\Big) \\
\stackrel{(m_B)_*}\longrightarrow Hom_{A\otimes A}\Big( CH_{X}(A) \otimes CH_{Y}(A), B\Big) \cong CH^{X\vee Y}(A,B)
\end{multline}
where the first map is the obvious one: $f\otimes g \mapsto \big(x\otimes y\mapsto f(x)\otimes g(y)\big)$.
\end{definition} 
\begin{example}If  $X_\bullet, Y_\bullet$ are finite simplicial sets models of $X,Y$, the map $\mu_{\vee}$ can be combinatorially described as the composition of the linear map $\tilde{\mu}$
given, for any $f\in CH^{X_n}(A,B)\cong Hom_A(A^{\otimes \# X_n},B)$, $g\in CH^{X_n}(A,B)\cong Hom_A(A^{\otimes
  \# Y_n},B))$ by $$\tilde{\mu}(f,g)(a_0,a_2,\dots a_{\# X_n},b_2,\dots,
b_{\#Y_n})=a_0.f(1,a_2,\dots a_{\# X_n}).g(1,b_2,\dots, b_{\#Y_n})$$ (where $a_0$ corresponds to the element indexed by the base point of $X_n\vee Y_n$) with the Eilenberg-Zilber quasi-isomorphism from $CH^{X_\bullet}(A,B) \otimes CH^{Y_\bullet}(A,B)$ to the chain complex associated to the diagonal cosimplicial space 
$\big(CH^{X_n}(A,B) \otimes CH^{Y_n}(A,B)\big)_{n\in \mathbb{N}}$. 
\end{example}
\begin{proposition}\label{P:Defcupproduct}The wedge product (of Definition~\ref{D:muvee}) is associative\footnote{precisely, it means that $\mu_{\vee}$ makes $X\mapsto CH^X(A,B)$ into a lax monoidal functor   $((\hTopp)^{op}, \vee)\to (\hkmod,\otimes)$}. 
In particular, if there is a diagonal  $X\stackrel{\delta}\to X\vee X$ making $X$ an $E_1$-coalgebra (in $(\hTopp,\vee)$), then $(CH^X(A,B), \delta^*\circ \mu_{\vee})$ is an $E_1$-algebra.
\end{proposition}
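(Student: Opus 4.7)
The plan is to use Lemma~\ref{L:wedge} as the main computational tool, iterated to handle multi-fold wedges, so that everything reduces to associativity and unitality of the multiplication in $B$. The overarching idea is that
$$CH^{X_1 \vee \cdots \vee X_n}(A,B) \;\simeq\; \Hom_{A^{\otimes n}}\!\left( CH_{X_1}(A) \otimes \cdots \otimes CH_{X_n}(A),\, B \right),$$
where the $i$-th factor of $A^{\otimes n}$ acts on $CH_{X_i}(A)$ through the basepoint map $A \cong CH_{pt}(A) \to CH_{X_i}(A)$, and where $B$ is seen as an $A^{\otimes n}$-module via the composition $A^{\otimes n} \to A \to B$.

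First I would prove this $n$-fold identification by iterating the excision axiom (Theorem~\ref{T:derivedfunctor}, axiom \emph{iii)}) applied to the pushout description $X \vee Y = X \cup^h_{pt} Y$, which gives $CH_{X_1 \vee \cdots \vee X_n}(A) \simeq CH_{X_1}(A) \otimes^{\mathbb{L}}_A \cdots \otimes^{\mathbb{L}}_A CH_{X_n}(A)$; the module-Hom adjunction then yields the $A^{\otimes n}$-linear Hom identification above. This is simply the $n$-fold generalization of Lemma~\ref{L:wedge} and inherits its naturality.

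Given the $n$-fold identification, associativity of $\mu_{\vee}$ becomes a direct unwinding: for $f \in CH^X(A,B)$, $g \in CH^Y(A,B)$, $h \in CH^Z(A,B)$, both $\mu_{\vee}(\mu_{\vee}(f,g),h)$ and $\mu_{\vee}(f,\mu_{\vee}(g,h))$ correspond under the threefold identification to the same map
$$x \otimes y \otimes z \;\longmapsto\; f(x)\cdot g(y) \cdot h(z) \;\in\; B,$$
which is well-defined by associativity of the multiplication $m_B$. Unitality of the lax monoidal structure is proved analogously, using that $pt$ is the unit for $\vee$ in $\hTopp$, that $CH^{pt}(A,B) \simeq B$ by axiom \emph{i)}, and that $1 \in B$ is a two-sided unit for $m_B$. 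Once the first assertion (lax monoidality) is established, the second follows formally: for a coassociative diagonal $\delta\colon X \to X \vee X$, the product $\delta^{*}\circ \mu_{\vee}$ has its two iterated compositions governed by $(\delta \vee \mathrm{id})\circ \delta$ and $(\mathrm{id}\vee \delta)\circ \delta$, which agree by coassociativity; naturality of $\mu_\vee$ with respect to maps of pointed spaces (part of the lax monoidal structure) then transports this equality to the associativity of the induced product on $CH^X(A,B)$.

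The main obstacle I anticipate is coherence at the $\infty$-categorical level rather than the strict level. The identifications of Lemma~\ref{L:wedge} and its iterates are equivalences in $\hkmod$, so the associator for $\mu_\vee$ is well-defined only up to coherent homotopy; to genuinely obtain a lax monoidal functor $((\hTopp)^{op},\vee) \to (\hkmod,\otimes)$ one must check that the equivalences produced by iterating excision assemble into a coherent system indexed by parenthesizations of $X_1 \vee \cdots \vee X_n$. The simplest route is to appeal to the universal property of the derived tensor product over $A$ (combined with the fact that $A^{\otimes n} \to A$ is an $E_\infty$-map), so that all parenthesizations produce canonically equivalent Hom-complexes; once that is granted, the rest of the proof is essentially the strict argument above carried out in $\hkmod$.
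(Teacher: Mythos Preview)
The paper does not supply a proof of Proposition~\ref{P:Defcupproduct}; it is stated and immediately followed by Example~\ref{ex:cupsphere}. Your argument is the natural one and is correct: iterating the excision identification behind Lemma~\ref{L:wedge} gives the $n$-fold description of $CH^{X_1\vee\cdots\vee X_n}(A,B)$, after which associativity and unitality of $\mu_\vee$ are transported from those of $m_B$, and the $E_1$-statement follows formally from lax monoidality and contravariant functoriality in pointed spaces. Your closing remark about $\infty$-categorical coherence is the only genuine subtlety, and your suggested resolution (via the universal property of iterated derived tensor products over $A$, or equivalently via Proposition~\ref{P:derivedfunctorversionforcoHH}) is adequate; this is presumably why the paper treats the result as routine.
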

\begin{example}\label{ex:cupsphere} A standard example of space with a diagonal is a sphere $S^d$.
For $d=1$, we obtain a cup product on the usual Hochschild cochain complex which is (homotopy) equivalent to the standard cup-product for Hochschild cochains from~\cite{Ge}. 
\end{example}
The little $d$-dimensional little cubes operad
$\text{Cube}_d$ acts continuously on $S^d$ by the pinching map
\begin{equation}\label{eq:pinchcube} pinch: \text{Cube}_d(r) \times S^d \longrightarrow  \bigvee_{i=1\dots r}\, S^d.\end{equation}
given, for any $c\in \text{Cube}_d(r)$,  by the map $pinch_{c}:S^d\to \bigvee S^d$
 collapsing the complement of the interiors of
the $r$ rectangles to the base point.  
We thus get a map 
\begin{equation}\label{eq:pinchSd} \tilde{pinch}: \text{Cube}_d(r)  \longrightarrow \Map_{\hcdga}(CH_{S^d}(A,B), CH_{\bigvee S^d}(A,B))\end{equation}

Applying the contravariance of Hochschild cochains and the wedge  product  (Definition~\ref{D:muvee}), we get, for all $d\geq 1$, a morphism
\begin{multline}\label{eq:pinchSr}
pinch_{S^d,r}^*: C_{\ast}\big(\text{Cube}_d(r)\big) \otimes \left(CH^{S^d}(A,B)\right)^{\otimes r}\\ 
\stackrel{(\mu_{\vee})^{(d-1)}}\longrightarrow  C_{\ast}\big(\text{Cube}_d(r)\big) \otimes CH^{\bigvee_{i=1}^r S^d}\big(A, B \big)
\stackrel{\tilde{pinch}^*}\longrightarrow CH^{S^d}(A,B).
\end{multline}
The map~\eqref{eq:pinchSr} has a  canonical extension to the case of  $E_\infty$-algebras.  We find 
\begin{proposition}[\cite{G-HHnote, GTZ3}]\label{T:EdHoch}
Let  $A\stackrel{f}\to B$ be a CDGA (or $E_\infty$-algebra) map. The collection of maps $(pinch_{S^d,k})_{k\geq 1}$ makes 
$CH^{S^d}(A,B)$ an $E_d$-algebra. 
\end{proposition}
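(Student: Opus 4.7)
The plan is to first observe that the topological pinch maps~\eqref{eq:pinchcube} endow $S^d$ with the structure of a coalgebra over the topological little $d$-cubes operad $\text{Cube}_d$, considered as an operad in the symmetric monoidal category $(\hTopp, \vee)$. One then applies to this structure the contravariant lax symmetric monoidal functor $CH^{-}(A,B):((\hTopp)^{op},\vee) \to (\hkmod,\otimes)$ provided by Proposition~\ref{P:Defcupproduct}, which converts the topological coaction into an action of the operad $C_\ast(\text{Cube}_d)$ on $CH^{S^d}(A,B)$, realized exactly by the maps~\eqref{eq:pinchSr}. Since $C_\ast(\text{Cube}_d)$ is an $E_d$-operad, this gives the desired $E_d$-algebra structure.

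The first step is to verify that the pinch maps assemble into a $\text{Cube}_d$-coaction on $S^d$. The essential compatibility is that, for the operadic composition $\gamma$ in $\text{Cube}_d$ and any $c \in \text{Cube}_d(r)$ and $c_i \in \text{Cube}_d(k_i)$,
\begin{equation*}
pinch_{\gamma(c;\, c_1, \ldots, c_r)} \;=\; \Bigl(\bigvee_{i=1}^r pinch_{c_i}\Bigr) \circ pinch_c
\end{equation*}
as continuous maps $S^d \to \bigvee_{\sum k_i} S^d$. Both sides collapse, to the basepoint, the complement of the $\sum k_i$ innermost nested rectangles, distributing them identically over the wedge summands. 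The unit axiom (pinching with the unit cube is the identity) and the $\Sigma_r$-equivariance (permuting the component rectangles of $c$ permutes the wedge summands) are immediate from the definitions.

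The second step is to apply $CH^{-}(A,B)$ and read off the $E_d$-algebra axioms. The associativity and symmetry of the wedge product $\mu_\vee$ (Proposition~\ref{P:Defcupproduct}) combined with the contravariant functoriality of $X\mapsto CH^X(A,B)$ produce, for each $c \in \text{Cube}_d(r)$, a map
\begin{equation*}
CH^{S^d}(A,B)^{\otimes r}\stackrel{(\mu_\vee)^{(r-1)}}\longrightarrow CH^{\bigvee_r S^d}(A,B)\stackrel{pinch_c^*}\longrightarrow CH^{S^d}(A,B),
\end{equation*}
depending continuously on $c$; taking singular chains of the family and composing with the Eilenberg--Zilber shuffle map yields precisely the map $pinch_{S^d,r}^*$ of~\eqref{eq:pinchSr}. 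Operadic associativity now follows by combining the associativity of $\mu_\vee$ with the topological compatibility of the first step; unitality and $\Sigma_r$-equivariance follow from the corresponding properties of the coaction on $S^d$ together with the symmetry of $\mu_\vee$.

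The main obstacle is ensuring the passage from the topological coaction to a bona fide algebra structure in chain complexes is coherent enough to be genuinely $E_d$, not merely $E_d$ up to higher homotopies left uncontrolled by the strict lax-monoidal model. The cleanest way to handle this is to work directly in the $\infty$-category $\hkmod$: $C_\ast(\text{Cube}_d)$ is an $E_d$-operad in $\hkmod$ (via the Eilenberg--Zilber lax symmetric monoidal structure on singular chains), and the lax symmetric monoidal $\infty$-functor $CH^{-}(A,B)$ sends the topological $\text{Cube}_d$-coaction on $S^d$ to a genuine $E_d$-algebra structure on $CH^{S^d}(A,B)$ whose structure maps coincide with the explicit $pinch_{S^d,r}^*$ of~\eqref{eq:pinchSr}.
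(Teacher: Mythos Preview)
Your proposal is correct and follows exactly the strategy the paper sets up: the paper itself does not give a proof of this proposition (it cites \cite{G-HHnote, GTZ3}), but the surrounding text constructs the pinch maps~\eqref{eq:pinchcube}, the wedge product $\mu_\vee$ (Proposition~\ref{P:Defcupproduct}), and the composite~\eqref{eq:pinchSr}, which is precisely the data you assemble into a $\text{Cube}_d$-coaction on $S^d$ in $(\hTopp,\vee)$ and then push through the lax monoidal functor $CH^{-}(A,B)$. Your treatment of the coherence issue (working at the level of the $\infty$-category $\hkmod$ so that the lax symmetric monoidal structure on singular chains carries the topological operad action to a genuine $E_d$-structure) is the right way to close the argument, and matches the spirit of the remark following the proposition about an explicit model via the Barratt--Eccles filtration.
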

The algebra structure is natural with respect to CDGA maps, meaning that given a commutative diagram {\small $\xymatrix{ A \ar[r]^{f}  & B \ar[d]^{\varphi}\\ 
A' \ar[u]^{\psi}\ar[r]^{f'} & B'  }$}, the canonical map $h'\mapsto \varphi \circ h'\circ \psi$ is an $E_d$-algebras morphism $CH^{S^d}(A',B') \to CH^{S^d}(A,B)$. 
\begin{remark} If $f:A\to B$ is a CDGA map, it is possible to describe this $E_d$-algebra structure by giving an explicit action of the filtration $F_{d}\text{BE}$ of the Barrat-Eccles operad  on $CH^{S^d_\bullet}(A,B)$ using the standard simplicial model of $S^d$ (Example~\ref{ex:higherspheres}).
\end{remark}
\begin{example}\label{ex:EnHHSn}
If $A=k$, 
$CH^{S^n}(k,B)\cong B$ (viewed as $E_n$-algebras).
If $B=k$, then the $E_n$-algebra structure of $CH^{S^n}(A,k)$ is the dual of the $E_n$-coalgebra structure given by the $n$-times iterated Bar construction  $Bar^{(n)}(A)$, see~\S~\ref{S:BarConstruction}. 
\end{example}

%%%%%%%%%%%%%%%%%%%%%%%%%%%%%%%%%%
%%%%%%%%%%%%%%%%%%%%%%%%%%%%%%%%%%%%%%%%%%%%%%%%%%%%%%
%%%%%%%%%%%%%%%%%%%%%%%%%%%%%%%%%%%%%%%%%%%%%%%%%%%%%%%%%%%%%%%%%%%%%%%%%

\section{Homology Theory for manifolds} \label{S:HomologyTheoryMfld}
\subsection{Categories of structured manifolds and variations on $E_n$-algebras}
In order to specify what is a homology theory for manifolds, we need to specify an interesting category of manifolds. 
\begin{definition}\label{D:Mfldn}
 Let $\Mfldn$ be the $\infty$-category associated\footnote{by Example~\ref{ex:topologicalasinfty}} to the topological category with objects  topological manifolds of dimension $n$ and  with morphism space
 $$\Map_{\Mfldn}(M,N):= \Emb (M,N)$$ the space of all embeddings of $M$ into $N$ (viewed as a subspace of the space $\Map(M,N)$ of all continuous maps from $M$ to $N$ endowed with the compact-open topology).
\end{definition}
In the above definition, the manifolds can be closed or open, but  have no boundary\footnote{though  homology theory for manifolds can be extended to stratified manifolds, see~\cite{AFT}}.
\begin{remark}
 It is important to consider embeddings instead of smooth maps.
 Indeed,  the category of all manifolds and all (smooth) maps is weakly homotopy equivalent to $\Top$ so that, in that case,  one would obtain a homology theory which extends to spaces.
\end{remark}
\begin{remark}[smooth manifolds] \label{R:smoothmafld} One can also restrict to \emph{smooth} manifolds in which case it makes sense to equip $\Emb(M,N)$ with the weak Whitney $C^\infty$-topology; this gives us \emph{the $\infty$-category $\Mfldn^{un}$ of smooth manifolds of dimension $n$}. This latter category embeds in $\Mfldn$ and this embedding is an equivalence  onto the full subcategory of $\Mfldn$ spanned by the smooth manifolds.
\end{remark}

One can also consider categories of  more structured manifolds, such as oriented, spin or framed manifolds, as follows.  Let $E\to X$ be a topological $n$-dimensional vector bundle, which is the same as a space $X$ together with a (homotopy class of) map $e:X\to B \Homeo(\R^n)$ from $X$ to the classifying space of the group of homeomorphisms of $\R^n$. \emph{An $(X,e)$-structure on a manifold $M\in \Mfldn$} is a map $f:M\to X$ such that $TM$ is the pullback $f^*(E)$ which is the same as a factorization $M\stackrel{f}\to X \stackrel{e}\to B\Homeo(\R^n)$ of the map $M \stackrel{e_M}\longrightarrow B\Homeo(\R^n)$ classifying the tangent (micro-)bundle of $M$. 
\begin{definition}\label{D:XMfldn}  Let $\Mfldn^{(X,e)}$ be the (homotopy) pullback (in $\infty\textbf{-Cat}$)
 $$\Mfldn^{(X,e)}:= \Mfldn \times^{h}_{{\hTop}_{/B\Homeo(\R^n)}} {\hTop}_{/X}. $$
\end{definition}
In other words $\Mfldn^{(X,e)}$ is the $\infty$-category with objects $n$-dimensional topological manifolds with an $(X,e)$-structure and with morphism the embeddings preserving the $(X,e)$-structure. The latter morphisms are made into a topological space by identifying them with the homotopy pullback space 
 \begin{multline*}\Map_{\Mfldn^{(X,e)}}(M,N):= \Emb^{(X,e)} (M,N)\\ \cong \Emb (M,N) \times^{h}_{\Map_{/B\Homeo(\R^n)}(M,N)} \Map_{/X}(M,N).\end{multline*}
\begin{example}\label{E:mainExofXstructure} We list our main examples of study.
\begin{itemize}
 \item Let $X=pt$, then $E$ is trivial (here $e$ is induced by the canonical base point of $B\Homeo(\R^n)$) and an $(X,e)$-structure on $M$ is a framing, that is, a trivialization of the tangent (micro-)bundle of $M$. 
 In that case, \emph{we denote $\Mfldn^{fr}:= \Mfldn^{(pt,e)}$ the $\infty$-category of framed manifolds}. Note that this $\infty$-category is equivalent to the one  associated to the topological category with objects the framed manifolds of dimension $n$ and morphism spaces from $M$ to $N$ the framed embeddings, that is the pairs $(f,h)$ where $f\in \Emb(M,N)$ and $h$ is an homotopy between the two trivialisation of $TM$ induced by the framing of $M$ and the framing of $N$ pulled-back along $f$. 
 \item Let $X=BO(n)$ and  $BO(n)\stackrel{e}\to B\Homeo(\R^n)$  be the canonical map. Then $\Mfldn^{(BO(n),e)}$ is (equivalent to) the \emph{$\infty$-category of smooth $n$-manifolds} of Remark~\ref{R:smoothmafld}. This essentially follows because the map $O(n)\to \text{Diffeo}(\R^n)$ is a deformation retract and the characterization of smooth manifolds in terms of their micro-bundle structure~\cite{KiSi}.
 \item Let $X=BSO(n)$ and  $BSO(n)\stackrel{e}\to B\Homeo(\R^n)$  be the canonical map  induced by the inclusion of $SO(n)\hookrightarrow \Homeo(\R^n)$. 
 Then a $(BSO(n),e)$-structure on $M$ is an orientation of $M$. We denote \emph{$\Mfldn^{or}:=\Mfldn^{(BSO(n),e)}$ the $\infty$-category of oriented smooth $n$-manifolds}. Similarly to the framed case, it has a straightforward description as the $\infty$-category associated a topological category with morphisms the space of oriented embeddings.
\item If $X$ is a $n$-dimensional manifold, we can take $e_X: X\to B\Homeo(\R^n)$ to be the map corresponding to the tangent bundle $TX\to X$ of $X$. We simply denote  $\Mfldn^{(X,TX)}$ the associated $\infty$-category of manifolds. Every open subset of $X$ is canonically an object of $\Mfldn^{(X, TX)}$.
\end{itemize}
\end{example}

The (topological) coproduct $M\coprod N$ (that is disjoint union) of two $(X,e)$-manifolds $M,N$ has a canonical structure of $(X,e)$-manifold  (given by $M\coprod N \stackrel{f\coprod g}\to X$ where $M\stackrel{f}\to X$ and $N\stackrel{g}\to X$ define the $(X,e)$-structures). Note that in general there are no embeddings $M\coprod M \to M$ so that the disjoint union of manifolds is \emph{not} a coproduct (in the sense of category theory) in $\Mfldn^{(X,e)}$. Nevertheless
\begin{lemma} $(\Mfldn^{(X,e)}, \coprod)$ is a symmetric monoidal $\infty$-category. 
\end{lemma}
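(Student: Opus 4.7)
The plan is to first establish the symmetric monoidal structure at the level of the underlying topological category of $\Mfldn$ (where it is essentially strict), and then transport it along the pullback defining $\Mfldn^{(X,e)}$.

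First, I would work with the topological category whose $\infty$-categorical localization is $\Mfldn$. At this strict level, disjoint union of topological manifolds is an honestly associative and commutative operation (up to canonical homeomorphism), with unit object the empty manifold $\emptyset$. The essential point is to verify continuity of the bifunctor $\coprod$ on morphism spaces: given manifolds $M,M',N,N'$, the map
\[
\Emb(M,M') \times \Emb(N,N') \longrightarrow \Emb(M\coprod N,\, M'\coprod N'),\qquad (f,g)\mapsto f\coprod g
\]
is continuous for the compact-open topologies. This is a standard check: any compact $K \subset M\coprod N$ splits as $K_M \sqcup K_N$ with $K_M\subset M$, $K_N\subset N$ compact, and the subbasic open sets $W(K,U)$ on the target pull back to products of such subbasic opens on the source. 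Associativity, symmetry, and unit isomorphisms are given by the obvious homeomorphisms of underlying manifolds and are clearly natural. This endows the topological category underlying $\Mfldn$ with a symmetric monoidal structure, which then passes to the associated $\infty$-category by taking nerves (see Appendix~\ref{S:operad}).

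Next I would transport this structure to $\Mfldn^{(X,e)}$ via the homotopy pullback presentation of Definition~\ref{D:XMfldn}. The $\infty$-categories $\hTop$, $\hTop_{/B\Homeo(\R^n)}$ and $\hTop_{/X}$ all carry symmetric monoidal structures given by (disjoint union of) spaces over their respective bases: for instance, two maps $M\to X$ and $N\to X$ assemble into $M\coprod N\to X$. The structure functors $\hTop_{/X}\to \hTop_{/B\Homeo(\R^n)}\leftarrow \Mfldn$ are symmetric monoidal (the first by post-composition with $e$, the second by sending $M$ to $M\xrightarrow{e_M}B\Homeo(\R^n)$, which commutes with disjoint union since the classifying map of $T(M\coprod N)$ is $e_M\coprod e_N$). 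Because the homotopy pullback of symmetric monoidal $\infty$-categories along symmetric monoidal functors inherits a canonical symmetric monoidal structure, this produces the desired $(\coprod$-)symmetric monoidal structure on $\Mfldn^{(X,e)}$.

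The only genuine technical issue is the compatibility of disjoint union with the $(X,e)$-structure at the level of morphisms, i.e.\ that an embedding $f\coprod g:M\coprod N\to M'\coprod N'$ preserves $(X,e)$-structures whenever $f$ and $g$ do, and that this assembly is continuous in the relevant homotopy pullback of mapping spaces. Both follow from the universal property of coproducts in $\hTop$ applied pointwise to the homotopies witnessing $(X,e)$-compatibility, and from the fact that homotopy fiber products are formed levelwise in mapping spaces.

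I expect the main obstacle to be notational rather than mathematical: carefully tracking the coherence data needed to promote the strict symmetric monoidal structure on the underlying topological category to an $\infty$-categorical symmetric monoidal structure, and checking that the functors into $\hTop_{/B\Homeo(\R^n)}$ respect this data. However, since every step reduces to disjoint union being a coproduct in $\hTop$ (hence automatically symmetric monoidal in a coherent way) plus continuity of $\coprod$ on embedding spaces, no deep computation is required.
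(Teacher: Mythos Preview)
Your argument is correct and considerably more detailed than what the paper offers. In fact the paper does not give a proof of this lemma at all: it is stated without proof, preceded only by the remark that the disjoint union $M\coprod N$ of two $(X,e)$-manifolds carries a canonical $(X,e)$-structure via $M\coprod N \xrightarrow{f\coprod g} X$. Your proposal fleshes out exactly this hint, first establishing the symmetric monoidal structure on $\Mfldn$ via continuity of $\coprod$ on embedding spaces, and then transporting it through the homotopy pullback of Definition~\ref{D:XMfldn} by observing that all the categories and functors involved are compatibly symmetric monoidal for disjoint union. This is the natural way to make the paper's one-line justification rigorous, and your identification of the coherence bookkeeping as the only real (and ultimately formal) obstacle is accurate.
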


There is a canonical choice of framing of $\R^n$ which induces a canonical $(X,e)$-structure on $\R^n$ for any pointed space $X$. Unlike other manifolds, there are interesting framed embeddings $\coprod \R^i \to \R^i$ for any integer $i$. Indeed, in view of Example~\ref{E:mainExofXstructure} and Definition~\ref{D:EnasDisk}, the space of embeddings $\Emb^{fr}(\coprod_{\{1,\dots,r\}} \R^n, \R^n)=\Disk_n^{fr}(r,1)$ is homotopy equivalent to $\text{Cube}_n(r)$ the arity $r$ space of the  little cube operad and thus is homotopy equivalent to the configuration space of $r$ unordered points in $\R^n$. 

This motivates the following $(X,e)$-structured version of $E_n$-algebras. 
\begin{definition} \label{D:DiskXstructured} Let $\Disk^{(X,e)}_n$ be the full subcategory of $\Mfldn^{(X,e)}$ spanned by disjoints union of standard euclidean disks $\R^n$.  The $\infty$-category of $\Disk^{(X,e)}_n$-algebras\footnote{which we also referred to as the category of $(X,e)$-structured $E_n$-algebras} is the category 
$$\textbf{Fun}^{\otimes}(\Disk^{(X,e)}_n,\hkmod)$$ of symmetric monoidal ($\infty$-)functors from $(\Disk^{(X,e)}_n, \coprod)$ to $(\hkmod, \otimes)$. 

The \emph{underlying object} of a  $\Disk^{(X,e)}_n$-algebra $A$ is its value $A(\R^n)$ on a single disk $\R^n$. We will often abusively denote in the same way the $\Disk^{(X,e)}_n$-algebra and its underlying object.
\end{definition}
We denote $\Disk^{(X,e)}\textbf{-Alg}$ \emph{the $\infty$-category of $\Disk^{(X,e)}_n$-algebras} and $\Disk^{(X,e)}\textbf{-Alg}(\mathcal{C})$ the one of    $\Disk^{(X,e)}_n$-algebras with values in a symmetric monoidal category $\mathcal{C}$ (whose definition are the same as Definition~\ref{D:DiskXstructured} with $(\hkmod,\otimes)$ replaced by $(\mathcal{C}, \otimes)$). 
The underlying object induces a functor  
\begin{equation}\label{eq:Defunderlying} \Disk^{(X,e)}\textbf{-Alg}(\mathcal{C})\longrightarrow \mathcal{C} \end{equation}

\begin{example}\label{E:DisknAlg}
 \begin{itemize}
  \item For $X=pt$,  the category  of $\Disk^{(X,e)}_n$-algebras will be \emph{denoted $\Disk^{fr}_n\textbf{-Alg}$}. It is equivalent to the usual category of $E_n$-algebras and corresponds to the case of framed manifolds.
  \item The category  of $\Disk^{(BSO(n),e)}_n$-algebras is equivalent to the category of algebras over the operad $\big(\text{Cube}_n(r)\ltimes SO(n)^r\big)_{r\geq 1}$ introduced in~\cite{SW} (and called the framed little disk operad). 
Since these algebras corresponds to the case of oriented manifolds, we call them \emph{oriented $E_n$-algebras} and we simply write $\Disk^{or}_n$ for $\Disk^{(BSO(n),e)}_n$.
 It can be shown that $\Disk^{or}_n$-algebras are homotopy fixed points of the $E_n$-algebras with respect to the action of $SO(n)$ on the operad $\Disk_n^{fr}$.
  \item Similarly, the category of $\Disk^{(BO(n),e)}_n$-algebras is equivalent to the category of algebras over the operad $\big(\text{Cube}_n(r)\ltimes O(n)^r\big)_{r\geq 1}$. We also call them \emph{unoriented $E_n$-algebras} and simply write $\Disk^{un}_n$ for $\Disk^{(BO(n),e)}_n$. 
\item Let  $U\cong \R^n$ be a disk in $X$. By restriction to sub-disks of $U$, we have a canonical functor $\Disk_n^{(X,TX)}\textbf{-Alg}\to \Disk_n^{(\R^n,T\R^n)}\textbf{-Alg}\cong E_n\textbf{-Alg}$ (see Theorem~\ref{P:En=Fact} below). It follows that a $\Disk_n^{(X,TX)}$-algebra  is simply a family of  $E_n$-algebras over $X$. 
 \end{itemize}
\end{example}

\begin{example}[{Commutative algebras as $\Disk_n^{(X,e)}$-algebras}]\label{E:ComisDisk}
 The canonical functor $\Disk_n^{(X,e)}\to \mathbf{Fin}$ (where $\mathbf{Fin}$ is the $\infty$-category associated to the category of finite sets) shows that any $E_\infty$-algebra (Definition~\ref{D:EinftyAlg}) has a canonical structure of $\Disk_n^{(X,e)}$-algebras. Thus we have canonical functors $$\hcdga \longrightarrow E_{\infty}\textbf{-Alg}\longrightarrow \Disk_n^{(X,e)}\textbf{-Alg}.$$
 
 For $A$  a differential graded commutative algebra, this structure is the symmetric monoidal functor defined by $A(\coprod_{i\in I} \R^n):= A^{\otimes I}$ and, for an $(X,e)$-preserving embedding $\coprod_{I} \R^n \hookrightarrow \R^n$, by the (iterated) multiplication $A^{\otimes I}\to A$. 
\end{example}

\begin{example}[Opposite of an $E_n$-algebra] \label{R:OppositeAlgebra}
 There is a canonical $\mathbb{Z}/2\mathbb{Z}$-action on $E_n\textbf{-Alg}$ induced by the antipodal map $\tau: \R^n \to \R^n$, $x\mapsto -x$ acting on the source of  $\textbf{Fun}^\otimes(\text{\emph{Disk}}^{fr}_n,\,\hkmod)$. If $A$ is an $E_n$-algebra, then the result of this action $A^{op}:=\tau^*(A)$ is its opposite algebra. If $n=\infty$, the antipodal map is homotopical to the identity so that $A^{op}$ is equivalent to $A$ as an $E_\infty$-algebra. 
\end{example}

\subsection{Factorization homology of manifolds}
We now explain what is a Homology Theory for Manifolds (Definition~\ref{D:HomologyforMfld}) in a way parallel to the presentation of the Eilenberg-Steenrod axioms. 
We first need an analogue of Lemma~\ref{L:monoidalimpliesCom} for monoidal functors out of manifolds (instead of spaces) in order to formulate the correct excision axiom.

 \smallskip

Observe that $\R^n$ is canonically an $E_n$-algebra object in $\Mfldn$. Let
  $N$ be an $(n-s)$-dimensional manifold such that $N\times \R^s$ has an $(X,e)$-structure. Then, similarly, $N\times \R^s$ is also an $E_s$-algebra object in $\Mfldn^{(X,e)}$. Let us describe more precisily this structure: for finite sets $I,J$, we have  continuous maps $$\gamma_{I,J}^N: \Emb^{fr}\Big(\coprod_{I} \R^s,\coprod_{J}\R^s\Big) \to \Emb^{(X,e)}\Big(\coprod_{I} (N\times \R^s), \coprod_{J} (N\times \R^s)\Big)$$ induced by the composition
$$\coprod_{I} (N\times \R^s) \cong N\times \Big(\coprod_{I} \R^s \Big)\stackrel{id_N\times f} \to  N\times  \Big(\coprod_{J} \R^s \Big)\cong \coprod_{J} (N\times \R^s)$$ for any $f\in \Emb^{fr}\big(\coprod_{I} \R^s,\coprod_{J}\R^s\big) $.
In particular,  taking $s=n$, $N=pt$, the above maps induce a canonical map of operads $$\gamma: \Disk_n^{fr}\to \Disk_n^{(X,e)}$$ and thus we have an underlying  functor $\gamma^*:\Disk_n^{(X,e)}\textbf{-Alg}\longrightarrow E_n\textbf{-Alg}.$ And more generally we obtain functors: 
$(\gamma^{N})^*:\Disk_n^{(N\times \R^s,T(N\times \R^s))}\textbf{-Alg}\longrightarrow E_s\textbf{-Alg}.$ 

\smallskip

The main consequence is that any symmetric monoidal functor (from $\Mfldn^{(X,e)}$) maps  $N\times \R^s$  to an $E_s$-algebra object of the target category. More precisely: 
\begin{lemma}\label{L:homthMfldisEn}
 Let  $ \big(\Mfldn^{(X,e)},\coprod\big)\stackrel{\mathcal{F}}\longrightarrow (\hkmod,\otimes)$ be a symmetric monoidal functor. \begin{enumerate}
\item For any manifold $N\times \R^s$ with an $(X,e)$-structure, $\mathcal{F}(N\times \R^s)$  has a canonical $E_s$-algebra structure. 
 \item  Let $M$ be  an $(X,e)$-structured manifold with an end\footnote{\emph{i.e.} open boundary component} trivialized as    $N\times \R$ (where $N$ is of codimension 1 and the open part of $M$ lies in the neighborhood of $N\times \{-\infty\}$, see Figure~\ref{fig:ModuleTCH}). Then $\mathcal{F}(M)$ has a canonical   left\footnote{If $N\times \R$ is trivialized so that the open part of $M$ is in the neighborhood of $N\times \{+\infty\}$, then $\mathcal{F}(M)$ has a canonical \emph{right} module structure.} module structure over the $E_1$-algebra $\mathcal{F}(N\times \R)$.
\item $\mathcal{F}(\R^n)$ has a natural structure of $\Disk_n^{(X,e)}$-algebra.\end{enumerate}
\end{lemma}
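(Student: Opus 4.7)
The plan is to deduce all three claims by precomposing $\mathcal{F}$ with suitable symmetric monoidal $\infty$-functors into $\Mfldn^{(X,e)}$ built from framed embedding operads in the appropriate dimension.

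Part (3) is essentially tautological: by Definition~\ref{D:DiskXstructured} the inclusion $\Disk_n^{(X,e)} \hookrightarrow \Mfldn^{(X,e)}$ is a \emph{full} symmetric monoidal $\infty$-subcategory, so the restriction $\mathcal{F}|_{\Disk_n^{(X,e)}}$ is, by definition, a $\Disk_n^{(X,e)}$-algebra with underlying object $\mathcal{F}(\mathbb{R}^n)$. For (1), I would package the maps $\gamma^N_{I,J}$ introduced just before the statement of the lemma into a symmetric monoidal $\infty$-functor
$$
N\times(-):\; \Disk_s^{fr} \longrightarrow \Mfldn^{(X,e)},\qquad \coprod_I \mathbb{R}^s \longmapsto \coprod_I (N\times \mathbb{R}^s),
$$
acting on morphism spaces by $f \mapsto \mathrm{id}_N \times f$. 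Symmetric monoidality is manifest since both disjoint union and framed composition commute with the product by $N$. Composing with $\mathcal{F}$ yields a symmetric monoidal functor $\Disk_s^{fr}\to \hkmod$, that is, an $E_s$-algebra in $\hkmod$ in the sense of Example~\ref{E:DisknAlg}, whose underlying object is $\mathcal{F}(N\times \mathbb{R}^s)$.

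For (2), I would run the analogous argument with a coloured half-line operad $\mathsf{LMod}$ whose algebras in a symmetric monoidal $\infty$-category are pairs $(A,M)$ with $A$ an $E_1$-algebra and $M$ a left $A$-module. Concretely, $\mathsf{LMod}$ has two colours $\mathfrak{a}$ and $\mathfrak{m}$; its $k$-ary operations of output colour $\mathfrak{m}$ are the spaces of framed embeddings $(-\infty,0] \coprod \coprod_k \mathbb{R} \hookrightarrow (-\infty,0]$ which fix the half-line near $0$, while the purely $\mathfrak{a}$-coloured operations form $\Disk_1^{fr}$. Using the trivialization $N\times \mathbb{R}\hookrightarrow M$ of the end of $M$ (with open direction towards $-\infty$), such an embedding determines an $(X,e)$-embedding $M \coprod (\coprod_k N\times \mathbb{R}) \hookrightarrow M$ by pushing the new copies of $N\times \mathbb{R}$ deep into the collar and acting by the identity on the rest of $M$; combined with $N\times(-)$ on the purely $\mathfrak{a}$-coloured operations this assembles into a symmetric monoidal functor $\mathsf{LMod}\to \Mfldn^{(X,e)}$ sending $\mathfrak{a}\mapsto N\times \mathbb{R}$ and $\mathfrak{m}\mapsto M$. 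Post-composing with $\mathcal{F}$ produces the asserted left-module structure on $\mathcal{F}(M)$ over the $E_1$-algebra $\mathcal{F}(N\times \mathbb{R})$, and by construction this $E_1$-structure coincides with the one obtained in (1).

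The hard part is the usual $\infty$-categorical coherence bookkeeping: one must verify that $N\times(-)$ and its half-line variant are honest symmetric monoidal $\infty$-functors rather than mere maps of underlying operads, and that the induced algebra and module structures are canonical up to contractible choice. This is, however, painless in the topological model of $\Mfldn^{(X,e)}$ from Definition~\ref{D:Mfldn}: with embedding spaces carrying their compact-open topology, the maps $\gamma^N$ and the collar insertions are continuous and strictly compatible with disjoint unions and composition, so symmetric monoidality holds \emph{on the nose} at the level of topological categories and descends to the associated $\infty$-categories via the homotopy coherent nerve.
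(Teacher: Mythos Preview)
Your proof is correct and follows essentially the same strategy as the paper's own argument: both use the maps $\gamma^N_{I,J}$ to transport framed disk embeddings into $(X,e)$-structured embeddings and then apply $\mathcal{F}$. The only difference is packaging: you phrase parts (1) and (2) as precomposition with symmetric monoidal functors $\Disk_s^{fr}\to \Mfldn^{(X,e)}$ and $\mathsf{LMod}\to \Mfldn^{(X,e)}$, whereas the paper writes out the resulting structure maps explicitly as compositions
\[
\Emb^{fr}\Big(\coprod_I \mathbb{R}^s,\mathbb{R}^s\Big)\times \mathcal{F}(N\times\mathbb{R}^s)^I \xrightarrow{\gamma^N} \Emb^{(X,e)}(\cdots)\times \mathcal{F}(\cdots) \xrightarrow{\mathcal{F}} \mathcal{F}(N\times\mathbb{R}^s),
\]
and similarly for the module case using $\Emb^{fr}\big((\coprod_I\mathbb{R})\coprod(0,1],(0,1]\big)$. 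Your functorial packaging is a bit cleaner and makes the coherence discussion at the end more transparent, but the mathematical content is identical.
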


\begin{figure}[b]\begin{center}
 \includegraphics[scale=0.65]{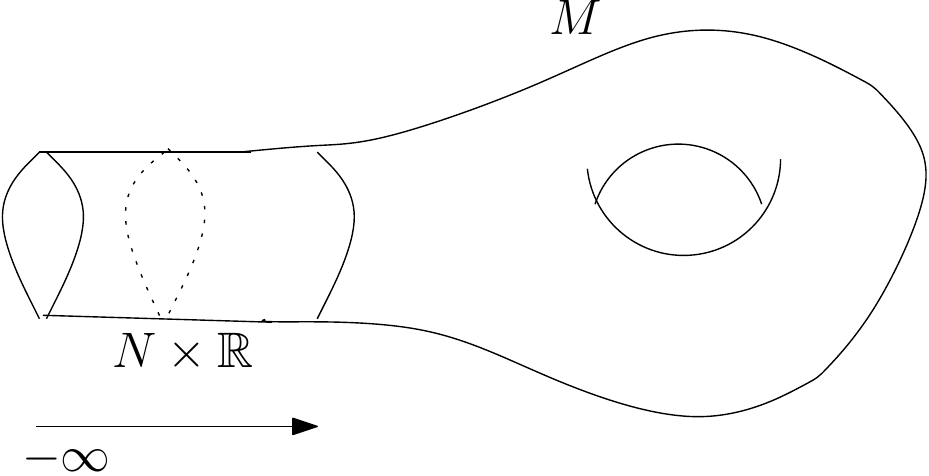}
\caption{A manifold $M$ with a trivialization $N\times \mathbb{R}$ of its open boundary.}\label{fig:ModuleTCH} \end{center}
\end{figure}
\begin{proof} Endow $\R^n$ with its canonical framing. It automatically inherits an $(X,e)$-structure for every connected component\footnote{In practice, $X$ will almost always be connex so that the structure will be canonical} of $X$ (since every vector bundle is locally trivial). 
Now, since $\mathcal{F}$ is  symmetric monoidal, then $\mathcal{F}(\R^n)$ also has an induced structure of $\Disk_n^{(X,e)}$-algebra. Let us describe the structure mentioned in 1. and 2.
  \begin{enumerate}
\item For any manifold $N\times \R^s$ with an $(X,e)$-structure,  the $E_s$-algebra structure on $\mathcal{F}(N\times \R^i)$  is given by the structure maps
\begin{multline*} \Emb^{fr}\Big(\coprod_{I} \R^s,\R^s\Big) \times \big(\mathcal{F}(N\times \R^s)\big)^{I}\\ \stackrel{\gamma_{I,pt}^N}\longrightarrow \Emb^{(X,e)}\Big(\coprod_{I} (N\times \R^i),N\times \R^s\Big)\times \big(\mathcal{F}(N\times \R^s)\big)^{I} \\ \longrightarrow \Emb^{(X,e)}\Big(\coprod_{I} (N\times \R^s),N\times \R^s\Big)\times \mathcal{F}\Big(\coprod_I (N\times \R^s)\Big) \stackrel{\mathcal{F}(\Emb^{(X,e)}(-,-))}\longrightarrow \mathcal{F}(N\times \R^s).\end{multline*}
The fact that $\mathcal{F}$ is monoidal ensures it defines an $E_s$-algebra structure.

From the definition, it is clear that $\gamma^*(\mathcal{F}(\R^n))\cong \mathcal{F}(\R^n)$ as an $E_n$-algebra (where the two structures are given by 1. and 3.).
 \item  Now, let $M$ be  an $(X,e)$-structured manifold with an end trivialized as    $N\times \R$; $\mathcal{F}(N\times \R)$ is an $E_1$-algebra by 1.  The left module structure of $\mathcal{F}(M)$ is given       
  by the maps
\begin{multline*} \Emb^{fr}\Big((\coprod_{I} \R)\coprod (0,1],(0,1]\Big) \times \Big(\mathcal{F}(N\times \R)\Big)^{I}\times \mathcal{F}(M) \\
\stackrel{\gamma_{I\coprod \{*\},pt}^N}\longrightarrow \Emb^{(X,e)}\Big(\big(\coprod_{I} N\times \R\big)\coprod M,M\Big)\times \big(\mathcal{F}(N\times \R^i)\big)^{I} \times \mathcal{F}(M)
\\ \longrightarrow \Emb^{(X,e)}\Big(\coprod_{I} (N\times \R)\coprod M ,M\Big)\times \mathcal{F}\Big(\big(\coprod_I (N\times \R)\big)\coprod M \Big)
\\ \stackrel{\mathcal{F}(\Emb^{(X,e)}(-,-))}\longrightarrow \mathcal{F}(N\times \R^i).\end{multline*}
\end{enumerate}
\end{proof}

The above lemma is crucial in order to formulate the excision property. 
\begin{definition} \label{D:HomologyforMfld} An homology theory for $(X,e)$-manifolds (with values in the symmetric monoidal $\infty$-category $(\hkmod, \otimes)$) is a functor 
$$\mathcal{F}: \Mfldn^{(X,e)} \times \Disk_n^{(X,e)}\textbf{-Alg} \to \hkmod$$ (denoted $(M,A)\mapsto \mathcal{F}_M(A)$)
 satisfying the following axioms: 
\begin{enumerate} 
\item[i)] \textbf{(dimension)} there is a natural equivalence $\mathcal{F}_{\R^n}(A)\cong A$ in $\hkmod$;
\item[ii)] \textbf{(monoidal)} the functor $M\mapsto \mathcal{F}_M(A)$ is  symmetric lax-monoidal and, for any set $I$,  the following  induced  maps are  equivalences (naturally in $A$) \begin{equation}\label{eq:monoidalaxiom}\bigotimes_{i\in I}\mathcal{F}_{M_i}(A)\stackrel{\simeq}\longrightarrow \mathcal{F}_{\coprod_{i\in I} M_i}(A) . \end{equation} 
\item[iii)] \textbf{(excision)} Let  $M$ be an $(X,e)$-manifold.  Assume   there is a codimension $1$ submanifold  $N$ of $M$  with a trivialization $N\times \R$ of its neighborhood  such that $M$ is decomposable as $M=R\cup_{N\times \R}L$ where $R, L$ are submanifolds  of $M$ glued along  $N \times \R$.  
By Lemma~\ref{L:homthMfldisEn}, $\mathcal{F}_{N\times \R}(A)$ is an $E_1$-algebra and $\mathcal{F}_{R}(A)$, $\mathcal{F}_{L}(A)$ are respectively right and left modules. The \emph{excision axiom}\footnote{or Mayer-Vietoris principle} is that the  canonical map
$$ \mathcal{F}_{L}(A)\mathop{\otimes}\limits^{\mathbb{L}}_{\mathcal{F}_{N\times \R}(A)} \mathcal{F}_{R}(A)\stackrel{\simeq}\longrightarrow \mathcal{F}_M(A)$$
 (induced by the universal property of the right hand side)  is an equivalence.
\end{enumerate}
\end{definition}
\begin{remark}\label{R:explainaxioms}
The symmetric lax-monoidal condition in axiom ii) means that there are natural (in $A$, $M$) transformations like~\eqref{eq:monoidalaxiom} compatible with composition for any finite $I$ and invariant under the action of permutations. The axiom ii) thus implies that $M\mapsto \mathcal{F}_{M}(A)$ is symmetric monoidal. When $I$ is not finite, the right hand side in~\eqref{eq:monoidalaxiom} is the colimit  $\colim_{F\to I} \bigotimes_{j\in F}\mathcal{F}_{M_j}(A)$ over all finite sets $F$ and the map is induced by the universal property of the colimit and the lax monoidal property. 
\end{remark}
\begin{theorem}[Francis~\cite{F2}]\label{T:UniquenessFactHom} 
 There is an unique\footnote{up to contractible choices} homology theory for $(X,e)$-manifolds (in the sense of Definition~\ref{D:HomologyforMfld}), which is called factorization homology\footnote{the name comes from the fact that it satisfies the factorization property (Remark~\ref{R:factpointed}). Another name is topological chiral homology.}.  
\end{theorem}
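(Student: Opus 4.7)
The plan is to establish existence and uniqueness simultaneously by identifying the unique homology theory with \emph{factorization homology}, constructed as an operadic left Kan extension. For any $\Disk_n^{(X,e)}$-algebra $A$, I would define the candidate
\[
\int_M A \;:=\; \colim\Big(\big(\Disk_n^{(X,e)}\big)_{/M} \longrightarrow \Disk_n^{(X,e)} \stackrel{A}\longrightarrow \hkmod\Big),
\]
where $\big(\Disk_n^{(X,e)}\big)_{/M}$ is the $\infty$-category whose objects are $(X,e)$-embeddings of a disjoint union of standard disks into $M$, with morphisms the isotopies over $M$. The universal property of this Kan extension will then be the engine driving uniqueness once excision is in place.

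For \textbf{existence}, I would verify the three axioms for $\mathcal{F}_M(A) := \int_M A$. The dimension axiom holds because $\big(\Disk_n^{(X,e)}\big)_{/\R^n}$ admits $\mathrm{id}_{\R^n}$ as a final object in the relevant homotopical sense (using that the space of $(X,e)$-embeddings $\R^n \hookrightarrow \R^n$ has contractible components once one fixes the appropriate framing data), so the colimit collapses to $A(\R^n) = A$. The monoidal axiom follows because any connected disk embedded into $\coprod_i M_i$ lies in a unique component, splitting the indexing $\infty$-category as the disjoint union of the $\big(\Disk_n^{(X,e)}\big)_{/M_i}$ and commuting with the monoidal structure on $A$. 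Excision is the substantial step: given $M = L \cup_{N \times \R} R$, one must establish a cofinality result identifying $\big(\Disk_n^{(X,e)}\big)_{/M}$ with the homotopy pushout of the analogous categories for $L$, $R$, and $N \times \R$, and then check that the $E_1$-algebra and module structures produced by Lemma~\ref{L:homthMfldisEn} on $\int_{N\times\R} A$, $\int_L A$, $\int_R A$ agree with those intrinsic to the Kan extension, so that the colimit computes the claimed relative tensor product.

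For \textbf{uniqueness}, given any other homology theory $\mathcal{F}$, axioms (i) and (ii) together with the Kan extension universal property produce a canonical natural transformation $\eta_M : \int_M A \to \mathcal{F}_M(A)$. I would then show $\eta_M$ is an equivalence by induction on a handle-type exhaustion. Every topological $n$-manifold $M$ with $(X,e)$-structure can be written as $M = \colim_k M_k$ where $M_0$ is a disjoint union of disks and each $M_{k+1} = M_k \cup_{N_k \times \R} U_k$ with $U_k$ a disjoint union of disks glued along a collar of a codimension-one submanifold $N_k$. Axioms (i)--(ii) cover the base, axiom (iii) propagates the equivalence through each gluing step, and passage to the filtered colimit uses a continuity property for $\mathcal{F}$ applied to nested open submanifolds (itself a formal consequence of excision and the monoidal axiom).

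The main obstacles are geometric rather than formal. On the existence side, the delicate point is the cofinality argument for excision: one must show that disks straddling the collar can be replaced, up to a contractible choice of isotopy, by disks lying entirely in $L$, in $R$, or in a small tubular neighborhood of $N$, compatibly with the $E_1$-module structures. On the uniqueness side, the existence of a handle-by-handle exhaustion compatible with an arbitrary $(X,e)$-structure is a nontrivial input from manifold topology (good covers of topological manifolds, micro-bundle theory, and in dimension $4$ the necessity of working with topological rather than smooth handles). These are precisely the technical hearts of the argument carried out in~\cite{F2}, and I would refer there for the finer details.
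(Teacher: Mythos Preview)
The paper does not supply its own proof of this theorem; it is stated with attribution to Francis~\cite{F2} and then used as a black box. Your outline---defining $\int_M A$ as the left Kan extension of $A$ along $\Disk_n^{(X,e)}\hookrightarrow\Mfldn^{(X,e)}$, verifying the dimension, monoidal, and excision axioms (the last via a cofinality argument), and deducing uniqueness by a handle-type induction---is exactly the strategy of~\cite{F2}, and it is consistent with the coend description that the paper records just after the theorem (the ``Derived functor definition'' remark and the Proposition there, both attributed to~\cite{F2}). So your proposal matches the cited source; there is nothing in the paper itself to compare it against.
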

Factorization homology is defined in \cite{L-HA} and \emph{its value on a $(X,e)$-manifold $M$ and $\text{Disk}_n^{(X,e)}$-algebra $A$ is denoted $\int_{M}A$}. 
\begin{remark}[\textbf{other coefficients}]\label{R:factgeneral} In Definition~\ref{D:HomologyforMfld} and Theorem~\ref{T:UniquenessFactHom}, one can replace the symmetric monoidal category $(\hkmod, \otimes)$ by any symmetric monoidal $\infty$-category $(\mathcal{C},\otimes)$ which has all colimits and whose monoidal structure commutes with geometric realization and filtered colimits, see~\cite{F, F2, AFT}.
\end{remark}
\begin{remark}[finite variant]
If one restricts to the full subcategory of $\Mfldn^{(X,e)}$ spanned by the manifolds which have finitely many connected components which are the interior of closed manifolds, then axiom ii) becomes equivalent to asking $\mathcal{F}$ to be naturally symmetric monoidal and Theorem~\ref{T:UniquenessFactHom} stills holds in this context.
\end{remark}

Note that factorization homology depends on the $(X,e)$-structure not the underlying topological manifold structure of $M$ in general. For instance, if $M=\R$ is equipped with its standard framing and $A$ is an associative algebra (hence $E_1$), then $\int_{\R} A\cong A$ as an $E_1$-algebra. However, if $N=\R$ is equipped with the opposite framing (pointing toward $-\infty$), then $\int_{N} A\cong A^{op}$ (where $A^{op}$ is the algebra with opposite multiplication) as an $E_1$-algebra (see~\cite{F, L-HA} for more general statements).
\begin{remark}\label{R:facthomwithcoeff}
 In particular,  Theorem~\ref{T:UniquenessFactHom} implies that the functor $\mathcal{F} \mapsto \mathcal{F}_{\R^n}$ from, the category of symmetric monoidal functors $\Mfldn^{(X,e)}\to \hkmod$ satisfying excision\footnote{i.e. axiom iii) in Definition~\ref{D:HomologyforMfld}}, to the category of $\Disk_n^{(X,e)}$-algebras (which is a  well defined functor by Lemma~\ref{L:homthMfldisEn}) is a natural equivalence.

\begin{definition}\label{D:facthomwithcoeff}
Let $A\in \Disk_n^{(X,e)}\textbf{-Alg}$. The homology theory for $(X,e)$-manifolds defined\footnote{in the sense of Remark~\ref{R:facthomwithcoeff}} by $A$ will be called factorization homology (or homology theory) with coefficient in $A$.
\end{definition}
\end{remark}

\begin{example}[Hochschild homology]\label{Ex:FacS1=Hoch}Let $A$ be a differential graded associative algebra (or even an $A_\infty$-algebra) and choose a framing of $S^1=SO(2)$ induced by its Lie group structure. We can use excision to evaluate the factorization homology with value in $A$ on the framed manifold $S^1$.  
Here, we see the circle as being obtained by gluing two intervals: $S^1= \R\cup_{\{1,-1\}\times \R} \R$, see Figure \ref{fig:HHasFact} .
\begin{figure}[b]
\begin{minipage}[c]{0.5\textwidth}
\includegraphics[scale=0.6]{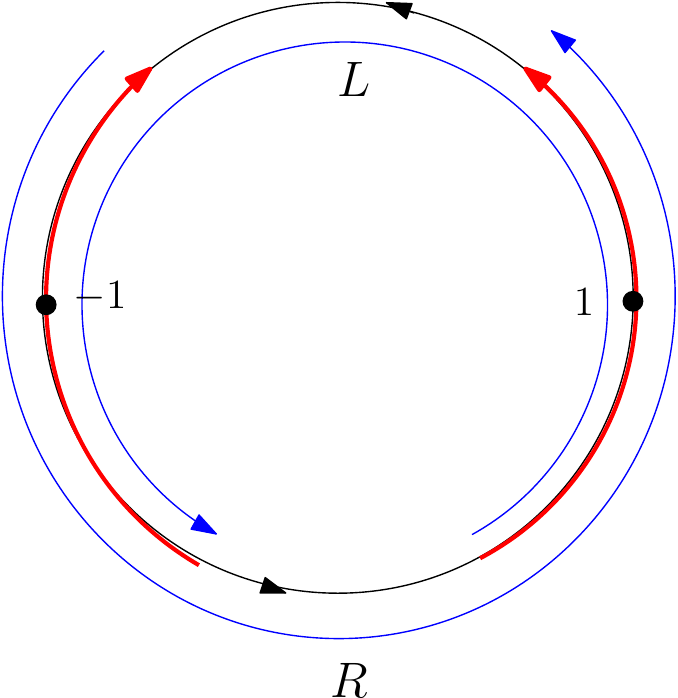}\end{minipage} \hfill \begin{minipage}[c]{0.48\textwidth}
\caption{The decomposition of the circle $S^1$ into 2 intervals (pictured in blue just across the circle) $L\cong \R$ and $R\cong \R$ along a trivialization $\{1,-1\}\times \R$ (pictured in red). The arrows are indicating the orientations/framing of the circle and other various pieces of the decomposition.}\label{fig:HHasFact}\end{minipage}
\end{figure}

Note that the induced framing on $\{1,-1\}\times \R$ correspond to the standard framing of $\R$ on the component $\{1\}\times \R$ and the opposite framing on the component $\{-1\}\times \R$ so that $\int_{\{-1\}\times \R} A = A^{op}$ (see Example~\ref{ex:E1frombasis}).  Thus, by excision we find that
\begin{equation}\label{eq:HHaschiral} \int_{S^1}A \cong \int_{\R}A \mathop{\otimes}\limits^{\mathbb{L}}_{\int_{\{1,-1\}\times \R} A} \int_{\R} A \cong A \mathop{\otimes}\limits^{\mathbb{L}}_{A\otimes A^{op}} A \cong HH(A) \end{equation}
where $HH(A)$ is the usual Hochschild homology\footnote{at least if $A$ is projective over $k$; if $A$ is not projective over $k$, there are several variants of Hochschild homology, the one we are considering is the derived version and correspond to what is sometimes called shukla homology \cite{Shukla, Qu-cohomology}} of $A$ with value in itself. 
\end{example}
\begin{example}\label{ex:FactofConf}
Let $\text{Free}_n$ be the free $E_n$-algebra on $k$, which is naturally a $\Disk_n^{un}$-algebra (Example~\ref{E:DisknAlg}). It can thus be evaluated on any manifold.
\begin{proposition}[\cite{AFT}]\label{P:FactofConf} Let $M$ be a manifold. Then $\int_{M} \text{Free}_n \, \cong \, C_{\ast}\big(\coprod_{n\in \mathbb{N}}\text{Conf}_n(M)\big)$ where $\text{Conf}_n(M)$ is the space of configurations of $n$ -unordered points in $M$.
\end{proposition}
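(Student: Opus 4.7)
The plan is to invoke the uniqueness half of Theorem~\ref{T:UniquenessFactHom}: I set
$$\mathcal{G}_M \;:=\; C_{\ast}\Big(\coprod_{k\geq 0}\text{Conf}_k(M)\Big)$$
and aim to show that $M\mapsto \mathcal{G}_M$ defines an homology theory for unoriented $n$-manifolds in the sense of Definition~\ref{D:HomologyforMfld}, whose value on $\mathbb{R}^n$ recovers $\text{Free}_n$.

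First I would verify the monoidal axiom. A configuration on $M\sqcup N$ is exactly a pair of configurations, one on $M$ and one on $N$, giving a homeomorphism
$$\coprod_k\text{Conf}_k(M\sqcup N)\;\cong\; \Big(\coprod_i\text{Conf}_i(M)\Big)\times \Big(\coprod_j\text{Conf}_j(N)\Big).$$
Applying $C_{\ast}$ and the Eilenberg--Zilber equivalence yields the required tensor product decomposition, and functoriality in embeddings is automatic since embeddings preserve distinctness of points. Next, for the dimension axiom I would identify $\mathcal{G}_{\mathbb{R}^n}$ with $\text{Free}_n$ as unoriented $E_n$-algebras. The standard model for the free $E_n$-algebra on a complex $V$ is the monadic sum $\bigoplus_{k}C_{\ast}(F_k(\mathbb{R}^n))\otimes_{\Sigma_k}V^{\otimes k}$, where $F_k$ is the space of ordered configurations and $\Sigma_k$ acts freely and properly; setting $V=k$ collapses this to $\bigoplus_k C_{\ast}(F_k(\mathbb{R}^n)/\Sigma_k) \cong \mathcal{G}_{\mathbb{R}^n}$. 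Both sides carry the same $\Disk_n^{un}$-algebra structure, namely insertion of disjoint configurations along a disk embedding $\coprod\mathbb{R}^n\hookrightarrow\mathbb{R}^n$.

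The main step is excision. For $M=L\cup_{N\times\mathbb{R}}R$, Lemma~\ref{L:homthMfldisEn} equips $\mathcal{G}_{N\times\mathbb{R}}$ with an $E_1$-structure (horizontal stacking of configurations along the $\mathbb{R}$-factor) and makes $\mathcal{G}_L$, $\mathcal{G}_R$ into right and left modules. One needs the canonical comparison map
$$\mathcal{G}_L\mathop{\otimes}\limits^{\mathbb{L}}_{\mathcal{G}_{N\times\mathbb{R}}}\mathcal{G}_R \longrightarrow \mathcal{G}_M$$
to be an equivalence. The cleanest route I would take is to upgrade this to a descent statement: the functor $M\mapsto\coprod_k\text{Conf}_k(M)$ is a symmetric monoidal cosheaf on $\Mfldn^{un}$ for the Weiss topology generated by finite disjoint unions of open disks. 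Indeed, any finite configuration lies in some disjoint union of small balls around its points, which supplies local-to-global descent; the functor $C_{\ast}$ then transports this to a cosheaf condition in $\hkmod$, and specialising it to a two-sided decomposition $M=L\cup_{N\times\mathbb{R}}R$ produces the desired bar-style equivalence.

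The step I expect to be the main obstacle is precisely this excision/descent argument: one must control the $\Sigma_k$-coinvariants (which appear as soon as one passes from ordered to unordered configurations) inside the derived tensor product, and ensure that the naive bar complex built from $\mathcal{G}_L$, $\mathcal{G}_{N\times\mathbb{R}}$, $\mathcal{G}_R$ really computes the configuration space of $M$ without phantom correction terms. This is why I would prefer the descent-style argument over a hand-built chain-level construction; equivalently, one can reformulate factorization homology as a homotopy left Kan extension along $\Disk_n^{un}\hookrightarrow\Mfldn^{un}$ and exhibit the category of disjoint unions of disks sitting inside $M$ with finite total configuration as a cofinal subcategory, reducing the whole statement to the free case treated in the dimension axiom.
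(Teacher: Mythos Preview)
The paper does not give its own proof of this proposition; it is stated with a citation to \cite{AFT} and used as a black box. There is therefore nothing in the paper to compare your argument against.

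That said, your strategy is correct and is essentially the one used in the cited reference: verify that $M\mapsto C_\ast\big(\coprod_k\text{Conf}_k(M)\big)$ satisfies the axioms of Definition~\ref{D:HomologyforMfld} and then invoke the uniqueness half of Theorem~\ref{T:UniquenessFactHom}. Your identification of excision as the crux is accurate, and the Weiss/disk-cosheaf argument you sketch (equivalently, the cofinality argument for the left Kan extension along $\Disk_n^{un}\hookrightarrow\Mfldn^{un}$) is precisely what is required. The worry you flag about $\Sigma_k$-coinvariants is not a genuine obstacle: the symmetric group acts freely on the ordered configuration space, so passing to unordered configurations commutes with the homotopy colimits involved and no phantom correction terms appear.
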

In particular, factorization homology is not an homotopy invariant of manifolds (since configurations spaces of unordered points are not, see~\cite{LS}). 
By considering configuration spaces of points with labels, one has a similar result for the free $E_n$-algebra $\text{Free}_n(V)$ associated to $V\in \hkmod$, see~\cite{AFT}.
\end{example}

\begin{example}[\textbf{Non-abelian Poincar\'e duality}]
Let us now mention another important example of computation of factorization homology. 
Let $(Y,y_0)$ be a pointed space and $\Omega^n(Y):=\{ f: [0,1]^n \to Y, \, f(\partial [0,1]^n)=y_0\}$ be its $n$-fold based loop space. Then the singular chains  $C_*(\Omega^n(Y))$ has a natural structure of unoriented $E_n$-algebra.
\begin{theorem}[non-abelian Poincar\'e duality, Lurie~\cite{L-HA}] \label{T:nonabelianPoincareduality}
If $M$ is a manifold  of dimension $n$ and $Y$ an  $n-1$-connective pointed space, then $$\int_{M}C_*(\Omega^n(Y)) \cong C_*(\Map_c(M,Y))$$ where
$\Map_c(M,Y)$ is the space of compactly supported maps from $M$ to $Y$.
\end{theorem}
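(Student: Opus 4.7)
The plan is to apply the uniqueness of factorization homology (Theorem~\ref{T:UniquenessFactHom}, together with Remark~\ref{R:factgeneral} allowing values in an arbitrary reasonable symmetric monoidal $\infty$-category). Concretely, I would first show that the assignment $\mathcal{F}\colon M \mapsto \Map_c(M,Y)$ defines a homology theory for (unoriented) $n$-manifolds with values in $(\hTop,\times)$, whose underlying $E_n$-algebra on $\R^n$ is $\Omega^n Y$. Then, because the singular chain functor $C_\ast\colon(\hTop,\times)\to(\hkmod,\otimes)$ is symmetric monoidal (via Eilenberg--Zilber) and preserves the geometric realizations and sifted colimits appearing in the excision axiom, the composition $M\mapsto C_\ast(\Map_c(M,Y))$ is itself a homology theory for $n$-manifolds with coefficient $C_\ast(\Omega^n Y)$, and uniqueness identifies it with $\int_M C_\ast(\Omega^n Y)$.

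For the three axioms applied to $\mathcal{F}(M)=\Map_c(M,Y)$: the \emph{dimension} axiom follows from the identification of compactly supported maps $\R^n\to Y$ with pointed maps from the one-point compactification $S^n=(\R^n)^+$ to $(Y,y_0)$, which is exactly $\Omega^n Y$. The \emph{monoidal} axiom is immediate from $\Map_c(\coprod_i M_i,Y)\cong\prod_i\Map_c(M_i,Y)$ together with the fact that only finitely many components can carry non-basepoint data on any given compact set, so infinite disjoint unions are handled by a filtered colimit. The \emph{excision} axiom is the substantive step: given $M=L\cup_{N\times\R}R$, one must show the natural map
\[
B\bigl(\Map_c(L,Y),\,\Map_c(N\times\R,Y),\,\Map_c(R,Y)\bigr)\;\longrightarrow\;\Map_c(M,Y)
\]
is a weak equivalence, where the left-hand side is the two-sided bar construction for the topological monoid $\Map_c(N\times\R,Y)$ (whose multiplication is stacking along the $\R$-direction) acting on $\Map_c(L,Y)$ and $\Map_c(R,Y)$ via the trivialized collars. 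This map can be described explicitly: a $k$-simplex of the bar construction is a collection of compactly supported data on $L$, on $k$ disjoint copies of $N\times\R$, and on $R$, which glue (after rescaling the $\R$-factors of the copies of $N\times\R$ into nested intervals covering the collar in $M$) to a compactly supported map on $M$.

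The hard part is the excision equivalence, and this is precisely where the $(n{-}1)$-connectivity hypothesis on $Y$ is needed: it ensures that $\Map_c(N\times\R,Y)$ is grouplike (so that the bar construction computes the correct homotopy type rather than a group completion), and more generally that scanning arguments converge. My proposed approach would be to reduce to the case of a handle attachment via induction on a handle decomposition of $M$, so that at each step one of the pieces may be taken to be $\R^n$ (or a disjoint union of such); the base case is then essentially a scanning/configuration-space statement of May--Segal type, identifying $\Map_*(S^n,Y)$ with a group completion of configurations in $\R^n$ labelled in $Y$, which is exactly the content of non-abelian Poincar\'e duality on a disk. Lurie's original argument in \cite{L-HA} proceeds along these lines, and any alternative handle-induction proof must negotiate the same scanning/group-completion input at the base case.
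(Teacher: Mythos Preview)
The paper does not actually give a proof of this theorem: it is stated as a result of Lurie with a citation to \cite{L-HA}, followed only by the remark that the case $n=1$ recovers Goodwillie's equivalence $HH(C_\ast(\Omega Y))\cong C_\ast(LY)$. So there is no proof in the paper to compare against.

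That said, your strategy is exactly the one used in the literature (by Lurie in \cite{L-HA} and by Francis/Ayala--Francis in \cite{F2,AFT}): verify that $M\mapsto \Map_c(M,Y)$ satisfies the axioms of a homology theory for manifolds with values in $(\hTop,\times)$, identify its value on $\R^n$ with $\Omega^n Y$, and then invoke the uniqueness Theorem~\ref{T:UniquenessFactHom} (and Remark~\ref{R:factgeneral}) together with the symmetric monoidality of $C_\ast$ to transport the result to chain complexes. You have also correctly isolated the one nontrivial point, namely excision, and correctly identified that the $(n{-}1)$-connectivity of $Y$ enters precisely to make the relevant $E_1$-monoids $\Map_c(N\times\R,Y)$ group-like, so that the two-sided bar construction genuinely computes the homotopy pushout rather than a group completion. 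One small caution: the handle-induction you sketch is not strictly necessary once the axioms are verified, since uniqueness does all the work; the handle argument is rather what one would use to prove excision directly if one did not want to appeal to the general machinery. Either route is fine, and both are present in the references cited.
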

If  $n=1$ and $Y$ is connected,   Theorem~\ref{T:nonabelianPoincareduality} reduces to Goodwillie's quasi-isomorphism~\cite{Go-loop} $HH(C_*(\Omega(Y))) \cong C_*(LY)$ where $LY=\Map(S^1,Y)$ is the free loop space of $Y$.
\end{example}

\begin{remark}[\textbf{Derived functor definition}]
One possible way for defining factorization homology is similar to the one of \S~\ref{SSS:derivedfunctor}. 
Indeed, let $A$ be a $\Disk_n^{(X,e)}$-algebra. Then $A$ defines a covariant functor $\Disk_n^{(X,e)}\to \hkmod$. 
Similarly,  if $M$ is  in $\Mfldn^{(X,e)}$, then it defines a contravariant functor $E_{M}^{(X,e)}:\big(\Disk_n^{(X,e)}\big)^{op}\to \hTop$, given by the formula  $$E_{M}^{(X,e)}\Big(\coprod_{i\in I}\R^n\Big) := \Emb^{(X,e)}\Big(\coprod_{i\in I}\R^n, M\Big).$$ 
The data of $A$ and $M$ thus gave a functor 
$$E_{M}^{(X,e)}\otimes A:\big(\Disk_n^{(X,e)}\big)^{op}\times \Disk_n^{(X,e)} \stackrel{E_{M}^{(X,e)}\times A}\longrightarrow \Top \times \hkmod \stackrel{\otimes}\longrightarrow \hkmod.$$
Here $\Top \times \hkmod \stackrel{\otimes}\to \hkmod$ means the tensor of a space with a chain complex which is equivalent to 
$(X, D_\ast)\mapsto C_\ast(X)\otimes D_\ast$ where $C_\ast(X)$ is the singular chain functor of $X$ (with value in $k$).
\begin{proposition}[\cite{F2}]The factorization homology $\int_{M}A$ is the (homotopy) coend  of $E_{M}^{(X,e)}\otimes A$. In other words:
\begin{multline*}
\int_{M}A \cong E_{M}^{(X,e)}\mathop{\otimes}_{\Disk_n^{(X,e)}}^{\mathbb{L}} A\\  \cong
 \hocolim \left( \coprod_{f:\{1,\dots, q\} \to \{1,\dots, p\}} C_\ast\big(E_{M}^{(X,e)} (\R^n)\big)^{\otimes p} \otimes \Disk_n^{(X,e)}(q,p)\otimes A^{\otimes q}\right. \\ \left. \rightrightarrows  \coprod_{m} C_\ast\big(E_{M}^{(X,e)}(\R^n)\big)^{\otimes m}\otimes A^{\otimes m}\right) \end{multline*}
\end{proposition}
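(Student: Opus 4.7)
My plan is to appeal to the uniqueness Theorem~\ref{T:UniquenessFactHom}: it suffices to show that the coend construction
$$\mathcal{G}_M(A) \;:=\; E_{M}^{(X,e)}\mathop{\otimes}_{\Disk_n^{(X,e)}}^{\mathbb{L}} A$$
defines a homology theory for $(X,e)$-manifolds in the sense of Definition~\ref{D:HomologyforMfld}. That it is functorial in both variables (and lands in the correct $\infty$-category) is formal from the general theory of $\infty$-coends / left Kan extensions; what has to be checked is the three axioms (dimension, monoidal, excision).

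For the \emph{dimension axiom}, I would observe that the presheaf $E_{\R^n}^{(X,e)}: \coprod_{I}\R^n \mapsto \Emb^{(X,e)}(\coprod_I \R^n,\R^n)$ is, by construction, the restricted Yoneda embedding of the object $\R^n\in \Disk_n^{(X,e)}$. The coend of a representable presheaf against any functor recovers the value of that functor at the representing object, yielding a natural equivalence $\mathcal{G}_{\R^n}(A)\simeq A(\R^n)\cong A$. For the \emph{monoidal axiom}, I would use that, since each disk $\R^n$ is connected, the embedding space $\Emb^{(X,e)}(\coprod_I \R^n, \coprod_j M_j)$ decomposes up to homotopy equivalence as the disjoint union, over partitions $I = \coprod_j I_j$, of $\prod_j \Emb^{(X,e)}(\coprod_{I_j}\R^n, M_j)$. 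Plugging this decomposition into the formula for the coend and distributing tensor products produces exactly $\bigotimes_j \mathcal{G}_{M_j}(A)$.

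The main obstacle is the \emph{excision axiom}. Suppose $M = L\cup_{N\times \R} R$ as in Definition~\ref{D:HomologyforMfld}. I want to identify $\mathcal{G}_M(A)$ with the relative bar construction $\mathcal{G}_L(A)\otimes^{\mathbb{L}}_{\mathcal{G}_{N\times \R}(A)} \mathcal{G}_R(A)$. The strategy is a Seifert--van Kampen / refinement argument for embedding spaces: I would show that the presheaf $E_{M}^{(X,e)}$ on $\Disk_n^{(X,e)}$ is the (homotopy) pushout of $E_{L}^{(X,e)}$ and $E_{R}^{(X,e)}$ along $E_{N\times\R}^{(X,e)}$, in a sense strong enough to survive tensoring against $A$. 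Concretely, given any embedding $\coprod_I \R^n \hookrightarrow M$ and any open cover refining $\{L, R\}$, one can isotope and further refine (by inserting extra disks sitting inside a tubular neighborhood of $N$) so that every disk of the resulting configuration lies either entirely in $L$, entirely in $R$, or in the collar $N\times \R$; the space of such refinements is contractible. This is the technical heart of the argument and the analogue, for embedding spaces, of the standard Mayer--Vietoris cover argument. Once this cosheaf-type property for the presheaves $E^{(X,e)}_{(-)}$ is in hand, commuting the coend past the homotopy pushout (which is legitimate since the monoidal structure of $\hkmod$ preserves geometric realizations, cf.\ Remark~\ref{R:factgeneral}) and recognizing the resulting two-sided simplicial object as the bar construction produces the desired equivalence.

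Having verified the three axioms, Theorem~\ref{T:UniquenessFactHom} yields a canonical equivalence $\mathcal{G}_M(A)\simeq \int_M A$, natural in both $M$ and $A$, which is exactly the statement of the proposition. The explicit coequalizer presentation at the end of the statement is simply the standard simplicial model of a coend, obtained by writing out the Bar resolution of $A$ as a left $\Disk_n^{(X,e)}$-module.
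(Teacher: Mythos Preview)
The paper does not actually prove this proposition; it is stated with attribution to Francis~\cite{F2} and no proof is supplied in the text. Your approach---verify that the coend construction satisfies the dimension, monoidal, and excision axioms of Definition~\ref{D:HomologyforMfld} and then invoke the uniqueness Theorem~\ref{T:UniquenessFactHom}---is correct and is essentially the strategy of~\cite{F2}. Your treatments of the dimension axiom (co-Yoneda) and the monoidal axiom (connectedness of disks forces a partition of the index set) are the standard ones.

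One caution on your excision step: the phrasing ``$E_{M}^{(X,e)}$ is the homotopy pushout of $E_{L}^{(X,e)}$ and $E_{R}^{(X,e)}$ along $E_{N\times\R}^{(X,e)}$'' is not quite the right formulation, since the target is a \emph{relative tensor product} over the $E_1$-algebra $\mathcal{G}_{N\times\R}(A)$, not a bare pushout. What you actually need is that the coend applied to $M$ is computed by the two-sided bar construction $|\mathrm{Bar}_\bullet(\mathcal{G}_L(A),\mathcal{G}_{N\times\R}(A),\mathcal{G}_R(A))|$, which amounts to showing that the diagram of embedding presheaves indexed by the nerve of the cover $\{L,R\}$ (with iterated intersections in the collar) is cofinal in the full diagram of disks in $M$. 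Your isotopy/refinement sketch is pointing at exactly this cofinality statement; just be aware that packaging it as a ``pushout of presheaves'' obscures the $E_1$-module structure that makes the bar construction appear.
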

The Proposition remains true with $(\hkmod,\otimes)$ replaced by any symmetric monoidal $\infty$-category satisfying the assumptions of Remark~\ref{R:factgeneral}.
\end{remark}

\section{Factorizations algebras}\label{S:FactAlgebras}
In this section we will give a \v{C}ech type construction of Factorization homology which plays for Factorization homology the same role as sheaf cohomology plays for singular cohomology\footnote{singular cohomology of a paracompact space $X$ can be computed as the cohomology of the constant sheaf $\mathbb{Z}_X$ on $X$ while singular cohomology with twisted coefficient is computed by sheaf cohomology with value in a locally constant sheaf}. This analogue of cosheaf theory is given by factorization algebras which we describe in length here. 

\subsection{The category of factorization algebras} 
We start by describing various categories of (pre)factorization algebras (including the locally constant ones).

Following Costello Gwilliam~\cite{CG}, given a topological space $X$, a \emph{prefactorization} algebra over $X$  is an algebra over the colored operad whose objects are open subsets of $X$  and whose morphisms from $\{U_1, \cdots, U_n\}$ to $V$ are empty unless  when $U_i$'s are mutually disjoint subsets of $U$, in which case they are singletons.  Unfolding the definition, we find 
\begin{definition}\label{D:Prefact} A prefactorization algebra  on $X$ (with value in chain complexes) is a rule that assigns to any open set $U$ a chain complex $\mathcal{F}(U)$ and, to any finite family of pairwise disjoint open sets $U_1,\dots, U_n \subset V$ included in an open $V$, a chain map
 $$\rho_{U_1,\dots, U_n,V}: \mathcal{F}(U_1)\otimes\cdots\otimes \mathcal{F}(U_n) \longrightarrow \mathcal{F}(V).$$ These structure maps are required to satisfy obvious \emph{associativity and symmetry} conditions (see~\cite{CG}): the map $\rho_{U_1,\dots, U_n,V}$ is invariant with respect to the action of the symmetric group $S_n$ by permutations of the factors on its domain (in other words, the map $\rho_{U_1,\dots, U_n,V}$ depends only of the collection $U_1, \dots, U_n, V$ not on the particular choice of ordering of the open sets) and $\rho_{U,U}$ is the identity\footnote{we could weaken this condition to be only a weak-equivalence or actually just a chain map. In the latter case, we will obtain a (homotopy) strictly weaker notion of prefactorization algebras; however, this will \emph{not} change the notion of factorization algebras since the condition of being a factorization algebra (Definition~\ref{D:FactAlgebra}) will imply that $\rho_{U,U}$ is an equivalence as $U$ is always a factorizing cover of itself; since it is idempotent by associativity, we will get that it is homotopy equivalent to the identity} of $\mathcal{F}(U)$.
 Further, the associativity condition is that: for any finite collection of pairwise disjoint open subsets $(V_j)_{j\in J}$ lying in an open subset $W$ together with, for all $j\in J$,  a finite collections $(U_{i,j})_{i\in I_j}$ of pairwise disjoint open subset   lying in $V_j$, the following diagram 
\begin{equation}\label{eq:associativityPFact}
\xymatrix{
\mathop{\bigotimes}\limits_{(i,j)\in \coprod_{r} I_r}\mathcal{F}\left(U_{ij}\right) \ar[rd]_{\bigotimes\limits_{j} \rho_{(U_{ij})_{i\in I_j},V_j}} \ar[rr]^{\rho_{(U_{ij}),W}} & & 
\mathcal{F}\left(W\right) \\
& \bigotimes\limits_{j\in J}\mathcal{F}\left(V_{j}\right) \ar[ru]_{\rho_{(V_j)_{j\in J},W}} & 
}\end{equation}
is commutative.

If $\mathcal{U}$ is an open cover of $X$, we define a \emph{prefactorization algebra on $\mathcal{U}$}, also denoted a \emph{$\mathcal{U}$-prefactorization algebra}, to be the same thing as a prefactorization algebra except that  $\mathcal{F}(U)$ is defined only for $U\in \mathcal{U}$.  
\end{definition}

\begin{remark}One can define a prefactorization algebra with value in any symmetric monoidal category  $(\mathcal{C},\otimes)$ by replacing chain complexes by objects of $\mathcal{C}$.
\end{remark}
\begin{remark}\label{R:prefactpointed}
Prefactorization algebras are \emph{pointed} since the inclusion $\emptyset \hookrightarrow U$ of the empty set in any open induces a canonical map $\mathcal{F}(\emptyset) \to \mathcal{F}(U)$. Further, the structure maps of a prefactorization algebra  exhibit $\mathcal{F}(\emptyset) $  as a  commutative algebra in $(\mathcal{C},\otimes)$ (non necessarily unital) and $\mathcal{F}(U)$ as a $\mathcal{F}(\emptyset) $-module. 
\end{remark}

There is  a \emph{\v{C}ech-complex} associated to a cover $\mathcal{U}$ of an open set $U$. Denoting $P\mathcal{U}$ the set of finite pairwise disjoint open subsets $\{U_1,\dots,U_n \, |\, U_i\in \mathcal{U}\}$ ($n$ is not fixed),  it is, by definition the realization of the
simplicial chain complex
$$\check{C}_\bullet(\mathcal{U},\mathcal{F})= \bigoplus_{\alpha \in P\mathcal{U}}\left( \bigotimes_{U \in \alpha}\mathcal{F}(U)\right)   \leftleftarrows \bigoplus_{(\alpha, \beta)\in P\mathcal{U} \times P\mathcal{U}} \left(\bigotimes_{(U,V) \in \alpha \times \beta}\mathcal{F}(U\cap V) \right)\leftthreearrows \cdots$$
where the horizontal arrows are induced by the natural inclusions as for the usual \v{C}ech complex of a cosheaf (see~\cite{CG}). 

 Let us describe the simplicial structure more precisely. 
In simplicial degree $i$, we get the chain complex $\check{C}_i(\mathcal{U},\mathcal{F}):=\bigoplus\limits_{\alpha\in P\mathcal{U}^{i+1}} \mathcal{F}(\alpha)$  where, for $\alpha =(\alpha_0,\dots, \alpha_i)\in P\mathcal{U}^{i}$,% say $\alpha=\big( (U_0^{1},\dots,U_0^{j_0}),\dots, (U_i^1,\dots, U_i^{j_i})\big)$, 
we denote $\mathcal{F}(\alpha)$ the tensor product of chain complexes (with its natural differential) :
$$\mathcal{F}(\alpha)= \bigotimes_{U_j\in \alpha_j} \mathcal{F}\Big(\bigcap_{j=0}^i U_j \Big).$$ 
We write  $d_{in}: \bigoplus_{\alpha \in P\mathcal{U}^{i+1}} \mathcal{F}(\alpha)\to \bigoplus_{\alpha \in P\mathcal{U}^{i+1}} \mathcal{F}(\alpha)$ the induced differential. 
The face maps $\partial_s: \bigoplus\limits_{\alpha\in P\mathcal{U}^{n+1}} \mathcal{F}(\alpha)\longrightarrow \bigoplus\limits_{\beta\in P\mathcal{U}^{n}} \mathcal{F}(\beta)$  ($s=0\dots n$)  are the direct sum of  maps $\widehat{\rho}_{\alpha}^s:\mathcal{F}(\alpha)\to \mathcal{F}(\widehat{\alpha}^s)$ where $\widehat{\alpha}^s=(\alpha_0,\dots, \alpha_{s-1}, \alpha_{s+1}, \dots, \alpha_n)$ is obtained by discarding the $s^{\text{th}}$-collection of opens in $P\mathcal{U}^{n+1}$. 
Precisely $\widehat{\rho}_{\alpha}^s$ is the tensor product 
$$\bigotimes_{U_j\in \alpha_j} \mathcal{F}\Big(\bigcap_{j=0}^n U_j \Big) \longrightarrow  \bigotimes_{\scriptsize\begin{array}{l} U_k\in \alpha_k,\\ k\neq s\end{array}} \mathcal{F}\Big(\bigcap_{\scriptsize\begin{array}{l} k=0\\  k\neq s\end{array}}^n U_k \Big) $$ of the structure maps associated to the inclusion of opens $\bigcap\limits_{j=0\dots n} U_j$ into $\bigcap\limits_{j\neq s} U_j $. The degeneracies are similarly given by  operations $(\alpha_0,\dots, \alpha_n)\mapsto (\alpha_0, \dots, \alpha_j, \alpha_j, \dots, \alpha_{n})$ doubling a set $\alpha_j$.

\smallskip

The simplicial chain-complex $\check{C}_\bullet(\mathcal{U},\mathcal{F})$ can be made into a chain complex (which is the total complex of a  bicomplex):
$$\check{C}(\mathcal{U},\mathcal{F})= \bigoplus_{\alpha\in P\mathcal{U}} \mathcal{F}(\alpha)  \leftarrow \bigoplus_{\beta \in P\mathcal{U} \times P\mathcal{U}} \mathcal{F}(\beta)[1]  \leftarrow \cdots$$ where the horizontal arrows are induced by the alternating sum of the faces $\partial_j$ in the standard way.
 In other words,  $\check{C}(\mathcal{U},\mathcal{F}) = \bigoplus\limits_{i\geq 0} \Big(\bigoplus\limits_{\alpha\in P\mathcal{U}^{i+1}} \mathcal{F}(\alpha)[i]\Big)$ with differential the sum of  $ \mathcal{F}(\alpha)[i] \stackrel{(-1)^i d_{in}}\longrightarrow \mathcal{F}(\alpha)[i] $ and $$\sum_{j=0}^{n} (-1)^{j}\partial_j: \bigoplus\limits_{\alpha\in P\mathcal{U}^{n+1}} \mathcal{F}(\alpha)[n] \to  \bigoplus\limits_{\beta\in P\mathcal{U}^{n}} \mathcal{F}(\beta)[n-1].$$

\begin{remark} If a cover $\mathcal{U}$ is stable under finite intersections, we only need  $\mathcal{F}$ to be  a prefactorization algebra on $\mathcal{U}$,  to define the \v{C}ech-complex $\check{C}(\mathcal{U},\mathcal{F})$.
\end{remark}

If $\mathcal{U}$ is a cover of an open set $U$, then the structure maps of $\mathcal{F}$ yield canonical maps $\mathcal{F}(\alpha)\to \mathcal{F}(U)$ which commute with the simplicial maps. Thus, we get a natural map of simplicial chain complexes 
$\big(\check{C}_i(\mathcal{U},\mathcal{F})\to \mathcal{F}(U)\big)_{i\geq 0}$ to the constant simplicial chain complex $\big(\mathcal{F}(U)\big)_{i\geq 0}$. Passing to geometric realization, we obtain a canonical chain complex homomorphism:
\begin{equation}\label{eq:canonicalmapFact}
\check{C}(\mathcal{U},\mathcal{F})\longrightarrow \mathcal{F}(U).
\end{equation}
\begin{remark}[\textbf{\v{C}ech complexes in $(\mathcal{C}, \otimes)$}]
If $(\mathcal{C}, \otimes)$ is a symmetric monoidal category with coproducts, we define the \v{C}ech complex of a prefactorization algebra with values in $\mathcal{C}$  in the same way, replacing the direct sum by the coproduct in order to get a simplicial object
$\check{C}_\bullet(\mathcal{U},\mathcal{F})$ in $\mathcal{C}$. If further, $\mathcal{C}$ has a geometric realization, then we obtain the \v{C}ech complex $\check{C}(\mathcal{U},\mathcal{F})\in \mathcal{C}$ exactly as  for chain complexes above and the canonical map~\eqref{eq:canonicalmapFact} is also well defined.
\end{remark}

\begin{definition}\label{D:FactAlgebra} An open cover of $\mathcal{U}$  is \emph{factorizing} if,  for all finite collections $x_1,\dots, x_n$ of distinct points in $U$, there are pairwise disjoint open subsets $U_1,\dots, U_k$ in $\mathcal{U}$ such that $\{x_1,\dots, x_n\} \subset \bigcup_{i=1}^k U_i$.

A prefactorization algebra $\mathcal{F}$ on $X$  is said to be a \emph{homotopy\footnote{we can also say \emph{derived factorization algebra}. Unless otherwise specified, the word factorization algebra will always mean a homotopy factorization algebra in these notes} factorization algebra} if, for all open subsets $U\in Op(X)$ and for every factorizing cover $\mathcal{U}$ of $U$, the canonical map
$\check{C}(\mathcal{U},\mathcal{F})\to \mathcal{F}(U)$ is a quasi-isomorphism (see~\cite{Co,CG}). 
\end{definition}
 Note that we do \emph{not} consider the lax version of homotopy factorization algebra defined in~\cite{CG}. 
\begin{remark}[\textbf{Factorization property}]\label{R:factpointed}
If $\mathcal{F}$ is a factorization algebra and $U_1, \dots, U_i$ are disjoint open subsets of $X$, the factorization condition implies that the structure map \begin{equation}\label{eq:factorizationproperty}\mathcal{F}(U_1)\otimes \cdots \otimes \mathcal{F}(U_i) \longrightarrow \mathcal{F}(U_1\cup \dots \cup U_i)\end{equation} is a quasi-isomorphism. In particular, this implies that our definition of factorization algebra agrees with the one of~\cite{CG}. 

Another consequence is that  $\mathcal{F}(\emptyset)\cong k$ (or, more generally, is the unit of the symmetric monoidal category $\mathcal{C}$ if $\mathcal{F}$ has values in $\mathcal{C}$).

The fact that the map~\eqref{eq:factorizationproperty} is an equivalence is called the \emph{factorization property} in the terminology of Beilinson-Drinfeld~\cite{BD}, in the sense that the value of $\mathcal{F}$ on disjoint opens factors through its value on each connected component. 
\end{remark}
\begin{example}[the trivial factorization algebra]\label{ex:trivialFact}
 The \emph{\textbf{trivial}} prefactorization algebra $k$ is the \emph{constant} prefactorization algebra given by the rule $U\mapsto k(U):=k$, with structure maps given by multiplication. It is  a (homotopy) factorization algebra. It is in particular  locally constant over any stratified space $X$ (Definitions~\ref{D:locallyconstant} and~\ref{D:lcFacStratified}).
 
 One defines similarly the trivial factorization algebra over $X$ with values in a symmetric monoidal $\infty$-category  $(\mathcal{C}, \otimes)$ by the rule $U\mapsto \textbf{1}_{\mathcal{C}}$ where  $\textbf{1}_{\mathcal{C}}$ is the unit of the monoidal structure.
\end{example}

\begin{remark}[genuine factorization algebras] \label{R:NaiveFacAlg}The notion of homotopy (or derived) factorization algebra in Definition~\ref{D:FactAlgebra} is a homotopy version of a more naive, un-derived, version of factorization algebra. This  version is a prefactorization algebra such that the following sequence
$$ \Big(\bigoplus\limits_{\alpha\in P\mathcal{U}^{2}} \mathcal{F}(\alpha)\Big) \underset{\partial_1}{\stackrel{\partial_0}\rightrightarrows}  \Big(\bigoplus\limits_{\beta\in P\mathcal{U}} \mathcal{F}(\beta)\Big) \to \mathcal{F}(U) $$
is (right) exact for any factorizing cover $\mathcal{U}$ of $U$. In other words we ask for a similar condition as in Definition~\ref{D:FactAlgebra} but with the truncated \v{C}ech complex. We refer to prefactorization algebras satisfying  this condition as \emph{genuine factorization algebras} (they are also called \emph{strict} in~\cite{CG}). Note that a genuine factorization algebra is \emph{not} a (homotopy) factorization algebra in general.  Homotopy factorization algebras are to genuine factorization algebras what homotopy cosheaves are to cosheaves; that is they are obtained by replacing the naive version by an acyclic resolution.
\end{remark}

When $X$ is a manifold we have the notion of locally constant factorization algebra which roughly means that the structure maps do not depend on the size of the open subsets but only their relative shapes:
\begin{definition}\label{D:locallyconstant}  Let $X$ be a topological manifold of dimension $n$. We say that an open subset $U$ of $X$ is a \emph{disk} if $U$ is homeomorphic to a standard euclidean disk $\R^n$.
A  (pre-)factorization algebra over $X$  is  \emph{locally constant} if for any inclusion of open disks $U\hookrightarrow V$  in $X$, the structure map $\mathcal{F}(U) \to \mathcal{F}(V)$ is a quasi-isomorphism. 
\end{definition}
Let us mention that a locally constant prefactorization algebra is automatically a (homotopy) factorization algebra, see Remark~\ref{R:Ndisk(M)}.

\begin{definition}\label{D:CatofFacAlg}
A \emph{morphism}  $\mathcal{F}\to \mathcal{G}$ of (pre)factorization algebras over $X$ is the data of chain complexes morphisms $\phi_U:\mathcal{F}(U)\to \mathcal{G}(U)$ for every open set $U\subset X$ which commute with the structures maps; that is the following diagram 
$$\xymatrix{\mathcal{F}(U_1)\otimes \cdots \otimes \mathcal{F}(U_i) \ar[d]_{\otimes \phi_j(U_j)}\ar[rrr]^{\rho_{U_1,\dots,U_i,V}}&&& \mathcal{F}(V)\ar[d]^{\phi_V}\\ \mathcal{G}(U_1)\otimes \cdots \otimes \mathcal{G}(U_i) \ar[rrr]^{\rho_{U_1,\dots,U_i,V}} &&& \mathcal{G}(V)} $$
is commutative for any pairwise disjoint finite family $U_1,\dots, U_i$ of open subsets of an open set $V$.
Morphisms of (pre)factorization algebras are naturally enriched over topological space. Indeed, we have   \emph{mapping spaces $\Map(\mathcal{F}, \mathcal{G})$} defined as the geometric realization of the simplicial set $$n\mapsto \Map(\mathcal{F}, \mathcal{G})_n:=\{\text{prefactorization algebras morphisms from } \mathcal{F} \text{ to }C^{\ast}(\Delta^n)\otimes\mathcal{G}   \} $$
where $ C^{\ast}(\Delta^n)\otimes\mathcal{G}$ is the prefactorization algebra whose value on an open set $U$ is $ C^{\ast}(\Delta^n)\otimes\mathcal{G}(U)$. We obtain in this way $\infty$-categories of (pre)factorization algebras (as in~\S~\ref{S:DKL}, Example~\ref{ex:topologicalasinfty}).

The $\infty$-\emph{category of prefactorization algebras over $X$ is denoted $\textbf{PFac}_X$} and similarly we write
 \emph{$\textbf{Fac}_X$ for the $\infty$-categories of factorization algebras over $X$}  and  \emph{$\textbf{Fac}^{lc}_X$ for the  locally constant ones} (which is a full subcategory). Also if $(\mathcal{C},  \otimes)$ is a symmetric monoidal ($\infty$-)category (with coproducts and geometric realization), 
we will denote  $\textbf{Fac}_X(\mathcal{C})$,  $\textbf{Fac}_X^{lc}(\mathcal{C})$ the $\infty$-categories of \emph{factorization algebras in $\mathcal{C}$}.
\end{definition}
 Note that the embedding $\textbf{Fac}_X\to \textbf{PFac}_X$ is a fully faithful embedding.

The underlying tensor product\footnote{recall our convention that if $k$ is not a field, the tensor product really means   derived tensor product}  of chain complexes induces a tensor product of factorization algebras which is computed pointwise: for $\mathcal{F}, \mathcal{G}\in \textbf{PFac}_X$ and an open set $U$,  we have
\begin{equation}\label{eq:tensorproductFact}\big(\mathcal{F}\otimes  \mathcal{G}\big) (U) := \mathcal{F}(U)\otimes \mathcal{G}(U)\end{equation}
 and the structure maps are just the tensor product of the structure maps. If $(\mathcal{C},\otimes)$ is symmetric monoidal,  the same construction yields a monoidal structure on  $\textbf{PFac}_X(\mathcal{C})$. Its unit is the trivial factorization algebra with values in $\mathcal{C}$ (Example~\ref{ex:trivialFact}).
 
\begin{proposition}[Costello-Gwilliam~\cite{CG}] The ($\infty$-)categories $\textbf{PFac}_X(\mathcal{C})$, $\textbf{Fac}_X(\mathcal{C})$, $\textbf{Fac}^{lc}_X(\mathcal{C})$\footnote{the latter is defined when $X$ is a manifold} are symmetric monoidal  with tensor product given by~\eqref{eq:tensorproductFact}.
\end{proposition}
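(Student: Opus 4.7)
The plan is to verify the three assertions successively, starting with $\textbf{PFac}_X(\mathcal{C})$ and then transferring the structure to the full subcategories $\textbf{Fac}_X(\mathcal{C})$ and $\textbf{Fac}_X^{lc}(\mathcal{C})$.

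First I would check that the pointwise formula~\eqref{eq:tensorproductFact} does define a symmetric monoidal structure on $\textbf{PFac}_X(\mathcal{C})$. For $\mathcal{F},\mathcal{G}\in \textbf{PFac}_X(\mathcal{C})$, one sets $(\mathcal{F}\otimes\mathcal{G})(U) := \mathcal{F}(U)\otimes\mathcal{G}(U)$ and defines the structure maps as tensor products of structure maps, reshuffled via the symmetric braiding of $(\mathcal{C},\otimes)$: for pairwise disjoint $U_1,\dots,U_n\subset V$, the map $\bigotimes_i (\mathcal{F}\otimes\mathcal{G})(U_i)\to (\mathcal{F}\otimes\mathcal{G})(V)$ is the composite of the braiding isomorphism $\bigotimes_i(\mathcal{F}(U_i)\otimes\mathcal{G}(U_i))\cong (\bigotimes_i \mathcal{F}(U_i))\otimes (\bigotimes_i \mathcal{G}(U_i))$ with $\rho^{\mathcal{F}}_{U_\bullet,V}\otimes \rho^{\mathcal{G}}_{U_\bullet,V}$. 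Associativity (diagram~\eqref{eq:associativityPFact}) and equivariance are then immediate consequences of the corresponding properties of $\mathcal{F}$ and $\mathcal{G}$ together with the coherence axioms of $(\mathcal{C},\otimes)$. The trivial prefactorization algebra $\underline{\mathbf{1}}_{\mathcal{C}}$ of Example~\ref{ex:trivialFact} is clearly a unit, and the usual symmetric and associator constraints of $(\mathcal{C},\otimes)$ promote to natural transformations at the level of prefactorization algebras.

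Next I would show the monoidal structure restricts to $\textbf{Fac}_X(\mathcal{C})$. Given a factorizing cover $\mathcal{U}$ of an open $U$, one has a natural comparison map
\[
\check{C}(\mathcal{U},\mathcal{F})\otimes \check{C}(\mathcal{U},\mathcal{G}) \longrightarrow \check{C}(\mathcal{U},\mathcal{F}\otimes\mathcal{G})
\]
obtained levelwise from the canonical identifications $\mathcal{F}(\alpha)\otimes\mathcal{G}(\alpha)\cong (\mathcal{F}\otimes\mathcal{G})(\alpha)$, followed by geometric realization. The crucial point is that under the standing hypothesis on $(\mathcal{C},\otimes)$ (namely that $\otimes$ commutes with geometric realization and with filtered colimits, cf.\ Remark~\ref{R:factgeneral}), this comparison map is an equivalence: it is the Eilenberg-Zilber type statement that the tensor product of two simplicial objects realizes the tensor product of their realizations. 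Combined with the factorization axiom for $\mathcal{F}$ and $\mathcal{G}$, we then obtain a commuting square
\[
\xymatrix{
\check{C}(\mathcal{U},\mathcal{F})\otimes \check{C}(\mathcal{U},\mathcal{G}) \ar[r]^-{\simeq} \ar[d]_{\simeq} & \check{C}(\mathcal{U},\mathcal{F}\otimes\mathcal{G}) \ar[d] \\
\mathcal{F}(U)\otimes\mathcal{G}(U) \ar@{=}[r] & (\mathcal{F}\otimes\mathcal{G})(U)
}
\]
whose left vertical arrow is an equivalence (because $\mathcal{F},\mathcal{G}$ are factorization algebras) and the top arrow is an equivalence by the above. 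Therefore the right vertical arrow is an equivalence, and $\mathcal{F}\otimes\mathcal{G}$ satisfies the Čech condition of Definition~\ref{D:FactAlgebra}. The unit $\underline{\mathbf{1}}_{\mathcal{C}}$ has already been noted to be a factorization algebra.

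Finally, for locally constant factorization algebras when $X$ is a manifold, the assertion is immediate: if $U\hookrightarrow V$ is an inclusion of disks, then by hypothesis $\mathcal{F}(U)\to\mathcal{F}(V)$ and $\mathcal{G}(U)\to\mathcal{G}(V)$ are equivalences, hence so is their tensor product, since $\otimes$ preserves equivalences in each variable under our assumptions on $(\mathcal{C},\otimes)$. Thus $\textbf{Fac}^{lc}_X(\mathcal{C})$ is closed under the tensor product and inherits a symmetric monoidal structure. The main obstacle in the whole argument is the commutation of $\otimes$ with geometric realization of the Čech simplicial object in step two; everything else is a direct application of the coherence of $(\mathcal{C},\otimes)$ and the pointwise nature of the formula.
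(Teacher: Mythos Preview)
The paper does not actually supply a proof of this proposition; it is stated with attribution to Costello--Gwilliam~\cite{CG}. So there is no paper proof to compare against, and the relevant question is whether your sketch stands on its own.

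Steps one and three are fine: the prefactorization case is a routine coherence check, and closure under tensor product for the locally constant condition is immediate. The gap is in step two. You write that the comparison map
\[
\check{C}(\mathcal{U},\mathcal{F})\otimes \check{C}(\mathcal{U},\mathcal{G}) \longrightarrow \check{C}(\mathcal{U},\mathcal{F}\otimes\mathcal{G})
\]
is ``obtained levelwise from the canonical identifications $\mathcal{F}(\alpha)\otimes\mathcal{G}(\alpha)\cong (\mathcal{F}\otimes\mathcal{G})(\alpha)$''. But in simplicial degree $n$ the left-hand side is
\[
\Big(\bigoplus_{\alpha\in P\mathcal{U}^{n+1}}\mathcal{F}(\alpha)\Big)\otimes\Big(\bigoplus_{\beta\in P\mathcal{U}^{n+1}}\mathcal{G}(\beta)\Big)\;\cong\;\bigoplus_{(\alpha,\beta)}\mathcal{F}(\alpha)\otimes\mathcal{G}(\beta),
\]
indexed by \emph{pairs} $(\alpha,\beta)$, whereas $\check{C}_n(\mathcal{U},\mathcal{F}\otimes\mathcal{G})=\bigoplus_{\gamma}\mathcal{F}(\gamma)\otimes\mathcal{G}(\gamma)$ is indexed by a single $\gamma$. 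Your ``canonical identification'' only addresses the diagonal summands $\alpha=\beta$; there is no evident map on the off-diagonal terms $\mathcal{F}(\alpha)\otimes\mathcal{G}(\beta)$ with $\alpha\neq\beta$, so the map you describe does not exist in the stated direction. What does exist naturally is the diagonal inclusion going the other way, and Eilenberg--Zilber (siftedness of $\Delta^{op}$) identifies the realization of the levelwise tensor product with $\check{C}(\mathcal{U},\mathcal{F})\otimes\check{C}(\mathcal{U},\mathcal{G})$; but you still owe an argument that the diagonal inclusion $\bigoplus_\gamma \hookrightarrow \bigoplus_{(\alpha,\beta)}$ becomes an equivalence after realization, and this is false for arbitrary simplicial objects (consider the case where $\mathcal{F}(\alpha)=\mathcal{G}(\alpha)=k$ for all $\alpha$). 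The actual argument in \cite{CG} uses more of the structure of $P\mathcal{U}$: one exploits that pairs $(\alpha,\beta)$ can be refined to a common element of $P\mathcal{U}$ via intersections, effectively comparing the \v{C}ech complex of $\mathcal{F}\otimes\mathcal{G}$ on $\mathcal{U}$ with a bisimplicial \v{C}ech-type object whose realization manifestly gives $\mathcal{F}(U)\otimes\mathcal{G}(U)$. Your overall strategy is right, but this combinatorial step is the real content and cannot be reduced to Eilenberg--Zilber alone.
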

\begin{remark}[\textbf{Restrictions}]If $Y\subset X$ is an open subspace, then we have natural restriction functors $\textbf{PFac}_X(\mathcal{C})\to \textbf{PFac}_Y(\mathcal{C})$, $\textbf{Fac}_X(\mathcal{C})\to \textbf{Fac}_Y(\mathcal{C})$. When $X$ is a manifold, the same holds for locally constant factorization algebras.
\begin{definition}If $U$ is an open subset of $X$ and $\mathcal{A}\in \textbf{PFac}(X)$, we  write \emph{$\mathcal{A}_{|U}\in \textbf{PFac}_{U}$ for the restriction 
of $\mathcal{A}$ to $U$} and similarly for (possibly locally constant) factorization algebras. \end{definition}
A homeomorphism  $f:X\stackrel{\simeq} \to Y$  induces  isomorphisms $\textbf{PFac}_X\cong \textbf{PFac}_Y$ and
$\textbf{Fac}_X\cong \textbf{Fac}_Y$ (or $\textbf{Fac}^{lc}_X\cong \textbf{Fac}^{lc}_Y$ when $X$ is a manifold) realized by the functor $f_*$ see~\S~\ref{SS:pushforward}.
\end{remark}

\subsection{Factorization homology and locally constant factorization algebras}\label{S:EnasFact}
 We now explain the relationship between the \v{C}ech complex of a factorization algebras and factorization homology. 
 We first start to express $\text{Disk}^{(X,TX)}_n$-algebras in terms of factorization algebras. For simplicity, we assume in  \S~\ref{S:EnasFact} that manifolds are smooth. For topological manifolds one obtains the same result as below by  replacing geodesic convex neighborhoods by  families of embeddings $\R^n\to M$ wich preserves the $(M,TM)$-structure and whose images form a basis of open of $M$. 
 
Let $M$ be a manifold with an $(X,e)$-structure. Every open subset $U$ of $M$ inherits a canonical $(X,e)$-structure  given by the factorization  $U\hookrightarrow M\stackrel{f}\to X \stackrel{e}\to B\Homeo(\R^n)$ of the map $e_U: U\to B\Homeo(\R^n)$ classifying the tangent bundle of $U$. This construction extends canonically into a  functor $$f_*:\Disk_n^{(M,TM)} \longrightarrow  \Disk_n^{(X,e)}$$ and (by Definition~\ref{D:DiskXstructured}) we have 
\begin{lemma} An $(X,e)$-structure $M\stackrel{f}\to X \stackrel{e}\to B\Homeo(\R^n)$ on a manifold $M$ induces a
  functor $f^*:\Disk_n^{(X,e)}\textbf{-Alg} \to \Disk_n^{(M,TM)}\textbf{-Alg}$.
\end{lemma}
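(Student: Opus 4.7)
The proof is essentially a formal consequence of the functor $f_*:\Disk_n^{(M,TM)} \to \Disk_n^{(X,e)}$ built in the paragraph preceding the lemma, together with the definition of $\Disk_n^{(X,e)}$-algebras as symmetric monoidal functors with values in $(\hkmod,\otimes)$. The plan is to define $f^*$ as precomposition with $f_*$ and then check the (mild) compatibilities required for it to land in the correct $\infty$-category.

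First I would verify that $f_*$ is symmetric monoidal with respect to disjoint union. By construction, if $U_1,\dots,U_k$ are disjoint open subsets of $M$, then the $(X,e)$-structure on $\coprod_i U_i$ inherited from $M$ is exactly the disjoint union of the $(X,e)$-structures on each $U_i$, because the defining factorization $\coprod_i U_i \hookrightarrow M \stackrel{f}\to X \stackrel{e}\to B\Homeo(\R^n)$ restricts to the corresponding factorization on each component. In particular $f_*$ sends disjoint unions of euclidean disks in $M$ to disjoint unions of the same disks equipped with $(X,e)$-structures, and it is compatible with the obvious symmetry and unit (the empty manifold) data.

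Next, given $A \in \Disk_n^{(X,e)}\textbf{-Alg} = \textbf{Fun}^{\otimes}(\Disk_n^{(X,e)},\hkmod)$, I would set
\[
f^*(A) := A \circ f_*\, : \, \Disk_n^{(M,TM)} \longrightarrow \hkmod.
\]
Since the composition of symmetric monoidal ($\infty$-)functors is symmetric monoidal, $f^*(A)$ belongs to $\textbf{Fun}^{\otimes}(\Disk_n^{(M,TM)},\hkmod) = \Disk_n^{(M,TM)}\textbf{-Alg}$. Functoriality in $A$ is automatic since precomposition on the category of symmetric monoidal functors clearly preserves natural transformations; enriching everything over spaces, the assignment upgrades to a map of mapping spaces, so $f^*$ is an $\infty$-functor.

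The only mildly subtle point I expect is the coherence needed to make sense of $f_*$ as an actual $\infty$-functor (rather than a strict one), in particular to see that the induced $(X,e)$-structures on open disks depend continuously on the embedding into $M$ and that this dependence is coherent under composition of embeddings. This follows from viewing $f_*$ as the functor on (homotopy) slice $\infty$-categories induced by the map $e\colon X\to B\Homeo(\R^n)$ via the pullback description of $\Mfldn^{(X,e)}$ in Definition~\ref{D:XMfldn}: composing with $f$ sends an $(M,TM)$-structured embedding $\coprod_i\R^n\hookrightarrow M$ to the same embedding equipped with its factorization through $X$, and this is manifestly continuous in the embedding and compatible with composition up to coherent homotopy. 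Once $f_*$ is recognized as a symmetric monoidal $\infty$-functor, precomposition $A\mapsto A\circ f_*$ delivers the desired $f^*$.
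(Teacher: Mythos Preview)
Your proposal is correct and matches the paper's approach exactly: the paper states the lemma as an immediate consequence of Definition~\ref{D:DiskXstructured} (algebras as symmetric monoidal functors) together with the functor $f_*$ constructed just before, which is precisely your precomposition argument $f^*(A) = A \circ f_*$. The additional coherence discussion you provide is more detail than the paper gives, but it is in the same spirit and fills in what the paper leaves implicit.
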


Now let $A$ be a $\Disk_n^{(X,e)}$-algebra and choose a metric on $M$. A family of  pairwise disjoint open convex geodesic neighborhoods $U_1,\dots, U_i$ which lies in a convex geodesic neighborhood\footnote{which is thus canonically homeomorphic to an euclidean disk} $V$,  defines an $(X,e)$-structure preserving embedding $i_{U_1,\dots, U_i,V}\in \Emb^{(X,e)}\big(\coprod_{\{1,\dots, i\}} \R^n, \R^n\big)$ so that the $\Disk_n^{(X,e)}$-algebra structure of $A$  yields a structure map 
$$\mu_{U_1,\dots, U_i, V} : A^{\otimes i} \stackrel{i_{U_1,\dots, U_i,V}} \longrightarrow  \Emb^{(X,e)}\Big(\coprod_{\{1,\dots, i\}} \R^n, \R^n\Big)\otimes A^{\otimes i} \to A. $$
This allows us to define a \emph{prefactorization algebra $\mathcal{F}_A$ on open convex geodesic subsets} by the formula $\mathcal{F}_A(V):=A$. Since, the convex geodesic   neighborhoods form a basis of open which is stable by intersection,  for any open set $U\subset M$, we have the \v{C}ech complex\footnote{the construction is actually the extension of a factorization algebra on $\mathcal{CV}(U)$ as in Section~\ref{S:OperationsforFact} } 
$$\check{C}(\mathcal{CV}(U), \mathcal{F}_A)$$ where $\mathcal{CV}(U)$ is the factorizing cover of $U$ given by the geodesic convex open subsets of $U$. The following result shows that $\mathcal{F}_A$ is actually (the restriction of) a factorization algebra and computes factorization homology. 
 
\begin{theorem}[{\cite{GTZ2}}]\label{T:Theorem6GTZ2}  Let $A$ be a $\Disk_n^{(X,e)}$-algebra. 
\begin{itemize}\item The rule $M\mapsto \check{C}(\mathcal{CV}(M), \mathcal{F}_A)$ is a  homology theory for $(X,e)$-manifolds. In particular the \v{C}ech complex  is independent of the choice of the metric and computes factorization homology of $M$: $$\check{C}(\mathcal{CV}(M), \mathcal{F}_A)\simeq \int_M A.$$ 
\item  The functor $(U,A)\mapsto\check{C}(\mathcal{CV}(U), \mathcal{F}_A)$ induces an equivalence of $\infty$-categories  $\text{Disk}^{(M,TM)}_n\textbf{-Alg}\stackrel{\simeq}\longrightarrow \textbf{Fac}^{lc}_M$.
\end{itemize}
\end{theorem}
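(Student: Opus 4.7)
My plan is to reduce the first bullet to the uniqueness statement (Theorem~\ref{T:UniquenessFactHom}) by verifying that $M\mapsto \check{C}(\mathcal{CV}(M),\mathcal{F}_A)$ satisfies the three axioms of Definition~\ref{D:HomologyforMfld}, and then to establish the second bullet by constructing an explicit quasi-inverse via restriction to disks. Before checking axioms, I would first prove that the \v{C}ech complex is independent of the chosen metric. For this, I would extend $\mathcal{F}_A$ by left Kan extension to a prefactorization algebra on all opens of $M$, and then verify that this extension is actually a homotopy factorization algebra whose restriction to any factorizing basis of disks computes the same \v{C}ech complex (up to equivalence). Any two metrics yield factorizing covers admitting a common refinement, so invariance follows.

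For the three axioms: the \textbf{dimension} axiom is immediate because $\R^n$ itself belongs to $\mathcal{CV}(\R^n)$, so the one-element cover $\{\R^n\}$ is factorizing and produces the \v{C}ech complex $A$; any larger factorizing cover refines to this, giving the equivalence. For \textbf{monoidal}, note that a convex geodesic open in $M\sqcup N$ lies in exactly one component, so $P\mathcal{CV}(M\sqcup N)$ splits as a product of $P\mathcal{CV}(M)$ and $P\mathcal{CV}(N)$; since the tensor product commutes with geometric realization, the \v{C}ech complex splits as the tensor product. For \textbf{excision}, given $M=L\cup_{N\times\R}R$, I would choose a metric whose convex neighborhoods respect the collar decomposition: each convex open sits entirely in $L$, entirely in $R$, or (for those meeting the collar) in $N\times\R$. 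The bar-simplicial structure on the collar then identifies the \v{C}ech complex of $M$ with the two-sided bar construction computing $\check{C}(\mathcal{CV}(L),\mathcal{F}_A)\otimes^{\mathbb{L}}_{\check{C}(\mathcal{CV}(N\times\R),\mathcal{F}_A)}\check{C}(\mathcal{CV}(R),\mathcal{F}_A)$, where the $E_1$-structure on the middle factor arises from stacking convex opens along the $\R$-direction, matching the structure from Lemma~\ref{L:homthMfldisEn}.

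For the equivalence of $\infty$-categories, I would construct the inverse $\Psi:\textbf{Fac}^{lc}_M\to \Disk_n^{(M,TM)}\textbf{-Alg}$ by sending $\mathcal{B}$ to the symmetric monoidal functor $\coprod_I \R^n \mapsto \mathcal{B}(\phi(\coprod_I \R^n))$, well-defined up to coherent equivalence by local constancy (the space of embeddings $\phi$ is contractible when nonempty and $\mathcal{B}$ inverts disk inclusions). The composition $\Psi\circ \bigl(A\mapsto \check{C}(\mathcal{CV}(-),\mathcal{F}_A)\bigr)$ is naturally equivalent to the identity because evaluating $\check{C}(\mathcal{CV}(U),\mathcal{F}_A)$ on a disk $U$ yields $A$ by the dimension axiom. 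The other composition is the content of a reconstruction statement: a locally constant factorization algebra is the left Kan extension of its restriction to $\mathcal{CV}(M)$, which again follows from the factorizing-basis invariance combined with local constancy.

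The principal obstacle is the excision axiom: justifying that the collar structure on $N\times \R$ indeed yields the bar construction, up to compatible quasi-isomorphism, requires a careful cofinality argument comparing the subcategory of $P\mathcal{CV}(M)$ of collections respecting the decomposition with the full $P\mathcal{CV}(M)$, and invoking that the extended $\mathcal{F}_A$ satisfies homotopy descent on factorizing covers. A secondary technical point is verifying that the left Kan extension of $\mathcal{F}_A$ from $\mathcal{CV}(M)$ to all opens preserves the factorization property (the equivalence~\eqref{eq:factorizationproperty}), which I would prove by reducing it to the corresponding statement for each convex neighborhood where it is tautological.
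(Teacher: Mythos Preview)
The paper does not contain a proof of this theorem; it is stated with attribution to~\cite{GTZ2} and used as a black box throughout. So there is no in-paper argument to compare your proposal against.

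That said, your strategy is the expected one and matches what the cited reference does: verify the three axioms of Definition~\ref{D:HomologyforMfld} and invoke uniqueness (Theorem~\ref{T:UniquenessFactHom}) for the first bullet, then build the quasi-inverse by restriction to disks for the second. Your identification of excision as the principal obstacle is correct; the cofinality/bar-construction comparison you sketch is exactly the substantive content, and the paper's own excision arguments elsewhere (e.g.\ the proof of Proposition~\ref{P:BarInterval} in \S\ref{Proof:BarInterval}) illustrate the flavor of what is needed. One small point: in your dimension-axiom argument you say ``$\R^n$ itself belongs to $\mathcal{CV}(\R^n)$'' and use the one-element cover. This is fine if $\mathcal{CV}$ is taken to be all convex opens, but if you work with \emph{bounded} geodesic convex neighborhoods (as one sometimes does to ensure stability under intersection on a general $M$), you instead need the exhaustion-by-balls argument; this is harmless but worth flagging. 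Otherwise the outline is sound.
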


Since we have a preferred choice of framing for $\R^n$, the projection map $\R^n\to  pt$ induces an equivalence of $\infty$-categories $\text{Disk}^{(\R^n,T\R^n)}_n\stackrel{\simeq} \to \text{Disk}^{(pt,e)}_n$ and thus  equivalences $\text{Disk}^{(X,TX)}_n\textbf{-Alg} \cong \text{Disk}^{fr}_n\textbf{-Alg}\cong E_n\textbf{-Alg}$ (see Example~\ref{E:DisknAlg}).
Hence  Theorem~\ref{T:Theorem6GTZ2} is a slight generalization of  the following beautiful result.
\begin{theorem}[Lurie \cite{L-HA}]\label{P:En=Fact}
There is a natural equivalence of $\infty$-categories $$E_n\textbf{-Alg} \;\cong \;\textbf{Fac}^{lc}_{\R^n}.$$
The functor $\textbf{Fac}^{lc}_{\R^n}\to E_n\textbf{-Alg}$ is given by the global section (i.e. the pushforward $p_*$ where $p:\R^n\to pt$, see \S~\ref{SS:pushforward}) and the inverse functor is precisely given by factorization homology.
\end{theorem}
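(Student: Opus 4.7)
The plan is to construct mutually inverse functors between $E_n\textbf{-Alg}$ and $\textbf{Fac}^{lc}_{\R^n}$; both sides ought to encode the same data, namely an object together with multiplication maps parametrized by configurations of disjoint framed disks in $\R^n$, presented respectively as operad algebras and as local-to-global data. Alternatively, since Theorem~\ref{T:Theorem6GTZ2} is available, one may simply specialize it to $M = \R^n$ with its canonical framing and invoke the equivalence $\Disk_n^{(\R^n, T\R^n)}\textbf{-Alg} \simeq \Disk_n^{fr}\textbf{-Alg} \simeq E_n\textbf{-Alg}$ induced by the projection $\R^n \to pt$ (which is an equivalence on disk operads because every vector bundle on $\R^n$ is canonically trivialized); but I will sketch the direct construction, which is really the prototype for the proof of Theorem~\ref{T:Theorem6GTZ2}.

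First I would build the functor $\Phi: E_n\textbf{-Alg} \to \textbf{Fac}^{lc}_{\R^n}$. Given $A \in E_n\textbf{-Alg}$, define a prefactorization algebra $\mathcal{F}_A$ on the basis $\mathcal{CV}(\R^n)$ of open convex subsets of $\R^n$ by $\mathcal{F}_A(V) := A$. Each inclusion $U_1 \sqcup \cdots \sqcup U_i \hookrightarrow V$ of disjoint convex opens yields, after identifying each with $\R^n$ via the framing inherited from $\R^n$, a framed embedding $\coprod_I \R^n \hookrightarrow \R^n$, i.e.\ an element of $\Disk_n^{fr}(i, 1)$ which is homotopy equivalent to $E_n(i)$. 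The $E_n$-action on $A$ then produces the required structure map $A^{\otimes i} \to A$, and operadic associativity of the $E_n$-action gives the associativity diagrams~\eqref{eq:associativityPFact}. Local constancy is immediate: the inclusion of one convex open into another corresponds to a contractible space of framed embeddings $\R^n \hookrightarrow \R^n$ acting by self-equivalences of $A$. For arbitrary open $U \subset \R^n$, extend by $\Phi(A)(U) := \check{C}(\mathcal{CV}(U), \mathcal{F}_A)$, exactly as in the construction preceding Theorem~\ref{T:Theorem6GTZ2}.

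Second, the inverse is the pushforward $p_*$ along $p: \R^n \to pt$, whose underlying complex is $p_*(\mathcal{F}) := \mathcal{F}(\R^n)$. The $E_n$-algebra structure arises as follows: for any configuration of $i$ disjoint framed disks $D_1, \ldots, D_i \subset \R^n$, the structure map $\mathcal{F}(D_1) \otimes \cdots \otimes \mathcal{F}(D_i) \to \mathcal{F}(\R^n)$, combined with the local constancy equivalences $\mathcal{F}(D_j) \simeq \mathcal{F}(\R^n)$, produces an action $\Disk_n^{fr}(i, 1) \otimes \mathcal{F}(\R^n)^{\otimes i} \to \mathcal{F}(\R^n)$. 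Compatibility with operadic composition is a direct consequence of the prefactorization associativity~\eqref{eq:associativityPFact}. To check these two functors are mutually inverse: $p_*(\Phi(A)) = \check{C}(\mathcal{CV}(\R^n), \mathcal{F}_A)$ is quasi-isomorphic to $\mathcal{F}_A(\R^n) = A$ because $\{\R^n\}$ is itself a factorizing cover of $\R^n$ and the canonical map~\eqref{eq:canonicalmapFact} is therefore an equivalence; conversely, given $\mathcal{F} \in \textbf{Fac}^{lc}_{\R^n}$ with $A := p_*(\mathcal{F})$, local constancy provides a canonical equivalence $\Phi(A)|_{\mathcal{CV}(\R^n)} \simeq \mathcal{F}|_{\mathcal{CV}(\R^n)}$, and the factorization (descent) property of $\mathcal{F}$ extends this to an equivalence on all opens.

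The main obstacle is tracking the full $\infty$-categorical coherence. One must encode not merely a single multiplication on $A$ in the homotopy category but the entire tower of higher homotopies provided by the space-level action $\Disk_n^{fr}(i,1) \otimes A^{\otimes i} \to A$, and likewise promote the natural equivalences $\Phi \circ p_* \simeq \mathrm{id}$ and $p_* \circ \Phi \simeq \mathrm{id}$ to coherent equivalences of $\infty$-functors. Following~\cite{L-HA}, the cleanest resolution is to work systematically at the level of $\infty$-operads, promoting both $\Disk_n^{fr}$ and the colored category of pairwise disjoint opens of $\R^n$ to $\infty$-operads and exhibiting the functors above as induced by an $\infty$-operadic equivalence between them (together with the $\infty$-operadic Čech-descent encoding the factorization algebra condition).
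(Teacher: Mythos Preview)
Your proposal is correct and matches the paper's own treatment. The paper does not give a standalone proof of this theorem (it is attributed to Lurie), but Remark~\ref{R:En=Fact} spells out exactly the argument you describe: one passes through the intermediate category $\textbf{PFac}_{\mathcal{D}}^{lc}$ of locally constant $\mathcal{D}$-prefactorization algebras (with $\mathcal{D}$ the cover by open disks, playing the role of your $\mathcal{CV}(\R^n)$), uses Lurie's equivalence $\Disk_n^{fr}\textbf{-Alg}\simeq \textbf{PFac}_{\mathcal{D}}^{lc}$, and then invokes Theorem~\ref{T:Theorem6GTZ2} and Proposition~\ref{P:extensionfrombasis} to close the diagram; your first option of specializing Theorem~\ref{T:Theorem6GTZ2} to $M=\R^n$ is precisely how the paper frames the relationship in the sentence introducing the theorem.
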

Locally constant factorization algebras on $\R^n$  are thus \emph{a model for $E_n$-algebras}. More generally, locally constant factorization algebras are a model for $\Disk_n^{(X,TX)}$-algebras in which the cosheaf property replaces\footnote{note that factorization algebras  are described by operads in \emph{discrete} space together with the \v{C}ech condition, see Remark~\ref{R:Ndisk(M)}} some of the  higher homotopy machinery needed for studying these algebras (at the price of working with \lq\lq{}lax\rq\rq{} algebras).

\begin{remark}\label{R:En=Fact} Theorem~\ref{P:En=Fact} is the key example of the relationship between factorization algebras and factorization homology so we now explain the equivalence in more depth.
 Recall that $E_n\textbf{-Alg}$ is the $\infty$-category
of algebras over the operad $\text{Cube}_n$ of little cubes.
 It is  equivalent to the $\infty$-category $\Disk_n^{fr}\textbf{-Alg}$ since we have an equivalence of operads $\text{Cube}_n \stackrel{\simeq}\longrightarrow \Disk_n^{fr}$  induced by a choice of diffeomorphism $\theta:(0,1)^n\cong \R^n$.
Consider the open cover $\mathcal{D}$ of $\R^n$ 
consisting of all open disks and denote $\textbf{PFac}_{\mathcal{D}}^{lc}$\footnote{Note that the category of $\mathcal{D}$-prefactorization algebras is the category of algebras over the colored operad $N(\Disk(\R^n))$, see Remark~\ref{R:Ndisk(M)}.}
 the category of $\mathcal{D}$-prefactorization algebras which satisfy the locally constant condition (Definition~\ref{D:locallyconstant} and Definition~\ref{D:Prefact}). Evaluation of a $\Disk_n^{fr}$-algebra on an open disk yields a functor
$\Disk_n^{fr}\textbf{-Alg} \to \textbf{PFac}_{\mathcal{D}}^{lc}$ which is an equivalence by~\cite[\S 5.2.4]{L-HA}. 

Similarly, let $\mathcal{R}$ be the cover of $(0,1)^n$ by open rectangles and $\textbf{PFac}_{\mathcal{R}}^{lc}$ be the category of locally constant  $\mathcal{R}$-prefactorization algebras. Evaluation of a   $\text{Cube}_n$-algebra on rectangles yields a functor
$E_n\textbf{-Alg}=\text{Cube}_n\textbf{-Alg} \to \textbf{PFac}_{\mathcal{R}}^{lc}$. Denote $\textbf{Fac}_{\mathcal{D}}^{lc}$,  $\textbf{Fac}_{\mathcal{R}}^{lc}$ the category of locally constant factorization algebras over the covers $\mathcal{D}$, $\mathcal{R}$ respectively (see \S~\ref{SS:extensionfrombasis}).
We have two commutative diagram and an equivalence between them induced by the diffeomorphism $\theta$: 
\begin{equation}\label{eq:diagEn=Fact1}\xymatrix{ 
\Disk_n^{fr}\textbf{-Alg} \ar[rr]^{\simeq} \ar@{.>}[rrd] &  &\textbf{PFac}_{\mathcal{D}}^{lc} & &\textbf{Fac}_{\R^n}^{lc} \ar[ll]
\ar[dll]^{\simeq}  \\
& &\textbf{Fac}_{\mathcal{D}}^{lc} \ar@{^{(}->}[u] && }\end{equation}
\begin{equation}\label{eq:diagEn=Fact2}\xymatrix{
E_n\textbf{-Alg} \ar@{.>}[rrd] \ar[rr]^{\simeq} &&  \textbf{PFac}_{\mathcal{R}}^{lc} && \textbf{Fac}_{(0,1)^n}^{lc} \ar[lld]^{\simeq}\ar[ll] \\
&& \textbf{Fac}_{\mathcal{R}}^{lc} \ar@{^{(}->}[u] & &
} \end{equation}
where the dotted arrows exists by Theorem~\ref{T:Theorem6GTZ2} and the diagonal right equivalences are given by Proposition~\ref{P:extensionfrombasis}. 
Since the embedding of  factorization algebras in  prefactorization algebras (over any cover or space) is fully faithful, we obtain that all maps in Diagrams~\eqref{eq:diagEn=Fact1} and~\eqref{eq:diagEn=Fact2} are equivalences so that we recover the equivalence $E_n\textbf{-Alg} \cong \textbf{Fac}^{lc}_{\R^n}\cong \textbf{Fac}^{lc}_{(0,1)^n}$ of Theorem~\ref{P:En=Fact}, also see~\cite{Ca-HDR}.
\end{remark}

\begin{example}[Constant factorization algebra on framed manifolds]\label{ex:framedisEn}
Let $M$ be a framed manifold of dimension $n$. By Theorem~\ref{T:Theorem6GTZ2} or~\cite{L-HA, GTZ2},  any $E_n$-algebra $A$ yields a locally constant factorization algebra $\mathcal{A}$ on $M$ which is defined by assigning to any geodesic disk $D$ the chain complex $\mathcal{A}(D) \cong A$. We call such a factorization algebra the \emph{constant factorization algebra} on $M$ associated to $A$ since it satisfies the property that there is a (globally defined) $E_n$-algebra $A$ together with  natural (with respect to the structure map of the factorization algebra) quasi-isomorphism $\mathcal{A}(D) \stackrel{\simeq}\to A$ for every disk $D$.

 In particular, for $n=0,1,3,7$,  there is a faithful embedding of $E_n$-algebras into   constant factorization algebras over the $n$-sphere $S^n$. 
\end{example}

If a manifold $X$ is not framable, we can obtain constant factorization algebras on $X$ by using  (un)oriented $E_n$-algebras instead of plain $E_n$-algebras: 
\begin{example}[Constant factorization algebra on (oriented) smooth manifolds]
Let $A$ be an unoriented $E_n$-algebra (i.e. a $\Disk_n^{un}$-algebra, Example~\ref{E:DisknAlg}). Then $A$ yields  a (locally) constant factorization $\mathcal{A}$ algebra on any smooth manifold of dimension $n$  which is defined by assigning to any geodesic disk $D$ the chain complex $\mathcal{A}(D) \cong A$. 

Similarly, an oriented $E_n$-algebra (i.e. a $\Disk_n^{or}$-algebra) yields a (locally) constant factorization algebra on any oriented dimension $n$ manifold.
\end{example}

\begin{example}[Commutative factorization algebras]\label{E:EinftygivesNDisk}
The canonical functor  $E_\infty\textbf{-Alg} \to \Disk_n^{(X,e)}\textbf{-Alg}$ (see Example~\ref{E:ComisDisk}) shows that any $E_\infty$-algebras induces a canonical structure of (locally) constant factorization algebra on any (topological) manifold $M$. In that case, the factorization homology reduces to the derived Hochschild chains according to Theorem~\ref{T:Theorem6GTZ2} and Theorem~\ref{T:Fact=CH} below. See~\S~\ref{SS:ComFactAlgebras} for more details. 
\end{example}

\begin{theorem}\label{T:Fact=CH}
If $A$ is an $E_\infty$-algebra\footnote{for instance a (differential graded) commutative algebra}, then, for every topological manifold $M$, there is an natural equivalence ${CH}_M(A)\cong \int_M A$. In particular, factorization homology of $E_\infty$-algebras extends uniquely as an homology theory for spaces (see Definition~\ref{D:axioms}).
\end{theorem}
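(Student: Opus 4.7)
The plan is to apply the uniqueness of factorization homology on manifolds (Theorem~\ref{T:UniquenessFactHom}) by verifying that the functor $M \mapsto CH_M(A)$, restricted from $\hTop$ to $\Mfldn^{(X,e)}$, satisfies the three axioms of a homology theory for manifolds (Definition~\ref{D:HomologyforMfld}). Since $A$ is $E_\infty$, the canonical functor $E_\infty\textbf{-Alg}\to \Disk_n^{(X,e)}\textbf{-Alg}$ of Example~\ref{E:EinftygivesNDisk} makes $\int_M A$ well defined with underlying object $A$, so both sides of the claimed equivalence exist and are natural in $M$.

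First I would verify the dimension axiom: since $\R^n$ is contractible, homotopy invariance of $\CH$ combined with axiom (i) of Definition~\ref{D:axioms} gives $CH_{\R^n}(A)\simeq CH_{pt}(A)\simeq A$. The monoidal axiom for manifolds is just the restriction of axiom (ii) of Definition~\ref{D:axioms} to disjoint unions of manifolds viewed as disjoint unions of spaces. For excision, given a decomposition $M = R\cup_{N\times \R} L$ in $\Mfldn^{(X,e)}$, the inclusions $N\times \R \hookrightarrow R$ and $N\times \R \hookrightarrow L$ are open embeddings, hence cofibrations, so this pushout of manifolds is already a homotopy pushout in $\hTop$. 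Applying axiom (iii) of Definition~\ref{D:axioms} to this homotopy pushout yields
$$CH_M(A) \;\simeq\; CH_R(A)\mathop{\otimes}_{CH_{N\times \R}(A)}^{\mathbb{L}} CH_L(A),$$
which is precisely the excision condition demanded by Definition~\ref{D:HomologyforMfld}. Theorem~\ref{T:UniquenessFactHom} then yields the natural equivalence $CH_M(A) \simeq \int_M A$. For the \emph{in particular} assertion, the above equivalence exhibits $M\mapsto CH_M(A)$ as an extension of factorization homology with coefficients in $A$ from manifolds to all spaces, and its uniqueness among homology theories for spaces is precisely Theorem~\ref{T:derivedfunctor}.

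The main obstacle I anticipate lies in reconciling the two incarnations of the relative tensor product appearing in the manifold and space excisions. The manifold excision of Definition~\ref{D:HomologyforMfld} presents $CH_{N\times \R}(A)$ as an $E_1$-algebra via Lemma~\ref{L:homthMfldisEn}, with $CH_R(A)$ and $CH_L(A)$ as right and left $E_1$-modules, whereas the space-level excision of Definition~\ref{D:axioms} uses the $E_\infty$-relative tensor product over $CH_{N\times \R}(A)$. One must therefore verify that the $E_1$-algebra structure on $CH_{N\times \R}(A)$ induced by framed embeddings of disjoint copies of $N\times \R$ into itself is the one underlying the canonical $E_\infty$-structure on $CH_{N\times \R}(A)$, and similarly that the induced $E_1$-module structures on $CH_R(A)$ and $CH_L(A)$ are restrictions of their natural $E_\infty$-module structures; this is where the hypothesis that $A$ is $E_\infty$ is essential, since it forces all these operadic operations to be induced from the underlying commutative algebra structure. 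Once this compatibility is established, the two relative tensor products agree and the manifold and space excision axioms produce equivalent outputs, completing the identification.
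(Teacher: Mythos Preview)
Your proposal is correct and follows essentially the same route as the paper: restrict the homology theory for spaces $M\mapsto CH_M(A)$ to manifolds, verify the dimension, monoidal, and excision axioms of Definition~\ref{D:HomologyforMfld}, and invoke the uniqueness Theorem~\ref{T:UniquenessFactHom}. The paper's proof is in fact terser than yours---it simply asserts that the space-level axioms imply the manifold-level ones---so your explicit discussion of the compatibility between the $E_1$-structures arising from Lemma~\ref{L:homthMfldisEn} and the underlying $E_\infty$-structures is a welcome elaboration rather than a deviation.
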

\begin{proof}This  is proved in~\cite{GTZ2}, also see~\cite{F}.  The result essentially follows by uniqueness of the  homology theories (Theorem~\ref{T:UniquenessFactHom}). Namely,  if $\mathcal{H}_A$ is an homology theory for spaces whose value on a point  is $A$, then $\mathcal{H}_A(\R^n)=A$ ($\R^n$ is contractible) and further $\mathcal{H}_A$ satisfies the monoidal and excision axioms of a homology theory for manifolds.  
\qed \end{proof}

\begin{example}[pre-cosheaves]\label{ex:Precosheafyeildsprefact} Let $\mathcal{P}$ be a pre-cosheaf on $X$ (with values in vector spaces or chain complexes).  For any open  $U \subset X$,  set $\mathcal{F}(U):= S^\bullet(\mathcal{P}(U))=\bigoplus_{n\geq 0} (\mathcal{P}(U)^{\otimes n})_{S_n}$ where $S^\bullet$ is the free (differential graded) commutative algebra functor. Then  $\mathcal{F}$ is a prefactorization algebra with structure maps given by the algebra structure of $S^{\bullet}(\mathcal{P}(V))$: 
\begin{multline*}\mathcal{F}(U_1)\otimes \cdots \otimes \mathcal{F}(U_i) \cong  S^\bullet(\mathcal{P}(U_1))\otimes \cdots \otimes  S^\bullet(\mathcal{P}(U_i)) \\ \stackrel{\otimes S^\bullet(\mathcal{P}(U_i\to V))}\longrightarrow S^\bullet(\mathcal{P}(V))\otimes \cdots \otimes  S^\bullet(\mathcal{P}(V)) \longrightarrow S^\bullet(\mathcal{P}(V))=\mathcal{F}(V).\end{multline*}
\begin{proposition}[\emph{cf}.~\cite{CG}]\label{P:Precosheafyeildsprefact}
If $\mathcal{F}$ is a homotopy cosheaf, then $\mathcal{F}$ is a factorization algebra (not necessarily locally constant). 

In characteristic zero, if $\mathcal{P}$ is a homotopy cosheaf, then  $\mathcal{F}$ is a factorization algebra. \end{proposition}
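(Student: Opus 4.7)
The strategy is to translate the factorization \v{C}ech condition for $\mathcal{F}$ into an ordinary cosheaf \v{C}ech condition, exploiting the fundamental identity $\bigotimes_{V\in\alpha} S^{\bullet}(\mathcal{P}(V)) \cong S^{\bullet}\bigl(\bigoplus_{V\in\alpha}\mathcal{P}(V)\bigr)$ supplied by the commutative algebra structure of the symmetric algebra. Given a factorizing cover $\mathcal{U}$ of an open set $U$, the key construction is the \emph{enlarged cover} $\widetilde{\mathcal{U}} := \{\bigsqcup_{V\in\alpha} V : \alpha \in P\mathcal{U}\}$, whose elements are finite disjoint unions of elements of $\mathcal{U}$; this $\widetilde{\mathcal{U}}$ is still an ordinary open cover of $U$ and contains $\mathcal{U}$.

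For Part 1, I would unpack each simplicial degree of the factorization \v{C}ech complex and combine the identity above with the distributivity of intersection over disjoint unions to identify, with $W_j := \bigsqcup_{V\in\alpha_j} V$,
\[
\bigotimes_{(V_0,\dots,V_i)\in\alpha_0\times\cdots\times\alpha_i} S^{\bullet}\bigl(\mathcal{P}(V_0\cap\cdots\cap V_i)\bigr) \;\cong\; \mathcal{F}(W_0\cap\cdots\cap W_i),
\]
where the right-hand side is computed by interpreting the commutative cosheaf $\mathcal{F}$ on disjoint unions as tensor products. A careful check of the simplicial face and degeneracy maps then promotes this to an isomorphism of simplicial chain complexes $\check{C}(\mathcal{U},\mathcal{F}) \cong \check{C}^{\mathit{cosh}}(\widetilde{\mathcal{U}},\mathcal{F})$, after which the homotopy cosheaf hypothesis on $\mathcal{F}$ applied to the ordinary open cover $\widetilde{\mathcal{U}}$ delivers the required quasi-isomorphism with $\mathcal{F}(U)$.

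For Part 2, I would promote the cosheaf property from $\mathcal{P}$ to $\mathcal{F}$ and reduce to Part 1. In characteristic zero the symmetric coinvariants $V^{\otimes n}/S_n$ are exact, so $S^{\bullet}$ preserves quasi-isomorphisms between chain complexes; as the left adjoint of the forgetful functor from CDGAs to chain complexes, it also preserves homotopy colimits, and the characteristic-zero exactness ensures that the underlying chain complex of the CDGA realization of a simplicial CDGA coincides with the chain complex realization of the underlying simplicial chain complex. Applying $S^{\bullet}$ to the cosheaf resolution $\check{C}^{\mathit{cosh}}(\widetilde{\mathcal{U}},\mathcal{P}) \xrightarrow{\simeq} \mathcal{P}(U)$ and combining with the degreewise identification from Part 1 yields $\check{C}(\mathcal{U},\mathcal{F}) \simeq \mathcal{F}(U)$.

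The main obstacle is the failure of $S^{\bullet}$ to preserve quasi-isomorphisms in positive characteristic, which is precisely what forces the restriction to characteristic zero in Part 2; a secondary technical issue is the careful bookkeeping of simplicial face maps needed to verify that the identification in Part 1 is an isomorphism of simplicial chain complexes and not merely a degreewise one.
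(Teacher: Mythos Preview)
The paper does not give its own proof of this proposition; it is stated with a citation to Costello--Gwilliam~[CG]. However, your approach is exactly the argument the paper later develops in Lemma~\ref{L:cosheaves=Fact} (and Proposition~\ref{P:ComFact=CoShvofAlg}) for the more general statement that commutative factorization algebras coincide with cosheaves of commutative algebras: there the factorization \v{C}ech complex $\check{C}(\mathcal{U},\mathcal{F})$ is identified with the ordinary cosheaf \v{C}ech complex $\check{C}^{\text{cosheaf}}(P\mathcal{U},\mathcal{F})$ over the enlarged cover $P\mathcal{U}$ (your $\widetilde{\mathcal{U}}$), using precisely the identity $\bigotimes_i \mathcal{F}(U_i)\cong \mathcal{F}(\coprod_i U_i)$ that holds when the monoidal structure is the coproduct. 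Your Part~1 is thus the specialization of that lemma to $\mathcal{C}=E_\infty\textbf{-Alg}$, and your Part~2 correctly isolates the characteristic-zero exactness of symmetric powers as the reason one can transfer the cosheaf property from $\mathcal{P}$ to $S^\bullet(\mathcal{P})$. The proposal is correct and aligned with the paper's treatment.
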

\end{example}
\begin{example}[Observables] \label{ex:ObservablesasFact} Several examples of (pre-)factorization algebras arising from theoretical physics (more precisely from perturbative quantum field theories) are described in the beautiful work~\cite{CG, Co}. They  arose as deformations of those obtained as in the previous Example~\ref{ex:Precosheafyeildsprefact}.
For instance, let $E\to X$ be a (possibly graded) vector bundle over a smooth manifold $X$. Let $\mathcal{E}$ be the sheaf of smooth sections of  $E$ (which may be endowed with a differential which is a differential operator) and $\mathcal{E}^{'}$ be  its associated distributions. 
The above construction yields a (homotopy) factorization algebra $U\mapsto S(\mathcal{E}^{'}(U))=\bigoplus_{n\geq 0} \text{Hom}_{\mathcal{O}_X(U)}(\mathcal{E}(U)^{\otimes n}, \R)$.  In~\cite{CG}, Costello-Gwilliam have been refining this example to equip the classical observables of a classical field theory with the structure of a factorization algebra (with values in $P_0$-algebras, Example~\ref{Ex:PnAlg}). Their construction is a variant of the classical AKSZ formalism~\cite{AKSZ}. Related constructions are studied in~\cite{Paugam-book}.

Further, the quantum observables of a quantization of the classical field theory, when they exist, also form a factorization algebra (not necessarily locally constant). A very nice example of this procedure arises when $X$ is an elliptic curve, see~\cite{Co}.

These factorization algebras (with values in lax $P_0$-algebras) encode the algebraic structure governing observables of the field theories (in the same way as the observables of classical mechanics are described by the algebra of smooth functions on a manifold together with its Poisson bracket).
 Very roughly speaking, the \emph{locally constant factorization algebras correspond to observables of topological field theories}. 
%Also note that in the algebraic geometry setting, vertex algebras are represented by translation invariant factorization algebras~\cite{BD, FG} and there are similar interpretation in the topological context, see~\cite{CG, Gw-Thesis, Satger}.
\end{example}

\begin{example}[Enveloping factorization algebra of a  dg-Lie algebra]
 Let $\mathcal{L}$ be a homotopy cosheaf of differential graded Lie algebras on a Hausdorff space $X$, over a characteristic zero ring.
 For instance, $\mathcal{L}$ can be the cosheaf of \emph{compactly supported} forms $\Omega^{\bullet, c}_{dR,M}\otimes \mathfrak{g}$ with value in $\mathfrak{g}$ where $\mathfrak{g}$ is a differential graded Lie algebra and $\Omega^{\bullet}_{dR,M}$ is the (complex of) sheaf on a manifold $M$ given by the de Rham complex. If $M$  is a complex manifold, another interesting example is obtained by substituting the Dolbeaut complex to the de Rham complex. 
 
 \smallskip
 
 For any open $U\subset X$, we can form the Chevalley-Eilenberg  chain complex $C^{CE}_\bullet\big(\mathcal{L}(U)\big)$ of the (dg-)Lie algebra $\mathcal{L}(U)$.
 Its  underlying $k$-module (see~\cite{Weibel-book}) is 
given by 
 $$C^{CE}_\bullet\big(\mathcal{L}(U)\big):= S^\bullet\big(\mathcal{L}(U)[1]\big) $$ and its differential is induced by the Lie bracket and inner differential of $\mathcal{L}$. 
 The structure maps of Example~\ref{ex:Precosheafyeildsprefact} (applied to $\mathcal{F}=\mathcal{L}[1]$) are maps of chain complexes (since $\mathcal{L}$ is a precosheaf of dg-Lie algebras), hence make $C^{CE}_\bullet\big(\mathcal{L}(-)\big)$ a prefactorization algebra over $X$, which we denote $C^{CE}(\mathcal{L})$ (note that this construction only requires $\mathcal{L}$ to be a precoheaf of dg-Lie algebras).
 As a corollary of Proposition~\ref{P:Precosheafyeildsprefact}, one obtains
 \begin{corollary}[Theorem 4.5.3,~\cite{Gw-Thesis}]If $\mathcal{L}$ is a homotopy cosheaf of dg-Lie algebras, the prefactorization algebra 
  $C^{CE}(\mathcal{L})$ is a (homotopy) factorization algebra.
 \end{corollary}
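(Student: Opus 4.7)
The plan is to deduce the corollary from the second (characteristic zero) part of Proposition~\ref{P:Precosheafyeildsprefact} by a spectral sequence argument that trivializes the Chevalley--Eilenberg differential, reducing the problem to the abelian case already handled there.

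First, I would check that $C^{CE}(\mathcal{L})$ is a prefactorization algebra. As a graded object on each open it agrees with the prefactorization algebra $S^\bullet(\mathcal{L}[1])$ constructed in Example~\ref{ex:Precosheafyeildsprefact}, so the symmetric-algebra structure maps $\rho_{U_1,\dots,U_n,V}$ are already available, and they intertwine the internal dg-structure of $\mathcal{L}$. The only additional point is that they must intertwine the CE differential; this is automatic because each precosheaf structure map $\mathcal{L}(U_i)\to \mathcal{L}(V)$ is a morphism of dg-Lie algebras, hence commutes with the brackets out of which the CE differential is built.

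Second, fix an open $U$ and a factorizing cover $\mathcal{U}$ of $U$. I would put on both $\check{C}(\mathcal{U}, C^{CE}(\mathcal{L}))$ and $C^{CE}(\mathcal{L}(U))$ the exhaustive increasing filtration by symmetric-algebra polynomial degree, $F_p := S^{\leq p}(\mathcal{L}[1])$. The CE differential, being a derivation built from the bracket $\mathcal{L}\otimes \mathcal{L}\to \mathcal{L}$, strictly lowers polynomial degree by one and so preserves $F_\bullet$; the internal differential of $\mathcal{L}$ and the \v{C}ech differential preserve polynomial degree on the nose. The canonical comparison map is therefore filtered. On the associated graded the CE differential vanishes, so in polynomial degree $p$ the map becomes exactly the \v{C}ech comparison morphism for the prefactorization algebra $S^p(\mathcal{L}[1])$ with its internal differential only. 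Summing over $p$, this is precisely the comparison morphism produced by Example~\ref{ex:Precosheafyeildsprefact} applied to the homotopy cosheaf of chain complexes $\mathcal{L}[1]$, which is a quasi-isomorphism by the second part of Proposition~\ref{P:Precosheafyeildsprefact}.

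Finally, I would conclude by the standard comparison of spectral sequences: the filtration $F_\bullet$ is bounded below ($F_{-1}=0$) and exhaustive, so the induced spectral sequences converge, and a filtered map that is a quasi-isomorphism on $E^0$ is a quasi-isomorphism on the abutments. The main obstacle is this convergence step, since $S^\bullet$ involves an infinite direct sum over polynomial degree: one deals with it either by working in a fixed total homological degree (where only finitely many filtration strata contribute, assuming $\mathcal{L}$ is reasonably bounded) or by invoking the Eilenberg--Moore/Cartan--Eilenberg comparison theorem for bounded-below exhaustive filtrations. The hypothesis of characteristic zero enters twice: in order to invoke Proposition~\ref{P:Precosheafyeildsprefact}, and to ensure via the symmetrization idempotent that each $S^p$ preserves quasi-isomorphisms between the underlying chain complexes, so that the associated graded step is actually homotopically well-behaved.
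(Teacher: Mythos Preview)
Your argument is correct and is exactly the standard way to deduce the corollary from Proposition~\ref{P:Precosheafyeildsprefact}. The paper does not give any details here: it simply asserts ``As a corollary of Proposition~\ref{P:Precosheafyeildsprefact}, one obtains'' and then states the result, deferring to Gwilliam's thesis for the original statement. Your filtration by symmetric polynomial degree, on whose associated graded the Chevalley--Eilenberg differential vanishes so that one is reduced to the abelian case of $S^\bullet(\mathcal{L}[1])$, is precisely the mechanism that justifies calling this a corollary, and your treatment of the convergence issue via the Eilenberg--Moore comparison theorem for bounded-below exhaustive filtrations is the right way to close the argument. One small remark: since the structure maps and both remaining differentials on the associated graded preserve polynomial degree, the comparison map there splits as a direct sum over $p$, so invoking Proposition~\ref{P:Precosheafyeildsprefact} for the full $S^\bullet$ is equivalent to knowing each $S^p$-piece separately; either phrasing suffices for the spectral sequence step.
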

 The above corollary extends  to homotopy cohseaves of $L_\infty$-algebras as well.
 
 This construction actually generalizes the construction of the \emph{universal enveloping algebra} of a Lie algebra which corresponds to the case $\mathcal{L}=\Omega^{\bullet,c}_{dR,\mathbb{R}}\otimes \mathfrak{g}$ (\cite{Gw-Thesis}). 
 
 \smallskip
 
 More generally the observables of \emph{Free Field Theories} can be obtained this way, see~\cite{Gw-Thesis} for many examples. 
\end{example}

\begin{remark}[Algebras over disks in $X$]\label{R:Ndisk(M)} Assume $X$ has a cover by euclidean neighborhoods.
One can define a colored operad whose objects are open subsets of $X$ that are homeomorphic to $\mathbb R^n$ and whose morphisms from $\{U_1, \cdots, U_n\}$ to $V$ are empty except  when the $U_i$'s are mutually disjoint subsets of $V$, in which case they are singletons. 
We can take the monoidal envelope of this operad  (as in Appendix~\ref{S:EnAlg} or \cite[\S 2.4]{L-HA}) to get a symmetric monoidal $\infty$-category $\text{Disk}(X)$ (also see \cite{L-HA}, Remark 5.2.4.7).  
For any symmetric monoidal $\infty$-category $\mathcal{C}$, we thus get the $\infty$-category  $\text{Disk}(X)\textbf{-Alg}:=\textbf{Fun}^{\otimes}(\text{Disk}(X), \mathcal{C})$ of $\text{Disk}(X)$-algebras. 
% The $\infty$-operad associated to this colored operad is denoted by $N(\text{Disk}(X))$, . 
Unfolding the definition we find  that a $\text{Disk}(X)$-algebra  is precisely a $\mathcal{D}_{\textit{isk}}$-prefactorization algebra over $X$ where $\mathcal{D}_{\textit{isk}}$ is the set of all open disks in $X$.

A $\text{Disk}(X)$-algebra is \emph{locally constant} if for any inclusion of open disks $U\hookrightarrow V$  in $X$, the structure map $\mathcal{F}(U) \to \mathcal{F}(V)$ is a quasi-isomorphism (see~\cite{L-HA}). By Theorem~\ref{T:Theorem6GTZ2} and~\cite[\S 5.2.4]{L-HA},  locally constant $N(\text{Disk}(M))$-algebras are the same as locally constant factorization algebras. Hence we have
\begin{proposition} \label{P:Ndisk(M)}A locally constant $\mathcal{D}_{\textit{isk}}$-prefactorization algebra has an unique extension as a locally constant homotopy factorization algebra. In fact,  the functor $\textbf{Fac}^{lc}_X \to \textbf{PFac}^{lc}_{\mathcal{D}_{\textit{isk}}} \cong \text{Disk}(X)\textbf{-Alg}^{lc}$ is an equivalence. \end{proposition}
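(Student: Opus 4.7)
The plan is to factor the asserted equivalence through Theorem~\ref{T:Theorem6GTZ2}. First, I would unfold the definition of an algebra over the symmetric monoidal envelope of the colored operad $\text{Disk}(X)$: such an algebra is precisely a rule assigning a chain complex $\mathcal{F}(U)$ to every open disk $U \subset X$ together with a structure map $\mathcal{F}(U_1) \otimes \cdots \otimes \mathcal{F}(U_n) \to \mathcal{F}(V)$ for each finite family of pairwise disjoint disks $U_1, \dots, U_n$ sitting inside a disk $V$, satisfying the associativity condition~\eqref{eq:associativityPFact}. By Definition~\ref{D:Prefact}, this is exactly the data of a $\mathcal{D}_{\textit{isk}}$-prefactorization algebra, and the locally constant conditions on both sides coincide (both require a single-disk inclusion $U \hookrightarrow V$ to induce a quasi-isomorphism). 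This gives the tautological identification $\text{Disk}(X)\textbf{-Alg}^{lc} \cong \textbf{PFac}^{lc}_{\mathcal{D}_{\textit{isk}}}$.

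Next I would identify $\text{Disk}(X)\textbf{-Alg}^{lc}$ with $\Disk_n^{(X,TX)}\textbf{-Alg}$. The key input is that for any disk $V \subset X$, the embedding space $\Emb^{(X,TX)}(\coprod_I \R^n, V)$ is weakly homotopy equivalent to the discrete set of families of $I$ pairwise disjoint sub-disks of $V$; this reduces to showing the space $\Emb^{(X,TX)}(\R^n, V')$ of $(X,TX)$-preserving embeddings of $\R^n$ into a disk $V'$ is weakly contractible, which follows from a standard isotopy extension argument canonically contracting such an embedding onto a reference one. Under this equivalence, the colored operad $\text{Disk}(X)$ is a homotopy-coherent discrete model for the topologically enriched operad $\Disk_n^{(X,TX)}$, and the local-constancy condition on $\text{Disk}(X)$-algebras translates exactly to the homotopy-invariance built into symmetric monoidal $\infty$-functors out of $\Disk_n^{(X,TX)}$.

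Combining with Theorem~\ref{T:Theorem6GTZ2}, which provides an equivalence $\Disk_n^{(X,TX)}\textbf{-Alg} \simeq \textbf{Fac}^{lc}_X$ whose explicit inverse is $A \mapsto \check{C}(\mathcal{CV}(-), \mathcal{F}_A)$, one obtains the composite equivalence
\[
\textbf{Fac}^{lc}_X \; \simeq \; \Disk_n^{(X,TX)}\textbf{-Alg} \; \simeq \; \text{Disk}(X)\textbf{-Alg}^{lc} \; \cong \; \textbf{PFac}^{lc}_{\mathcal{D}_{\textit{isk}}}.
\]
Chasing through the constructions, this composite is precisely the restriction functor $\mathcal{F} \mapsto \mathcal{F}_{|\mathcal{D}_{\textit{isk}}}$ of the statement; its inverse sends a locally constant $\mathcal{D}_{\textit{isk}}$-prefactorization algebra $\mathcal{A}$ to the factorization algebra $U \mapsto \check{C}(\mathcal{CV}(U), \mathcal{A})$, which simultaneously establishes existence and uniqueness of the extension.

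The main obstacle is the middle identification $\text{Disk}(X)\textbf{-Alg}^{lc} \simeq \Disk_n^{(X,TX)}\textbf{-Alg}$, because one passes between a discrete colored operad (with morphism sets either empty or singletons) and a topologically enriched one (with genuine embedding spaces). The bridge is the contractibility of $\Emb^{(X,TX)}(\R^n, V)$, together with the verification that local constancy on the discrete side is the exact homotopical replacement of the topological structure on the enriched side; once this is in hand, the remaining steps are essentially bookkeeping, consisting of verifying that the composite equivalence coincides with the restriction-to-disks functor.
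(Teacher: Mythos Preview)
Your overall strategy matches the paper's exactly: the paper states the proposition as an immediate consequence of Theorem~\ref{T:Theorem6GTZ2} together with the comparison result~\cite[\S 5.2.4]{L-HA}, which is precisely your middle identification $\text{Disk}(X)\textbf{-Alg}^{lc}\simeq \Disk_n^{(X,TX)}\textbf{-Alg}$.

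However, your sketch of that middle step contains a genuine error. The space $\Emb^{(X,TX)}(\coprod_I \R^n, V)$ is \emph{not} weakly equivalent to a discrete set when $|I|\geq 2$: it is homotopy equivalent to the ordered configuration space $\mathrm{Conf}_I(V)$, which for $|I|=2$ and $V\cong\R^2$ already has the homotopy type of $S^1$. Contractibility of $\Emb^{(X,TX)}(\R^n,V')$ (which is correct) does not reduce the multi-disk embedding space to a discrete set; it only identifies it with the configuration space of the centers. What you actually need is the subtler statement that the functor from the discrete colored operad $\text{Disk}(X)$ to the topologically enriched $\Disk_n^{(X,TX)}$ exhibits the latter as the localization of the former at the single-disk inclusions, so that \emph{locally constant} algebras over $\text{Disk}(X)$ coincide with algebras over $\Disk_n^{(X,TX)}$. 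Concretely one must show that the classifying space of the poset of families of $I$ disjoint sub-disks of $V$ (ordered by inclusion) is weakly equivalent to $\Emb^{(X,TX)}(\coprod_I\R^n,V)$; this is what the contractibility of single-disk embeddings buys, but it is a statement about nerves of categories, not about underlying morphism sets. Once you replace your claim by this localization statement, the rest of your argument goes through and agrees with the paper's.
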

In particular, a \emph{locally constant prefactorization algebra $\mathcal{F}$ has an unique extension as a locally constant factorization algebra} $\mathcal{F}^{\circ}$ taking the same values as $\mathcal{F}$ on any disk.
\begin{example} The unique extension $\mathcal{F}^{\circ}$ of $\mathcal{F}$ as a factorization algebra can have different values than $\mathcal{F}$ and two different prefactorization algebras on $X$ can have the same values on the open cover $\mathcal{D}_{\textit{isk}}$. 
As a trivial example, let $X=\{x,y\}$ be a two  points discrete set. Then a  $\mathcal{D}_{\textit{isk}}$-prefactorization algebra is given by two pointed chain complexes $\mathcal{F}(\{x\})=V_x$, $\mathcal{F}(\{y\})=V_y$. It is locally constant and 
 the associated factorization algebra is given by $\mathcal{F}^{\circ}(\{x, y\})\cong V_{x}\otimes  V_{y}$. However, if $W$ is any chain complex with pointed maps $V_{x}\to W$, $V_{y}\to W$ then we have a prefactorization algebras $\mathcal{G}$ defined by $\mathcal{G}(X)=W$ and is otherwise the same as $\mathcal{F}^{\circ}$.  In particular, $\mathcal{G}$ is a prefactorization algebra on $X$ with the same values as $\mathcal{F}^{\circ}$ on the disks of $X$ but is different from $\mathcal{F}^{\circ}$. 
\end{example}
\end{remark}

The notion of being locally constant for a \emph{factorization algebra} is indeed a local property (though its definition is about all disks) as proved by the following result.
\begin{proposition}\label{P:locallocallyconstant}
Let $M$ be a topological manifold and $\mathcal{F}$ be a factorization algebra on $M$. Assume that there is an open  cover $\mathcal{U}$ of $M$ such that for any $U\in \mathcal{U}$  the restriction $\mathcal{F}_{|U}$ is locally constant.
Then $\mathcal{F}$ is locally constant on $M$.
\end{proposition}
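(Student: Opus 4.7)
The plan is to combine the 2-out-of-3 property for quasi-isomorphisms with the extension-from-a-factorizing-basis principle (Proposition~\ref{P:extensionfrombasis}), so as to localize the problem to data where the hypothesis applies directly. Define $\mathcal{B}$ to be the collection of open disks $D \subset M$ with $D \subset U$ for some $U \in \mathcal{U}$. It is straightforward to check that $\mathcal{B}$ is a factorizing basis of $M$: given distinct points $x_1,\dots,x_n \in M$, pick $U_i \in \mathcal{U}$ with $x_i \in U_i$ and then pairwise disjoint small disks $B_i \ni x_i$ with $B_i \subset U_i$, each lying in $\mathcal{B}$. Furthermore $\mathcal{F}|_\mathcal{B}$ is locally constant on $\mathcal{B}$: for an inclusion $B \subset B'$ with $B, B' \in \mathcal{B}$, some $U \in \mathcal{U}$ contains $B'$ (hence also $B$), so both $B$ and $B'$ are disks of $U$, and the local constancy of $\mathcal{F}|_U$ forces $\mathcal{F}(B) \to \mathcal{F}(B')$ to be a quasi-isomorphism.

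Next I reduce the statement, via 2-out-of-3, to the following sub-claim: for every open disk $D \subset M$ and every $B \in \mathcal{B}$ with $B \subset D$, the structure map $\mathcal{F}(B) \to \mathcal{F}(D)$ is a quasi-isomorphism. Granting this, let $D' \subset D$ be an arbitrary inclusion of open disks in $M$ and choose $B \in \mathcal{B}$ with $B \subset D'$ (possible since $\mathcal{B}$ is a factorizing basis). Applying the sub-claim to $B \subset D'$ and to $B \subset D$ yields quasi-isomorphisms $\mathcal{F}(B) \to \mathcal{F}(D')$ and $\mathcal{F}(B) \to \mathcal{F}(D)$; by the commutativity of $\mathcal{F}(B) \to \mathcal{F}(D') \to \mathcal{F}(D)$ and 2-out-of-3, the structure map $\mathcal{F}(D') \to \mathcal{F}(D)$ is a quasi-isomorphism.

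To prove the sub-claim I would choose a factorizing cover $\mathcal{V} \subset \mathcal{B}$ of the disk $D$ containing $B$ as one of its elements. By the factorization algebra axiom applied to $\mathcal{F}$ on $D$, one has $\mathcal{F}(D) \simeq \check{C}(\mathcal{V}, \mathcal{F})$, and every structure map of $\mathcal{F}$ between disks of $\mathcal{B}$ appearing in this \v{C}ech complex is a quasi-isomorphism by the first step. Proposition~\ref{P:extensionfrombasis} then identifies $\mathcal{F}|_D$ with the unique factorization algebra on $D$ extending the basis datum $\{B' \in \mathcal{B} : B' \subset D\}$, and Theorem~\ref{P:En=Fact} further identifies this extension with the locally constant factorization algebra on $D \cong \R^n$ associated to the $E_n$-algebra $\mathcal{F}(B)$; the structure map $\mathcal{F}(B) \to \mathcal{F}(D)$ then corresponds to the canonical equivalence with $\int_D \mathcal{F}(B) \simeq \mathcal{F}(B)$. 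The main technical obstacle lies precisely in this last identification: one must check that being locally constant on the factorizing basis $\mathcal{B}$ suffices to trigger the extension-from-basis machinery so as to produce a \emph{locally constant} factorization algebra on $D$, rather than merely an arbitrary factorization algebra; this is what allows the \v{C}ech complex to collapse onto any single element of the cover contained in $D$.
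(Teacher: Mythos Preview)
Your reduction via 2-out-of-3 to the sub-claim ``$\mathcal{F}(B)\to\mathcal{F}(D)$ is a quasi-isomorphism for every small disk $B\in\mathcal{B}$ inside an arbitrary disk $D$'' is correct and is exactly the shape of what must be shown. The gap is in the final paragraph, and it is not a technicality: the ``main technical obstacle'' you flag is the entire content of the proposition.

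Two concrete problems. First, Proposition~\ref{P:extensionfrombasis} requires the basis to be stable under finite intersections, and your $\mathcal{B}$ (disks contained in some $U\in\mathcal{U}$) is not---intersections of disks are not disks. Second, and more seriously, the appeal to Theorem~\ref{P:En=Fact} is circular: that equivalence takes \emph{locally constant} factorization algebras on $\R^n$ to $E_n$-algebras, so to invoke it on $\mathcal{F}|_D$ you would already need to know $\mathcal{F}|_D$ is locally constant. You propose instead to build a locally constant $\mathcal{G}$ on $D$ from the $E_n$-algebra $\mathcal{F}(B)$ and then argue $\mathcal{G}\simeq\mathcal{F}|_D$ by uniqueness of extension; but matching $\mathcal{G}|_{\mathcal{B}}$ with $\mathcal{F}|_{\mathcal{B}}$ as $\mathcal{B}$-factorization algebras (not just objectwise) is exactly the step that requires work, and you do not carry it out. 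One can try to rescue this by refining $\mathcal{B}$ to small convex sets and invoking the equivalence $\textbf{Fac}^{lc}_{\mathcal{CV}}\simeq E_n\textbf{-Alg}$ buried in Remark~\ref{R:En=Fact}, but that is already a substantial input and not the soft argument your write-up suggests.

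The paper's proof is entirely different and more elementary: it fixes a Euclidean disk $U=D(x,r_0)$ on which $\mathcal{F}$ is locally constant and runs a direct supremum argument on the radius $T$ up to which $\mathcal{F}(D(x,r_0/2))\to\mathcal{F}(D(x,s))$ remains an equivalence. The key geometric step is that for any finite $T$, each point on the sphere of radius $T$ lies in some $U\in\mathcal{U}$, so on thin angular sectors near that sphere $\mathcal{F}$ is locally constant; a \v{C}ech comparison over a cover by such sectors then pushes the equivalence from radius $T$ to $T+\epsilon$. Compactness of the sphere makes $\epsilon$ uniform, forcing $T_+=+\infty$. A second \v{C}ech argument over concentric disks then gives $\mathcal{F}(D(x,R))\stackrel{\simeq}{\to}\mathcal{F}(\R^n)$, and a final 2-out-of-3 step handles non-Euclidean disks. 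No appeal to Theorem~\ref{P:En=Fact} or to the $E_n$-operad is needed; the argument stays at the level of covers and \v{C}ech complexes.
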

See \S~\ref{S:SomeProofsGen} for a proof.

\begin{remark}[\textbf{Ran space}] Factorization algebras on $X$ can be seen as a certain kind of cosheaf on the Ran space of $X$. 
This definition is actually the correct one to deal with factorization algebras in the algebraic geometry context (see~\cite{BD, FG}).   
The  \emph{Ran space $\text{Ran}(X)$ of a manifold $X$} is the space of finite non-empty subsets of $X$. 
Its topology\footnote{this topology is closely related (but different) to  the final topology on $\text{Ran}(X)$ making the canonical applications $X^{n}\to \text{Ran}(X)$ ($n>0$) continuous} is the coarsest topology on $\text{Ran}(X)$ for which  the sets $\text{Ran}(\{U_i\}_{i\in I})$ are open for every non-empty finite collection of pairwise disjoint opens subsets  $U_i$ ($i\in I$) of $X$. 
Here, the set  $\text{Ran}(\{U_i\}_{i\in I})$  is the collection of all finite subsets $\{x_j\}_{j \in J }\subset X$ such that $\{x_j\}_{j \in J }\cap U_i$ is non empty for every $i\in I$.  

Two subsets $U$, $V$ of $\text{Ran}(X)$ are said to be independent if  the two subsets $\big(\bigcup_{S\in U} S \big)\subset X$ and $\big(\bigcup_{T\in V} T \big)\subset X$ are disjoint (as subsets of $X$). If $U$, $V$ are subsets of $\text{Ran}(X)$, one denotes 
$U\star V$ the subset $\{ S\cup T, \, S\in U, \, T\in V\}$ of $\text{Ran}(X)$.

It is proved in~\cite{L-HA} that a factorization algebra on $X$ is the same thing as a constructible cosheaf $F$  on $\text{Ran}(X)$ which satisfies in addition the \emph{factorizing condition}, that is,  that satisfies that, for every family of pairwise independent open subsets,  the canonical map
$$F(U_1)\otimes \cdots \otimes F(U_n) \longrightarrow F(U_1\star \cdots \star U_n) $$ is an equivalence (the condition is similar to Remark~\ref{R:factpointed}). 

This characterization explain why there is a similarity between cosheaves and factorization algebras. However, the factorization condition is \emph{not} a purely cosheaf condition and is not compatible with every operations on cosheaves. 
\end{remark}

\section{Operations for factorization algebras}
\label{S:OperationsforFact}
In this section we review many properties and operations available for factorization algebras.
\subsection{Pushforward} \label{SS:pushforward} If $\mathcal{F}$ is a prefactorization algebra on $X$, and $f:X\to Y$ is a continuous map, one can define the \emph{pushforward $f_*(\mathcal{F})$} by the formula $f_*(\mathcal{F})(V) =\mathcal{F}(f^{-1}(V))$. 
If $\mathcal{F}$ is an (homotopy) factorization algebra then so is $f_*(\mathcal{F})$,  see~\cite{CG}.  
\begin{proposition}\label{P:pushforwardFac}The pushforward is a symmetric monoidal functor  $f_*: \textbf{Fac}_X\to \textbf{Fac}_Y$ and further $(f\circ g)_*=f_*\circ g_*$.
\end{proposition}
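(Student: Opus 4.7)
The plan is to proceed in three steps: first construct $f_*\mathcal{F}$ as a prefactorization algebra, then verify the \v{C}ech-descent condition making it a genuine (homotopy) factorization algebra, and finally record the symmetric monoidality and the strict equality $(f\circ g)_*=f_*\circ g_*$.

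First, I would unpack the definition. Given a prefactorization algebra $\mathcal{F}$ on $X$ and open $V\subset Y$, set $f_*(\mathcal{F})(V):=\mathcal{F}(f^{-1}(V))$. If $V_1,\dots,V_n\subset W$ are pairwise disjoint opens of $Y$, then $f^{-1}(V_1),\dots,f^{-1}(V_n)\subset f^{-1}(W)$ are pairwise disjoint opens of $X$ (since $f^{-1}$ preserves unions, intersections and complements). The structure maps $\rho^{\mathcal{F}}_{f^{-1}(V_1),\dots,f^{-1}(V_n),f^{-1}(W)}$ therefore supply the required structure maps for $f_*(\mathcal{F})$, and the associativity/symmetry diagrams of $f_*(\mathcal{F})$ pull back from those of $\mathcal{F}$, so $f_*(\mathcal{F})\in \textbf{PFac}_Y$ and $f_*$ is functorial on morphisms by post/precomposition with $f^{-1}$.

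The main step — and the only one requiring real content — is showing that if $\mathcal{F}\in\textbf{Fac}_X$ then $f_*\mathcal{F}\in\textbf{Fac}_Y$. Given a factorizing open cover $\mathcal{V}$ of an open $V\subset Y$, I would verify that $f^{-1}(\mathcal{V}):=\{f^{-1}(U):U\in\mathcal{V}\}$ is a factorizing open cover of $f^{-1}(V)$: for distinct points $x_1,\dots,x_n\in f^{-1}(V)$, let $\{y_1,\dots,y_m\}=\{f(x_1),\dots,f(x_n)\}\subset V$ be the set of \emph{distinct} images; choose pairwise disjoint $U_1,\dots,U_k\in\mathcal{V}$ covering $\{y_1,\dots,y_m\}$; then $f^{-1}(U_1),\dots,f^{-1}(U_k)$ are pairwise disjoint opens in $f^{-1}(\mathcal{V})$ containing every $x_i$ (possibly with several of them in the same $f^{-1}(U_\ell)$, which is allowed). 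Because $f^{-1}$ commutes with finite intersections, there is a tautological isomorphism of simplicial objects $\check{C}_\bullet(\mathcal{V},f_*\mathcal{F})\cong \check{C}_\bullet(f^{-1}(\mathcal{V}),\mathcal{F})$, hence of their realizations. The resulting commutative square
\begin{equation*}
\xymatrix{
\check{C}(\mathcal{V},f_*\mathcal{F})\ar[r]^-{\cong}\ar[d] & \check{C}(f^{-1}(\mathcal{V}),\mathcal{F})\ar[d]^{\simeq}\\
f_*\mathcal{F}(V)\ar@{=}[r] & \mathcal{F}(f^{-1}(V))
}
\end{equation*}
shows that the left vertical map is a quasi-isomorphism since the right one is, by the factorization algebra property of $\mathcal{F}$ applied to the factorizing cover $f^{-1}(\mathcal{V})$ of $f^{-1}(V)$. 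Verifying that $f^{-1}(\mathcal{V})$ really is factorizing (the argument above about coinciding images) is the only subtle point; everything else is formal set-theoretic bookkeeping.

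For monoidality, the pointwise formula~\eqref{eq:tensorproductFact} gives
\begin{equation*}
f_*(\mathcal{F}\otimes\mathcal{G})(V)=(\mathcal{F}\otimes\mathcal{G})(f^{-1}V)=\mathcal{F}(f^{-1}V)\otimes\mathcal{G}(f^{-1}V)=\bigl(f_*\mathcal{F}\otimes f_*\mathcal{G}\bigr)(V),
\end{equation*}
and these identifications commute with the structure maps (which are simply tensor products of those of $\mathcal{F}$ and $\mathcal{G}$). The unit, i.e.\ the trivial factorization algebra of Example~\ref{ex:trivialFact}, is clearly preserved. Finally, for a second continuous map $g:W\to X$, the equality $(f\circ g)^{-1}(V)=g^{-1}(f^{-1}(V))$ gives, on the nose,
\begin{equation*}
(f\circ g)_*\mathcal{F}(V)=\mathcal{F}\bigl(g^{-1}(f^{-1}V)\bigr)=(g_*\mathcal{F})(f^{-1}V)=f_*(g_*\mathcal{F})(V),
\end{equation*}
compatibly with structure maps, proving $(f\circ g)_*=f_*\circ g_*$. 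The expected main obstacle is just the factorizing-cover verification in the second paragraph; once that is in hand, every other assertion reduces to the tautology that $f^{-1}$ commutes with finite unions and intersections.
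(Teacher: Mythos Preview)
The paper does not actually prove this proposition: the sentence preceding it says ``If $\mathcal{F}$ is an (homotopy) factorization algebra then so is $f_*(\mathcal{F})$, see~\cite{CG}'', and the remaining assertions (monoidality, $(f\circ g)_*=f_*\circ g_*$) are left implicit. Your argument is exactly the standard one that~\cite{CG} records, so there is nothing to compare.

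One small caveat worth tightening: the ``tautological isomorphism'' $\check{C}_\bullet(\mathcal{V},f_*\mathcal{F})\cong\check{C}_\bullet(f^{-1}(\mathcal{V}),\mathcal{F})$ is literally true if you treat $f^{-1}(\mathcal{V})$ as the \emph{indexed family} $(f^{-1}(U))_{U\in\mathcal{V}}$ rather than as the underlying set of opens. If you pass to the set, the map $P\mathcal{V}\to P(f^{-1}\mathcal{V})$ can fail to be a bijection (e.g.\ when distinct $U,U'\in\mathcal{V}$ have the same preimage, or when $U\cap U'\neq\emptyset$ but $f^{-1}(U\cap U')=\emptyset$). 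This does not affect the conclusion --- the indexed family is still a factorizing cover and the \v{C}ech complex computed with it agrees on the nose with $\check{C}(\mathcal{V},f_*\mathcal{F})$ --- but it is worth saying explicitly which convention you are using.
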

\smallskip

 Let us abusively \emph{denote $\mathcal{G}$ for the global section  $\mathcal{G}(pt)$ of a factorization algebra over the point $pt$}.
Let $p:X\to pt$ be the canonical map. By Theorem~\ref{T:Theorem6GTZ2}, when $X$ is a manifold and $\mathcal{F}_A$ is a locally constant factorization algebra associated to a $\Disk_n^{(X,TX)}$-algebra, then the factorization homology of $M$ is 
\begin{equation}\label{eq:facthom=Factalg}\int_X A \,\cong\, \mathcal{F}_A(A)\,\cong \, p_*(\mathcal{F}_A) .\end{equation}
This analogy legitimates to call $p_*(\mathcal{F})$, that is the (derived) global sections of $\mathcal{F}$, its factorization homology:  
\begin{definition}\label{D:FacthomologyofFactalgebra}
The \emph{factorization homology\footnote{recall that we are considering homotopy factorization algebras, which are already derived objects. If $\mathcal{G}$ is a genuine factorization algebra (Remark~\ref{R:NaiveFacAlg}), then its factorization homology would be $\mathbb{L}p_*(\mathcal{G}):=p_*(\tilde{G})$ where $\tilde{G}$ is an acyclic resolution of $\mathcal{G}$ (as genuine factorization algebra)} of a factorization algebra} $\mathcal{F}\in \textbf{Fac}_X$ is $p_*(\mathcal{F})$ and is also denoted $\int_X\mathcal{F}$.
\end{definition}
% Note that, despite its name, $p_*(\mathcal{F})$ is a cochain complex (up to equivalences), that is an object of $\hkmod$.  

\begin{proposition}\label{P:pushforwardlc} Let $f:X\to Y$ be a locally trivial fibration between smooth manifolds. If $\mathcal{F}\in \textbf{Fac}_X$ is locally constant, then $f_*(\mathcal{F}) \in \textbf{Fac}_Y$ is locally constant. 
\end{proposition}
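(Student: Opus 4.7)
The plan is to first reduce to the case of a trivial fibration via Proposition~\ref{P:locallocallyconstant}, and then to exhibit the structure maps of $f_*\mathcal{F}$ as quasi-isomorphisms by an isotopy argument. Since local constancy is a local property on $Y$, Proposition~\ref{P:locallocallyconstant} allows me to cover $Y$ by open sets trivializing $f$ and assume without loss of generality that $f : X = Y\times F \to Y$ is the projection onto the first factor. I must then show that for every inclusion $U \hookrightarrow V$ of open disks in $Y$, the structure map $(f_*\mathcal{F})(U) = \mathcal{F}(U\times F) \to \mathcal{F}(V\times F) = (f_*\mathcal{F})(V)$ is a quasi-isomorphism.

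Since both $U$ and $V$ are diffeomorphic to $\R^{\dim Y}$, I pick an orientation-preserving diffeomorphism $\bar\psi : V \stackrel{\cong}{\to} U$, and let $\iota : U \hookrightarrow V$ denote the inclusion. The space of orientation-preserving open embeddings $V \hookrightarrow V$ is path-connected (it deformation retracts onto $SO(\dim Y)$), so there exists an isotopy $\phi_t : V \hookrightarrow V$, $t\in[0,1]$, with $\phi_0 = \mathrm{id}_V$ and $\phi_1 = \iota \circ \bar\psi$. Crossing with $\mathrm{id}_F$ yields an isotopy of open embeddings $\Phi_t = \phi_t \times \mathrm{id}_F : V\times F \hookrightarrow V\times F$, with $\Phi_0 = \mathrm{id}$ and $\Phi_1 = (\iota \times \mathrm{id}_F) \circ (\bar\psi \times \mathrm{id}_F)$; composition with the inclusion $V\times F \hookrightarrow X$ furnishes the homotopies lifting $\Phi_t$ to a path in the mapping space of $(X,TX)$-preserving embeddings $V\times F \hookrightarrow V\times F$.

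By Theorem~\ref{T:Theorem6GTZ2}, $\mathcal{F}$ corresponds to a $\Disk_n^{(X,TX)}$-algebra, so isotopic $(X,TX)$-preserving embeddings induce homotopic maps on $\mathcal{F}$-values. Consequently, $\Phi_{1*} = (\iota\times\mathrm{id}_F)_* \circ (\bar\psi \times \mathrm{id}_F)_*$ is homotopic to the identity of $\mathcal{F}(V\times F)$; since $\bar\psi \times \mathrm{id}_F$ is a diffeomorphism, the map $(\bar\psi \times \mathrm{id}_F)_*$ is a quasi-isomorphism, and two-out-of-three forces $(\iota \times \mathrm{id}_F)_*$ to be a quasi-isomorphism as well. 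This is precisely the required structure map. The main technical point to check carefully is that $\Phi_t$ genuinely defines a path in the $\infty$-categorical space of $(X,TX)$-preserving embeddings, and that the resulting isotopy invariance coincides with the structure maps of $\mathcal{F}$ via Theorem~\ref{T:Theorem6GTZ2}; once these coherences are verified, the argument reduces to the elementary connectedness of embedding spaces between open disks of the same dimension.
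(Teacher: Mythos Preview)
Your argument is correct in outline but takes a genuinely different route from the paper. The paper's proof is a short \v{C}ech-complex computation that never leaves the factorization-algebra framework: given disks $U\subset V$ in $Y$, it trivializes $f^{-1}(V)\cong V\times F$ (using only that $V$ is contractible, so no appeal to Proposition~\ref{P:locallocallyconstant} is needed), picks a factorizing cover $\mathcal{V}$ of $F$ by open disks that is stable under finite intersection, and observes that each $U\times D\hookrightarrow V\times D$ with $D\in\mathcal{V}$ is an inclusion of disks in $X$, hence $\mathcal{F}(U\times D)\to\mathcal{F}(V\times D)$ is a quasi-isomorphism by local constancy of $\mathcal{F}$. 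Passing to \v{C}ech complexes over the covers $\{U\}\times\mathcal{V}$ and $\{V\}\times\mathcal{V}$ gives the conclusion.

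Your approach instead invokes Theorem~\ref{T:Theorem6GTZ2} to identify $\mathcal{F}(W)$ with factorization homology $\int_W A$ for the associated $\Disk_n^{(X,TX)}$-algebra $A$, and then uses that factorization homology is an $\infty$-functor on $\Mfldn^{(X,TX)}$, so that an isotopy of self-embeddings of $V\times F$ produces a homotopy of endomorphisms of $\int_{V\times F}A$. This works, and your connectedness input ($\mathrm{Emb}^{+}(\R^m,\R^m)\simeq SO(m)$) is correct; the isotopy $\Phi_t$ itself supplies the homotopy over $X$ needed to regard $\Phi_1$ as an $(X,TX)$-preserving embedding. The tradeoff is that you are importing the full equivalence of Theorem~\ref{T:Theorem6GTZ2} and the compatibility of structure maps of $\mathcal{F}$ with the embedding functoriality of factorization homology---exactly the coherences you flag as needing care---whereas the paper's argument uses only the definition of locally constant and the \v{C}ech description of $\mathcal{F}$ on a factorizing cover. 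Your route is more conceptual and makes the isotopy invariance manifest; the paper's route is shorter, self-contained, and avoids any potential worry about the logical order in which the heavier results are established.
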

\begin{proof}
Let $U\hookrightarrow V$ be an inclusion of an open sub-disk $U$ inside an open disk $V\subset Y$. Since $V$ is contractible, it can be trivialized so  we can assume $f^{-1}(V)= V\times F$ with $F$ a smooth manifold. Taking a stable by finite intersection and factorizing cover $\mathcal{V}$ of $F$ by open disks,  we have a factorizing cover $\{V\}\times \mathcal{V}$ of $f^{-1}(V)$ consisting of open disks in $X$. Similarly $\{U\}\times \mathcal{V}$ is a factorizing cover of $f^{-1}(U)$ consisting of open disks. In particular for any $D\in \mathcal{V}$, the structure map $\mathcal{F}(U\times D) \to \mathcal{F}(V\times D)$ is a quasi-isomorphism since $\mathcal{F}$ is locally constant.
Thus the induced map $\check{C}(\{U\}\times \mathcal{V},\mathcal{F}) \to \check{C}(\{V\}\times \mathcal{V},\mathcal{F})$ is a quasi-isomorphism as well which implies that $f_*(\mathcal{F})(U)\to f_*(\mathcal{F})(V)$ is a quasi-isomorphism. 
\qed \end{proof}

\begin{example}{Locally constant factorization algebras induced on a submanifold}\label{Ex:bundleFac} 
Let  $i:X\hookrightarrow \R^n$ be  an embedding of a manifold $X$ into $\R^n$ and   $NX$ be an open tubular neighborhood of $X$ in $\R^n$. We write $q:NX\to X$ for the bundle map. If $A$ is an $E_n$-algebra, then it defines a factorisation algebra $\mathcal{F}_A$ on $\R^n$. Then the pushforward   $q_*(\mathcal{F_{A}}_{|NX})$ is a locally constant (by Proposition~\ref{P:pushforwardlc}) factorization algebra on $X$, which is \emph{not} constant in general if the normal bundle $NX$ is not trivialized.
\end{example}

Since a continuous map $f: X\to Y$ yields a factorization $X\stackrel{f}\to Y \to pt$ of $X\to pt$,  Proposition~\ref{P:pushforwardFac} and the equivalence~\eqref{eq:facthom=Factalg} imply the following pushforward formula of factorization homology.
\begin{proposition}[pushforward formula] Let $X\stackrel{f}\to Y$ be  continuous and $\mathcal{F}$ be in $\textbf{Fac}_X$. The factorization homology of $\mathcal{F}$ over $X$ is the same as the factorization homology of $f_*(\mathcal{F})$ over $Y$:
\begin{equation}\label{eq:pushforwardfacthomology}
\int_{X} \mathcal{F} \,\cong \,p_*(\mathcal{F}) \,\cong\,  p_* \big(f_*(\mathcal{F})\big) \, \cong \, \int_{Y} f_*(\mathcal{F}).
\end{equation}
\end{proposition}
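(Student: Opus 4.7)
The plan is essentially to unwind the definitions and invoke functoriality of the pushforward operation established in Proposition~\ref{P:pushforwardFac}. Let $p_X : X \to pt$ and $p_Y : Y \to pt$ denote the terminal maps of $X$ and $Y$ respectively. First, I would observe that the left and right equivalences are \emph{tautological} restatements of Definition~\ref{D:FacthomologyofFactalgebra} (the definition of factorization homology of a factorization algebra as its derived global sections), namely $\int_X \mathcal{F} = (p_X)_*(\mathcal{F})$ and $\int_Y f_*(\mathcal{F}) = (p_Y)_*(f_*(\mathcal{F}))$. So there is really only one nontrivial identification to make, which is the middle one.

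For the middle equivalence, I would use the functoriality statement $(g \circ h)_* = g_* \circ h_*$ from Proposition~\ref{P:pushforwardFac}. Since $p_X = p_Y \circ f$ as continuous maps $X \to pt$, applying the functor $(-)_*$ to this composition yields $(p_X)_* = (p_Y)_* \circ f_*$ as functors $\textbf{Fac}_X \to \textbf{Fac}_{pt}$. Evaluating at $\mathcal{F}$ gives the desired equivalence $(p_X)_*(\mathcal{F}) \simeq (p_Y)_*(f_*(\mathcal{F}))$ in $\textbf{Fac}_{pt}$, and hence on global sections.

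There is no real obstacle here: the argument is a one-line consequence of functoriality of pushforward combined with the definition of factorization homology. The only thing one might want to be slightly careful about is that Proposition~\ref{P:pushforwardFac} should be understood as holding at the level of $\infty$-categories (so that $(f \circ g)_* = f_* \circ g_*$ is a coherent equivalence of $\infty$-functors rather than a strict equality), which is indeed how the pushforward was set up on $\textbf{Fac}_X$. Given that, the chain of equivalences
\[
\int_X \mathcal{F} \;\cong\; (p_X)_*(\mathcal{F}) \;\cong\; (p_Y \circ f)_*(\mathcal{F}) \;\cong\; (p_Y)_*\bigl(f_*(\mathcal{F})\bigr) \;\cong\; \int_Y f_*(\mathcal{F})
\]
follows immediately and is natural in $\mathcal{F}$.
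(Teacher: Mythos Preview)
Your proposal is correct and follows essentially the same approach as the paper: the paper simply notes that the factorization $X\stackrel{f}\to Y\to pt$ of $p_X$, together with the functoriality of pushforward (Proposition~\ref{P:pushforwardFac}) and the definition~\eqref{eq:facthom=Factalg} of factorization homology as $p_*$, immediately yields the chain of equivalences. There is nothing more to add.
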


%\smallskip

\subsection{Extension from a basis}\label{SS:extensionfrombasis} Let $\mathcal{U}$ be a \emph{basis} stable by finite intersections  for the topology of a space $X$ and which is also a \emph{factorizing cover}.  Let $\mathcal{F}$ be a (homotopy) \emph{$\mathcal{U}$-factorization algebra}, that is a $\mathcal{U}$-prefactorization algebra (Definition~\ref{D:Prefact}) such that, for any  $U\in \mathcal{U}$ and factorizing cover  $\mathcal{V}$ of $U$   consisting of open sets in $\mathcal{U}$, the canonical map $\check{C}(\mathcal{V},\mathcal{F}) \to \mathcal{F}(U) $ is a quasi-isomorphism.

\begin{proposition}[Costello-Gwilliam~\cite{CG}]\label{P:extensionfrombasis}
There is an unique\footnote{up to contractoble choices} (homotopy) factorization algebra $i_*^{\mathcal{U}}(\mathcal{F})$  on $X$ extending $\mathcal{F}$ (that is equipped with a quasi-isomorphism of $\mathcal{U}$-factorization algebras $i_*^{\mathcal{U}}(\mathcal{F})\to \mathcal{F}$). 

Precisely, for any open set $V\subset X$, one has  
$$i_*^{\mathcal{U}}(\mathcal{F})(V) \,:=\, \check{C}(\mathcal{U}_V, \mathcal{F})$$ 
where $\mathcal{U}_V$ is the open cover of $V$ consisting of all open subsets of $V$ which are in $\mathcal{U}$. 
\end{proposition}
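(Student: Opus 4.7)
The plan is to construct the extension by the explicit formula $i_*^{\mathcal{U}}(\mathcal{F})(V) := \check{C}(\mathcal{U}_V, \mathcal{F})$ and then verify in turn that (a) it defines a prefactorization algebra on $X$, (b) it agrees up to quasi-isomorphism with $\mathcal{F}$ on $\mathcal{U}$, (c) it satisfies the \v{C}ech descent condition for every factorizing cover of every open in $X$, and (d) it is the unique such extension up to contractible choice.

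Setting up the prefactorization structure is essentially formal. For pairwise disjoint opens $V_1,\dots,V_n\subset W$, any element of $P\mathcal{U}_{V_1}\times\cdots\times P\mathcal{U}_{V_n}$ concatenates canonically into an element of $P\mathcal{U}_W$, because $\mathcal{U}$ is stable under finite intersections and the $V_i$ are pairwise disjoint. Combined with the structure maps of $\mathcal{F}$ as a $\mathcal{U}$-prefactorization algebra and the symmetric monoidal structure of $\hkmod$, this produces maps $\bigotimes_i \check{C}(\mathcal{U}_{V_i},\mathcal{F})\to \check{C}(\mathcal{U}_W,\mathcal{F})$, whose associativity and $S_n$-equivariance follow from the corresponding properties of $\mathcal{F}$. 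Property (b) is immediate: for $U\in\mathcal{U}$, $\mathcal{U}_U$ is itself a factorizing cover of $U$ by elements of $\mathcal{U}$ (because $\mathcal{U}$ is a basis and a factorizing cover of $X$), so the assumption that $\mathcal{F}$ is a $\mathcal{U}$-factorization algebra gives $i_*^{\mathcal{U}}(\mathcal{F})(U)=\check{C}(\mathcal{U}_U,\mathcal{F})\xrightarrow{\simeq}\mathcal{F}(U)$.

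The crux is (c). Given a factorizing cover $\mathcal{V}$ of an open $W$, form the refinement $\mathcal{U}_\mathcal{V}:=\bigcup_{V\in\mathcal{V}}\mathcal{U}_V$, which is itself a factorizing cover of $W$ by elements of $\mathcal{U}$. The approach is to exhibit a span
\begin{equation*}
\check{C}(\mathcal{V},\,i_*^{\mathcal{U}}(\mathcal{F}))\;\longleftarrow\;\widetilde{C}\;\longrightarrow\;\check{C}(\mathcal{U}_W,\mathcal{F})
\end{equation*}
where $\widetilde{C}$ is the totalization of the natural bisimplicial chain complex whose $(p,q)$-term sums tensor products $\bigotimes \mathcal{F}(U_{ij})$ indexed by a $(p+1)$-tuple in $P\mathcal{V}$ together with a compatible family of $(q+1)$-tuples in $P\mathcal{U}_{\bigcap V_k}$. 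Applying a Fubini/Eilenberg--Zilber argument, realizing $q$ first recovers $\check{C}(\mathcal{V},i_*^{\mathcal{U}}(\mathcal{F}))$ column by column (since each value of $i_*^{\mathcal{U}}(\mathcal{F})$ is by definition a \v{C}ech complex over $\mathcal{U}$), while realizing $p$ first produces $\check{C}(\mathcal{U}_\mathcal{V},\mathcal{F})$. One then has to prove that the natural inclusion $\check{C}(\mathcal{U}_\mathcal{V},\mathcal{F})\to\check{C}(\mathcal{U}_W,\mathcal{F})$ is a quasi-isomorphism; this is a refinement statement for \v{C}ech complexes of $\mathcal{U}$-factorization algebras, which I would establish by a further bisimplicial argument, intersecting each open of $\mathcal{U}_W$ with opens of $\mathcal{U}_\mathcal{V}$ and using repeatedly the assumed \v{C}ech property of $\mathcal{F}$ (on the factorizing $\mathcal{U}$-subcovers obtained this way, which stay inside $\mathcal{U}$ by stability under intersection).

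For uniqueness (d), if $\mathcal{G}\in\textbf{Fac}_X$ also restricts to (something equivalent to) $\mathcal{F}$ on $\mathcal{U}$, then for any open $V\subset X$ the cover $\mathcal{U}_V$ is a factorizing cover of $V$, and the factorization algebra axiom for $\mathcal{G}$ yields a chain of natural equivalences $\mathcal{G}(V)\simeq\check{C}(\mathcal{U}_V,\mathcal{G}_{|\mathcal{U}})\simeq\check{C}(\mathcal{U}_V,\mathcal{F})=i_*^{\mathcal{U}}(\mathcal{F})(V)$, compatible with the prefactorization structure maps. The main obstacle is the bisimplicial/refinement step in (c): the combinatorics of $P\mathcal{U}$ (finite collections of \emph{pairwise disjoint} opens, not just finite subsets) is more delicate than that of an ordinary cover, and one must argue carefully that both partial realizations of $\widetilde{C}$ and the refinement map $\check{C}(\mathcal{U}_\mathcal{V},\mathcal{F})\to\check{C}(\mathcal{U}_W,\mathcal{F})$ are quasi-isomorphisms. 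A cleaner alternative, closer to Lurie's viewpoint, is to translate the problem to constructible factorizing cosheaves on the Ran space of $X$ and invoke a standard extension-from-a-basis result in that setting, which compresses most of the double-complex bookkeeping into a single descent statement.
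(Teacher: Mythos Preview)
Your proposal is correct and in fact considerably more detailed than what the paper provides. The paper does not prove this proposition; it attributes the full result to Costello--Gwilliam and only remarks on the uniqueness part (your step (d)), observing that for any factorization algebra $\mathcal{G}$ on $X$ the canonical map $\check{C}(\mathcal{U}_V,\mathcal{G})\to\mathcal{G}(V)$ is a quasi-isomorphism and that this \v{C}ech complex depends only on $\mathcal{G}_{|\mathcal{U}}$. Your treatment of (d) matches this exactly.

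Your outline of (a)--(c) is the standard Costello--Gwilliam argument: the bisimplicial object $\widetilde{C}$ and the two partial realizations are precisely how the factorization condition is verified in their book, and you have correctly identified the refinement lemma $\check{C}(\mathcal{U}_{\mathcal{V}},\mathcal{F})\to\check{C}(\mathcal{U}_W,\mathcal{F})$ as the technical core. One small caution: in (c) you should make sure that $\mathcal{U}_{\mathcal{V}}$ really is a factorizing cover of $W$; this uses that $\mathcal{U}$ is a basis (so each point of $W$ has arbitrarily small $\mathcal{U}$-neighborhoods contained in some $V\in\mathcal{V}$) together with the factorizing property of $\mathcal{V}$ itself. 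The Ran-space alternative you mention is indeed cleaner conceptually but is not the route taken in the cited reference.
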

Note that the uniqueness is immediate since, if $\mathcal{G}$ is a factorization algebra on $X$, then for any open $V$ the canonical map $\check{C}(\mathcal{U}_V, \mathcal{G})\to \mathcal{G}(V)$ is a quasi-isomorphism and, further,  the \v{C}ech complex   $\check{C}(\mathcal{U}_V, \mathcal{G})$  is computed using only open subset in $\mathcal{U}$.

\smallskip

 Proposition~\ref{P:extensionfrombasis} gives a way to construct (locally constant) factorization algebras as we now demonstrate.
\begin{example}\label{ex:E1frombasis}
By Example~\ref{ex:framedisEn}, we know that an associative unital algebra (possibly differential graded) gives a locally constant factorization algebra on the interval $\R$. It can be explicitly given by using extension along a basis. Indeed, the collection $\mathcal{I}$  of intervals $(a,b)$ ($a<b$)  is a factorizing basis of opens, which is stable by finite intersections. Then one can set a $\mathcal{I}$-prefactorization algebra $\mathcal{F}_A$ by setting $\mathcal{F}_A((a,b)):=A$.
 For pairwise disjoints open interval $I_1,\dots, I_n\subset I$, where the indices are chosen so that $\sup(I_i) \leq \inf(I_{i+1})$, the structure maps  are given by
\begin{eqnarray}\label{eq:structureinterval}
A^{\otimes n}=\mathcal F_A(I_1)\otimes\cdots\otimes\mathcal F_A(I_n)& \longrightarrow & \mathcal F_A(I)=A \\  
a_1\otimes\cdots\otimes a_n & \longmapsto & a_1\cdots a_n\,.
\end{eqnarray} To extend this construction to a full homotopy factorization algebra on $\R$, one needs to check that $\mathcal{F}_A$ is a $\mathcal{I}$-factorization algebra which is the content of Proposition~\ref{P:BarInterval} below. 
 
In the construction, we have chosen an implicit orientation of $\R$; namely, in the structure map~\eqref{eq:structureinterval}, we have  decided to multiply the elements $(a_i)$ by choosing to order the intervals in increasing order from left to right.

Let $\tau: \R \to \R$ be the antipodal map $x\mapsto -x$ reversing the orientation.
 One can check  that  $\tau_*(\mathcal{F}_A)= \mathcal{F}_{A^{op}}$ where $A^{op}$ is the algebra $A$ with opposite multiplication $(a,b)\mapsto b\cdot a$.
 In other words, \emph{choosing  the opposite orientation} (that is the decreasing one) \emph{of $\R$} amounts to \emph{replacing the algebra by its opposite algebra}.
\end{example}
\begin{example}[back to the circle] \label{Ex:S1frombasis} The circle $S^1$ also has a (factorizing) basis given by the open (embedded) intervals (of length less than half of the perimeter of the circle in order to be stable by intersection). Choosing an orientation on the circle, one can define a (homotopy) factorization algebra $\mathcal{S}_A$ on the circle using again the structure maps~\eqref{eq:structureinterval}. 
This gives an explicit construction of the factorization algebra associated to a framing of $S^1$ from Example~\ref{ex:framedisEn} (since they agree on a stable by finite intersection basis of open sets). 
The global section of  $\mathcal{S}_A$ are thus the Hochschild chains of $A$ by the computation~\eqref{eq:HHaschiral}. 

Similarly to Example~\ref{ex:E1frombasis}, choosing the opposite framing on the circle amounts to considering the factorization algebra $\mathcal{S}_{A^{op}}$.
However, \emph{unlike on $\R$}, there is an equivalence  $\mathcal{S}_A\cong \mathcal{S}_{A^{op}}$ induced by the fact that there is an orientation preserving diffeomorphism between the to possible orientations of the circle and that further, the value of $\mathcal{S}_A$ on any interval is constant. 
\end{example}

\subsection{Exponential law: factorization algebras on a product}

Let $\pi_1: X\times Y \to X$ be the canonical projection. By Proposition~\ref{P:pushforwardlc}, we have the \emph{pushforward} functor ${\pi_1}_*:\textbf{Fac}^{lc}_{X\times Y}\to \textbf{Fac}^{lc}_{X}$.
 This functor has an natural lift into $\textbf{Fac}^{lc}(Y)$. Indeed, if $U$ is  open in $X$ and   $V$ is  open  in $Y$, we have a chain complex: $$\underline{{\pi}_1}_*(\mathcal{F})(U,V):= \mathcal{F}({\pi_1}_{|X\times V}^{-1}(U))=\mathcal{F}(U\times V).$$ Let $V_1, \dots, V_i$ be pairwise disjoint open subsets in an open set $W\subset Y$ and consider
\begin{multline}\label{eq:structureexpFact} \underline{\pi_1}_*(\mathcal{F})(U,V_1)\otimes \cdots \otimes \underline{\pi_1}_*(\mathcal{F})(U,V_i)  \cong \mathcal{F}(U\times V_1) \otimes \cdots \otimes \mathcal{F}(U\times V_i) \\ \stackrel{\rho_{U\times V_1,\dots, U\times V_i, U\times W}}\longrightarrow 
\mathcal{F}(U\times W)= \underline{\pi_1}_*(\mathcal{F})(U,W).
\end{multline}
The map~\eqref{eq:structureexpFact} makes $\underline{\pi_1}_*(\mathcal{F})(U)$ a prefactorization algebra on $Y$. If $U_1,\dots, U_j$ are pairwise disjoint open inside an open  $O\subset X$,  the collection of structure maps 
\begin{multline}\label{eq:structureexpFact2} \underline{\pi_1}_*(\mathcal{F})(U_1,V)\otimes \cdots \otimes \underline{\pi_1}_*(\mathcal{F})(U_j,V)  \cong \mathcal{F}(U_1\times V) \otimes \cdots \otimes \mathcal{F}(U_j\times V) \\ \stackrel{\rho_{U_1\times V,\dots, U_j\times V, O\times V}}\longrightarrow 
\mathcal{F}(O\times V)= \underline{\pi_1}_*(\mathcal{F})(O,V)
\end{multline}
indexed by opens $V\subset Y$ is  a map $ \underline{\pi_1}_*(\mathcal{F})(U_1)\otimes \cdots  \otimes\underline{\pi_1}_*(\mathcal{F})(U_j) \longrightarrow \underline{\pi_1}_*(\mathcal{F})(O)$ of  prefactorization algebras over $Y$.

Combining the two constructions, we find that
the structure maps~\eqref{eq:structureexpFact} and~\eqref{eq:structureexpFact2} make ${\pi_1}_*(\mathcal{F})$ a prefactorization algebra over $X$ with values in the category of prefactorization algebras over $Y$. In other words we have just defined a functor:
\begin{equation}\label{eq:defpi1}
  \underline{\pi_1}_*: \textbf{PFac}_{X\times Y} \longrightarrow \textbf{PFac}_{X}(\textbf{PFac}_{Y})
\end{equation}
 fitting into a commutative diagram 
 $$\xymatrix{\textbf{PFac}_{X\times Y} \ar[r]^{\underline{\pi_1}_*} \ar[rd]_{(\pi_1)_*} & \textbf{PFac}_{X}(\textbf{PFac}_{Y}) \ar[d]^{\textbf{PFac}_X(p_*)} \\ &  \textbf{PFac}_{X}}$$ where $p_*$ is given by Definition~\ref{D:FacthomologyofFactalgebra}.
\begin{proposition}\label{L:expFact} Let $\pi_1: X\times Y \to X$ be the canonical projection.
The pushforward~\eqref{eq:defpi1} by $\pi_1$ induces  a functor $$\underline{\pi_1}_*: {\textbf{Fac}}_{X\times Y} \longrightarrow {\textbf{Fac}}_X(\textbf{Fac}_Y)$$ and, if $X$, $Y$ are smooth manifolds, an equivalence $\underline{\pi_1}_*: {\textbf{Fac}}^{lc}_{X\times Y} \stackrel{\simeq}\longrightarrow {\textbf{Fac}}^{lc}_X(\textbf{Fac}_Y^{lc})$  of $\infty$-categories .
\end{proposition}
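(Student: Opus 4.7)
The plan is to reduce the statement to a Fubini-type identity for \v{C}ech descent, after extension from a factorizing basis of product opens.

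First I would verify that for any factorization algebra $\mathcal{F}$ on $X\times Y$, the assignment $\underline{\pi_1}_*(\mathcal{F})$ indeed lands in $\textbf{Fac}_X(\textbf{Fac}_Y)$. Fixing an open $U\subset X$ and applying Proposition~\ref{P:pushforwardFac} to the projection $U\times Y\to Y$ of the restricted factorization algebra $\mathcal{F}_{|U\times Y}$ shows that $V\mapsto \mathcal{F}(U\times V)$ is a factorization algebra on $Y$. Symmetrically, fixing $V$ and projecting $X\times V\to X$ shows that $U\mapsto \mathcal{F}(U\times V)$ is a factorization algebra on $X$. Combined with the structure maps~\eqref{eq:structureexpFact} and~\eqref{eq:structureexpFact2}, $\underline{\pi_1}_*(\mathcal{F})$ is then a prefactorization algebra on $X$ with values in $\textbf{Fac}_Y$. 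Since geometric realizations in $\textbf{Fac}_Y$ can be computed at the prefactorization-algebra level pointwise (followed by reflection back to $\textbf{Fac}_Y$), the \v{C}ech descent condition on $X$ reduces pointwise in $V$ to the descent of the factorization algebra $U\mapsto \mathcal{F}(U\times V)$, which holds by the previous step. This gives the first assertion.

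For the equivalence in the locally constant case, I would work with the factorizing basis $\mathcal{D}_{X\times Y}:=\{D\times E\,:\,D\in\mathcal{D}_X,\,E\in\mathcal{D}_Y\}$, where $\mathcal{D}_X$, $\mathcal{D}_Y$ are factorizing bases of geodesic open disks of $X, Y$, each stable by finite intersection. This is a factorizing basis of $X\times Y$ stable by finite intersection: finitely many distinct points are separated by first separating their $X$-projections in $\mathcal{D}_X$ and then separating the fibers in $\mathcal{D}_Y$ within each chosen disk. Local constancy of $\mathcal{F}$ is equivalent, via Propositions~\ref{P:Ndisk(M)} and~\ref{P:extensionfrombasis}, to its local constancy on $\mathcal{D}_{X\times Y}$, and this translates through $\underline{\pi_1}_*$ into joint local constancy of the bifunctor $(D,E)\mapsto \mathcal{F}(D\times E)$ in both arguments; this shows that $\underline{\pi_1}_*$ preserves local constancy. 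The inverse functor sends $\mathcal{G}\in \textbf{Fac}^{lc}_X(\textbf{Fac}^{lc}_Y)$ to the $\mathcal{D}_{X\times Y}$-prefactorization algebra $\mathcal{F}^{\mathcal{G}}(D\times E):=\mathcal{G}(D)(E)$, with the tautological structure maps assembled from those of $\mathcal{G}$ and each $\mathcal{G}(D)$; by Proposition~\ref{P:extensionfrombasis} it extends uniquely to a (locally constant) factorization algebra on $X\times Y$. The uniqueness clause of the same proposition, applied to the two composites, yields the inverse equivalences.

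The main technical obstacle is the Fubini-type identity underlying the \v{C}ech descent on product covers: for factorizing covers $\mathcal{U}\subset\mathcal{D}_X$ of $U$ and $\mathcal{V}\subset\mathcal{D}_Y$ of $V$ (both stable by finite intersection), the product $\mathcal{U}\times\mathcal{V}$ is a factorizing cover of $U\times V$ in $\mathcal{D}_{X\times Y}$, and one needs the natural equivalences
\[
\check{C}(\mathcal{U}\times\mathcal{V},\mathcal{F})\,\simeq\,\check{C}\bigl(\mathcal{U},\check{C}(\mathcal{V},\mathcal{F}(-\times -))\bigr)\,\simeq\,\check{C}\bigl(\mathcal{V},\check{C}(\mathcal{U},\mathcal{F}(-\times -))\bigr),
\]
which follow from the standard identification of geometric realizations of bisimplicial objects together with the fact that the tensor product in $\hkmod$ commutes with realizations in each variable. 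Combining this Fubini identity with the descent of $\mathcal{F}$ against $\mathcal{D}_{X\times Y}$ matches the two \v{C}ech conditions (that $\mathcal{F}^{\mathcal{G}}$ is a $\mathcal{D}_{X\times Y}$-factorization algebra, and that $\mathcal{G}$ is a factorization algebra valued in factorization algebras), which is precisely what identifies the two $\infty$-categories on the basis level and hence, by extension, globally.
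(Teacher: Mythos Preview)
Your first paragraph, establishing that $\underline{\pi_1}_*$ lands in $\textbf{Fac}_X(\textbf{Fac}_Y)$, is fine and matches the paper's argument: one identifies $\check{C}(\mathcal{V},\underline{\pi_1}_*\mathcal{F}(U))$ with $\check{C}(\{U\}\times\mathcal{V},\mathcal{F})$ and uses that $\mathcal{F}$ is a factorization algebra.

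The gap is in your construction of the inverse. You write that the structure maps of $\mathcal{F}^{\mathcal{G}}$ are ``tautological,'' assembled from those of $\mathcal{G}$ and of each $\mathcal{G}(D)$, and then reduce the descent check to a Fubini identity for product covers $\mathcal{U}\times\mathcal{V}$. Neither step is as automatic as you suggest. Given pairwise disjoint product opens $D_1\times E_1,\dots,D_r\times E_r\subset D\times E$, the $D_i$'s need not be pairwise disjoint (disjointness of the products only says that for each pair either the $D_i$'s or the $E_i$'s are disjoint), so you cannot directly feed them into the structure map of $\mathcal{G}$ on $X$; symmetrically the $E_i$'s need not be disjoint either. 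Producing a coherent map $\bigotimes_i\mathcal{G}(D_i)(E_i)\to\mathcal{G}(D)(E)$ in this generality, and showing the resulting prefactorization algebra is associative and satisfies descent for \emph{all} factorizing covers in $\mathcal{D}_{X\times Y}$ (not just product covers), is precisely the non-trivial content. Your Fubini identity, while correct for product covers, does not address this; indeed, if it did, your argument would prove the equivalence for arbitrary factorization algebras, not just locally constant ones.

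The paper resolves this by reducing to the local model: since each $D\times E$ is homeomorphic to $\R^n\times\R^m$, constructing the $\mathcal{D}_{X\times Y}$-factorization algebra structure amounts to knowing that $\underline{\pi_1}_*:\textbf{Fac}^{lc}_{\R^{n+m}}\to\textbf{Fac}^{lc}_{\R^n}(\textbf{Fac}^{lc}_{\R^m})$ is an equivalence. Via Theorem~\ref{P:En=Fact} this is exactly Dunn's Theorem $E_{n+m}\textbf{-Alg}\simeq E_n\textbf{-Alg}(E_m\textbf{-Alg})$ (Theorem~\ref{T:Dunn}), which the paper invokes as the key input. Once the inverse is built locally using Dunn, uniqueness of extension from a basis (Proposition~\ref{P:extensionfrombasis}) finishes the proof as you indicate. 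So your outline is missing the essential appeal to Dunn's theorem; the Fubini/\v{C}ech manipulation alone does not supply the structure maps nor the full descent.
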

See \S~\ref{Proof:expFact} for a proof.
%Clearly  $\pi_1:\to \textbf{Fac}_X$ is the composition $\textbf{Fac}_X\stackrel{\underline{\pi_1}_*}\longrightarrow {\textbf{Fac}}_X(\textbf{Fac}_Y) \to \textbf{Fac}_X$ where the last functor is induced by taking the global section  (that is homology) functor $p_*:\textbf{Fac}_Y\to \hkmod$ of a factorization algebra over $Y$.

\smallskip

The above Proposition is a slight generalization of (and relies on) the  following $\infty$-category version of the beautiful  Dunn's Theorem~\cite{Du}  proved under the following form by Lurie~\cite{L-HA} (see~\cite{GTZ2} for the pushforward interpretation):
\begin{theorem}[Dunn's Theorem]\label{T:Dunn} There is an equivalence  of $\infty$-categories $$E_{m+n}\textbf{-Alg} \stackrel{\simeq}\longrightarrow E_m\textbf{-Alg}(E_n\textbf{-Alg}).$$ 
 Under the equivalence $E_n\textbf{-Alg} \cong \textbf{Fac}_{\R^n}^{lc}$ (Theorem~\ref{P:En=Fact}), the above equivalence is realized by 
the pushforward $\underline{\pi}_*:\textbf{Fac}_{\R^m\times \R^n}^{lc} \to \textbf{Fac}_{\R^m}^{lc}(\textbf{Fac}^{lc}_{\R^n})$ associated to the canonical projection $\pi:\R^m\times \R^n\to \R^m$.
\end{theorem}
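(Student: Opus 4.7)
The plan is to deduce Theorem \ref{T:Dunn} as a direct combination of Proposition~\ref{L:expFact} (the exponential law for locally constant factorization algebras on a product of manifolds) with Theorem~\ref{P:En=Fact} (which identifies $E_n\textbf{-Alg}$ with $\textbf{Fac}^{lc}_{\R^n}$). The geometric content of the statement — that factoring an $E_{m+n}$-structure along the orthogonal splitting $\R^{m+n}\cong\R^m\times\R^n$ produces an $E_m$-algebra in the category of $E_n$-algebras — is precisely what the pushforward $\underline{\pi}_*$ encodes at the level of factorization algebras, so the entire theorem becomes a transport statement once the factorization-algebra side is known.

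Concretely, I would assemble the following chain of equivalences of symmetric monoidal $\infty$-categories:
\begin{equation*}
E_{m+n}\textbf{-Alg}\;\stackrel{(1)}{\simeq}\;\textbf{Fac}^{lc}_{\R^{m+n}}\;\stackrel{(2)}{\simeq}\;\textbf{Fac}^{lc}_{\R^m\times \R^n}\;\stackrel{(3)}{\simeq}\;\textbf{Fac}^{lc}_{\R^m}(\textbf{Fac}^{lc}_{\R^n})\;\stackrel{(4)}{\simeq}\;E_m\textbf{-Alg}(E_n\textbf{-Alg}).
\end{equation*}
Here (1) is Theorem~\ref{P:En=Fact}; (2) is transport along the canonical framed diffeomorphism $\R^{m+n}\cong\R^m\times\R^n$, which is an isomorphism of $\infty$-categories by functoriality of the pushforward; (3) is the equivalence $\underline{\pi_1}_*$ furnished by Proposition~\ref{L:expFact} applied to the smooth manifolds $X=\R^m$ and $Y=\R^n$; and (4) is Theorem~\ref{P:En=Fact} applied \emph{with coefficients} in the symmetric monoidal $\infty$-category $(\textbf{Fac}^{lc}_{\R^n},\otimes)$, a generality permitted in view of Remark~\ref{R:factgeneral} since the tensor product of factorization algebras on $\R^n$ is defined pointwise and hence commutes with the colimits appearing in the proof of Theorem~\ref{P:En=Fact}. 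By the definition of $\underline{\pi_1}_*$ given in the paragraph preceding Proposition~\ref{L:expFact}, the composite of (1)--(4) is exactly the pushforward $\underline{\pi}_*$ of Theorem~\ref{T:Dunn}; and since each step is a symmetric monoidal equivalence, so is the composite.

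The main obstacle is equivalence (3), i.e.\ Proposition~\ref{L:expFact} itself, whose proof is deferred to~\S\ref{S:SomeProofs}. Producing an inverse to $\underline{\pi_1}_*$ amounts to showing that a locally constant factorization algebra $\mathcal{G}$ on $\R^m$ valued in $\textbf{Fac}^{lc}_{\R^n}$ assembles into a locally constant factorization algebra on $\R^m\times\R^n$; the natural strategy is to define the values on the factorizing basis $\{U\times V\mid U\subset\R^m,\, V\subset\R^n\text{ open disks}\}$ by $(U,V)\mapsto\mathcal{G}(U)(V)$, verify the factorizing \v{C}ech condition (using that products of factorizing covers are factorizing and that \v{C}ech complexes commute with the ambient monoidal structure), and then extend via Proposition~\ref{P:extensionfrombasis}. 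Local constancy of the pushforward direction is Proposition~\ref{P:pushforwardlc}; local constancy in the reverse direction follows from that of $\mathcal{G}$ on $\R^m$ together with the local constancy of each $\mathcal{G}(U)$ on $\R^n$. A secondary point to verify is that Lurie's proof of Theorem~\ref{P:En=Fact} in~\cite{L-HA} goes through unchanged in the $\textbf{Fac}^{lc}_{\R^n}$-enriched setting needed for step (4); this is a routine check that the argument uses only formal properties of the target symmetric monoidal $\infty$-category.
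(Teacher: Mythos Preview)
Your argument is circular. In the paper, the logical dependency runs the other way: Theorem~\ref{T:Dunn} is not proved here but is quoted from Lurie~\cite{L-HA} (with the pushforward interpretation from~\cite{GTZ2}), and Proposition~\ref{L:expFact} is then \emph{derived from} it. Indeed, the sentence immediately preceding Theorem~\ref{T:Dunn} says that Proposition~\ref{L:expFact} ``is a slight generalization of (and relies on)'' Dunn's Theorem, and if you look at the proof of Proposition~\ref{L:expFact} in \S\ref{Proof:expFact}, the construction of the inverse to $\underline{\pi_1}_*$ explicitly invokes Theorem~\ref{T:Dunn}: after reducing to convex charts $U\cong\R^n$, $V\cong\R^m$, the proof says ``hence by Dunn Theorem~\ref{T:Dunn} below, we have that $\textbf{Fac}^{lc}_{\R^{n+m}}\stackrel{\underline{\pi_1}_*}\longrightarrow\textbf{Fac}^{lc}_{\R^n}(\textbf{Fac}^{lc}_{\R^m})$ is an equivalence which allows to define a $\mathcal{CV}(X)\times \mathcal{CV}(Y)$-factorization algebra structure.'' So your step~(3) presupposes the very statement you are trying to prove.

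The place where your sketch hides the difficulty is the phrase ``verify the factorizing \v{C}ech condition'' for the assignment $(U,V)\mapsto\mathcal{G}(U)(V)$. Products $U\times V$ of convex opens do form a factorizing basis of $\R^m\times\R^n$, but the structure maps of a prefactorization algebra on that basis require, for pairwise disjoint rectangles $U_1\times V_1,\dots,U_r\times V_r$ inside a single rectangle $U\times V$, a map $\bigotimes_i \mathcal{G}(U_i)(V_i)\to\mathcal{G}(U)(V)$. When the $U_i$ are not pairwise disjoint in $\R^m$ and the $V_i$ are not pairwise disjoint in $\R^n$ (only the products are disjoint), neither the $\R^m$-factorization structure of $\mathcal{G}$ nor the $\R^n$-factorization structure of each $\mathcal{G}(U_i)$ gives you this map directly; producing it coherently is exactly the content of Dunn's Theorem. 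Your ``routine check'' in step~(4) has the same issue: Theorem~\ref{P:En=Fact} with coefficients in $\textbf{Fac}^{lc}_{\R^n}$ is again essentially equivalent to Dunn.
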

\begin{example}[PTVV construction] There is a derived geometry variant of the AKSZ formalism introduced recently in~\cite{PTTV} which leads to factorization algebras similarly to Example~\ref{ex:ObservablesasFact}.  
We briefly sketch it. The main input is a (derived Artin) stack $X$ (over a characteristic zero field) which is assumed to be compact and equipped with an orientation (write $d_X$ for the dimension of $X$) and a stack $Y$ with an $n$-shifted symplectic structure $\omega$ (for instance take $Y$ to be the shifted cotangent complex $Y=T^*[n]Z$ of a scheme $Z$).
   The natural evaluation map $ev: X\times \mathbb{R}\mathop{Map}(X,Y)$ allows to pullback the  symplectic structure on the space of fields $\mathbb{R}\mathop{Map}(X,Y)$. 
Precisely, $\mathbb{R}\mathop{Map}(X,Y)$ carries an natural $(n-d_X)$-shifted symplectic structure roughly given by the integration
$\int_{[X]} ev^*(\omega)$ of the pullback of $\omega$ on the fundamental class of $X$.
 It is expected that the observables $\mathcal{O}_{ \mathbb{R}\mathop{Map}(X,Y)}$ carries a structure of $P_{1+n-d_X}$-algebra. 
 Assume further that $X$ is a Betti stack, that is in the essential image of $\hTop \to \textbf{dSt}_k$.
It will then follow from Corollary~\ref{C:mappingstack} and Example~\ref{E:EinftygivesNDisk} that
 $\mathcal{O}_{ \mathbb{R}\mathop{Map}(X,Y)}$ belongs to $\textbf{Fac}^{lc}_{X}(P_{1+n-d_X}\textbf{-Alg})$. Using the formality of the little disks operads in dimension $\geq 2$, 
  Proposition~\ref{L:expFact} then will give
$\mathcal{O}_{ \mathbb{R}\mathop{Map}(X,Y)}$ the structure of a locally constant factorization algebra on $X\times \R^{1+n-d_X}$ when $n>d_X$. 
It is also expected that the quantization of such shifted symplectic stacks shall carries canonical locally constant factorization algebras structures.

Note that the Pantev-To\"en-Vaqui\'e-Vezzosi construction was recently extended by Calaque~\cite{CaMappingStack} to add boundary conditions. The global observables of such \emph{relative} mapping stacks shall be naturally endowed with the structure of locally constant factorization algebra on stratified spaces (as defined in \S~\ref{S:stratifiedFact}).
\end{example}

Proposition~\ref{L:expFact} and Theorem~\ref{T:Theorem6GTZ2} have the following consequence
\begin{corollary}[Fubini formula \cite{GTZ2}]
\label{C:FubiniTCH} Let $M$, $N$ be manifolds of respective dimension $m$, $n$ and let  $A$ be a $\Disk_{n+m}^{(M\times N, T(M\times N))}$-algebra. Then, $\int_{N} A$ has a canonical structure of $\Disk_{m}^{(M,TM)}$-algebra and further, 
$$\int_{M\times N} A \; \cong \; \int_M\Big(\int_N A\Big). $$
\end{corollary}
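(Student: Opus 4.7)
The strategy is to reduce the Fubini formula to the pushforward formula~\eqref{eq:pushforwardfacthomology} by passing through (locally constant) factorization algebras and using the exponential law of Proposition~\ref{L:expFact}. By Theorem~\ref{T:Theorem6GTZ2} applied to the product $M\times N$, the algebra $A$ corresponds to a locally constant factorization algebra $\mathcal{F}_A\in \textbf{Fac}^{lc}_{M\times N}$. Let $\pi:M\times N\to M$ denote the canonical projection; by Proposition~\ref{L:expFact} the object $\underline{\pi}_*(\mathcal{F}_A)$ lies in $\textbf{Fac}^{lc}_M(\textbf{Fac}^{lc}_N)$, i.e.\ it is a locally constant factorization algebra on $M$ whose values are themselves locally constant factorization algebras on $N$.

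To define the $\Disk_m^{(M,TM)}$-algebra structure on $\int_N A$, I post-compose $\underline{\pi}_*(\mathcal{F}_A)$ with the global sections functor $\int_N = (p_N)_*:\textbf{Fac}^{lc}_N\to \hkmod$, where $p_N:N\to\mathrm{pt}$. This produces a factorization algebra $\mathcal{G}_A$ on $M$ valued in chain complexes; local constancy is preserved because $\int_N$ is an $\infty$-functor and therefore preserves the equivalences $\underline{\pi}_*(\mathcal{F}_A)(U)\to \underline{\pi}_*(\mathcal{F}_A)(V)$ coming from inclusions of disks $U\hookrightarrow V$ in $M$. By Theorem~\ref{T:Theorem6GTZ2} applied to $M$, $\mathcal{G}_A$ corresponds to a canonical $\Disk_m^{(M,TM)}$-algebra, which is by definition $\int_N A$. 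Unwinding~\eqref{eq:defpi1}, one moreover identifies $\mathcal{G}_A$ with the ordinary pushforward $\pi_*(\mathcal{F}_A)$, since for each open $U\subset M$,
$$\mathcal{G}_A(U)\;=\;\int_N \underline{\pi}_*(\mathcal{F}_A)(U)\;\simeq\; \mathcal{F}_A(U\times N)\;=\;\pi_*(\mathcal{F}_A)(U).$$

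The Fubini identity then follows formally. Writing $p_M:M\to\mathrm{pt}$ and $p:M\times N\to\mathrm{pt}$ for the collapse maps, $p=p_M\circ\pi$, so Proposition~\ref{P:pushforwardFac}, the pushforward formula~\eqref{eq:pushforwardfacthomology}, and the identification $\mathcal{G}_A\simeq \pi_*(\mathcal{F}_A)$ give
$$\int_{M\times N} A \;\simeq\; p_*(\mathcal{F}_A) \;\simeq\; (p_M)_*\bigl(\pi_*(\mathcal{F}_A)\bigr) \;\simeq\; (p_M)_*(\mathcal{G}_A)\;\simeq\; \int_M \Bigl(\int_N A\Bigr).$$

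The main obstacle is that essentially all of the real content lives in Proposition~\ref{L:expFact}: namely, that $\underline{\pi}_*$ takes locally constant factorization algebras on $M\times N$ to locally constant factorization algebras on $M$ with values in locally constant factorization algebras on $N$. Once this has been established, the remainder is routine, relying only on the associativity $(p_M\circ\pi)_*=(p_M)_*\circ\pi_*$ of pushforwards (Proposition~\ref{P:pushforwardFac}) and on $\int_N$ being a morphism of $\infty$-categories.
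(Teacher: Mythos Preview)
Your proof is correct and follows essentially the same route the paper indicates: the corollary is stated as an immediate consequence of Proposition~\ref{L:expFact} and Theorem~\ref{T:Theorem6GTZ2}, and you have simply spelled out the details of how those two results combine via the pushforward formula~\eqref{eq:pushforwardfacthomology}. Your identification $\mathcal{G}_A\simeq \pi_*(\mathcal{F}_A)$ and the use of $(p_M\circ\pi)_*=(p_M)_*\circ\pi_*$ are exactly the intended mechanism.
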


\begin{example}
Let $A$ be a smooth  commutative algebra. By Hochschild-Kostant-Rosenberg theorem (see Example~\ref{Ex:HKR}),  $CH_{S^1}(A) \stackrel{\simeq} \to  S_A^\bullet(\Omega^1(A)[1])$; this  algebra is also smooth. 
Since $A$ is commutative it defines a factorization algebra on the torus $S^1\times S^1$. By Corollary~\ref{C:FubiniTCH}, we find that
$$\int_{S^1\times S^1} A \cong \int_{S^1} \big(S_A^\bullet(\Omega^1(A)[1])\big) \,\cong\, S_A ^\bullet\Big(\Omega^1(A)[1]\oplus \Omega^1(A)[1] \oplus \Omega^1(A)[2]\Big).$$
\end{example}

\subsection{Pullback along open immersions and equivariant factorization algebras} 
Let $f: X\to Y$  be an open immersion and let $\mathcal{G}$ be a factorization algebra on $Y$. Since $f:X\to Y$ is an open immersion, the set $$\mathcal{U}_f:=\{U \text{ open in $X$ such that }f_{|U}: U\to Y \text{ is an homeomorphism}\}$$ is an open cover of $X$ as well as a factorizing basis. 
For $U\in \mathcal{U}_{f}$, we define $f^*(\mathcal{G})(U):=\mathcal{G}(f(U))$. The structure maps of $\mathcal{G}$ make $f^*(\mathcal{G})$ a $\mathcal{U}_f$-factorization algebra in a canonical way. Thus by Proposition~\ref{P:extensionfrombasis}, 
$i_*^{\mathcal{U}_f}(f^*(\mathcal{G}))$ is the factorization algebra on $X$ extending $f^*(\mathcal{G})$ .
We (abusively) denote  $f^*(\mathcal{G}):=i_*^{\mathcal{U}_f}(f^*(\mathcal{G}))$ and call it the pullback along $f$ of the factorization algebra $\mathcal{G}$. 
\begin{proposition}[\cite{CG}]The pullback along open immersion is a functor $f^*: \textbf{Fac}_Y\to \textbf{Fac}_X$. If $f: X\to Y$ and $g: Y\to Z$ are open immersions, then $(g\circ f)^*= f^* \circ g^*$.
\end{proposition}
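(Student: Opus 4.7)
The plan is to deduce both parts of the statement from the universal property of Proposition~\ref{P:extensionfrombasis}: the factorization algebra $f^*(\mathcal{G})$ on $X$ is characterized, up to contractible choice, by the datum of the $\mathcal{U}_f$-prefactorization algebra $U\mapsto\mathcal{G}(f(U))$, so functoriality and compatibility with composition reduce to checking the corresponding statements on a well-chosen factorizing basis.

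First I would verify that $\mathcal{U}_f$ is a factorizing basis of $X$, stable under finite intersection: if $U,U'\in\mathcal{U}_f$, then $f|_{U\cap U'}$ is still a homeomorphism onto $f(U)\cap f(U')$, which is open in $Y$, and since an open immersion is locally a homeomorphism onto its image, every point of $X$ admits arbitrarily small neighborhoods in $\mathcal{U}_f$, so pairwise disjoint ones can be found around any finite family of points. Next, one checks that $U\mapsto\mathcal{G}(f(U))$ is a $\mathcal{U}_f$-factorization algebra: the structure maps are inherited from those of $\mathcal{G}$ (pairwise disjoint $U_1,\ldots,U_n\subset V$ in $\mathcal{U}_f$ have pairwise disjoint images inside $f(V)\subset Y$), and for a factorizing cover $\mathcal{V}$ of $U\in\mathcal{U}_f$ by opens of $\mathcal{U}_f$ one has
\[
\check{C}(\mathcal{V},f^*\mathcal{G})\;=\;\check{C}(f(\mathcal{V}),\mathcal{G})\;\stackrel{\simeq}{\longrightarrow}\;\mathcal{G}(f(U))\;=\;f^*\mathcal{G}(U),
\]
the quasi-isomorphism being the factorization-algebra axiom for $\mathcal{G}$ applied to the factorizing cover $f(\mathcal{V})$ of $f(U)$ (which is a factorizing cover because $f|_U$ is a homeomorphism). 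Proposition~\ref{P:extensionfrombasis} then extends this to a factorization algebra on all of $X$. Functoriality on morphisms is immediate: a morphism $\phi:\mathcal{G}\to\mathcal{G}'$ in $\textbf{Fac}_Y$ defines, via $(f^*\phi)_U:=\phi_{f(U)}$, a morphism of $\mathcal{U}_f$-prefactorization algebras that extends uniquely, by the universal property of the \v{C}ech complex, to a morphism $f^*\phi:f^*\mathcal{G}\to f^*\mathcal{G}'$ in $\textbf{Fac}_X$. Identities and composition are preserved on $\mathcal{U}_f$-sections, hence, by the uniqueness in Proposition~\ref{P:extensionfrombasis}, everywhere on $X$.

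To prove $(g\circ f)^*=f^*\circ g^*$, I would observe that if $U\in\mathcal{U}_{g\circ f}$, then $(g\circ f)|_U$ being a homeomorphism onto its image forces $f|_U$ to be a homeomorphism onto $f(U)$ and $g|_{f(U)}$ to be a homeomorphism onto $g(f(U))$, whence $U\in\mathcal{U}_f$ and $f(U)\in\mathcal{U}_g$. In particular $\mathcal{U}_{g\circ f}$ is itself a factorizing basis of $X$ stable under finite intersection, and on every $U\in\mathcal{U}_{g\circ f}$ one has a tautological identification
\[
(g\circ f)^*(\mathcal{H})(U)\;=\;\mathcal{H}\bigl(g(f(U))\bigr)\;=\;(g^*\mathcal{H})(f(U))\;=\;f^*(g^*\mathcal{H})(U),
\]
with structure maps in both cases inherited from those of $\mathcal{H}$. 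The uniqueness in Proposition~\ref{P:extensionfrombasis} then identifies $(g\circ f)^*\mathcal{H}$ with $f^*(g^*\mathcal{H})$ as factorization algebras on $X$. The only mildly technical step is checking that $\mathcal{U}_f$ (and $\mathcal{U}_{g\circ f}$) is a factorizing basis stable under finite intersection, which is precisely the point where the open-immersion hypothesis, rather than a more general continuous map, is essential.
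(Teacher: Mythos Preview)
Your argument is correct and follows exactly the approach the paper sets up: the paragraph preceding the proposition constructs $f^*(\mathcal{G})$ as the unique extension (via Proposition~\ref{P:extensionfrombasis}) of the $\mathcal{U}_f$-factorization algebra $U\mapsto\mathcal{G}(f(U))$, and the paper then cites~\cite{CG} without further proof, so what you wrote is precisely the intended filling-in of details.

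One small correction that does not affect your conclusions: you claim that for $U,U'\in\mathcal{U}_f$ the restriction $f|_{U\cap U'}$ is a homeomorphism onto $f(U)\cap f(U')$. For a non-injective open immersion (e.g.\ the universal cover $\R\to S^1$) one need not have $f(U\cap U')=f(U)\cap f(U')$. What is true, and what you actually need, is only that $f|_{U\cap U'}$ is a homeomorphism onto its image $f(U\cap U')$ (being the restriction of the homeomorphism $f|_U$ to an open subset), so $U\cap U'\in\mathcal{U}_f$ and the basis is stable under finite intersection as required.
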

\smallskip

If $U\in \mathcal{U}_f$, then $U$ is an open subset of the open set $f^{-1}(f(U))$.
 Thus if $\mathcal{F}$ is a factorization algebra on $X$, we have the natural map 
 \begin{equation}\label{eq:NatTransf}\mathcal{F}(U)\stackrel{\rho_{U, f^{-1}(f(U))}}\longrightarrow \mathcal{F}(f^{-1}(f(U)))\cong f^{*}(f_{*}(\mathcal{F}))(U)\end{equation}                                                                                                                                                                which is a map of $\mathcal{U}_f$-factorization algebras. Since $\mathcal{F}$ and $f^{*}(f_{*}(\mathcal{F}))$ are factorization algebras on $X$, the above map extends uniquely into a map of factorization algebras on $X$. We have proved:
\begin{proposition}
Let $f: X\to Y$  be an open immersion. There is an natural transformation $\text{Id}_{\textbf{Fac}_X} \to  f^{*}f_{*}$ induced by the maps~\eqref{eq:NatTransf}.
\end{proposition}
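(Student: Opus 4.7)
The plan is to produce the natural transformation by constructing it on a factorizing basis and then invoking the extension-from-a-basis result (Proposition~\ref{P:extensionfrombasis}). First I would verify that $\mathcal{U}_f$ is a factorizing basis of $X$ stable under finite intersections. That it is an open cover is immediate since $f$ is locally a homeomorphism. For stability under intersection, if $U_1,U_2\in\mathcal{U}_f$ then $f$ is injective on each, hence on $U_1\cap U_2$, and $f(U_1\cap U_2)=f(U_1)\cap f(U_2)$ since $f$ is injective on $U_1\cup U_2$ and open; so the restriction $f_{|U_1\cap U_2}$ is a homeomorphism onto its image. The factorizing property follows from Hausdorffness: disjoint small open neighborhoods of any finite set of points can be taken inside $\mathcal{U}_f$.

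Next I would check that the maps~\eqref{eq:NatTransf} assemble into a morphism of $\mathcal{U}_f$-(pre)factorization algebras from $\mathcal{F}_{|\mathcal{U}_f}$ to $(f^*f_*\mathcal{F})_{|\mathcal{U}_f}$. Given pairwise disjoint $U_1,\dots,U_i\in\mathcal{U}_f$ inside some $V\in \mathcal{U}_f$, one notes that the opens $f^{-1}(f(U_j))$ are pairwise disjoint (since $f(U_j)\cap f(U_k)=\emptyset$ for $j\neq k$, so that $f^{-1}(f(U_j))\cap f^{-1}(f(U_k))=\emptyset$), all contained in $f^{-1}(f(V))$, and each $U_j\subset f^{-1}(f(U_j))$. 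The required square
$$\xymatrix@C=3ex{\mathcal{F}(U_1)\otimes\cdots\otimes \mathcal{F}(U_i) \ar[r]^-{\rho} \ar[d] & \mathcal{F}(V) \ar[d] \\ \mathcal{F}(f^{-1}f(U_1))\otimes\cdots\otimes \mathcal{F}(f^{-1}f(U_i)) \ar[r] & \mathcal{F}(f^{-1}f(V))}$$
then commutes by the associativity diagram~\eqref{eq:associativityPFact} for $\mathcal{F}$: both composites equal the structure map attached to the inclusions $U_j\hookrightarrow f^{-1}f(V)$.

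Both $\mathcal{F}_{|\mathcal{U}_f}$ and $(f^*f_*\mathcal{F})_{|\mathcal{U}_f}$ are $\mathcal{U}_f$-factorization algebras (the former because the factorization condition on $X$ restricts to any factorizing basis, the latter since $f^*f_*\mathcal{F}$ is itself a factorization algebra on $X$ by the previous constructions). Proposition~\ref{P:extensionfrombasis} therefore upgrades the basis-level morphism uniquely to a morphism of factorization algebras $\eta_\mathcal{F}:\mathcal{F}\to f^*f_*\mathcal{F}$ on all of $X$.

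For naturality in $\mathcal{F}$, given a morphism $\phi:\mathcal{F}\to \mathcal{G}$ in $\textbf{Fac}_X$, the functoriality of $f_*$ and $f^*$ together with the fact that $\phi$ commutes with structure maps yields a commutative square $\phi\circ\eta_\mathcal{F}=\eta_\mathcal{G}\circ\phi$ on the basis $\mathcal{U}_f$; by the uniqueness part of Proposition~\ref{P:extensionfrombasis}, this identity propagates to all of $X$. The principal (and only genuinely non-formal) step is the compatibility check in the second paragraph; everything else is bookkeeping once the extension-from-a-basis machinery is in place.
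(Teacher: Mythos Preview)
Your approach is exactly the paper's: define the transformation on the basis $\mathcal{U}_f$ via the structure maps $\rho_{U,f^{-1}(f(U))}$, verify these assemble into a morphism of $\mathcal{U}_f$-factorization algebras, and then invoke extension from a basis. One small slip: your claim that ``$f$ is injective on $U_1\cup U_2$'' is false for a general local homeomorphism (think of the universal cover $\R\to S^1$ with two overlapping fundamental domains), so $f(U_1\cap U_2)=f(U_1)\cap f(U_2)$ need not hold; but stability of $\mathcal{U}_f$ under intersection follows more directly since $f_{|U_1\cap U_2}$ is the restriction of the homeomorphism $f_{|U_1}$ to an open subset.
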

\begin{example}
Let $X=\{c,d\}$ be a discrete space with two elements and consider the projection $f:X\to pt$.  A factorization algebra $\mathcal{G}$ on $pt$ is just the data of a chain complex $G$ with a distinguished cycle $g_0$ while  a factorization algebra $\mathcal{F}$ on $X$ is given by two chain complexes $C$, $D$  (with distinguished cycles $c_0$, $d_0$) and  the rule $\mathcal{F}(\{c\})=C$, $\mathcal{F}(\{d\})=D$, $\mathcal{F}(\{c\})=C\stackrel{id\otimes \{d_0\}}\longrightarrow C\otimes D=\mathcal{F}(X)$ and $\mathcal{F}(\{d\})=C\stackrel{\{c_0\}\otimes id}\longrightarrow C\otimes D=\mathcal{F}(X)$. 
In that case we have that $$f^*(f_*(\mathcal{F})(\{x\})=C\otimes D =f^*(f_*(\mathcal{F})(\{y\})=f^*(f_*(\mathcal{F})(X) $$  while $f_*(f^*(\mathcal{G}))(pt)=F\otimes F$. Note that there are no natural transformation of chain complexes $F\otimes F\to F$  in general; in particular \emph{$f^*$ and $f_*$ are not adjoint}.

 In fact $f_*$ does \emph{not} have any adjoint in general; indeed as the above example of $F:\{c,d\}\to pt$ demonstrates, $f_*$ does not commute with coproducts nor products.
\end{example}

We now turn on to a descent property of factorization algebras.
Let $G$ be a discrete group acting on a space $X$. For $g\in G$, we write $g: X\to X$ the homeomorphism $x\mapsto g\cdot x$ induced by the action.
\begin{definition}\label{D:GequivariantFacAlg} A $G$-equivariant factorization algebra on $X$ is a factorization algebra $\mathcal{G}\in \textbf{Fac}_X$ together with, for all $g\in G$, (quasi-)isomorphisms of factorization algebras 
$$\theta_g: g^{*}(\mathcal{G}) \stackrel{\simeq} \to\mathcal{G}$$   such that $\theta_{1}=\text{id}$ and $$ \theta_{gh}=\theta_{h}\circ h^*(\theta_g): h^*(g^*(\mathcal{G}))\to \mathcal{G}.$$
We write $\textbf{Fac}_X^{G}$ for the category of $G$-equivariant factorization algebras over $X$.
\end{definition}

Assume $G$ acts properly discontinuously and $X$ is Hausdorff so that the quotient map $q: X\to X/G$ is an open immersion. If $\mathcal{F}$ is a factorization algebra over $X/G$, then $q^{*}(\mathcal{F})$ is $G$-equivariant (since $q(g\cdot x)=q(x)$). We thus have a functor $q^*:\textbf{Fac}_{X/G} \longrightarrow \textbf{Fac}_X^{G}$.
\begin{proposition}[Costello-Gwilliam~\cite{CG}] \label{P:descentGequiv} If the discrete group $G$ acts properly discontinuously on $X$, then the functor $q^*:\textbf{Fac}_{X/G} \longrightarrow \textbf{Fac}_X^{G}$ is an equivalence of categories.
\end{proposition}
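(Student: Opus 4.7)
The strategy is to construct an inverse functor to $q^*$ by a combination of descent along the covering $q$ and extension from a factorizing basis (Proposition~\ref{P:extensionfrombasis}). Since $G$ acts properly discontinuously, every point of $X/G$ admits an evenly covered neighborhood $V$, meaning $q^{-1}(V) = \coprod_{g\in G} g\cdot \widetilde{V}$ with each $g\cdot\widetilde{V} \stackrel{q}{\cong} V$. Let $\mathcal{U}$ be the collection of all such evenly covered open sets of $X/G$ that are also contained in an evenly covered open. It is straightforward to check that $\mathcal{U}$ is stable under finite intersections and is a factorizing basis for the topology of $X/G$.

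Given a $G$-equivariant factorization algebra $(\mathcal{G},\theta)\in \textbf{Fac}_X^G$, I would define a $\mathcal{U}$-prefactorization algebra $q^G_*(\mathcal{G})$ on $X/G$ by choosing, for each $V\in\mathcal{U}$, a lift $\widetilde{V}\subset X$ and setting $q^G_*(\mathcal{G})(V):= \mathcal{G}(\widetilde{V})$; the equivariance isomorphisms $\theta_g$ provide canonical identifications $\mathcal{G}(\widetilde{V})\cong \mathcal{G}(g\cdot\widetilde{V})$, making this independent of the choice of lift up to coherent homotopy. Structure maps for disjoint inclusions $V_1,\dots,V_k\subset W$ in $\mathcal{U}$ are obtained by lifting them coherently into a single sheet $\widetilde{W}$ and applying the structure maps of $\mathcal{G}$. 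The cocycle condition $\theta_{gh}=\theta_h\circ h^*(\theta_g)$ guarantees associativity. The $\mathcal{U}$-factorization (\v{C}ech) property follows from that of $\mathcal{G}$ on $X$ since any factorizing cover of $V\in\mathcal{U}$ by elements of $\mathcal{U}_V$ lifts uniquely to a factorizing cover of $\widetilde{V}$ by its homeomorphic images. Proposition~\ref{P:extensionfrombasis} then produces a genuine factorization algebra $q^G_*(\mathcal{G})$ on $X/G$, and $\mathcal{G}\mapsto q^G_*(\mathcal{G})$ is functorial in the $G$-equivariant structure.

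To show $q^G_*$ is a quasi-inverse to $q^*$, I would check the two composites on the basis $\mathcal{U}$, where both factorization algebras involved are determined by their values. For $\mathcal{F}\in\textbf{Fac}_{X/G}$: on $V\in \mathcal{U}$, one has $q^G_*(q^*\mathcal{F})(V) = q^*(\mathcal{F})(\widetilde{V}) = \mathcal{F}(q(\widetilde{V}))=\mathcal{F}(V)$ by the definition of $q^*$ on the basis $\mathcal{U}_q$ of evenly covered lifts, and these identifications are compatible with the structure maps; hence $q^G_*\circ q^*\simeq \text{Id}$. Conversely, for $(\mathcal{G},\theta)$ equivariant, on any $U\in \mathcal{U}_q$ that is a sheet over some $V\in\mathcal{U}$, one has $q^*(q^G_*\mathcal{G})(U) = q^G_*\mathcal{G}(q(U)) = \mathcal{G}(U)$ (after using $\theta$ to identify $\mathcal{G}(\widetilde{V})$ with $\mathcal{G}(U)$), and this identification is $G$-equivariant by construction. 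Since the opens in $\mathcal{U}_q$ form a factorizing basis of $X$ stable under intersection, the uniqueness in Proposition~\ref{P:extensionfrombasis} upgrades this to an equivalence $q^*\circ q^G_*\simeq \text{Id}$ in $\textbf{Fac}_X^G$.

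The main technical obstacle is the bookkeeping of the $G$-equivariance coherences: one must ensure that the choices of lifts, the identifications via $\theta_g$, and the structure maps are compatible in a way that produces actual $\infty$-functors (not merely bijections on isomorphism classes), and that $q^*$ and $q^G_*$ induce equivalences on mapping spaces. This amounts to checking that the natural transformations $\text{Id}\to q^G_*\circ q^*$ and $q^*\circ q^G_*\to \text{Id}$ are quasi-isomorphisms on an honest factorizing basis, which is precisely what Proposition~\ref{P:extensionfrombasis} is designed to exploit; properness of the $G$-action (hence local triviality of $q$) is used exactly to produce this basis.
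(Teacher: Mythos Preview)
Your proposal is correct and follows essentially the same approach as the paper: the paper only sketches the proof in one sentence, stating that it ``essentially relies on considering the factorization basis given by trivialization of the principal $G$-bundle $X\to X/G$ to define an inverse to $q^*$,'' which is precisely what you carry out in detail via evenly covered opens and Proposition~\ref{P:extensionfrombasis}.
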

The proof essentially relies on considering the factorization basis given by trivialization of the principal $G$-bundle $X\to X/G$ to define an inverse to $q^*$.
\begin{proposition}\label{P:GequivFactlc}
 If the discrete group $G$ acts properly discontinuously on a smooth manifold $X$, then the equivalence $q^*:\textbf{Fac}_{X/G} \longrightarrow \textbf{Fac}_X^{G}$ factors as an equivalence $q^*:\textbf{Fac}_{X/G}^{lc} \longrightarrow (\textbf{Fac}_X^{lc})^{G}$ between the subcategories of locally constant factorization algebras.
\end{proposition}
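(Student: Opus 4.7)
The plan is to prove the two directions of the desired equivalence by reducing local constancy to a local property on compatible covers of $X$ and $X/G$ respectively, then invoking Proposition~\ref{P:locallocallyconstant}. The key geometric input is that, since $G$ acts properly discontinuously on the Hausdorff manifold $X$, the quotient map $q:X\to X/G$ is an étale covering; in particular it is a local homeomorphism, and the collection $\mathcal{U}_q$ of opens on which $q$ restricts to a homeomorphism onto its image is a factorizing basis of $X$ (stable under finite intersections), while the collection $\mathcal{V}$ of open subsets $V\subset X/G$ whose preimage decomposes as a disjoint union $q^{-1}(V)\cong \coprod_{g\in G} g\cdot U$ with $q_{|U}:U\stackrel{\simeq}\to V$ a homeomorphism, is a factorizing basis of $X/G$.

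First, I would show that $q^*$ preserves local constancy. Let $\mathcal{F}\in \textbf{Fac}^{lc}_{X/G}$. By construction of the pullback along the open immersion $q$ (via Proposition~\ref{P:extensionfrombasis} applied to the basis $\mathcal{U}_q$), for any $U\in \mathcal{U}_q$ the restriction $q^*(\mathcal{F})_{|U}$ is the factorization algebra on $U$ extending the $\mathcal{U}_q$-factorization algebra $W\mapsto \mathcal{F}(q(W))$, $W\in \mathcal{U}_q$, $W\subset U$. Since $q_{|U}$ is a homeomorphism onto the open subset $q(U)\subset X/G$, this restriction is isomorphic to $(q_{|U})^*(\mathcal{F}_{|q(U)})$, which is locally constant as the image of a locally constant factorization algebra under a homeomorphism. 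Since $\mathcal{U}_q$ is an open cover of $X$, Proposition~\ref{P:locallocallyconstant} applies and $q^*(\mathcal{F})$ is locally constant on all of $X$.

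Next, I would show that the inverse $(q^*)^{-1}$ from Proposition~\ref{P:descentGequiv} preserves local constancy, establishing essential surjectivity of $q^*$ between the subcategories. Let $\mathcal{G}\in (\textbf{Fac}^{lc}_X)^G$ and set $\mathcal{F}:=(q^*)^{-1}(\mathcal{G})\in \textbf{Fac}_{X/G}$. For any $V\in \mathcal{V}$, pick a trivializing sheet $U\subset q^{-1}(V)$ as above. Under the equivalence of Proposition~\ref{P:descentGequiv}, whose construction proceeds precisely through this trivialization basis, the restriction $\mathcal{F}_{|V}$ is identified, via the homeomorphism $q_{|U}:U\to V$, with $\mathcal{G}_{|U}$; the $G$-equivariance of $\mathcal{G}$ ensures the independence (up to coherent isomorphism) of this identification from the choice of sheet. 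Since $\mathcal{G}_{|U}$ is locally constant, so is $\mathcal{F}_{|V}$. As $\mathcal{V}$ is an open cover of $X/G$, Proposition~\ref{P:locallocallyconstant} again gives that $\mathcal{F}$ is locally constant on $X/G$.

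Combined with fully faithfulness inherited from Proposition~\ref{P:descentGequiv} (the subcategories of locally constant factorization algebras are full), these two steps produce the desired equivalence $q^*:\textbf{Fac}^{lc}_{X/G}\stackrel{\simeq}\longrightarrow (\textbf{Fac}^{lc}_X)^G$. The only mildly delicate point is the appeal to Proposition~\ref{P:locallocallyconstant}: one has to make sure that the covers $\mathcal{U}_q$ and $\mathcal{V}$ actually cover $X$ and $X/G$, which is guaranteed by proper discontinuity of the $G$-action. Everything else reduces to the elementary fact that pushforward and pullback along a homeomorphism preserve the locally constant property on disks.
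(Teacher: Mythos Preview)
Your proposal is correct and follows essentially the same approach as the paper: both directions reduce local constancy to the trivializing opens of the covering map $q$ and then invoke Proposition~\ref{P:locallocallyconstant}. The only minor difference is that, for the inverse direction, the paper observes directly that any disk $D\subset X/G$ is contractible and hence admits a section $\sigma:D\to X$ of $q$, so $(q^*)^{-1}(\mathcal{G})(D)=\mathcal{G}(\sigma(D))$ can be checked on all disks at once without a second appeal to Proposition~\ref{P:locallocallyconstant}; your route via the cover $\mathcal{V}$ is equally valid and perhaps more uniform.
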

\begin{proof}
Let $U$ be an open set such that $q_{|U}: U\to X/G$ is an homeomorphism onto its image. Then, for every open subset $V$ of $U$, $q^*(\mathcal{F})(V)=\mathcal{F}(q(V))$. Thus, if $\mathcal{F}$ satisfies the condition of being locally constant for disks included in $U$, then so does $q^*(\mathcal{F})$ for disks included in $q(U)$. Hence, by Proposition~\ref{P:locallocallyconstant}, $q^*(\mathcal{F})$ is locally constant if $\mathcal{F}$ is locally constant. 

Now, assume $\mathcal{G}\in \textbf{Fac}_{X}^{G}$ is locally constant. Then $(q^*)^{-1}(\mathcal{G})$ is the factorization algebra defined on every section $X/G\supset U\to \sigma(U)\subset X$ of $q$ (with $U$ open) by $(q^*)^{-1}(\mathcal{G})(U)=\mathcal{G}(\sigma(U))$. Since every disk $D$  is contractible, we always have a section $D\to X$ of $q_{|D}$. Thus, if $\mathcal{G}$ is   locally constant, then so is $(q^*)^{-1}(\mathcal{G})$.
\qed \end{proof}
\begin{remark}[General definition of equivariant factorization algebras]
Definition~\ref{D:GequivariantFacAlg} can be easily generalized to topological groups as follows. Indeed, if $G$ acts continuously on $X$, then the rule $(g,\mathcal{F})\mapsto g^*(\mathcal{F})$ induces a right action of $G$ on $\textbf{Fac}_X$\footnote{that is a map of $E_1$-algebras (in $\hTop$) from $G^{op}$ to $\textbf{Fun}(\textbf{Fac}_X, \textbf{Fac}_X)$}. 

The $\infty$-category of
  $G$-equivariant factorization algebras is the $\infty$-category of homotopy $G$-fixed points of $\textbf{Fac}_X$: $$\textbf{Fac}_X^G:=(\textbf{Fac}_X)^{hG}.$$
This $\infty$-category is equivalent to the one of Definition~\ref{D:GequivariantFacAlg} for discrete groups. It is the $\infty$-category consisting  of a factorization algebra $\mathcal{G}$ on $X$ together with
quasi-isomorphisms of factorization algebras $\theta_g: g^*(\mathcal{G})\to \mathcal{G}$ (inducing a $\infty$-functor $BG\to \textbf{Fac}_X$, where $BG$ is the $\infty$-category associated to the topological category   with a single object and mapping space of morphisms given by $G$) and equivalences $\theta_{gh} \sim \theta_h \circ h^*(\theta_g)$   satisfying some higher coherences.
\end{remark}

\subsection{Example: locally constant factorization algebras over the circle}
Let $q:\R \to S^1=\R/\mathbb{Z}$ be the universal cover of $S^1$ and let  $\mathcal{F}$ be a locally constant factorization algebra on $S^1$. By Proposition~\ref{P:GequivFactlc},  $\mathcal{F}$ is equivalent to the data of a $\mathbb{Z}$-equivariant  locally constant factorization algebra  on $\R$ which is the same as a locally constant factorization algebra over $\R$ together with an 
equivalence of factorization algebras,  the equivalence being given by $ \theta_1: 1^*(q^{*}(\mathcal{F})) \stackrel{\simeq}\to q^*(\mathcal{F})$. 
By Theorem~\ref{P:En=Fact}, the category of locally constant factorization algebras on $\R$ is the same as the category of $E_1$-algebras, and thus is equivalent to its full subcategory of \emph{constant} factorization algebras. 
It follows that we have a canonical equivalence $1^*(\mathcal{G})\cong \mathcal{G}$ for $\mathcal{G}\in \textbf{Fac}^{lc}_{\R}$ (in particular, for any open interval $I$, the structure map $1*(\mathcal{G}(I))=\mathcal{G}(1+I)\to \mathcal{G}(\R)$ is a quasi-isomorphism).
\begin{definition} We denote $\mathop{mon}: q^{*}(\mathcal{F})\cong 1^*(q^{*}(\mathcal{F}))\stackrel{\theta_1}{\to} q^{*}(\mathcal{F})$ the self-equivalence of $q^*(\mathcal{F})$ induced by $\theta_1$ and call it the \emph{monodromy} of $\mathcal{F}$. 
\end{definition}
 We thus get the following result
\begin{corollary}\label{C:locallyconstantS1}
The category $\textbf{Fac}_{S^1}^{lc}$ of locally constant factorization algebra on the circle is equivalent to
the $\infty$-category $\textbf{Aut}(E_1\textbf{-Alg})$ of  $E_1$-algebras equipped with a self-equivalence.
\end{corollary}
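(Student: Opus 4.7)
The plan is to chain together the two main tools recalled just above the corollary: Proposition~\ref{P:GequivFactlc} applied to the universal cover $q:\mathbb{R}\to S^1$ with deck group $\mathbb{Z}$, and Theorem~\ref{P:En=Fact} identifying $\textbf{Fac}^{lc}_{\mathbb{R}}$ with $E_1\textbf{-Alg}$. Since $\mathbb{Z}$ acts properly discontinuously on the Hausdorff manifold $\mathbb{R}$ by translations, Proposition~\ref{P:GequivFactlc} gives an equivalence
\[
q^*:\textbf{Fac}^{lc}_{S^1}\stackrel{\simeq}\longrightarrow (\textbf{Fac}^{lc}_{\mathbb{R}})^{\mathbb{Z}}.
\]
So the whole problem is to identify $(\textbf{Fac}^{lc}_{\mathbb{R}})^{\mathbb{Z}}$ with $\textbf{Aut}(E_1\textbf{-Alg})$ through the equivalence of Theorem~\ref{P:En=Fact}.

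The key observation, noted in the paragraph preceding the statement, is that translation by $n\in\mathbb{Z}$ acts on $\textbf{Fac}^{lc}_{\mathbb{R}}$ in a canonically trivial way: for any locally constant factorization algebra $\mathcal{G}$ on $\mathbb{R}$ and any open interval $I$, the structure map $\mathcal{G}(I)\to \mathcal{G}(n+I)\to \mathcal{G}(\mathbb{R})$ is an equivalence, giving a natural equivalence $n^*(\mathcal{G})\simeq \mathcal{G}$ compatible with the structure maps. More conceptually, the translation action of $\mathbb{Z}$ on $\mathbb{R}$ extends to the translation action of the contractible group $\mathbb{R}$ on itself, so the induced $\mathbb{Z}$-action on $\textbf{Fac}^{lc}_{\mathbb{R}}\simeq E_1\textbf{-Alg}$ is (canonically) homotopic to the trivial action. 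I would spell this out by exhibiting a natural isotopy between $n:\mathbb{R}\to \mathbb{R}$ and $\mathrm{id}_{\mathbb{R}}$ through open immersions, and using functoriality of pullback along open immersions.

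Once the $\mathbb{Z}$-action is identified with the trivial one, the homotopy fixed points compute as
\[
(E_1\textbf{-Alg})^{h\mathbb{Z}} \;\simeq\; \textbf{Fun}(B\mathbb{Z},E_1\textbf{-Alg}) \;\simeq\; \textbf{Fun}(S^1,E_1\textbf{-Alg}).
\]
An $\infty$-functor $S^1\to E_1\textbf{-Alg}$ is precisely the datum of an object $A\in E_1\textbf{-Alg}$ together with a self-equivalence $A\stackrel{\simeq}\to A$, i.e.\ an element of $\textbf{Aut}(E_1\textbf{-Alg})$. Under this chain of equivalences, one reads off that the factorization algebra on $S^1$ recovered from $(A,\varphi)$ is the one whose pullback to $\mathbb{R}$ is the constant factorization algebra associated to $A$ with monodromy $\mathop{mon}=\varphi$.

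The main obstacle is the middle step: rigorously justifying that translations act homotopically trivially on the $\infty$-category $\textbf{Fac}^{lc}_{\mathbb{R}}$ and that the homotopy $\mathbb{Z}$-fixed points for the trivial action really form $\textbf{Fun}(B\mathbb{Z},-)$. This is where some care with the $\infty$-categorical formalism (as opposed to the strict $1$-categorical description of Definition~\ref{D:GequivariantFacAlg}) is required; the rest of the argument is essentially a direct unwinding of definitions once these two inputs are in place.
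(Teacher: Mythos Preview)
Your proposal is correct and follows essentially the same route as the paper: pull back along the universal cover $q:\mathbb{R}\to S^1$ via Proposition~\ref{P:GequivFactlc} to land in $(\textbf{Fac}^{lc}_{\mathbb{R}})^{\mathbb{Z}}$, then use Theorem~\ref{P:En=Fact} and the canonical triviality of the translation action to reduce to an $E_1$-algebra with a self-equivalence. The only difference is one of phrasing: the paper argues more directly that since $\mathbb{Z}$ is free on the generator $1$, the $\mathbb{Z}$-equivariant structure is determined by the single datum $\theta_1:1^*(q^*\mathcal{F})\to q^*\mathcal{F}$, and then uses local constancy (the structure map $\mathcal{G}(1+I)\to\mathcal{G}(\mathbb{R})$ is a quasi-isomorphism) to obtain the canonical identification $1^*\mathcal{G}\simeq\mathcal{G}$; you instead package this as the $\mathbb{Z}$-action extending to a contractible $\mathbb{R}$-action and invoke $(E_1\textbf{-Alg})^{h\mathbb{Z}}\simeq\textbf{Fun}(B\mathbb{Z},E_1\textbf{-Alg})$. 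Your formulation is slightly more abstract but amounts to the same computation; the paper's version avoids having to justify the homotopy-fixed-point identity for trivial actions by working directly with the generator.
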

\begin{remark} Using Proposition~\ref{L:expFact}, it is easy to prove similarly that $\textbf{Fac}^{lc}_{S^1\times S^1}$ is equivalent to the category of $E_2$-algebras equipped with two commuting monodromies (i.e. self-equivalences).
 
It seems harder to describe the categories of locally constant factorization algebras over the spheres $S^3$, $S^7$ in terms of $E_3$ and $E_7$-algebras (due to the complicated homotopy groups of the spheres). However, for $n=3,7$, there shall be an embedding of the categories of $E_n$-algebras equipped with an $n$-gerbe\footnote{by an $n$-gerbe over $A\in E_n\textbf{-Alg}$,  we mean a monoid map $\mathbb{Z}\to \Omega^{n-1}\Map_{E_n\textbf{-Alg}}(A,A)$.} into $\textbf{Fac}_{S^n}^{lc}$.
\end{remark}

Let $\mathcal{F}$ be a locally constant factorization algebra on $S^1$  (identified with the unit sphere in $\mathbb{C}$). We wish to compute the \emph{global section of} $\mathcal{F}$ (i.e. its factorization homology $\int_{S^1} \mathcal{F}$).
Let $\mathcal{B}\cong \mathcal{F}(S^1\setminus\{1\})$ be its underlying  $E_1$-algebra (with monodromy  $\mathop{mon}:\mathcal{B}\stackrel{\simeq}\to \mathcal{B}$). 
We use the orthogonal  projection $\pi:S^1\to [-1,1]$ from $S^1$ to the real axis. 
The equivalence~\eqref{eq:pushforwardfacthomology} yields 
 $\mathcal{F}(S^1)\cong \pi_*(\mathcal{F})([-1,1])$ and by  Proposition~\ref{P:pushforwardlc} and Proposition~\ref{P:BarInterval}, we are left to compute the $E_1$-algebra $ \pi_*(\mathcal{F}) \big((-1,1)\big)$ and left and right modules 
$ \pi_*(\mathcal{F} \big((-1,1]\big)$,  $ \pi_*(\mathcal{F}) \big([-1,1)\big)$.  From Example~\ref{ex:E1frombasis}, we get  $ \pi_*(\mathcal{F}) \big((-1,1)\big)\cong \mathcal{B}\otimes \mathcal{B}^{op}$.
 Further, $$ \pi_*(\mathcal{F}) \big([-1,1)\big)\cong \mathcal{F}(S^1\setminus\{1\})=\mathcal{B} \quad \mbox{ (as a $\mathcal{B}\otimes \mathcal{B}^{op}$-module).} $$ Similarly 
 $ \pi_*(\mathcal{F}) \big((-1,1]\big)\cong \mathcal{B}^{\mathop{mon}}$, that is   $\mathcal{B}$ viewed as a $\mathcal{B}\otimes \mathcal{B}^{op}$-module through the monodromy. 
 When $\mathcal{B}$ is actually a differential graded algebra, then the bimodule stucture of $\mathcal{B}^{\mathop{mon}}$ boils down to  $a\cdot x \cdot b = \mathop{mon}(b)\cdot m \cdot a$. 
This proves the following which is also asserted in~\cite[\S 5.3.3]{L-HA}. 
\begin{corollary}\label{C:Hochtwistedmonodromy} Let $\mathcal{B}$ be a locally constant factorization algebra on $S^1$. Let $B$ be  a differential graded algebra and $\mathop{mon}: B\stackrel{\simeq}\to B$ be a quasi-isomorphism of algebras so that $(B, \mathop{mon})$ is a model for the underlying $E_1$-algebra of $\mathcal{B}$ and its monodromy. 
Then the factorization homology $$\int_{S^1} \mathcal{B}\,\cong\, B\mathop{\otimes}\limits^{\mathbb{L}}_{B\otimes B^{op}} B^{\mathop{mon}}\, \cong \, HH(B, B^{\mathop{mon}})$$
is computed by the (standard) Hochschild homology\footnote{in particular by the standard Hochschild complex (see \cite{Lo-book}) $C_*(B,B^{\mathop{mon}})$ when $B$ is flat over $k$} $HH(B)$ of $B$ with value in $B$ twisted by the monodromy.
\end{corollary}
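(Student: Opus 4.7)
The corollary is essentially a direct unpacking of the discussion immediately preceding it, together with the excision axiom for factorization homology. My plan is to organize that discussion into a clean proof as follows.

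First, I would appeal to the pushforward formula~\eqref{eq:pushforwardfacthomology} applied to the orthogonal projection $\pi \colon S^1 \to [-1,1]$ in order to identify $\int_{S^1}\mathcal{B}$ with $\pi_*(\mathcal{B})([-1,1])$. By Proposition~\ref{P:pushforwardlc}, $\pi_*(\mathcal{B})$ is a locally constant factorization algebra on $[-1,1]$. Next, I would write $[-1,1]$ as the union $[-1,1) \cup (-1,1]$ glued along the open interval $(-1,1)$, and apply the excision axiom for the $E_1$-algebra $\pi_*(\mathcal{B})((-1,1))$ acting on the left and right modules $\pi_*(\mathcal{B})((-1,1])$ and $\pi_*(\mathcal{B})([-1,1))$, exactly as in the statement of the excision axiom (Definition~\ref{D:HomologyforMfld}) combined with the description of $E_1$-algebras and modules on intervals given in Example~\ref{ex:E1frombasis} and the forthcoming Proposition~\ref{P:BarInterval}.

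The heart of the proof is then the identification of these three pieces. The preimage $\pi^{-1}((-1,1))$ has two connected components, the upper and lower open half-circles. On the upper arc, the framing of $S^1$ matches the standard orientation of the interval and yields $B$; on the lower arc it is reversed, giving $B^{\mathrm{op}}$ by Example~\ref{ex:E1frombasis}. By the monoidal (factorization) property, this gives $\pi_*(\mathcal{B})((-1,1)) \simeq B \otimes B^{\mathrm{op}}$ as an $E_1$-algebra. Similarly, $\pi^{-1}([-1,1)) = S^1\setminus\{1\}$ is a single arc with a trivialized framing, so that $\pi_*(\mathcal{B})([-1,1)) \simeq B$ as a $B\otimes B^{\mathrm{op}}$-module via the usual outer bimodule structure.

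The key subtle step, and the one I expect to be the main obstacle, is identifying $\pi_*(\mathcal{B})((-1,1]) \simeq B^{\mathrm{mon}}$, that is, showing that the $(B \otimes B^{\mathrm{op}})$-action on this second copy of $B$ is \emph{twisted by the monodromy}. The point is that while $\pi^{-1}((-1,1])=S^1 \setminus\{-1\}$ is again an arc, the identification of its two open ends with the two components of $\pi^{-1}((-1,1))$ is not the \lq\lq straight\rq\rq\ one: going from the upper arc to the lower arc through the point $1 \in S^1$ corresponds to the deck transformation $1 \in \mathbb{Z}$ acting on $q^*(\mathcal{B})$ on the universal cover $q\colon\mathbb{R}\to S^1$. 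Via the equivalence $\textbf{Fac}_{S^1}^{lc}\cong \textbf{Fac}_{\mathbb{R}}^{lc,\,\mathbb{Z}}$ of Proposition~\ref{P:descentGequiv} and Proposition~\ref{P:GequivFactlc}, this deck translation is precisely the monodromy self-equivalence $\mathop{mon}\colon B\stackrel{\simeq}\to B$. Thus one of the two module structures (say, the right $B^{\mathrm{op}}$-action coming from the component containing $1$) is pre-composed with $\mathop{mon}$, yielding $B^{\mathop{mon}}$. Assembling these identifications, excision gives
\[
\int_{S^1}\mathcal{B}\;\simeq\; B \mathop{\otimes}\limits^{\mathbb{L}}_{B\otimes B^{\mathrm{op}}} B^{\mathop{mon}},
\]
which is precisely $HH(B, B^{\mathop{mon}})$ when $B$ is flat over $k$, via the standard two-sided bar resolution computing the relative tensor product.
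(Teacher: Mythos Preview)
Your proof is correct and follows essentially the same route as the paper: pushforward along the orthogonal projection $\pi\colon S^1\to[-1,1]$, then use Proposition~\ref{P:BarInterval} (which packages the excision-style computation on the closed interval) together with the identifications $\pi_*(\mathcal{B})((-1,1))\simeq B\otimes B^{\mathrm{op}}$, $\pi_*(\mathcal{B})([-1,1))\simeq B$, and $\pi_*(\mathcal{B})((-1,1])\simeq B^{\mathrm{mon}}$. Your explicit unpacking of the monodromy twist via the deck-transformation description of Propositions~\ref{P:descentGequiv} and~\ref{P:GequivFactlc} is a welcome elaboration of what the paper leaves implicit in the sentence ``Similarly $\pi_*(\mathcal{F})\big((-1,1]\big)\cong \mathcal{B}^{\mathop{mon}}$.''
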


\begin{example}[the circle again] Let $p:\R\to S^1$ be the universal cover of $S^1$. By Example~\ref{ex:E1frombasis}, an unital associative algebra $A$ defines a locally constant factorization algebra, denoted  $\mathcal{A}$, on $\R$. By Proposition~\ref{P:pushforwardlc},  the pushforward $p_*(\mathcal{A})$ is a locally constant factorization algebra on $S^1$, which, on any interval $I\subset S^1$ is given by $p_*(\mathcal{A})(I) =\mathcal{A}(I\times \mathbb{Z}) = A^{\otimes \mathbb{Z}}$. It is however \emph{not a constant} factorization algebra since the global section of $p_*(\mathcal{A})$ is different from the Hochschild homology of $A$:
$$ p_*(\mathcal{A})(S^1)=\mathcal{A}(\R)\cong A  \not\simeq HH(A^{\otimes \mathbb{Z}})$$ 
(for instance if $A$ is commutative the Hochschild homology of $A^{\otimes \mathbb{Z}}$ is $A^{\otimes \mathbb{Z}}$ in degree $0$.) Indeed, the monodromy of $\mathcal{A}$ is given by the automorphism  $\sigma$ of $A$ which sends the element $a_i$ in the tensor index by an integer $i$ into the tensor factor indexed by $i+1$, that is $\sigma\big( \bigotimes_{i\in \mathbb{Z}} a_i\big) =  \bigotimes_{i\in \mathbb{Z}}a_{i-1}$.

 However, by Corollary~\ref{C:Hochtwistedmonodromy}, we have that, for any $E_1$-algebra $A$,   $$HH\big(A^{\otimes \mathbb{Z}}, \big(A^{\otimes \mathbb{Z}}\big)^{\mathop{mon}}\big)\cong A.$$
\end{example}

\subsection{Descent}
There is a way to glue together factorization algebras provided they satisfy some descent conditions which we now explain.

Let $\mathcal{U}$ be an open cover of a space $X$ (which we assume to be equipped with a factorizing basis).  We also assume that all   intersections of infinitely many different opens in $\mathcal{U}$ are empty. 
 For every finite
subset $\{U_i\}_{i\in I}$ 
of $\mathcal{U}$, let $\mathcal{F}_{I}$ be a factorization algebra on $\bigcap_{i\in I} U_i$. 
For any $i\in I$, we have an inclusion 
$s_i: \bigcap_{i\in I} U_i \hookrightarrow \bigcap_{j\in I\setminus\{i\}} U_j$.
\begin{definition} A gluing data is a collection, for all finite  subset $\{U_i\}_{i\in I}\subset \mathcal{U}$ and $i\in I$,  of quasi-isomorphisms $r_{I,i}: \mathcal{F}_I \longrightarrow  \big(\mathcal{F}_{I\setminus\{i\}}\big)_{|\mathcal{U}_I}$ such that, for all $I$, $i,j\in I$, the following diagram commutes:
$$ \xymatrix{ \mathcal{F}_I \ar[d]_{r_{I,i}} \ar[rr]^{r_{I,j}}  &&   \big(\mathcal{F}_{I\setminus\{j\}} \big)_{|\mathcal{U}_I}
\ar[d]^{r_{I\setminus\{j\},i}} \\      \big(\mathcal{F}_{I\setminus\{i\}}\big)_{|\mathcal{U}_I}\ar[rr]^{r_{I\setminus\{i\},j}} &&  
 \big(\mathcal{F}_{I\setminus\{i,j\}}\big)_{|\mathcal{U}_I}.}$$
\end{definition}
Given a gluing data, one can define a factorizing basis $\mathcal{V}_{\mathcal{U}}$ given by the family of all opens which lies in 
some $U\in \mathcal{U}$. 
For any $V\in \mathcal{V}_{\mathcal{U}}$, set $\mathcal{F}(V) =\mathcal{F}_{I_V}(V)$ where $I_V$ is the largest subset of $I$ such that $V\in \bigcap_{j\in I_V} U_j$.  The maps $R_{I,i}$ induce a structure of $\mathcal{V}_{\mathcal{U}}$-prefactorization algebra.
\begin{proposition}[\cite{CG}] Given a gluing data, the  $\mathcal{V}_{\mathcal{U}}$-prefactorization  $\mathcal{F}$ extends uniquely 
into a factorization algebra $\mathcal{F}$ on $X$ whose restriction $\mathcal{F}_{|\mathcal{U}_I}$ on each $\mathcal{U}_{I}$ is canonically equivalent to $\mathcal{F}_I$.
\end{proposition}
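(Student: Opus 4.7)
The plan is to construct $\mathcal{F}$ first on the factorizing basis $\mathcal{V}_\mathcal{U}$ and then invoke Proposition~\ref{P:extensionfrombasis} to extend it to all of $X$. The key feature allowing this to work is that, because infinite intersections of distinct opens of $\mathcal{U}$ are empty, for any nonempty $V \in \mathcal{V}_\mathcal{U}$ there is a \emph{maximal} finite subset $I_V \subset \mathcal{U}$ such that $V \subset \mathcal{U}_{I_V}$; this makes the assignment $V \mapsto \mathcal{F}_{I_V}(V)$ unambiguous, and one immediately checks that $\mathcal{V}_\mathcal{U}$ is a factorizing basis stable by finite intersections, refining whatever factorizing basis $X$ already carries.

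Given pairwise disjoint $V_1,\dots,V_n \subset W$ in $\mathcal{V}_\mathcal{U}$, one has $I_W \subset I_{V_k}$ for all $k$. The structure map is defined as the composition
\[
\bigotimes_{k=1}^n \mathcal{F}_{I_{V_k}}(V_k) \;\xrightarrow{\;\otimes\, r^{\,V_k}_{I_{V_k}\to I_W}\;}\; \bigotimes_{k=1}^n \mathcal{F}_{I_W}(V_k) \;\xrightarrow{\;\rho^{I_W}_{V_1,\dots,V_n,W}\;}\; \mathcal{F}_{I_W}(W),
\]
where $r^{\,V_k}_{I_{V_k}\to I_W}$ is any iterated composition of the $r_{J,j}$'s removing the indices in $I_{V_k}\setminus I_W$ one at a time. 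The commutative square in the definition of a gluing data guarantees that this iterated composition is independent of the chosen order of removal, and then the associativity of the $\rho$'s of each $\mathcal{F}_I$ together with the naturality of the $r_{J,j}$'s yields the associativity diagram~\eqref{eq:associativityPFact}. So $\mathcal{F}$ is a genuine $\mathcal{V}_\mathcal{U}$-prefactorization algebra.

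Next I would verify that $\mathcal{F}$ is a $\mathcal{V}_\mathcal{U}$-factorization algebra. Let $\mathcal{W}$ be a factorizing cover of some $V \in \mathcal{V}_\mathcal{U}$ by opens of $\mathcal{V}_\mathcal{U}$. For every finite collection $W_1,\dots,W_p \in \mathcal{W}$, the open $W_1 \cap \cdots \cap W_p$ lies in $\mathcal{V}_\mathcal{U}$ with $I_{W_1\cap\cdots\cap W_p} \supset I_V$, so the iterated $r$'s furnish a quasi-isomorphism $\mathcal{F}(W_1\cap\cdots\cap W_p) = \mathcal{F}_{I_{W_1\cap\cdots\cap W_p}}(W_1\cap\cdots\cap W_p) \xrightarrow{\simeq} \mathcal{F}_{I_V}(W_1\cap\cdots\cap W_p)$, natural in the simplicial structure by coherence of the gluing data. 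Hence there is a level\-wise quasi-isomorphism of Čech bicomplexes $\check{C}(\mathcal{W},\mathcal{F}) \xrightarrow{\simeq} \check{C}(\mathcal{W},\mathcal{F}_{I_V})$, and the latter is quasi-isomorphic to $\mathcal{F}_{I_V}(V) = \mathcal{F}(V)$ because $\mathcal{F}_{I_V}$ is a factorization algebra on $\mathcal{U}_{I_V} \supset V$ and $\mathcal{W}$ is a factorizing cover of $V$ by opens of $\mathcal{U}_{I_V}$.

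Now Proposition~\ref{P:extensionfrombasis} applied to the factorizing basis $\mathcal{V}_\mathcal{U}$ produces a unique (up to contractible choices) homotopy factorization algebra on $X$, which we still call $\mathcal{F}$, with prescribed values on $\mathcal{V}_\mathcal{U}$. To identify $\mathcal{F}_{|\mathcal{U}_I}$ with $\mathcal{F}_I$, observe that $\mathcal{V}_\mathcal{U} \cap \mathcal{U}_I$ is a factorizing basis of $\mathcal{U}_I$ stable by finite intersection; the iterated $r_{J,j}$'s ($J \supset I$) define a morphism of $\mathcal{V}_\mathcal{U}\cap\mathcal{U}_I$-factorization algebras $\mathcal{F}_{|\mathcal{V}_\mathcal{U}\cap\mathcal{U}_I} \xrightarrow{\simeq} (\mathcal{F}_I)_{|\mathcal{V}_\mathcal{U}\cap\mathcal{U}_I}$ which is objectwise a quasi-isomorphism, and both sides extend uniquely to factorization algebras on $\mathcal{U}_I$ by Proposition~\ref{P:extensionfrombasis}, so this extends to a canonical equivalence $\mathcal{F}_{|\mathcal{U}_I} \simeq \mathcal{F}_I$. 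The main obstacle in the argument is the verification that the structure maps built from iterated $r_{J,j}$'s are coherent; this is purely combinatorial but requires the commutative square axiom in full strength and its higher consequences, which one handles by choosing, for each pair $I \supset I'$, a representative composition of removals and then showing that any two choices are homotopic through the coherence relations.
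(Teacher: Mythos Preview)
Your proposal is correct and follows exactly the approach the paper sketches: the paper itself does not prove this proposition but simply records the construction $\mathcal{F}(V)=\mathcal{F}_{I_V}(V)$ with structure maps induced by the $r_{I,i}$'s (immediately before the statement) and then cites \cite{CG} for the result. Your write-up is a faithful fleshing out of that sketch, supplying the verification of the $\mathcal{V}_\mathcal{U}$-factorization condition via the levelwise quasi-isomorphism of \v{C}ech complexes and the appeal to Proposition~\ref{P:extensionfrombasis}, which is precisely what the cited argument amounts to.
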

Note that if the $\mathcal{F}_I$ are the restrictions to $\mathcal{U}_I$ of  a factorization algebra $\mathcal{F}$, then the collection of the  $\mathcal{F}_I$ satisfy the condition of a gluing data.

%%%%%%%%%%%%%%%%%%%
\section{Locally constant factorization algebras on stratified spaces and categories of modules} \label{S:stratifiedFact}
There is an interesting variant of locally constant factorization algebras over (topologically) stratified spaces which can be used to \emph{encode categories of $E_n$-algebras and their modules} for instance. Note that by Remark~\ref{R:factpointed}, all our categories of modules will be pointed, that is coming with a preferred element. We gave the definition and several examples in this Section. An analogue of Theorem~\ref{T:Theorem6GTZ2} for stratified spaces shall provide the link between the result in this section and results of~\cite{AFT}.

\subsection{Stratified locally constant factorization algebras}\label{SS:stratifiedFact}

In this paper, by a \emph{stratified space} of dimension $n$, we mean a Hausdorff paracompact topological space $X$,   which is filtered as the union of a sequence of closed subspaces $\emptyset=X_{-1}\subset X_0\subset X_1\subset \cdots \subset X_n=X$ such that any point $x\in X_i\setminus X_{i-1}$  has a neighborhood $U_x\stackrel{\phi}{\simeq} \R^i \times C(L)$ in $X$ where $C(L)$ is the (open) cone on a stratified space of dimension $n-i-1$ and the homeomorphism preserves the filtration\footnote{that is $\phi(U_x\cap X_{i+j+1})= \R^i\times C(L_j)$ for  $0\leq j\leq n-i-1$}. We further require that $X\setminus X_{n-1}$ is dense in $X$. 
In particular, a stratified space of dimension $0$ is simply a topological manifold of dimension $0$ and  $X_i\setminus X_{i-1}$ is a topological manifold of dimension $i$ (possibly empty or non-connected). 

The connected components of $X_i\setminus X_{i-1}$ are called the dimension $i$-strata of $X$. We always assume that $X$ has at most countable strata. %Also, the number $n-i$ will be called the depth.

%and   any  point  $X_{i+1}$ is a smooth manifold of dimension $i$ with corners $\partial X_{i+1} \subset X_i$ belonging to $X_i$.
 %We call the $X_i$ the dimension $i$ closed strata of $X$ and  the open strata of dimension $i$ of $X$ (even though it is not open in $X$ unless $i=n$).  
\begin{definition} \label{D:lcFacStratified} An open subset $D$ of $X$ is called a (stratified) \emph{disk} if it is homeomorphic to $\R^i\times C(L)$ with $L$ stratified of dimension $n-i-1$, the homeomorphism preserves the filtration and further $D\cap X_i \neq \emptyset$ and $D\subset X\setminus X_{i-1}$. We call $i$ the \emph{index of} the (stratified disk) $D$. It is the \emph{smallest integer} $j$ such that $D\cap X_j\neq \emptyset$ 

 %(in such a way that $U\cap \partial X_n$ is the image of the corners of $\R^{n-j}\times [0,+\infty)^{j}$). 
We say that a (stratified) disk $D$ is a \emph{good neighborhood at} $X_i$  if $i$ is the index of $D$   and $D$ intersects only one connected component of $X_i\setminus X_{i-1}$. %We call $i$ the \emph{index of} the (good neighborhood) $D$.

A  \emph{factorization algebra} $\mathcal{F}$ over a stratified space $X$  is  called \emph{locally constant} if for any inclusion of  (stratified) disks $U\hookrightarrow V$  such that both $U$ and $V$ are good neighborhoods at $X_i$ (for the same $i\in \{0,\dots,n\}$)\footnote{in other words are good neighborhoods of same index}, the structure map $\mathcal{F}(U) \to \mathcal{F}(V)$ is a quasi-isomorphism. 
\end{definition}
The underlying space of almost all examples of stratified spaces $X$ arising in these notes will be a manifold (with boundary or corners). In that cases, all (stratified) disk are   homeomorphic to  to a standard euclidean (half-)disk $\R^{n-j}\times [0,+\infty)^{j}$.

\smallskip

Let $X$ be a manifold (without boundary) and let $X^{str}$ be the same manifold  endowed with some stratification. A locally constant factorization algebra on $X$ is also locally constant with respect to the stratification. Thus, we have a fully faithful embedding 
\begin{equation}\label{eq:FactoFacstratified}
 \textbf{Fac}_{X}^{lc} \longrightarrow \textbf{Fac}_{X^{str}}^{lc}.
\end{equation}

Several general results on locally constant factorization algebras from \S~\ref{S:FactAlgebras} have analogues in the stratified case. We now list three useful ones.
%A proof similar to the proof of Proposition~\ref{P:locallocallyconstant} yields
\begin{proposition}\label{P:locallocallyconstantStrat}
Let $X$ be a stratified manifold and $\mathcal{F}$ be a factorization algebra on $X$ such  that there is an open  cover $\mathcal{U}$ of $X$ such that for any $U\in \mathcal{U}$  the restriction $\mathcal{F}_{|U}$ is locally constant.
Then $\mathcal{F}$ is locally constant on $X$.\end{proposition}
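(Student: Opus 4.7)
The goal is to show that for any inclusion $D_1 \hookrightarrow D_2$ of good neighborhoods at a stratum $X_i\setminus X_{i-1}$ of the same index $i$ and same connected component, the structure map $\mathcal{F}(D_1) \to \mathcal{F}(D_2)$ is a quasi-isomorphism. The strategy mimics the unstratified argument of Proposition~\ref{P:locallocallyconstant}: reduce to a chain of intermediate good neighborhoods each consecutive pair of which lies in a common $U\in\mathcal{U}$, then invoke the hypothesis on each step.

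The first observation is a restriction compatibility: if $E$ is a good neighborhood at $X_i\setminus X_{i-1}$ in $X$ and $E\subset U$ for some $U\in\mathcal{U}$, then $E$ is also a good neighborhood at $(X_i\cap U)\setminus(X_{i-1}\cap U)$ inside $U$ (for the stratification induced from $X$), of the same index and meeting a single connected component. In particular, the hypothesis that $\mathcal{F}_{|U}$ is locally constant immediately gives the quasi-isomorphism property for any nested pair of good neighborhoods contained in a common element of $\mathcal{U}$.

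The core step is to build the chain. For this I would use the cone structure of $D_2 \cong \R^i \times C(L)$: fixing a point $p\in D_1\cap X_i$ identified with $(0,\ast)$, the scaling family $D_2^{(t)} := B_t(0) \times C_t(L)$ (with $C_t(L)=(L\times[0,t))/(L\times\{0\})$) provides a nested family of good neighborhoods at $X_i\setminus X_{i-1}$ of the same index and component, with $\bigcap_{t>0} D_2^{(t)} = \{p\}$, so that $D_2^{(t)}$ lies inside $D_1\cap U_p$ for $t$ sufficiently small, where $U_p\in\mathcal{U}$ is any element of the cover containing $p$. Combined with an analogous scaling family inside $D_1$ around $p$, associativity of the factorization algebra structure maps and the 2-out-of-3 property for quasi-isomorphisms reduce the statement to the following: for any good neighborhood $D$ at $X_i\setminus X_{i-1}$ of index $i$ and any small $D^{(t_0)}\subset U_p\in \mathcal{U}$, the structure map $\mathcal{F}(D^{(t_0)})\to \mathcal{F}(D)$ is a quasi-isomorphism.

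The hard part will be this last reduced statement, since for $t$ close to $1$ the scaling neighborhood $D^{(t)}$ may not fit into any single $U\in\mathcal{U}$, so one cannot simply extend the nested chain inside a single cover element. To overcome this I would use the factorizing cover description $\mathcal{F}(D)\simeq \check{C}(\mathcal{W},\mathcal{F})$ from Proposition~\ref{P:extensionfrombasis}, applied to a factorizing cover $\mathcal{W}$ of $D$ by stratified disks each contained in some element of $\mathcal{U}$, combined with the observation that the cylindrical complement $D\setminus \overline{D^{(t_0)}}$ lies entirely in $X\setminus X_{i-1}$ (which is an unstratified manifold), so the unstratified Proposition~\ref{P:locallocallyconstant} propagates local constancy across it. Matching the \v{C}ech computation of $\mathcal{F}(D)$ with the scaling chain near $p$ then yields the required quasi-isomorphism and completes the proof.
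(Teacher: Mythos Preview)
Your reduction in the first two paragraphs is fine, and the scaling family $D^{(t)}$ inside $D\cong \R^i\times C(L)$ is the right geometric tool. The gap is in your last paragraph, where the whole weight of the argument rests on the assertion that ``$D\setminus \overline{D^{(t_0)}}$ lies entirely in $X\setminus X_{i-1}$ (which is an unstratified manifold).'' The inclusion $D\subset X\setminus X_{i-1}$ is trivially true (that is what index $i$ means), but $X\setminus X_{i-1}$ is \emph{not} an unstratified manifold unless $i=n$: it still carries the strata $X_j\setminus X_{j-1}$ for $j\geq i$. Concretely, in the local model $\R^i\times C(L)$ the complement $D\setminus \overline{D^{(t_0)}}$ contains points $(x,\ast)$ with $|x|>t_0$, which sit in the stratum $X_i\setminus X_{i-1}$ itself, and in the cone direction the link $L$ is again stratified. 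So you cannot invoke the unstratified Proposition~\ref{P:locallocallyconstant} on that complement, and the final ``matching the \v{C}ech computation'' step has no content as written.

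The paper's route is different and avoids this separation entirely. It runs the same supremum/angular-sector argument used for Proposition~\ref{P:locallocallyconstant}, but with the target disk $\R^n$ replaced by the stratified model $\R^i\times [0,+\infty)^{n-i}$ (which covers the manifold-with-corners examples actually used in the notes). The point is that the compactness-and-sectors mechanism is insensitive to whether the ambient disk is $\R^n$ or a half-space model: one still pushes the radius $T$ to infinity by covering the ``sphere'' of radius $T$ with small pieces each contained in some $U\in\mathcal{U}$, and compares \v{C}ech complexes on matching covers of $D(x,T)$ and $D(x,T+\epsilon)$. If you want to rescue your approach in the general cone-model setting, the natural fix is an induction on the depth of the stratification (so that the link $L$ has strictly smaller depth and the statement is already known there), rather than trying to strip away the stratification in one step.
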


The functor~\eqref{eq:FactoFacstratified} generalizes to inclusion of any stratified subspace. 
\begin{proposition}
 Let $i: X\hookrightarrow Y$ be a stratified (that is filtration preserving) embedding of stratified spaces in such a way that $i(X)$ is a reunion of strata of $Y$. Then the pushforward along $i$  preserves locally constantness, that is lift as a functor
 $$ \textbf{Fac}^{lc}_X\to \textbf{Fac}^{lc}_Y.$$
\end{proposition}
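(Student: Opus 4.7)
The plan is to reduce the proposition to a local constancy check which follows from a concrete description of $V\cap X$ for $V$ a stratified disk in $Y$. By Proposition~\ref{P:pushforwardFac}, for any $\mathcal{F}\in \textbf{Fac}_X$ the pushforward $i_*(\mathcal{F})\in \textbf{Fac}_Y$ is already a factorization algebra on $Y$, given by $i_*(\mathcal{F})(V)=\mathcal{F}(V\cap X)$ (identifying $X$ with $i(X)$), with functoriality in $\mathcal{F}$ inherited from the general pushforward. All that remains is therefore the local constancy of $i_*(\mathcal{F})$: unwinding Definition~\ref{D:lcFacStratified}, I need to show that for any inclusion $U\hookrightarrow V$ of good neighborhoods in $Y$ at the same $Y_j$, the structure map $\mathcal{F}(U\cap X)\to\mathcal{F}(V\cap X)$ is a quasi-isomorphism.

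The crux is a local decomposition of $V\cap X$. Writing $V\cong \R^j\times C(L)$, the strata of $V$ are the core $\R^j\times\{\text{cone pt}\}$ together with strata of the form $\R^j\times S\times(0,1)$ indexed by strata $S$ of $L$; each such stratum of $V$ lies in a single stratum of $Y$. Since by hypothesis $i(X)$ is a reunion of strata of $Y$, each stratum of $V$ is either entirely contained in $X$ or entirely disjoint from $X$, and $V\cap X$ is the reunion of those strata of $V$ that lie in $X$. A short case analysis on whether the core of $V$ lies in $X$ shows that $V\cap X$ canonically decomposes as a disjoint union $\bigsqcup_\alpha V_\alpha$ of connected open stratified subspaces of $X$, each of which is itself a stratified disk in $X$ and a good neighborhood at some stratum $X_{k_\alpha}$. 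Applying the same analysis to $U$ and using that $U\subset V$ are both modelled on $\R^j\times C(L)$ produces a canonical bijection between components, yielding inclusions $U_\alpha\hookrightarrow V_\alpha$ of good neighborhoods at the same $X_{k_\alpha}$.

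The conclusion then follows quickly: the factorization property (Remark~\ref{R:factpointed}) gives natural equivalences $\mathcal{F}(U\cap X)\simeq \bigotimes_\alpha \mathcal{F}(U_\alpha)$ and $\mathcal{F}(V\cap X)\simeq \bigotimes_\alpha \mathcal{F}(V_\alpha)$, and the structure map identifies with the tensor product of the maps $\mathcal{F}(U_\alpha)\to \mathcal{F}(V_\alpha)$; each is a quasi-isomorphism by local constancy of $\mathcal{F}$ on $X$, and hence so is the full map. The main technical obstacle in this approach is the local decomposition step---identifying each component of $V\cap X$ as a good neighborhood in $X$ of a well-defined index and matching them bijectively with those of $U\cap X$. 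This is precisely where the assumption that $i(X)$ be a reunion of strata becomes essential, since otherwise a stratum of $V$ could meet $X$ only partially and the clean disk decomposition would fail. I would handle this in practice by choosing compatible conical coordinates for the pair $U\subset V$ relative to the common product model $\R^j\times C(L)$, so that both the component decomposition and the bijection become visibly canonical.
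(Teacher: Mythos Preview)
Your overall strategy matches the paper's: analyze $V\cap X$ for $V$ a good disk in $Y$ and invoke local constancy of $\mathcal{F}$ on $X$. The paper's argument is terser and only treats two cases explicitly---when the core stratum of $V$ lies in $i(X)$ (then $V\cap X$ is itself a single good disk in $X$ of the same index $j$) and when $V\subset Y\setminus i(X)$ (then $i_*(\mathcal{F})(V)\cong k$). Your proposal goes further by trying to handle the remaining case, where the core of $V$ is not in $X$ but $V$ still meets $X$, via a decomposition of $V\cap X$ into connected components.

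There is a genuine gap in that step: it is not true in general that each component of $V\cap X$ is a stratified disk in $X$. Take $Y=\R^{n}$ with $n\ge 3$, stratified as $Y_{n-2}=\R^{n-2}\times\{0\}\subset Y_n=\R^n$, and let $X=Y\setminus Y_{n-2}$ be the open top stratum (a union of strata of $Y$). For a good disk $V$ at $Y_{n-2}$ one has $V\cap X\cong \R^{n-2}\times(\R^2\setminus\{0\})$, which is connected but has the homotopy type of $S^1$, hence is not homeomorphic to $\R^i\times C(L)$ for any $i,L$. Your component argument therefore cannot reduce the map $\mathcal{F}(U\cap X)\to\mathcal{F}(V\cap X)$ to a tensor product of maps between good disks, and local constancy of $\mathcal{F}$ on $X$ is not directly applicable. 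A correct treatment of this case proceeds instead by a \v{C}ech argument as in the proof of Proposition~\ref{P:pushforwardlc}: write $V\cap X\cong \R^{j+1}\times L_X$ where $L_X\subset L$ is the union of strata of the link lying in $X$, cover $L_X$ by good stratified disks $W$, observe that each $\R^{j+1}\times W$ \emph{is} a good disk in $X$, and compare the resulting \v{C}ech complexes for $U\cap X$ and $V\cap X$ termwise.
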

\begin{proof}  Let $\mathcal{F}$ be in $\textbf{Fac}^{lc}_X$ and $U\subset D$ be  good disks of index $j$ at a neigborhood of a strata in $i(X)$.
 The preimage $i^{-1}(U) \cong i(X)\cap U$ is a good disk of index $j$ in $X$ and so is $i^{-1}(D)$. Hence $i_*(\mathcal{F}(U))\to i_*(\mathcal{F}(D))$ is a quasi-isomorphism. 
 On the other hand, if $V\subset Y\setminus i(X)$ is a good disk, then $i_*(\mathcal{F}(V))\cong k$. Since the constant factorization algebra with values $k$ (example~\ref{ex:trivialFact}) is locally constant on every stratified space, the result follows. 
\end{proof}

%A continuous map $f: X\to Y$ between stratified spaces is said to strongly preserve the stratification if, for any $i$,  $f(X_i)\subset Y_i$. 
Let $f:X\to Y$ be a locally trivial fibration between stratified spaces. We say that $f$ is \emph{adequatly stratified} if  $Y$ has an open cover by trivializing (stratified) disks $V$ which are good neighborhoods satisfying that: \begin{itemize} \item  $f^{-1}(V) \stackrel{\psi}\simeq V\times F$ has a cover  by (stratified) disks of the form $\psi^{-1}(V\times D)$ which are good neighborhoods in $X$;
\item for  sub-disks $T\subset U$ which are good neighborhoods (in $V$) with the same index, then    $\psi^{-1}(T\times D)$ is a good neighborhood of $X$ of same index as  $\psi^{-1}(U\times D)$.                                                                                                                                                                          \end{itemize}
Obvious examples of adequatly stratified maps are given by locally trivial stratified fibrations; in particular by proper stratified submersions according to Thom first isotopy lemma~\cite{Thom-stratified, GMP-stratified}.   
\begin{proposition}\label{P:pushforwardlcStrat}  Let $f:X\to Y$ be adequatly stratified. If $\mathcal{F}\in \textbf{Fac}_X$ is locally constant, then $f_*(\mathcal{F}) \in \textbf{Fac}_Y$ is locally constant. 
\end{proposition}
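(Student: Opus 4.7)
The plan is to mimic the proof of Proposition~\ref{P:pushforwardlc} (the unstratified case), but to feed into it the two extra properties packaged in the definition of an adequately stratified map, so that the Čech computation takes place entirely in good neighborhoods of $X$ of prescribed index.

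First, let $U\hookrightarrow V$ be an inclusion of stratified disks in $Y$ that are good neighborhoods of the same index at some stratum $Y_i\setminus Y_{i-1}$. I need to show that the structure map $f_*(\mathcal{F})(U)=\mathcal{F}(f^{-1}(U))\to \mathcal{F}(f^{-1}(V))=f_*(\mathcal{F})(V)$ is a quasi-isomorphism. By Proposition~\ref{P:locallocallyconstantStrat} I may shrink $V$ so that $V$ lies in a trivializing good neighborhood of the cover provided by the adequately stratified hypothesis; then $f^{-1}(V)\stackrel{\psi}{\simeq} V\times F$ and, by the first bullet of the adequately stratified condition, there is a cover $\mathcal{D}_F$ of $F$ by disks such that each $\psi^{-1}(V\times D)$ is a good neighborhood of $X$. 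Up to refining $\mathcal{D}_F$, I can assume it is a factorizing cover of $F$ stable under finite intersections and consisting of disks of the above kind, so that the collections
\[
\mathcal{V}=\{\psi^{-1}(V\times D)\}_{D\in\mathcal{D}_F},\qquad
\mathcal{U}=\{\psi^{-1}(U\times D)\}_{D\in\mathcal{D}_F}
\]
are factorizing covers of $f^{-1}(V)$ and $f^{-1}(U)$ respectively, again stable under finite intersections.

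Next, by Definition~\ref{D:FactAlgebra} the canonical maps $\check{C}(\mathcal{U},\mathcal{F})\to \mathcal{F}(f^{-1}(U))$ and $\check{C}(\mathcal{V},\mathcal{F})\to \mathcal{F}(f^{-1}(V))$ are quasi-isomorphisms. It therefore suffices to show that the inclusion-induced map of Čech complexes $\check{C}(\mathcal{U},\mathcal{F})\to \check{C}(\mathcal{V},\mathcal{F})$ is a quasi-isomorphism. This map is a map of realizations of simplicial chain complexes, and is built in each simplicial degree from the structure maps
\[
\mathcal{F}\Bigl(\bigcap_{\alpha}\psi^{-1}(U\times D_\alpha)\Bigr)
\longrightarrow
\mathcal{F}\Bigl(\bigcap_{\alpha}\psi^{-1}(V\times D_\alpha)\Bigr),
\]
where each intersection has the form $\psi^{-1}(T\times D)$ with $T\in\{U,V\}$ and $D$ a finite intersection of elements of $\mathcal{D}_F$, hence still a disk (as $\mathcal{D}_F$ is stable under intersection).

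The heart of the argument is now the second bullet of the adequately stratified hypothesis: it says precisely that, for $T=U$ and $T=V$, the opens $\psi^{-1}(T\times D)$ are good neighborhoods of $X$ \emph{of the same index}. Since $\mathcal{F}$ is locally constant on $X$, each individual structure map displayed above is therefore a quasi-isomorphism. It follows that the induced map of simplicial chain complexes is a levelwise quasi-isomorphism, hence a quasi-isomorphism after geometric realization, which is what was needed. The only subtle step—and the one for which the technical notion of adequately stratified map was introduced—is the guarantee that the indices match on both sides of the comparison; once this is granted, the proof reduces to the same Čech argument as in the unstratified case.
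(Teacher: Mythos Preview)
Your proof is correct and follows essentially the same approach as the paper's: reduce to a trivializing good neighborhood via Proposition~\ref{P:locallocallyconstantStrat}, build factorizing covers of $f^{-1}(U)$ and $f^{-1}(V)$ from the cover of the fiber guaranteed by the first bullet of the adequately stratified hypothesis, and then run the \v{C}ech-complex comparison of Proposition~\ref{P:pushforwardlc}, invoking the second bullet to guarantee that the corresponding opens on the $U$- and $V$-sides are good neighborhoods of the same index. The paper's proof is terser—it simply says the construction ``reduces the proof to the same argument as the one of Proposition~\ref{P:pushforwardlc}''—but you have correctly unpacked what that reduction entails.
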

\begin{proof}   Let $U\hookrightarrow V$ be an inclusion of open disks which are both good neighborhoods at $Y_i$; by proposition~\ref{P:locallocallyconstantStrat}, we may assume $V$ lies in one of the good trivializing disk in the definition of an adequatly stratified map so that we have a cover by opens homeomorphic to  $(\psi^{-1}(V\times D_j))_{j\in J}$ which are good neigborhood such that $(\psi^{-1}(U\times D_j))_{j\in J}$  is also a good neighborhood of same index. 
This reduces the proof to the same argument as  the one of Proposition~\ref{P:pushforwardlc}.
\end{proof}

If $X$, $Y$ are stratified spaces with finitely many strata, there is a natural stratification on the product $X\times Y$, given by $(X\times Y)_k:=\bigcup_{i+j=k} X_i\times Y_j \subset X\times Y$. The natural projections on $X$ and $Y$ are adequatly stratified. 
\begin{corollary}\label{L:expFactStrat} Let $X$, $Y$ be stratified spaces with finitely many strata.
The pushforward $\underline{\pi_1}_*: {\textbf{Fac}}_{X\times Y} \longrightarrow {\textbf{Fac}}_X(\textbf{Fac}_Y)$ (see Proposition~\ref{L:expFact}) induces a functor $\underline{\pi_1}_*: {\textbf{Fac}}^{lc}_{X\times Y} \longrightarrow {\textbf{Fac}}^{lc}_X(\textbf{Fac}_Y^{lc})$.%  of the $\infty$-categories associated to stratified locally constant factorization algebras.
\end{corollary}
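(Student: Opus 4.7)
The plan is to reduce the statement to two separate local-constancy checks using Proposition~\ref{L:expFact}, which already provides the underlying functor $\underline{\pi_1}_*: \textbf{Fac}_{X\times Y} \to \textbf{Fac}_X(\textbf{Fac}_Y)$. Given $\mathcal{F} \in \textbf{Fac}^{lc}_{X\times Y}$ (with respect to the product stratification on $X\times Y$), I will need to show (i) that for each open $U\subset X$, the $Y$-factorization algebra $\underline{\pi_1}_*(\mathcal{F})(U): V \mapsto \mathcal{F}(U\times V)$ is locally constant, and (ii) that the outer factorization algebra $U\mapsto \underline{\pi_1}_*(\mathcal{F})(U)$ is locally constant on $X$ as a $\textbf{Fac}^{lc}_Y$-valued factorization algebra.

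For (i), I will observe that $\underline{\pi_1}_*(\mathcal{F})(U)$ is precisely the pushforward $\pi_{2,*}(\mathcal{F}|_{U\times Y})$ along the second projection $\pi_2: U\times Y\to Y$. The restriction $\mathcal{F}|_{U\times Y}$ is locally constant, and I will verify that $\pi_2$ is adequatly stratified: for any good stratified disk $V\subset Y$, the preimage $U\times V$ admits a cover by products $D\times V$, where $D$ ranges over good stratified disks of $X$ covering $U$; each such product is a good neighborhood of the product stratification of $X\times Y$, and nested good disks of common index in $X$ cross with $V$ into nested good disks of common index in $X\times Y$. Proposition~\ref{P:pushforwardlcStrat} then delivers local constancy of $\pi_{2,*}(\mathcal{F}|_{U\times Y})$ on $Y$.

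For (ii), I will pick an inclusion $U_1\subset U_2$ of good stratified disks in $X$ of the same index at a stratum $X_i$, and show that the structure map $\underline{\pi_1}_*(\mathcal{F})(U_1)\to \underline{\pi_1}_*(\mathcal{F})(U_2)$ is an equivalence in $\textbf{Fac}^{lc}_Y$. By the stratified analogue of Proposition~\ref{P:Ndisk(M)}, it suffices to check the induced map is a quasi-isomorphism when evaluated on good stratified disks $V\subset Y$; for such a $V$ of index $j$, both $U_1\times V$ and $U_2\times V$ are good stratified disks of $X\times Y$ of common index $i+j$ meeting the same connected component of the product stratum. The nesting $U_1\times V \subset U_2\times V$ combined with the local constancy of $\mathcal{F}$ on $X\times Y$ will then yield the desired quasi-isomorphism $\mathcal{F}(U_1\times V) \to \mathcal{F}(U_2\times V)$.

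The main technical obstacle will be the fine structural verification that products of good neighborhoods behave well with respect to the product stratification: specifically, that $\pi_2: U\times Y\to Y$ genuinely satisfies the definition of an adequatly stratified map, and that nested good stratified disks in each factor yield nested good stratified disks of matching index in the product. Once this disk-combinatorics on the product stratification is settled, both local-constancy verifications reduce transparently to the single local-constancy hypothesis on $\mathcal{F}$ evaluated on product disks $U\times V$, so no input beyond Propositions~\ref{L:expFact}, \ref{P:pushforwardlcStrat}, and~\ref{P:Ndisk(M)} will be required.
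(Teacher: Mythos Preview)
Your proposal is correct and follows essentially the same approach as the paper. The paper's proof is the one-liner ``since the projections are adequatly stratified, the result follows from Proposition~\ref{L:expFact} together with Proposition~\ref{P:pushforwardlcStrat} applied to both projections''; your steps (i) and (ii) are precisely the unpacking of this, with (i) being Proposition~\ref{P:pushforwardlcStrat} applied to $\pi_2$ (restricted to $U\times Y$) and (ii) being a direct verification on product disks that amounts to the content of Proposition~\ref{P:pushforwardlcStrat} applied to $\pi_1$. Your explicit check in (ii) on good disks $V$ is arguably cleaner than a bare citation of Proposition~\ref{P:pushforwardlcStrat}, since what is needed is that the structure map be an equivalence in $\textbf{Fac}_Y^{lc}$ rather than merely on global sections, but this is exactly what the proof of that proposition establishes anyway via a cover of the fiber.
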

We conjecture that $\underline{\pi_1}_*$ is an equivalence under  rather weak conditions on $X$ and $Y$. We will give a couple of examples.
\begin{proof}Since the projections are adequatly stratified, the result follows from Proposition~\ref{L:expFact} together with Proposition~\ref{P:pushforwardlcStrat} applied to both projections (on $X$ and $Y$). 
\end{proof}
\begin{remark} Let $\mathcal{F}$ be a stratified locally constant factorization algebra on $X$.
 Let $U\subset V$ be stratified disks of same index $i$, but not necessarily good neighborhoods at $X_i$. Assume all connected component of $V\cap X_i$ contains exactly one connected component of $U\cap X_i$. Then the structure maps
 $\mathcal{F}(U) \to \mathcal{F}(V)$ is a quasi-isomorphism. Indeed, we can take a factorizing cover $\mathcal{V}$ of $V$ by good neighborhoods $D$ such that $D\cap U$ are good neigborhoods. Then, $\mathcal{F}(D\cap U)\to \mathcal{F}(D)$ is a quasi-isomorphism and thus we get a quasi-isomorphism 
 $\check{C}(\mathcal{F}, \mathcal{V}\cap U) \stackrel{\simeq}\to \check{C}(\mathcal{F}, \mathcal{V})$. 
\end{remark}

\subsection{Factorization algebras on the interval and (bi)modules} \label{SS:Fachalfline} Let us consider an important example: the closed interval $I=[0,1]$ viewed as a stratified space\footnote{note that this stratification is just given by looking at $[0,1]$ as a manifold with boundary}  with two dimension $0$-strata given by $I_0=\{0,1\}$. 

The  disks at $I_0$ are  the half-closed intervals $[0,s)$ ($s<1$) and $(t,1]$ ($0<t$) and the disks at $I_1$ are the open  intervals $(t,u)$ ($0<t<u<1$).  The disks of (the stratified space) $I$ form a (stable by finite intersection) factorizing basis denoted $\mathcal{I}$. 

An example of stratified locally constant factorization algebra on $I$ is obtained as follows.
 Let $A$ be a differential graded associative unital algebra, $M^r$ a pointed differential graded right $A$-module (with distinguished element denoted $m^r\in M^r$) and $M^\ell$ a pointed differential graded left $A$-module (with distinguished element  $m^\ell\in M^\ell$).  We define a $\mathcal{I}$-prefactorization algebra by setting, for any interval $J\in \mathcal{I}$ 
$$
\mathcal F(J):=\begin{cases}
M^r\textrm{ if }0\in J \\
M^\ell\textrm{ if }1\in J \\
A\textrm{ else.}
\end{cases}
$$

We define its structure maps to be given by the following\footnote{as in Example~\ref{ex:E1frombasis}, we use the implicit  orientation of $I$ given by increasing numbers}:
\begin{itemize}
\item$\mathcal{F}(\emptyset)\to \mathcal{F}([0,s))$ is given by $k\ni 1 \mapsto m^r$, $\mathcal{F}(\emptyset)\to \mathcal{F}((t,s))$ is given by $ 1 \mapsto 1_A$ and  $\mathcal{F}(\emptyset)\to \mathcal{F}((t,1]))$ is given by $k\ni 1 \mapsto m^\ell$; 
\item For $0<s<t_1<u_1<\cdots <t_i<u_{i}<v<1$ one sets  
\begin{eqnarray*}
M^r\otimes A^{\otimes i}=\mathcal F([0,s))\otimes\mathcal F((t_1,u_1))\otimes \cdots  \otimes \mathcal F((t_i,u_i))& \longrightarrow & \mathcal F([0,v))=M^r \\
m\otimes a_1\otimes \cdots \otimes a_i & \longmapsto & m\cdot a_1\cdots a_i\,;
\end{eqnarray*}
%and  
\begin{eqnarray*}
 A^{\otimes i}\otimes M^{\ell}=\mathcal F((t_1,u_1))\otimes \cdots  \otimes\mathcal F((t_i,u_i))\otimes \mathcal F((v,1])& \longrightarrow & \mathcal F((s,1])=M^\ell \\
 a_1\otimes \cdots \otimes a_i \otimes n & \longmapsto & a_1\cdots a_i\cdot n\,;
\end{eqnarray*}
and also 
\begin{eqnarray*}
 A^{\otimes i}=\mathcal F((t_1,u_1))\otimes \cdots  \otimes\mathcal F((t_i,u_i)) & \longrightarrow & \mathcal F((s,v))=A \\
a_1\otimes \cdots \otimes a_i & \longmapsto &  a_1\cdots a_i\,.
\end{eqnarray*}
\end{itemize}
It is straightforward to check that $\mathcal{F}$ is a $\mathcal{I}$-prefactorization algebra and,
by definition, it satisfies the locally constant condition.

Proposition~\ref{P:BarInterval} shows that $\mathcal{F}$ is indeed a locally constant factorization algebra on the closed interval $I$. 
Further, \emph{any} locally constant factorization algebra on  $I$ is (homotopy) equivalent to such a factorization algebra.

\smallskip

Also note that the $\mathcal{I}$-prefactorization algebra  induced by $\mathcal{F}$ on the open interval $(0,1)$ is precisely the  $\mathcal{I}$-prefactorization algebra constructed in Example~\ref{ex:E1frombasis} (up to an identification of $(0,1)$ with $\R$). We denote it $\mathcal{F}_A$. %We also denote $\mathcal{F}_A$  induced by $\mathcal{F}$ on the open interval $(0,1)\cong \R$. Note that $\mathcal{F}_A$ is 
\begin{proposition}\label{P:BarInterval} Let $\mathcal{F}$, $\mathcal{F}_A$\ be defined as above. \begin{enumerate} \item 
The $\mathcal{I}$- prefactorization algebra $\mathcal{F}$ is an $\mathcal{I}$-factorization algebra hence extends  uniquely into a factorization algebra (still denoted) $\mathcal{F}$ on the stratified closed interval $I=[0,1]$;
\item  in particular, $\mathcal{F}_A$ also extends uniquely into a factorization algebra (still denoted) $\mathcal{F}_A$ on $(0,1)$.
\item There is an equivalence $\int_{[0,1]}\mathcal{F}=\mathcal{F}([0,1]) \cong M^{r}\mathop{\otimes}\limits^{\mathbb{L}}_{A} M^{\ell}$ in $\hkmod$.
\item Moreover, any locally constant factorization algebra $\mathcal{G}$ on $I=[0,1]$ is equivalent\footnote{more precisely, taking $A$, $M^{\ell}$, $M^r$  be strictification of  $\mathcal{A}$, $\mathcal{M}^{\ell}$ and $\mathcal{M}^{r}$, there is a quasi-isomorphism of factorization algebras from $\mathcal{F}$ (associated to $A, M^{\ell}, M^{r}$) to $\mathcal{G}$.} to $\mathcal{F}$ for some $A, M^{\ell}, M^r$, that is, it is uniquely determined by an $E_1$-algebra $\mathcal{A}$ and pointed left module $\mathcal{M}^{\ell}$ and pointed right module $\mathcal{M}^{r}$ satisfying $$\mathcal{G}([0,1))\cong\mathcal{M}^{r}, \quad \mathcal{G}((0,1])\cong\mathcal{M}^{ \ell}, \quad \mathcal{G}((0,1))\cong \mathcal{A}$$ with structure maps given by the $E_1$-structure similarly to those of $\mathcal{F}$. 
\end{enumerate}
\end{proposition}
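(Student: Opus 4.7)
The plan is to prove (2) first by appealing to Theorem~\ref{P:En=Fact}, then to handle (1) by a direct \v{C}ech computation which is essentially the one-sided Bar resolution, then to deduce (3) from an explicit two-element cover yielding the two-sided Bar complex, and finally to obtain (4) by restriction and the uniqueness part of Proposition~\ref{P:extensionfrombasis}.

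For (2), I would note that the $E_1$-algebra $A$ gives, by Theorem~\ref{P:En=Fact}, a locally constant factorization algebra on $\R \cong (0,1)$; its restriction to the basis of open sub-intervals is by construction (a representative of) $\mathcal{F}_A$. Since that restriction is already known to satisfy descent on factorizing covers of open intervals by elements of $\mathcal{I}_{(0,1)}$, $\mathcal{F}_A$ itself is an $\mathcal{I}_{(0,1)}$-factorization algebra and extends uniquely via Proposition~\ref{P:extensionfrombasis}.

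For (1), I would verify the descent condition on a cofinal family of factorizing covers of each $J \in \mathcal{I}$. The cases $J = (t,u)$ are handled by (2). For $J = [0,s)$, the cofinal covers take the form $\{[0,b_0), (a_1, b_1), \dots, (a_k, b_k)\}$ with suitably interleaved endpoints, and the associated \v{C}ech complex (after identifying each factor with $M^r$ or $A$ through the defining formulas of $\mathcal{F}$) is precisely a standard one-sided Bar complex resolving the right $A$-module $M^r$. Hence the canonical map $\check{C}(\mathcal{V}, \mathcal{F}) \to \mathcal{F}([0,s)) = M^r$ is a quasi-isomorphism; the case $(t,1]$ is symmetric. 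The main obstacle here will be to match the \v{C}ech differentials of the interleaved cover with the Bar differentials near the boundary stratum, and to verify the cofinality of the chosen family among all factorizing covers of $[0,s)$ by elements of $\mathcal{I}$.

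For (3), I would apply the extended factorization algebra to the two-element cover $\mathcal{V} = \{[0,b), (a,1]\}$ of $[0,1]$ with $0 < a < b < 1$. The only non-trivial intersection is $(a,b)$, and the resulting \v{C}ech complex has $n$-simplices $M^r \otimes A^{\otimes n} \otimes M^\ell$ with face maps induced by the three multiplications $M^r \otimes A \to M^r$, $A \otimes A \to A$ and $A \otimes M^\ell \to M^\ell$. This is exactly the two-sided Bar complex $Bar(M^r, A, M^\ell)$, a standard model of $M^r \otimes_A^{\mathbb{L}} M^\ell$. For (4), given $\mathcal{G} \in \textbf{Fac}^{lc}_{[0,1]}$, its restriction to $(0,1)$ corresponds via Theorem~\ref{P:En=Fact} to an $E_1$-algebra $\mathcal{A}$; local constantness ensures $\mathcal{M}^r := \mathcal{G}([0,s))$ and $\mathcal{M}^\ell := \mathcal{G}((t,1])$ are well-defined up to canonical equivalence, and the structure maps of $\mathcal{G}$ applied to configurations of the form $[0,s) \coprod (a_1,b_1) \coprod \cdots \coprod (a_k,b_k) \hookrightarrow [0,s')$ endow $\mathcal{M}^r$ with a pointed right $\mathcal{A}$-module structure (symmetrically for $\mathcal{M}^\ell$). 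The factorization algebra built from the triple $(\mathcal{A}, \mathcal{M}^r, \mathcal{M}^\ell)$ via the construction of the statement agrees with $\mathcal{G}$ on all of the basis $\mathcal{I}$, and the required equivalence follows from the uniqueness in Proposition~\ref{P:extensionfrombasis}.
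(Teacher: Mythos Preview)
Your argument for (3) has a concrete error. The two-element cover $\mathcal{V}=\{[0,b),(a,1]\}$ with $0<a<b<1$ is \emph{not} a factorizing cover of $[0,1]$: the two opens intersect, so the only pairwise disjoint subfamilies are the two singletons, and neither singleton contains a pair of points $x<a$ and $y>b$ simultaneously. Hence the factorization \v{C}ech condition does not apply to $\mathcal{V}$, and you cannot use it to compute $\mathcal{F}([0,1])$. Moreover, even setting that aside, the \v{C}ech complex $\check{C}(\mathcal{V},\mathcal{F})$ of this cover does not have $n$-simplices $M^r\otimes A^{\otimes n}\otimes M^\ell$: since $P\mathcal{V}$ consists only of the two singletons, the simplicial object in degree $n$ is a direct sum over $2^{n+1}$ sequences in $\{[0,b),(a,1]\}^{n+1}$ and each summand is $\mathcal{F}$ of a single intersection (either $M^r$, $M^\ell$, or $A$), never a tensor product. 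What you would get, at best, is a homotopy pushout $M^r\amalg^h_A M^\ell$ in chain complexes, which is not the derived tensor product.

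The paper takes a different route for both (1) and (3): it uses the cover $\mathcal{U}_{[0,1]}$ of $[0,1]$ by the opens $U_t:=[0,1]\setminus\{t\}$ for $t\in[0,1]$ (and the analogous covers of $[0,1)$ and $(0,1]$). This cover is factorizing because any finite set of points lies in a single $U_t$ for $t$ outside that set. Since any two $U_s,U_t$ intersect, $P\mathcal{U}_{[0,1]}$ again consists of singletons, but now $\mathcal{F}(U_{t_0}\cap\cdots\cap U_{t_n})\cong A\otimes A^{\otimes n}\otimes A$ (for $0<t_0<\cdots<t_n<1$) with face maps given by multiplication, so the \v{C}ech complex is a parametrized analogue of the two-sided Bar construction. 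The paper then reduces to $M^r=M^\ell=A$ by writing $M^r\otimes^{\mathbb L}_A M^\ell\cong M^r\otimes_A B(A,A,A)\otimes_A M^\ell$ and constructs an explicit retract $\kappa:A\hookrightarrow\check{C}(\mathcal{U}_{[0,1]},\mathcal{F})$ together with a homotopy $h$ witnessing $\eta:\check{C}(\mathcal{U}_{[0,1]},\mathcal{F})\to A$ as an equivalence. Your approach to (1) via finite covers and cofinality could perhaps be made to work, but it would also require justifying why checking a cofinal family of factorizing covers suffices, which is not automatic; the paper bypasses this by picking one explicit cover for each open and computing directly.
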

For a proof, see \S~\ref{Proof:BarInterval}.
The last statement restricted to the open interval $(0,1)$ is just Theorem~\ref{P:En=Fact} (in the case $n=1$). 

\begin{example}\label{ex:halfline} We consider the closed half-line $[0,+\infty)$ as a stratified manifold, with strata  $\{0\}\subset [0,+\infty)$ given by its boundary.
Namely, it has a $0$-dimensional strata given by $\{0\}$ and thus one dimension 1 strata $(0,+\infty)$. Similarly, there is a stratified closed half-line $(-\infty, 0]$.
From Proposition~\ref{P:BarInterval} (and its proof)  we also deduce 
\begin{proposition} \label{P:halfline} There is an equivalence of $\infty$-categories between locally constant factorization algebra on the closed half-line $[0,+\infty)$ 
and the category $E_1\textbf{-RMod}$ of (pointed) right modules over $E_1$-algebras\footnote{which, informally, is the category of pairs $(A, M^r)$ where $A$ is an $E_1$-algebra and $M^r$ a (pointed) right $A$-module}. This equivalence sits in a commutative diagram 
$$ \xymatrix{\textbf{Fac}^{lc}_{[0,+\infty)} \ar[rr]^{\cong} \ar[d] && E_1\textbf{-RMod} \ar[d] \\
\textbf{Fac}^{lc}_{(0,+\infty)} \ar[rr]^{\cong} && E_1\textbf{-Alg}  }$$
where the left vertical functor is given by restriction to the open line  and the lower horizontal functor is given by Theorem~\ref{P:En=Fact}.

There is a  similar equivalence (and diagram) of $\infty$-categories between locally constant factorization algebra on the closed half-line $(-\infty, 0]$ 
and the category $E_1\textbf{-LMod}$ of (pointed) left modules over $E_1$-algebras.
\end{proposition}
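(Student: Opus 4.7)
The plan is to mimic the proof of Proposition~\ref{P:BarInterval}, treating the half-line as the closed interval with only its left endpoint stratum present. First I would introduce the factorizing basis $\mathcal{I}_+$ of $[0,+\infty)$ consisting of the open intervals $(a,b)\subset(0,+\infty)$ together with the half-open intervals $[0,s)$, $s>0$; this basis is stable under finite intersection. Given an $E_1$-algebra $A$ (modelled as a differential graded unital algebra) together with a pointed right $A$-module $(M^r,m^r)$, I would define an $\mathcal{I}_+$-prefactorization algebra $\mathcal{F}_{A,M^r}$ by setting $\mathcal{F}_{A,M^r}([0,s)):=M^r$ and $\mathcal{F}_{A,M^r}((a,b)):=A$, with structure maps given by the left-endpoint half of the data in \S\ref{SS:Fachalfline}: the pointings $1\mapsto m^r$ and $1\mapsto 1_A$, the right action $M^r\otimes A^{\otimes k}\to M^r$, and the multiplication $A^{\otimes k}\to A$.

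Next I would verify that $\mathcal{F}_{A,M^r}$ is an $\mathcal{I}_+$-factorization algebra. The argument is a restriction of the one used for Proposition~\ref{P:BarInterval}: on factorizing covers of an interval $(a,b)\subset(0,+\infty)$, the \v{C}ech condition reduces to the $E_1$-algebra case of Example~\ref{ex:E1frombasis}, while on covers of $[0,s)$ it reduces to a one-sided bar construction computation for a right module. Only one boundary stratum is present here, which strictly simplifies the argument compared to Proposition~\ref{P:BarInterval}. By Proposition~\ref{P:extensionfrombasis}, $\mathcal{F}_{A,M^r}$ extends uniquely to $\Psi(A,M^r)\in\textbf{Fac}^{lc}_{[0,+\infty)}$, and this construction defines a functor $\Psi:E_1\textbf{-RMod}\to\textbf{Fac}^{lc}_{[0,+\infty)}$.

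For the inverse functor $\Phi$, given $\mathcal{G}\in\textbf{Fac}^{lc}_{[0,+\infty)}$, I would set $A:=\mathcal{G}((0,1))$ (equipped with its $E_1$-structure coming from applying Theorem~\ref{P:En=Fact} to the restriction $\mathcal{G}_{|(0,+\infty)}$) and $M^r:=\mathcal{G}([0,1))$. The locally constant condition at the stratum $\{0\}$ ensures $M^r$ is well defined up to equivalence independently of the chosen radius; the pointing is induced by $\mathcal{G}(\emptyset)\simeq k\to\mathcal{G}([0,1))$; and the structure maps $\mathcal{G}([0,s))\otimes\mathcal{G}((a_1,b_1))\otimes\cdots\otimes\mathcal{G}((a_k,b_k))\to\mathcal{G}([0,t))$ (with $s<a_i<b_i<t$) equip $M^r$ with the structure of a pointed right $A$-module. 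That $\Psi$ and $\Phi$ are mutually quasi-inverse then follows from the uniqueness assertion of Proposition~\ref{P:extensionfrombasis} together with an argument as in part~(4) of Proposition~\ref{P:BarInterval}: any such $\mathcal{G}$ is determined up to equivalence by its values on $\mathcal{I}_+$ and the structure maps between them. Commutativity of the displayed square is immediate since, under $\Phi$, restriction to $(0,+\infty)$ corresponds to forgetting the right-module data, and the statement for $(-\infty,0]$ is obtained either by repeating the argument with left modules in place of right modules, or by pushforward along the orientation-reversing homeomorphism $x\mapsto -x$.

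The main obstacle will be the \v{C}ech/factorization verification for $\mathcal{F}_{A,M^r}$; this is the only genuinely homotopical step, amounting to a bar construction computation. However it is strictly subsumed by the corresponding verification in Proposition~\ref{P:BarInterval} (carried out in the dedicated appendix~\S\ref{S:SomeProofs}), so no new argument is required beyond isolating the one-endpoint half of that proof.
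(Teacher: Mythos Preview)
Your proposal is correct and follows exactly the approach the paper takes: the paper simply states that Proposition~\ref{P:halfline} is deduced ``from Proposition~\ref{P:BarInterval} (and its proof)'', and your outline is precisely the one-endpoint restriction of that argument, with the \v{C}ech verification handled by the bar-construction computation in \S\ref{Proof:BarInterval}.
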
 
%The functor $\textbf{Fac}^{lc}_{[0,+\infty)}\to E_1\textbf{-RMod}$ is given as in Proposition~\ref{P:BarInterval}.4.

Let $X$ be a manifold and consider the stratified manifold $X\times [0,+\infty)$, with a $\dim(X)$ open strata $X\times \{0\}$. Using Corollary~\ref{L:expFactStrat} and Proposition~\ref{P:halfline} one can get
\begin{corollary}\label{C:FacXhalfline}
The pushforward along the projection $X\times [0,+\infty) \to [0,+\infty)$ induces an equivalence
$$\textbf{Fac}^{lc}_{X\times [0,+\infty)} \stackrel{\simeq}\longrightarrow E_1\textbf{-RMod}(\textbf{Fac}_X^{lc}).$$
\end{corollary}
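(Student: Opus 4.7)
The plan is to deduce the equivalence from a stratified exponential law together with Proposition~\ref{P:halfline} applied internally to the symmetric monoidal $\infty$-category $\textbf{Fac}^{lc}_X$. First I would note that, giving $X\times[0,+\infty)$ the product stratification (whose strata are $X\times\{0\}$ and $X\times(0,+\infty)$), the second projection $\pi_2:X\times[0,+\infty)\to[0,+\infty)$ is a locally trivial stratified fibration with fiber $X$, hence adequately stratified in the sense of \S\ref{SS:stratifiedFact}. By Proposition~\ref{P:pushforwardlcStrat} combined with the construction of the internal prefactorization structure as in Proposition~\ref{L:expFact} (carried out over the two stratified opens $[0,+\infty)$ and $(0,+\infty)$ rather than over $\mathbb{R}$), one obtains a well-defined functor
$$\underline{\pi_2}_*:\;\textbf{Fac}^{lc}_{X\times[0,+\infty)}\;\longrightarrow\;\textbf{Fac}^{lc}_{[0,+\infty)}\bigl(\textbf{Fac}^{lc}_X\bigr).$$
Proposition~\ref{P:halfline} then identifies the target, once we observe that its statement and proof go through when $\hkmod$ is replaced by any symmetric monoidal $\infty$-category satisfying the standing assumptions of Remark~\ref{R:factgeneral}; taking that category to be $\textbf{Fac}^{lc}_X$ (which has all colimits and tensor product compatible with geometric realization since $\textbf{Fac}_X$ does) we get $\textbf{Fac}^{lc}_{[0,+\infty)}(\textbf{Fac}^{lc}_X)\simeq E_1\textbf{-RMod}(\textbf{Fac}^{lc}_X)$. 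Composing yields the desired functor.

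To show it is an equivalence I would construct a quasi-inverse via extension from a factorizing basis. The opens of the form $D\times J$, where $D$ is a geodesic disk of $X$ and $J\subset[0,+\infty)$ is either an open subinterval of $(0,+\infty)$ or a half-open interval $[0,s)$, form a stable-by-intersection factorizing basis $\mathcal{B}$ of $X\times[0,+\infty)$ consisting of stratified disks (each being a good neighborhood of its index in the product stratification). Given $(\mathcal{A},\mathcal{M}^r)\in E_1\textbf{-RMod}(\textbf{Fac}^{lc}_X)$, i.e.\ an $E_1$-algebra object $\mathcal{A}$ in $\textbf{Fac}^{lc}_X$ together with a pointed right $\mathcal{A}$-module $\mathcal{M}^r$, I would define a $\mathcal{B}$-prefactorization algebra $\widetilde{\mathcal{F}}$ by
$$\widetilde{\mathcal{F}}(D\times J)\;:=\;\begin{cases}\mathcal{M}^r(D)&\text{if }0\in J,\\ \mathcal{A}(D)&\text{otherwise,}\end{cases}$$
with structure maps induced by combining the structure maps of $\mathcal{A},\mathcal{M}^r$ on disks of $X$ with those of the half-line model from Proposition~\ref{P:BarInterval}. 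Checking the $\mathcal{B}$-factorization axiom reduces, by the Fubini-type splitting of a factorizing cover of $D\times J$ into a cover of $D$ and a cover of $J$, to the fact that the interval factorization algebra from Proposition~\ref{P:BarInterval} is genuinely a factorization algebra (applied fiberwise with values in $\textbf{Fac}^{lc}_X$), so Proposition~\ref{P:extensionfrombasis} extends $\widetilde{\mathcal{F}}$ uniquely to a factorization algebra on $X\times[0,+\infty)$.

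The main obstacle is the verification that this extension is locally constant on the stratified product. By Proposition~\ref{P:locallocallyconstantStrat} it suffices to check the condition for an inclusion $D_1\times J_1\hookrightarrow D_2\times J_2$ of basis elements of the same index. If $J_i\subset(0,+\infty)$, the induced structure map factors as $\mathcal{A}(D_1)\to\mathcal{A}(D_2)$ (a quasi-isomorphism since $\mathcal{A}\in\textbf{Fac}^{lc}_X$) followed by the open-interval structure map of the $E_1$-algebra object $\mathcal{A}$ (a quasi-isomorphism since $(\mathcal{A},\mathcal{M}^r)$ corresponds, under Proposition~\ref{P:halfline} internally, to a \emph{locally constant} factorization algebra on $[0,+\infty)$ with values in $\textbf{Fac}^{lc}_X$). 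If instead $0\in J_i$, the same argument applies with $\mathcal{A}$ replaced by $\mathcal{M}^r$. The two reductions are compatible by the coherence of the structure maps, and the unit and counit of the adjunction $\underline{\pi_2}_*\dashv(\text{extension})$ are equivalences by a direct computation on the basis $\mathcal{B}$, completing the proof.
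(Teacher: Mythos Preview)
Your overall strategy matches the paper's: obtain the functor from a stratified exponential law together with Proposition~\ref{P:halfline} (applied with coefficients in $\textbf{Fac}^{lc}_X$), and then build a quasi-inverse by extension from the product basis $\mathcal{B}=\{D\times J\}$. The identification of the target and the locally constant verification are fine.

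The gap is in the sentence ``with structure maps induced by combining the structure maps of $\mathcal{A},\mathcal{M}^r$ on disks of $X$ with those of the half-line model''. For a configuration $D_1\times J_1,\dots,D_k\times J_k$ pairwise disjoint inside $D\times J$, neither the family $(J_i)$ nor the family $(D_i)$ is pairwise disjoint in general: one can have $D_1\times[0,s)$ and $D_2\times(t,u)$ disjoint with $t<s$ provided $D_1\cap D_2=\emptyset$, and simultaneously $D_2\times(t,u)$ and $D_3\times(t',u')$ disjoint with $D_2\cap D_3\neq\emptyset$ provided $(t,u)\cap(t',u')=\emptyset$. Hence you cannot simply invoke the $\textbf{Fac}^{lc}_X$-structure of $\mathcal{M}^r$ or $\mathcal{A}$ on the $D_i$'s, nor the half-line structure on the $J_i$'s, and your ``Fubini-type splitting'' applies to covers, not to a single configuration of basis opens. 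This is precisely what the paper calls ``the only difficulty'': it handles it (after reducing to $X=\R^n$) by replacing the values on half-open intervals by their \v{C}ech complex over the cover $\mathcal{I}_\varepsilon$ of $[0,+\infty)$ and choosing $\varepsilon$ small enough that every cube not touching $\{0\}$ lies entirely in $\R^n\times(\varepsilon,+\infty)$. This separates the boundary band $[0,\tau)$ (where only module pieces with pairwise disjoint $X$-projections survive) from an interior region where the already-established unstratified equivalence of Proposition~\ref{L:expFact} supplies the needed structure map; the two are then assembled using the $[0,+\infty)$-factorization structure. Without some device of this kind, your $\mathcal{B}$-prefactorization algebra is not actually defined on non-product configurations, so the extension step via Proposition~\ref{P:extensionfrombasis} cannot be carried out.
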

\end{example}
%%%%%%%%%%%%%%%%%
%%%%%%%%%%%%%%%%%
\subsection{Factorization algebras on pointed disk and $E_n$-modules} \label{SS:EnModasFac}

In this section we relate $E_n$-modules\footnote{according to our convention in Appendix~\ref{S:EnAlg},  all $E_n$-modules are pointed by definition} and factorization algebras over the pointed disk.  

Let \emph{$\R^n_*$ denote the pointed disk} which we see as a stratified manifold with one  $0$-dimensional strata given by the point $0\in \R^n$  and  $n$-dimensional strata given by the complement $\R^n\setminus\{0\}$. %(it has two connected components for $n=1$). 
\begin{definition}We denote $\textbf{Fac}_{\R^n_*}^{lc}$ the \emph{$\infty$-category of locally constant factorization algebras on the pointed disk $\R^n_*$} (in the sense of Definition~\ref{D:lcFacStratified}).
\end{definition} Recall  the  functor~\eqref{eq:FactoFacstratified} giving the obvious embedding  $\textbf{Fac}^{lc}_{\R^n} \longrightarrow \textbf{Fac}^{lc}_{\R^n_*}$.%  (which is a special case of the ).

Locally constant factorization algebras on $\R^n_*$ are related to those on the closed half-line (\S~\ref{SS:Fachalfline}) as follows: let $N:\R^n\to [0,+\infty)$ be the euclidean norm map $x\mapsto \|x\|$. We have the pushforwards $N_*:\textbf{Fac}_{\R^n_*}\to \textbf{Fac}_{[0,+\infty)}$ and $(-N)_*:\textbf{Fac}_{\R^n_*}\to \textbf{Fac}_{(-\infty,0]}$.
\begin{lemma}\label{L:Normpreserveslc} If $\mathcal{F}\in \textbf{Fac}_{\R^n_*}^{lc}$, then $N_*(\mathcal{F})\in \textbf{Fac}_{[0,+\infty)}^{lc}$ and $(-N)_*(\mathcal{F})\in \textbf{Fac}_{(-\infty,0]}^{lc}$.
%In particular, the pushforward functor induces a commutative diagram :
%$$ \xymatrix{  \textbf{Fac}_{\R^n_*}^{lc} \ar[rr] \ar[rrd]_{N_*} & & \textbf{Fac}_{[0,+\infty)}^{lc} \ar[d] \cong E_1\textbf{-RMod}\\&&  \textbf{Fac}_{[0,+\infty)} .} $$ 
\end{lemma}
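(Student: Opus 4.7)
The plan is to verify the locally constant condition of Definition~\ref{D:lcFacStratified} for $N_*(\mathcal{F})$ on $[0,+\infty)$, stratified as $\{0\}\subset[0,+\infty)$, by checking it separately on each of its two strata. The statement for $(-N)_*$ will follow by the evident symmetry $x\mapsto -x$.

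The preliminary observation I would make is that the restriction $\mathcal{F}|_{\R^n\setminus\{0\}}$ is automatically locally constant on the manifold $\R^n\setminus\{0\}$ in the sense of Definition~\ref{D:locallyconstant}: indeed, any open euclidean disk $D\subset\R^n\setminus\{0\}$ is a stratified disk of index $n$ in $\R^n_*$, so inclusions of such disks are inclusions of good neighborhoods at the top stratum of $\R^n_*$, on which $\mathcal{F}$ is locally constant by hypothesis.

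For the open stratum $(0,+\infty)$, I would use that $N|_{\R^n\setminus\{0\}}\colon\R^n\setminus\{0\}\to(0,+\infty)$ is a locally trivial fibration (with fibre $S^{n-1}$) between smooth manifolds, so by Proposition~\ref{P:pushforwardlc} the pushforward $\bigl(N|_{\R^n\setminus\{0\}}\bigr)_*\bigl(\mathcal{F}|_{\R^n\setminus\{0\}}\bigr)$ is locally constant on $(0,+\infty)$; this pushforward is tautologically the restriction of $N_*(\mathcal{F})$ to $(0,+\infty)$, so inclusions of good neighborhoods $(t,s)\hookrightarrow(t',s')$ in the open stratum give quasi-isomorphisms.

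For the boundary stratum $\{0\}$, good neighborhoods are precisely the half-open intervals $[0,s)$, and their preimages $N^{-1}([0,s))=B(0,s)$ are open euclidean balls centred at $0$, which are themselves good neighborhoods of $\{0\}$ in $\R^n_*$ (index $0$). Thus any inclusion $[0,s)\hookrightarrow[0,s')$ pulls back to a concentric inclusion $B(0,s)\hookrightarrow B(0,s')$ of good neighborhoods at $\{0\}$, and the induced structure map $N_*(\mathcal{F})([0,s))\to N_*(\mathcal{F})([0,s'))$ is a quasi-isomorphism by the locally constant property of $\mathcal{F}$. Combining the two cases (and, if one prefers a cleaner organization, applying Proposition~\ref{P:locallocallyconstantStrat} to the open cover $\{(0,+\infty),\,[0,+\infty)\}$) yields the result. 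There is no serious obstacle here: the whole argument is a bookkeeping matter once one notices that $N$ pulls the two types of good neighborhoods on $[0,+\infty)$ back to two matching types of good neighborhoods on $\R^n_*$.
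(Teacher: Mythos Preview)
Your proof is correct and follows essentially the same approach as the paper's: both handle the boundary stratum by observing that $N^{-1}([0,s))$ is a euclidean ball centred at $0$ (a good neighborhood at the $0$-stratum of $\R^n_*$), and both handle the open stratum by applying Proposition~\ref{P:pushforwardlc} to the fibration $N|_{\R^n\setminus\{0\}}$. Your write-up is slightly more detailed (e.g.\ explicitly noting that $\mathcal{F}|_{\R^n\setminus\{0\}}$ is locally constant and invoking Proposition~\ref{P:locallocallyconstantStrat} to glue), but the argument is the same.
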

%Recall from Proposition~\ref{P:halfline} that $\textbf{Fac}_{[0,+\infty)}^{lc}\cong E_1\textbf{-RMod}$.
%The horizontal arrow $\textbf{Fac}^{lc}_{\R^n_*}\to \textbf{Fac}^{lc}_{[0,+\infty)}$ of the diagram will also be denoted $N_*$ which is a harmless abuse of notation. 
\begin{proof} %The diagram follows from Proposition~\ref{P:halfline} and the first claim. 
For $0<\varepsilon<\eta$, the structure map  
$N_*(\mathcal{F})\big([0,\varepsilon) \big) \cong \mathcal{F}\big(N^{-1}([0,\varepsilon)) \big)\to  \mathcal{F}\big(N^{-1}([0,\eta)) \big)\cong N_*(\mathcal{F})\big([0,\eta) \big)  $ is an equivalence since $\mathcal{F}$ is locally constant and $N^{-1}([0,\alpha)$ is a euclidean disk centered at $0$. Further, by Proposition~\ref{P:pushforwardlc}, $N_*(\mathcal{F}_{|\R^n\setminus\{0\}})$ is locally constant  from which we deduce that $N_*(\mathcal{F})$ is locally constant on the stratified half-line $[0,+\infty)$. The case of $(-N)_*$ is the same. \qed
\end{proof}

Our next task is to define a functor $E_n\textbf{-Mod} \to \textbf{Fac}^{lc}_{\R^n_*}$ from (pointed) $E_n$-modules (see~\S~\ref{S:EnAlg}) to (locally constant) factorization algebras on the pointed disk. It is enough to associate (functorially), to 
any $M\in  E_n\textbf{-Mod}$,  a $\mathcal{CV}(\R^n)$-factorization algebra $\mathcal{F}_M$ where $\mathcal{CV}(\R^n)$ is the (stable by finite intersection) factorizing basis of $\R^n$ of convex open subsets. 
It turns out to be easy: since any convex subset $C$ is canonically an embedded framed disk, we can set $\mathcal{F}_M(C) :=M(C)$. In other words, we assign the module to a convex neighborhood of $0$ and the algebra to a convex neigborhood which does not contain the origin.  

Then set the structure maps $\mathcal{F}_M(C_1)\otimes \cdots \otimes\mathcal{F}_M(C_i) \to \mathcal{F}_M(D)$, for any pairwise disjoint convex subsets $C_k$ of $D$, to be given by  the map $M(C_1)\otimes \cdots\otimes M(C_i)\to M(D)$ associated to the framed embedding $\coprod_{k=1\dots i} \R^n \cong \bigcup_{k=1\dots i} C_k \hookrightarrow D \hookrightarrow \R^n$. 

\begin{theorem}\label{P:EnModasFact} The rule $M\mapsto \mathcal{F}_M$ induces a fully faithful functor $\psi: E_n\textbf{-Mod} \to \textbf{Fac}_{\R^n_*}^{lc}$   which fits  in a commutative diagram 
$$\xymatrix{ E_n\textbf{-Alg} \ar[rr]^{\simeq} \ar[d]_{can} & & \textbf{Fac}^{lc}_{\R^n}\ar[d] \\
E_n\textbf{-Mod} \ar[rr]^{\psi} & & \textbf{Fac}_{\R^n_*}^{lc} .} $$ Here $can:E_n\textbf{-Alg} \to E_n\textbf{-Mod}$ is given by the canonical module structure of an algebra  over itself.
\end{theorem}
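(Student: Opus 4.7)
The plan is to construct $\psi$ via the factorizing basis $\mathcal{CV}(\R^n)$, verify the factorization axioms, and reduce the fully faithful claim to a basis-level statement. First, I would check that $\mathcal{F}_M$ genuinely defines a $\mathcal{CV}(\R^n)$-prefactorization algebra with values in $\hkmod$: the required associativity, symmetry and unitality of the structure maps all transfer directly from the defining operadic structure maps of the $E_n$-module $M$ (Appendix \S~\ref{S:EnAlg}), because any framed embedding $\coprod C_k \hookrightarrow D$ of pairwise disjoint convex opens is canonically an element of the relevant space of embeddings and this assignment is functorial in composition. Since $\mathcal{CV}(\R^n)$ is stable under finite intersections and is a factorizing basis, by Proposition~\ref{P:extensionfrombasis} it suffices to verify the \v{C}ech descent condition on $\mathcal{CV}(\R^n)$ in order to extend $\mathcal{F}_M$ uniquely to a factorization algebra on $\R^n_*$.

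To verify \v{C}ech descent, I would separate the two cases. For a convex open $U \subset \R^n \setminus \{0\}$ with a factorizing cover by convex subsets, the restriction $\mathcal{F}_{M|\R^n\setminus\{0\}}$ coincides tautologically with the factorization algebra associated under Theorem~\ref{P:En=Fact} to the underlying $E_n$-algebra $A$ of $M$, so descent is known. For a convex neighborhood $U$ of $0$ with a factorizing cover $\mathcal{U}$, descent can be established as in the proof of Theorem~\ref{T:Theorem6GTZ2}, using that exactly one element of any pairwise disjoint family in $\mathcal{U}$ can contain $0$: the resulting \v{C}ech complex computes the relative tensor product that encodes the $E_n$-module structure of $M$ over $A$. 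The locally constant condition is then immediate: on disks in $\R^n \setminus \{0\}$ it follows from the $E_n$-algebra case of Theorem~\ref{P:En=Fact}, while for an inclusion $U \hookrightarrow V$ of convex neighborhoods of $0$ (i.e.\ good neighborhoods at $\{0\}$), the structure map is the one induced by the framed embedding $\R^n \cong U \hookrightarrow V \cong \R^n$, which acts as an equivalence on $M$ because the contractible space of such embeddings acts on $M$ through the $E_n$-module structure.

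Commutativity of the diagram with $\textbf{Fac}^{lc}_{\R^n}$ is essentially built in: if $A$ is viewed as a module over itself, then $\mathcal{F}_{(A,A)}$ assigns $A$ to every convex open (regardless of whether it contains $0$) and its structure maps are the operadic operations of $A$, so it restricts on $\R^n$ to the factorization algebra associated to $A$ under Theorem~\ref{P:En=Fact}. Functoriality of $\psi$ in $M$ follows by an analogous observation: a morphism $(A,M) \to (A',M')$ in $E_n\textbf{-Mod}$ restricts on each convex open to the module or algebra component and is compatible with the operadic operations. For fully faithfulness, I would exploit that $\mathcal{CV}(\R^n)$ is a factorizing basis so that, for $M,N \in E_n\textbf{-Mod}$, both mapping spaces in
\[
\Map_{E_n\textbf{-Mod}}(M,N) \longrightarrow \Map_{\textbf{Fac}^{lc}_{\R^n_*}}(\mathcal{F}_M, \mathcal{F}_N)
\]
may be computed as mapping spaces of natural transformations of $\mathcal{CV}$-prefactorization algebras (using Proposition~\ref{P:extensionfrombasis} on the right and the operadic description of module maps on the left). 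Both sides are then identified with the space of sequences of maps compatible with the operations parameterized by embeddings of disjoint convex opens containing or not the origin, which is precisely the same combinatorial data on both sides.

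The main obstacle will be making the fully faithful comparison precise at the $\infty$-categorical level: one must check that the operadic description of morphisms of $E_n$-modules given in Appendix~\ref{S:EnAlg} matches, at the level of derived mapping spaces, the morphisms of $\mathcal{CV}(\R^n)$-prefactorization algebras that are locally constant near $0$. In practice this amounts to identifying the colored operad controlling $E_n$-modules with the (discrete) colored operad whose algebras are $\mathcal{CV}$-prefactorization algebras satisfying the locally constant condition at the two strata, a comparison in the same spirit as the equivalence $E_n\textbf{-Alg} \simeq \text{Disk}(\R^n)\textbf{-Alg}^{lc}$ underlying Theorem~\ref{P:En=Fact} and Proposition~\ref{P:Ndisk(M)}.
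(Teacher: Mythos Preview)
Your construction of $\psi$ and verification of descent follow essentially the same route as the paper: restrict to the factorizing basis $\mathcal{CV}(\R^n)$, check the \v{C}ech condition separately for convex opens avoiding $0$ (where $\mathcal{F}_M$ coincides with the algebra factorization algebra $\mathcal{F}_A$) and for convex neighborhoods of $0$ (where the \v{C}ech complex is identified with $M\otimes_A^{\mathbb{L}} \check{C}(\mathcal{U}_V,\mathcal{F}_A)$ and one uses that $\mathcal{F}_A$ is already a factorization algebra). The locally constant check and the commutativity of the square are also handled in the same way.

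Where you diverge from the paper is in the argument for fully faithfulness. You propose to identify both sides directly as mapping spaces of $\mathcal{CV}$-prefactorization algebras and then match the underlying colored operads. The paper instead routes the comparison through the intermediate category $\Disk(\R^n)\textbf{-Alg}$: it factors $\psi$ as $E_n\textbf{-Mod}\to\Disk(\R^n)\textbf{-Mod}\to\Disk(\R^n)\textbf{-Alg}$, uses that $\textbf{Fac}^{lc}_{\R^n_*}\to\Disk(\R^n)\textbf{-Alg}$ is fully faithful, and then identifies $\Disk(\R^n)\textbf{-Mod}_A$ with the over-category $\Disk(\R^n)\textbf{-Alg}^{/A}\times^h_{\Disk(\R^n\setminus\{0\})\textbf{-Alg}^{/A}}\text{Iso}(A)$. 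Fully faithfulness then reduces to showing that the fiber of a certain projection of mapping spaces is contractible; the key observation is that because $A$ and $B$ are locally constant on \emph{all} of $\R^n$ (not just on $\R^n_*$), the structure maps $\rho^A_{U,D}$ and $\rho^B_{U,D}$ are equivalences even when $D$ contains $0$, so the algebra map $\tau$ is forced by the module map $\Theta$ via a commutative square. This makes transparent exactly where the extra local constancy of the algebra (versus the module) is used, a point which in your sketch is absorbed into the phrase ``the same combinatorial data on both sides'' and would need to be made explicit to close the argument.
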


We will now identify  $E_n$-modules in terms of factorization algebras on $\R^n_*$; that is the essential image of the functor $\psi:E_n\textbf{-Mod}\to \textbf{Fac}^{lc}_{\R^n_*}$ given by Theorem~\ref{P:EnModasFact}. Recall the functor   $\pi_{E_n}: E_n\textbf{-Mod}\to E_n\textbf{-Alg}\cong \textbf{Fac}_{\R^n}^{lc}$ which, to a module $M\in E_n\textbf{-Mod}_A$, associates $\pi_{E_n}(M)=A$.

By restriction to  the open set $\R^n\setminus\{0\}$ we get that the   two compositions of functors
$$E_n\textbf{-Mod}\stackrel{\psi}\longrightarrow \textbf{Fac}^{lc}_{\R^n_*}\to \textbf{Fac}_{\R^n\setminus\{0\}}^{lc}\;\; \mbox{ and } \; \;
E_n\textbf{-Mod}\stackrel{\pi_{E_n}}\longrightarrow  E_n\textbf{-Alg}\cong\textbf{Fac}^{lc}_{\R^n}\to \textbf{Fac}_{\R^n\setminus\{0\}}^{lc}$$  are equivalent. Hence, we get a factorization of $(\psi, \pi_{E_n})$ to the pullback
$$ (\psi, \pi_{E_n}):E_n\textbf{-Mod}\longrightarrow \textbf{Fac}^{lc}_{\R^n_*}\times^{h}_{\textbf{Fac}^{lc}_{\R^n\setminus\{0\}}} \textbf{Fac}^{lc}_{\R^n}.$$
  Informally,   $\textbf{Fac}^{lc}_{\R^n_*}\times^{h}_{\textbf{Fac}^{lc}_{\R^n\setminus\{0\}}}\textbf{Fac}^{lc}_{\R^n}$ is simply the ($\infty$-)category  of pairs $(\mathcal{A}, \mathcal{M})\in \textbf{Fac}_{\R^n}^{lc}\times \textbf{Fac}^{lc}_{\R^n_*}$
 together with a quasi-isomorphism  $f: \mathcal{A}_{|\R^n\setminus\{0\}} \to \mathcal{M}_{|\R^n\setminus\{0\}}$ of factorization algebras. 
\begin{corollary}\label{C:EnModasFact}
The functor $ E_n\textbf{-Mod}\stackrel{(\psi, \pi_{E_n})}\longrightarrow \textbf{Fac}^{lc}_{\R^n_*}\times^{h}_{\textbf{Fac}^{lc}_{\R^n\setminus\{0\}}} \textbf{Fac}^{lc}_{\R^n}$ is an equivalence.
\end{corollary}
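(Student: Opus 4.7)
\begin{sketchofproof}
The plan is to construct an explicit inverse $\Phi$ to the functor $(\psi,\pi_{E_n})$ and to verify that both compositions are equivalent to the identity.

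First I would define the candidate inverse $\Phi$ as follows. Given an object $(\mathcal{A},\mathcal{M},f)$ of the pullback, the underlying $E_n$-algebra is to be $A := \mathcal{A}(\R^n)$, recovered from $\mathcal{A}$ via the equivalence of Theorem~\ref{P:En=Fact}. The underlying object of the module is to be $M := \mathcal{M}(D_0)$ for any small convex disk $D_0$ containing the origin; by the locally constant condition of Definition~\ref{D:lcFacStratified}, this is well-defined up to canonical equivalence. The $E_n$-module structure maps are then to be produced by the factorization structure of $\mathcal{M}$: for pairwise disjoint convex disks $C_1,\dots,C_k\subset \R^n\setminus\{0\}$ together with $D_0$, all contained in a larger convex disk $D$ containing the origin, one obtains
\[
A^{\otimes k}\otimes M \; \simeq \; \mathcal{M}(C_1)\otimes\cdots\otimes\mathcal{M}(C_k)\otimes\mathcal{M}(D_0) \; \longrightarrow \; \mathcal{M}(D) \; \simeq \; M,
\]
where the identifications $\mathcal{M}(C_i)\simeq A$ use the equivalence $f$ together with Theorem~\ref{P:En=Fact}.

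Next, I would verify that $\Phi\circ(\psi,\pi_{E_n})\simeq \textrm{Id}_{E_n\textbf{-Mod}}$ and $(\psi,\pi_{E_n})\circ\Phi\simeq \textrm{Id}$ on the pullback. The first equivalence should be immediate from the construction: for an $E_n$-module $M$, the functor $\psi$ of Theorem~\ref{P:EnModasFact} is defined by the very same rule on convex neighborhoods that $\Phi$ uses to recover the module structure. For the second, given $(\mathcal{A},\mathcal{M},f)$ in the pullback, the factorization algebra $\psi(\Phi(\mathcal{A},\mathcal{M},f))$ will agree with $\mathcal{M}$ on all convex disks of $\R^n_*$ by construction, and hence globally by the uniqueness of extension from a factorizing basis (Proposition~\ref{P:extensionfrombasis}); similarly, $\mathcal{A}(\R^n)$ recovers $\mathcal{A}$ via Theorem~\ref{P:En=Fact}.

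The hard step will be to justify that the structure maps of $\mathcal{M}$ really assemble into a genuine $E_n$-$A$-module structure, rather than merely a lax or partial one. This amounts to establishing an $E_n$-module analogue of Theorem~\ref{P:En=Fact}: namely, that the topological operad of embeddings $\Emb^{fr}(\coprod_k \R^n \sqcup \R^n_*, \R^n_*)$ governing the $E_n$-action on a module is equivalent to the combinatorial operad of configurations of pairwise disjoint convex disks in $\R^n_*$ (exactly one distinguished disk containing the origin) inside a larger convex disk containing the origin. I expect the argument to be a direct extension of the correspondence between $\text{Cube}_n$ and convex configurations in $\R^n$ outlined in Remark~\ref{R:En=Fact}, combined once more with extension from a basis (Proposition~\ref{P:extensionfrombasis}).
\end{sketchofproof}
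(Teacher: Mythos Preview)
Your approach is conceptually sound but takes a genuinely different route from the paper, and the step you flag as ``hard'' is indeed where the real content lies.

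The paper does not build an inverse from scratch. Since Theorem~\ref{P:EnModasFact} already gives that $(\psi,\pi_{E_n})$ is fully faithful, and since $E_n\textbf{-Alg}\to\textbf{Fac}^{lc}_{\R^n}$ is an equivalence, the paper reduces to showing essential surjectivity on each fiber $E_n\textbf{-Mod}_A \to \textbf{Fac}^{lc}_{\R^n_*}\times_{\textbf{Fac}^{lc}_{\R^n\setminus\{0\}}}\{A\}$. For this it takes an indirect path: given $\mathcal{M}$ in the target, it pushes forward along the norm $N:\R^n\to[0,\infty)$ (Lemma~\ref{L:Normpreserveslc}) to obtain a right $E_1$-module over $\int_{S^{n-1}\times\R}A$ (via Proposition~\ref{P:halfline}). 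The $E_n$-module structure on $M:=\mathcal{M}(D^n)$ is then manufactured explicitly from this $E_1$-module structure together with the factorization-homology functoriality of $A$ on $S^{n-1}\times(0,1)$, by writing down a continuous map from the space $\text{Rect}_*\big(D^n_*\coprod(\coprod_I D^n),D^n_*\big)$ of pointed rectilinear embeddings into $\Map(M\otimes A^{\otimes I},M)$. One then checks that $\psi(M)\cong\mathcal{M}$ on the basis of euclidean disks.

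Your direct approach---reading off the module structure from the structure maps of $\mathcal{M}$ on convex disks---is more transparent, but the ``hard step'' you identify is not a routine extension of Remark~\ref{R:En=Fact}. That remark invokes \cite[\S 5.2.4]{L-HA} for the equivalence $\Disk^{fr}_n\textbf{-Alg}\simeq\textbf{PFac}^{lc}_{\mathcal{D}}$, which is a substantial result; the pointed/module analogue you need (comparing the topological operad $\Emb^{fr}(\coprod_k\R^n\sqcup\R^n_*,\R^n_*)$ with the discrete poset of convex-disk configurations at the origin) is of the same depth and would have to be proved, not cited. The paper's detour through the norm map and Corollary~\ref{C:envelopEn}-style enveloping algebras is precisely a device to sidestep this, trading a direct operadic comparison for an explicit one-dimensional construction plus factorization homology. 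Your route would ultimately yield a cleaner statement, but it front-loads the difficulty; the paper's route is more circuitous but uses only tools already in hand.
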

Now, if $A$ is an $E_n$-algebra, we can see it as a factorization algebra on $\R^n$ (Theorem~\ref{P:En=Fact}) and taking the (homotopy) fiber  at $\{A\}$ of the right hand side of  the equivalence in Corollary~\ref{C:EnModasFact}, we get
\begin{corollary}\label{C:AEnModasFact}
The functor $ E_n\textbf{-Mod}_{A}\stackrel{\psi}\longrightarrow \textbf{Fac}^{lc}_{\R^n_*}\times^{h}_{\textbf{Fac}^{lc}_{\R^n\setminus\{0\}}} \{A\}$ is an equivalence. 
\end{corollary}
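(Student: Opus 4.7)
The plan is to deduce Corollary~\ref{C:AEnModasFact} directly from Corollary~\ref{C:EnModasFact} by taking homotopy fibers on both sides of the equivalence over a chosen object $A$. The key observation is that all functors in sight are compatible with the natural projections to $\textbf{Fac}^{lc}_{\R^n}\cong E_n\textbf{-Alg}$, and taking (homotopy) fibers is functorial with respect to equivalences of $\infty$-categories.

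First I would recall that the $\infty$-category $E_n\textbf{-Mod}_A$ is by construction (see Definition~\ref{DEnModoverA} in the Appendix) the homotopy fiber of $\pi_{E_n} : E_n\textbf{-Mod}\to E_n\textbf{-Alg}$ over $A$, that is
\[ E_n\textbf{-Mod}_A \;\cong\; E_n\textbf{-Mod}\times^{h}_{E_n\textbf{-Alg}}\{A\}. \]
Second, I would observe that the equivalence $\alpha := (\psi,\pi_{E_n})$ of Corollary~\ref{C:EnModasFact} fits into a commutative diagram of $\infty$-categories
\[\xymatrix{ E_n\textbf{-Mod} \ar[r]^-{\alpha}_-{\simeq} \ar[d]_{\pi_{E_n}} & \textbf{Fac}^{lc}_{\R^n_*}\times^{h}_{\textbf{Fac}^{lc}_{\R^n\setminus\{0\}}} \textbf{Fac}^{lc}_{\R^n} \ar[d]^{\mathrm{pr}_2} \\ E_n\textbf{-Alg} \ar[r]^-{\simeq} & \textbf{Fac}^{lc}_{\R^n} }\]
where the bottom horizontal arrow is the equivalence of Theorem~\ref{P:En=Fact} and $\mathrm{pr}_2$ is the projection onto the second factor. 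Commutativity of this diagram is precisely the statement that the second component of $\alpha$ is $\pi_{E_n}$, which holds by definition of $\alpha$.

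Third, I would take the homotopy fiber of each vertical arrow over $A\in E_n\textbf{-Alg}$ (identified with its image in $\textbf{Fac}^{lc}_{\R^n}$). Since $\alpha$ is an equivalence and equivalences preserve homotopy limits, it induces an equivalence on the fibers, giving
\[ E_n\textbf{-Mod}_A \;\simeq\; \Bigl(\textbf{Fac}^{lc}_{\R^n_*}\times^{h}_{\textbf{Fac}^{lc}_{\R^n\setminus\{0\}}} \textbf{Fac}^{lc}_{\R^n}\Bigr)\times^{h}_{\textbf{Fac}^{lc}_{\R^n}} \{A\}. \]
The iterated homotopy pullback on the right simplifies (by associativity/transitivity of homotopy pullbacks) to $\textbf{Fac}^{lc}_{\R^n_*}\times^{h}_{\textbf{Fac}^{lc}_{\R^n\setminus\{0\}}} \{A\}$, which is exactly the target in the statement. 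Unwinding the definitions, the induced functor coincides with $\psi$ restricted to modules over $A$.

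The only genuine subtlety, which I expect to be the main (mild) obstacle, is verifying the commutativity of the square above as a diagram in $\infty\textbf{-Cat}$, i.e.\ checking that the equivalence $\textbf{Fac}^{lc}_{\R^n}\cong E_n\textbf{-Alg}$ of Theorem~\ref{P:En=Fact} matches the restriction-to-$\R^n\setminus\{0\}$ composed with $\mathrm{pr}_2$ up to coherent homotopy. This is essentially formal: the equivalence $\psi$ from Theorem~\ref{P:EnModasFact} is compatible (via the commutative diagram there) with the canonical embedding $\mathrm{can}: E_n\textbf{-Alg}\hookrightarrow E_n\textbf{-Mod}$ and with the embedding $\textbf{Fac}^{lc}_{\R^n}\hookrightarrow\textbf{Fac}^{lc}_{\R^n_*}$, so applying the restriction functor to $\R^n\setminus\{0\}$ on both sides gives the required compatibility up to equivalence. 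Once this naturality is in place, the conclusion follows from the general principle that homotopy fibers commute with equivalences.
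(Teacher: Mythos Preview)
Your argument is correct and is exactly the deduction the paper gives in the main text: Corollary~\ref{C:AEnModasFact} is obtained from Corollary~\ref{C:EnModasFact} by taking the homotopy fiber over $A\in E_n\textbf{-Alg}\cong\textbf{Fac}^{lc}_{\R^n}$, using the pullback square~\eqref{eq:pullbackModAlg} and associativity of homotopy pullbacks.

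One point worth flagging about logical dependence: in the paper, the \emph{proof} of Corollary~\ref{C:EnModasFact} (in \S\ref{SS:AppEnModasFac}) actually proceeds by reducing to the fibers and directly establishing essential surjectivity of $E_n\textbf{-Mod}_A\to\textbf{Fac}^{lc}_{\R^n_*}\times^h_{\textbf{Fac}^{lc}_{\R^n\setminus\{0\}}}\{A\}$ via an explicit construction with the norm map $N$; the footnote there even says ``that is we are left to prove Corollary~\ref{C:AEnModasFact}.'' So while your formal deduction is precisely how the paper \emph{states} the corollary, the substantive content flows the other way: Corollary~\ref{C:AEnModasFact} is the heart of the proof of Corollary~\ref{C:EnModasFact}, not a consequence of it. Your argument is therefore valid but somewhat tautological relative to the paper's actual work.
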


\begin{example}[Locally constant factorization algebra on the pointed line $\R_*$]\label{ex:pointeddisk} The case of the pointed line $\R_*$ is slightly special since it has two (and not one) strata of maximal dimension. 
The two embeddings  $j_+:(0,+\infty)\hookrightarrow \R$ and $j_{-}:(-\infty,0)\hookrightarrow \R$ %given by  the two open half-lines hence
yields two restrictions functors: $j_{\pm}^*:\textbf{Fac}_{\R_*}^{lc} \to \textbf{Fac}_{\R}^{lc}$.
Hence   $\mathcal{F}\in \textbf{Fac}_{\R_*}^{lc}$  determines two $E_1$-algebras $R\cong \mathcal{F}\big((0,+\infty)\big)$ and $L\cong \mathcal{F}\big((-\infty, 0)\big)$. 

By Lemma~\ref{L:Normpreserveslc},  the pushforward $(-N)_*: \textbf{Fac}_{\R_*}^{lc}\to \textbf{Fac}^{lc}_{(-\infty,0]}$ along $-N:x\mapsto -|x|$ is well defined.
Then Proposition~\ref{P:halfline} implies that  $(-N)_*(\mathcal{F})$ is  determined by a left module $M$ over the $E_1$-algebra  $A\cong (-N)_*(\mathcal{F})\big((0,+\infty)\big)\cong L\otimes R^{op}$, \emph{i.e.} by a $(L,R)$-bimodule. 

We thus have a functor $(j_{\pm}^*, (-N)_*):\textbf{Fac}_{\R_{*}}^{lc} \to \textbf{BiMod}$ where $\textbf{BiMod}$ is the $\infty$-category of bimodules (in $\hkmod$) defined in~\cite[\S 4.3]{L-HA} (\emph{i.e.}   the $\infty$-category  of triples $(L,R,M)$ where $L$, $R$ are $E_1$-algebras and $M$ is a $(L,R)$-bimodule). 
\begin{proposition}\label{P:BimodasFact}
The functor  $(j_{\pm}^*, (-N)_*):\textbf{Fac}_{\R_{*}}^{lc} \cong \textbf{BiMod}$  is an equivalence. 
\end{proposition}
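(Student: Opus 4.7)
The plan is to prove the equivalence by constructing an explicit inverse and verifying it via extension from a factorizing basis, closely paralleling the argument of Proposition~\ref{P:BarInterval}. First, one checks that the functor $(j_\pm^*, (-N)_*)$ is well-defined into $\textbf{BiMod}$: the two restrictions give $E_1$-algebras $L$ and $R$, while Lemma~\ref{L:Normpreserveslc} together with Proposition~\ref{P:halfline} show that $(-N)_*\mathcal{F}$ corresponds to a left module over $A\cong (-N)_*\mathcal{F}\bigl((-\infty,0)\bigr)\cong \mathcal{F}\bigl((-\infty,0)\sqcup(0,+\infty)\bigr)\cong L\otimes R^{op}$. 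Here the orientation-reversal of $-N$ on $(0,+\infty)$ is what turns $R$ into $R^{op}$, and a left $L\otimes R^{op}$-module is precisely an $(L,R)$-bimodule.

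For the inverse, given $(L,R,M)\in \textbf{BiMod}$ (with chosen strictifications as dg-algebras and a dg-bimodule), define an $\mathcal{I}_*$-prefactorization algebra $\mathcal{G}_{L,R,M}$ on the stable-under-finite-intersection factorizing basis $\mathcal{I}_*$ of open intervals of $\R$ by the rule that $\mathcal{G}_{L,R,M}\bigl((a,b)\bigr)$ is $L$ if $b\leq 0$, is $R$ if $a\geq 0$, and is $M$ if $a<0<b$. For pairwise disjoint $I_1<\dots<I_k\subset J$, the structure map multiplies tensor factors strictly to the left of $0$ into $L$ using its product, those strictly to the right into $R$, and then has this element of $L$ or $R$ or $L\otimes R$ act on the (necessarily unique if present) $M$-factor through the bimodule structure; if $0\in J$ but no $I_s$ contains $0$, one lands in $M$ via the action of $L\otimes R$ on the unit-like element.

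The core technical step is to show that $\mathcal{G}_{L,R,M}$ is an $\mathcal{I}_*$-factorization algebra. For intervals $J$ avoiding $0$, the descent condition reduces to Proposition~\ref{P:BarInterval} applied to $L$ or $R$. For $J$ containing $0$, any factorizing cover $\mathcal{V}$ by elements of $\mathcal{I}_*$ splits into intervals entirely on the left of $0$, intervals entirely on the right of $0$, and intervals straddling $0$, and the corresponding \v{C}ech complex  $\check{C}(\mathcal{V},\mathcal{G}_{L,R,M})$ is exactly the two-sided bar construction $\mathrm{Bar}(L,L,M,R,R)$, which is quasi-isomorphic to $M$. Once this is established, Proposition~\ref{P:extensionfrombasis} produces a unique extension to a factorization algebra on $\R_*$, which is locally constant in the stratified sense by construction (the value on a good neighborhood of either stratum is constant in $L$, $R$, or $M$).

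Finally one checks the two compositions are equivalent. Starting from $(L,R,M)$ and applying $(j_\pm^*,(-N)_*)$ to $\mathcal{G}_{L,R,M}$ recovers the data by direct inspection of values on the basis together with the orientation-reversal analysis above. Conversely, given $\mathcal{F}\in\textbf{Fac}^{lc}_{\R_*}$, the extracted triple $(L,R,M)$ satisfies $\mathcal{F}_{|\mathcal{I}_*}\simeq \mathcal{G}_{L,R,M}$ on the basis (the structure maps on basis elements are forced by the $E_1$-structures and bimodule structure, as in Proposition~\ref{P:BarInterval}), so the uniqueness clause of Proposition~\ref{P:extensionfrombasis} gives $\mathcal{F}\simeq \mathcal{G}_{L,R,M}$. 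The main obstacle is the bar-construction argument for intervals containing $0$, where one must handle the two-sided nature of the module carefully, but this is a mild generalization of the analogous one-sided argument already carried out in Proposition~\ref{P:BarInterval}.
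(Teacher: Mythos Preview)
Your proposal is correct and follows essentially the same approach as the paper: both construct the explicit inverse $\mathcal{G}_{L,R,M}$ on the basis of intervals (with values $L$, $R$, or $M$ according to position relative to $0$ and structure maps given by the bimodule action), verify the $\mathcal{I}_*$-factorization condition by reducing to the bar-type argument of Proposition~\ref{P:BarInterval}, and extend via Proposition~\ref{P:extensionfrombasis}. The only organizational difference is that the paper argues fiberwise over pairs $(L,R)$, showing $(-N)_*$ restricts to an equivalence $(\textbf{Fac}^{lc}_{\R_*})_{L,R}\simeq E_1\textbf{-LMod}_{L\otimes R^{op}}$ on each fiber of the map to $E_1\textbf{-Alg}\times E_1\textbf{-Alg}$, whereas you build the inverse globally and check both compositions directly; the underlying content is the same.
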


\end{example}

\begin{example}[From pointed disk to the $n$-dimensional annulus: $S^{n-1}\times \R$]
 Let $\mathcal{M}$ be  a locally constant factorization algebra on the pointed disk $\R^n_*$.   By the previous results in this Section  (specifically Lemma~\ref{L:Normpreserveslc} and Proposition~\ref{P:halfline}), % we can identify $N_*(\mathcal{M})$  with a right module over the $E_1$-algebra $\mathcal{M}\big( S^{n-1}\times \R\big) \cong  N_*(\mathcal{M})\big((0,+\infty)\big)$. 
   for any $\mathcal{A}\in \textbf{Fac}^{lc}_{S^{n-1}\times \R}$,  the pushforward along the euclidean norm $N: \R^n\to [0,+\infty)$ factors as a functor $$N_*: \textbf{Fac}^{lc}_{\R^n_*}\times_{\textbf{Fac}^{lc}_{\R^n\setminus\{0\}}} \{\mathcal{A}\}  \longrightarrow E_1\textbf{-RMod}_{\mathcal{A}(S^{n-1}\times \R)}$$
from the category of locally constant factorization algebras on the pointed disk whose restriction to $\R^n\setminus \{0\}$ is (quasi-isomorphic to) $\mathcal{A}$ to the category of right modules over the $E_1$-algebra\footnote{the $E_1$-structure is given by Lemma~\ref{L:homthMfldisEn}} $\mathcal{A}({S^{n-1}\times \R})\cong \int_{S^{n-1}\times \R} \mathcal{A}$.
 %Here $A$ is the $\Disk_n^{(S^{n-1}\times \R, T(S^{n-1}\times \R))}$-algebra associated to $\mathcal{A}$ by Theorem~\ref{T:Theorem6GTZ2}.
Similarly, we have the functor %$-N: \R^n\to (-\infty, 0]$, $x\mapsto -N(x)=-\|x\|$, yields a functor
$(-N)_*: \textbf{Fac}^{lc}_{\R^n_*}\times_{\textbf{Fac}^{lc}_{\R^n\setminus\{0\}}} \{\mathcal{A}\}  \longrightarrow E_1\textbf{-LMod}_{\mathcal{A}(S^{n-1}\times \R)}$.
\begin{proposition}\label{P:envelopFac} 
Let $\mathcal{A}$ be in $\textbf{Fac}^{lc}_{S^{n-1}\times \R}$.
\begin{itemize} \item The functor $N_*: \textbf{Fac}^{lc}_{\R^n_*}\times_{\textbf{Fac}^{lc}_{\R^n\setminus\{0\}}} \{\mathcal{A}\}  \longrightarrow E_1\textbf{-RMod}_{\mathcal{A}(S^{n-1}\times \R)}$ is an equivalence of $\infty$-categories.
\item   $(-N)_*: \textbf{Fac}^{lc}_{\R^n_*}\times_{\textbf{Fac}^{lc}_{\R^n\setminus\{0\}}} \{\mathcal{A}\}  \longrightarrow E_1\textbf{-LMod}_{\mathcal{A}(S^{n-1}\times \R)}$ is an equivalence.
\end{itemize}
\end{proposition}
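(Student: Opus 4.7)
The plan is to establish the first claim via the pushforward $N_*$ combined with Proposition~\ref{P:halfline}; the second follows by a symmetric argument with orientations reversed. By Lemma~\ref{L:Normpreserveslc}, $N_*$ gives a well-defined functor $\textbf{Fac}^{lc}_{\R^n_*} \to \textbf{Fac}^{lc}_{[0,+\infty)}$, and composing with the equivalence of Proposition~\ref{P:halfline} yields a functor into $E_1\textbf{-RMod}$. For any $\mathcal{F}$ with $\mathcal{F}_{|\R^n\setminus\{0\}}\simeq \mathcal{A}$, the underlying $E_1$-algebra is
$$N_*(\mathcal{F})\bigl((0,+\infty)\bigr) \,=\, \mathcal{F}(\R^n\setminus\{0\}) \,\cong\, \mathcal{A}(S^{n-1}\times\R),$$
and one checks that the $E_1$-structures on both sides match under the identification via the radial coordinate. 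Restricting to the fiber over $\mathcal{A}$ thus yields the functor $N_*$ of the statement.

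To prove $N_*$ is an equivalence, the plan is to construct a quasi-inverse by extension from a factorizing basis (Proposition~\ref{P:extensionfrombasis}). Write $B := \mathcal{A}(S^{n-1}\times\R)$ and fix $M\in E_1\textbf{-RMod}_B$. Take the factorizing basis $\mathcal{B}$ of $\R^n_*$ whose elements are pairwise disjoint unions of (i)~at most one central euclidean ball $N^{-1}([0,r))$ around~$0$, and (ii)~good (stratified) disks in $\R^n\setminus\{0\}$ contained in concentric annuli $N^{-1}((a,b))$, with $r<a$ whenever a central ball appears. Define a $\mathcal{B}$-prefactorization algebra $\mathcal{F}_M$ by assigning $M$ to a central ball, $\mathcal{A}(D)$ to a good disk $D\subset \R^n\setminus\{0\}$, and the tensor product to disjoint unions. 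Structure maps among annular components are those of $\mathcal{A}$; structure maps combining a central ball with annular disks $D_1,\ldots,D_k$ sitting in $N^{-1}((a,b))$ are defined by composing the right action $M\otimes B\to M$ with the canonical map $\bigotimes_i \mathcal{A}(D_i) \to \mathcal{A}(S^{n-1}\times(a,b)) \simeq B$, where the final equivalence comes from applying local constancy in the $\R$-direction after invoking the exponential law of Proposition~\ref{L:expFact}.

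The main obstacle is verifying that $\mathcal{F}_M$ is genuinely a $\mathcal{B}$-factorization algebra, \emph{i.e.} satisfies the \v{C}ech descent condition on every factorizing cover of a basis element. This is the radial higher-dimensional analogue of Proposition~\ref{P:BarInterval}: for a factorizing cover of a central ball $N^{-1}([0,r))$ by smaller central balls together with annular good disks, the resulting \v{C}ech complex is identified with a two-sided bar construction computing $M\otimes^{\mathbb{L}}_B B \simeq M$, so convergence follows from compatibility of the right action with the $E_1$-structure on $B$; for covers lying entirely in $\R^n\setminus\{0\}$, the condition reduces to the factorization axiom for $\mathcal{A}$. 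Once this is established, Proposition~\ref{P:extensionfrombasis} extends $\mathcal{F}_M$ uniquely to a factorization algebra on $\R^n_*$, and Proposition~\ref{P:locallocallyconstantStrat} ensures it is locally constant. A direct computation shows $N_*(\mathcal{F}_M)\simeq M$ in $E_1\textbf{-RMod}_B$, and that morphisms between factorization algebras extending $\mathcal{A}$ are determined by their values on $\mathcal{B}$, hence correspond exactly to morphisms of right $B$-modules. This yields essential surjectivity and fully faithfulness simultaneously. The second statement for $(-N)_*$ is proved identically, with the orientation conventions from Lemma~\ref{L:homthMfldisEn} and Proposition~\ref{P:halfline} producing left modules instead of right ones.
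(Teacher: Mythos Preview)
Your proof is correct and follows essentially the same route as the paper: construct an inverse to $N_*$ by defining a factorization algebra on the basis consisting of central euclidean balls together with convex opens (or good disks) away from $0$, assigning the module to central balls and $\mathcal{A}$ elsewhere, with structure maps built from the module action combined with the structure maps of $\mathcal{A}$, then extend via Proposition~\ref{P:extensionfrombasis}. The paper's version is slightly slicker in one respect: instead of starting from a right $B$-module $M$ and verifying the \v{C}ech condition on the basis via a bar-construction argument, it first invokes the equivalence of Proposition~\ref{P:halfline} to view the input as a factorization algebra $\mathcal{R}$ on $[0,+\infty)$, so that the factorization condition on the basis of $\R^n_*$ follows directly from the factorization algebra axioms for $\mathcal{R}$ and $\mathcal{A}$, with no separate bar-type computation needed.
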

See~\S~\ref{SS:AppEnModasFac} for a Proof.

The Proposition allows to reduce a category  of\lq\lq{}modules\rq\rq{} over $\mathcal{A}$ to a category of modules over an (homotopy) dg-\emph{associative} algebra. 

We will see in section~\ref{SS:enveloping} that it essentially gives a concrete description of the enveloping algebra of an $E_n$-algebra.
\end{example}

%%%%%%%%%
%%%%%%%%%%%
%%%%%%%%%%%

\subsection{More examples}
We now give, without going into any details, a few more examples of locally constant factorization algebras on stratified spaces which can be studied along the same way as we did previously.

\begin{example}[towards quantum mechanics] One can make the following   variant of the interval example of Proposition~\ref{P:BarInterval} (see~\cite{CG} for details). Fix a one parameter group  $(\alpha_t)_{t\in[0,1]}$ of invertible elements in an associative (topological) algebra $A$. Now, we define a factorization algebra on the basis $\mathcal{I}$ of open disks of $[0,1]$ in the same way as in Proposition~\ref{P:BarInterval} except that we add the element $\alpha_d$ to any hole of length $d$ between two intervals (corresponding to the inclusion of the empty set: $\mathcal{F} (\emptyset)\to \mathcal{F}(I)$ into any open interval).
Precisely, we  set:
\begin{itemize}
\item the structure map $k=\mathcal{F}(\emptyset)\to \mathcal{F}\big(s,t)\big)=A$ is given by the element  
 $\alpha_{t-s}$,  the structure map $k=\mathcal{F}(\emptyset)\to \mathcal{F}\big[0,t)\big)=M^r$ $m^r\cdot \alpha_{t}$ and the structure map $k=\mathcal{F}(\emptyset)\to \mathcal{F}\big(u,1]\big)=M^{\ell}$ $\alpha_{1-u}\cdot m^{\ell}$.
\item For $0<s<t_1<u_1<\cdots <t_i<u_{i}<v<1$ one sets  the map
$$M^r\otimes A^{\otimes i}=\mathcal F([0,s))\otimes\mathcal F((t_1,u_1))\otimes \cdots  \mathcal F((t_i,u_i)) \longrightarrow  \mathcal F([0,v))=M^r$$ to be given by
$$m\otimes a_1\otimes \cdots \otimes a_i \longmapsto  m\cdot \alpha_{t_1-s} a_1\alpha_{t_2-u_1}a_2\cdots \alpha_{t_i-u_{i-1}}a_i \alpha_{v-u_i},$$
the map $$
 A^{\otimes i}\otimes M^{\ell}=\mathcal F((t_1,u_1))\otimes \cdots  \mathcal F((t_i,u_i))\otimes \mathcal F((v,1]) \longrightarrow  \mathcal F((s,1])=M^\ell $$ to be given by
$ a_1\otimes \cdots \otimes a_i \otimes n  \longmapsto  \alpha_{t_1-s}a_1\alpha_{t_2-u_1}a_2\cdots \alpha_{t_i-u_{i-1}}a_i \alpha_{v-u_i}n,$
and also the map 
$ A^{\otimes i}=\mathcal F((t_1,u_1))\otimes \cdots  \otimes \mathcal F((t_i,u_i))  \longrightarrow  \mathcal F((s,v))=A $ to be given by  $$a_1\otimes \cdots \otimes a_i  \longmapsto  \alpha_{t_1-s}a_1\alpha_{t_2-u_1}a_2\cdots \alpha_{t_i-u_{i-1}}a_i \alpha_{v-u_i}.$$
\end{itemize}
One can check that these structure maps define a $\mathcal{I}$-factorization algebra and thus a factorization algebra on $I$.

One can replace chain complexes with topological  $\mathbb{C}$-vector spaces (with monoidal structure the completed tensor product) and take $V$ to be an Hilbert space. Then one can choose $A=\left(End^{cont}(V)\right)^{op}$ and $M^r=V=M^\ell$ where the left $A$-module structure on $M^{\ell}$ is given by the action of adjoint operators. Let 
 $\alpha_t:=e^{{\rm i}t\varphi}$ where $\varphi\in End^{cont}(V)$. 
One has $\mathcal F([0,1])=\mathbb{C}$. One can think of $A$ as the algebra of observables where $V$ is the space of states and 
 $e^{it\varphi}$ is the time evolution operator. Now, given two consecutives
measures $O_1,O_2$ 
made during the time intervals  $]s,t[$ and $]u,v[$ ($0<s<t<u<v<1$), the probability amplitude that
the system goes from an initial  state $v^r$ to a final state
$v^\ell$ is  the image of $O_1\otimes O_2$ under the structure map
$$
A\otimes A=\mathcal F(]s,t[)\otimes\mathcal F(]u,v[)\longrightarrow\mathcal F([0,1])=\mathbb{C}\,,
$$
and is usually denoted $\langle v^\ell|e^{i(1-v)\varphi}O_2e^{i(u-t)\varphi}O_1e^{is\varphi}|v^r\rangle$. 
\end{example}

\begin{example}[the upper half-plane] \label{ex:upperhalfplane} Let $H=\{z=x+i y \in \mathbb{C}, y\geq 0\}$ be the closed upper half-plane, viewed as a stratified space with $H_0=\emptyset$, $H_1=\R$ the real line as dimension $1$ strata and $H_2=H$.
 The orthogonal  projection $\pi: H\to \R$ onto the imaginary axis $\R i \subset \mathbb{C}$ induces, by Proposition~\ref{L:expFactStrat} (and Proposition~\ref{P:halfline}),  an equivalence 
$$\textbf{Fac}_{H}^{lc} \stackrel{\simeq}\longrightarrow \textbf{Fac}^{lc}_{[0,+\infty)}(\textbf{Fac}_\R^{lc})\cong E_1\textbf{-RMod}(E_1\textbf{-Alg}).$$
%where the latter follows from Theorem~\ref{P:En=Fact} and Example~\ref{ex:halfline}. 
Using Dunn Theorem~\ref{T:Dunn},  we have (sketched a proof of the fact) that 
\begin{proposition}
The  $\infty$-category of stratified locally constant factorization algebra on $H$ is equivalent to the $\infty$-category of algebras over the swiss cheese operad, that is the $\infty$-category consisting of triples $(A,B,\rho)$ where $A$ is an $E_2$-algebra, $B$ an $E_1$-algebra and $\rho: A\to \RHom_{B}^{E_1}(B,B)$ is an action of $A$ on $B$ compatible with all the multiplications\footnote{that is the map $\rho$ is a map of $E_2$-algebras where $ \RHom_{B}^{E_1}(B,B)$ is the $E_2$-algebra given by the (derived) center of $B$. In particular, $\rho$ induces a map $\rho(1_B):A\to B$}. 
\end{proposition}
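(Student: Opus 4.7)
The plan is to chain the stratified exponential law with the half-line and $E_n$-as-factorization identifications, and then unfold what comes out using Dunn's theorem. First, I identify the stratified space $H$ with the product $\mathbb{R}\times[0,+\infty)$, where $\mathbb{R}$ carries the trivial stratification and $[0,+\infty)$ carries the stratification $\{0\}\subset[0,+\infty)$. The homeomorphism $x+iy\mapsto(x,y)$ sends the real line stratum $H_1$ to $\mathbb{R}\times\{0\}$, and it intertwines the orthogonal projection $\pi$ appearing in the statement with projection onto the second factor.

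Next I apply the stratified exponential law to the projection $\pi$. Corollary~\ref{L:expFactStrat} already provides a functor $\underline{\pi}_*:\textbf{Fac}^{lc}_{H}\to\textbf{Fac}^{lc}_{[0,+\infty)}(\textbf{Fac}^{lc}_{\mathbb{R}})$; to promote it to an equivalence I would mimic the proof of the unstratified Proposition~\ref{L:expFact} given in \S\ref{Proof:expFact}, the only modification being that one must work with the factorizing basis of $H$ consisting of products $U\times J$, where $U\subset\mathbb{R}$ is an open interval and $J\subset[0,+\infty)$ is either an open interval $(a,b)\subset(0,+\infty)$ or a half-closed interval $[0,b)$. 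The locally constant condition on $H$ is precisely what controls the structure maps across the boundary stratum. A quasi-inverse is then constructed on this basis by sending a $[0,+\infty)$-factorization algebra valued in $\textbf{Fac}^{lc}_\mathbb{R}$ to the obvious $H$-prefactorization algebra on the basis, and one concludes using the extension-from-a-basis result, Proposition~\ref{P:extensionfrombasis}.

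Having the equivalence $\textbf{Fac}^{lc}_H\simeq\textbf{Fac}^{lc}_{[0,+\infty)}(\textbf{Fac}^{lc}_\mathbb{R})$, I then apply Proposition~\ref{P:halfline} internally to the symmetric monoidal $\infty$-category $\textbf{Fac}^{lc}_{\mathbb{R}}$, which is permitted by Remark~\ref{R:factgeneral} since this category is presentably symmetric monoidal and its tensor product commutes with geometric realization. This gives $\textbf{Fac}^{lc}_{[0,+\infty)}(\textbf{Fac}^{lc}_\mathbb{R})\simeq E_1\textbf{-RMod}(\textbf{Fac}^{lc}_\mathbb{R})$, and then Theorem~\ref{P:En=Fact} identifies $\textbf{Fac}^{lc}_\mathbb{R}\simeq E_1\textbf{-Alg}$, yielding the chain $\textbf{Fac}^{lc}_H\simeq E_1\textbf{-RMod}(E_1\textbf{-Alg})$.

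Finally, I unfold the right-hand side using Dunn's theorem (Theorem~\ref{T:Dunn}). An object of $E_1\textbf{-RMod}(E_1\textbf{-Alg})$ is a triple $(A,B,\rho)$ where $A$ is an $E_1$-algebra in $E_1$-algebras, hence an $E_2$-algebra; $B$ is an $E_1$-algebra on which $A$ acts on the right through a map $B\otimes A\to B$ in $E_1\textbf{-Alg}$. By the standard adjunction relating right actions to endomorphism algebras, this is equivalent to the data of an $E_1$-algebra map $A\to\RHom^{E_1}_B(B,B)$ in $E_1\textbf{-Alg}$, i.e., to a map of $E_2$-algebras into the Hochschild center (whose $E_2$-structure, via Deligne's conjecture — see \S\ref{SS:enveloping} — is compatible with both its $E_1$-algebra structure from acting on $B$ and the $E_2$-structure coming from Dunn's theorem on $A$). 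This is exactly the description of a swiss cheese algebra and concludes the identification. The main obstacle is Step~2, the stratified exponential law: although the argument parallels the smooth case, one must verify carefully that local constancy at the boundary stratum is preserved under the candidate quasi-inverse; a secondary technical point is checking, in the final step, that the two a priori distinct $E_2$-structures on the Hochschild center (from $B$ and from $A$) agree under the map $\rho$, so that the adjunction genuinely lands in swiss cheese algebras.
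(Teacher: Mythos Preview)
Your proposal is correct and follows essentially the same approach as the paper's (sketched) argument: project onto the $[0,+\infty)$-axis, apply the stratified exponential law together with Proposition~\ref{P:halfline} to obtain $\textbf{Fac}^{lc}_H\simeq E_1\textbf{-RMod}(E_1\textbf{-Alg})$, then unfold via Dunn's theorem. One shortcut you missed: the equivalence in your Step~2 is exactly Corollary~\ref{C:FacXhalfline} applied to $X=\mathbb{R}$, so you need not rederive it from scratch.
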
 
One can also consider another stratification $\tilde{H}$ on $\R\times [0,+\infty)$ given by adding a $0$-dimensional strata to $H$, given by the point $0\in \mathbb{C}$.
There is now  4 kinds of (stratified) disks in $\tilde{H}$: the half-disk containing $0$, the half disk with a connected boundary component lying on $(-\infty,0)$, the half-disk containing $0$, the half disk with a connected boundary component lying on $(0, +\infty)$ and the open disks in the interior $\{x+iy, \, y>0\}$ of $H$. 

One proves similarly
\begin{proposition}\label{P:halfplanepted}Locally constant factorization algebras on the stratified space $\tilde{H}$ are the same as the category given by quadruples $(M, A,B, \rho_A, \rho_B, E)$ where $E$ is an $E_2$-algebra, $(A, \rho_A)$, $(B, \rho_B)$ are $E_1$-algebras together with a compatible action of $E$ and $(M, \rho_M)$ is a $(A,B)$-bimodule  together with a compatible action\footnote{precisely, this means  the choice of a factorization  $A\otimes B^{op} \longrightarrow  A\mathop{\otimes}\limits^{\mathbb{L}}_{E} B^{op} \stackrel{\rho_{M}} \longrightarrow  \RHom(M,M) $ (in $E_1\textbf{-Alg}$) of the $(A,B)$-bimodule structure of $M$} of $E$. 
\end{proposition}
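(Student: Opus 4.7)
The plan is to mimic the strategy of Example~\ref{ex:upperhalfplane}, using a pushforward along a suitable projection in order to reduce to the two simpler equivalences of Propositions~\ref{P:BimodasFact} and~\ref{P:halfline}. Consider the projection $\pi : \tilde H \to \R_*$ onto the real axis, where the base $\R$ is stratified at the origin so as to record the $0$-dimensional stratum of $\tilde H$. Every fiber of $\pi$ is homeomorphic, as a stratified space, to the stratified closed half-line $[0,+\infty)$; in particular, $\pi$ is adequately stratified, its trivializing neighborhoods being products of open intervals of $\R_*$ with $[0,+\infty)$ that decompose into good stratified disks of $\tilde H$.

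By the stratified exponential law (the argument of Corollary~\ref{L:expFactStrat} applied to $\pi$, with values in a general symmetric monoidal $\infty$-category where needed), the pushforward $\pi_*$ induces an equivalence
$$\textbf{Fac}^{lc}_{\tilde H} \;\stackrel{\simeq}{\longrightarrow}\; \textbf{Fac}^{lc}_{\R_*}\!\bigl(\textbf{Fac}^{lc}_{[0,+\infty)}\bigr).$$
Next I would invoke Proposition~\ref{P:BimodasFact} (extended to values in a symmetric monoidal $\infty$-category satisfying the assumptions of Remark~\ref{R:factgeneral}) and Proposition~\ref{P:halfline}, yielding
$$\textbf{Fac}^{lc}_{\tilde H} \;\simeq\; \textbf{BiMod}\!\bigl(E_1\textbf{-RMod}\bigr).$$

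The core of the argument is then to unwind the right-hand side. An $E_1$-algebra in $E_1\textbf{-RMod}$ is a pair $(\widetilde E,\widetilde A)$ consisting of an $E_1$-algebra $\widetilde E$ in $E_1\textbf{-Alg}$---equivalently an $E_2$-algebra by Dunn's Theorem~\ref{T:Dunn}---together with a right $\widetilde E$-module $\widetilde A$ equipped with a compatible $E_1$-algebra structure; by the same reasoning as in Example~\ref{ex:upperhalfplane}, this is precisely the swiss-cheese datum of an $E_2$-algebra acting on an $E_1$-algebra. A bimodule in $E_1\textbf{-RMod}$ between two such pairs $(E_L,A,\rho_A)$ and $(E_R,B,\rho_B)$ therefore records a chain complex $M$ with an $(A,B)$-bimodule structure compatible with an $(E_L,E_R)$-bimodule structure on an auxiliary chain complex $P$.

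The main obstacle is to show that the algebras $E_L$ and $E_R$ coming from the two sides of the base $\R_*$ are canonically identified with a single $E_2$-algebra $E$, and that the bimodule $P$ is equivalent to the regular $E$-bimodule. This will follow from the fact that $\mathcal F$ is locally constant on the path-connected open stratum $\{y>0\}\subset \tilde H$: any two disks in that open stratum get quasi-isomorphic values, so both $E_L$ and $E_R$ recover the unique $E_2$-algebra $E$ underlying the interior of $\tilde H$, and the bimodule $P$, being captured by values on disks in the interior above a neighborhood of the origin, must itself be equivalent to $E$. After this identification, the remaining data is exactly the quadruple $(E,A,B,M)$ together with the compatible actions $\rho_A$, $\rho_B$, $\rho_M$ asserted by the proposition.
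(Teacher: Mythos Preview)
Your overall strategy—push forward along a projection and then unwind using Propositions~\ref{P:BimodasFact} and~\ref{P:halfline}—is in the spirit of the paper's ``one proves similarly''.  The paper (implicitly) projects onto the \emph{imaginary} axis, paralleling the treatment of $H$, while you project onto the \emph{real} axis; both directions are reasonable.

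There is, however, a genuine gap.  You assert an equivalence
\[
\textbf{Fac}^{lc}_{\tilde H}\;\stackrel{\simeq}{\longrightarrow}\;\textbf{Fac}^{lc}_{\R_*}\bigl(\textbf{Fac}^{lc}_{[0,+\infty)}\bigr)\;\cong\;\textbf{BiMod}\bigl(E_1\textbf{-RMod}\bigr),
\]
but this cannot hold: the right-hand side classifies locally constant factorization algebras on the \emph{stratified product} $\R_*\times[0,+\infty)$, which carries an additional $1$-stratum $\{0\}\times(0,+\infty)$ that $\tilde H$ does not have.  That extra stratum is exactly what allows two distinct $E_2$-algebras $E_L$, $E_R$ and a nontrivial $(E_L,E_R)$-bimodule $P$ in $\textbf{BiMod}(E_1\textbf{-RMod})$.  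In $\tilde H$ the interior $\{y>0\}$ is connected, so such data cannot arise.  (Note also that Corollary~\ref{L:expFactStrat} only asserts that $\underline{\pi_1}_*$ is a \emph{functor} for products, not an equivalence, and $\tilde H$ is not a product to begin with.)

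You do see the problem in your final paragraph, but the framing is backwards: you cannot deduce the constraint $E_L\simeq E_R\simeq E$, $P\simeq E$ \emph{after} asserting an equivalence with the full $\textbf{BiMod}(E_1\textbf{-RMod})$—that would be contradictory.  The correct statement is that $\underline{\pi}_*$ is fully faithful with essential image the full subcategory of $\textbf{BiMod}(E_1\textbf{-RMod})$ on objects for which the two underlying $E_2$-algebras agree and the bimodule between them is the regular one.  Establishing this requires constructing an inverse on that subcategory, along the lines of the proof of Corollary~\ref{C:FacXhalfline} (or Proposition~\ref{P:envelopFac}); that construction, not the pushforward itself, is where the substance of the argument lies.
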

Examples of such factorization algebras occur in deformation quantization in the presence of two branes, see~\cite{CFFR}.

Note that the norm is again adequatly stratified so that, if $\mathcal{F}\in \textbf{Fac}^{lc}_{\tilde{H}}$,  then $(-N)_*(\mathcal{F})\in \textbf{Fac}^{lc}\big((-\infty,0]\big)\cong E_1\textbf{-LMod}$. Using the argument of Proposition~\ref{P:envelopFac} and Proposition~\ref{P:BarInterval}, we see that if $\mathcal{F}$ is given by a tuple $(M, A,B, \rho_A, \rho_B, E)$, then the underlying $E_1$-algebra of $(-N)_*(\mathcal{F})$ is (the two-sided Bar construction) $A\mathop{\otimes}\limits^{\mathbb{L}}_{E} B^{op}$.

%It is easy to imagine higher dimensional analogues of this construction.
\end{example}

\begin{example}[the unit disk in $\mathbb{C}$]
Let $D=\{z\in \mathbb{C}, \, |z|\leq 1\}$ be the closed unit disk (with dimension 1 strata given by its boundary).  By Lemma~\ref{L:Normpreserveslc}, the norm  gives us a pushforward functor 
$N_*: \textbf{Fac}^{lc}_{D} \to \textbf{Fac}^{lc}_{[0,1]}$. A proof similar to the one of Corollary~\ref{C:FacXhalfline} and Theorem~\ref{P:EnModasFact} shows that the two restrictions of $N_*$ to $D\setminus \partial D$ and $D\setminus \{0\}$  induces an equivalence
$$\textbf{Fac}^{lc}_{D} \stackrel{\simeq} \longrightarrow \textbf{Fac}^{lc}_{D\setminus \partial D} \times^{h}_{\textbf{Fac}^{lc}_{(0,1)}}E_1\textbf{-RMod}\big(\textbf{Fac}^{lc}_{S^1}\big). $$ 
By Corollary~\ref{C:locallyconstantS1} and Theorem~\ref{P:En=Fact}, one obtains:
\begin{proposition}\label{P:FaconclosedRiemannDisk}
 The $\infty$-category of locally constant factorization algebra on the (stratified) closed unit disk $D$ is equivalent to the $\infty$-category  consisting of quadruples $(A,B,\rho,f)$ where $A$ is an $E_2$-algebra, $B$ an $E_1$-algebra, $\rho: A: \RHom_{E_1\textbf{-Alg}}(B,B)$ is an action of $A$ on $B$ compatible with all the multiplications and $f:B\to B$ is a monodromy compatible with $\rho$.
\end{proposition}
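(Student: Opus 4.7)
The plan is to derive the proposition directly from the equivalence
$$\textbf{Fac}^{lc}_{D}\;\simeq\; \textbf{Fac}^{lc}_{D\setminus \partial D}\times^{h}_{\textbf{Fac}^{lc}_{(0,1)}}E_1\textbf{-RMod}\big(\textbf{Fac}^{lc}_{S^1}\big)$$
asserted in the paragraph immediately preceding the statement. That equivalence itself can be obtained by a Mayer--Vietoris/descent argument along the two opens $D\setminus\partial D$ and an open collar of $\partial D$, pushed forward via the norm map $N\colon D\to[0,1]$, in complete analogy with Corollary~\ref{C:FacXhalfline} and Theorem~\ref{P:EnModasFact}; I would take it for granted and focus on unpacking each factor in terms of $E_n$-algebras.

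First I would identify the pieces using results already in the paper. Theorem~\ref{P:En=Fact} gives $\textbf{Fac}^{lc}_{D\setminus\partial D}\simeq \textbf{Fac}^{lc}_{\R^2}\simeq E_2\textbf{-Alg}$, producing the desired $E_2$-algebra $A$. For the other factor, Corollary~\ref{C:locallyconstantS1} identifies $\textbf{Fac}^{lc}_{S^1}$ with the $\infty$-category $\textbf{Aut}(E_1\textbf{-Alg})$ of $E_1$-algebras with a self-equivalence. Combined with Dunn's theorem (Theorem~\ref{T:Dunn}), this yields $E_1\textbf{-Alg}(\textbf{Fac}^{lc}_{S^1})\simeq E_1\textbf{-Alg}(\textbf{Aut}(E_1\textbf{-Alg}))\simeq \textbf{Aut}(E_2\textbf{-Alg})$, so an object of $E_1\textbf{-RMod}(\textbf{Fac}^{lc}_{S^1})$ consists of an $E_2$-algebra $A'$ equipped with a self-equivalence $\sigma$, together with a right $(A',\sigma)$-module in $\textbf{Aut}(E_1\textbf{-Alg})$, i.e.\ the data of an $E_1$-algebra $B$, a self-equivalence $f\colon B\to B$, and a compatible $E_2$-action, equivalently a map $\rho'\colon A'\to \RHom_{E_1\textbf{-Alg}}(B,B)$ of $E_2$-algebras intertwining $\sigma$ and conjugation by $f$.

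Next I would take the fibre product over $\textbf{Fac}^{lc}_{(0,1)}\simeq E_1\textbf{-Alg}$, both functors being the underlying-$E_1$-algebra functors (on the $E_2$-side through Dunn's theorem; on the $S^1$-side by forgetting the module and restricting $\mathcal{A}$ to a contractible arc of $S^1$). This compatibility identifies the underlying $E_1$-algebras of $A$ and $A'$; tracking the identification over the whole annulus $S^1\times(0,1)$ via the exponential law (Proposition~\ref{L:expFact}, Corollary~\ref{L:expFactStrat}) promotes it to an equivalence of $E_2$-algebras and forces $\sigma$ to coincide with the canonical rotation self-equivalence already encoded in the $E_2$-structure of $A$, hence to be determined by $A$ rather than extra data. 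What survives is therefore precisely a quadruple $(A,B,\rho,f)$ as in the statement.

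The main technical obstacle is this last step: one must check that the fibre product over $\textbf{Fac}^{lc}_{(0,1)}$ absorbs $\sigma$ into the $E_2$-structure of $A$ while leaving $f$ genuinely free. This rests on the observation that a factorization algebra on $\R^2$ carries a canonical rotation monodromy induced by its $E_2$-structure, whereas $B$ is only an $E_1$-algebra and has no prescribed monodromy, so the self-equivalence $f$ really is an independent piece of data. The remaining compatibilities — that $\rho$ is a map of $E_2$-algebras to $\RHom_{E_1\textbf{-Alg}}(B,B)$ and that $f$ is $A$-linear with respect to $\rho$ — then follow directly from the unpacking of $E_1\textbf{-RMod}(\textbf{Fac}^{lc}_{S^1})$ performed above.
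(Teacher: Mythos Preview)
Your approach is essentially the same as the paper's: the paper sets up the pullback
$\textbf{Fac}^{lc}_{D}\simeq \textbf{Fac}^{lc}_{D\setminus \partial D}\times^{h}_{\textbf{Fac}^{lc}_{(0,1)}}E_1\textbf{-RMod}(\textbf{Fac}^{lc}_{S^1})$
and then simply says ``By Corollary~\ref{C:locallyconstantS1} and Theorem~\ref{P:En=Fact}, one obtains'' the result. Your unpacking of each factor via Theorem~\ref{P:En=Fact}, Corollary~\ref{C:locallyconstantS1} and Dunn's theorem is exactly what that sentence is asking the reader to do, and you correctly isolate the one non-formal point, namely that the monodromy $\sigma$ on the $E_2$-algebra side is absorbed because the factorization algebra on the annulus extends over the puncture to all of $D\setminus\partial D\cong\R^2$.

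One imprecision worth flagging: you describe both maps into the base of the fibre product as ``underlying-$E_1$-algebra functors'' landing in $\textbf{Fac}^{lc}_{(0,1)}\simeq E_1\textbf{-Alg}$. The maps actually being matched are restrictions to the open annulus $S^1\times(0,1)$, so the natural base is $\textbf{Fac}^{lc}_{S^1\times(0,1)}\simeq E_1\textbf{-Alg}(\textbf{Fac}^{lc}_{S^1})$ (the paper's $\textbf{Fac}^{lc}_{(0,1)}$ should be read with values in $\textbf{Fac}^{lc}_{S^1}$, via $N_*$). You effectively acknowledge this when you write ``tracking the identification over the whole annulus \dots\ promotes it to an equivalence of $E_2$-algebras and forces $\sigma$'', so your argument is sound; but the earlier phrasing undersells what is being matched. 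With that adjustment your proof is a correct expansion of the paper's sketch.
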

One can also consider a variant of this construction with dimension $0$ strata given by the center of the disk. In that case, one has to add a $E_2$-$A$-module to the data in Proposition~\ref{P:FaconclosedRiemannDisk}. 
\end{example}

\begin{example}[the closed unit disk in $\R^n$]\label{Diskandaugmented}
Let $D^n$ be the closed unit disk of $\R^n$ which is stratified with a single  strata of dimension $n-1$ given by its boundary $\partial D^n =S^{n-1}$. 
We have restriction functors $\textbf{Fac}^{lc}_{D^n}\longrightarrow \textbf{Fac}^{lc}_{D^n\setminus \partial D^n}\cong E_n\textbf{-Alg}$ (Theorem~\ref{P:En=Fact}),    $$E_n\textbf{-Alg} \cong  \textbf{Fac}^{lc}_{D^n\setminus \partial D^n}\to \textbf{Fac}^{lc}_{S^{n-1}\times (0,1)}\cong E_1\textbf{-Alg}(\textbf{Fac}^{lc}_{S^{n-1}}) \quad \mbox{(by Proposition~\ref{L:expFact})}$$
 and $\textbf{Fac}^{lc}_{D^n}\longrightarrow \textbf{Fac}^{lc}_{D^n\setminus \{0\}}$.  From Corollary~\ref{C:FacXhalfline}, we deduce
\begin{proposition}\label{P:FaconDn}
The above restriction functors induce an equivalence $$\textbf{Fac}^{lc}_{D^n}\stackrel{\simeq}\longrightarrow E_n\textbf{-Alg}\times_{E_1\textbf{-Alg}\big(\textbf{Fac}^{lc}_{S^{n-1}}\big)} E_1\textbf{-LMod}\big(\textbf{Fac}^{lc}_{S^{n-1}}\big).$$
\end{proposition}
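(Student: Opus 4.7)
The plan is to combine the descent formalism of Section~\ref{S:FactAlgebras} with the already-established equivalences (Theorem~\ref{P:En=Fact}, Proposition~\ref{L:expFact}, and an analogue of Corollary~\ref{C:FacXhalfline}). First, I would introduce the open cover $\mathcal{U}=\{U,V\}$ of $D^n$, where $U = D^n \setminus \partial D^n$ is the open interior and $V = \{x \in D^n : \|x\| > 1/2\}$ is an open collar neighborhood of $\partial D^n$, and note that $U \cap V \cong S^{n-1} \times (1/2,1)$ while $V$ is (stratified) homeomorphic to $S^{n-1} \times (-1, 0]$ (with the boundary strata corresponding to $S^{n-1}\times\{0\}$). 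By the descent result at the end of Section~\ref{S:OperationsforFact}, a locally constant factorization algebra on $D^n$ is equivalent to the data of a locally constant factorization algebra on each of $U$ and $V$ together with a coherent equivalence of their restrictions to $U \cap V$; in $\infty$-categorical language this gives $\textbf{Fac}^{lc}_{D^n} \simeq \textbf{Fac}^{lc}_{U} \times^h_{\textbf{Fac}^{lc}_{U \cap V}} \textbf{Fac}^{lc}_{V}$.

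Next I would identify each factor. The factor $\textbf{Fac}^{lc}_{U} \cong \textbf{Fac}^{lc}_{\R^n} \cong E_n\textbf{-Alg}$ by Theorem~\ref{P:En=Fact}. For $V \cong S^{n-1} \times (-1,0]$, I would apply Corollary~\ref{L:expFactStrat} to the projection $\pi_2: S^{n-1}\times (-1,0] \to (-1,0]$: since the projection is adequately stratified (the boundary being a proper stratified submersion in each fiber direction), pushforward yields $\textbf{Fac}^{lc}_{V} \simeq \textbf{Fac}^{lc}_{(-1,0]}(\textbf{Fac}^{lc}_{S^{n-1}})$. Combined with the left-module analogue of Proposition~\ref{P:halfline} applied inside the symmetric monoidal $\infty$-category $\textbf{Fac}^{lc}_{S^{n-1}}$ (exactly as in Corollary~\ref{C:FacXhalfline}), this gives $\textbf{Fac}^{lc}_{V} \simeq E_1\textbf{-LMod}(\textbf{Fac}^{lc}_{S^{n-1}})$. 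Finally, applying Proposition~\ref{L:expFact} to $U \cap V \cong S^{n-1}\times (1/2,1)$ gives $\textbf{Fac}^{lc}_{U\cap V} \simeq E_1\textbf{-Alg}(\textbf{Fac}^{lc}_{S^{n-1}})$.

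I would then check that the restriction maps of the descent square correspond, under these identifications, to the two functors appearing in the fiber product of the proposition: the functor $E_n\textbf{-Alg} \to E_1\textbf{-Alg}(\textbf{Fac}^{lc}_{S^{n-1}})$ is the one induced by restricting a locally constant factorization algebra on $\R^n$ to the annulus $S^{n-1}\times (1/2,1)$ and pushing forward along the projection to $(1/2,1)$ (this is essentially Dunn's theorem applied radially), while $E_1\textbf{-LMod}(\textbf{Fac}^{lc}_{S^{n-1}}) \to E_1\textbf{-Alg}(\textbf{Fac}^{lc}_{S^{n-1}})$ is the canonical forgetful functor obtained by restricting from the closed half-line $(-1,0]$ to the open part $(-1,0)$. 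Assembling these identifications into the descent equivalence yields the stated fiber product.

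The main obstacle is ensuring that the descent square really assembles into the \emph{homotopy} fiber product as stated, i.e.\ that the coherent gluing data on $U \cap V$ matches the coherence datum in the fiber product of $\infty$-categories. A secondary but genuine subtlety is verifying that the restriction $E_n\textbf{-Alg} \to E_1\textbf{-Alg}(\textbf{Fac}^{lc}_{S^{n-1}})$ really is the pushforward along the radial projection; this is essentially a radial version of Dunn's theorem (Theorem~\ref{T:Dunn}) and should follow from Proposition~\ref{L:expFact} applied to a tubular decomposition $\R^n \setminus \{0\} \cong S^{n-1} \times \R$, together with the fact that constant-on-disks factorization algebras extend uniquely through the origin. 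Once these coherences are pinned down, the proposition follows directly from the descent equivalence.
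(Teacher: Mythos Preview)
Your proposal is correct and follows essentially the same approach as the paper: both arguments decompose $D^n$ into its interior (identified with $E_n\textbf{-Alg}$ via Theorem~\ref{P:En=Fact}) and a collar of the boundary (identified with $E_1\textbf{-LMod}(\textbf{Fac}^{lc}_{S^{n-1}})$ via the left-module analogue of Corollary~\ref{C:FacXhalfline}), gluing along the open annulus. The paper is slightly terser---it simply takes the collar to be the full punctured disk $D^n\setminus\{0\}\cong S^{n-1}\times(0,1]$ and writes ``From Corollary~\ref{C:FacXhalfline}, we deduce'' without spelling out the descent step you make explicit---but the ingredients and logic are the same.
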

Let $f:A\to B$ be an $E_n$-algebra map. Since $S^{n-1}\times \R$ has a canonical framing, both $A$ and $B$ carries a structure of locally constant factorization algebra  on $S^{n-1}$, which is induced by pushforward along the projection $q:S^{n-1}\times \R\to S^{n-1}$ (see Example~\ref{Ex:bundleFac}). 
Further, $B$ inherits an $E_n$-module structure over $A$ induced by $f$ and, by restriction, $q_*(B_{|S^{n-1}\times \R})$ is an $E_1$-module over $q_*(A_{|S^{n-1}\times \R})$.  
Thus, Proposition~\ref{P:FaconDn} yields a factorization algebra $\omega_{D^n}(A\stackrel{f}\to B)\in \textbf{Fac}^{lc}_{D^n}$.  
\begin{proposition}\label{P:EnAlgmapgivesfactonDn}
The map $\omega_{D^n}$ induces a faithful functor $\omega_{D^n}:\Hom_{E_n\textbf{-Alg}}\longrightarrow \textbf{Fac}^{lc}_{D^n}$ where $\Hom_{E_n\textbf{-Alg}}$ is the $\infty$-groupoid of $E_n$-algebras maps.  
\end{proposition}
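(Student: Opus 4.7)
The plan is to unwind Proposition~\ref{P:FaconDn} to construct $\omega_{D^n}$ explicitly and then establish faithfulness by exhibiting an explicit recovery of $f:A\to B$ from the structure maps of the associated factorization algebra on $D^n$.

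First I would construct $\omega_{D^n}(f)$ as follows. Both $A$ and $B$ define locally constant factorization algebras on $\R^n$ by Theorem~\ref{P:En=Fact}; restricting to the annulus $S^{n-1}\times \R$ and pushing forward along the projection $q: S^{n-1}\times \R \to S^{n-1}$ yields locally constant factorization algebras on $S^{n-1}$ (Proposition~\ref{P:pushforwardlc}), each canonically an $E_1$-algebra in $\textbf{Fac}^{lc}_{S^{n-1}}$ by Lemma~\ref{L:homthMfldisEn} applied to the $\R$-factor. The map $f$ turns $B$ into an $E_n$-module over $A$, and restricting this action to the $\R$-direction makes $q_*(B_{|S^{n-1}\times \R})$ into a left $E_1$-module over $q_*(A_{|S^{n-1}\times \R})$ in $\textbf{Fac}^{lc}_{S^{n-1}}$. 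The resulting pair lies in the pullback of Proposition~\ref{P:FaconDn}, which yields $\omega_{D^n}(f)\in \textbf{Fac}^{lc}_{D^n}$. Functoriality in $f$ is inherited from the functoriality of restriction, pushforward, and the passage from algebra maps to module structures.

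Next I would identify the values of $\omega_{D^n}(f)$ in a form useful for recovery. One has $\omega_{D^n}(f)(U)\simeq A$ on any interior disk $U\subset D^n\setminus\partial D^n$, while for a good neighborhood $V$ of a boundary point $p$, unpacking Proposition~\ref{P:FaconDn} gives $\omega_{D^n}(f)(V)\simeq q_*(B_{|S^{n-1}\times \R})(D_p)\simeq B$ by local constancy on $S^{n-1}$. The central observation is that, for an inclusion $U\hookrightarrow V$ of an interior disk into a boundary half-disk, the structure map $\omega_{D^n}(f)(U)\to \omega_{D^n}(f)(V)$ coincides with $f:A\to B$. Indeed, translated through the stratified analogue of Corollary~\ref{C:FacXhalfline} (which identifies factorization algebras on a collar with $E_1$-left-modules) this structure map is precisely the action of $A$ on the pointing element $1_B\in B$, namely $a\mapsto f(a)\cdot 1_B=f(a)$.

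For faithfulness, recall that a morphism in the $\infty$-groupoid $\Hom_{E_n\textbf{-Alg}}$ from $f:A\to B$ to $f':A'\to B'$ is a datum $(\alpha:A\simeq A',\beta:B\simeq B',h:\beta\circ f\simeq f'\circ \alpha)$. Applying $\omega_{D^n}$ produces an equivalence $\Phi:\omega_{D^n}(f)\simeq \omega_{D^n}(f')$ in $\textbf{Fac}^{lc}_{D^n}$. Conversely, any morphism $\Phi$ in the mapping space of the target restricts to a morphism on interior disks (producing the component $\alpha$) and to a morphism on boundary half-disks (producing $\beta$), and the naturality of $\Phi$ with respect to the inclusion of an interior disk into a boundary half-disk supplies exactly the compatibility datum $h$. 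Consequently the homotopy fibers of the induced map on mapping spaces are either empty or contractible, so the map is $(-1)$-truncated and $\omega_{D^n}$ is faithful. The main technical difficulty lies in the identification of the structure map $\omega_{D^n}(f)(U)\to \omega_{D^n}(f)(V)$ with $f$ coherently at the $\infty$-categorical level rather than merely up to homotopy; this reduces, via Corollary~\ref{C:FacXhalfline} applied in a neighborhood of a boundary point (where $D^n$ looks like $\R^{n-1}\times[0,\infty)$) combined with Proposition~\ref{L:expFact} to handle the $\R^{n-1}$ factor, to the one-dimensional identification for modules on the closed half-line contained in Proposition~\ref{P:BarInterval}.
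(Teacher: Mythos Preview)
The paper does not actually give a proof of this proposition: the text preceding it explains the construction of $\omega_{D^n}(A\stackrel{f}\to B)$ via Proposition~\ref{P:FaconDn} (which your first paragraph reproduces correctly), and the proposition itself is then simply asserted. So there is no paper argument to compare against; what needs assessment is whether your route is sound.

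Your construction and your identification of the values of $\omega_{D^n}(f)$ on interior disks and on boundary half-disks are correct, and the crucial point---that for $U\subset V$ with $U$ interior and $V$ a boundary half-disk the structure map $\omega_{D^n}(f)(U)\to\omega_{D^n}(f)(V)$ recovers $f$---is exactly the right observation. Reducing that identification, near a boundary point, to the half-line situation of Proposition~\ref{P:BarInterval} via Corollary~\ref{C:FacXhalfline} and Proposition~\ref{L:expFact} is the appropriate strategy.

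The one place to be more careful is the last step. You phrase the argument as ``from any $\Phi$ one recovers $(\alpha,\beta,h)$,'' but $\textbf{Fac}^{lc}_{D^n}$ is not a groupoid, so an arbitrary $\Phi$ need not yield equivalences $\alpha,\beta$, and hence need not produce a morphism in $\Hom_{E_n\textbf{-Alg}}$. What you actually want is the cleaner statement that the induced map on mapping spaces factors, via the equivalence of Proposition~\ref{P:FaconDn}, through a map whose components record $(\alpha,\beta,h)$; in other words, there is a functorial left inverse on mapping spaces restricted to the essential image, which does give $(-1)$-truncated fibers. Equivalently: use that restriction to the interior is an equivalence onto $E_n\textbf{-Alg}$ (Theorem~\ref{P:En=Fact}), restriction to a boundary collar and evaluation on a half-disk recover $B$ and $B'$ as $E_n$-algebras, and naturality with respect to a single inclusion $U\hookrightarrow V$ pins down the homotopy $h$; together these give that distinct morphisms in $\Hom_{E_n\textbf{-Alg}}$ cannot become equivalent after applying $\omega_{D^n}$. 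With that sharpening your argument goes through.
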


By the relative higher Deligne conjecture (Theorem~\ref{T:Deligne}), we also have the $E_n$-algebra $HH_{E_n}(A, B_f)$ and an $E_n$-algebra map $HH_{E_n}(A, B_f)\to B$ given as the composition 
$$HH_{E_n}(A, B_f)\cong \mathfrak{z}(f)\stackrel{id \otimes 1_A}\longrightarrow \mathfrak{z}(f)\otimes \mathfrak{z}(k\stackrel{1_A}\to A) \longrightarrow \mathfrak{z}(k\stackrel{1_B}\to B) \cong B.$$ 
Thus by Proposition~\ref{P:EnAlgmapgivesfactonDn}, the pair $(HH_{E_n}(A, B_f), B)$ also defines a factorization algebra on $D^n$. 

\smallskip

Let $\tau: D^n\to \R^n\cup \{\infty\}$ be the map collapsing $\partial D^n$ to the point $\infty$. It is an adequatly stratified map if we see $\R^n \cup \{\infty\}$  as being stratified with one dimension $0$ strata given by $\infty$. Denote $\widehat{\R^n}$ this stratified manifold. The composition of $\tau_*$ with $\omega_{D^n}$ gives us:
\begin{corollary}\label{C:EnAlgmapgivesfactonDnhat}
 There is a faithful functor $\widehat{\omega}:\Hom_{E_n\textbf{-Alg}}\longrightarrow \textbf{Fac}^{lc}_{\widehat{\R^n}}$. 
\end{corollary}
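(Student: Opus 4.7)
The plan is to define $\widehat{\omega}:=\tau_*\circ \omega_{D^n}$, where $\omega_{D^n}:\Hom_{E_n\textbf{-Alg}}\to \textbf{Fac}^{lc}_{D^n}$ is the faithful functor of Proposition~\ref{P:EnAlgmapgivesfactonDn} and $\tau_*:\textbf{Fac}_{D^n}\to \textbf{Fac}_{\widehat{\R^n}}$ is the pushforward from Proposition~\ref{P:pushforwardFac}. Since a composition of faithful functors is faithful, the work splits into: (a) showing that $\tau_*$ carries the image of $\omega_{D^n}$ into the subcategory $\textbf{Fac}^{lc}_{\widehat{\R^n}}$, and (b) verifying that $\tau_*$ remains faithful on this image.

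First I would observe that $\tau$ restricts to a homeomorphism between $D^n\setminus\partial D^n$ and the open stratum $\R^n\subset \widehat{\R^n}$, while $\tau^{-1}(\infty)=\partial D^n$. On $\widehat{\R^n}\setminus\{\infty\}$, the restriction of $\tau_*(\omega_{D^n}(f))$ agrees with the restriction of $\omega_{D^n}(f)$ to the interior of $D^n$, which by construction is the constant factorization algebra associated to the underlying $E_n$-algebra $A$; this part is clearly locally constant.

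The heart of the argument is local constancy at $\infty$. Given two nested good stratified disks $U\subset V$ at $\infty$, the preimages $\tau^{-1}(U)\subset \tau^{-1}(V)$ are nested open collar neighborhoods of $\partial D^n$ in $D^n$, each deformation retracting onto $\partial D^n$. I would cover both by a factorizing basis of good stratified disks of $D^n$---interior disks $\R^n$ together with half-disks $\R^{n-1}\times [0,\delta)$ at points of $\partial D^n$---chosen so that the cover of $\tau^{-1}(V)$ refines the cover of $\tau^{-1}(U)$ compatibly. Since $\mathcal{F}:=\omega_{D^n}(f)$ is locally constant on $D^n$, each structure map on basic pieces is a quasi-isomorphism, and the \v{C}ech description of $\mathcal{F}$ provided by Proposition~\ref{P:extensionfrombasis} then yields that $\mathcal{F}(\tau^{-1}(U))\to \mathcal{F}(\tau^{-1}(V))$ is a quasi-isomorphism. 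This is the step I expect to be the main obstacle, because the neighborhoods $\tau^{-1}(U)$ are not themselves disks of $D^n$, so local constancy of $\mathcal{F}$ must be leveraged indirectly through descent rather than invoked directly.

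Finally, for faithfulness of $\tau_*$ on the image of $\omega_{D^n}$: the restriction functor $\textbf{Fac}^{lc}_{\widehat{\R^n}}\to \textbf{Fac}^{lc}_{\widehat{\R^n}\setminus\{\infty\}}$ applied to $\tau_*(\omega_{D^n}(f))$ already recovers the $E_n$-algebra $A$, while evaluation on small stratified disks at $\infty$ (i.e. on small collars of $\partial D^n$) recovers the $E_1$-module data of $B$ over $q_*(A_{|S^{n-1}\times \R})$ that characterizes $\omega_{D^n}(f)$ according to Proposition~\ref{P:FaconDn}. Combined with the faithfulness of $\omega_{D^n}$ itself, this yields that $\widehat{\omega}$ is faithful.
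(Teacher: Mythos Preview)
Your proposal is correct and uses the same construction as the paper: $\widehat{\omega}=\tau_*\circ\omega_{D^n}$. The difference lies only in how you justify that $\tau_*$ lands in $\textbf{Fac}^{lc}_{\widehat{\R^n}}$. The paper simply observes that $\tau$ is adequately stratified and invokes Proposition~\ref{P:pushforwardlcStrat}; you instead sketch the \v{C}ech argument directly for this particular map. Your route is sound---it essentially reproduces the proof of Proposition~\ref{P:pushforwardlcStrat} in this special case---though the phrase ``chosen so that the cover of $\tau^{-1}(V)$ refines the cover of $\tau^{-1}(U)$ compatibly'' should be sharpened: what you actually want is a bijection between factorizing covers of the two collars so that each basic piece over $\tau^{-1}(U)$ sits inside its counterpart over $\tau^{-1}(V)$ as an inclusion of good disks of the \emph{same index}, exactly as in the proof of Proposition~\ref{P:pushforwardlc}. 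The paper's route is more economical given the available machinery; yours has the virtue of being self-contained and of not leaning on $\tau$ literally satisfying the hypotheses of ``adequately stratified'' (note that $\tau$ is not a locally trivial fibration in the naive sense, since the fiber jumps from a point to $S^{n-1}$ at $\infty$). Your treatment of faithfulness is also more explicit than the paper's, which leaves it implicit.
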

\end{example}

\begin{example}[The Square]\label{Ex:FacOnSquare}
 Consider the square $I^2:=[0,1]^{2}$, the product of the interval (with its natural stratification of  \S~\ref{SS:Fachalfline}) with itself. This stratification agrees with the one given by seeing $I^2$ as a manifold with corners. Thus, there are four $0$-dimensional strata corresponding to the vertices of $I^2$ and four $1$-dimensional strata corresponding to the edges. Let us denote $\{1,2,3,4\}$ the set of vertices and $[i,i+1]$ the corresponding edge linking the vertices $i$ and $i+1$ (ordered cyclically\footnote{in other words we have the 4 edges $[1,2]$, $[2,3]$, $[3,4]$ and $[4,1]$}). See Figure~\ref{fig:FacOnSquare}.
 
 We can construct a factorization algebra on $I^2$ as follows. 
For every edge $[i,j]$ of $I^2$, let $A_{[i,j]}$ be an $E_1$-algebra; also  let $A$ be an $E_2$-algebra.  For every vertex $i$, let $M_i$ be a $(A_{[i-1,i]}, A_{[i, i+1]})$-bimodule. Finally, assume that $A$ acts on each $A_{[i,j]}$ and $M_{k}$ in a  compatible way with algebras and module structures. Compatible, here, means  that for
every vertex $i$, the data $(M_i,A_{[i-1,i]}, A_{[i, i+1]}, A)$ (together with the various module structures) define a locally constant stratified factorization algebra on a neighborhood\footnote{for instance, take  the complement of the closed sub-triangle of $I^2$ given by the three other vertices, which is isomorphic as a stratified space to the pointed half plane $\tilde{H}$ of Example~\ref{ex:upperhalfplane}} of the vertex $i$ as given by Proposition~\ref{P:halfplanepted}.

\begin{figure}[b] \begin{minipage}[c]{0.48\textwidth}
 \includegraphics[scale=0.67]{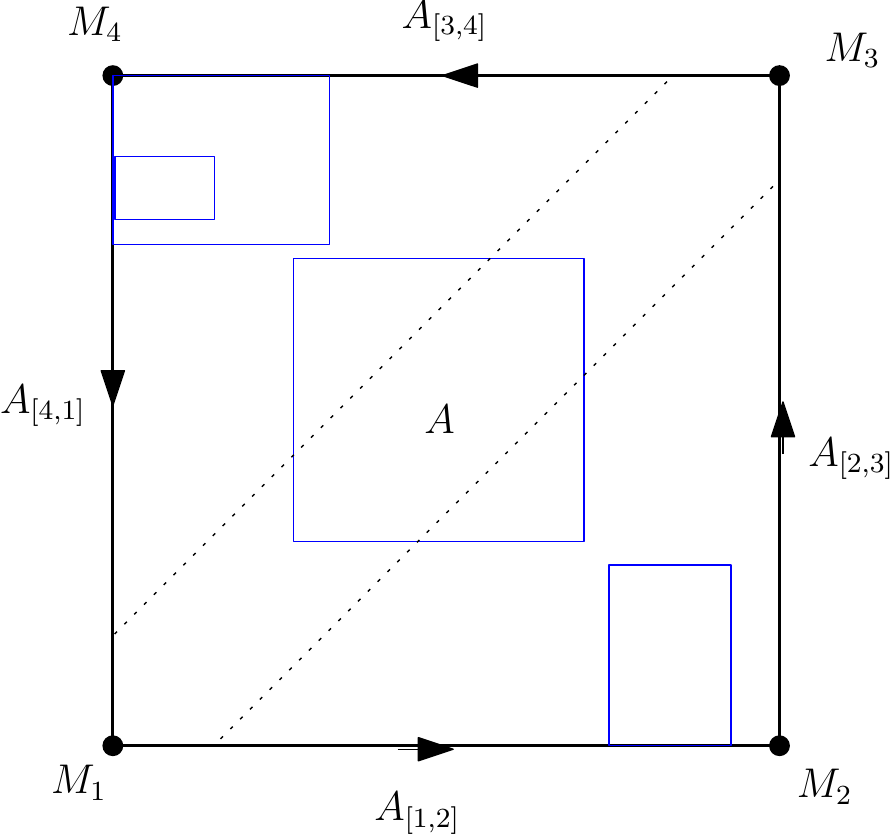}\end{minipage} \hfill \begin{minipage}[c]{0.48\textwidth}
\caption{The stratification of the square. The (oriented) edges and vertices are decorated with their associated algebras and modules. Also 4 rectangles which are good neighborhoods at respectively $1$, $[4,1]$, $[1,2]$ and the dimension $2$-strata  are depicted in blue. The band $D_{13}$ is delimited by the dotted lines while the annulus $T$ is the complement of the interior blue square.}\label{fig:FacOnSquare}\end{minipage}
\end{figure}

Using Propositions~\ref{P:BarInterval} and~\ref{P:BimodasFact} (and their proofs) and Remark~\ref{R:En=Fact}, we see that we obtain a locally constant factorization algebra $\mathcal{I}$ on $I^2$ whose value on an open rectangle $R\subset I^2$ is given by
\begin{equation}\label{eq:FaconSquare} \mathcal{I}(R):= \left\{\begin{array}{ll} M_i & \mbox{if $R$ is a good neighborhood at the vertex $i$;} \\ A_{[i,i+1]} & \mbox{if $R$ is a good neighborhood at the edge $[i,i+1]$;}\\ A & \mbox{if $R$ lies in $I^2\setminus \partial I^2$.} \end{array}\right.\end{equation}
The structure maps being given by the various module and algebras structure\footnote{note that we  orient  the edges accordingly to the ordering of the edges}.

 By Corollary~\ref{L:expFactStrat}, we get the functor $\underline{\pi_1}_*:\textbf{Fac}^{lc}_{[0,1]^{2}}\to \textbf{Fac}^{lc}_{[0,1]}(\textbf{Fac}^{lc}_{[0,1]})$. Combining the proof of Proposition~\ref{L:expFact} and Proposition~\ref{P:BarInterval} (similarly to the proof of Corollary~\ref{C:FacXhalfline}), we get:
 \begin{proposition} The functor $\underline{\pi_1}_*: \textbf{Fac}^{lc}_{[0,1]^{2}}\to \textbf{Fac}^{lc}_{[0,1]}(\textbf{Fac}^{lc}_{[0,1]})$ is an equivalence of $\infty$-categories.
 
 Moreover, any stratified locally constant factorization algebra  on $I^2$ is quasi-isomorphic to a factorization algebra associated to a tuple $(A, A_{[i,i+1]}, M_i, i=1\dots 4)$ as in the rule~\eqref{eq:FaconSquare} above.
 \end{proposition}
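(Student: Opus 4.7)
The plan is to first establish the exponential equivalence and then unfold it to recover the combinatorial description in terms of the tuple $(A,A_{[i,i+1]},M_i)$.

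For the first assertion, the functor $\underline{\pi_1}_*$ is already shown to land in locally constant factorization algebras by Corollary~\ref{L:expFactStrat}, since both projections from $[0,1]^2$ are adequately stratified. To show it is an equivalence, I would follow the proof of Proposition~\ref{L:expFact} (given in \S~\ref{Proof:expFact}) adapted to the stratified setting. The factorizing basis $\mathcal{I}$ of stratified open subintervals of $[0,1]$ used in Proposition~\ref{P:BarInterval} has the convenient feature that products $J \times K$ with $J,K \in \mathcal{I}$ form a factorizing basis $\mathcal{I} \times \mathcal{I}$ of $[0,1]^2$ stable under finite intersections, and each such product is a good neighborhood at exactly one stratum of $I^2$. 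Given $\mathcal{B} \in \textbf{Fac}^{lc}_{[0,1]}(\textbf{Fac}^{lc}_{[0,1]})$, define a $(\mathcal{I} \times \mathcal{I})$-prefactorization algebra by $\mathcal{F}(J \times K) := \mathcal{B}(J)(K)$; the structure maps combine the structure maps in the two coordinates. The locally constant property (\`a la Definition~\ref{D:lcFacStratified}) is inherited from that of $\mathcal{B}$ and of $\mathcal{B}(J)$ for each $J$. Then Proposition~\ref{P:extensionfrombasis} extends this uniquely to a factorization algebra on $I^2$, providing the inverse to $\underline{\pi_1}_*$ on objects; that the constructions are mutually inverse at the level of mapping spaces follows from \v{C}ech descent along the same factorizing basis.

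For the second assertion, I would unfold the equivalence. Let $\mathcal{F} \in \textbf{Fac}^{lc}_{I^2}$ and set $\mathcal{B} := \underline{\pi_1}_*(\mathcal{F})$. Applying Proposition~\ref{P:BarInterval} to $\mathcal{B} \in \textbf{Fac}^{lc}_{[0,1]}(\textbf{Fac}^{lc}_{[0,1]})$ produces an $E_1$-algebra object $\mathcal{B}^\circ$ in $\textbf{Fac}^{lc}_{[0,1]}$ (the underlying $E_1$-algebra of $\mathcal{B}$) together with a left and a right pointed $\mathcal{B}^\circ$-module in $\textbf{Fac}^{lc}_{[0,1]}$, obtained from the values of $\mathcal{B}$ on the interior and on the two endpoints respectively. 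Now apply Proposition~\ref{P:BarInterval} a second time to each of these three factorization algebras on $[0,1]$:
\begin{itemize}
\item $\mathcal{B}^\circ$ gives an $E_1$-algebra $A'$, a right module $A'_{[1,2]}$ and a left module $A'_{[3,4]}$, and the outer $E_1$-structure promotes $A'$ to an $E_2$-algebra $A$ (by Theorem~\ref{T:Dunn}) and equips $A'_{[1,2]}, A'_{[3,4]}$ with $E_1$-structures together with compatible $E_2$-actions of $A$, exactly as in Proposition~\ref{P:halfplanepted};
\item the left pointed module (the value of $\mathcal{B}$ at the endpoint $x=0$) gives an $E_1$-algebra $A_{[4,1]}$ acted on compatibly by the whole system, together with pointed bimodules $M_1, M_4$ at the vertices $y=0, y=1$;
\item similarly the right pointed module gives $A_{[2,3]}$ and pointed bimodules $M_2, M_3$.
\end{itemize}
At each vertex $i$, the resulting data $(M_i, A_{[i-1,i]}, A_{[i,i+1]}, A)$ together with the various actions is, by construction, a locally constant factorization algebra on a neighborhood of $i$ shaped like $\tilde{H}$, so Proposition~\ref{P:halfplanepted} gives the precise compatibility at corners. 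This recovers exactly the rule~\eqref{eq:FaconSquare}, and the assignment on mapping spaces is tautological by the first part.

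The main obstacle is bookkeeping: both $E_1$-structures (one from each factor of $[0,1]$) act on every value, and one must verify carefully that the compatibilities produced by iterating Proposition~\ref{P:BarInterval} in the two directions assemble into precisely the vertex-by-vertex data described by Proposition~\ref{P:halfplanepted} (rather than some weaker or stronger coherence). The essential input making this work is that a good neighborhood at a corner of $I^2$ decomposes as a product of two good neighborhoods of the corresponding endpoints of $[0,1]$, so that the \v{C}ech descent along $\mathcal{I} \times \mathcal{I}$ in the first part really does identify the abstract tensor/module structures with the geometric ones around each vertex.
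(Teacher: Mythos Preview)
Your proposal is correct and follows essentially the same approach as the paper, which simply states that the result is obtained by ``combining the proof of Proposition~\ref{L:expFact} and Proposition~\ref{P:BarInterval} (similarly to the proof of Corollary~\ref{C:FacXhalfline}).'' You have accurately unfolded what that combination entails: the product basis $\mathcal{I}\times\mathcal{I}$ and extension from a basis for the equivalence, and a double application of Proposition~\ref{P:BarInterval} (with Dunn's theorem and the corner analysis via Proposition~\ref{P:halfplanepted}) for the classification in terms of the tuple.
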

Let us give some examples of computations of the global sections of $\mathcal{I}$ on various opens.
\begin{itemize}
 \item The band $I^2\setminus ([4,1] \cup [2,3])$ is isomorphic (as a stratified space) to $(0,1)\times [0,1]$. Then from the definition of factorization homology and Proposition~\ref{P:BarInterval}.3, we get
\begin{eqnarray*}\mathcal{I}\big(I^2\setminus ([4,1] \cup [2,3])\big) &\cong& p_*\big(\mathcal{I}_{|I^2\setminus ([4,1] \cup [2,3])}\big)  \cong  p_*(\underline{\pi_1}_*(\mathcal{I}_{|I^2\setminus ([4,1] \cup [2,3])}) \\&=& \int_{[0,1]}\underline{\pi_1}_*(\mathcal{I}_{|I^2\setminus ([4,1] \cup [2,3])} \cong  A_{[1,2]}\,\mathop{\otimes}^{\mathbb{L}}_{A} \,(A_{[3,4]})^{op}. \end{eqnarray*}In view of Corollary~\ref{C:FacXhalfline} in the case $X=\R$ (and Theorem~\ref{P:En=Fact}); this is an equivalence of $E_1$-algebras.
\item Consider a tubular neighborhood of the boundary, namely the complement\footnote{note that the obvious radial projection $T\to \partial I^2$ is \emph{not}  adequatly stratified} $T:=I^2\setminus [1/4,3/4]^2$ (see Figure~\ref{fig:FacOnSquare}). The argument of Proposition~\ref{P:BarInterval} and Example~\ref{Ex:FacS1=Hoch} shows (by projecting the square on $[0,1]\times \{1/2\}$) that
$$\mathcal{I}\big(T\big) \cong  
\Big(M_4\mathop{\otimes}^{\mathbb{L}}_{A_{[4,1]}} M_1\Big) \mathop{\bigotimes}^{\mathbb{L}}_{A_{[1,2]}\otimes (A_{[3,4]})^{op}} \Big(M_2\mathop{\otimes}^{\mathbb{L}}_{A_{[2,3]}} M_3\Big) .$$
\item Now, consider a diagonal band, say a tubular neigborhood $D_{13}$ of the diagonal linking the vertex $1$ to the vertex $3$ (see Figure~\ref{fig:FacOnSquare}). Then projecting onto the diagonal $[1,3]$ and using again Proposition~\ref{P:BarInterval}, we find,
$$\mathcal{I}\big(D_{13}) \cong M_1 \mathop{\otimes}^{\mathbb{L}}_{A} M_3.$$
\end{itemize}
Iterating the above constructions to higher dimensional cubes, one finds 
\begin{proposition} Let $[0,1]^n$ be the stratified cube. 
 The pushforward along the canonical projections is an equivalence  $\textbf{Fac}^{lc}_{[0,1]^{n}}\stackrel{\simeq}\to \textbf{Fac}^{lc}_{[0,1]}\big(\cdots (\textbf{Fac}^{lc}_{[0,1]})\cdots \big)$.
\end{proposition}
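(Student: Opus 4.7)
The proof should proceed by induction on $n$, using the $n=2$ case (just established) as a template and leveraging Corollary~\ref{L:expFactStrat} together with the classification of locally constant factorization algebras on $[0,1]$ given by Proposition~\ref{P:BarInterval}.

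For the base case $n=1$, the statement is tautological. For the inductive step, write $[0,1]^{n+1} = [0,1] \times [0,1]^n$ and consider the projection $\pi_1:[0,1]^{n+1}\to [0,1]$ onto the first factor. Since both $[0,1]$ and $[0,1]^n$ have finitely many strata (with the product stratification agreeing with the natural corner stratification of $[0,1]^{n+1}$), Corollary~\ref{L:expFactStrat} produces the pushforward functor
\[\underline{\pi_1}_*:\textbf{Fac}^{lc}_{[0,1]^{n+1}}\longrightarrow \textbf{Fac}^{lc}_{[0,1]}\bigl(\textbf{Fac}^{lc}_{[0,1]^n}\bigr).\]
Composing with the inductive hypothesis $\textbf{Fac}^{lc}_{[0,1]^n}\simeq \textbf{Fac}^{lc}_{[0,1]}(\cdots(\textbf{Fac}^{lc}_{[0,1]})\cdots)$, it remains to show $\underline{\pi_1}_*$ is an equivalence of $\infty$-categories.

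To construct an inverse, I would mimic the strategy of the $n=2$ case, combining the proofs of Proposition~\ref{L:expFact} and Proposition~\ref{P:BarInterval}. The stratified disks of $[0,1]^{n+1}$ of the form $J\times D$, where $J$ is a (stratified) sub-interval of $[0,1]$ and $D$ is a stratified disk in $[0,1]^n$, form a factorizing basis $\mathcal{B}$ of $[0,1]^{n+1}$ which is stable under finite intersections. Given  $\mathcal{G}\in\textbf{Fac}^{lc}_{[0,1]}(\textbf{Fac}^{lc}_{[0,1]^n})$, one defines a $\mathcal{B}$-prefactorization algebra by the rule $J\times D\mapsto \mathcal{G}(J)(D)$, with structure maps obtained by combining the $E_1$-module-type structure maps of $\mathcal{G}$ (in the $[0,1]$-direction, as classified by Proposition~\ref{P:BarInterval} applied to the target $\infty$-category $\textbf{Fac}^{lc}_{[0,1]^n}$) with the structure maps of each $\mathcal{G}(J)$ (in the $[0,1]^n$-direction). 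The local constancy of $\mathcal{G}$ in both variables ensures the resulting prefactorization algebra satisfies the $\mathcal{B}$-factorization algebra axiom (by checking it separately on products), and then Proposition~\ref{P:extensionfrombasis} provides an extension to a genuine factorization algebra on $[0,1]^{n+1}$ which is locally constant by Proposition~\ref{P:locallocallyconstantStrat}. This yields a candidate inverse, and one verifies the composition with $\underline{\pi_1}_*$ in both directions is equivalent to the identity by unraveling the definitions on the basis $\mathcal{B}$.

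The main obstacle will be this last verification, and more precisely checking that the structure maps built from $\mathcal{G}$ on the basis $\mathcal{B}$ really do satisfy the factorization condition for arbitrary factorizing covers of a product disk $J\times D$; this is where one must exploit the fact that $[0,1]$ has a particularly simple geometry (its disks are half-closed or open intervals, all classified by modules and bimodules via Proposition~\ref{P:BarInterval}). In effect, one reduces the higher-dimensional \v{C}ech condition to a combination of the one-dimensional \v{C}ech condition in each coordinate, exactly as in the proof of Proposition~\ref{L:expFact} for smooth manifolds without boundary but now applied iteratively along each stratified factor. The inductive hypothesis makes this reduction clean, since it identifies the target category as an $n$-fold iterated $\textbf{Fac}^{lc}_{[0,1]}$-valued category, matching the product structure of the basis $\mathcal{B}$.
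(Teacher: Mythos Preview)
Your proposal is correct and follows essentially the same approach as the paper. The paper's own argument is extremely terse---it merely says ``Iterating the above constructions to higher dimensional cubes, one finds'' after establishing the $n=2$ case by ``combining the proof of Proposition~\ref{L:expFact} and Proposition~\ref{P:BarInterval} (similarly to the proof of Corollary~\ref{C:FacXhalfline})''---and your induction, invoking Corollary~\ref{L:expFactStrat} for the functor and Propositions~\ref{P:BarInterval}, \ref{P:extensionfrombasis}, \ref{P:locallocallyconstantStrat} to build and verify the inverse on a basis of product disks, is exactly a fleshed-out version of that iteration.
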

 In other words, $\textbf{Fac}^{lc}_{[0,1]^{n}}$ is a \emph{tractable model} for an $\infty$-category   consisting of the data of an $E_n$-algebra $A_{n}$ together with  $E_{n-1}$-algebras $A_{n-1,i_{n-1}}$ ($i_{n-1}= 1\dots 2n$) equipped with an action of $A_n$,  $E_{n-2}$-algebras $A_{n_2,i_{n-2}}$ ($i_{n-2}=1\dots 2n(n-1)$) each equipped with a structure of bimodule over $2$ of the $A_{n-1, j}$  compatible with the $A$ actions, \dots,  $E_{k}$-algebras $A_{k,i_{k}}$ ($i_k=1\dots 2^{n-k} \binom{n}{k}$) equipped with structure of $n-k$-fold modules over ($n-k$ many of) the $ A_{k+1,j}$ algebras, compatible with the previous actions, and so on \dots.%  as in the rule~\eqref{eq:FaconSquare} above.
 
 Similarly to the previous example~\ref{Diskandaugmented}, we also have a faithful functor 
 $$E_n\textbf{-Alg} \cong \textbf{Fac}^{lc}_{\R^n} \longrightarrow \textbf{Fac}^{lc}_{[0,1]^{n}}.$$
\end{example}

\begin{example}[Iterated categories of (bi)modules] We consider the $n$-fold product $(\R_*)^{n}$ of the pointed line (see  Example~\ref{ex:pointeddisk}) with its induced stratification. It has one $0$-dimensional strata given by the origins, $2n$ many $1$-dimensional strata given by  half of the coordinate axis, \dots, and $2^{n}$ open strata. 

Locally constant (stratified) factorization algebras on $(\R_*)^{n}$ are a model for iterated categories of bimodules objects.
Indeed, by Corollary~\ref{L:expFactStrat}, the iterated first projections on $\R_*$ yields a functor
$\underline{\pi}_*: \textbf{Fac}^{lc}_{(\R_*)^{n}} \longrightarrow  \textbf{Fac}^{lc}_{\R_*} \Big(\textbf{Fac}^{lc}_{\R_*}\big( \dots (\textbf{Fac}^{lc}_{\R_*}) \dots\big)\Big).$
 From Proposition~\ref{P:BimodasFact} (and its proof) combined with the arguments of the proofs of Corollary~\ref{C:FacXhalfline} and Proposition~\ref{P:envelopFac}, we get
 \begin{corollary}\label{C:iteratedBiMod} The functor \begin{multline*}\underline{\pi}_*: \textbf{Fac}^{lc}_{(\R_*)^{n}} \longrightarrow  \textbf{Fac}^{lc}_{\R_*} \Big(\textbf{Fac}^{lc}_{\R_*}\big( \dots (\textbf{Fac}^{lc}_{\R_*}) \dots\big)\Big) \\ \cong \textbf{BiMod}\Big(\textbf{BiMod}\big( \dots (\textbf{BiMod}) \dots\big)\Big)\end{multline*}
  is an equivalence.
 \end{corollary}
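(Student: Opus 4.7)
The plan is to proceed by induction on $n$. The base case $n=1$ is precisely Proposition~\ref{P:BimodasFact}. For the inductive step, assuming the equivalence $\textbf{Fac}^{lc}_{(\R_*)^{n-1}} \cong \textbf{BiMod}(\cdots (\textbf{BiMod})\cdots)$ (with $n-1$ nested $\textbf{BiMod}$'s), it suffices to upgrade Corollary~\ref{L:expFactStrat} in the particular case at hand, i.e.\ to show that the pushforward $\underline{\pi_1}_* : \textbf{Fac}^{lc}_{\R_*\times (\R_*)^{n-1}} \to \textbf{Fac}^{lc}_{\R_*}\big(\textbf{Fac}^{lc}_{(\R_*)^{n-1}}\big)$ along the first projection is itself an equivalence of $\infty$-categories. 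Applying the induction hypothesis inside the symmetric monoidal category $\textbf{Fac}^{lc}_{(\R_*)^{n-1}}$ then yields the desired equivalence, and combining with Proposition~\ref{P:BimodasFact} for the outer layer identifies the target with the iterated $\textbf{BiMod}$ category.

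To construct a (quasi-)inverse, I would use extension from a factorizing basis along the lines of Proposition~\ref{P:extensionfrombasis}, together with the descent argument used in the proof of Proposition~\ref{L:expFact}. Concretely, let $\mathcal{B}$ be the collection of product open sets $U\times V\subset \R_* \times (\R_*)^{n-1}$ where $U$ is a stratified disk of $\R_*$ (i.e.\ an open interval, either containing the origin or lying in one of the two open half-lines) and $V$ is a stratified disk of $(\R_*)^{n-1}$. This is a factorizing basis of $(\R_*)^n$ stable by finite intersections, and every stratified disk of $(\R_*)^n$ that is a good neighborhood of a given stratum can be exhausted by elements of $\mathcal{B}$ which are good neighborhoods of the matching type (since the strata of $(\R_*)^n$ are products of strata of $\R_*$). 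Given $\mathcal{G}\in \textbf{Fac}^{lc}_{\R_*}\big(\textbf{Fac}^{lc}_{(\R_*)^{n-1}}\big)$, I would define a $\mathcal{B}$-prefactorization algebra $\Phi(\mathcal{G})$ by the formula $\Phi(\mathcal{G})(U\times V) := \mathcal{G}(U)(V)$, with structure maps assembled from the factorization algebra structures of $\mathcal{G}(U)$ and the $\textbf{Fac}^{lc}_{(\R_*)^{n-1}}$-valued factorization algebra structure of $\mathcal{G}$ itself (exactly as in the unstratified case of Proposition~\ref{L:expFact}).

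The key verifications are then: (i) $\Phi(\mathcal{G})$ satisfies the \v{C}ech descent condition on $\mathcal{B}$, so that it extends uniquely to a factorization algebra on $(\R_*)^n$ by Proposition~\ref{P:extensionfrombasis}; (ii) the extension is locally constant for the stratification; and (iii) $\Phi$ and $\underline{\pi_1}_*$ are mutually inverse up to natural equivalence. Point (i) follows from the fact that a factorizing cover of $U\times V$ by product opens $U_\alpha\times V_\alpha$ gives, for each fixed \lq\lq{}column\rq\rq{}, a factorizing cover of $V$, and, after taking \v{C}ech complexes in the $V$-direction, a factorizing cover of $U$ in $\R_*$; iterated use of the descent for $\mathcal{G}$ and its values then yields the expected equivalence. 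Point (iii) is checked by evaluating $\underline{\pi_1}_*\circ \Phi$ on a good neighborhood $U$ of a stratum of $\R_*$ and observing that $\underline{\pi_1}_*(\Phi(\mathcal{G}))(U) = \Phi(\mathcal{G})(U\times -)$ is, by construction and point (ii), the factorization algebra $\mathcal{G}(U)\in \textbf{Fac}^{lc}_{(\R_*)^{n-1}}$.

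The main obstacle will be point (ii), namely verifying local constancy of the extension at the deep strata of $(\R_*)^n$. Indeed, good neighborhoods of, say, the origin $0\in (\R_*)^n$ need not be of product shape, so one must show that the structure map $\Phi(\mathcal{G})^\circ(D)\to \Phi(\mathcal{G})^\circ(D')$ between two such neighborhoods is a quasi-isomorphism. By Proposition~\ref{P:locallocallyconstantStrat} it is enough to check local constancy in a neighborhood basis of each point, which reduces us to product-shaped good neighborhoods; there local constancy follows from the assumption that $\mathcal{G}$ is locally constant on $\R_*$ and takes values in locally constant factorization algebras on $(\R_*)^{n-1}$, combined with the matching of good neighborhoods of $(\R_*)^n$ with products of good neighborhoods discussed above. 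Once these points are established, the equivalence of $\infty$-categories follows, and iterating $n$ times produces the statement of the corollary.
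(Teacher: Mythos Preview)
Your proposal is correct and follows essentially the same approach the paper indicates: the paper derives the corollary ``from Proposition~\ref{P:BimodasFact} (and its proof) combined with the arguments of the proofs of Corollary~\ref{C:FacXhalfline} and Proposition~\ref{P:envelopFac}'', which is precisely the induction on $n$ with base case Proposition~\ref{P:BimodasFact} and inductive step given by constructing an inverse to $\underline{\pi_1}_*$ via extension from the product basis, exactly as in the proof of Corollary~\ref{C:FacXhalfline} (itself modeled on Proposition~\ref{L:expFact}). Your handling of local constancy at the deep strata via Proposition~\ref{P:locallocallyconstantStrat} and reduction to product-shaped good neighborhoods is also the intended mechanism.
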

An interesting consequence arise  if we assume that the restriction to $\R^n\setminus\{0\}$ of $\mathcal{F}\in \textbf{Fac}^{lc}_{(\R_*)^{n}}$ is constant, that is in the essential image of the functor $\textbf{Fac}^{lc}_{\R^{n}}\to \textbf{Fac}^{lc}_{(\R_*)^{n}}$. In that case,  $\mathcal{F}$ belongs to the essential image of $\textbf{Fac}^{lc}_{\R_*^{n}}\to \textbf{Fac}^{lc}_{(\R_*)^{n}}$. Thus Corollary~\ref{C:AEnModasFact} and Corollary~\ref{C:iteratedBiMod} imply
\begin{corollary}There is a natural equivalence $$E_n\textbf{-Mod} \cong E_1\textbf{-Mod}\Big(E_1\textbf{-Mod}\big( \dots (E_1\textbf{-Mod}) \dots\big)\Big)$$ inducing an  equivalence
 $$E_n\textbf{-Mod}_A \cong E_1\textbf{-Mod}_{A}\Big(E_1\textbf{-Mod}_{A}\big( \dots (E_1\textbf{-Mod}_{A}) \dots\big)\Big)$$  between the relevant $\infty$-subcategories\footnote{the $A$-$E_1$-module structure on the right hand side are taken along the various underlying $E_1$-structures of $A$ obtained by projecting on the various component $\R$ of $\R^n$}. 
\end{corollary}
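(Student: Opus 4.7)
The plan is to combine Corollary~\ref{C:iteratedBiMod} with Corollary~\ref{C:AEnModasFact}, using the essential-image discussion immediately preceding the statement. First I would set up the two functors $\textbf{Fac}^{lc}_{\R^n_*}\to \textbf{Fac}^{lc}_{(\R_*)^n}$ and $\textbf{Fac}^{lc}_{\R^n}\to \textbf{Fac}^{lc}_{(\R_*)^n}$ (the first induced by the fact that $(\R_*)^n$ refines the stratification of $\R^n_*$, the second from~\eqref{eq:FactoFacstratified}). Using Proposition~\ref{P:locallocallyconstantStrat} (or simply by inspection of the stratifications away from the origin), the essential image of $\textbf{Fac}^{lc}_{\R^n_*}\to \textbf{Fac}^{lc}_{(\R_*)^n}$ consists exactly of those $\mathcal{F}$ whose restriction to $\R^n\setminus\{0\}$ lies in the image of $\textbf{Fac}^{lc}_{\R^n}\to \textbf{Fac}^{lc}_{(\R_*)^n\setminus\{0\}}$. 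Hence we get a pullback square identification
\[
 \textbf{Fac}^{lc}_{\R^n_*} \;\cong\; \textbf{Fac}^{lc}_{(\R_*)^n}\times^{h}_{\textbf{Fac}^{lc}_{(\R_*)^n\setminus\{0\}}}\textbf{Fac}^{lc}_{\R^n\setminus\{0\}}.
\]

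Next I would plug this in Corollary~\ref{C:AEnModasFact} to identify $E_n\textbf{-Mod}$ as the full subcategory of $\textbf{Fac}^{lc}_{(\R_*)^n}\times^h \textbf{Fac}^{lc}_{\R^n}$ whose off-origin restriction is constant. Applying the equivalence of Corollary~\ref{C:iteratedBiMod}, $\textbf{Fac}^{lc}_{(\R_*)^n}\cong \textbf{BiMod}(\textbf{BiMod}(\cdots(\textbf{BiMod})\cdots))$, the constancy condition off the origin translates, at every nesting level, into the condition that the two underlying $E_1$-algebras attached to the two half-lines of the corresponding factor $\R_*$ coincide and further agree with the value of the outer constant datum. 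By Dunn's Theorem~\ref{T:Dunn}, the tower of coinciding $E_1$-structures assembles into exactly an $E_n$-algebra structure on a single object $A$, and the nested bimodule data become a nested sequence of $E_1$-modules. This yields the first equivalence
\[
 E_n\textbf{-Mod}\;\cong\;E_1\textbf{-Mod}\Big(E_1\textbf{-Mod}\big(\cdots(E_1\textbf{-Mod})\cdots\big)\Big).
\]

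For the relative statement, I would take the homotopy fiber of the above equivalence at $\{A\}\in E_n\textbf{-Alg}\cong \textbf{Fac}^{lc}_{\R^n}$; on the right this fixes the underlying algebras at every nesting level to be $A$ equipped with its various $E_1$-structures coming from Dunn's theorem, giving $E_1\textbf{-Mod}_A(\cdots(E_1\textbf{-Mod}_A)\cdots)$.

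The main technical obstacle is the bookkeeping in the middle step: one must verify that the iterated pushforward of Corollary~\ref{C:iteratedBiMod} sends a factorization algebra which is constant off the origin precisely to a tower of bimodule data in which, at every level, the two flanking $E_1$-algebras agree and are compatible with the ambient one. This requires iterating Proposition~\ref{P:BimodasFact} together with Dunn's Theorem~\ref{T:Dunn} $2n$ times and checking that at each stage the constancy condition on an open half-strata is intertwined with the algebra coincidence in the adjacent bimodule factor. Once this compatibility is pinned down, the remainder of the argument is a direct assembly of Corollaries~\ref{C:iteratedBiMod} and~\ref{C:AEnModasFact}.
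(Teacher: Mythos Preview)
Your proposal is correct and follows essentially the same route as the paper: the paper's argument is the single sentence preceding the corollary, which observes that a locally constant factorization algebra on $(\R_*)^n$ whose restriction to $\R^n\setminus\{0\}$ is constant lies in the essential image of $\textbf{Fac}^{lc}_{\R^n_*}\to \textbf{Fac}^{lc}_{(\R_*)^n}$, and then invokes Corollary~\ref{C:AEnModasFact} and Corollary~\ref{C:iteratedBiMod}. Your write-up is simply a more detailed unpacking of this, including the explicit pullback reformulation and the bookkeeping that the paper leaves implicit.
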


Let us describe now informally $\textbf{Fac}^{lc}_{(\R_*)^{2}}$. A basis of (stratified) disks is given by the convex open subsets. Such a subset is a good disk of index $0$ if it is a neigborhood of the origin, of index $1$ if it is in $\R^2\setminus \{0\}$ and intersects one and only one half open coordinate axis.  It is a good disk  of index $2$ if it lies in the complement of the coordinate axis.  
 We can construct a factorization algebra on $(\R_*)^{2}$ as follows. Let $E_{i}$, $i=1\dots 4$ be four $E_2$-algebras (labelled by the cyclically ordered four quadrants of $\R^2$).
 Let $A_{1,2}$, $A_{2,3}$, $A_{3,4}$ and $A_{4,1}$ be four $E_1$-algebras endowed, for each $i\in \mathbb{Z}/4\mathbb{Z}$, 
 with a compatible $(E_i, E_{i+1})$-bimodule structure\footnote{that is a map of $E_2$-algebras $E_i\otimes (E_{i+1})^{op} \to \RHom_{A_{i, i+1}}^{E_1}(A_{i, i+1},A_{i, i+1})$ } on $A_{i, i+1}$. 
 Also let $M$ be a  $E_2$-module over each $E_i$ in a compatible way. Precisely, this means that $M$ is endowed with a right action\footnote{In particular, it implies that the tuple $(M,A_{i-1,i}, A_{i, i+1}, E_i)$ defines a stratified locally constant factorization algebra on $\tilde{H}$ by Proposition~\ref{P:halfplanepted}} of the $E_1$-algebra $ \big(A_{4,1}\mathop{\otimes}\limits^{\mathbb{L}}_{E_1} A_{1,2} \big)\mathop{\bigotimes}\limits^{\mathbb{L}}_{E_2\otimes (E_4)^{op}}  \big(A_{2,3}\mathop{\otimes}\limits^{\mathbb{L}}_{E_3}A_{3,4} \big)$. 
 
 Similarly to previous examples (in particular example~\ref{Ex:FacOnSquare}), 
 we see that we obtain a locally constant factorization algebra $\mathcal{M}$ on $(\R_*)^{2}$ whose value on an open rectangle  is given by
\begin{equation}\label{eq:FaconQuadrant} \mathcal{M}(R):= \left\{\begin{array}{ll} M & \mbox{if $R$ is a good neighborhood of the origin;} \\ A_{i,i+1} & \mbox{if $R$ is a good neighborhood of index $1$} \\& \mbox{ intersecting the quadrant labelled  $i$ and $i+1$;}\\ E_i & \mbox{if $R$ lies in the interior of the $i^{\mbox{th}}$-quadrant.} \end{array}\right.\end{equation}
The structure maps are given by the various module and algebras structures. 
 As in example~\ref{Ex:FacOnSquare}, we get
 \begin{proposition}
  Any stratified locally constant factorization algebra  on $(\R_*)^2$ is quasi-isomorphic to a factorization algebra associated to a tuple $(M, A_{i,i+1}, E_i, i=1\dots 4)$ as in the rule~\eqref{eq:FaconQuadrant} above.
 \end{proposition}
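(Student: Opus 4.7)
The plan is to identify the $\infty$-category $\textbf{Fac}^{lc}_{(\mathbb{R}_*)^2}$ with the data of the tuple $(M, A_{i,i+1}, E_i)$ by iterating the equivalence for the pointed line and then tracking where each component of the tuple appears under the iterated first-projection pushforward.

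First, I would invoke the $n=2$ case of Corollary~\ref{C:iteratedBiMod}, which states that the pushforward $\underline{\pi_1}_*:\textbf{Fac}^{lc}_{(\mathbb{R}_*)^2}\stackrel{\simeq}\to \textbf{BiMod}(\textbf{BiMod})$ is an equivalence (this uses the adequate stratification of $\pi_1$ and two applications of Proposition~\ref{P:BimodasFact}). Thus, given any $\mathcal{F}\in \textbf{Fac}^{lc}_{(\mathbb{R}_*)^2}$, its pushforward $\underline{\pi_1}_*(\mathcal{F})$ determines and is determined by a triple $(\mathcal{L},\mathcal{R},\mathcal{N})$ where $\mathcal{L},\mathcal{R}\in \textbf{Fac}^{lc}_{\mathbb{R}_*}$ are $E_1$-algebras in $\textbf{Fac}^{lc}_{\mathbb{R}_*}$ (recording the restrictions to the left and right half-planes $\{x<0\}\times\mathbb{R}_*$ and $\{x>0\}\times\mathbb{R}_*$) and $\mathcal{N}\in \textbf{Fac}^{lc}_{\mathbb{R}_*}$ is an $(\mathcal{L},\mathcal{R})$-bimodule in $\textbf{Fac}^{lc}_{\mathbb{R}_*}$ (recording the restriction to a vertical tubular neighborhood of the $y$-axis). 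A second application of Proposition~\ref{P:BimodasFact} to each of $\mathcal{L}$, $\mathcal{R}$, $\mathcal{N}$ expresses each as its own bimodule triple.

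Next, I would match the resulting nine objects with the entries of our tuple by tracking good neighborhoods through the iterated pushforward. Restricting $\mathcal{F}$ to the interior of the $i$-th quadrant (an open subset of $\mathbb{R}^2$) yields a locally constant factorization algebra on $\mathbb{R}^2$, hence an $E_2$-algebra by Theorem~\ref{P:En=Fact}, which we label $E_i$. Concretely, these appear respectively as the \lq\lq{}left and right $E_1$-algebra\rq\rq{} entries of the triples associated with $\mathcal{L}$ and $\mathcal{R}$: namely $\mathcal{L}\leftrightarrow(E_3,E_2,A_{2,3})$ and $\mathcal{R}\leftrightarrow(E_4,E_1,A_{4,1})$, where $A_{2,3}$ (resp.\ $A_{4,1}$) arises as the value on a good index-1 disk meeting the negative (resp.\ positive) $x$-axis. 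Similarly, $\mathcal{N}\leftrightarrow(A_{3,4},A_{1,2},M)$, with $A_{1,2}$, $A_{3,4}$ arising from good index-1 disks meeting the positive and negative $y$-axis, and $M$ being the value on a good index-0 neighborhood of the origin.

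The bimodule structures organize themselves into exactly the compatibilities required by the statement: each $A_{i,i+1}$ carries an $(E_i,E_{i+1})$-bimodule structure (the $E_2$-action arising from the fact that locally around a point of a dimension-1 stratum the data of $(\mathcal{F}_{\mid \text{disk}}, E_i, E_{i+1}, A_{i,i+1})$ is itself a locally constant factorization algebra on a pointed half-plane $\tilde{H}$, to which Proposition~\ref{P:halfplanepted} applies), and $M$ acquires a right action of the $E_1$-algebra $(A_{4,1}\otimes^{\mathbb{L}}_{E_1}A_{1,2})\otimes^{\mathbb{L}}_{E_2\otimes E_4^{op}}(A_{2,3}\otimes^{\mathbb{L}}_{E_3}A_{3,4})$ compatible with every $E_i$-action; these last compatibilities are precisely the outer bimodule relation witnessing $\mathcal{N}$ as a bimodule between $\mathcal{L}$ and $\mathcal{R}$, together with the inner bimodule relations inside each of $\mathcal{L}$, $\mathcal{R}$, $\mathcal{N}$. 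Finally, to recover the rule~\eqref{eq:FaconQuadrant} on an arbitrary open rectangle $R$, one uses Proposition~\ref{P:extensionfrombasis}: since convex opens form a stable-by-intersection factorizing basis, the values of $\mathcal{F}$ on such opens determine $\mathcal{F}$ uniquely, and on each good-neighborhood type of rectangle the iterated pushforward identification just made yields the prescribed value. The main obstacle is purely bookkeeping: carefully orienting each $\mathbb{R}_*$-factor so that the $op$'s and the cyclic labelling of quadrants line up on the nose with the $(E_i,E_{i+1})$-bimodule conventions in the statement; once the orientations are fixed, no further computation is needed beyond invoking the cited equivalences.
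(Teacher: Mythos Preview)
Your proposal is correct and follows essentially the same approach as the paper: the paper's proof is the single clause ``As in example~\ref{Ex:FacOnSquare}'', which itself amounts to applying the iterated pushforward $\underline{\pi_1}_*$ together with Proposition~\ref{P:BimodasFact} (equivalently, Corollary~\ref{C:iteratedBiMod} for $n=2$) and then reading off the values on a basis of good rectangles---exactly the strategy you have spelled out. Your labeling of the nine pieces and the bookkeeping of the bimodule compatibilities match the paper's conventions.
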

\end{example}

\begin{example}[Butterfly]
 Let us give one of the most simple singular stratified example. Consider the \lq\lq{}(semi-open) butterfly\rq\rq{} that is the  subspace $B:=\{(x,y)\in \R^2|\, |y|< |x|\} \cup \{(0,0)\}$ of $\R^2$ .
 $B$ has a dimension $0$ strata given by the origin and two open strata $B_{+}$, $B_{-}$ of dimension $2$ given respectively by restricting to those points $(x,y)\in B$ such that $x>0$, resp.  $x<0$.  
 
The restriction to $B_{+}$  of a stratified locally constant factorization algebra  on $B$ is  locally constant factorization over $B_{+}\cong \mathbb{R}^2$ (hence is determined by an $E_2$-algebra). This way, we get the restriction functor 
$\text{res}^*:\textbf{Fac}^{lc}_{B} \to \textbf{Fac}^{lc}_{\R^2} \times \textbf{Fac}^{lc}_{\R^2}$. 

Let $\pi: \R^2\to \R$ be the projection $(x,y)\mapsto x$ on the first coordinate. Then, by Corollary~\ref{L:expFactStrat}, we get a functor  $\pi_*: \textbf{Fac}^{lc}_{B} \to \textbf{Fac}^{lc}_{\R_*}$ where $\R_*$ is the pointed line (example~\ref{ex:pointeddisk}). The restriction of $\pi_*$ to $B_{+}\cup B_{-}$ thus yields a the functor $\textbf{Fac}^{lc}_{\R^2} \times \textbf{Fac}^{lc}_{\R^2}\to  \textbf{Fac}^{lc}_{\R \setminus \{0\}}$.

A proof similar (and slightly easier) to the one of Proposition~\ref{P:Barofenveloping} shows that the induced functor
$$(\pi_*, \text{res}^*): \textbf{Fac}^{lc}_{B} \longrightarrow \textbf{Fac}^{lc}_{\R_*} \times^{h}_{\textbf{Fac}^{lc}_{\R \setminus \{0\}}}  (\textbf{Fac}^{lc}_{\R^2} \times \textbf{Fac}^{lc}_{\R^2}) $$ is an equivalence.
From  Proposition~\ref{P:BimodasFact}, we then deduce
\begin{proposition}
 There is an equivalence $$\textbf{Fac}^{lc}_{B}\cong \textbf{BiMod} \times^{h}_{(E_1\textbf{-Alg} \times E_1\textbf{-Alg})} \,\big(E_2\textbf{-Alg} \times E_2\textbf{-Alg}\big).$$  In other words, locally constant factorization algebras on the (semi-open) butterfly are equivalent to the $\infty$-category  of triples $(A, B, M)$ where $A$, $B$ are $E_2$-algebras and $M$ is a left $A\otimes B^{op}$-module (for the underlying $E_1$-algebras structures of $A$ and $B$). 
\end{proposition}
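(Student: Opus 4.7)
\emph{Plan.} Following the strategy indicated in the proposition statement (a mild variant of the proof of Proposition~\ref{P:Barofenveloping}), I would prove the equivalence by constructing a quasi-inverse to $(\pi_*, \text{res}^*)$ via descent. First, I would check both functors are well-defined on locally constant factorization algebras. The restriction $\text{res}^*$ is pullback along the two open stratified embeddings $B_\pm \hookrightarrow B$, and each $B_\pm$ is a single open stratum diffeomorphic to $\R^2$, so $\text{res}^*$ lands in $\textbf{Fac}^{lc}_{\R^2}\times \textbf{Fac}^{lc}_{\R^2}\cong E_2\textbf{-Alg}\times E_2\textbf{-Alg}$ by Theorem~\ref{P:En=Fact}. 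For $\pi_*$, I would verify that $\pi: B \to \R_*$ is adequately stratified: over $\R\setminus\{0\}$ it restricts to a trivial bundle with contractible open-interval fibers, and a cofinal basis of good neighborhoods of $0 \in \R_*$ is given by intervals $(-\varepsilon,\varepsilon)$ whose preimages $\pi^{-1}((-\varepsilon,\varepsilon))$ are good neighborhoods of the origin in $B$. Then Proposition~\ref{P:pushforwardlcStrat} (and Corollary~\ref{L:expFactStrat} applied over $\R\setminus\{0\}$) yields $\pi_*(\mathcal{F}) \in \textbf{Fac}^{lc}_{\R_*}$.

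Next, to see that $(\pi_*, \text{res}^*)$ factors through the homotopy pullback, I would identify the two compositions $\textbf{Fac}^{lc}_B \to \textbf{Fac}^{lc}_{\R\setminus\{0\}}$ arising from the two sides. Via Proposition~\ref{P:BimodasFact}, the restriction of $\pi_*(\mathcal{F})$ to $\R\setminus\{0\}$ recovers the two underlying $E_1$-algebras $L,R$ of the bimodule datum. On the other hand, via Proposition~\ref{L:expFact} applied to the projection on each wing $B_\pm \cong \R\times \R \to \R$, the further pushforward of $\text{res}^*(\mathcal{F})$ along $\pi$ gives the same $E_1$-algebras: this is precisely the Dunn-type identification of Theorem~\ref{T:Dunn}, realized at the level of factorization algebras by the compatibility of iterated pushforwards. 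This natural identification provides the factorization through the homotopy pullback.

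For the quasi-inverse, given a compatible tuple $(A_+, A_-, M)$ one has factorization algebras $\mathcal{A}_\pm$ on $B_\pm$ by Theorem~\ref{P:En=Fact}, and $\mathcal{M}\in \textbf{Fac}^{lc}_{\R_*}$ by Proposition~\ref{P:BimodasFact}. Pulling $\mathcal{M}$ back along $\pi$ on a sufficiently small tubular neighborhood $U$ of the singular locus and using the open cover $\{B_+, B_-, U\}$ of $B$, the compatibility equivalences furnished by the homotopy pullback supply gluing data to which the descent result of the Descent subsection applies, producing a locally constant factorization algebra on $B$. The main obstacle is precisely this descent step: one must verify that the gluing data satisfy the required coherences and that the resulting factorization algebra is locally constant across the singular stratum at the origin. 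Both reduce, after choosing a factorizing basis of convex stratified disks (good neighborhoods of index $0$ being the convex opens containing the origin, and good neighborhoods of index $2$ being convex opens contained in a single wing), to the compatibilities already encoded in Proposition~\ref{P:BimodasFact} and Theorem~\ref{P:En=Fact} together with the Dunn identification above; the two constructions are then seen to be mutually quasi-inverse.
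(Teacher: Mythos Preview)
Your proposal is essentially correct and follows the same route the paper sketches (the paper gives no detailed proof, only the remark that the argument is a mild variant of the proof of Proposition~\ref{P:envelopFac}, to which the text's reference ``Proposition~\ref{P:Barofenveloping}'' is almost certainly a typo). Your verification that $\text{res}^*$ and $\pi_*$ land in the correct categories, and your identification of the two maps to $\textbf{Fac}^{lc}_{\R\setminus\{0\}}$ via Dunn/Proposition~\ref{L:expFact}, are the right steps and match what the paper intends.

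There is one genuine imprecision in your inverse construction: you write ``Pulling $\mathcal{M}$ back along $\pi$ on a sufficiently small tubular neighborhood $U$''. The pullback of factorization algebras $f^*$ is only defined in this paper for open immersions (see \S~\ref{S:OperationsforFact}), and $\pi:B\to\R_*$ is not one, so this sentence as written has no meaning. Moreover, any such neighborhood $U$ of the origin is itself stratified-homeomorphic to $B$, so producing a factorization algebra on $U$ is the same problem you are trying to solve. What actually works, and what you yourself arrive at in your last sentence, is the \emph{direct} construction on a factorizing basis \`a la Proposition~\ref{P:envelopFac}: take the basis consisting of convex opens contained in a single wing (on which you use $\mathcal{A}_\pm$) together with good index-$0$ neighborhoods of the origin (on which you assign $\mathcal{M}(\pi(V))$), and define the structure maps by combining the structure maps of $\mathcal{A}_\pm$ with those of $\mathcal{M}\in\textbf{Fac}^{lc}_{\R_*}$ exactly as in formula~\eqref{eq:structuremapforG}. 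The descent/gluing language with the cover $\{B_+,B_-,U\}$ is then superfluous: once you have the basis construction, Proposition~\ref{P:extensionfrombasis} gives you the factorization algebra on $B$ directly, and checking it is locally constant and that the two composites are equivalent to the identity proceeds just as in the appendix proof of Proposition~\ref{P:envelopFac}. So drop the pullback/descent paragraph and go straight to the basis construction you already describe at the end.
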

 Let $\overline{B}=\{(x,y)\in \R^2|\,  |y|\leq |x|\}$ be the closure of the butterfly. It has four additional dimension $1$ strata, given by the boundary of $B_+$ and $B_{-}$. The above argument yields an equivalence
 $$ \textbf{Fac}^{lc}_{\overline{B}} \stackrel{\simeq}\longrightarrow \textbf{Fac}^{lc}_{\R_*} \times^{h}_{\textbf{Fac}^{lc}_{\R \setminus \{0\}}}  \big(\textbf{Fac}^{lc}_{\tilde{H}} \times \textbf{Fac}^{lc}_{\tilde{H}}\big) $$
 where $\tilde{H}$ is the pointed half-plane, see example~\ref{ex:upperhalfplane}. From Propositions~\ref{P:halfplanepted},~\ref{P:BarInterval} and~\ref{P:BimodasFact} we see that a locally constant factorization algebra on the closed butterfly  $\overline{B}$ is equivalent to the data of two $E_2$-algebras $E_{+}$, $E_{-}$, four $E_1$-algebras $A_+$,  $B_+$, $A_{-}$, $B_{-}$, equipped respectively with left or right modules structures over $E_{+}$ or $E_{-}$, and a left $\big(A_{+}\otimes_{E_{+}}^{\mathbb{L}} B_{+}\big) \otimes \big(A_{-}\otimes_{E_{-}}^{\mathbb{L}} B_{-}\big)^{op}$-module $M$.  
\end{example}

\begin{example}[Homotopy calculus]\label{ex:calculus}
The canonical action of $S^1=SO(2)$ on $\R^2$ has the origin for fixed point.
 It follows that 
 $S^1$ acts canonically on $\textbf{Fac}_{\R^2_*}$, the category of factorization algebras over the pointed disk. If $\mathcal{F}\in \textbf{Fac}_{\R^2_*}^{lc}$, its restriction to $\R^2\setminus \{0\}$ determines 
an $E_2$-algebra $A$ with monodromy by Corollary~\ref{C:locallyconstantS1}, Proposition~\ref{L:expFact} and Theorem~\ref{T:Dunn}. If $\mathcal{F}$ is $S^1$-equivariant, then its monodromy is trivial and it follows that the global section $\mathcal{F}(\R^2)$ is an $E_2$-module over $A$. The $S^1$-action yields an $S^1$-action on $\mathcal{F}(\R^2)$ which, algebraically, boils down to an additional differential of homological degree $1$ on   $\mathcal{F}(\R^2)$. We believe that the techniques in this section suitably extended to the case of compact group actions on factorization algebras allow to prove
\begin{claim}\emph{
The category $\big(\textbf{Fac}^{lc}_{\R^2_*}\big)^{S^1}$ of $S^1$-equivariant locally constant factorization algebras on the pointed disk $\R^2_*$ is equivalent to the category of homotopy calculus of Tamarkin-Tsygan~\cite{TT} describing homotopy Gerstenhaber algebras acting on a $\text{BV}$-module.}
\end{claim}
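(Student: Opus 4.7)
The plan is to combine Corollary~\ref{C:EnModasFact} (which presents $E_2\textbf{-Mod}$ as the homotopy pullback $\textbf{Fac}^{lc}_{\R^2_*}\times^{h}_{\textbf{Fac}^{lc}_{\R^2\setminus\{0\}}} \textbf{Fac}^{lc}_{\R^2}$) with a homotopy fixed-point computation for the $S^1\cong SO(2)$-action. The rotation action on $\R^2$ fixes the origin, preserves the punctured complement, and acts by functoriality on each of the three $\infty$-categories in the pullback; the equivalences of Corollary~\ref{C:EnModasFact} are manifestly $S^1$-equivariant. Since $(-)^{hS^1}$ preserves (homotopy) limits, the $S^1$-fixed points are computed by the fixed-point pullback, and it suffices to analyze each vertex separately and then to match the induced compatibility with the Cartan-type axioms of a homotopy calculus.

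I would first treat the vertex $\textbf{Fac}^{lc}_{\R^2}\simeq E_2\textbf{-Alg}$: the $S^1$-action here is the classical rotation of little $2$-disks, and by Kontsevich's formality of the little $2$-disks operad in characteristic zero, $E_2$-algebras are homotopy equivalent to homotopy Gerstenhaber algebras. The extra $S^1$-fixed-point datum amounts, up to homotopy, to a coherent trivialization of the monodromy discussed after Corollary~\ref{C:locallyconstantS1}; this structure is absorbed into the data on the other vertex rather than promoting $A$ to a framed $E_2$-algebra. Next, the vertex $\textbf{Fac}^{lc}_{\R^2_*}$ itself corresponds, via Corollary~\ref{C:AEnModasFact}, to $E_2$-modules $M$ over $A$: the $S^1$-fixed-point datum yields on $M=\mathcal{F}(\R^2)$ an additional degree $+1$ square-zero derivation $B$, coming from integration against the fundamental class of $S^1$, turning $M$ into a mixed complex. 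Finally, the gluing vertex $\textbf{Fac}^{lc}_{\R^2\setminus\{0\}}\simeq E_1\textbf{-Alg}(\textbf{Fac}^{lc}_{S^1})$ (via Proposition~\ref{L:expFact}) enforces the compatibility between the $E_2$-action of $A$ on $M$ and the operator $B$.

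Unpacking this compatibility is the heart of the argument. The $E_2$-module structure on $M$, after formality and the Dunn-Lurie decomposition (Theorem~\ref{T:Dunn}), supplies contraction operators $\iota_a\colon M\to M[-|a|]$ and Lie derivatives $L_a\colon M\to M[-|a|+1]$ for $a\in A$, and the $S^1$-equivariance of the pushforward along the radial projection (\emph{cf.}\ Example~\ref{ex:halfline}) forces the Cartan magic relation $L_a=[B,\iota_a]$ together with its full tower of homotopy coherences, which is precisely the structure of a Tamarkin-Tsygan homotopy calculus. This produces a comparison functor from $\big(\textbf{Fac}^{lc}_{\R^2_*}\big)^{hS^1}$ to the $\infty$-category of homotopy calculi; its quasi-inverse is constructed by mimicking the proof of Theorem~\ref{P:EnModasFact}, assigning to any calculus a $\mathcal{CV}(\R^2_*)$-prefactorization algebra whose value on a convex open containing the origin is the module, whose value on a convex open avoiding $0$ is the algebra, and whose structure maps are defined by the operadic action of the calculus generators.

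The main obstacle will be making the homotopy $S^1$-action on $\textbf{Fac}^{lc}_{\R^2_*}$ sufficiently concrete to allow an explicit fixed-point computation. This requires an equivariant enhancement of Theorem~\ref{P:EnModasFact}: one must refine the factorizing basis of stratified convex disks into one closed under the rotation action, on which the $SO(2)$-action is strict (or at least coherently organized), so that homotopy fixed points can be computed directly on the basis. A secondary difficulty is to identify, at the operadic level, the resulting $SO(2)$-equivariant operad with the operad controlling Tamarkin-Tsygan calculi: both are known to be formal in characteristic zero with homology the calculus operad, and establishing the $\infty$-operadic equivalence will likely rely on the formality of the framed little $2$-disks operad (Severa-Willwacher) combined with operadic Koszul duality, rather than on a direct identification of generating cells.
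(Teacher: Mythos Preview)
The paper does not actually prove this Claim; it is explicitly presented as a belief (``We believe that the techniques in this section suitably extended to the case of compact group actions on factorization algebras allow to prove\ldots''). The surrounding text only offers a two-line heuristic: $S^1$-equivariance trivializes the monodromy of the restriction $\mathcal{F}_{|\R^2\setminus\{0\}}$, so that restriction determines an honest $E_2$-algebra $A$ and $\mathcal{F}(\R^2)$ becomes an $E_2$-$A$-module; the $S^1$-action on $\mathcal{F}(\R^2)$ then supplies the degree~$1$ BV differential. Your proposal is therefore being compared to a sketch, not a proof, and at that level it is broadly compatible with the paper's intentions and considerably more detailed.

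There is, however, a structural slip in your outline. Corollary~\ref{C:EnModasFact} presents $E_2\textbf{-Mod}$, not $\textbf{Fac}^{lc}_{\R^2_*}$, as the homotopy pullback; $\textbf{Fac}^{lc}_{\R^2_*}$ is one of the three vertices, not the pullback itself. So ``$(-)^{hS^1}$ commutes with limits'' does not directly express $\big(\textbf{Fac}^{lc}_{\R^2_*}\big)^{hS^1}$ as a pullback of fixed-point categories. What you implicitly need (and what the paper's heuristic asserts) is that after imposing $S^1$-equivariance the monodromy is trivialized, so the functor $\textbf{Fac}^{lc}_{\R^2_*}\to\textbf{Fac}^{lc}_{\R^2\setminus\{0\}}$ factors, on $hS^1$-fixed points, through the image of $\textbf{Fac}^{lc}_{\R^2}$; this is a separate statement that should be argued, not assumed.

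A second and more substantive issue is the vertex $\big(\textbf{Fac}^{lc}_{\R^2}\big)^{hS^1}$. By Example~\ref{E:DisknAlg} this is the category of $\Disk^{or}_2$-algebras, i.e.\ \emph{framed} $E_2$-algebras, which at the homology level are BV-algebras, not merely Gerstenhaber. In a Tamarkin--Tsygan calculus the algebra $A$ is only (homotopy) Gerstenhaber; the BV operator lives on the module $M$, not on $A$. Your remark that the extra $S^1$-datum on $A$ is ``absorbed into the data on the other vertex'' is precisely the point that needs justification, and as stated it is not correct: taking $hS^1$ of the full pullback genuinely promotes $A$ to a framed $E_2$-algebra. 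The paper's heuristic sidesteps this by never passing through the vertex $\textbf{Fac}^{lc}_{\R^2}$ at all---it works directly with $\textbf{Fac}^{lc}_{\R^2_*}$ and uses the $S^1$-equivariance only to trivialize the monodromy on $\R^2\setminus\{0\}$, which lands the restriction in $E_2\textbf{-Alg}$ rather than in $\Disk^{or}_2\textbf{-Alg}$. Your route through the pullback would naturally produce a richer structure (a ``framed calculus'' with BV on both $A$ and $M$), and you would then need to explain why the target category of ordinary homotopy calculi is nonetheless equivalent, or else correct the statement.
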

There are nice examples of homotopy calculus arising in algebraic geometry~\cite{BeFa}.
\end{example}

\begin{example} Let $K: S^1\to \R^3$ be a knot, that is a smooth embedding of $S^1$ inside $\R^3$. Then we can consider the stratified manifold $\R^3_{K}$ with a  1 dimensional open strata given by the image of $K$, and another $3$-dimensional open strata given by the knot complement $\R^3\setminus K(S^1)$. Then the category of locally constant factorization algebra on $\R^3_{K}$ is equivalent to the category of quadruples $(A,B, f,\rho)$ where $A$ is an $E_3$-algebras, $B$ is an $E_1$-algebra, $f:B\to B$ is a monodromy and $\rho: A\otimes B\to B$  is an action of $A$ onto $B$ (compatible with all the structures (that is making $B$ an object of $E_2\textbf{-Mod}_{A}(E_1\textbf{-Alg})$. We refer to~\cite{AFT} and~\cite{T-Houches} for details on the invariant of knots produced this way.
\end{example}

%%%%%%%%%%%%%%%%%%%%%
%%%%%%%%%%%%%%%%%%%%
\section{Applications of factorization algebras and homology} \label{S:Applications}
\subsection{Enveloping algebras of $E_n$-algebras and Hochschild cohomology of $E_n$-algebras}\label{SS:enveloping}
In this section we describe the universal enveloping algebra of an $E_n$-algebra  in terms of factorization algebras, and apply it to describe $E_n$-Hochschild cohomology.

\smallskip

Given an $E_n$-algebra $A$, we get a factorization algebra on $\R^n$ and thus on its submanifold $S^{n-1}\times \R$ (equipped with the induced framing); see Example~\ref{ex:framedisEn}. We can use the results of \S~\ref{S:stratifiedFact} to study the category of $E_n$-modules over $A$.

In particular, from Corollary~\ref{C:EnModasFact}, and Proposition~\ref{P:envelopFac}, we obtain the following corollary which was first proved by J. Francis~\cite{F}.
\begin{corollary}\label{C:envelopEn} Let $A$ be an $E_n$-algebra.
The functor $N_*: E_n\textbf{-Mod}_A \to E_1\textbf{-RMod}_{\int_{S^{n-1}\times \R} A}$ is an equivalence.

Similarly, the functor $(-N)_*: E_n\textbf{-Mod}_A \to E_1\textbf{-LMod}_{\int_{S^{n-1}\times \R} A}$ is an equivalence.
\end{corollary}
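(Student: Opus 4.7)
The plan is to assemble this as a direct consequence of Corollary~\ref{C:AEnModasFact} and Proposition~\ref{P:envelopFac}, with the only real content being the identification of the relevant $E_1$-algebra as factorization homology over $S^{n-1}\times\R$.

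First I would invoke Corollary~\ref{C:AEnModasFact} to replace the category $E_n\textbf{-Mod}_A$ by the homotopy fiber
\[
E_n\textbf{-Mod}_A \;\simeq\; \textbf{Fac}^{lc}_{\R^n_*}\times^{h}_{\textbf{Fac}^{lc}_{\R^n\setminus\{0\}}} \{A\},
\]
where $A$ on the right hand side is viewed as a locally constant factorization algebra on $\R^n$ (via Theorem~\ref{P:En=Fact}) and then restricted to $\R^n\setminus\{0\}$.

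Next, I would transport this fiber across the (framed) homeomorphism $\psi\colon \R^n\setminus\{0\}\xrightarrow{\simeq} S^{n-1}\times\R$ given by $x\mapsto (x/|x|,\log|x|)$. Since pushforward along a homeomorphism is an equivalence of categories of (locally constant) factorization algebras (Proposition~\ref{P:pushforwardlc}), setting $\mathcal{A}:=\psi_{*}(A_{|\R^n\setminus\{0\}})\in\textbf{Fac}^{lc}_{S^{n-1}\times\R}$ yields an equivalence between the above fiber and $\textbf{Fac}^{lc}_{\R^n_*}\times^{h}_{\textbf{Fac}^{lc}_{\R^n\setminus\{0\}}} \{\mathcal{A}\}$, where the latter is now interpreted via the identification provided by $\psi$. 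Then I would apply Proposition~\ref{P:envelopFac} directly: it gives an equivalence
\[
N_*\colon\;\textbf{Fac}^{lc}_{\R^n_*}\times^{h}_{\textbf{Fac}^{lc}_{\R^n\setminus\{0\}}} \{\mathcal{A}\}\;\xrightarrow{\simeq}\; E_1\textbf{-RMod}_{\mathcal{A}(S^{n-1}\times\R)}.
\]
Combining the three equivalences gives the claimed functor, after observing that $\mathcal{A}(S^{n-1}\times\R)=\int_{S^{n-1}\times\R}\mathcal{A}\simeq \int_{S^{n-1}\times\R}A$ by the very definition of factorization homology as global sections of the locally constant factorization algebra associated with $A$ (equation~\eqref{eq:facthom=Factalg}), and that under these identifications the composite functor coincides with pushforward along the norm $N\colon\R^n\to[0,+\infty)$, because $N=\mathrm{pr}_2\circ\psi$ on $\R^n\setminus\{0\}$ and the cone point $0\in\R^n$ is mapped to $0\in[0,+\infty)$.

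The $(-N)_*$ statement is obtained identically, replacing Proposition~\ref{P:envelopFac}'s first bullet by its second. The only mildly subtle step, and the one that requires a bit of care, is verifying commutativity of the relevant diagram so that the composite equivalence agrees with $N_*$ on the nose (and not merely abstractly); this is a naturality check using that both equivalences in Corollary~\ref{C:AEnModasFact} and Proposition~\ref{P:envelopFac} are constructed by restricting to, and then reassembling from, a factorizing basis of (stratified) disks that is compatible with the norm projection. Everything else is formal once the fiber-product description of $E_n\textbf{-Mod}_A$ is in place.
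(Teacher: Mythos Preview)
Your proof is correct and follows exactly the route the paper indicates: the corollary is stated there as an immediate consequence of Corollary~\ref{C:EnModasFact} (equivalently~\ref{C:AEnModasFact}) and Proposition~\ref{P:envelopFac}, and you have simply made explicit the identification $\R^n\setminus\{0\}\cong S^{n-1}\times\R$ and the computation $\mathcal{A}(S^{n-1}\times\R)\cong\int_{S^{n-1}\times\R}A$ that the paper leaves implicit. One harmless slip: with your choice $\psi(x)=(x/|x|,\log|x|)$ one has $\mathrm{pr}_2\circ\psi=\log\circ N$ rather than $N$, but since $\log\colon(0,\infty)\to\R$ is a homeomorphism this does not affect the argument.
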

We call \emph{$\int_{S^{n-1}\times \R} A$ the universal ($E_1$-)enveloping algebra of the $E_n$-algebra $A$}. 

A virtue of Corollary~\ref{C:envelopEn} is that it reduces the homological algebra aspects in the category of $E_n$-modules to standard homological algebra in the category of modules over a differential graded algebra  (given by any strict model of $\int_{S^{n-1}\times \R}A$).
\begin{remark}\label{R:EinftyModasFact}
Corollary~\ref{C:EnModasFact} remains true for $n=\infty$ (and follows from the above study, see~\cite{GTZ3}), in which case, since $S^\infty$ is contractible, it boils down to the following result:
\begin{proposition}[\cite{L-HA}, \cite{KM}]\label{P:EinftyasFact} Let $A$ be an $E_\infty$-algebra. There is an natural equivalence of $\infty$-categories $E_\infty\textbf{-Mod}_A \cong E_1\textbf{-RMod}_{A}$ (where in the right hand side, $A$ is identified with its underlying  $E_1$-algebra).
\end{proposition}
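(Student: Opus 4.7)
The plan is to deduce Proposition~\ref{P:EinftyasFact} as the $n = \infty$ limit of Corollary~\ref{C:envelopEn}, by computing the universal $E_1$-enveloping algebra $\int_{S^{n-1}\times \R}A$ of an $E_\infty$-algebra and showing that it reduces to $A$ itself. Concretely, I would model $E_\infty\textbf{-Alg}$ as the homotopy limit of the tower $\cdots \to \Disk^{fr}_{n+1}\textbf{-Alg} \to \Disk^{fr}_n\textbf{-Alg} \to \cdots$ induced by the tower of operad maps $E_n \to E_{n+1}$, so that an $E_\infty$-algebra structure on $A$ refines, coherently in $n$, all of its $E_n$-structures (this is exactly the content of Example~\ref{E:ComisDisk}).

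First I would rerun the argument of \S~\ref{SS:EnModasFac} (Theorem~\ref{P:EnModasFact} and Corollary~\ref{C:envelopEn}) in the $E_\infty$ setting. Compatibility along $n$ upgrades the functor $\psi : E_n\textbf{-Mod}_A \to \textbf{Fac}^{lc}_{\R^n_*}\times^h_{\textbf{Fac}^{lc}_{\R^n\setminus\{0\}}}\{A\}$ to a functor on $E_\infty\textbf{-Mod}_A$ landing in the homotopy limit of the relevant stratified factorization categories; by Corollary~\ref{C:AEnModasFact} each stage is an equivalence, so the limit gives an equivalence $E_\infty\textbf{-Mod}_A \simeq \varprojlim_n \big(E_1\textbf{-RMod}_{\int_{S^{n-1}\times\R}A}\big)$ via the pushforward along the norm map $N: \R^n \to [0,+\infty)$.

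Next I would compute the enveloping $E_1$-algebra $\int_{S^{n-1}\times\R}A$. Since $A$ is $E_\infty$, Theorem~\ref{T:Fact=CH} identifies factorization homology with derived Hochschild chains, and the exponential law~\eqref{eq:CHprod=prodCH} combined with contractibility of $\R$ (so that $CH_\R(A)\simeq CH_{pt}(A)\simeq A$) gives a natural equivalence
\[
\int_{S^{n-1}\times \R}A \;\simeq\; CH_{S^{n-1}\times \R}(A) \;\simeq\; CH_{S^{n-1}}(A)
\]
in $E_\infty\textbf{-Alg}$. The transition maps in the tower are induced by the equatorial inclusions $S^{n-1}\hookrightarrow S^n$, so, passing to the limit and using that $S^\infty = \colim_n S^{n-1}$ is contractible together with homotopy invariance of $CH$ (axiom i) of Definition~\ref{D:axioms}), we obtain $\varprojlim_n CH_{S^{n-1}}(A) \simeq CH_{pt}(A) \simeq A$, naturally in $A$.

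Finally I would verify that this equivalence matches $E_1$-algebra structures: the $E_1$-structure on $\int_{S^{n-1}\times\R}A$ comes from stacking collars in the $\R$-direction (Lemma~\ref{L:homthMfldisEn}), and the identifications above transport it to the $E_1$-structure on $A$ underlying its $E_\infty$-structure, because the stacking map $\R\sqcup\R\hookrightarrow\R$ on the right factor matches the canonical $E_\infty$ multiplication on $CH_{pt}(A)\simeq A$ under the exponential law. Combining the two equivalences yields $E_\infty\textbf{-Mod}_A \simeq E_1\textbf{-RMod}_A$ as claimed. The main obstacle I expect is the last point: checking that the $E_1$-algebra equivalence $\int_{S^{n-1}\times\R}A \simeq A$ is natural enough (as $n$ varies) to identify the enveloping $E_1$-structure with the underlying $E_1$-structure of $A$; this coherence is exactly what one extracts from the proof in~\cite{GTZ3} and, independently, is the content of~\cite[\S 5.1.2]{L-HA} and~\cite{KM}.
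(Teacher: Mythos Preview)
The paper does not actually prove this proposition; it is stated with references to \cite{L-HA} and \cite{KM}, and the surrounding remark offers only the one-line hint that Corollary~\ref{C:envelopEn} remains valid for $n=\infty$ and that, since $S^\infty$ is contractible, the universal enveloping $E_1$-algebra $\int_{S^{\infty-1}\times\R}A$ collapses to $A$. Your proposal is precisely an attempt to flesh out that hint, and the overall strategy is the right one.

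Two points deserve attention. First, a direction error: you write $\varprojlim_n CH_{S^{n-1}}(A)\simeq CH_{pt}(A)$, but $CH$ is covariant in spaces and $S^\infty=\colim_n S^{n-1}$, so you need the \emph{colimit} $\varinjlim_n CH_{S^{n-1}}(A)\simeq CH_{S^\infty}(A)\simeq A$. Correspondingly, the transition maps on module categories are restrictions along $E_1$-algebra maps $\int_{S^{n-1}\times\R}A\to\int_{S^n\times\R}A$, so the tower of module categories is an inverse system while the tower of enveloping algebras is a direct system; you then need the (true but not entirely free) fact that $\varprojlim_n E_1\textbf{-RMod}_{B_n}\simeq E_1\textbf{-RMod}_{\colim_n B_n}$ for a filtered system of $E_1$-algebras.

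Second, and more structurally: routing the argument through the tower over finite $n$ introduces exactly the coherence headaches you flag at the end (compatibility of the equivalences $E_n\textbf{-Mod}_A\simeq E_1\textbf{-RMod}_{\int_{S^{n-1}\times\R}A}$ as $n$ varies, and the identification $E_\infty\textbf{-Mod}_A\simeq\varprojlim_n E_n\textbf{-Mod}_A$). The paper's hint suggests a cleaner path: run the proof of Corollary~\ref{C:envelopEn} once, directly at $n=\infty$. The pointed-disk and norm-pushforward arguments of \S\ref{SS:EnModasFac} go through verbatim for $\R^\infty$ (using that disks there are contractible and the norm still lands in $[0,+\infty)$), yielding $E_\infty\textbf{-Mod}_A\simeq E_1\textbf{-RMod}_{\int_{S^\infty\times\R}A}$ in one stroke; contractibility of $S^\infty$ and Theorem~\ref{T:Fact=CH} then give $\int_{S^\infty\times\R}A\simeq A$ as $E_1$-algebras. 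This bypasses the tower entirely and is what the paper's remark is pointing at.
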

\end{remark}

\begin{example}
Let $A$ be a smooth commutative algebra (or the sheaf of functions of a smooth scheme or manifold) viewed as an $E_n$-algebra. Then, by Theorem~\ref{T:HKRhigher}  and Theorem~\ref{T:Fact=CH},   we have 

\begin{proposition}  For $n\geq 2$, there is an equivalence $$ E_n\textbf{-Mod}_A \; \cong \; E_1\textbf{-RMod}_{S^\bullet_{A}(\Omega_A^1[n-1])}.$$
\end{proposition}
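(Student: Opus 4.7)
The plan is to combine Corollary~\ref{C:envelopEn}, which identifies the $\infty$-category of $E_n$-modules over $A$ with right modules over the $E_1$-enveloping algebra $\int_{S^{n-1}\times \R} A$, with an explicit HKR-style computation of this enveloping algebra.

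First, since $A$ is commutative, Theorem~\ref{T:Fact=CH} gives a natural equivalence $\int_{S^{n-1}\times \R} A \cong CH_{S^{n-1}\times \R}(A)$ in $E_\infty\textbf{-Alg}$. The projection $S^{n-1}\times \R \to S^{n-1}$ is a homotopy equivalence, and derived Hochschild chains are invariant under weak homotopy equivalences (either directly from $CH_X(A) \cong A\boxtimes X$ in Theorem~\ref{T:derivedfunctor}, or from axiom i) of Definition~\ref{D:axioms} applied via the excision/monoidal characterization). This yields a natural $E_\infty$-algebra equivalence
$$\int_{S^{n-1}\times \R} A \;\cong\; CH_{S^{n-1}}(A).$$
Applying the generalized HKR Theorem~\ref{T:HKRhigher}(1), which requires exactly $n\geq 2$ so that $n-1\geq 1$, we obtain a further equivalence
$$CH_{S^{n-1}}(A) \;\cong\; S^\bullet_A\big(\Omega^1_A[n-1]\big)$$
of CDGAs. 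Composing, the enveloping $E_1$-algebra $\int_{S^{n-1}\times \R} A$ is equivalent, as an $E_\infty$-algebra, to $S^\bullet_A\big(\Omega^1_A[n-1]\big)$. Forgetting to the underlying $E_1$-structure and invoking Corollary~\ref{C:envelopEn} gives
$$E_n\textbf{-Mod}_A \;\cong\; E_1\textbf{-RMod}_{\int_{S^{n-1}\times \R} A} \;\cong\; E_1\textbf{-RMod}_{S^\bullet_{A}(\Omega_A^1[n-1])},$$
which is the claim.

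The one subtle point — and the only place that requires any care — is the compatibility of algebra structures at each step. The equivalence from Corollary~\ref{C:envelopEn} is an equivalence of $\infty$-categories of right modules over $E_1$-algebras, so what is actually needed is an equivalence of $E_1$-algebras between $\int_{S^{n-1}\times \R}A$ and $S^\bullet_A(\Omega^1_A[n-1])$. Because $A$ is commutative, factorization homology lifts to $E_\infty$-algebras by Theorem~\ref{T:Fact=CH}, and the HKR equivalence of Theorem~\ref{T:HKRhigher}(1) is also one of CDGAs; the $E_1$-structure relevant to Corollary~\ref{C:envelopEn} is recovered by restricting along $E_\infty\to E_1$, so everything matches on the nose. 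No further obstacle arises.
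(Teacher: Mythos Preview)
Your proof is correct and follows precisely the route the paper takes: the paper's argument is the terse sentence ``by Theorem~\ref{T:HKRhigher} and Theorem~\ref{T:Fact=CH}'' immediately after stating Corollary~\ref{C:envelopEn}, and you have simply unpacked that chain of equivalences (including the homotopy invariance step reducing $S^{n-1}\times\R$ to $S^{n-1}$) and made the compatibility of $E_1$-structures explicit.
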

The  right hand side is just a category of modules over a graded commutative algebra. If $A=\mathcal{O}_X$, then one thus has an equivalence between
 $E_n\textbf{-Mod}_{\mathcal{O}_X}$ and right graded modules over ${\mathcal{O}_{T_X[1-n]}}$, the functions on the graded tangent space of $X$.
\end{example}

\begin{example}
Let $A$ be an $E_n$-algebra. It  is  canonically a $E_n$-module  over itself; thus by Corollary~\ref{C:envelopEn} it has a structure of right module over $\int_{S^{n-1}\times \R} A$.  
The later  has an easy geometrict description.  Indeed, by the dimension axiom, $A\cong \int_{\R^n} A$. The euclidean norm gives the trivialization $S^{n-1}\times (0,+\infty)$ of the end(s) of $\R^n$ so that, by Lemma~\ref{L:homthMfldisEn}, $\int_{\R^n}A$ has a canonical structure of right module over $\int_{S^{n-1}\times (0,+\infty)} A$.
\end{example}

 Let us consider the example of an $n$-fold loop space. Let $Y$ be an $n$-connective pointed space ($n\geq 0$) and let $A=C_*(\Omega^n(Y))$ be the associated $E_n$-algebra. By non-abelian Poincar\'e duality (Theorem~\ref{T:nonabelianPoincareduality}) we have an equivalence
$$ \int_{S^{n-1}\times \R}A \,\cong\, C_*\big( \Map_c(S^{n-1}\times \R, Y)\big)\,\cong \, C_*\Big(\Omega \Big( Y^{S^{n-1}}\Big)\Big).$$ 
By Corollary~\ref{C:envelopEn}  we get
\begin{corollary}\label{C:Enmodfornfoldloop}
The category of $E_n$-modules over $C_*(\Omega^n(Y))$ is equivalent to the category of right modules over $C_*\Big(\Omega\Big(Y^{S^{n-1}}\Big)\Big)$.
\end{corollary}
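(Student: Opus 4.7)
The plan is to chain together two results already established in the preceding text: the enveloping algebra description of $E_n$-modules (Corollary~\ref{C:envelopEn}) and the non-abelian Poincar\'e duality theorem (Theorem~\ref{T:nonabelianPoincareduality}). Setting $A=C_*(\Omega^n Y)$, Corollary~\ref{C:envelopEn} immediately gives an equivalence of $\infty$-categories
\[ E_n\textbf{-Mod}_{C_*(\Omega^n Y)} \;\cong\; E_1\textbf{-RMod}_{\int_{S^{n-1}\times \R} C_*(\Omega^n Y)}, \]
so it suffices to identify the enveloping $E_1$-algebra $\int_{S^{n-1}\times \R} C_*(\Omega^n Y)$ with $C_*(\Omega(Y^{S^{n-1}}))$ as an $E_1$-algebra.

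Next, since $Y$ is $n$-connective it is in particular $(n-1)$-connective, and $S^{n-1}\times \R$ is a manifold of dimension $n$, so Theorem~\ref{T:nonabelianPoincareduality} applies and yields a natural equivalence
\[ \int_{S^{n-1}\times \R} C_*(\Omega^n Y) \;\cong\; C_*\bigl(\Map_c(S^{n-1}\times \R,\, Y)\bigr). \]
I would then identify the underlying space via the exponential law: because $S^{n-1}$ is compact, a compactly supported map $S^{n-1}\times \R \to Y$ is precisely a compactly supported map $\R \to Y^{S^{n-1}}$, where $Y^{S^{n-1}}$ is based at the constant map to $y_0$. Such maps are the same as based maps $S^1\to Y^{S^{n-1}}$, giving a natural homeomorphism $\Map_c(S^{n-1}\times \R, Y) \cong \Omega(Y^{S^{n-1}})$.

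The remaining point, which I would treat as the main subtlety, is to check that this homeomorphism respects the $E_1$-algebra structures. On the left, the $E_1$-structure on $\int_{S^{n-1}\times \R} C_*(\Omega^n Y)$ is the one coming from Lemma~\ref{L:homthMfldisEn} applied to the trivialized end of $S^{n-1}\times \R$, which, under the equivalence of Theorem~\ref{T:nonabelianPoincareduality}, corresponds to concatenation of compactly supported maps along the $\R$-direction (push maps into disjoint sub-intervals and reparametrize). Under the exponential law, this $\R$-concatenation is precisely the Moore/loop composition in $\Omega(Y^{S^{n-1}})$. Hence both $E_1$-structures agree, and combining the three displayed equivalences completes the proof. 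The hardest part is really just bookkeeping the naturality of non-abelian Poincar\'e duality with respect to the $E_1$-structure witnessed by the collar $S^{n-1}\times \R$, but this naturality is exactly what makes Corollary~\ref{C:envelopEn} an equivalence of $\infty$-categories rather than merely of underlying objects.
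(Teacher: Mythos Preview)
Your proposal is correct and follows essentially the same approach as the paper: the paper also derives the corollary by combining Corollary~\ref{C:envelopEn} with non-abelian Poincar\'e duality (Theorem~\ref{T:nonabelianPoincareduality}) and the identification $\Map_c(S^{n-1}\times\R,Y)\cong\Omega(Y^{S^{n-1}})$. You are simply more explicit about the $E_1$-compatibility, which the paper leaves implicit.
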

The algebra  $C_*\Big(\Omega\Big(Y^{S^{n-1}}\Big)\Big)$ in Corollary~\ref{C:Enmodfornfoldloop} is computed by the cobar construction of the differential graded coalgebra $C_*\big(\Map(S^{n-1},Y)\big)$. If $Y$ is of finite type, $n-1$-connected and the ground ring $k$ is a field of characteristic zero, the latter is  the linear dual of the commutative differential graded algebra $CH_{S^{n-1}}(\Omega_{dR}^*(Y))$ where $\Omega_{dR}^*(Y)$ is  (by Theorem~\ref{T:IteratedIntegralsTop}) the differential graded algebra of Sullivan polynomial forms on $Y$.
 In that case, the structure can be computed using rational homotopy techniques.
\begin{example} Assume $Y=S^{2m+1}$, with $2m\geq n$. Then, $Y$ has a Sullivan model given by the CDGA $S(y)$, with  $|y|=2m+1$. 
By Theorem~\ref{T:HKRhigher} and Theorem~\ref{T:IteratedIntegralsTop}, 
$C_*(\Map(S^{n-1},S^{2m+1}))$ is equivalent to the cofree cocommutative coalgebra $S(u,v)$ with $|u|=-1-2m$ and $|v|=n-2m-2$. 
By Corollary~\ref{C:Enmodfornfoldloop} and Corollary~\ref{C:iteratedloopbar} we find that the category of $E_n$-modules over $C_*(\Omega^n(S^{2m+1})$ is equivalent to the category of right modules over the graded commutative algebra $S(a,b)$ where $|u|=-2m-2$ and $|v|=n-2m-3$. 
\end{example}

There is an natural notion of cohomology  for $E_n$-algebras which generalizes Hochschild cohomology of associative algebras. It plays the same role with respect to deformations of $E_n$-algebras as Hochschild cohomology plays with respect to deformations of associatives algebras.
\begin{definition}
\label{D:EnHoch}
Let $M$ be an $E_n$-module over an $E_{n}$-algebra $A$.
The $E_n$-Hochschild cohomology\footnote{which is an object of $\hkmod$} of $A$ with values in $M$, denoted
by $HH_{E_n}(A, M)$, is by definition (see~\cite{F}) 
$\RHom_{A}^{E_n}(A, M)$ (Definition~\ref{DEnModoverA}).
\end{definition}

\begin{corollary}\label{P:coHH=coTCH} 
Let $A$ be an $E_n$-algebra, and $M$, $N$ be $E_n$-modules over $A$. 
\begin{enumerate}\item There is a canonical equivalence $$\RHom_{A}^{E_n}(M,N)\cong \RHom_{\int_{S^{n-1}\times \R} A}^{\text{left}}(M,N)$$ where the right hand side are  homomorphisms of left modules (Definition~\ref{D:LandRMod}).
\item In particular $HH_{E_n}(A,M)\cong  \RHom_{\int_{S^{n-1}\times \R} A}^{\text{left}}(A,M)$.
\item If $A$ is a CDGA (or $E_\infty$-algebra) and $M$ is a left module over $A$, then  $$HH_{E_n}(A,M)\cong \RHom_{A}^{E_n}(A,M)\cong CH^{S^n}(A,M).$$
\end{enumerate}
\end{corollary}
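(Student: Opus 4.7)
The plan is to deduce everything from the equivalence of $\infty$-categories
$(-N)_*\colon E_n\textbf{-Mod}_A\stackrel{\simeq}\to E_1\textbf{-LMod}_{\int_{S^{n-1}\times \R}A}$ provided by Corollary~\ref{C:envelopEn} (and the parallel $N_*$-version for right modules). Since an equivalence of $\infty$-categories induces, by definition, equivalences on mapping spaces, it yields equivalences between the derived $\Hom$-complexes on both sides. Applying this to the pair $(M,N)\in E_n\textbf{-Mod}_A$ (both viewed on the left-module side through $(-N)_*$) immediately gives the canonical equivalence in (1). Part (2) is then just the special case $M=A$ of (1), combined with Definition~\ref{D:EnHoch} and the observation that, under $(-N)_*$, the $E_n$-module $A$ corresponds to the left $\int_{S^{n-1}\times\R}A$-module $\int_{\R^n}A\cong A$ (coming from Lemma~\ref{L:homthMfldisEn} applied to the trivialized end of $\R^n\cong D^n\cup S^{n-1}\times(0,+\infty)$).

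For part (3), assume $A$ is a CDGA (or $E_\infty$-algebra), so that $A$ defines a (locally constant) factorization algebra on every manifold (Example~\ref{E:EinftygivesNDisk}) and factorization homology agrees with derived Hochschild chains by Theorem~\ref{T:Fact=CH}. First identify the enveloping algebra: $\int_{S^{n-1}\times\R}A\cong CH_{S^{n-1}\times\R}(A)\cong CH_{S^{n-1}}(A)$, where the last equivalence follows from the exponential law~\eqref{eq:CHprod=prodCH} together with $CH_\R(A)\cong A$ (axiom i) of Definition~\ref{D:axioms} plus homotopy invariance). Next, using the excision axiom of Definition~\ref{D:axioms} applied to the homotopy pushout $S^n\simeq D^n\cup^h_{S^{n-1}}D^n$, one obtains
\begin{equation*}
CH_{S^n}(A)\;\cong\;CH_{D^n}(A)\mathop{\otimes}_{CH_{S^{n-1}}(A)}^{\mathbb{L}}CH_{D^n}(A)\;\cong\;A\mathop{\otimes}_{CH_{S^{n-1}}(A)}^{\mathbb{L}}A,
\end{equation*}
where $CH_{S^{n-1}}(A)$ acts on the two copies of $A$ via the base-point maps. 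Tensor-hom adjunction then gives
\begin{equation*}
\RHom^{\text{left}}_A\!\big(CH_{S^n}(A),M\big)\;\cong\;\RHom^{\text{left}}_{CH_{S^{n-1}}(A)}(A,M)\;\cong\;\RHom^{\text{left}}_{\int_{S^{n-1}\times\R}A}(A,M),
\end{equation*}
which by (2) is $HH_{E_n}(A,M)$, and by Definition~\ref{D-coHoch} is $CH^{S^n}(A,M)$.

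The main obstacle is a bookkeeping one rather than a structural one: one must verify that all of the left $\int_{S^{n-1}\times\R}A$-module structures match. Namely, under the equivalence of Corollary~\ref{C:envelopEn}, the canonical $E_n$-module structure of $A$ over itself must correspond to the left $CH_{S^{n-1}}(A)$-module structure on $A\cong CH_{D^n}(A)$ induced by the inclusion $S^{n-1}\hookrightarrow D^n$; and the two factors of $A$ in the excision decomposition of $CH_{S^n}(A)$ must receive the module structures coming from the two inclusions of $D^n$ into $S^n$. Both of these checks follow from naturality of the pushforward $(-N)_*$ with respect to open embeddings and the symmetric monoidal functoriality of derived Hochschild chains (Lemma~\ref{L:monoidalimpliesCom} and Theorem~\ref{T:derivedfunctor}), so no further computation beyond tracing definitions is required.
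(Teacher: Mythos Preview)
Your proof is correct and follows essentially the same route as the paper: parts (1) and (2) are immediate from the categorical equivalence of Corollary~\ref{C:envelopEn}, and part (3) is the same chain of identifications via Theorem~\ref{T:Fact=CH}, excision for $S^n\simeq D^n\cup^h_{S^{n-1}}D^n$, and tensor--hom adjunction. Your explicit attention to matching the left $\int_{S^{n-1}\times\R}A$-module structures on $A$ is a welcome addition that the paper's proof leaves implicit.
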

\begin{proof}The first two points follows from from Corollary~\ref{C:envelopEn}.
The last one follows from Theorem~\ref{T:Fact=CH} which yields equivalences
\begin{multline}\label{eq:equivCHSnHHEn} 
 \RHom_{\int_{S^{n-1}\times \R}A}^{\text{left}}(A, M)
\,\cong \, \RHom_{CH_{S^{n-1}}(A)}^{left}\left({CH_{\R^n}(A)}, M\right) \\
\, \cong \,  \RHom_A^{left}\left( CH_{\R^n}(A)\otimes_{CH_{S^{n-1}}(A)}^{\mathbb{L}} A   , M\right)\\
\, \cong\,  \RHom_A^{left}\left( CH_{S^n}(A), M\right)
\, \cong \, CH^{S^n}(A,M)
\end{multline} when $A$ is an $E_\infty$-algebra.
\qed\end{proof}
\begin{example}
Let $A$ be a smooth commutative algebra and $M$ a symmetric $A$-bimodule. By the HKR Theorem  (see Theorem~\ref{T:HKRhigher}), one has $CH_{S^d}(A)\cong S_{A}^\bullet(\Omega^1_A[d])$  which is a projective $A$-module since $A$ is smooth. Thus,  Corollary~\ref{P:coHH=coTCH} implies $$HH_{E_d}(A,M)\cong S_{A}^\bullet(\text{Der}(A, M)[-d]).$$
\end{example}
 We now explain  the relationship in between  $E_n$-Hochschild cohomology and deformation of $E_n$-algebras.  
Denote $E_n\textbf{-Alg}_{|A}$ the $\infty$-category  of $E_n$-algebras over $A$.  The bifunctor 
 of $E_n$-derivations $\text{Der}:(E_n\textbf{-Alg}_{|A})^{op}\times E_n\textbf{-Mod}_A\to \hkmod$ is defined as  $$ \text{Der}(R,N):= \Map_{E_n\textbf{-Alg}_{|A}}(R, A\oplus N).$$
The (absolute) \emph{cotangent complex} of $A$ (as an $E_n$-algebra) is the value on $A$ of the left adjoint of the split square zero extension functor  $E_n\textbf{-Mod}_A\ni M\mapsto A\oplus M \in E_n\textbf{-Alg}_{|A}$. 
 In other words, there is an natural equivalence $$\Map_{E_n\textbf{-Mod}_A}(L_A,-) \stackrel{\simeq}\longrightarrow \text{Der}(A,-)$$ of functors.
The (absolute) \emph{tangent complex of $A$} (as an $E_n$-algebra) is the dual of $L_A$ (as an $E_n$-module):
$$T_A:= \RHom_{A}^{E_n}(L_A, A) \cong  \RHom_{\int_{S^{n-1}\times \R}A}^{\text{left}}(L_A,A).$$
The tangent complex has a structure of an (homotopy) Lie algebra which controls the deformation of $A$ as an $E_n$-algebra (that is, its deformations  are precisely given by the solutions of Maurer-Cartan equations in $T_A$). Indeed, 
 Francis~\cite{F} has proved the following beautiful result which solve (and generalize) a conjecture of Kontsevich~\cite{K-operad}.
 His proof relies heavily on factorization homology and in particular on the excision property to identify $E_n\textbf{-Mod}_A$ with the $E_{n-1}$-Hochschild homology of $ E_1\textbf{-LMod}_A$ which is a $E_{n-1}$-monoidal category.
 \begin{theorem}[\cite{F}]\label{T:tangentcomplexEn}Let $A$ be an $E_n$-algebra and $T_A$ be its tangent complex. There is a fiber sequence of non-unital $E_{n+1}$-algebras
$$ A[-1]\longrightarrow T_A[-n] \longrightarrow HH_{E_n}(A)$$ inducing a fiber sequence of (homotopy) Lie algebras
$$ A[n-1]\longrightarrow T_A \longrightarrow HH_{E_n}(A)[n]$$ after suspension.
\end{theorem}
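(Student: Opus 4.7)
The plan is to deduce the theorem from a fiber sequence of $E_n$-$A$-modules via the identification of $E_n$-modules with modules over the $E_1$-enveloping algebra, and then to extract the non-unital $E_{n+1}$-algebra structure from the higher Deligne conjecture together with the fact that fibers of maps of non-unital $E_{n+1}$-algebras are again non-unital $E_{n+1}$-algebras.

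First I would use Corollary~\ref{C:envelopEn} to identify $E_n\textbf{-Mod}_A \simeq E_1\textbf{-LMod}_{U}$, where $U := \int_{S^{n-1}\times \R} A$ is the $E_1$-enveloping algebra. The inclusion of an end $S^{n-1}\times (0,+\infty) \hookrightarrow \R^n\setminus \{0\} \hookrightarrow \R^n$ induces, by functoriality of factorization homology, an $E_1$-algebra \emph{augmentation} $\varepsilon : U \to \int_{\R^n} A \simeq A$. Under the equivalence $E_n\textbf{-Mod}_A \simeq E_1\textbf{-LMod}_{U}$, the canonical $E_n$-module $A$ corresponds to $A$ regarded as a left $U$-module via $\varepsilon$, while $U$ itself (as a module over itself) corresponds to a ``free'' $E_n$-$A$-module whose mapping space into any $M \in E_n\textbf{-Mod}_A$ evaluates back to $M$, i.e.\ $\RHom_A^{E_n}(U, M) \simeq M$.

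The technical core of the argument is the claim that there is a fiber sequence of left $U$-modules
\[
L_A[n-1]\longrightarrow U \stackrel{\varepsilon}{\longrightarrow} A,
\]
where $L_A$ is the $E_n$-cotangent complex of $A$. For $n=1$ this is the classical bimodule sequence $\Omega^1_A \to A\otimes A^{op} \to A$. For general $n$, one identifies the fiber of $\varepsilon$ with $L_A[n-1]$ by matching universal properties: mapping the fiber into an $E_n$-$A$-module $M$ should recover $E_n$-derivations of $A$ into $M$ up to a shift $[1-n]$ dictated by the dimension of the linking sphere. Concretely, I would interpret the three terms as stratified locally constant factorization algebras on the pointed disk $\R^n_*$ via Corollary~\ref{C:AEnModasFact}, compute the fiber of $\varepsilon$ as a relative factorization-homology construction on the pair $(\R^n, S^{n-1}\times \R)$, and extract the $[n-1]$ shift from a Poincar\'e-Lefschetz/Thom-type duality for the unit normal bundle of the origin.

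Granting this module-level fiber sequence, I would apply $\RHom_A^{E_n}(-,A) \simeq \RHom_U^{\mathrm{left}}(-,A)$, getting a fiber sequence in $\hkmod$
\[
HH_{E_n}(A)= \RHom_A^{E_n}(A,A)\longrightarrow A \simeq \RHom_A^{E_n}(U,A)\longrightarrow T_A[1-n]
\]
which rotates to $A[-1] \to T_A[-n] \to HH_{E_n}(A)$. For the multiplicative upgrade: by the higher Deligne conjecture (Theorem~\ref{T:Deligne}), $HH_{E_n}(A)$ is the $E_n$-center of $A$ and is canonically an $E_{n+1}$-algebra, and the evaluation-at-the-unit map $HH_{E_n}(A)\to A$ is a morphism of non-unital $E_{n+1}$-algebras. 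Its fiber $T_A[-n]$ therefore inherits a non-unital $E_{n+1}$-algebra structure, yielding the first displayed sequence of the theorem. Finally, applying the canonical $[n]$-shifted Lie-algebra functor on non-unital $E_{n+1}$-algebras (the $E_{n+1}$-bracket lives in degree $-n$) and shifting by $[n]$ produces the fiber sequence of (homotopy) Lie algebras $A[n-1]\to T_A\to HH_{E_n}(A)[n]$. The main obstacle I anticipate is the identification of the fiber of $\varepsilon$ with $L_A[n-1]$: this is trivial for $n=1$ but in higher dimensions requires a genuine factorization-homology argument (or, equivalently, an operadic computation of $L_A$ in terms of Koszul duality for $E_n$), and is the geometric heart of the result. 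A secondary but important issue is verifying that the whole construction is performed coherently in $E_{n+1}$-algebras, not merely on underlying complexes; this reduces to checking naturality of Deligne's $E_{n+1}$-structure with respect to the evaluation map.
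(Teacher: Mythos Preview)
The paper does not itself prove this theorem: it is attributed to Francis~\cite{F}, and the only indication given about the argument is the sentence preceding the statement, namely that the proof ``relies heavily on factorization homology and in particular on the excision property to identify $E_n\textbf{-Mod}_A$ with the $E_{n-1}$-Hochschild homology of $E_1\textbf{-LMod}_A$ which is a $E_{n-1}$-monoidal category.'' Your module-level skeleton---the fiber sequence $L_A[n-1]\to U\to A$ over the enveloping algebra $U=\int_{S^{n-1}\times\R}A$, followed by $\RHom_U(-,A)$---is indeed how Francis obtains the underlying fiber sequence in $\hkmod$, and you are right that identifying $\mathrm{fib}(\varepsilon)$ with $L_A[n-1]$ is the geometric core.

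There is, however, a genuine gap in your multiplicative upgrade. You claim that the evaluation map $HH_{E_n}(A)\to A$ is a morphism of non-unital $E_{n+1}$-algebras and then take its fiber in that category. But $A$ is only an $E_n$-algebra: the Deligne-with-action statement (Corollary~\ref{C:ActionDeligne}) makes $A$ a \emph{module} over the $E_{n+1}$-algebra $HH_{E_n}(A)$ inside $E_n\textbf{-Alg}$, not an $E_{n+1}$-algebra receiving an $E_{n+1}$-map. Hence you cannot form the fiber in non-unital $E_{n+1}$-algebras at this step, and rotating a fiber triangle of chain complexes does not manufacture $E_{n+1}$-algebra structures on the connecting map. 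This is precisely where the categorical input the paper alludes to enters in Francis's proof: one uses that $E_1\textbf{-LMod}_A$ is $E_{n-1}$-monoidal and that $E_n\textbf{-Mod}_A$ arises as its $E_{n-1}$-center/Hochschild construction, so that both $T_A[-n]$ and $HH_{E_n}(A)$, together with the comparison map between them, are produced \emph{simultaneously} as endomorphism objects in $E_n$-monoidal $\infty$-categories; Dunn's theorem then yields the non-unital $E_{n+1}$-structures and the map compatibly. Your outline would need to be rerouted through this categorical identification (rather than through the map to $A$) to obtain the $E_{n+1}$-level statement.
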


\subsection{Centralizers and  (higher) Deligne conjectures}\label{S:maincentralizers}
We will here sketch applications of factorization algebras to study \emph{centralizers} and solve the (relative and higher) Deligne conjecture.

The following definition is due to Lurie~\cite{L-HA} 
(and generalize the notion of center of a category due to Drinfeld). 
\begin{definition}\label{D:center}
The (derived) centralizer of an $E_n$-algebra map $f:A\to B$ is the \emph{universal} $E_n$-algebra $\mathfrak{z}_n(f)$ equipped  with a map of $E_n$-algebras $e_{\mathfrak{z}_n(f)}: A\otimes \mathfrak{z}_{n}(f)\to B$  making the following diagram
\begin{equation}
\label{eq:Defcenter} \xymatrix{ & A\otimes \mathfrak{z}_{n}(f) \ar@{.>}[rd]^{e_{\mathfrak{z}_{n}(f)}} & \\
A \ar[ru]^{id\otimes 1_{\mathfrak{z}_{n}(f)}} \ar[rr]^{f} && B  }
\end{equation}
commutative in $E_n$-Alg. The \emph{(derived) center} of an $E_n$-algebra $A$ is the centralizer $\mathfrak{z}_{n}(A):=\mathfrak{z}_{n}(A\stackrel{id}\to A)$ of the identity map.
\end{definition}
The existence of the derived centralizer $\mathfrak{z}_{n}(f)$ of   an $E_n$-algebra map $f:A\to B$ is a \emph{non-trivial} result of Lurie~\cite{L-HA}. 
The universal property means that if $C\stackrel{\varphi}\to B$ is an $E_n$-algebra map fitting inside a commutative diagram   
\begin{equation}\label{eq:Csatisfiescenter}
\xymatrix{ & A\otimes C \ar[rd]^{\varphi} & \\
A \ar[ru]^{id\otimes 1_{C}} \ar[rr]^{f} && B  }
\end{equation}
then there is a unique\footnote{up to a contractible space of choices} factorization $\varphi: A\otimes C\stackrel{id\otimes \kappa}\dashrightarrow A\otimes \mathfrak{z}_{n}(f) \stackrel{e_{\mathfrak{z}_{n}(f)}}\longrightarrow B$ of $\varphi$ by an $E_n$-algebra map $\kappa:C\to \mathfrak{z}_{n}(f)$. In  particular,  the commutative diagram 
$$ \xymatrix{ && A\otimes  \mathfrak{z}_{n}(f) \otimes \mathfrak{z}_{n}(g) \ar[rd]^{e_{\mathfrak{z}_{n}(f)}\otimes id} && \\
& A\otimes \mathfrak{z}_{n}(f)\ar[ru]^{id\otimes 1_{\mathfrak{z}_{n}(g)}} \ar[rd]^{e_{\mathfrak{z}_{n}(f)}} &   & B\otimes \mathfrak{z}_{n}(g) \ar[rd]^{e_{\mathfrak{z}_{n}(g)}} & \\
A \ar[ru]^{id\otimes 1_{\mathfrak{z}_{n}(f)}} \ar[rr]^{f} && B \ar[ru]^{id\otimes 1_{\mathfrak{z}_{n}(g)}} \ar[rr]^{g} && C  } $$
induces natural
maps of $E_n$-algebras 
\begin{equation}
\label{eq:z(circ)} \mathfrak{z}_{n}(\circ): \mathfrak{z}_{n}(f)\otimes \mathfrak{z}_{n}(g) \longrightarrow \mathfrak{z}_{n}(g\circ f) .
\end{equation}

\begin{example} Let $M$ be a  monoid (for instance a group). That is an $E_1$-algebra in the (discrete) category of sets with cartesian product for monoidal structure. Then $\mathfrak{z}_1(M)=Z(M)$ is the usual  center $\{m\in M, \forall n\in M, n\cdot m =m\cdot n\}$ of $M$. 
Let $f: H\hookrightarrow G$ be the inclusion of a subgroup in a group $G$. Then $\mathfrak{z}_1(f)$ is the usual centralizer of the subgroup $H$ in $G$. This examples explain the name centralizer.

Similarly, let $k\textit{-Mod}$ be the (discrete) category of $k$-vector spaces over a field $k$. Then an $E_1$-algebra in $k\textit{-Mod}$ is an associative algebra and $\mathfrak{z}_1(A)=Z(A)$ is its usual (non-derived) center. 
However, if one sees $A$ as an $E_1$-algebra in the $\infty$-category $\hkmod$ of chain complexes, then $\mathfrak{z}_1(A)\cong \RHom_{A\otimes A^{op}}(A,A)$ is (computed by) the usual Hochschild cochain complex (\cite{Lo-book}) in which  the usual center embeds naturally, but  is  different from it even when $A$ is commutative. 
\end{example}

 Let $A\stackrel{f}\to B$ be an $E_n$-algebra map. 
Then $B$ \emph{ inherits a} canonical \emph{structure of $E_n$-$A$-module, denoted $B_f$}, which is the pullback along $f$ of the tautological $E_n$-$B$-module structure on $B$.

The relative Deligne conjecture claims that the centralizer is computed by  $E_n$-Hochschild cohomology. 
\begin{theorem}[Relative Deligne conjecture]\label{T:RelativeDeligne} Let $A\stackrel{f}\to B$ and $B\stackrel{g}\to C$ be  maps of $E_n$-algebras.
\begin{enumerate}
\item There is an $E_n$-algebra structure on $HH_{E_n}(A,B_{f})\cong \RHom_{A}^{E_n}(A,B_{f})$ which makes $HH_{E_n}(A,B_{f})$ the centralizer $\mathfrak{z}_n(f)$ of $f$ (in particular, $\mathfrak{z}(f)$ exists);
\item the diagram
$$\xymatrix{    \mathfrak{z}_{n}(f) \otimes \mathfrak{z}_{n}(g) \ar[rr]^{\mathfrak{z_n}(\circ)} &&  \mathfrak{z}_{n}(g\circ f)    \\
 \RHom_{A}^{E_n}(A,B_{f})\otimes  \RHom_{B}^{E_n}(B,C_{g}) \ar[u]^{\cong} \ar[rr]^{\circ} &&  \RHom_{A}^{E_n}(A,C_{g\circ f})  \ar[u]_{\cong}} $$ is commutative in $E_{n}\textbf{-Alg}$ (where the lower arrow is induced by composition of maps  in $E_n\textbf{-Mod}$).
\item If $A\stackrel{f}\to B$ is a map of $E_\infty$-algebras, then there is an equivalence of $E_n$-algebras $\mathfrak{z}_n\cong CH^{S^n}(A, B_f)$ where $CH^{S^n}(A,B)$ is endowed with the structure given by Proposition~\ref{T:EdHoch}.
\end{enumerate}
\end{theorem}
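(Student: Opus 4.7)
The plan is to leverage the stratified factorization algebra description of $E_n$-algebra maps from Corollary~\ref{C:EnAlgmapgivesfactonDnhat}. Given an $E_n$-algebra map $f: A\to B$, the factorization algebra $\widehat{\omega}(f)\in \textbf{Fac}^{lc}_{\widehat{\R^n}}$ takes value $A$ on disks in the smooth part and $B$ on disks around the special stratum $\infty$. I will show that the value $\mathfrak{Z}:=\widehat{\omega}(f)(D_\infty)$ on a small disk $D_\infty$ around $\infty$ is canonically an $E_n$-algebra (by Lemma~\ref{L:homthMfldisEn}, since $D_\infty\setminus\{\infty\}\simeq S^{n-1}\times\mathbb{R}$ provides framed ends) and identify it with $HH_{E_n}(A,B_f)$. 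Pushing $\widehat{\omega}(f)$ forward along the norm $N:\R^n\to[0,+\infty)$ and invoking Proposition~\ref{P:BarInterval} reduces $\mathfrak{Z}$ to a relative tensor product which, via the enveloping algebra equivalence of Corollary~\ref{C:envelopEn}, is precisely the left-$\int_{S^{n-1}\times\R}A$-module Hom $\RHom^{\text{left}}_{\int_{S^{n-1}\times\R}A}(A,B_f)\cong HH_{E_n}(A,B_f)$ of Corollary~\ref{P:coHH=coTCH}.

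Step 2 is to verify the universal property~\eqref{eq:Defcenter}. The canonical map $A\otimes \mathfrak{Z}\to B$ comes from the factorization algebra structure itself: a small disk $U\subset \R^n$ and the disk $D_\infty$ around $\infty$ embed disjointly into a larger disk $D'_\infty$ exhausting almost all of $\widehat{\R^n}$, so the structure map $\widehat{\omega}(f)(U)\otimes \widehat{\omega}(f)(D_\infty)\to \widehat{\omega}(f)(D'_\infty)$ together with the identification $\widehat{\omega}(f)(D'_\infty)\simeq B$ produces the desired evaluation. For universality, given any $E_n$-algebra $C$ with an extension $\varphi:A\otimes C\to B$ as in~\eqref{eq:Csatisfiescenter}, the pair $(A,\varphi)$ canonically produces another object of $\textbf{Fac}^{lc}_{\widehat{\R^n}}$ with $C$ at $\infty$, and factoriality (with the fact that $\widehat{\omega}(f)$ is the object of $\textbf{Fac}^{lc}_{\widehat{\R^n}}$ universally extending the $E_n$-$A$-module $B_f$) provides the unique factorization $\kappa:C\to\mathfrak{Z}$ on the stalk at $\infty$.

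Step 3, the composition compatibility, is obtained by considering a three-region stratification of $\widehat{\R^n}$, with $A$ sitting on a small disk, $B$ on an intermediate annular region, and $C$ on a disk around $\infty$. The resulting factorization algebra encodes both $\mathfrak{z}_n(g\circ f)$ (as a disk around $\infty$ after collapsing the two inner regions) and the tensor product $\mathfrak{z}_n(f)\otimes\mathfrak{z}_n(g)$ (as the product of the middle-annular and $\infty$-disk evaluations), and the required commutativity is then an instance of the associativity axiom~\eqref{eq:associativityPFact} for different nestings of disks within $\widehat{\R^n}$. For Step 4 (the $E_\infty$-case), the underlying equivalence $HH_{E_n}(A,B_f)\cong CH^{S^n}(A,B_f)$ already follows from~\eqref{eq:equivCHSnHHEn} and Theorem~\ref{T:Fact=CH}. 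To match the two $E_n$-structures, I would exhibit both as arising from the same factorization algebra: in the $E_\infty$-case, Example~\ref{E:EinftygivesNDisk} and Theorem~\ref{T:Fact=CH} allow us to compute $\widehat{\omega}(f)(D_\infty)$ by collapsing the complement of a disk in $\widehat{\R^n}$ to a point, producing precisely $CH^{S^n}(A,B_f)$ with its pinch-map $E_n$-structure from Proposition~\ref{T:EdHoch} (naturally compatible under the operad map $\text{Cube}_n\to \Emb^{(X,e)}$ acting on both sides).

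The main obstacle will be Step 2: verifying the universal property rigorously requires showing that $\widehat{\omega}$ defines a suitable adjunction between the $\infty$-groupoid $\Hom_{E_n\textbf{-Alg}}(A,B)$ and factorization algebras on $\widehat{\R^n}$ extending a fixed $A$, with $\mathfrak{Z}$ realizing the universal such extension. Concretely one needs to control the full mapping space of $\widehat{\R^n}$-factorization algebras and identify it via the stratified recognition principle (an analogue of Theorem~\ref{T:Theorem6GTZ2} in the stratified setting) with a space of $E_n$-algebra morphisms; alternatively, one can bypass this by invoking Francis' tangent-complex computation (Theorem~\ref{T:tangentcomplexEn}) or Lurie's cobar-type construction of centralizers, which give the universal property more directly.
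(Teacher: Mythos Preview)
There is a genuine gap in Step~1, and it propagates through the rest. The stratified factorization algebra $\widehat{\omega}(f)$ does assign $A$ on the open stratum and the $A$-module $B_f$ near $\infty$; consequently $\mathfrak{Z}=\widehat{\omega}(f)(D_\infty)$ is a \emph{module}-type object, not a mapping space. Concretely, $\tau^{-1}(D_\infty)$ is a collar $S^{n-1}\times(1-\epsilon,1]$ of $\partial D^n$, and $\omega_{D^n}(f)$ evaluated there yields $\int_{S^{n-1}\times\R}B$ (for $n=1$: two disjoint half-intervals, giving $B\otimes B^{op}$), not $HH_{E_n}(A,B_f)$. Your norm-pushforward together with Proposition~\ref{P:BarInterval} expresses the \emph{global sections} of $\widehat{\omega}(f)$ over $\widehat{\R^n}$ as a relative tensor product $A\otimes^{\mathbb{L}}_{\int_{S^{n-1}\times\R}A}\mathfrak{Z}$; it does not compute $\mathfrak{Z}$ itself, and in any event a relative tensor product is not an $\RHom$. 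Corollary~\ref{C:envelopEn} is an equivalence of module \emph{categories} and does not interchange the functors $-\otimes^{\mathbb{L}}_R-$ and $\RHom^{\text{left}}_R(-,-)$, which are adjoint rather than equal. Factorization homology is a colimit construction; the centralizer is a limit (mapping-space) construction, and nothing in your argument bridges that distinction. Lemma~\ref{L:homthMfldisEn} is also misapplied: it supplies an $E_s$-structure on $\mathcal{F}(N\times\R^s)$ for a homology theory $\mathcal{F}$ on unstratified manifolds, not an $E_n$-structure on the stalk of a stratified factorization algebra at its singular point.

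The paper takes a different route precisely to land on a Hom-type object. It builds a \emph{new} locally constant factorization algebra on $\R^n$ (not on $\widehat{\R^n}$) whose value on a convex open $U$ with chosen center $x_U$ is the mapping space $\RHom^{\textbf{Fac}_U}_{A_{|U}}(A_{|U},{B_f}_{|U})$ of factorization-algebra morphisms agreeing with $f$ on $U\setminus\{x_U\}$; via Corollary~\ref{C:EnModasFact} and Theorem~\ref{P:En=Fact} this is exactly $\RHom_A^{E_n}(A,B_f)$ on every such $U$. The $E_n$-multiplication is patching: given $g_1,\dots,g_r$ on disjoint convex $U_i\subset V$, one manufactures a map on $V$ equal to $g_i$ near each $x_{U_i}$ and to $f$ elsewhere (Figure~\ref{fig:algebramap}). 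Universality is then checked directly by adjointing a test map $\varphi:A\otimes C\to B$ to $\theta_\varphi:C\to\RHom(A,B)$ and observing that commutativity of diagram~\eqref{eq:Csatisfiescenter} forces $\theta_\varphi$ to factor through $\RHom_A^{E_n}(A,B)$; uniqueness comes from the evident retraction. Your Steps~2--4 have the right architectural shape \emph{once the correct object is in hand}, but as written they sit on a misidentification of $\mathfrak{Z}$.
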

\begin{sketchofproof} This result is proved in~\cite{GTZ3} (the techniques of~\cite{F} shall also give an independent proof) and we only briefly sketch the main point of the argument. We first define an $E_n$-algebra structure on $\RHom_{A}^{E_n}(A,B_{f})$. 
By Corollary~\ref{C:EnModasFact} and Theorem~\ref{P:En=Fact}, we can assume that   $A$, $B$ are  factorization algebras  on $\R^n$ and that a morphism of modules is a map of the underlying (stratified) factorization algebras. 
We are left to prove that there is a locally constant factorization algebra structure on $\R^n$ whose global sections are $HH_{E_n}(A,B_{f})\cong \RHom_{A}^{E_n}(A,B_{f})$. 
It is enough to define it on the basis of convex open subsets $\mathcal{CV}$ of $\R^n$ (by Proposition~\ref{P:extensionfrombasis}).
 To any convex open set $U$ (with central point $x_U$), we associate the chain complex $\RHom_{A_{|U}}^{\textbf{Fac}_U}(A_{|U}, {B_f}_{|U})$ of factorization algebras morphisms from the restrictions $A_{|U}$ to $B_{|U}$ which, on the restriction to $U\setminus \{x_U\}$ are given by $f$. Note that since $U$ is convex, there is a quasi-isomorphism  
\begin{equation}\label{eq:definerhocenter} \RHom_{A_{|U}}^{\textbf{Fac}_U}(A_{|U}, {B_f}_{|U})\cong  \RHom_{A}^{E_n}(A,B_{f})= \RHom_{A_{|\R^n}}^{\textbf{Fac}_{\R^n}}(A_{|\R^n}, {B_f}_{|\R^n}).\end{equation} Now, given convex sets $U_1,\dots, U_r$ which are pairwise disjoint inside a bigger convex $V$, we define a map  
\begin{equation*}\rho_{U_1,\dots, U_r, V}:\bigotimes_{i=1\dots r}\RHom_{A_{|U_i}}^{\textbf{Fac}_{U_i}}(A_{|U_i}, {B_f}_{|U_i}) \longrightarrow \RHom_{A_{|V}}^{\textbf{Fac}_V}(A_{|V}, {B_f}_{|V})\end{equation*} as follows. To define $\rho_{U_1,\dots, U_r, V}(g_1,\dots, g_r)$, we need to define a factorization algebra map on $V$ and for this, it is enough to do it on the open set consisting of convex subsets of $V$ which either are included in one of the $U_i$ and contains $x_{U_i}$ or else does not contains any $x_{U_i}$. To each open set $x_{U_i}\in D_i\subset U_i$ of the first kind, we define $$\rho_{U_1,\dots, U_r, V}(g_1,\dots, g_r)(D_i): A(D_i) \stackrel{g_i}\longrightarrow B_f(D_i)$$ to be given by $g_i$, while for any open set $D\subset V\setminus\{x_{U_1},\dots, x_{|U_r}\}$, 
 we define $$\rho_{U_1,\dots, U_r, V}(g_1,\dots, g_r)(D): A(D) \stackrel{f}\longrightarrow B_f(D)$$ 
to be given by $f$. The conditions that $g_1,\dots, g_r$ are maps of $E_n$-modules over $A(U_i)$ ensures that $\rho_{U_1,\dots, U_r, V}$ define the structure maps of a factorization algebra which is further locally constant since  $\rho_{U,\R^n}$ is the equivalence~\eqref{eq:definerhocenter}. 

 The construction  is roughly described in Figure~\ref{fig:algebramap}.
\begin{figure}[b]
\begin{minipage}[c]{0.6\textwidth} \includegraphics[scale=0.45]{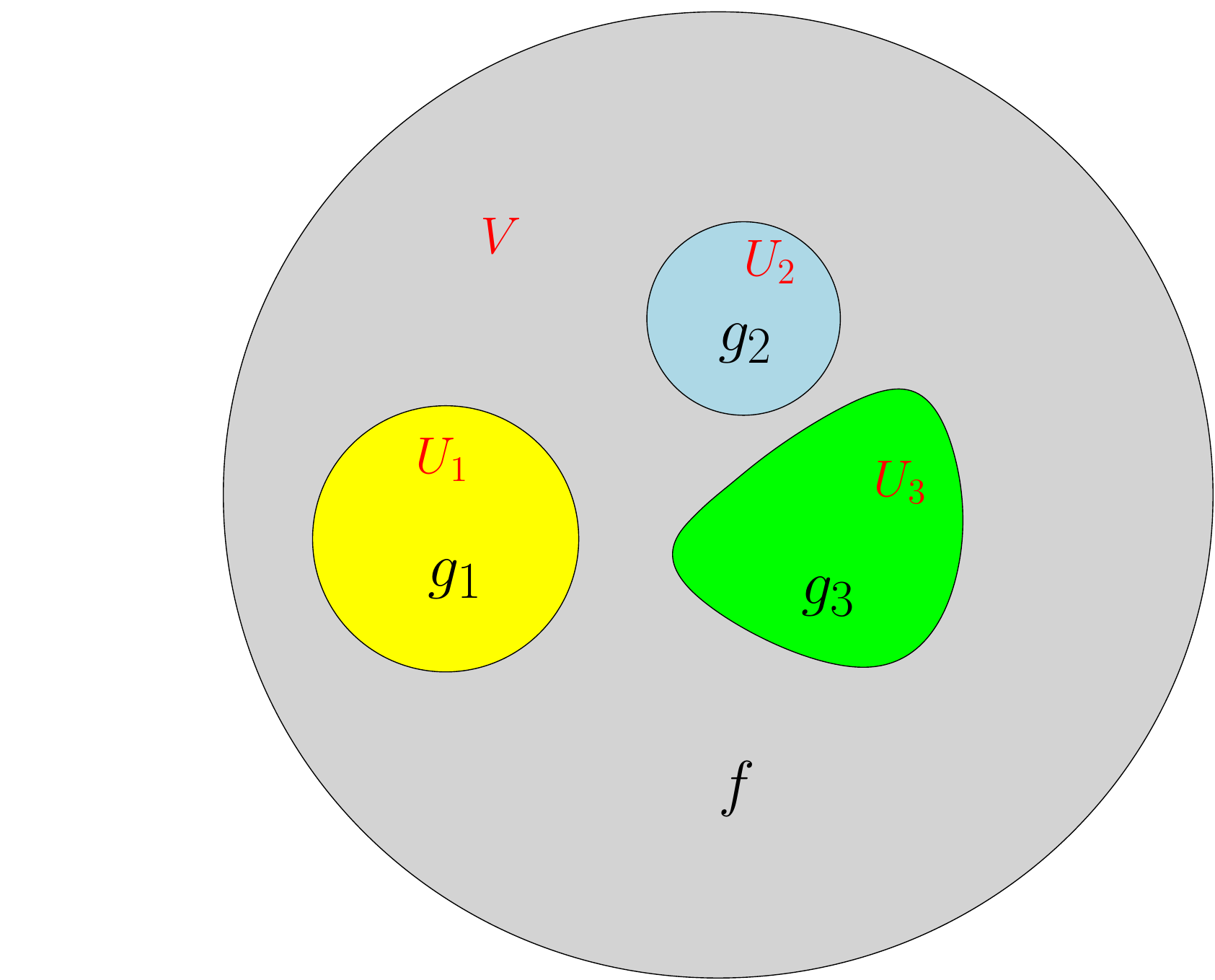}\end{minipage} \hfill \begin{minipage}[c]{0.37\textwidth}
\caption{The factorization algebra map $A_{|V} \to B_{|V}$ obtained by applying the relevant maps of  modules $g_1, g_2, g_3$ (viewed as maps of factorizations algebras) and the $E_n$-algebra map $f:A\to B$ on the respective regions}\label{fig:algebramap}\end{minipage}
\end{figure}
One can check that  the natural evaluation map
 $\mathop{eval}: A\otimes\RHom_{A}^{E_n}(A,B_{f})\to B$ is a map of $E_n$-algebras.  

 Now let  $C$ be an $E_n$-algebra fitting in the commutative diagram~\eqref{eq:Csatisfiescenter}, which we again identify with a factorization algebra map (over $\R^n$). 
By  adjunction (in $\hkmod$), the map $\varphi: A\otimes C\to B$ has a (derived) adjoint 
$\theta_{\varphi}: C\to RHom(A,B)$. Since $\varphi$ is a map of factorization algebras 
and diagram~\eqref{eq:Defcenter}  is commutative, one check that $\theta_{\varphi}$ factors through a map 
\begin{equation}\label{eq:factorthetaphi}
 \tilde{\theta_{\varphi}} : C \longrightarrow RHom_{A}^{{E}_n}(A,B)\cong CH^{S^n}(A,B).
\end{equation}
which can be proved to be a map of factorization algebras. Further, by definition of $\theta_{\varphi}$,  the identity 
$$\mathop{eval} \circ \big(id_A\otimes \theta_{\varphi} \big)\;=\; \varphi$$ holds.
The uniqueness of the map $\tilde{\theta_\varphi}$ follows  from the fact that the composition 
\begin{equation*}
 RHom_{A}^{{E}_n}(A,B) 
\stackrel{1_{RHom_A^{{E}_n}(A,A)}\otimes id}\longrightarrow 
 RHom_A^{{E}_n}\Big(A,A\otimes RHom_{A}^{{E}_n}(A,B)\Big)
\stackrel{ev_*}\longrightarrow RHom_{A}^{{E}_n}(A,B)
\end{equation*}
is the identity map. 

Finally the equivalence between $\mathfrak{z}_n(f)$ and $CH^{S^{n}}(A,B_f)$ in the commutative case follows from the string of equivalences~\eqref{eq:equivCHSnHHEn} which can be checked to be an equivalence of $E_n$-algebras using diagram~\eqref{eq:diagEn=Fact2} connecting algebras over the operads of little rectangles of dimension $n$ and factorization algebras.
\qed\end{sketchofproof}

\begin{example}\label{R:derivedcompositionasusualcomposition} 
Let $A\stackrel{f}\to B$ be a map of CDGAs.  then by Proposition~\ref{P:coHH=coTCH}  we have an equivalence
$HH_{E_n}(A,B) \cong \RHom_{CH_{\partial I^n}(A)}^{left}(CH_{I^n}(A), CH_{I^n}(B))$ where $I=[0,1]$ and $\partial I^n\cong S^{n-1}$ is the boundary of the unit cube $I^n$. We have a simplicial model $I_\bullet^n$ of $I^n$ where $I_\bullet$ is the standard model of  Example~\ref{ex:ptinterval}; its  boundary $\partial I_\bullet^n$ is a simplicial model for $S^{n-1}$. 
Then Theorem~\ref{T:RelativeDeligne} identifies the derived composition~\eqref{eq:z(circ)} as the usual composition (of left dg-modules)
\begin{multline*} Hom_{CH_{\partial I_\bullet^n}(A)}^{left}\left(CH_{ I^n_\bullet}(A), CH_{ I^n_\bullet}(B)\right)\otimes Hom_{CH_{\partial I_\bullet^n}(B)}^{left}\left(CH_{ I_\bullet^n}(B), CH_{ I_\bullet^n}(C)\right)\\
 \stackrel{\circ}\longrightarrow  Hom_{CH_{\partial I_\bullet^n}(A)}^{left}\left(CH_{ I_\bullet^n}(A), 
CH_{ I_\bullet^n}(C)\right). \end{multline*}
\end{example}

\smallskip

The relative Deligne conjecture implies easily the standard one and also the Swiss cheese conjecture. 
Indeed, 
 Theorem~\ref{T:RelativeDeligne} implies that the multiplication
$\mathfrak{z}_n(A)\otimes \mathfrak{z}_n( A) \stackrel{\mathfrak{z}_n(\circ)}\longrightarrow \mathfrak{z}_n(A)$
makes $\mathfrak{z}_n(A)$ into an $E_1$-algebra in the $\infty$-category $E_n\textbf{-Alg}$.
The $\infty$-category version of Dunn Theorem (Theorem~\ref{T:Dunn}) gives an equivalence
 $E_1\textbf{-Alg}\big(E_n\textbf{-Alg}\big)\cong E_{n+1}\textbf{-Alg}$. This yields the following solution to the higher Deligne conjecture, see~\cite{GTZ3, L-HA}.
\begin{corollary}[Higher Deligne Conjecture]\label{T:Deligne}  
 Let $A$ be an $E_n$-algebra. The $E_n$-Hochschild cohomology $HH_{E_n}(A,A)$ has an $E_{n+1}$-algebra structure lifting the Yoneda product  which further lifts the $E_n$-algebra structure of the centralizer $\mathfrak{z}(A\stackrel{id}\to A)$.
\end{corollary}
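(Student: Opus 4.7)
The plan is to deduce this corollary as a direct consequence of the relative Deligne conjecture (Theorem~\ref{T:RelativeDeligne}) combined with Dunn's theorem (Theorem~\ref{T:Dunn}). First I would apply Theorem~\ref{T:RelativeDeligne}(1) to the identity map $\mathrm{id}_A:A\to A$; this immediately yields that $HH_{E_n}(A,A)\cong \RHom_A^{E_n}(A,A)$ carries a canonical $E_n$-algebra structure realizing it as the derived center $\mathfrak{z}_n(A)$ with evaluation $e_{\mathfrak{z}_n(A)}:A\otimes \mathfrak{z}_n(A)\to A$. This settles the underlying $E_n$-structure and its identification with the centralizer.

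Next I would construct an $E_1$-algebra structure on $\mathfrak{z}_n(A)$ internal to the $\infty$-category $E_n\textbf{-Alg}$ using the composition maps~\eqref{eq:z(circ)} of Theorem~\ref{T:RelativeDeligne}(2). Specializing the natural map $\mathfrak{z}_n(\circ):\mathfrak{z}_n(f)\otimes \mathfrak{z}_n(g)\to \mathfrak{z}_n(g\circ f)$ to $f=g=\mathrm{id}_A$ gives a multiplication
\begin{equation*}
\mu_{\mathfrak{z}_n(A)}:\ \mathfrak{z}_n(A)\otimes \mathfrak{z}_n(A)\ \longrightarrow\ \mathfrak{z}_n(\mathrm{id}_A\circ \mathrm{id}_A)\ =\ \mathfrak{z}_n(A)
\end{equation*}
in $E_n\textbf{-Alg}$. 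The unit is induced by $1_{\mathfrak{z}_n(A)}:k\to \mathfrak{z}_n(A)$, and associativity follows by applying Theorem~\ref{T:RelativeDeligne}(2) to the triple composition $A\xrightarrow{\mathrm{id}}A\xrightarrow{\mathrm{id}}A\xrightarrow{\mathrm{id}}A$, where both bracketings produce the same composed map $\mathfrak{z}_n(A)^{\otimes 3}\to \mathfrak{z}_n(A)$ because composition in $E_n\textbf{-Mod}_A$ is strictly associative. The unitality is analogous, exploiting the fact that under the identification of part (3) of Theorem~\ref{T:RelativeDeligne}, the unit $1_{\mathfrak{z}_n(A)}$ corresponds to the identity endomorphism of $A$ as an $E_n$-module over itself. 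This upgrades $\mathfrak{z}_n(A)$ to an object of $E_1\textbf{-Alg}\bigl(E_n\textbf{-Alg}\bigr)$.

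Finally I would invoke the $\infty$-categorical Dunn theorem (Theorem~\ref{T:Dunn}) to get the equivalence $E_1\textbf{-Alg}\bigl(E_n\textbf{-Alg}\bigr)\simeq E_{n+1}\textbf{-Alg}$, thereby producing the desired $E_{n+1}$-algebra structure on $HH_{E_n}(A,A)$. To check that this structure lifts the Yoneda product, I would use the explicit description of $\mathfrak{z}_n(\circ)$ provided in the proof of Theorem~\ref{T:RelativeDeligne}: on global sections, $\mathfrak{z}_n(\circ)$ is induced by the composition of factorization algebra maps $A\to B$ and $B\to C$, which under Corollary~\ref{P:coHH=coTCH} and Corollary~\ref{C:envelopEn} corresponds exactly to the Yoneda composition of left modules over the enveloping algebra $\int_{S^{n-1}\times\R}A$ (compare Example~\ref{R:derivedcompositionasusualcomposition}).

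The only step that requires some care is verifying coherent associativity and unitality of $\mu_{\mathfrak{z}_n(A)}$ at the level of $\infty$-categories, rather than just up to homotopy; the main obstacle is producing the full simplicial diagram $\Delta^{op}\to E_n\textbf{-Alg}$ that encodes the $E_1$-structure. This is handled by iterating the universal property in Theorem~\ref{T:RelativeDeligne}(2) to $k$-fold compositions $A\xrightarrow{\mathrm{id}}\cdots\xrightarrow{\mathrm{id}}A$ and observing that the strict associativity of composition of $E_n$-module endomorphisms (in a model where $\mathfrak{z}_n(A)$ is represented as a mapping complex in $\textbf{Fac}^{lc}_{\R^n}$, as in the proof of Theorem~\ref{T:RelativeDeligne}) supplies all the needed higher coherences automatically.
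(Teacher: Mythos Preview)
Your proof is correct and follows essentially the same approach as the paper: apply Theorem~\ref{T:RelativeDeligne} to the identity map to realize $HH_{E_n}(A,A)$ as the center $\mathfrak{z}_n(A)$, use the composition map $\mathfrak{z}_n(\circ)$ to make it an $E_1$-algebra object in $E_n\textbf{-Alg}$, and then invoke Dunn's Theorem~\ref{T:Dunn}. You supply more detail on the coherence of the $E_1$-structure and the identification with the Yoneda product than the paper does (which simply asserts these and defers to~\cite{GTZ3, L-HA}), but the strategy is the same.
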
 
In particular, if $A$ is commutative,  there is a natural  $E_{n+1}$-algebra structure on $CH^{S^n}(A,A)\cong HH_{E_n}(A,A)$ whose underlying  $E_n$-algebra structure is the one given by Theorem~\ref{T:EdHoch}. Hence the underlying
$E_1$-algebra structure is given by the cup-product (Example~\ref{ex:cupsphere}). 

\begin{example} Let $\mathcal{C}$ be a monoidal (ordinary) category. Then the center $\mathfrak{z}_1(\mathcal{C})$ is in $E_2\textbf{-Alg}(\textit{Cat})$, that is a braided monoidal category. One can prove that  $\mathfrak{z}_1(\mathcal{C})$ is actually the Drinfeld center of $\mathcal{C}$, see~\cite{L-HA}.
\end{example}
\begin{remark}
Presumably, the $E_{n+1}$-structure on $HH_{E_n}(A)$ given by Corollary~\ref{T:Deligne} shall be closely related to the one given by Theorem~\ref{T:tangentcomplexEn}.
\end{remark}
\begin{example}\label{ex:z(1A)}
Let $1_A: k\to A$ be the unit of an $E_n$-algebra $A$. Then  $\mathfrak{z}_n(1_A)\cong A$ as an $E_n$-algebra.
The derived composition~\eqref{eq:z(circ)} yields canonical map\footnote{which is equivalent to $e_{\mathfrak{z}(1_A)}$} of $E_n$-algebras
$\mathfrak{z}_n(A)\otimes \mathfrak{z}_n(1_A) \longrightarrow \mathfrak{z}_n(1_A)$ which exhibits $A\cong \mathfrak{z}_n(1_A)$ as a right $E_1$-module over $\mathfrak{z}_n(A)\cong HH_{E_n}(A,A)$ in the category of $E_n$-algebras (by Theorem~\ref{T:Deligne}).
 Hence, in view of Example~\ref{ex:upperhalfplane}, we obtain, as an immediate corollary (see~\cite{Ca-HDR}), a proof of the Swiss-Cheese version of Deligne conjecture\footnote{originally proved, for a slightly different variant of the swiss cheese operad, in~\cite{Do-SC, Vo-SwissCheese}}:
\begin{corollary}[Deligne conjecture with action]\label{C:ActionDeligne} Let $A$ be an $E_n$-algebra. Then the pair $(HH_{E_n}(A,A), A)$ is canonically an object of $E_1\textbf{-RMod}(E_n\textbf{-Alg})$, that is $A$ has an natural action of the $E_{n+1}$-algebra $HH_{E_n}(A,A)$.
\end{corollary}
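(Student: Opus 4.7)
The plan is to assemble the action of $HH_{E_n}(A,A)$ on $A$ by specializing the associative composition law~\eqref{eq:z(circ)} of centralizers to the case where one of the two $E_n$-algebra maps is the unit map $1_A:k\to A$. First, by Example~\ref{ex:z(1A)}, we identify $\mathfrak{z}_n(1_A)\simeq A$ as an $E_n$-algebra, and by the higher Deligne conjecture (Corollary~\ref{T:Deligne}) we identify $\mathfrak{z}_n(A)\simeq HH_{E_n}(A,A)$ with its $E_{n+1}$-structure, whose underlying $E_n$-algebra is the centralizer of $\text{id}_A$. The composition map of centralizers applied to $A\stackrel{\text{id}}\to A$ and $k\stackrel{1_A}\to A$ then gives a canonical map of $E_n$-algebras
\[
\alpha:\ \mathfrak{z}_n(A)\otimes \mathfrak{z}_n(1_A)\ \stackrel{\mathfrak{z}_n(\circ)}\longrightarrow\ \mathfrak{z}_n(1_A\circ \text{id}_A)\ =\ \mathfrak{z}_n(1_A),
\]
that is a map $HH_{E_n}(A,A)\otimes A\to A$ in $E_n\textbf{-Alg}$.

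Next, the associativity and unitality of the $E_1$-action follow from the corresponding coherences for $\mathfrak{z}_n(\circ)$. For associativity, I would apply part~(2) of Theorem~\ref{T:RelativeDeligne} to the triple $(\text{id}_A,\text{id}_A,1_A)$, which yields the commutativity (in $E_n\textbf{-Alg}$) of the square expressing that the two ways of composing $\mathfrak{z}_n(A)\otimes\mathfrak{z}_n(A)\otimes\mathfrak{z}_n(1_A)\to\mathfrak{z}_n(1_A)$ agree. Unitality follows from the fact that $\mathfrak{z}_n(\text{id})\simeq k$ (since any $C$ mapping to $A$ equivariantly over $\text{id}_A$ gets a canonical map $C\to k$ from the universal property), and that composition with the unit element recovers the identity map. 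Together, these coherences promote $\alpha$ to the structure of a right $E_1$-module over $\mathfrak{z}_n(A)$ in $E_n\textbf{-Alg}$ in the sense of Example~\ref{ex:upperhalfplane}.

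Finally, to translate this into the statement of the corollary, I would invoke the $\infty$-categorical Dunn theorem (Theorem~\ref{T:Dunn}): the $E_1$-algebra structure on $\mathfrak{z}_n(A)$ internal to $E_n\textbf{-Alg}$ is precisely the lift of the cup product on $HH_{E_n}(A,A)$ to an $E_{n+1}$-structure provided by Corollary~\ref{T:Deligne}, so that the pair $(HH_{E_n}(A,A),A)$ lies in $E_1\textbf{-RMod}(E_n\textbf{-Alg})$ with exactly the $E_{n+1}$-structure on the first component identified in Corollary~\ref{T:Deligne}. Equivalently, in the stratified factorization algebra language of Proposition~\ref{P:halfplanepted}, the data $(A,\mathfrak{z}_n(A)=HH_{E_n}(A,A),\alpha)$ defines a locally constant factorization algebra on the stratified pointed half-plane $\tilde H\times \mathbb{R}^{n-1}$ (taking the $E_{n-1}$-Dunn direction into account), giving the desired swiss-cheese action.

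The main obstacle is not producing the action map $\alpha$ itself, which is an immediate specialization, but rather checking the higher coherences: one must verify that the $E_1$-structure on $\mathfrak{z}_n(A)$ as an object of $E_n\textbf{-Alg}$ produced by $\mathfrak{z}_n(\circ)$ is the same (and not just abstractly equivalent) $E_1$-structure which, under Dunn's theorem, refines the $E_n$-centralizer structure to the $E_{n+1}$-structure of Corollary~\ref{T:Deligne}. This is where I would need to carefully track, either via the factorization-algebra model used in the sketch of Theorem~\ref{T:RelativeDeligne} or via the universal property of centralizers, the compatibility between the associativity of $\mathfrak{z}_n(\circ)$ and the Boardman--Vogt tensor product $E_1\otimes E_n\simeq E_{n+1}$.
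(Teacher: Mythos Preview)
Your approach is exactly the paper's: specialize the composition map $\mathfrak{z}_n(\circ)$ to the pair $(\mathrm{id}_A, 1_A)$, use $\mathfrak{z}_n(1_A)\cong A$ and $\mathfrak{z}_n(\mathrm{id}_A)\cong HH_{E_n}(A,A)$, and invoke the associativity of $\mathfrak{z}_n(\circ)$ from Theorem~\ref{T:RelativeDeligne}.(2).

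Two corrections are needed. First, your composition $1_A\circ\mathrm{id}_A$ does not typecheck: the composable string is $k\stackrel{1_A}{\to}A\stackrel{\mathrm{id}_A}{\to}A$, so the relevant instance of~\eqref{eq:z(circ)} is $\mathfrak{z}_n(1_A)\otimes\mathfrak{z}_n(\mathrm{id}_A)\to\mathfrak{z}_n(\mathrm{id}_A\circ 1_A)=\mathfrak{z}_n(1_A)$, and likewise the associativity triple should be $(1_A,\mathrm{id}_A,\mathrm{id}_A)$ rather than $(\mathrm{id}_A,\mathrm{id}_A,1_A)$. Second, and more seriously, your unitality argument is wrong: $\mathfrak{z}_n(\mathrm{id}_A)$ is \emph{not} equivalent to $k$ --- it is the derived center $HH_{E_n}(A,A)$, which is typically far from trivial. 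Unitality of the module structure instead comes from the unit $1_{\mathfrak{z}_n(A)}:k\to\mathfrak{z}_n(A)$ (produced by the universal property of the centralizer applied to $C=k$ and $\varphi=\mathrm{id}_A$), together with the fact that $\mathfrak{z}_n(\circ)\circ(1_{\mathfrak{z}_n(A)}\otimes\mathrm{id})$ recovers the identity; this is the content of the last displayed composition in the sketch of proof of Theorem~\ref{T:RelativeDeligne}.
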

\end{example}

\begin{example}[(Higher homotopy) calculus again~\cite{Ca-HDR}]
 Let $A$ be an $E_n$-algebra.  Assume $n=0,1,3,7$, so that  $A$ defines canonically a   (locally constant) factorization algebra $A_{S^n}$ on the framed manifold $S^n$ (see   Example~\ref{ex:framedisEn}).
 Similarly  the $E_{n+1}$-algebra given by the higher Hochschild cohomology $HH_{E_n}(A,A)$ defines canonically a   (locally constant) factorization algebra on the manifold $S^n \times (0,\infty)$ endowed with the product framing. 
 
 The Deligne conjecture with action  (Corollary~\ref{C:ActionDeligne}) shows that $A$ is also  a left module over $HH_{E_n}(A,A)$.   
Thus, according to Proposition~\ref{P:envelopFac}, the pair    $(HH_{E_n}(A,A),A)$ yields a stratified locally constant factorization algebra $\mathcal{H}$ on  $D^{n+1}\setminus \{0\}$, the closed disk in which we have removed the origin.  

By Theorem~\ref{T:Theorem6GTZ2}, we have that $(A_{S^n})(\partial D^{n+1})\cong \int_{S^n} A$. Collapsing the boundary $\partial D^{n+1}$ to a point yields an adequatly stratified  map $\tau:D^{n+1}\setminus \{0\}\to \R^{n+1}_*$ so that $\mathcal{A}:=\tau_*(\mathcal{H})$ is stratified locally constant on $ \R^{n+1}_*$. 
It is further
$SO(n+1)$-equivariant. Together with example~\ref{ex:calculus}, 
 the above paragraph thus sketches a proof of the following fact:
 \begin{corollary}
  Let $A$ be an $E_n$-algebra and $n=0,1,3,7$. Then $A$ gives rise to an  $SO(n+1)$-equivariant  stratified locally constant factorization algebra $\mathcal{A}$ on the pointed disk $\R^{n+1}_*$ such that $\mathcal{A}(\R^{n+1})\cong \int_{S^n} A$ and for any sub-disk $D\subset \R^{n+1}\setminus\{0\}$, there is an natural (with respect to disk inclusions) equivalence of $E_{n+1}$-algebras $\mathcal{A}(D)\stackrel{\simeq}\to HH_{E_n}(A,A)$.
  
  In particular, for $n=1$,  we recover that the  pair $(HH_{E_1}(A,A), CH_{S^1}(A))$, given by Hochschild cohomology and Hochschild homology of an associative or $A_\infty$-algebra $A$, defines an homotopy calculus (see~\cite{KS, TT} or Example~\ref{ex:calculus}).
 \end{corollary}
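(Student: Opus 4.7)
\begin{sketchofproof}
The proof essentially assembles the tools developed in the previous sections. The plan is to construct $\mathcal{A}$ in two stages: first build a locally constant factorization algebra $\mathcal{H}$ on the punctured closed disk $D^{n+1}\setminus\{0\}$, and then push it forward along the collapse map $\tau:D^{n+1}\setminus\{0\}\to \R^{n+1}_*$ which crushes $\partial D^{n+1}$ to the base point.

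First I would exploit the parallelizability of $S^n$ for $n=0,1,3,7$ to turn $A$ into a locally constant factorization algebra $A_{S^n}$ on the framed sphere (Example~\ref{ex:framedisEn}); by Theorem~\ref{T:Theorem6GTZ2}, its global sections compute $\int_{S^n}A$. At the same time, the higher Deligne conjecture (Corollary~\ref{T:Deligne}) endows $HH_{E_n}(A,A)$ with an $E_{n+1}$-algebra structure, hence a locally constant factorization algebra on $\R^{n+1}$, and by restriction to the annulus $S^n\times(0,\infty)$ (product framing) a locally constant factorization algebra there. The refined Deligne conjecture with module action (Corollary~\ref{C:ActionDeligne}) then promotes $A$ to an $E_1$-right module (equivalently, by picking the right orientation of the radial direction, a left module) over $HH_{E_n}(A,A)$ inside the category of $E_n$-algebras, that is an object of $E_1\textbf{-RMod}(\textbf{Fac}^{lc}_{S^n})$.

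Applying Proposition~\ref{P:envelopFac} (the inverse of the pushforward $N_*$ along the radial norm) to this module-over-algebra datum then produces the desired locally constant stratified factorization algebra $\mathcal{H}$ on $D^{n+1}\setminus\{0\}$, whose restriction to the open annulus recovers the $E_{n+1}$-algebra $HH_{E_n}(A,A)$ on each sub-disk, and whose value on a collar of $\partial D^{n+1}$ recovers $A_{S^n}$. Because collapsing $\partial D^{n+1}$ to a point is adequately stratified, Proposition~\ref{P:pushforwardlcStrat} guarantees that $\mathcal{A}:=\tau_{*}(\mathcal{H})$ is locally constant on the pointed disk $\R^{n+1}_{*}$. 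The computation $\mathcal{A}(\R^{n+1})=\mathcal{H}(D^{n+1}\setminus\{0\})$ restricted against the collar part, together with Theorem~\ref{T:Theorem6GTZ2}, yields $\mathcal{A}(\R^{n+1})\simeq \int_{S^n}A$, and the assertion for sub-disks $D\subset \R^{n+1}\setminus\{0\}$ follows from local constancy and the explicit values on good annular neighborhoods.

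The main obstacle, and the step I would devote the most care to, is the $SO(n+1)$-equivariance. The rotation action on $\R^{n+1}_*$ fixes the origin and acts radially; on $D^{n+1}\setminus\{0\}$ it preserves the stratification and commutes with $\tau$, so equivariance of $\mathcal{A}$ reduces to equivariance of $\mathcal{H}$, which in turn reduces to the $SO(n+1)$-equivariance of the pair $(HH_{E_n}(A,A),A)$ as a module in $\textbf{Fac}^{lc}_{S^n}$. The action on the $E_{n+1}$-algebra $HH_{E_n}(A,A)$ comes from its description as the centralizer $\mathfrak{z}_{n+1}(1_A)$ in the $SO(n+1)$-equivariant framework of factorization algebras over $\R^{n+1}$ (the factorization-algebra construction of Theorem~\ref{T:RelativeDeligne} is manifestly rotation invariant), while the action on $A_{S^n}$ comes from the Lie-group framing of $S^n$ for $n=0,1,3,7$. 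One must check that the module structure produced by Corollary~\ref{C:ActionDeligne} is compatible with both, which can be done by revisiting the construction of the evaluation map $\mathfrak{z}_{n+1}(A)\otimes \mathfrak{z}_{n+1}(1_A)\to \mathfrak{z}_{n+1}(1_A)$ inside the $SO(n+1)$-equivariant category of factorization algebras on $\R^{n+1}_*$. Finally the specialization $n=1$ is immediate from Example~\ref{ex:calculus}, since the data of an $SO(2)$-equivariant locally constant factorization algebra on $\R^2_*$ with $E_2$-part $HH_{E_1}(A,A)$ and pointed global sections $CH_{S^1}(A)$ is precisely the definition of a homotopy calculus structure in the sense of Tamarkin--Tsygan.
\end{sketchofproof}
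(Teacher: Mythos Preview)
Your proposal is correct and follows essentially the same route as the paper: build $\mathcal{H}$ on $D^{n+1}\setminus\{0\}$ from the pair $(HH_{E_n}(A,A),A)$ via Corollary~\ref{C:ActionDeligne} and Proposition~\ref{P:envelopFac}, then push forward along the collapse map $\tau$ using Proposition~\ref{P:pushforwardlcStrat}. In fact you give more detail on the $SO(n+1)$-equivariance step than the paper does (it merely asserts ``It is further $SO(n+1)$-equivariant''), and your caution there is well placed since this is indeed the least routine part of the argument.
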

 This corollary is proved in details (using indeed factorization homology techniques) in the interesting paper~\cite{Hor} along with many other examples in which $D^n$ is replaced by other framed manifold.
\end{example}

\subsection{Higher string topology}
The formalism of factorization homology for CDGAs and higher Deligne conjecture was applied in~\cite{G-HHnote, GTZ3} to higher string topology which we now explain briefly. We also refer to the work~\cite{Hu, HKV} for a related approach.

Let $M$ be a closed oriented manifold,  equipped with a Riemannian metric. String topology is about the algebraic structure of the chains and homology of the free loop space $LM:=\Map(S^1, M)$ and its higher free sphere spaces $M^{S^n}:= \Map(S^n, M)$. These spaces  have Fr\'echet manifold structures and there is a submersion $ev: M^{S^n}\to M$ given by evaluating at a chosen base point in $S^n$. 
The canonical embedding $  Map(S^n\vee S^n,M)\stackrel{\rho_{in}} \longrightarrow
   Map(S^n,M)\times Map(S^n,M) $ has an oriented normal bundle\footnote{which can be obtained as a pullback along $ev$ of the normal bundle of the diagonal $M\to M\times M$}.
It follows that there is a Gysin map 
$(\rho_{in})_!: H_\ast\Big(M^{S^n\coprod S^n} \Big) \longrightarrow H_{\ast -\dim(M)}\Big( M^{S^n\vee S^n}\Big) $.
The pinching map $\delta_{S^n}:S^n\to S^n\vee S^n$ (obtained by collapsing the equator to a point) yields 
the map $\delta_{S^n}^*: \Map(S^n\vee S^n, M) \longrightarrow M^{S^n}$.
The \emph{sphere product} is the composition 
\begin{multline}\label{D:sphereproduct}\star_{S^n}: H_{\ast+\dim(M)}\Big(M^{S^n}\Big)^{\otimes 2}\to H_{\ast+2\dim(M)}\Big(M^{S^n\coprod S^n}\Big)\\
\stackrel{(\rho_{in})_!}\longrightarrow H_{\ast+\dim(M)}\Big(M^{S^n\vee S^n}\Big)\stackrel{(\delta_{S^n}^*)_*}\longrightarrow H_{\ast+\dim(M)}\Big(M^{S^n}\Big). \end{multline} 
The circle action on itself induces an action $\gamma: LM\times S^1\to LM$.
\begin{theorem} \label{T:overviewstring}\begin{enumerate}
\item \textbf{(Chas-Sullivan~\cite{CS})} Let $\Delta: H_\ast(LM)\stackrel{\times [S^1]}\longrightarrow H_{\ast+1}(LM\times S^1) \stackrel{\gamma_*}\longrightarrow H_{\ast+1}(LM)$ be induced by the $S^1$-action. Then $(H_{\ast+\dim(M)}(LM), \star_{S^1}, \Delta)$ is a Batalin Vilkoviski-algebra and in particular a $P_2$-algebra.
\item \textbf{(Sullivan-Voronov~\cite{CV})}  $(H_{\ast+\dim(M)}(LM), \star_{S^n})$ is a graded commutative algebra.
\item \textbf{(Costello~\cite{Co-CY}, Lurie~\cite{L-TFT})} If $M$ is simply connected, the chains $C_{\ast}(LM)[\dim(M)]$ have a structure of $E_2$-algebra (and actually of $\Disk_2^{or}$-algebra) which, in characteristic $0$, induces Chas-Sullivan $P_2$-structure in homology (by~\cite{FT}). 
\end{enumerate}
\end{theorem}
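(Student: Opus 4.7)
My plan is to derive part (3) via the higher Deligne conjecture combined with Poincar\'e duality, and to indicate that (1) and (2) either follow from (3) (in characteristic zero) or can be established independently by the classical Pontrjagin-Thom/Gysin construction. For (1) and (2) I would simply record that the normal bundle of $\Map(S^n\vee S^n,M)\hookrightarrow M^{S^n}\times M^{S^n}$ is the pullback along $ev$ of the normal bundle of the diagonal $\Delta:M\to M\times M$, so the orientation on $M$ supplies a Thom class and hence the Gysin map $(\rho_{in})_!$; associativity and graded commutativity of $\star_{S^n}$ for $n\geq 2$ then reduce to the analogous (homotopy) properties of $\delta_{S^n}$, while for $n=1$ the BV identity is the standard family-of-loops argument for the rotation action.

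\textbf{Proof of (3).} First, Theorem~\ref{T:IteratedIntegralsTop} gives a natural equivalence $C^\ast(LM)\simeq CH^{S^1}(C^\ast(M),C^\ast(M))$ of $E_\infty$-algebras, whose hypotheses hold because $M$ is simply connected of finite type. Next, Corollary~\ref{T:Deligne} (higher Deligne) equips the right hand side with a canonical $E_2$-algebra structure lifting the cup product of Proposition~\ref{T:EdHoch}. To transport this to an $E_2$-structure on $C_\ast(LM)[\dim M]$, I would invoke Poincar\'e duality for $M$: the fundamental class gives an equivalence $C^\ast(M)\simeq C_\ast(M)[n]$ of $C^\ast(M)$-$E_\infty$-modules, and a fiberwise cap-product argument along the evaluation fibration $ev_0:LM\to M$ (whose fiber $\Omega M$ is simply connected) lifts this to a quasi-isomorphism $C^\ast(LM)\simeq C_\ast(LM)[n]$ intertwining the two $E_2$-structures. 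The upgrade from $E_2$ to $\Disk_2^{or}$ is obtained by observing that $CH_{S^1}$ carries the natural $SO(2)$-action of Example~\ref{ex:simplicialS1} and Remark~\ref{R:groupactionsonCH}, which is compatible with the Deligne $E_2$-structure because the pinching map~\eqref{eq:pinchcube} is $SO(2)$-equivariant in the case $n=1$.

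\textbf{Comparison with Chas-Sullivan.} To verify that the induced Gerstenhaber structure on homology agrees with Chas-Sullivan, I would use example~\ref{R:derivedcompositionasusualcomposition} to realize the Deligne product at the chain level as the ordinary composition of left $CH_{\partial I^2}(C^\ast(M))$-module morphisms, then unwind the Poincar\'e duality equivalence of the previous paragraph to match this composition with cap product by the Thom class of $\nu_\Delta$ followed by $\delta_{S^1}^\ast$, i.e.\ with the Chas-Sullivan product $\star_{S^1}$. The BV operator matches by identifying the rotation $S^1$-action on $LM$ with the mixed complex structure on $CH_{S^1}$ (Example~\ref{ex:simplicialS1}).

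\textbf{Main obstacle.} The hardest step is the Poincar\'e duality transport $C^\ast(LM)\simeq C_\ast(LM)[n]$ and its compatibility with the $E_2$-structures, because $LM$ is infinite-dimensional and the fiberwise argument must track higher coherences carefully. In practice I would bypass this by working directly with $C_\ast(\Omega M)$: this is an $E_1$-algebra which, by Poincar\'e duality for $M$, is Calabi-Yau of dimension $n$, and one has $\int_{S^1}C_\ast(\Omega M)\simeq C_\ast(LM)$ by Theorem~\ref{T:nonabelianPoincareduality}. The $\Disk_2^{or}$-structure on $C_\ast(LM)[n]$ then comes from the Costello~\cite{Co-CY}-Lurie~\cite{L-TFT} theorem equating Calabi-Yau $A_\infty$-algebras with oriented 2-dimensional open-closed TCFTs, whose closed sector is precisely this Hochschild homology with its $\Disk_2^{or}$-action. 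This latter equivalence is the main external input that the proof cannot avoid.
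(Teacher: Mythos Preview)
The paper does not give its own proof of this theorem: it is stated as a survey of results from the cited literature (Chas--Sullivan, Sullivan--Voronov, Costello, Lurie, F\'elix--Thomas), and no argument is supplied. What you have written is closer in spirit to the paper's sketch of the \emph{next} theorem (Theorem~\ref{T:BraneChain}), which proves the $E_{n+1}$-structure on $C_\ast(M^{S^n})[\dim M]$ via higher Deligne and Poincar\'e duality. So your strategy is reasonable, but it is being compared against a theorem the paper deliberately outsources.

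That said, your argument for (3) contains a genuine confusion in its first step. Theorem~\ref{T:IteratedIntegralsTop} gives an equivalence $CH_{S^1}(C^\ast(M))\simeq C^\ast(LM)$ of $E_\infty$-algebras, i.e.\ Hochschild \emph{chains} over $S^1$, not $CH^{S^1}(C^\ast(M),C^\ast(M))$. The object carrying the Deligne $E_2$-structure is the Hochschild \emph{cochains} $CH^{S^1}(A,A)\cong HH_{E_1}(A,A)$, and these two are not the same before Poincar\'e duality is invoked. The correct logical order (as in the proof of Theorem~\ref{T:BraneChain}) is: start from $HH_{E_1}(C^\ast(M),C^\ast(M))$ with its $E_2$-structure; use the Poincar\'e duality map $\chi_M:C^\ast(M)\to (C^\ast(M))^\vee[\dim M]$ as a map of $E_1$-$C^\ast(M)$-modules to identify this with $\mathbb{R}\Hom_k\big(CH_{S^1}(C^\ast(M)),k\big)[\dim M]$; and only then invoke Theorem~\ref{T:IteratedIntegralsTop} to reach $C_\ast(LM)[\dim M]$. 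Your ``fiberwise cap-product along $ev_0$'' is not needed and would be hard to make rigorous; the module-level Poincar\'e duality on $M$ itself suffices.

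Finally, note that this route only yields an $E_2$-structure, not the $\Disk_2^{or}$-structure asserted in (3). Your claim that the pinching map~\eqref{eq:pinchcube} is $SO(2)$-equivariant for $n=1$ is false as stated (rotating the target circle does not correspond to rotating the source configuration of little disks), so the upgrade does not come for free from the Deligne argument. The paper itself does not claim to obtain the $\Disk_2^{or}$-refinement by its own methods; for that, the Costello--Lurie input you cite at the end really is essential, and you are right to flag it as the main external ingredient.
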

In~\cite{CV}, Sullivan-Voronov also sketched a proof of the fact that 
 $H_{\bullet+\dim(M)}(M^{S^n})$ is an algebra 
over the homology $H_{\ast}(\Disk_{n+1}^{or})$ of the operad $\Disk_{n+1}^{or}$ and in particular has a $P_{n+1}$-algebra structure (see Example~\ref{Ex:PnAlg}). Their work and the aforementioned work for $n=1$ (Theorem~\ref{T:overviewstring}.3) rise the following
\begin{question} 
Is there a natural $E_{n+1}$-algebra (or even $\Disk_{n+1}^{or}$-algebra)  on the \emph{chains} $C_\ast\Big(M^{S^n}\Big)[\dim(M)]$ which induces Sullivan-Voronov product in homology ?
\end{question} 
Using the solution to the higher Deligne conjecture and the relationship between factorization homology and mapping spaces, one obtains a positive solution to the above question for sufficiently connected manifolds. 
\begin{theorem}[\cite{GTZ3}]\label{T:BraneChain} Let $M$ be an $n$-connected\footnote{it is actually sufficient to assume that $M$ is nilpotent, connected and has finite homotopy groups $\pi_i(M,m_0)$ for $1\leq i \leq n$} Poincar\'e duality space.
The shifted chain complex $C_{\bullet+\dim(X)} (X^{S^n})$ has a
  natural\footnote{with respect to maps of 
Poincar\'e duality spaces}  $E_{n+1}$-algebra structure which induces the sphere product $\star_{S^n}$ (given by the map~\eqref{D:sphereproduct}) of Sullivan-Voronov~\cite{CV}
$$H_{p}\big(X^{S^n}\big)\otimes H_{q}\big(X^{S^n}\big) \to H_{p+q-\dim(X)}\big(X^{S^n}\big)$$ in homology when $X$ is an oriented closed manifold.
\end{theorem}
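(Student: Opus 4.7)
The strategy is to realize the shifted chain complex $C_{\bullet+\dim(X)}(X^{S^n})$ as the $E_n$-Hochschild cohomology $HH_{E_n}(C^\ast(X),C^\ast(X))$ of the $E_\infty$-algebra $C^\ast(X)$, and then to invoke the higher Deligne conjecture (Corollary~\ref{T:Deligne}) to endow the right-hand side with a natural $E_{n+1}$-algebra structure. Once this identification is in place, the matching with the Sullivan--Voronov product in homology will follow from the observation that both products are built out of the pinching map $\delta_{S^n}:S^n\to S^n\vee S^n$.

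First, I would apply Theorem~\ref{T:IteratedIntegralsTop}: the hypothesis that $X$ is $n$-connected (of finite type) yields a natural equivalence of $E_\infty$-algebras $\mathcal{I}t^{S^n}\colon CH_{S^n}(C^\ast(X))\xrightarrow{\simeq}C^\ast(X^{S^n})$. Next, Poincar\'e duality on the closed oriented $d$-manifold $X$ furnishes, through cap product with the fundamental class, an equivalence $C^\ast(X)\simeq C_\ast(X)[-d]$ of $C^\ast(X)$-modules, and the finite-type duality $C_\ast(X)\simeq C^\ast(X)^\vee$ then gives, for every $C^\ast(X)$-module $M$, a natural equivalence $\RHom_{C^\ast(X)}(M,C^\ast(X))\simeq M^\vee[-d]$. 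Applying this with $M=CH_{S^n}(C^\ast(X))$, and using both the first step and the finite-type duality $C^\ast(X^{S^n})^\vee\simeq C_\ast(X^{S^n})$ (which holds because the $n$-connectedness of $X$ forces $X^{S^n}$ to be of finite type in each degree), I obtain
\[
CH^{S^n}(C^\ast(X),C^\ast(X))\;=\;\RHom_{C^\ast(X)}\!\bigl(CH_{S^n}(C^\ast(X)),C^\ast(X)\bigr)\;\simeq\;C_{\bullet+d}(X^{S^n}).
\]
By Corollary~\ref{P:coHH=coTCH}, the left-hand side is $HH_{E_n}(C^\ast(X),C^\ast(X))$, and the higher Deligne conjecture (Corollary~\ref{T:Deligne}) equips it with a canonical $E_{n+1}$-algebra structure, which I transport to $C_{\bullet+d}(X^{S^n})$ along the equivalence above. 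Naturality with respect to maps of Poincar\'e duality spaces is automatic from the naturality of each of the ingredients.

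To verify the last clause about the sphere product, I would unwind the underlying associative product of this $E_{n+1}$-structure: by the construction in Example~\ref{ex:cupsphere} and Proposition~\ref{P:Defcupproduct}, the $E_1$-product on $CH^{S^n}(C^\ast(X),C^\ast(X))$ is precisely pullback along the pinching $\delta_{S^n}:S^n\to S^n\vee S^n$, composed with the wedge product $\mu_\vee$ (\ref{D:muvee}). On the other hand, the Sullivan--Voronov product is the composition of the pullback along $\delta_{S^n}$ with the Gysin map $(\rho_{in})_!$, and under the Poincar\'e-duality identification $C^\ast(X)\simeq C_\ast(X)[-d]$ the Gysin map corresponds exactly to the wedge-product map $\mu_\vee$ (which incorporates the shift by $d$ coming from the normal bundle of the diagonal). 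Comparing the two constructions on $H_\bullet$ then gives the required identification with $\star_{S^n}$.

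The main obstacle will be carrying out the second step cleanly at the $E_\infty$-module (rather than merely chain-complex) level: one must upgrade the Poincar\'e-duality equivalence $C^\ast(X)\simeq C_\ast(X)[-d]$ to an equivalence of $E_\infty$-$C^\ast(X)$-modules, and ensure that the finite-type duality on $X^{S^n}$ is compatible with the $CH_{S^n}(C^\ast(X))$-module structure used in the definition of $CH^{S^n}(-,-)$; this compatibility is what ultimately justifies transporting the higher Deligne $E_{n+1}$-structure to the chain complex of $X^{S^n}$. The matching with the Gysin map of Sullivan--Voronov in the fourth step is also delicate, as their construction lives on infinite-dimensional Fr\'echet manifolds while ours is entirely algebraic, so the identification passes through a comparison of the two incarnations of the ``intersection with the diagonal of $X$'' map on cochains.
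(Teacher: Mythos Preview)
Your proposal is correct and follows essentially the same route as the paper: lift Poincar\'e duality to an equivalence of $E_\infty$-$C^\ast(X)$-modules (the paper does this via Proposition~\ref{P:EinftyasFact}), combine it with the identification $HH_{E_n}(C^\ast(X),C^\ast(X))\cong CH^{S^n}(C^\ast(X),C^\ast(X))$ and the iterated-integral equivalence $CH_{S^n}(C^\ast(X))\simeq C^\ast(X^{S^n})$, then transport the $E_{n+1}$-structure from the higher Deligne conjecture and check the underlying $E_1$-product against the Sullivan--Voronov construction. The only cosmetic difference is the order in which you apply the ingredients; your identification of the main obstacle (upgrading Poincar\'e duality to the $E_\infty$-module level) is exactly the point the paper singles out.
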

\begin{sketchofproof} We only gives the key steps of the proof following~\cite{GTZ3, G-HHnote}.
\begin{itemize}
\item Let $[M]$ be the fundamental class of $M$.   The Poincar\'e duality map $\chi_M: x\mapsto x\cap [M]$
is a map of left modules and thus, by Proposition~\ref{P:EinftyasFact} (and since, by assumption, the  biduality homomorphism $C_\ast(X)\to (C^\ast(X))^{\vee}$ is a quasi-isomorphism), 
$$\chi_M: C^{\ast}(X) \to C_{\ast}(X)[\dim(X)] \; \cong\; 
\big(C^{\ast}(X)\big)^{\vee}[\dim(X)]$$ has an natural lift as an $E_\infty$-module. And thus as an $E_n$-module as well.
\item From the previous point we deduce that there is an equivalence 
\begin{multline}\label{eq:equivBrane}HH_{E_n}(C^\ast(X),C^\ast(X)) \cong 
\RHom_{C^\ast(X)}^{E_n}\big(C^\ast(X), C^\ast(X)\big)\\ \stackrel{(\chi_M)*}\longrightarrow \RHom_{C^\ast(X)}^{E_n}\big(C^\ast(X), \big(C^{\ast}(X)\big)^{\vee}\big)[\dim(M)]\\
\cong\RHom_{\int_{S^{n-1}\times \R}C^\ast(X)}^{left}\big(C^\ast(X), \big(C^{\ast}(X)\big)^{\vee}\big)[\dim(M)]\\
\cong \RHom_{C^\ast(X)}^{left}\big(C^\ast(X)\otimes_{\int_{S^{n-1}\times \R}C^\ast(X)}^{\mathbb{L}} C^\ast(X) , k \big)[\dim(M)]\\
\cong \RHom_{C^\ast(X)}^{left}\big(CH_{S^n}(C^{\ast}(X)), k\big)[\dim(M)].
\end{multline}
where the last equivalence follows from  Theorem~\ref{T:Fact=CH}.
\item By Theorem~\ref{T:IteratedIntegralsTop} relating Factorization homology of singular cochains with mapping spaces,  the above equivalence~\eqref{eq:equivBrane} induces an equivalence 
\begin{equation}\label{eq:equivsingularHoch}
C_\ast\Big( M^{S^{n}}\Big)[\dim(M)]\, \stackrel{\simeq}\longrightarrow \, HH_{E_n}(C^\ast(X), C^\ast(X)). 
\end{equation}
\item Now, one  uses the higher Deligne conjecture (Corollary~\ref{T:Deligne}) and the latter equivalence~\eqref{eq:equivsingularHoch} to get an $E_{n+1}$-algebra structure on $C_\ast\Big( M^{S^{n}}\Big)[\dim(M)]$. The explicit definition of the cup-product given by Proposition~\ref{T:EdHoch} (and Theorem~\ref{T:RelativeDeligne}) allows to describe explicitly  the $E_1$-algebra structure at the level of the cochains $C^\ast\Big( M^{S^n}\Big)$ through the equivalence~\eqref{eq:equivsingularHoch}, which, in turn allows to check it induces the product $\star_{S^n}$.
\end{itemize}
\qed\end{sketchofproof}

\begin{example} Let $M=G$ be a Lie group and $A=S(V)\stackrel{\simeq}\to \Omega_{dR}(G)$ be its minimal model.
 The graded space $V$ is  concentrated in positive odd degrees. If $G$ is $n$-connected,
by the generalized HKR Theorem~\ref{T:HKRhigher}, there is an equivalence 
\begin{equation}\label{eq:HKRS(V)}
S(V\oplus V^*[-n])\; \cong  \; CH^{S^n}(A,A )\;\cong \; C_{\ast}\Big(G^{S^n}\Big)[\dim(G)]
\end{equation}
in $\hkmod$. The \emph{higher formality conjecture} shows that the equivalence~\eqref{eq:HKRS(V)} is an equivalence of $E_{n+1}$-algebras. 
Here the left hand side is viewed as an $E_{n+1}$-algebra obtained by the formality of the $E_{n+1}$-operad from the $P_{n+1}$-structure on $S(V\oplus V^*[-n])$ whose multiplicative structure is the one given by the symmetric algebra and the bracket is given by the pairing between $V$ and $V^*$.
\end{example}

\subsection{Iterated loop spaces and Bar constructions}\label{S:BarConstruction}
In this section we apply the formalism of factorization homology to describe iterated Bar constructions \emph{equipped with} their algebraic structure and relate them to the $E_n$-algebra structure of $n^{\text{th}}$-iterated loop spaces. We follow the approach of~\cite{F, AF, GTZ3}.

Bar constructions have been introduced in topology as a model for the coalgebra structure of the cochains on $\Omega(X)$, the based loop space of a pointed space $X$. Similarly the cobar construction of coaugmented coalgebra has been studied originally as a model for the ($E_1$-)algebra structure of the chains on $\Omega(X)$. The Bar and coBar constructions also induce equivalences between algebras and coalgebras under sufficient nilpotence and degree assumptions~\cite{StasheffMoore, FHT, FG}.

\smallskip
Let $(A,d)$ be a differential graded unital associative algebra which is \emph{augmented}, that is equipped with an algebra homomorphism  $\varepsilon: A\to k$. 
\begin{definition}\label{D:standardBar}The  standard \emph{Bar functor} of the augmented algebra $(A,d,\varepsilon)$ is
$$Bar(A):= \; k\mathop{\otimes}^{\mathbb{L}}_{A} k .$$
If $A$ is flat over $k$, it is computed by the standard chain complex 
 $Bar^{std}(A)=\bigoplus_{n\geq 0} {\overline{A}}^{\otimes n} $
(where $\overline{A}=ker(A\stackrel{\epsilon}\to k)$  is the augmentation ideal of $A$) endowed with the differential
\begin{eqnarray*}
 b(a_1\otimes \cdots a_n)     & = &   \sum_{i=1}^n \pm a_1\otimes \cdots \otimes d(a_i)\otimes \cdots 
\otimes a_n\\ 
& & +  \sum_{i=1}^{n-1} \pm a_1\otimes \cdots \otimes (a_i\cdot a_{i+1})\otimes \cdots \otimes a_n
\end{eqnarray*}
see~\cite{FHT, Fre2} for details (and signs).
\end{definition} 
The Bar construction has a standard coalgebra structure. 
It is well-known that if 
$A$ is a commutative differential graded algebra, then the shuffle product makes  the Bar construction $Bar^{std}(A)$ a CDGA and  a bialgebra as well.
 It was proved by Fresse~\cite{Fre2} that Bar constructions of $E_\infty$-algebras have an (augmented) $E_\infty$-structure,  allowing to consider iterated Bar constructions and further that there is a canonical $n^{\text{th}}$-iterated Bar construction functor for augmented $E_n$-algebras as well~\cite{FresseBarEn}. 
            
\smallskip

Let us now describe the factorization homology/algebra point of view on Bar constructions.

An \emph{augmented $E_n$-algebra} is an  $E_n$-algebra $A$ equipped with an $E_n$-algebra map
$\varepsilon: A\to k$, called the augmentation.   We denote \emph{$E_n\textbf{-Alg}^{aug}$ the $\infty$-category of augmented $E_n$-algebras} (see Definition~\ref{D:overcategory}).
The augmentation makes $k$ an $E_n$-module over $A$.  

By  Proposition~\ref{P:EnAlgmapgivesfactonDn}, an augmented $E_n$-algebra defines naturally a locally constant factorization algebra on the closed unit disk (with its stratification given by its boundary) of dimension less than $n$. Indeed,
 we obtain  functors
                           \begin{equation} \label{eq:definefactoofBar}
\omega_{D^i}: E_n\textbf{-Alg}^{aug}\longrightarrow E_i\textbf{-Alg}^{aug}(E_{n-i}\textbf{-Alg}^{aug}) \longrightarrow \textbf{Fac}^{lc}_{D^i}(E_{n-i}\textbf{-Alg}^{aug}). 
\end{equation}
\begin{definition}\label{D:DefnBarEm} Let $A$ be an augmented $E_n$-algebra. Its Bar construction is
$$Bar(A) := \int_{I \times \mathbb{R}^{n-1}}A\mathop{\otimes}^{\mathbb{L}}_{\int_{S^0\times \mathbb{R}^{n-1}} A} k.$$
\end{definition}
This definition agrees with Definition~\ref{D:standardBar} for differential graded associative algebras and further we have equivalences
\begin{equation} \label{eq:DefnBarEm} Bar(A) := \int_{I \times \mathbb{R}^{n-1}}A\mathop{\otimes}^{\mathbb{L}}_{\int_{S^0\times \mathbb{R}^{n-1}} A} k\; \cong \; k\mathop{\otimes}_{A}^{\mathbb{L}} k\; \cong p_*(\omega_{D^1}(A))\end{equation}
where $I$ is the closed interval $[0,1]$ and $p:I\to pt$ is the unique map; in particular the right hand side of~\eqref{eq:DefnBarEm} is just the factorization homology of the associated factorization algebra on $D^1$.

The functor~\eqref{eq:definefactoofBar} shows that 
$ Bar(A)\cong  p_*(\omega_{D^1}(A))$ has an natural structure of augmented $E_{n-1}$-algebra (which can also be deduced from Lemma~\ref{L:homthMfldisEn}.2).

We can thus iterate  (up to $n$-times) the Bar constructions of an augmented $E_n$-algebra. 
 \begin{definition}\label{D:BariteratedforEm} Let $0\leq i\leq n$.
The $i^{\text{th}}$-iterated Bar construction of an augmented $E_n$-algebra $A$ is the augmented $E_{n-i}$-algebra  $$Bar^{(i)}(A) := Bar(\cdots (Bar(A))\cdots).$$  
\end{definition}
Using the excision axiom of factorization homology, one finds
 \begin{lemma}[Francis~\cite{F}, \cite{GTZ3}]\label{L:iteratedBarforEnisCH} Let $A$ be an $E_n$-algebra and $0\leq i\leq n$.
 There is a natural equivalence of $E_{n-i}$-algebras $$Bar^{(i)}(A) \cong \int_{D^i\times \mathbb{R}^{n-i}} A\mathop{\otimes}^{\mathbb{L}}_{\int_{S^{i-1}\times\mathbb{R}^{n-i+1}} A} k \; \cong \; p_*(\omega_{D^i}(A))$$
 \end{lemma}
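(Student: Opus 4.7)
The plan is to proceed by induction on $i$, using the excision axiom, the Fubini formula (Corollary~\ref{C:FubiniTCH}), and the geometric identification $I\times D^i\simeq D^{i+1}$ of manifolds with corners. The base cases are straightforward: for $i=0$ one uses $D^0\cong \mathrm{pt}$ and the convention $\mathcal{F}(\emptyset)\cong k$ (Remark~\ref{R:factpointed}) together with the dimension axiom to get $\int_{D^0\times\mathbb{R}^n}A\otimes^{\mathbb{L}}_k k\cong A=Bar^{(0)}(A)$; and for $i=1$ the statement is precisely Definition~\ref{D:DefnBarEm}, once one checks via excision that $\int_{S^0\times\mathbb{R}^{n-1}}A\simeq A\otimes A$ as an $E_{n-1}$-algebra (which follows from the monoidal axiom together with the dimension axiom).

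For the inductive step I would assume the result for $i\geq 1$ and compute
\[
Bar^{(i+1)}(A)=Bar\bigl(Bar^{(i)}(A)\bigr)=\int_{I\times\mathbb{R}^{n-i-1}}Bar^{(i)}(A)\mathop{\otimes}^{\mathbb{L}}_{\int_{S^0\times\mathbb{R}^{n-i-1}}Bar^{(i)}(A)}k,
\]
where $Bar^{(i)}(A)$ is regarded as an augmented $E_{n-i}$-algebra. Substituting the inductive description of $Bar^{(i)}(A)$, the key step is to commute the two nested factorization homologies. This is done at the level of the $E_{n-i-1}\textbf{-Alg}$-valued locally constant factorization algebras $\omega_{D^i}(A)$ introduced in~\eqref{eq:definefactoofBar}, by pushing forward along the projection $D^i\times I\times\mathbb{R}^{n-i-1}\to I\times\mathbb{R}^{n-i-1}$ and invoking the Fubini formula of Corollary~\ref{C:FubiniTCH} (or equivalently the pushforward equivalence of Proposition~\ref{L:expFact}) to obtain
\[
Bar^{(i+1)}(A)\,\simeq\,\int_{(I\times D^i)\times\mathbb{R}^{n-i-1}}A\mathop{\otimes}^{\mathbb{L}}_{\int_{\partial(I\times D^i)\times\mathbb{R}^{n-i-1}}A}k.
\]

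To finish, I would use the homeomorphism of stratified manifolds with corners $I\times D^i\simeq D^{i+1}$, whose boundary decomposes as $\partial(I\times D^i)=(\partial I\times D^i)\cup_{\partial I\times S^{i-1}}(I\times S^{i-1})\simeq S^i$. Since factorization homology depends only on the underlying topological structured manifold (not the smooth corner structure), this gives the required identification of the numerator. For the denominator, applying excision to the sphere decomposition $S^i\cong D^i\cup_{S^{i-1}}D^i$, together with the augmentation (which provides $\int_{D^i\times\mathbb{R}^{n-i}}A\otimes^{\mathbb{L}}k\simeq k$ after one collapses one copy), identifies $\int_{\partial(I\times D^i)\times\mathbb{R}^{n-i-1}}A$ with $\int_{S^i\times\mathbb{R}^{n-i}}A$ as $E_{n-i-1}$-algebras.

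The main obstacle will not be any single chain-level manipulation but rather tracking the full $E_{n-i-1}$-algebra structure throughout the argument, so that one obtains an equivalence in $E_{n-i-1}\textbf{-Alg}^{aug}$ rather than merely in $\hkmod$. The right framework for this bookkeeping is to work systematically with the factorization algebras $\omega_{D^{i+1}}(A)\in\textbf{Fac}^{lc}_{D^{i+1}}(E_{n-i-1}\textbf{-Alg}^{aug})$, in which case the identifications above are all identifications of factorization algebras valued in $E_{n-i-1}\textbf{-Alg}^{aug}$, and the $E_{n-i-1}$-structure comes for free from Lemma~\ref{L:homthMfldisEn} applied to the trivialized ends $\mathbb{R}^{n-i-1}$.
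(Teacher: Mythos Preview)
Your inductive strategy is plausible, but the paper's hint (``using the excision axiom'') points to a much more direct argument: push $\omega_{D^i}(A)$ forward along the norm $N:D^i\to[0,1]$. By Proposition~\ref{P:BarInterval} this gives a locally constant factorization algebra on $[0,1]$ with right module $A$ (over $[0,1)$), $E_1$-algebra $\int_{S^{i-1}\times\mathbb{R}}A$ on $(0,1)$ (here $A$ is viewed as an $E_i$-algebra in $E_{n-i}\textbf{-Alg}$, so by Fubini this unwinds to $\int_{S^{i-1}\times\mathbb{R}^{n-i+1}}A$), and left module $k$ (over $(0,1]$). One application of Proposition~\ref{P:BarInterval}.3 then yields the formula, with the $E_{n-i}$-structure coming for free since everything is valued in $E_{n-i}\textbf{-Alg}^{aug}$. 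No induction is needed.

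Your argument, as written, has a genuine error in the displayed formula: the denominator $\int_{\partial(I\times D^i)\times\mathbb{R}^{n-i-1}}A$ is an integral over an $(n-1)$-manifold, whereas excision always produces a tensor over $\int_{N\times\mathbb{R}}A$ for a codimension-one $N$. You have dropped the collar factor $\times\mathbb{R}$. Once restored, the denominator is $\int_{\partial(I\times D^i)\times\mathbb{R}\times\mathbb{R}^{n-i-1}}A\simeq\int_{S^i\times\mathbb{R}^{n-i}}A$, and your final paragraph about decomposing $S^i$ as two hemispheres and ``collapsing one copy'' becomes unnecessary --- indeed, as written that paragraph cannot be correct, since it purports to identify integrals over manifolds of different dimensions.

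There is also a subtler point you gloss over: $I\times D^i$ carries a \emph{corner} stratification, not the boundary stratification of $D^{i+1}$, and the Fubini equivalence produces a factorization algebra with respect to the former. The single-step excision you invoke on $I\times D^i$ (decomposing into interior and a collar of the full boundary) is not literally an instance of the excision axiom for this stratification --- the collar is itself stratified by corners. What makes it work is that the factorization algebra you obtain happens to take the value $k$ on \emph{every} boundary stratum, so it descends along the corner-smoothing map to $\omega_{D^{i+1}}(A)$; but you should say this rather than treat $I\times D^i\simeq D^{i+1}$ as a triviality. All of this is bypassed by the direct argument above.
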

In particular, taking $n=\infty$, we recover an $E_\infty$-structure on the iterated  Bar construction $Bar^{(i)}(A)$ of an augmented $E_\infty$-algebra.

\smallskip

We now describes the expected coalgebras structures. We start with the $E_\infty$-case, for which we can use the derived Hochschild chains from \S~\ref{S:comAlgebracase}. 
Then, Lemma~\ref{L:iteratedBarforEnisCH}, Theorem~\ref{T:Fact=CH} and the excision axiom  give natural equivalences of $E_\infty$-algebras (\cite{GTZ3}):
\begin{equation}\label{eq:nBarisCHIn}
CH_{S^i}(A,k)\; \cong \; Bar^{(i)}(A) .
\end{equation}

Recall the continuous map~\eqref{eq:pinchcube} 
$ pinch: \text{Cube}_d(r) \times S^d \longrightarrow  \bigvee_{i=1\dots r}\, S^d$. 
Similarly to the definition of the map~\eqref{eq:pinchSr}, applying the singular set 
functor to the map~\eqref{eq:pinchcube}  we get a morphism
\begin{multline}\label{eq:copinchSn}
pinch^{S^n,r}_*: C_{\ast}\big(\text{Cube}_d(r)\big) \otimes CH_{S^d}(A)  \mathop{\otimes}\limits_{A}^{\mathbb{L}} k \\
\stackrel{pinch_*\otimes_{A}^{\mathbb{L}} id} \longrightarrow  CH_{\bigvee_{i=1}^r S^d}(A)  \mathop{\otimes}\limits_{A}^{\mathbb{L}} k 
\cong \Big( CH_{\coprod_{i=1}^r S^d}(A)  \mathop{\otimes}\limits_{A^{\otimes r}}^{\mathbb{L}} A\Big) \mathop{\otimes}\limits_{A}^{\mathbb{L}} k \\
\cong \Big( CH_{\coprod_{i=1}^r S^d}(A) \Big)  \mathop{\otimes}\limits_{A^{\otimes r}}^{\mathbb{L}} k
\cong \Big( CH_{S^d}(A,k) \Big)^{\otimes r}
\end{multline}

\begin{proposition}[\cite{GTZ3}]\label{P:EncoAlgBar}Let $A$ be an augmented $E_\infty$-algebra. 
The maps~\eqref{eq:copinchSn} $pinch^{S^d,r}_*$ makes the iterated Bar construction  $Bar^{(d)}(A)\cong  CH_{S^d}(A,k)$ a natural $E_n$-coalgebra in the $\infty$-category of $E_\infty$-algebras.
\end{proposition}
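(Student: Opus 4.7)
The strategy is to realize the maps $pinch^{S^d,r}_*$ of~\eqref{eq:copinchSn} as the image, under an appropriate symmetric monoidal functor, of the pinching $E_d$-coalgebra structure already present on the pointed sphere $S^d$. To this end, the first step is to upgrade $X \mapsto CH_X(A,k)$ to a symmetric monoidal functor
\[
\Phi_A : (\hTopp, \vee) \longrightarrow (E_\infty\textbf{-Alg}, \otimes).
\]
Covariance is immediate from the definition $CH_X(A,k) = k \otimes^{\mathbb{L}}_A CH_X(A)$ since, for a pointed map $f:(X,x_0)\to (Y,y_0)$, the induced $f_\ast : CH_X(A)\to CH_Y(A)$ is a map of $E_\infty$-$A$-algebras (the augmentation is used to get the $E_\infty$-structure of $CH_X(A,k)$). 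The monoidality is the content of the excision axiom combined with the commutativity of $A$: excision gives $CH_{X\vee Y}(A)\simeq CH_X(A)\otimes^{\mathbb{L}}_A CH_Y(A)$, and tensoring with $k$ over $A$ together with the isomorphism $B\otimes^{\mathbb{L}}_A C \otimes^{\mathbb{L}}_A k \cong (B\otimes^{\mathbb{L}}_A k)\otimes (C\otimes^{\mathbb{L}}_A k)$ (valid in $E_\infty\textbf{-Alg}$ since $A$ is commutative) yields the natural equivalence $CH_{X\vee Y}(A,k)\simeq CH_X(A,k)\otimes CH_Y(A,k)$. Unit and symmetry compatibilities follow from the corresponding properties of $\mathcal{CH}$ and the augmentation.

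The second step is to observe that the family of maps $pinch : \text{Cube}_d(r)\times S^d \to \bigvee_{i=1}^r S^d$ of~\eqref{eq:pinchcube} makes $(S^d,*)$ into an $E_d$-coalgebra object in the $\infty$-category $(\hTopp,\vee)$. Continuity of the pinch, compatibility with operadic composition (nested rectangles give nested pinches), and $S_r$-equivariance are classical (and are in fact what is used in Proposition~\ref{T:EdHoch}); since all maps are pointed, this is an $E_d$-coalgebra structure in pointed spaces.

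The third step is simply to apply the symmetric monoidal functor $\Phi_A$ to this $E_d$-coalgebra. Since covariant symmetric monoidal functors preserve coalgebras over any operad, we obtain on $\Phi_A(S^d) = CH_{S^d}(A,k) \simeq Bar^{(d)}(A)$ a canonical $E_d$-coalgebra structure in $E_\infty\textbf{-Alg}$. Unfolding $\Phi_A$ on the morphism level, the comultiplications of this structure are precisely the maps
\[
C_\ast\big(\text{Cube}_d(r)\big)\otimes CH_{S^d}(A,k) \longrightarrow CH_{\bigvee_{i=1}^r S^d}(A,k) \simeq \bigotimes_{i=1}^r CH_{S^d}(A,k),
\]
which by inspection coincide (using the chain of natural equivalences displayed in~\eqref{eq:copinchSn}) with the $pinch^{S^d,r}_*$ constructed above. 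Naturality in $A$ is then automatic from the naturality of $\Phi_A$ in $A\in E_\infty\textbf{-Alg}^{aug}$.

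The only delicate point is the symmetric monoidality of $\Phi_A$: one has to ensure that the reshuffling of tensor factors $B\otimes^{\mathbb{L}}_A C\otimes^{\mathbb{L}}_A k \simeq (B\otimes^{\mathbb{L}}_A k)\otimes (C\otimes^{\mathbb{L}}_A k)$ is coherently associative, unital and symmetric at the $\infty$-categorical level, and compatible with the excision equivalence. This is the main technical obstacle; it can be settled either by working with an explicit symmetric monoidal model (for instance the simplicial model $CH_{X_\bullet}(A,k)$ of \S\ref{SS:explicitsimplicialmodelforHH}, in which both identifications become strict maps of CDGAs computed by explicit shuffle maps) or by invoking the universal property of $\mathcal{CH}$ restricted to finite pointed spaces (Proposition~\ref{P:derivedfunctorfinite}) to construct $\Phi_A$ directly as a symmetric monoidal functor. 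Once this is in place the rest of the argument is formal. \qed
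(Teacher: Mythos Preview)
Your proof is correct and follows the same approach implicit in the paper's construction of the maps~\eqref{eq:copinchSn}: the paper builds these structure maps by composing functoriality of $CH$ along the pinch map with the excision identification $CH_{\bigvee S^d}(A,k)\simeq CH_{S^d}(A,k)^{\otimes r}$, and cites~\cite{GTZ3} for the verification that they assemble into an $E_d$-coalgebra structure. Your packaging of this verification as ``apply the symmetric monoidal functor $\Phi_A:(\hTopp,\vee)\to(E_\infty\textbf{-Alg},\otimes)$ to the $E_d$-coalgebra $S^d$'' is exactly the clean conceptual reason the construction works, and your identification of the coherence of $\Phi_A$ as the only nontrivial point (resolvable either via strict simplicial models or via the universal property) is accurate.
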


If $Y$ is a pointed space, its $E_\infty$-algebra of cochains $C^\ast(Y)$ has a canonical augmentation $C^\ast(Y)\to C^\ast(pt)\cong k$ induced by the base point $pt\to Y$. By Theorem~\ref{T:IteratedIntegrals}, we have an 
$E_\infty$-algebra morphism
\begin{multline} 
\label{eq:iteratedlooptoBar}
 \mathcal{I}t^{\Omega^n}: Bar^{(n)}(C^{\ast}(Y))\cong CH_{S^n}(C^{\ast}(Y),k) 
 \\  \stackrel{\mathcal{I}t\otimes_{C^{\ast}(Y)}^{\mathbb{L}} k}\longrightarrow 
C^{\ast}\big(Y^{S^n}\big)\otimes_{C^{\ast}(Y)}^{\mathbb{L}} k
 \longrightarrow C^{\ast}\big(\Omega^n(Y) \big).
\end{multline}

We  now have a nice application of factorization homology   in algebraic topology.
\begin{corollary}\label{C:iteratedloopbar}
\begin{enumerate}
\item  The map~\eqref{eq:iteratedlooptoBar} 
$ \mathcal{I}t^{\Omega^n}: Bar^{(n)}(C^{\ast}(Y))
\to C^{\ast}\big(\Omega^n(Y)\big) $ 
is an $E_n$-coalgebra morphism in the category of $E_\infty$-algebras.
 It is further an equivalence 
if $Y$ is connected, nilpotent and has finite homotopy groups $\pi_i(Y)$  in degree $i\leq n$.
\item The dual of~\eqref{eq:iteratedlooptoBar}
$ C_{\ast}\big(\Omega^n(Y) \big)  \longrightarrow \big(C_{\ast}\big(\Omega^n(Y) \big)\big)^{\vee\vee} \stackrel{\mathcal{I}t_{\Omega^n}}\longrightarrow 
\Big(Bar^{(n)}(C^{\ast}(Y))\Big)^{\vee}$ is a morphism in $E_n\textbf{-Alg}$. If $Y$ is $n$-connected,  it
is an equivalence.
\end{enumerate}\end{corollary}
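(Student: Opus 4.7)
The overall strategy is to construct the map directly from Theorem~\ref{T:IteratedIntegralsTop} combined with the augmentation, then verify the multiplicative/comultiplicative structure transport using naturality of iterated integrals with respect to the pinching on the source space, and finally deduce the equivalences under the stated connectivity hypotheses.

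For part (1), the $E_\infty$-algebra map $\mathcal{I}t^{\Omega^n}$ is by construction the composition displayed in~\eqref{eq:iteratedlooptoBar}: use the identification $Bar^{(n)}(C^{\ast}(Y))\cong CH_{S^n}(C^{\ast}(Y),k)$ of~\eqref{eq:nBarisCHIn}, apply Theorem~\ref{T:IteratedIntegralsTop} to get a CDGA map $CH_{S^n}(C^{\ast}(Y))\to C^{\ast}(Y^{S^n})$, then extend scalars along the augmentation $C^{\ast}(Y)\to k$ induced by the basepoint of $Y$, and finally use the fibration $\Omega^n(Y)\to Y^{S^n}\to Y$ to produce $C^{\ast}(Y^{S^n})\otimes^{\mathbb{L}}_{C^{\ast}(Y)}k\to C^{\ast}(\Omega^n Y)$. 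To see $\mathcal{I}t^{\Omega^n}$ respects the $E_n$-coalgebra structures, recall (Proposition~\ref{P:EncoAlgBar}) that the coalgebra structure on $CH_{S^n}(C^{\ast}(Y),k)$ is encoded by the pinch maps $pinch:\mathrm{Cube}_n(r)\times S^n\to \bigvee_r S^n$ via~\eqref{eq:copinchSn}, while the $E_n$-coalgebra structure on $C^{\ast}(\Omega^n Y)$ is dual to the iterated-loop $E_n$-structure on $\Omega^n Y$, itself defined by pulling back along the same pinch map on pointed mapping spaces $\Map_*(\bigvee_r S^n,Y)\cong (\Omega^n Y)^r\to \Omega^n Y$. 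The key observation is that the iterated integral map of Theorem~\ref{T:IteratedIntegralsTop} is natural in the source space, so the square relating $CH_{\bigvee_r S^n}$ to $CH_{S^n}$ via the pinch and relating $C^{\ast}(\Map(\bigvee_r S^n,Y))$ to $C^{\ast}(\Map(S^n,Y))$ commutes; tensoring with $k$ along the augmentation identifies this with the $E_n$-cooperations, and monoidality gives the $\mathrm{Cube}_n(r)$-parametrized compatibility.

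For the equivalence statement in (1), under the hypotheses on $Y$, Theorem~\ref{T:IteratedIntegralsTop} already gives that $\mathcal{I}t^{S^n}:CH_{S^n}(C^{\ast}(Y))\to C^{\ast}(Y^{S^n})$ is an equivalence of $E_\infty$-algebras (since $S^n$ is $n$-dimensional). It thus remains to show that the base-change
\[
C^{\ast}(Y^{S^n})\mathop{\otimes}^{\mathbb{L}}_{C^{\ast}(Y)} k \;\longrightarrow\; C^{\ast}(\Omega^n Y)
\]
is an equivalence; this is a form of Eilenberg--Moore for the fibration $\Omega^n Y\to Y^{S^n}\to Y$, whose convergence is guaranteed by the connectivity, nilpotence and finiteness hypotheses on $Y$.

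For part (2), the map is by construction the composition of biduality with the $k$-linear dual of $\mathcal{I}t^{\Omega^n}$. Since dualization sends $E_n$-coalgebras to $E_n$-algebras and $E_n$-coalgebra morphisms to $E_n$-algebra morphisms, and since biduality is always a morphism of $E_n$-algebras by naturality, the composition is an $E_n$-algebra morphism; one should also check that on the target the resulting $E_n$-algebra structure on $C_*(\Omega^n Y)$ agrees with the one given by the $E_n$-space structure of $\Omega^n Y$ (equivalently, via non-abelian Poincar\'e duality Theorem~\ref{T:nonabelianPoincareduality}, by $\int_{\mathbb{R}^n}C_*(\Omega^n Y)\cong C_*(\Omega^n Y)$), which again follows by dualizing the identification of coalgebra structures through pinching. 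For the equivalence when $Y$ is $n$-connected: then $Y$ is connected, simply connected (hence nilpotent) and has trivial (in particular finite) homotopy groups in degrees $\le n$, so part~(1) applies and gives an equivalence $Bar^{(n)}(C^{\ast}(Y))\stackrel{\simeq}\to C^{\ast}(\Omega^n Y)$; dualizing yields $Bar^{(n)}(C^{\ast}(Y))^{\vee}\simeq C_*(\Omega^n Y)^{\vee\vee}$, and one concludes by noting that the map in (2) factors through biduality, which is an equivalence since $n$-connectedness of $Y$ ensures that $\Omega^n Y$ is connected with homology of finite type in each degree (by iterated Serre spectral sequences), making $C_*(\Omega^n Y)$ reflexive. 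The main obstacle in the whole argument is the verification of the coherent compatibility of $\mathcal{I}t$ with the full $E_n$-coalgebra operad action (beyond just the pinch cooperation), for which one must exploit the full naturality of the iterated integral map with respect to the simplicial pinch models on $S^n_\bullet$ coming from Example~\ref{ex:higherspheres} and match them with the $\mathrm{Cube}_n$-action in~\eqref{eq:copinchSn}.
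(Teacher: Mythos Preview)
The paper states this corollary without an explicit proof, treating it as an immediate consequence of the construction~\eqref{eq:iteratedlooptoBar}, Proposition~\ref{P:EncoAlgBar}, and Theorem~\ref{T:IteratedIntegralsTop}. Your proposal unpacks precisely the argument the paper has in mind: naturality of the iterated integral map in the source space transports the pinch-defined $E_n$-coalgebra structure of~\eqref{eq:copinchSn} to the one on $C^\ast(\Omega^n Y)$, and the equivalence in (1) reduces to Theorem~\ref{T:IteratedIntegralsTop} together with an Eilenberg--Moore argument for the evaluation fibration.

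One small point to watch in part~(2): your reflexivity step for $C_\ast(\Omega^n Y)$ relies on $\Omega^n Y$ having finite-type homology, which you deduce from $n$-connectedness of $Y$ via Serre spectral sequences. This is fine provided $Y$ itself has finite-type (co)homology over $k$, an assumption that is implicit throughout (and is needed already for the biduality $C_\ast(X)\to C_\ast(X)^{\vee\vee}$ to be reasonable), but you should make it explicit rather than leave it buried in the spectral-sequence step.
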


We now sketch the construction of the $E_i$-coalgebra structure of the $i^{\text{th}}$-iterated Bar construction of an $E_n$-algebra. 
To do so, we only need to define a locally constant \emph{cofactorization algebra} structure\footnote{that is a locally constant coalgebra over the $\infty$-operad $\text{Disk}(\R^i)$ see remark~\ref{R:Ndisk(M)}} whose global section is $Bar^{(i)}(A)$. By (the dual of) Proposition~\ref{P:extensionfrombasis}, it is enough to define such a structure on the basis of convex open disks of $\R^i$. Let $\mathcal{A}\in \textbf{Fac}^{lc}_{\R^i}(E_{n-i}\textbf{-Alg}^{aug})$ be the factorization algebra associated to $A$ (by Theorem~\ref{P:En=Fact} and Theorem~\ref{T:Dunn}).

Let $V$ be a convex open subset. 
By Corollary~\ref{C:EnAlgmapgivesfactonDnhat} (and Theorem~\ref{P:En=Fact}), the augmentation gives us a stratified locally constant factorization algebra  $\widehat{\omega}(\mathcal{A}_{|V})$ on $\widehat{V}=V\cup \{\infty\}$ (with values in $E_{n-i}\textbf{-Alg}^{aug}$).  
 
If $U\subset V$ is a convex open subset,  we have a continuous map $\pi_{U}:\widehat{V}\to \widehat{U}$ which maps the complement of $U$ to a single point. 
Further, the augmentation defines  maps of factorization algebras (on $\widehat{U}$) 
$$\epsilon_{U}: {\pi_{U}}_* (\widehat{\omega}(\mathcal{A}_{ |V}))\longrightarrow \widehat{\omega}(\mathcal{A}_{|U})$$  
which, on every open convex subset of  $U$ is the identity, and, on every open convex neighborhood of $\infty$ is given by the augmentation. 

Define $$Bar^{(i)}(A)(U) := \int_V \widehat{\omega}(\mathcal{A}_{|V})\cong \int_{U} {\pi_{U}}_*(\widehat{\omega}(\mathcal{A}_{|V})) $$
to be the factorization homology of $\widehat{\omega}(\mathcal{A}_{|V})$.
We finally get, for $U_1,\dots, U_s$  pairwise disjoint convex subsets of a convex open subset $V$, a structure map
\begin{multline}
\nabla_{U_1,\dots, U_s,V}: Bar^{(i)}(A)(V)=  \int_V (\widehat{\omega}(\mathcal{A}_{|V})\\
\stackrel{\bigotimes \int_{U_i}\varepsilon_{U_i}}\longrightarrow 
\bigotimes_{i=1\dots s} \int_{U_i}\widehat{\omega}(\mathcal{A}_{|U_i})   
\stackrel{\simeq}\longrightarrow Bar^{(i)}(A)(U_1)\otimes \cdots \otimes Bar^{(i)}(A)(U_s).
\end{multline}

%%%%%%%

%%%%%%%%

 The maps $\nabla_{U_1,\dots, U_s,V}$ are the structure maps of a locally constant factorization coalgebras (see~\cite{GTZ3}) hence they make $Bar^{(i)}(A)$ into an $E_i$-coalgebra (with values in the category of $E_{n-i}$-algebras), naturally in $A$:
\begin{theorem}[\cite{F, GTZ3, AF}]\label{P:BarforEn}
 The iterated Bar construction lifts into an $\infty$-functor $$Bar^{(i)}: E_n\text{-}Alg^{aug} \longrightarrow E_i\text{-}coAlg\Big(E_{n-i}\text{-}Alg^{aug}\Big).$$
One has an natural equivalence\footnote{the relative tensor product being the tensor product of $E_i$-modules over $A$} $Bar^{(i)}(A) \cong k\mathop{\otimes}\limits^{\mathbb{L}}_{A} k$ in $E_{n-i}\textbf{-Alg}$. 

 Further this functor is equivalent to the one given by Proposition~\ref{P:EncoAlgBar} when restricted to augmented $E_\infty$-algebras.
\end{theorem}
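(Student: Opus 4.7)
The plan is to produce $Bar^{(i)}(A)$ as the global sections of a locally constant \emph{cofactorization} algebra on $\R^i$ with values in $E_{n-i}\textbf{-Alg}^{aug}$, following the sketch given just before the statement, and then upgrade this to an $\infty$-functor.

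First, I would package the input data as a factorization algebra: by Theorem~\ref{P:En=Fact} combined with Dunn's theorem (Theorem~\ref{T:Dunn}), an augmented $E_n$-algebra $A$ corresponds to an object $\mathcal{A} \in \textbf{Fac}^{lc}_{\R^i}(E_{n-i}\textbf{-Alg}^{aug})$ together with an augmentation $\mathcal{A} \to \underline{k}$ in $\textbf{Fac}^{lc}_{\R^i}$. Applying Corollary~\ref{C:EnAlgmapgivesfactonDnhat} to the restriction $\mathcal{A}_{|V}$ on each convex open $V \subset \R^i$ yields a locally constant (stratified) factorization algebra $\widehat{\omega}(\mathcal{A}_{|V})$ on $\widehat{V} = V \cup \{\infty\}$; the key point is that this assignment is natural in $V$ along inclusions of convex opens and along the collapse maps $\pi_U : \widehat{V} \to \widehat{U}$ for $U \subset V$ convex.

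Second, I would construct the cofactorization structure on the factorizing basis $\mathcal{CV}$ of convex opens. Set $B(V) := \int_V \widehat\omega(\mathcal{A}_{|V}) \cong p_*(\widehat\omega(\mathcal{A}_{|V}))$. For pairwise disjoint $U_1,\dots,U_s \subset V$ with all $U_i, V$ convex, the collapse map $\pi_{U_j}: \widehat{V} \to \widehat{U_j}$ is adequately stratified, so Proposition~\ref{P:pushforwardlcStrat} gives ${\pi_{U_j}}_*\widehat\omega(\mathcal{A}_{|V}) \in \textbf{Fac}^{lc}_{\widehat{U_j}}$, and the augmentation supplies a canonical map $\varepsilon_{U_j}: {\pi_{U_j}}_*\widehat\omega(\mathcal{A}_{|V}) \to \widehat\omega(\mathcal{A}_{|U_j})$ (identity on convex opens of $U_j$, augmentation on neighborhoods of $\infty$). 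Combining pushforward along $V \to \coprod U_j$ (via the factorization property of Remark~\ref{R:factpointed}) with $\bigotimes_j \int_{U_j}(\varepsilon_{U_j})$ defines $\nabla_{U_1,\dots,U_s,V}: B(V) \to \bigotimes_j B(U_j)$. Associativity of these maps follows from the associativity of factorization algebra structure maps (diagram~\eqref{eq:associativityPFact}) together with the compatibility of iterated collapse maps $\pi_{U_j} \circ \pi_{U_{j,k}} = \pi_{U_{j,k}}$.

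Third, I would assemble this into a locally constant $\mathcal{CV}$-precofactorization algebra, check the locally constant condition (which reduces to showing $\nabla_{U,V}$ is an equivalence when $U,V$ are convex and $U\subset V$, via the excision axiom for factorization homology), and invoke the cofactorization-algebra version of Proposition~\ref{P:Ndisk(M)} (extension from disks, dualized) together with Proposition~\ref{P:extensionfrombasis} to extend uniquely to an object of $\textbf{coFac}^{lc}_{\R^i}(E_{n-i}\textbf{-Alg}^{aug})$. Applying Theorem~\ref{P:En=Fact} once more gives the $E_i$-coalgebra structure on $B(\R^i)$, and Lemma~\ref{L:iteratedBarforEnisCH} together with the excision computation $B(\R^i) \cong p_*\widehat\omega(\mathcal{A}) \cong k \otimes^{\mathbb{L}}_A k \cong Bar^{(i)}(A)$ identifies the underlying $E_{n-i}$-algebra with the iterated Bar construction. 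Finally, for the $E_\infty$-comparison, I would note that the natural equivalence $\int_M A \simeq CH_M(A)$ of Theorem~\ref{T:Fact=CH} matches the collapse map $\pi_U$ with the pinching map~\eqref{eq:pinchcube}, so that $\nabla_{U_1,\dots,U_s,V}$ is identified with the map $pinch^{S^i,s}_*$ of~\eqref{eq:copinchSn}.

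The main obstacle will be the third step: ensuring full $\infty$-categorical coherence of the $\nabla_{U_1,\dots,U_s,V}$ (not just strict associativity on the nose but compatibility of all higher homotopies) and, relatedly, proving an unambiguous locally-constant cofactorization algebra extension theorem dual to Proposition~\ref{P:Ndisk(M)} in the target category $E_{n-i}\textbf{-Alg}^{aug}$, which is not a category of chain complexes. The cleanest way is probably to work entirely with the stratified factorization algebras $\widehat{\omega}(\mathcal{A}_{|V})$ and their pushforwards along collapses within a single $\infty$-operadic framework, where functoriality in $A$ and coherence are automatic from the functoriality of $\widehat\omega$ in Corollary~\ref{C:EnAlgmapgivesfactonDnhat}.
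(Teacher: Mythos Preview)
Your proposal is correct and follows essentially the same approach as the paper: the paper itself does not give a self-contained proof but sketches exactly this construction (collapse maps $\pi_U:\widehat V\to\widehat U$, augmentation maps $\varepsilon_U$, and the resulting structure maps $\nabla_{U_1,\dots,U_s,V}$) in the paragraphs immediately preceding the statement, deferring the coherence verification to~\cite{GTZ3}. You have correctly identified the one genuine gap in the sketch (higher coherence of the $\nabla$ maps and the dual extension-from-a-basis statement) as the main obstacle.
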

\begin{example}Let $\text{Free}_n$ be the free $E_n$-algebra on $k$ as in Example~\ref{ex:FactofConf}. By Definition~\ref{D:DefnBarEm}, we have equivalences of $E_{n-1}$-algebras.
\begin{eqnarray*}
Bar(\text{Free}_n) & =& \int_{D^1\times \R^{n-1}}\text{Free}_n \mathop{\otimes}^{\mathbb{L}}_{\int_{S^0\times \R^{n-1}}\text{Free}_n} k \\
&\cong & \int_{S^1\times \R^{n-1}}\text{Free}_n \mathop{\otimes}^{\mathbb{L}}_{\int_{\R^{n-1}}\text{Free}_n} k\\
&\cong &\Big(\text{Free}_n\otimes \text{Free}_{n-1}(k[1])\Big)\mathop{\otimes}^{\mathbb{L}}_{\int_{\R^{n-1}}\text{Free}_n} k \qquad\mbox{(by Proposition~\ref{P:FactofConf})}\\
&\cong & \text{Free}_{n-1}(k[1]).
\end{eqnarray*}
The result also holds for $\text{Free}_n(V)$ instead of $Free_n$, see~\cite{F2}.  Iterating, one finds 
\begin{proposition}[\cite{F2}] There is a natural  equivalence of $E_{n-i}$-algebras
$$Bar^{(i)}(Free_n(V)) \, \cong\, \text{Free}_{n-i}(V[i]).$$
\end{proposition}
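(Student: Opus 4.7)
The plan is to proceed by induction on $i$, reducing the statement to the case $i=1$ (treated immediately after Proposition~\ref{P:FactofConf} in the excerpt for $V=k$) applied repeatedly.

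First, I would establish the base case $Bar(\text{Free}_n(V))\cong \text{Free}_{n-1}(V[1])$ in $E_{n-1}\textbf{-Alg}^{aug}$, following verbatim the computation displayed in the excerpt for $V=k$. Using Definition~\ref{D:DefnBarEm} and excision (rewriting $D^1\times \R^{n-1}\cup_{S^0\times \R^{n-1}}D^1\times \R^{n-1}\cong S^1\times \R^{n-1}$), one gets
\begin{equation*}
Bar(\text{Free}_n(V))\;\cong\; \int_{S^1\times \R^{n-1}}\text{Free}_n(V) \mathop{\otimes}^{\mathbb{L}}_{\int_{\R^{n-1}}\text{Free}_n(V)} k.
\end{equation*}
Applying the labelled version of Proposition~\ref{P:FactofConf} (as noted at the end of Example~\ref{ex:FactofConf}) identifies $\int_M \text{Free}_n(V)$ with $\bigoplus_k C_*(\text{Conf}_k(M))\otimes_{\Sigma_k} V^{\otimes k}$. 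The standard stable splitting of $\text{Conf}_\bullet(S^1\times \R^{n-1})$ as an (unlabelled) product $\text{Conf}_\bullet(\R^n)\times \text{Conf}_\bullet(\R^{n-1})$ (see~\cite{AFT}) then yields
\begin{equation*}
\int_{S^1\times \R^{n-1}}\text{Free}_n(V)\;\cong\;\text{Free}_n(V)\otimes \text{Free}_{n-1}(V[1]),
\end{equation*}
and tensoring over $\int_{\R^{n-1}}\text{Free}_n(V)\cong \text{Free}_n(V)$ (as an $E_1$-algebra with $k$) collapses the first factor, giving the claimed equivalence on underlying chain complexes.

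The inductive step is then purely formal: by the base case applied to the pair $(n-i,V[i])$,
\begin{equation*}
Bar^{(i+1)}(\text{Free}_n(V))\;\cong\; Bar\bigl(\text{Free}_{n-i}(V[i])\bigr)\;\cong\; \text{Free}_{n-i-1}(V[i+1]),
\end{equation*}
using functoriality of $Bar:E_{n-i}\textbf{-Alg}^{aug}\to E_{n-i-1}\textbf{-Alg}^{aug}$ from Theorem~\ref{P:BarforEn} and the inductive hypothesis in the first equivalence.

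The main obstacle is the base case, and specifically upgrading the chain-level equivalence to an equivalence of augmented $E_{n-1}$-algebras, where the left-hand side carries the $E_{n-1}$-structure produced by the factorization-algebra machinery of Theorem~\ref{P:BarforEn} and the right-hand side its tautological free-algebra structure. Rather than comparing the two structures directly via the pinching maps, I would establish this by a universal-property argument: show that both $Bar(\text{Free}_n(V))$ and $\text{Free}_{n-1}(V[1])$ corepresent the same functor on $E_{n-1}\textbf{-Alg}^{aug}$. The free-algebra side corepresents $B\mapsto \Map_{\hkmod}(V,\bar B[-1])$. For the Bar side one uses the $(Bar,\Omega)$-adjunction between augmented $E_n$-algebras and augmented $E_{n-1}$-algebras (equivalently, Koszul duality / Lemma~\ref{L:iteratedBarforEnisCH} combined with the universal property $\text{Free}_n\dashv \overline{(-)}$), which reduces $\Map_{E_{n-1}\textbf{-Alg}^{aug}}(Bar(\text{Free}_n(V)),B)$ to $\Map_{E_n\textbf{-Alg}^{aug}}(\text{Free}_n(V),\Omega(B))\cong \Map_{\hkmod}(V,\overline{\Omega(B)})$, and the augmentation ideal of the cobar satisfies $\overline{\Omega(B)}\simeq \bar B[-1]$ precisely when $B$ is augmented. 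Comparing these two natural identifications yields the equivalence in $E_{n-1}\textbf{-Alg}^{aug}$ and completes the induction.
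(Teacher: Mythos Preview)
Your base case computation and the inductive scheme are exactly what the paper does: it displays the same chain of equivalences (excision to pass from $D^1\times\R^{n-1}$ to $S^1\times\R^{n-1}$, then Proposition~\ref{P:FactofConf} to identify the factorization homology with configuration-space chains, then collapse the $\text{Free}_n$ factor) and then says ``Iterating, one finds'' the proposition, referring to~\cite{F2} for the general labelled case and for the $E_{n-1}$-algebra structure. So at the level of the computation you match the paper.

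Where you diverge is in the final paragraph. The paper does \emph{not} attempt to justify the $E_{n-1}$-algebra upgrade internally; it simply cites~\cite{F2}. Your universal-property argument is a genuinely different route, and if it works it is cleaner: it would give the equivalence in $E_{n-1}\textbf{-Alg}^{aug}$ directly, without the configuration-space computation at all. The issue is your justification for the $(Bar,\Omega)$-adjunction. Lemma~\ref{L:iteratedBarforEnisCH} only identifies $Bar^{(i)}(A)$ as a relative tensor product / factorization homology; it does not produce a right adjoint $\Omega:E_{n-1}\textbf{-Alg}^{aug}\to E_n\textbf{-Alg}^{aug}$, nor does the free/forgetful adjunction $\text{Free}_n\dashv\overline{(-)}$. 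What you need is that $Bar$ is left adjoint to the ``loop'' construction $B\mapsto k\times^h_B k$ with its induced $E_n$-structure (this is in~\cite{L-HA}, but is not a formal consequence of anything stated in these notes). Your identification $\overline{\Omega(B)}\simeq\bar B[-1]$ is correct once $\Omega$ is so defined. So the argument is salvageable, but you should either cite the adjunction properly or, if you want a self-contained proof, stick with the explicit computation and check compatibility with the $E_{n-1}$-structure maps coming from the $\R^{n-1}$ factor in the factorization picture---which is essentially what~\cite{F2} does.
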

If one works in $\hTopp$ instead of $\hkmod$, then $\text{Free}_{n}(X)=\Omega^n\Sigma^n X$ and the above proposition boils down to 
$Bar^{(i)}(\Omega^n\Sigma^n X) \cong B^i \Omega^i (\Omega^{n-i}\Sigma^n X) \stackrel{\simeq}{\leftarrow} \Omega^{n-i}\Sigma^{n-i}(\Sigma^{i} X)$.
\end{example}

\subsection{$E_n$-Koszul duality and Lie algebras homology}

Let $A\stackrel{\varepsilon}\to k$ be an augmented $E_n$-algebra. 
The linear dual 
  $RHom(Bar^{(n)}(A), k)$ of the $n^{\text{th}}$-iterated Bar construction inherits an $E_n$-algebra structure (Theorem~\ref{P:BarforEn}).
\begin{definition}[\cite{L-HA, F}] The $E_n$-algebra $A^{(n)!}:=RHom(Bar^{(n)}(A), k)$ dual to the iterated Bar construction is called the (derived) \emph{$E_n$-Koszul dual of $A$}.
\end{definition}
The terminology is chosen because it agrees with the usual notion of Koszul duality for quadratic associative algebras but it is really more like a $E_n\textit{-Bar}$-duality.

Direct inspection of the $E_n$-algebra structures show that  the dual of the iterated Bar construction is equivalent to the centralizer of the augmentation. 
(see \S~\ref{S:maincentralizers}):
\begin{lemma}[\cite{L-HA, GTZ3}] Let $A\stackrel{\varepsilon}\to k$ be an augmented $E_n$-algebra.
There is an natural equivalence of $E_n$-algebras $A^{(n)!} \cong \mathfrak{z}_n(A\stackrel{\varepsilon}\to k)$.
\end{lemma}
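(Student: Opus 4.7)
The plan is to identify both $A^{(n)!}$ and $\mathfrak{z}_n(\varepsilon)$ with the \emph{same} object $\RHom_A^{E_n}(A,k_\varepsilon)$ carrying a natural $E_n$-algebra structure.

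First I would dispose of the underlying chain-complex equivalence. By the relative Deligne conjecture (Theorem~\ref{T:RelativeDeligne}.1), the centralizer $\mathfrak{z}_n(\varepsilon)$ is computed by $HH_{E_n}(A,k_\varepsilon)=\RHom_A^{E_n}(A,k_\varepsilon)$, with its natural $E_n$-structure. Applying Corollary~\ref{P:coHH=coTCH}.1, this rewrites as $\RHom_{\int_{S^{n-1}\times\R}A}^{\mathrm{left}}(A,k)$. By the standard tensor--hom adjunction in $\hkmod$,
\begin{equation*}
\RHom_{\int_{S^{n-1}\times\R}A}^{\mathrm{left}}(A,k)\;\cong\;\RHom_k\!\left(A\mathop{\otimes}^{\mathbb{L}}_{\int_{S^{n-1}\times\R}A} k,\,k\right).
\end{equation*}
Lemma~\ref{L:iteratedBarforEnisCH} at $i=n$ together with the dimension axiom $\int_{D^n}A\simeq A$ identifies the relative tensor product on the right with $Bar^{(n)}(A)$. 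This yields the desired equivalence of chain complexes $\mathfrak{z}_n(\varepsilon)\simeq\RHom_k(Bar^{(n)}(A),k)=A^{(n)!}$.

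Next I would upgrade this to an equivalence of $E_n$-algebras. The idea is to perform the above chain of equivalences \emph{locally} over $\R^n$ and compare the two factorization-algebra descriptions. Recall from the proof of Theorem~\ref{T:RelativeDeligne} that $\mathfrak{z}_n(\varepsilon)$ is the global sections of the locally constant factorization algebra $\mathcal{Z}$ on $\R^n$ whose value on a convex open $U$ with base point $x_U$ is $\RHom_{A_{|U}}^{\mathbf{Fac}_U}(A_{|U},k_{\varepsilon|U})$, with structure maps $\rho_{U_1,\dots,U_r,V}$ built by plugging in module maps on disks around each $x_{U_i}$ and the augmentation $\varepsilon$ elsewhere. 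On the other hand, by the construction preceding Theorem~\ref{P:BarforEn}, $Bar^{(n)}(A)$ arises as the global sections of a locally constant \emph{cofactorization} algebra $\mathcal{B}$ on $\R^n$ with $\mathcal{B}(U)=\int_{\widehat U}\widehat\omega(\mathcal A_{|U})$ and co-structure maps $\nabla_{U_1,\dots,U_s,V}$ induced by collapse maps $\pi_{U_i}:\widehat V\to \widehat{U_i}$ together with $\varepsilon$. Dualizing $\mathcal{B}$ levelwise against $k$ yields a locally constant factorization algebra $\mathcal{B}^\vee$ on $\R^n$ whose global sections are $A^{(n)!}$ as $E_n$-algebras.

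I would then construct, for every convex $U\subset\R^n$, a natural quasi-isomorphism $\mathcal{Z}(U)\xrightarrow{\simeq}\mathcal{B}^\vee(U)$ using the local version of the four-step argument above (applied to $U$ in place of $\R^n$ and $\int_{\partial U\times\R}A$ in place of $\int_{S^{n-1}\times\R}A$). The crucial step is then to verify that these equivalences intertwine the structure maps $\rho_{U_1,\dots,U_r,V}$ with the duals of $\nabla_{U_1,\dots,U_r,V}$. This reduces, via the $\otimes$--$\RHom$ adjunction, to the statement that inserting the augmentation $\varepsilon$ on the complement of the $x_{U_i}$'s in $V$ on the centralizer side corresponds precisely to collapsing the complement of the $U_i$'s to a point on the Bar side, which is exactly the definition of the collapse maps $\pi_{U_i}$. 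The main obstacle is bookkeeping: giving a clean comparison that is simultaneously natural in all the convex subsets and compatible with the cofactorization structure on $\mathcal{B}$ (equivalently, checking the compatibility diagrams that express $\mathcal{Z}$ and $\mathcal{B}^\vee$ as the same object of $\mathbf{Fac}^{lc}_{\R^n}\simeq E_n\textbf{-Alg}$ via Theorem~\ref{P:En=Fact} and Theorem~\ref{T:Dunn}). Once this naturality is established, passing to global sections yields the desired equivalence $A^{(n)!}\simeq\mathfrak{z}_n(\varepsilon)$ in $E_n\textbf{-Alg}$.
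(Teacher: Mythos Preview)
Your proposal is correct and aligns with the paper's approach: the paper offers essentially no argument beyond ``direct inspection of the $E_n$-algebra structures'' (with a citation to \cite{L-HA, GTZ3}), and what you have written is precisely a careful unpacking of that inspection---identifying both sides with $\RHom_A^{E_n}(A,k_\varepsilon)$ via Theorem~\ref{T:RelativeDeligne} and Lemma~\ref{L:iteratedBarforEnisCH}, then matching the factorization algebra on $\R^n$ from the centralizer construction against the dual of the cofactorization algebra underlying $Bar^{(n)}(A)$. The key observation you isolate (that inserting $\varepsilon$ on the complement of the marked points corresponds, under tensor--hom adjunction, to the collapse maps defining the $E_n$-coalgebra structure on $Bar^{(n)}(A)$) is exactly the content of the ``direct inspection.''
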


Let $M$ be a dimension $m$ manifold endowed with a framing of $M\times \R^{n}$. By Proposition~\ref{P:EnAlgmapgivesfactonDn}, Theorem~\ref{T:Dunn} and Proposition~\ref{L:expFact}, we have the functor 
\begin{multline*}
\omega_{M\times D^n}: E_{n+m}\textbf{-Alg}^{aug}\to E_{n}\textbf{-Alg}^{aug}(E_{m}\textbf{-Alg})\\\stackrel{\omega_{D^n}}\longrightarrow \textbf{Fac}^{lc}_{D^n}(E_{m}\textbf{-Alg})
\cong \textbf{Fac}^{lc}_{ \R^{m}\times D^n} \longrightarrow \textbf{Fac}^{lc}_{M\times D^n} 
\end{multline*}
where the last map is induced by the framing of $M\times \R^n$ as in Example~\ref{ex:framedisEn}.
Let  $p$ be the map $p:M\times D^n\to pt$.
We can compute the factorization homology $p_*(\omega_{M\times D^n})$  by first pushing forward along the projection on $D^n$ and then applying $p_*$ or first pushing forward on $M$ and then pushing forward to the point.  By Theorem~\ref{P:BarforEn}, we thus obtain an equivalence 
\begin{equation}
\label{eq:KoszuldualofFact} p_*(\omega_{M\times D^n}) \, \cong \,   Bar^{(n)} \Big(\int_{M\times \R^n}A\Big) \,\cong\,\int_{M} \big(Bar^{(n)}(A)\big)
\end{equation}
where the right equivalence is an equivalence of $E_n$-coalgebras. 
When $M$ is further \emph{closed}, this result can be extended  to obtain :
\begin{proposition}[Francis~\cite{F2, AF}] \label{P:BarofFact} Let $A$ be an augmented $E_ {n+m}$-algebra,  $M\times \R^n$ be a framed closed manifold.
There is an natural equivalence of $E_n$-algebras $$\int_{M\times \R^n} A^{(n+m)!} \cong \Big(\int_{M\times \R^n} A\Big)^{(n)!}$$ if $Bar^{(n)}\Big(\int_{M\times \R^n}A\Big)$  has projective finite type homology groups in each degree.
In particular, $\int_{M} A^{(m)!} \cong \big(\int_{M}A\big)^{\vee}$ when $M$ is framed (and the above condition is satisfied).
\end{proposition}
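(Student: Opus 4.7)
The plan is to use equation~\eqref{eq:KoszuldualofFact} together with linear duality and a Poincar\'e-type duality for closed framed manifolds.

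First, I would apply equation~\eqref{eq:KoszuldualofFact} to obtain the equivalence
\[
Bar^{(n)}\Bigl(\int_{M\times \R^n} A\Bigr) \;\simeq\; \int_M Bar^{(n)}(A)
\]
of $E_n$-coalgebras, and then dualize. The finite-type hypothesis ensures that the biduality map is an equivalence and that $\RHom(-,k)$ sends the $E_n$-coalgebra structure on the left-hand side to a compatible $E_n$-algebra structure on the right-hand side; dualizing therefore produces an equivalence of $E_n$-algebras
\[
\Bigl(\int_{M\times \R^n} A\Bigr)^{(n)!} \;\simeq\; \RHom\Bigl(\int_M Bar^{(n)}(A),\, k\Bigr).
\]

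Second, I would reduce the problem to an ``in particular'' duality statement. Writing $B := Bar^{(n)}(A)$, which by Theorem~\ref{P:BarforEn} is an $E_n$-coalgebra in augmented $E_m$-algebras, the identity $Bar^{(n+m)} = Bar^{(m)}\circ Bar^{(n)}$ gives $A^{(n+m)!} \simeq B^{(m)!}$ as $E_m$-algebras, with the additional $E_n$-algebra structure on both sides coming from the $E_n$-coalgebra structure on $B$. The Fubini formula (Corollary~\ref{C:FubiniTCH}) and the dimension axiom then yield $\int_{M\times \R^n} A^{(n+m)!} \simeq \int_M B^{(m)!}$ as $E_n$-algebras. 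Thus the theorem reduces to the assertion that, for a closed framed $m$-manifold $M$ and an augmented $E_m$-algebra $B$ satisfying the finiteness hypothesis, there is a natural equivalence $\int_M B^{(m)!} \simeq (\int_M B)^\vee$ (this special case being exactly the ``in particular'' statement in the proposition).

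For this Poincar\'e-type duality I would realize both sides as global sections of dual factorization algebras on the one-point compactification $M^+$. Using Corollary~\ref{C:EnAlgmapgivesfactonDnhat} applied to the augmentation $B\to k$, one obtains a stratified locally constant factorization algebra $\widehat{\omega}(B)$ on $M^+$, valued $B$ on $M\subset M^+$ and $k$ at $\infty$. Excision identifies $\int_{M^+}\widehat{\omega}(B)$ with the relative tensor product $\int_M B \otimes^{\mathbb{L}}_{\int_{S^{m-1}\times \R}B} k$, which is a global $Bar^{(m)}$-type construction. The centralizer description $B^{(m)!} \simeq \mathfrak{z}_m(B\to k) = \RHom_B^{E_m}(B,k)$ (Theorem~\ref{T:RelativeDeligne}) together with the tautological evaluation $B\otimes \mathfrak{z}_m(B\to k)\to k$ then yield, via the \v{C}ech complex on a factorizing basis of disks (Proposition~\ref{P:extensionfrombasis}), a pairing $\int_M B \otimes \int_M B^{(m)!} \to k$; the finite-type hypothesis makes this pairing perfect, since locally on a disk it reduces to the tautological perfect pairing.

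The main obstacle is the Poincar\'e-type duality in this ``in particular'' step, together with the compatibility of the various $E_n$-algebra structures. Concretely, one has to check that the $E_n$-coalgebra structure on $\int_M Bar^{(n)}(A)$ coming from the pushforward along the $D^n$-factor in $\omega_{M\times D^n}(A)$ corresponds, under linear duality, to the $E_n$-algebra structure on $\int_{M\times \R^n} A^{(n+m)!}$ inherited from the $\R^n$-factor of the framing. I expect this naturality is most cleanly handled at the level of the stratified locally constant factorization algebra $\omega_{M\times D^n}(A)$ of Proposition~\ref{P:EnAlgmapgivesfactonDn} and its linear-dual cofactorization algebra structure, so that the desired equivalence arises as a pushforward along $M\times D^n \to D^n$ of equivalent (co)factorization algebras.
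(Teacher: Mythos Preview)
The paper does not actually prove this proposition: it is stated and attributed to Francis~\cite{F2, AF}, with only the preparatory equation~\eqref{eq:KoszuldualofFact} derived beforehand. So there is no detailed argument in the paper to compare against; your first step (apply~\eqref{eq:KoszuldualofFact} and dualize) is precisely the lead-in the paper provides, and from that point on you are supplying what the paper outsources to the cited references.

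Your reduction to the ``in particular'' case is clean and is the natural strategy: writing $B=Bar^{(n)}(A)$ and using $Bar^{(n+m)}\simeq Bar^{(m)}\circ Bar^{(n)}$ together with Fubini does collapse the general statement to the $n=0$ Poincar\'e-type duality $\int_M B^{(m)!}\simeq(\int_M B)^\vee$ for closed framed $M$. The one thing to be careful about in this reduction is exactly what you flag at the end: the $E_n$-structure on $A^{(n+m)!}$ comes from the $E_n$-\emph{coalgebra} structure on $B$ via the identification $A^{(n+m)!}\simeq B^{(m)!}$, and you need this to coincide (after $\int_M$) with the $E_n$-structure produced by Fubini on $\int_{M\times\R^n}A^{(n+m)!}$. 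This is not automatic from the statements you cite; it requires tracking the two pushforwards of $\omega_{M\times D^n}(A)$ as you indicate, and is really the content of~\eqref{eq:KoszuldualofFact} being an equivalence of $E_n$-coalgebras.

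For the Poincar\'e-duality step itself, your outline via $M^+$ and the evaluation pairing $B\otimes\mathfrak{z}_m(B\to k)\to k$ is in the right spirit, but the argument as written has a gap: passing from ``perfect on each disk'' to ``perfect globally'' is not a formal consequence of the \v{C}ech description, since $\RHom(-,k)$ turns the colimit computing $\int_M B$ into a limit, and one must explain why $\int_M B^{(m)!}$ (a colimit) agrees with that limit. This is exactly where closedness of $M$ and the finite-type hypothesis enter, and in Francis's treatment it is handled either via Poincar\'e/Koszul duality for factorization homology (the subject of~\cite{AF}) or via a handle-induction argument using excision, where the finite-type assumption guarantees that dualization commutes with the relative tensor products appearing at each gluing step. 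Your sketch would benefit from making one of these mechanisms explicit rather than appealing to a local-to-global principle for perfect pairings.
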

 In plain english, we can say that the factorization homology over a closed framed manifold of an algebra and its $E_n$-Koszul dual are the same (up to finiteness issues). 
\begin{example}[Lie algebras and their $E_n$-enveloping algebras]Let $\textbf{Lie-Alg}$ be the $\infty$-category of Lie algebras\footnote{which is equivalent to the category of $L_{\infty}$-algebras}.
The forgetful functor $E_n\textbf{-Alg}\to \textbf{Lie-Alg}$, induced by $A\mapsto A[n-1]$, has a left adjoint $U^{(n)}: \textbf{Lie-Alg}\to E_n\textbf{-Alg}$, the $E_n$-enveloping algebra functor (see~\cite{Fresse-EnKoszulity, FG} for a construction). For $n=1$, this functor agrees with the standard universal enveloping algebra. 
\begin{proposition}[Francis~\cite{F2}]\label{P:Barofenveloping}Let $\mathfrak{g}$ be a (differential graded) Lie algebra. There is an natural equivalence of $E_n$-coalgebras
$$(U^{(n)}(\mathfrak{g}))^{(n)!} \; \cong \; C^\bullet_{Lie}(\mathfrak{g}) $$ where $C^\bullet_{\text{Lie}}(\mathfrak{g})$ is the usual Chevalley-Eilenberg cochain complex (endowed with its differential graded commutative algebra structure). 
\end{proposition}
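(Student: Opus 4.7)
The plan is to identify $(U^{(n)}(\mathfrak{g}))^{(n)!} = \RHom\bigl(Bar^{(n)}(U^{(n)}(\mathfrak{g})),k\bigr)$ with the Chevalley--Eilenberg cochain algebra in two stages: first establish an equivalence of chain complexes $Bar^{(n)}(U^{(n)}(\mathfrak{g}))\simeq C^{Lie}_\bullet(\mathfrak{g})$, and then promote it to an equivalence of $E_n$-(co)algebras before dualising.

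For the chain level, I would invoke Lemma~\ref{L:iteratedBarforEnisCH}, which gives
\[
Bar^{(n)}\bigl(U^{(n)}\mathfrak{g}\bigr) \;\simeq\; k\mathop{\otimes}\nolimits^{\mathbb{L}}_{U^{(n)}\mathfrak{g}} k,
\]
where the relative tensor product is computed over the $E_n$-algebra structure; by Corollary~\ref{C:envelopEn} this is equivalent to the same tensor product taken over the universal enveloping $E_1$-algebra $\int_{S^{n-1}\times\R}U^{(n)}\mathfrak{g}$. The adjunction defining $U^{(n)}$ as left adjoint to the $(1-n)$-shift forgetful functor produces a Chevalley--Eilenberg-type free $E_n$-resolution $U^{(n)}\mathfrak{g}\otimes S^\bullet(\mathfrak{g}[n])\twoheadrightarrow k$ of the augmentation module, whose image under $k\otimes^{\mathbb{L}}_{U^{(n)}\mathfrak{g}}(-)$ is exactly $(S^\bullet(\mathfrak{g}[1]),d_{CE})=C^{Lie}_\bullet(\mathfrak{g})$. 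A more geometric alternative is to realise $U^{(n)}\mathfrak{g}$ as the global sections of the enveloping factorization algebra $C^{CE}(\Omega^{\bullet,c}_{dR,\R^n}\otimes\mathfrak{g})$ on $\R^n$, and then to invoke the pushforward identification~\eqref{eq:KoszuldualofFact} together with the compactly supported cohomology $H^\bullet_c(\R^n;k)\cong k[-n]$ to reduce the computation of $Bar^{(n)}$ to the ordinary Chevalley--Eilenberg complex of $\mathfrak{g}$.

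For the upgrade to $E_n$-(co)algebras, I would use the explicit description from Theorem~\ref{P:BarforEn} of the $E_n$-coalgebra structure on $Bar^{(n)}(A)$ as the one carried by the stratified locally constant factorization algebra on $\widehat{\R^n}$ associated with $A\to k$ (see Corollary~\ref{C:EnAlgmapgivesfactonDnhat}), with coproduct assembled from the pinching structure maps $\nabla_{U_1,\dots,U_s,V}$. By naturality in $\mathfrak{g}$ and the universal property of $U^{(n)}$, checking that the pinching coproduct matches the deconcatenation--shuffle coproduct on $C^{Lie}_\bullet(\mathfrak{g})$ reduces to the case of the free Lie algebra on one generator, and, using Dunn's Theorem~\ref{T:Dunn} to unfold $Bar^{(n)}$ as an iterated application of $Bar$, ultimately to the classical statement that $Bar(U\mathfrak{g})\simeq C^{Lie}_\bullet(\mathfrak{g})$ as dg cocommutative coalgebras. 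The resulting structure on $C^{Lie}_\bullet(\mathfrak{g})$ is the $E_n$-coalgebra obtained by restricting (via $E_\infty\to E_n$) its natural cocommutative comultiplication, whose $k$-linear dual is the CDGA structure on $C^\bullet_{Lie}(\mathfrak{g})$; applying $\RHom(-,k)$ then produces the claimed equivalence.

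The hardest step will be the second one: transporting the Koszul resolution to a resolution inside locally constant factorization algebras on $\widehat{\R^n}$, so that the pushforward to a point delivers simultaneously the chain equivalence and the coproduct. Concretely, one must verify that the coproduct obtained from the pinching maps on $\widehat{\R^n}$ is compatible, under the chain-level Koszul resolution, with the cocommutative coalgebra structure on $S^\bullet(\mathfrak{g}[1])$; without a resolution living in factorization algebras this compatibility is non-obvious. A secondary technical point is the passage from the equivalence of $Bar$-constructions to one of their linear duals, which, in analogy with the finiteness hypothesis in Proposition~\ref{P:BarofFact}, will impose that $\mathfrak{g}$ be of degreewise finite type (or force a pro-finite reformulation of the statement).
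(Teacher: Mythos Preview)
The paper does not actually supply a proof of this proposition: it is attributed to Francis~\cite{F2} and used thereafter as a black box (the later cross-reference to ``the proof of Proposition~\ref{P:Barofenveloping}'' in the Butterfly example appears to be a mislabelled pointer to a different proposition). So there is no argument in these notes for you to compare against.

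That said, your two-stage outline is along the expected lines and is consonant with the tools the paper develops. The factorization-algebra route you mention second---realising $U^{(n)}(\mathfrak{g})$ as global sections of the Chevalley--Eilenberg enveloping factorization algebra $C^{CE}(\Omega^{\bullet,c}_{dR,\R^n}\otimes\mathfrak{g})$ on $\R^n$ and then using the compactly supported cohomology of $\R^n$---is essentially the approach in Francis's paper, and it has the virtue of delivering the chain-level identification and the (co)algebra structure simultaneously, thereby sidestepping the ``hardest step'' you flag. Your first route (build a Koszul-type $E_n$-resolution by hand and then transport the coproduct) is more laborious, and the claimed reduction ``to the free Lie algebra on one generator by the universal property of $U^{(n)}$'' is not obviously valid: universal properties let you test \emph{maps} out of $U^{(n)}(\mathfrak{g})$, but they do not automatically reduce verification of a \emph{comultiplicative} structure on $Bar^{(n)}(U^{(n)}\mathfrak{g})$ to a single generating case. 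If you pursue that route you will need a genuine naturality argument in factorization algebras, at which point you are essentially back to the second approach. Your finiteness caveat at the end is well taken and is indeed implicit in how the paper uses the result.
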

Then using Proposition~\ref{P:BarofFact} and Proposition~\ref{P:Barofenveloping}, we obtain for $n=1,3,7$ that
\begin{equation*}
\int_{S^{n}} U^{(n)}(\mathfrak{g}) \, \cong \, \left(\int_{S^n}  C^\bullet_{Lie}(\mathfrak{g}) \right)^\vee
\end{equation*}
which for $n=1$ gives the following standard result computing the Hochschild homology groups of an universal enveloping algebra:
$HH_* (U(\mathfrak{g})) \cong HH_* (C^\bullet_{Lie}(\mathfrak{g}))^{\vee}   $. Applying the Fubini formula, we also find
$$\int_{S^1\times S^1} U^{(2)}(\mathfrak{g})\, \cong \,  CH_*\big(CH_*(C^\bullet_{Lie}(\mathfrak{g}))^{(1)!}\big).$$
\end{example}

\subsection{Extended topological quantum field theories}

In~\cite{L-TFT}, Lurie introduced factorization homology as a (generalization of) an invariant of an extended topological field theory and offshoot of  the cobordism hypothesis. We wish now to reverse this construction and explain very roughly how factorization homology can be used to produce an extended topological field theory.

Following~\cite{L-TFT},   there is an $\infty$-category\footnote{the cobordism hypothesis actually ensures that it is an $\infty$-groupoid} of \emph{extended topological field theories} with values in a symmetric monoidal $(\infty,n)$-category with duals $(\mathcal{C},\otimes)$. It is  the category of symmetric monoidal functors $\textbf{Fun}^{\otimes}\big((\text{Bord}_n^{fr},\coprod), (\mathcal{C},\otimes) \big)$ where  $\text{Bord}_n^{fr}$  is  the $(\infty,n)$-category of bordisms of framed manifolds with monoidal structure given by disjoint union. 
In~\cite{L-TFT},  $\text{Bord}_n^{fr}$ is defined as an $n$-fold Segal space which precisely  models the following  \emph{intuitive} notion of an $(\infty,n)$-category whose objects are framed compact $0$-dimensional manifolds. 
The morphisms between objects are framed $1$-bordism,
 that is $\Hom_{\text{Bord}_n^{fr}}(X,Y)$ consists of $1$-dimensional framed manifolds  $T$ with boundary $\partial T= Y\coprod X^{op}$ (where $X^{op}$ has the opposite framing to the one of  the object $X$). 
The $2$-morphisms in $\text{Bord}_n^{fr}$ are framed $2$-bordisms between $1$-dimensional framed manifolds (with corners) and so on. 
The $n$-morphisms are $n$-framed bordisms between $n-1$-dimensional framed manifolds with corners, its $n+1$-morphisms diffeomorphisms and the higher morphisms are isotopies.
Note that in the precise model of $\text{Bord}_n^{fr}$, the boundary component $N_1,\dots, N_r$ of  a manifold $M$ are represented by an open manifold with boundary components $N_1\times \R, \dots, N_r\times \R$ (in other words  are replaced by open collars).

There is an $(\infty,n+1)$-category $E_{\leq n}\textbf{-Alg}$ whose construction is  only sketched in~\cite{L-TFT} and detailled in~\cite{Sc-EnCat} using a model based on factorization algebras. The category    $E_{\leq n}\textit{-}Alg$  can be described informally as the $\infty$-category with objects the $E_n$-algebras, $1$-morphisms $\Hom_{E_{\leq n}\textbf{-Alg}}(A,B)$ is the space of all $(A,B)$-bimodules in $E_{n-1}\textbf{-Alg}$ and so on.
The ($\infty$-)category  $n\text{-}\Hom_{E_{\leq n}\textbf{-Alg}}(P,Q))$ of $n$-morphisms is the $\infty$-category of $(P,Q)$-bimodules where $P,Q$ are $E_1$-algebras (with additional structure). In other words  we have
\begin{eqnarray*} n\text{-}\Hom_{E_{\leq n}\textbf{-Alg}}(P,Q)) &\cong& \{P\}\mathop{\times}_{\textbf{Fac}^{lc}_{(-\infty,0)} }\textbf{Fac}^{lc}_{\R_*} \mathop{\times}_{\textbf{Fac}^{lc}_{(0, +\infty)}} \{R\} \\ 
&\cong & 
\{P\}\mathop{\times}_{E_1\textbf{-Alg}} \textbf{BiMod} \mathop{\times}_{E_1\textbf{-Alg}} \{R\}
\end{eqnarray*} see Example~\ref{ex:pointeddisk}.
 The composition 
$$ n\text{-}\Hom_{E_{\leq n}\textbf{-Alg}}(P,Q))\times n\text{-}\Hom_{E_{\leq n}\textbf{-Alg}}(Q,R)) \longrightarrow n\text{-}\Hom_{E_{\leq n}\textbf{-Alg}}(P,R))$$ 
is given by tensor products of bimodules: $ {}_{P}M_{Q} \otimes^{\mathbb{L}}_{Q} {}_{Q}N_{R}$ which in terms of factorization algebras is induced by the pushforward along the map $q:\R_*\times_{\R} \R_*  \to \R_*$ where $\R_*\times_{\R} \R_*$ is identified with $\R$ stratified in two points $-1$ and $1$ and $q$ is the quotient map identifying the interval $[-1,1]$ with the stratified point  $0\in \R_*$.

Let $E_{\leq n}\textbf{-Alg}_{(0)}$ be the $(\infty, n)$-category  obtained from $E_{\leq n}\textbf{-Alg}$ by discarding non-invertible $n+1$-morphisms, that is  $E_{\leq n}\textbf{-Alg}_{(0)}=Gr^{(n)}(E_{\leq n}\textbf{-Alg})$ where $Gr^{(n)}$ is the right adjoint of the forgetful functor  $(\infty,n+1)\textbf{-Cat} \to (\infty,n)\textbf{-Cat}$.

 The $(\infty,n)$-category $E_{\leq n}\textbf{-Alg}_{(0)}$ is fully dualizable (the dual of an algebra is its opposite algebra) hence every $E_n$-algebra determines in an unique way an extended topological field theory by the cobordism hypothesis.

In fact, this extended field theory can be constructed by factorization homology. Let $M$ be a $m$-dimensional  manifold. We say that $M$ is \emph{stably $n$-framed} if  $M\times \R^{n-m}$ is framed. 
Assume that $M$ has two ends which are trivialized as $L\times \R^{op} \subset M$ and $R\times \R \subset M$, where $L,R$ are   stably framed codimension $1$ closed sub-manifolds; here $\R^{op}$ means $\R$ endowed with the opposite framing to the standard one. 
For instance,    $M$ can be the interior of compact manifold $\overline{M}$ with two boundary component $L$, $R$ and  trivializations $L\times [0,\infty)\hookrightarrow \overline{M}$ and $R\times (-\infty, 0]\hookrightarrow \overline{M}$ where the trivialization on $L\times [0,\infty)$ has the opposite orientation as the one induced by $M$.

In that case, Lemma~\ref{L:homthMfldisEn} (and Proposition~\ref{P:BimodasFact}) imply that the factorization homology $\int_M A$ is an $E_{n-m}$-algebra which is also a bimodule over the $E_{n-m+1}$-algebras  $\big(\int_{L\times \R^{n-m+1}}A, \int_{R\times \R^{n-m+1}}A\big)$: $$ \int_{M\times \R^{n-m}} A \,\in \, \Big\{\int_{L\times \R^{n-m+1}}A\Big\}\mathop{\times}_{E_1-\textbf{-Alg}} \textbf{BiMod}\big(E_{n-m}\textbf{-Alg}\big)\mathop{\times}_{E_1-\textbf{-Alg}} \Big\{\int_{R\times \R^{n-m+1}}A\Big\}.$$
Thus $\int_M A$ is a $m$-morphism in $ E_{\leq n}\textbf{-Alg}_{(0)}$ from $R$ to $L$. In fact, one can prove
\begin{theorem}[\cite{Sc-EnCat}]\label{P:TCHasaTFT} Let $A$ be an $E_n$-algebra. The rule which, to a stably $n$-framed manifold $M$ of dimension $m$, associates 
$$Z_A(M):= \int_{M\times \R^{n-m}} A $$ extends as an extended field theory $Z_A\in \textbf{Fun}^{\otimes}\big(\text{Bord}_n^{fr}, E_{\leq n}\textbf{-Alg}_{(0)}\big)$.
\end{theorem}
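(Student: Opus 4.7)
The plan is to construct $Z_A$ level by level on the $(\infty,n)$-categorical skeleton of $\text{Bord}_n^{fr}$ using factorization homology, then verify that the required compositions and coherences are encoded exactly by excision and by the equivalences between stratified locally constant factorization algebras and categories of iterated bimodules proved in \S\ref{S:stratifiedFact}. On the level of objects, a stably $n$-framed point goes to $\int_{pt\times \R^n}A \simeq A$ (or $A^{op}$ for the opposite framing, as in Example~\ref{ex:E1frombasis}), viewed as an object of $E_{\leq n}\textbf{-Alg}_{(0)}$. For a $k$-morphism, represented by a stably $n$-framed $k$-manifold $W$ whose collared ends exhibit it as a $k$-bordism between $(k-1)$-manifolds $L$ and $R$, we set $Z_A(W)=\int_{W\times \R^{n-k}}A$; by iterated application of Lemma~\ref{L:homthMfldisEn}, the trivialized ends $L\times \R$ and $R\times\R$ inside $W$ endow $\int_{W\times \R^{n-k}}A$ with the structure of an $E_{n-k}$-algebra together with a compatible $\bigl(\int_{L\times\R^{n-k+1}}A,\int_{R\times\R^{n-k+1}}A\bigr)$-bimodule structure, which is precisely the data of a $k$-morphism in $E_{\leq n}\textbf{-Alg}_{(0)}$ by iterating Proposition~\ref{P:BimodasFact} and Corollary~\ref{C:iteratedBiMod}.

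The second step is to verify that composition of bordisms is sent to composition in $E_{\leq n}\textbf{-Alg}_{(0)}$. If $W=W_1\cup_{N\times\R} W_2$ is the composition of two $k$-bordisms along a common $(k-1)$-dimensional boundary $N$, the excision axiom (Definition~\ref{D:HomologyforMfld}.iii) and the pushforward formula~\eqref{eq:pushforwardfacthomology} give a canonical equivalence
\[
\int_{W\times \R^{n-k}}A \;\simeq\; \int_{W_1\times \R^{n-k}}A \mathop{\otimes}^{\mathbb{L}}_{\int_{N\times \R^{n-k+1}}A} \int_{W_2\times \R^{n-k}}A,
\]
which is exactly the relative tensor product that implements composition of bimodules in the pointed-line model $\textbf{Fac}^{lc}_{\R_*}$ of $\textbf{BiMod}$ (Example~\ref{ex:pointeddisk}). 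More generally, the horizontal and vertical compositions of $k$-morphisms involve gluing several collars simultaneously, but these are all governed by iterated excision together with the pushforward equivalences of Corollary~\ref{L:expFactStrat} applied to $\R_*^{\times j}$, whose target is precisely the iterated bimodule category $\textbf{BiMod}(\dots(\textbf{BiMod})\dots)$ appearing in Corollary~\ref{C:iteratedBiMod}. The symmetric monoidal structure is handled similarly via the monoidal axiom (Definition~\ref{D:HomologyforMfld}.ii), which sends disjoint unions of stably framed bordisms to tensor products.

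The third and technically hardest step is to assemble these pointwise assignments into an actual symmetric monoidal $(\infty,n)$-functor. The cleanest route is to use the factorization-algebra model of $E_{\leq n}\textbf{-Alg}$ developed in~\cite{Sc-EnCat}: a $k$-morphism there is by definition a stratified locally constant factorization algebra on $\R_*^{\times k}\times\R^{n-k}$ with prescribed restrictions to each open stratum, and composition is given by pushforward along the quotient maps $\R_*\times_{\R}\R_*\to\R_*$. One then realizes $\text{Bord}_n^{fr}$ as an $n$-fold Segal space whose $(k_1,\dots,k_n)$-simplices are stably $n$-framed manifolds equipped with $k_1+\cdots+k_n$ many transverse trivialized hypersurfaces stratifying them into a grid. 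The key observation is that crossing with $\R^{n-k}$ turns such a stratified bordism into an object on which we can form a stratified locally constant factorization algebra by restricting $A$ (viewed as a locally constant factorization algebra on $\R^n$) along the embeddings of all strata; the value of $Z_A$ on this grid-stratified bordism is then the pushforward of this factorization algebra to the appropriate $\R_*^{\times k_1}\times\cdots\times\R_*^{\times k_n}$, which lands in $E_{\leq n}\textbf{-Alg}_{(0)}$ by the stratified equivalences of \S\ref{S:stratifiedFact}. The Segal and globularity conditions for $Z_A$ follow from Proposition~\ref{L:expFactStrat} and the excision formula above, while symmetric monoidality follows from the monoidal axiom applied simplex-wise.

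The main obstacle is precisely this last packaging step: checking that the naive assignment is functorial not only on composable pairs but coherently on all higher simplices of the $n$-fold Segal space $\text{Bord}_n^{fr}$, and that the construction is invariant under the choice of collars and of stratifying hypersurfaces on a given bordism. The first issue reduces, by the higher Segal condition, to iterated applications of excision and Fubini (Corollary~\ref{C:FubiniTCH}); the second is handled by the homotopy invariance of factorization homology inside the contractible space of collars and by the fact that any two transverse stratifications can be connected by an isotopy through stratifications, under which the pushforward of the associated stratified factorization algebra is unchanged up to natural equivalence. Once these coherences are in place, the resulting symmetric monoidal $(\infty,n)$-functor is the desired extended TFT, and a direct check on the point shows $Z_A(pt)=A$ as claimed.
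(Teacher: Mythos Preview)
The paper does not actually prove this theorem: it is stated with attribution to \cite{Sc-EnCat} (Scheimbauer, in preparation) and no argument is given beyond the informal discussion preceding the statement. That discussion explains why $\int_{M\times\R^{n-m}}A$ acquires the iterated bimodule structure required of a $k$-morphism in $E_{\leq n}\textbf{-Alg}_{(0)}$ (via Lemma~\ref{L:homthMfldisEn} and Proposition~\ref{P:BimodasFact}), and records that composition in $E_{\leq n}\textbf{-Alg}_{(0)}$ is implemented by relative tensor products, which matches excision. But the packaging into a genuine symmetric monoidal functor of $(\infty,n)$-categories is entirely deferred to the cited reference.

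Your sketch is a faithful expansion of exactly this informal discussion, and you correctly identify the hard part: assembling the pointwise assignment into a map of $n$-fold Segal spaces, with all coherences. Your proposed route through the factorization-algebra model of $E_{\leq n}\textbf{-Alg}$ and the stratified-pushforward equivalences of \S\ref{S:stratifiedFact} is precisely the strategy of \cite{Sc-EnCat}. So there is no discrepancy to flag between your approach and the paper's --- the paper simply does not contain a proof to compare against. What you have written is a reasonable high-level outline; the substance (building the map of $n$-fold Segal objects and verifying the completeness and Segal conditions land correctly) is the content of Scheimbauer's thesis rather than of these notes.
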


\section{Commutative factorization algebras}\label{S:ComFactAlgebras}
In this Section, we explain in details the relationship in between classical homology theory \emph{\`a la} Eilenberg-Steenrod with factorization homology and more generally between  (co)sheaves and factorization algebras. The main point is that when $\mathcal{C}$ is endowed with the monoidal structure given by the coproduct, then Factorization algebras boils down to the usual theory of cosheaves. This is in particular the case of factorization algebras with values in a category of commutative algebras $E_\infty\textbf{-Alg}(\mathcal{C},\otimes)$ in $(\mathcal{C}, \otimes)$.
\subsection{Classical homology as factorization homology}
In this section we explain the relationship between factorization homology and singular homology (as well as generalized (co)homology theories for spaces).

Let $\dgZ$ be the ($\infty$-)category of differential graded abelian groups (i.e. chain complexes of $\mathbb{Z}$-modules). It has a symmetric monoidal structure given by the direct sum of chain complexes, which is the coproduct in $\dgZ$.
We can thus define homology theory for manifolds with values in $(\dgZ, \oplus)$. These are precisely (restrictions of) the (generalized) cohomology theories for spaces and nothing more. Recall that $\Mfldn^{or}$ is the ($\infty$-)category of oriented manifolds, Example~\ref{E:mainExofXstructure}. 

\begin{corollary}\label{C:ordinaryhomologyasFact}
Let $G$ be an abelian group\footnote{or even a graded abelian group or chain complex of abelian groups}. There is an unique  homology theory for oriented manifolds with coefficient in $G$ (Definition~\ref{D:facthomwithcoeff}), that is (continuous) functor $\mathcal{H}_{G}: \Mfldn^{or} \to  \dgZ$ satisfying the axioms 
\begin{itemize} 
\item \textbf{(dimension)}  $\mathcal{H}_{G}({\R^n})\cong G$ ;
\item \textbf{(monoidal)} the canonical map $\bigoplus\limits_{i\in I}\mathcal{H}_{G}({M_i})\stackrel{\simeq}\longrightarrow \mathcal{H}_{G}({\coprod\limits_{i\in I} M_i})$ is a quasi-isomorphism;
\item \textbf{(excision)} If $M$ is an oriented manifold  obtained as the gluing $M=R\cup_{N\times \R}L$  of two submanifolds along a a codimension $1$ submanifold  $N$ of $M$  with a trivialization $N\times \R$ of its tubular neighborhood in $M$,  there is an  natural  equivalence
$$\mathcal{H}_{G}(M) \stackrel{\simeq}\longleftarrow \textit{cone}\Big(\mathcal{H}_{G}({N\times \R})\stackrel{i_L-i_N}\longrightarrow \mathcal{H}_{G}({L})\oplus\mathcal{H}_{G}(R)\Big).$$ Here $\mathcal{H}_{G}({N\times \R})\stackrel{i_L}\longrightarrow \mathcal{H}_{G}({L})$ and $\mathcal{H}_{G}({N\times \R})\stackrel{i_R}\longrightarrow  \mathcal{H}_{G}({R})$ are the maps induced by functoriality by the inclusions of $N\times \R$ in $L$ and $R$.
\end{itemize}
Then, this homology theory is singular homology\footnote{or generalized exceptional homology when $G$ is graded or a chain complex} with coefficient in $G$. In particular, it extends as an homology theory for spaces.
\end{corollary}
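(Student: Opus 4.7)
The plan is to derive this statement directly from the general uniqueness theorem for factorization homology (Theorem~\ref{T:UniquenessFactHom}) applied to the symmetric monoidal $\infty$-category $(\dgZ, \oplus)$. The crucial observation is that when the monoidal structure is the categorical coproduct, a great deal of extra structure becomes automatic and canonical. In particular, any object $G \in \dgZ$ is canonically an $E_\infty$-algebra (hence an oriented $\Disk_n^{or}$-algebra via Example~\ref{E:ComisDisk}), with multiplication given by the fold map $G \oplus G \to G$; similarly every morphism in $\dgZ$ is canonically a map of such algebras. Thus specifying a $\Disk_n^{or}$-algebra in $(\dgZ, \oplus)$ reduces to specifying the underlying object $G$, and Theorem~\ref{T:UniquenessFactHom} (and Remark~\ref{R:facthomwithcoeff}) produces a unique functor $\mathcal{H}_G : \Mfldn^{or} \to \dgZ$ satisfying the dimension, monoidal, and excision axioms.

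Next I would translate the factorization-homology excision axiom into its classical shape. With $\oplus$ as the monoidal structure, an $E_1$-algebra is just a pointed object of $\dgZ$ (a chain complex $A$ together with a unit map $0 \to A$, which is trivial), and the derived relative tensor product $M \otimes^{\mathbb{L}}_{A} N$ of left and right $A$-modules becomes the homotopy pushout $M \sqcup^{h}_{A} N$ in $\dgZ$. For chain complexes this pushout is computed precisely by the mapping cone $\textit{cone}(A \xrightarrow{i_L - i_R} M \oplus N)$. Therefore the Mayer-Vietoris excision axiom of Definition~\ref{D:HomologyforMfld} specializes, in $(\dgZ, \oplus)$, to exactly the cone formulation that appears in the statement of the corollary.

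It remains to identify $\mathcal{H}_G$ with singular chains with coefficients in $G$. The functor $M \mapsto C_*(M, G)$ manifestly satisfies the three axioms: the dimension axiom holds since $\R^n$ is contractible and $C_*(\R^n, G) \simeq G$; the monoidal axiom is the standard identity $C_*(\coprod_i M_i, G) \cong \bigoplus_i C_*(M_i, G)$; and the excision axiom is the classical Mayer-Vietoris short exact sequence for the cover of $M = R \cup_{N\times\R} L$ by open sets $L$, $R$ whose intersection deformation retracts onto $N$, which is a quasi-isomorphism onto the mapping cone. By the uniqueness part already established this identifies $\mathcal{H}_G$ with $C_*(-, G)$.

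Finally, to extend $\mathcal{H}_G$ to all spaces one simply notes that singular chains $C_*(-, G) : \hTop \to \dgZ$ is defined on all of $\hTop$ and satisfies homotopy invariance, additivity under disjoint unions, and the Mayer-Vietoris/excision cone axiom from the introduction, so the extension is automatic; alternatively, one appeals to Mandell's dual result (mentioned in the paper before Theorem~\ref{T:Eilenberg-Steenrod}) characterizing such functors on spaces. The main subtle point in the whole argument is really the identification of the derived relative tensor product over an $E_1$-algebra in $(\dgZ, \oplus)$ with the mapping cone; once this is in hand everything else reduces to bookkeeping against the axioms already established.
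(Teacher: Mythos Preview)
Your proof is correct and follows essentially the same route as the paper's: both reduce to Theorem~\ref{T:UniquenessFactHom} in $(\dgZ,\oplus)$, use that any object in a cocartesian monoidal category carries a unique (trivial) $E_\infty$-structure so that $\Disk_n^{or}$-algebras are just objects of $\dgZ$, and identify the excision tensor product with the homotopy pushout (mapping cone). The paper packages the first two steps as an appeal to Proposition~\ref{P:Factwhencoproduct} (itself proved via the Eckman--Hilton Lemma~\ref{L:EnAlgincoprod}) and obtains the extension to spaces via Theorem~\ref{T:Fact=CH}, whereas you unpack these directly and verify the axioms for singular chains by hand; the content is the same.
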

The uniqueness means of course up to a contractible choice, meaning that any two homology  theory with coefficient in $G$ will be naturally equivalent  and any two choices of equivalences will  also be naturally equivalent and so on. 
\begin{proof}
This is a consequence of Proposition~\ref{P:Factwhencoproduct} below applied to $\mathcal{C}=\dgZ$ and the fact that the homotopy colimit $$ \hocolim\Big(\xymatrix{\mathcal{H}_{G}(R)& \mathcal{H}_{G}({N\times \R}) \ar[l]_{i_R} \ar[r]^{i_L }& \mathcal{H}_{G}({L})}\Big) $$ is precisely computed by the cone\footnote{note that in this case, we know \emph{a posteriori} that we can choose $\mathcal{H}_{G}$ to be singular chains, so that $\mathcal{H}_{G}(i_L) \oplus \mathcal{H}_{G}(i_R)$ is injective and the cone is equivalent to a quotient of chain complexes} of the map $i_L - i_R$.
\qed\end{proof}
\begin{remark}Let $H$ be a topological group, $f: H\to \Homeo(\R^n)$ a map of topological groups and $Bf: BH\to B\Homeo(\R^n)$ the induced map, so that we have the category $\Mfldn^{(BH,Bf)}$  of manifolds with $H$-structure, see Example~\ref{E:mainExofXstructure}.
As shown by its proof,  Corollary~\ref{C:ordinaryhomologyasFact} also holds with oriented manifolds replaced by manifold with a $H$-structure; in particular for\emph{ all manifolds or} a contrario for \emph{framed manifolds}.
\end{remark}
\begin{remark}
Corollary~\ref{C:ordinaryhomologyasFact} and Theorem~\ref{T:Fact=CH} implies that $\mathcal{H}_{G}(M)$ is computed by (derived) Hochschild homology $CH_{M}(G)$ (in $(\dgZ, \oplus)$). If $M_\bullet$ is a simplicial set model of $M$, then  $\mathcal{H}_{G}(M)\cong CH_{M_\bullet}(G)$ which is exactly (by \S~\ref{SS:explicitsimplicialmodelforHH}) the chain complex of the simplicial abelian group $G[M_\bullet]$. In particular, for $M_\bullet=\Delta_{\bullet}(M)=\Hom(\Delta^\bullet, M)$, one recovers \emph{exactly} the singular chain complex $C_\ast(M)$ of $M$.
\end{remark}

Corollary~\ref{C:ordinaryhomologyasFact} is a particular case of a more general result which we now describe. Let $\mathcal{C}$ be a category with coproducts. Then $(\mathcal{C}, \coprod)$ is symmetric monoidal, with unit given by its initial object $\emptyset$.
As we have seen in~\S~\ref{SS:AxiomsforHH}, any object $X$ of $\mathcal{C}$ carries a canonical (thus natural in $X$) structure of commutative algebra in  $(\mathcal{C}, \coprod)$ which is given by the \lq\lq{}multiplication\rq\rq{}  $X\coprod X \stackrel{\coprod id_X} \longrightarrow X$ induced by the identity map $id_X: X\to X$ on each component.
 This algebra structure is further unital, with unit given by the unique map $\emptyset \to X$. This defines a functor $\textit{triv}:\mathcal{C} \to E_\infty\textbf{-Alg}(\mathcal{C})$ which to an object associates its trivial commutative algebra structure. 
In fact, the latter algebras are the only only possible commutative and even associative ones in $(\mathcal{C}, \coprod)$. 
\begin{lemma}[Eckman-Hilton principle] \label{L:EnAlgincoprod} Let $\mathcal{C}$ be a category with coproducts and $(\mathcal{C}, \coprod)$ the associated symmetric monoidal category. Let   $H\stackrel{f}\to \Homeo(\R^n)$ be a topological group morphism and $\iota: \Disk^{(BH,Bf)}_n\textbf{-Alg}(\mathcal{C}) \to \mathcal{C}$ be the underlying object functor~\eqref{eq:Defunderlying} (Definition~\ref{D:DiskXstructured}).
We have a commutative diagram of equivalences 
$$ \xymatrix{E_\infty\textbf{-Alg}(\mathcal{C}) \ar[rr]^{\simeq} && \Disk^{(BH,Bf)}_n\textbf{-Alg}(\mathcal{C}) \ar[ld]_{\simeq}^{\iota} \\ 
& \mathcal{C} \ar[lu]_{\simeq}^{\textit{triv}} &  }$$ where the horizontal arrow is the canonical functor of Example~\ref{E:ComisDisk}.

In particular, any $E_n$-algebra ($n\geq 1$) in $(\mathcal{C},\coprod)$ is a (trivial) commutative algebra.
\end{lemma}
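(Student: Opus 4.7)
The plan is to reduce the lemma to two claims: (a) the functor $\textit{triv}$ exhibits $\mathcal{C}$ as equivalent to $E_\infty\textbf{-Alg}(\mathcal{C})$, and (b) the underlying-object functor $\iota$ exhibits $\mathcal{C}$ as equivalent to $\Disk_n^{(BH,Bf)}\textbf{-Alg}(\mathcal{C})$. Once both equivalences are established, commutativity of the triangle is automatic since all three arrows coherently identify an algebra with its underlying object.

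For (a), the core step is an Eckman-Hilton style computation. Given $E_\infty$-algebra data $(\mu,u)$ on $X$ in $(\mathcal{C},\coprod)$, the universal property of the coproduct identifies $\mu: X\coprod X\to X$ with a pair $(\mu_1,\mu_2)$ of endomorphisms of $X$. The left unit axiom $\mu\circ(u\coprod id_X)\simeq id_X$ becomes $\mu_2\simeq id_X$ after noting that $u\coprod id_X$ is identified with the second-summand inclusion $\iota_2:X\hookrightarrow X\coprod X$ via the isomorphism $\emptyset\coprod X\simeq X$; symmetrically the right unit axiom forces $\mu_1\simeq id_X$. Hence $\mu\simeq\nabla:=\coprod id_X$ is the codiagonal, i.e.\ the trivial algebra structure. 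Morphisms of trivial $E_\infty$-algebras are unconstrained because $f\circ\nabla_X$ and $\nabla_Y\circ(f\coprod f)$ agree automatically by the universal property, so $\textit{triv}$ is fully faithful; the higher coherences of the $E_\infty$-structure are uniquely determined by the same mechanism, upgrading this into an equivalence of $\infty$-categories.

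For (b), I would extend the argument operadically. Fix $A\in\Disk_n^{(BH,Bf)}\textbf{-Alg}(\mathcal{C})$ with underlying object $X:=A(\R^n)$ and denote the underlying $\infty$-operad by $\mathcal{P}$. For any $\alpha\in\mathcal{P}(I)$ with $|I|\geq 2$, its structure map $\alpha:\coprod_I X\to X$ decomposes into a tuple $(\alpha_i)_{i\in I}$ of endomorphisms of $X$; by inserting the unique arity-$0$ operation $u\in\mathcal{P}(0)$ into all but two slots one reduces to the arity $2$ case, where the operadic unit axioms $\alpha\circ_i u\simeq id_{\mathcal{P}(1)}$ force $\alpha_i\simeq id_X$, so $\alpha$ acts as a codiagonal. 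For the more delicate arity $1$ case, given $\theta\in\mathcal{P}(1)$, pick any $\nabla\in\mathcal{P}(2)$ and consider the operadic composition $\theta\circ_1\nabla\in\mathcal{P}(2)$; its action is the composite $\theta\circ\nabla:X\coprod X\to X$, and by the arity $2$ analysis this must be the codiagonal. Pre-composing with $\iota_1:X\hookrightarrow X\coprod X$ then yields $\theta=\theta\circ\nabla\circ\iota_1=\nabla\circ\iota_1=id_X$. Hence every structure map of $A$ is a codiagonal, and $A$ is equivalent to the trivial $\Disk_n^{(BH,Bf)}$-algebra on $X$. The main obstacle is the $\infty$-categorical coherence of these pointwise identifications: one must lift the equivalences $\mu_i\simeq id_X$ and $\theta\simeq id_X$ into a coherent equivalence of $\infty$-functors, which is handled by invoking the universal property of the coproduct at the level of mapping spaces and propagating it through all arities and higher homotopies.
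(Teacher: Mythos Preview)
Your approach is genuinely different from the paper's and, for the $E_\infty$ and $E_n$ cases, both cleaner and more elementary. The paper observes that \emph{any} map $m_I:\coprod_I C\to C$ automatically interchanges with the codiagonal, so the given $\Disk_n^{(BH,Bf)}$-structure and the trivial $E_\infty$-structure are compatible; it then invokes Dunn's theorem (Theorem~\ref{T:Dunn}) as a black box to conclude that the $E_n$-structure is forced by the $E_\infty$-one, and finally reduces the general $(BH,Bf)$ case to $E_n$ via the forgetful functor induced by $\{1\}\to H$. You instead analyze the structure maps arity by arity, using only the universal property of coproducts. This avoids Dunn entirely, which is a real gain.

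There is, however, a gap in your step~(b) for general $H$. Your assertion ``the operadic unit axioms $\alpha\circ_i u\simeq id_{\mathcal{P}(1)}$'' is not an operadic axiom at all: for $\mathcal{P}=\Disk_n^{(BH,Bf)}$ one has $\mathcal{P}(1)\simeq H$ (for instance $\mathcal{P}(1)\simeq SO(n)$ in the oriented case, see Example~\ref{E:DisknAlg}), and $\alpha\circ_i u\in\mathcal{P}(1)$ records the $H$-twist of the surviving disk, which need not be the identity. Without this, your arity-2 analysis does not yet show that every binary operation acts as the codiagonal, and your arity-1 argument then uses the arity-2 conclusion, so the two steps are circular. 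Your argument is correct as written only when $\mathcal{P}(1)$ is contractible, i.e.\ for $E_n$ and $E_\infty$; for general $H$ you still need to show that the $\mathcal{P}(1)\simeq H$ action on $X$ is trivial. One way to close the gap is to first run your argument on the underlying $E_n$-algebra (via $\Disk_n^{fr}\to\Disk_n^{(BH,Bf)}$), which does have contractible $\mathcal{P}(1)$, and then handle the residual $H$-action separately---this is essentially what the paper's last sentence gestures at, though the paper itself is also terse on this point.
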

\begin{proof}
 Let $I=\{1,\dots, n\}$ be a finite set and $m_I: \coprod_{i\in I} C \to C$ be any map. The universal property of the coproduct yields a commutative diagram 
$$ \xymatrix{ \big(\coprod_{i\in I} C\big) \coprod \big(\coprod_{i\in I} C\big) \ar[rrr]^{m_I \coprod m_I} \ar[d]_{\big(\coprod_{i\in I}(\coprod id_C)\big)\circ \sigma_I}    & && C\coprod C \ar[d]^{\coprod id_C} \\ 
\coprod_{i\in I} C   \ar[rrr]_{m_I}&&& C }$$
where $\sigma_I$ is the permutation  induced by the bijection $(1,n+1)(2,n+2)\cdots (n,2n)$ of $I\coprod I=\{1,\dots, 2n\}$ on itself. 
Hence, if $C$ is an $E_n$-algebra,  it is naturally an object of $E_n\textbf{-Alg}(E_\infty\textbf{-Alg})$ where the commutative algebra structure is the trivial one $C\coprod C\stackrel{\coprod id_C}\to C$. By   Dunn Theorem~\ref{T:Dunn} (or Eckman-Hilton principle),
the forgetful map (induced by the pushforward of factorization algebras) $E_n\textbf{-Alg}((E_\infty\textbf{-Alg}))\longrightarrow E_0\textbf{-Alg}(E_\infty\textbf{-Alg})$ is an equivalence.   It follows that the $E_n$-algebra $C$
 is an $E_\infty$-algebra whose structure is  equivalent to $\textit{triv}(C)$.

The group map $\{1\}\to H$ induces a canonical functor $\Disk^{(BH,Bf)}_n\textbf{-Alg}\to E_n\textbf{-Alg}$ so that the above result implies that such a $\Disk^{(BH,Bf)}$-algebra with underlying object $C$ is necessarily of the form $\textit{triv}(C)$.
\qed\end{proof}

\begin{proposition}\label{P:Factwhencoproduct} Let $(\mathcal{C}, \coprod)$ be a $\infty$-category whose monoidal structure is given by the coproduct and $f:H\to \Homeo(\R^n)$ be  a topological group morphism .
\begin{itemize}
\item Any homology theory for $(BH,Bf)$-structured manifolds (Definition~\ref{D:HomologyforMfld}) \emph{extends uniquely} into an homology theory for spaces (Definition~\ref{D:axioms}). 
\item Any object $C\in \mathcal{C}$ determines a unique homology theory for $(BH,Bf)$-manifolds with values in $C$ (Definition~\ref{D:facthomwithcoeff}); further the evaluation map $\mathcal{H}\mapsto \mathcal{H}(\R^n)$ is an equivalence between the category of homology theories for $(BH,Bf)$-manifolds  in $(\mathcal{C}, \coprod)$ and $\mathcal{C}$. 
\end{itemize}
\end{proposition}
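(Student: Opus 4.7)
The plan is to reduce both statements to Lemma~\ref{L:EnAlgincoprod} combined with the existence and uniqueness Theorems~\ref{T:UniquenessFactHom} and~\ref{T:derivedfunctor}. The key observation, already contained in Lemma~\ref{L:EnAlgincoprod}, is that when the monoidal structure is the coproduct, every $\Disk_n^{(BH,Bf)}$-algebra in $\mathcal{C}$ is (uniquely) pulled back from a trivial $E_\infty$-algebra structure, so that $\Disk_n^{(BH,Bf)}\textbf{-Alg}(\mathcal{C})$ is equivalent to $\mathcal{C}$ via the underlying-object functor, with quasi-inverse $\textit{triv}$.

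For the second bullet, I will first invoke Theorem~\ref{T:UniquenessFactHom} in the form valid for arbitrary symmetric monoidal $\infty$-categories satisfying the hypotheses of Remark~\ref{R:factgeneral} (colimits exist and the monoidal product commutes with geometric realization and filtered colimits; this is automatic for $\coprod$ since it is itself a colimit). This provides, via $\mathcal{H}\mapsto \mathcal{H}_{\R^n}$, an equivalence between homology theories for $(BH,Bf)$-manifolds in $(\mathcal{C},\coprod)$ and $\Disk_n^{(BH,Bf)}\textbf{-Alg}(\mathcal{C})$ (cf. Remark~\ref{R:facthomwithcoeff}). Composing with the equivalence of Lemma~\ref{L:EnAlgincoprod} yields the desired equivalence with $\mathcal{C}$; the explicit homology theory attached to $C\in \mathcal{C}$ is then factorization homology with coefficients in $\textit{triv}(C)$.

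For the first bullet, I will start from an arbitrary homology theory $\mathcal{H}$ for $(BH,Bf)$-manifolds in $(\mathcal{C},\coprod)$, set $C:=\mathcal{H}_{\R^n}$, and use Lemma~\ref{L:EnAlgincoprod} to promote $C$ canonically to an $E_\infty$-algebra in $(\mathcal{C},\coprod)$. I will then apply Theorem~\ref{T:derivedfunctor} (in its general monoidal version mentioned in the remark that follows it) to obtain a homology theory for spaces $\widetilde{\mathcal{H}}\colon \hTop \to \mathcal{C}$ with $\widetilde{\mathcal{H}}(pt)\simeq C$. Finally, the $(\mathcal{C},\coprod)$-analog of Theorem~\ref{T:Fact=CH} shows that the composition of $\widetilde{\mathcal{H}}$ with the forgetful functor $\Mfldn^{(BH,Bf)}\to \hTop$ is a homology theory for manifolds whose coefficient is $C$; by the uniqueness of the second bullet it must coincide with $\mathcal{H}$. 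Uniqueness of the extension is then immediate from Theorem~\ref{T:derivedfunctor}.

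The main obstacle I anticipate is purely organizational: checking the compatibility between the two uniqueness statements (one for manifolds, one for spaces) through the identifications furnished by Lemma~\ref{L:EnAlgincoprod}, and verifying that Theorems~\ref{T:UniquenessFactHom}, \ref{T:derivedfunctor} and~\ref{T:Fact=CH} genuinely apply to $(\mathcal{C},\coprod)$ under the stated hypotheses. No new ingredient beyond these results is required; in particular the concrete homology theory attached to $C$ is, by Theorem~\ref{T:Fact=CH}, simply the derived Hochschild functor $X\mapsto CH_X(\textit{triv}(C))$ computed in $(\mathcal{C},\coprod)$, which recovers singular homology with coefficients in $G$ when $(\mathcal{C},\coprod)=(\dgZ,\oplus)$ and $C=G$.
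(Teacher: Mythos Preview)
Your proposal is correct and follows essentially the same route as the paper: both proofs combine Theorem~\ref{T:UniquenessFactHom} (to identify homology theories for $(BH,Bf)$-manifolds with $\Disk_n^{(BH,Bf)}$-algebras), Lemma~\ref{L:EnAlgincoprod} (to identify the latter with $\mathcal{C}$ via $\textit{triv}$), and Theorem~\ref{T:Fact=CH} (to extend to spaces). Your version is more explicit about the hypotheses needed for these results to apply to $(\mathcal{C},\coprod)$ and about invoking Theorem~\ref{T:derivedfunctor} for the uniqueness of the extension, but the core argument is the same.
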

\begin{proof}
By Theorem~\ref{T:UniquenessFactHom},  homology theories for $(BH,Bf)$-structured manifolds are equivalent to    $\Disk^{(BH,Bf)}_n$-algebras which, by Lemma~\ref{L:EnAlgincoprod}, are equivalent to $\mathcal{C}$. In particular, any $\Disk^{(BH,Bf)}_n$-algebra  is  given by the commutative algebra associated to an object of $\mathcal{C}$ so that by Theorem~\ref{T:Fact=CH}, it extends to an homology theory for spaces.
\end{proof}

\subsection{Cosheaves as factorization algebras}\label{SS:ComFactAlgebras}

In this Section we  identify (pre-)cosheaves and (pre-)factorization algebras when the monoidal structure is given by the coproduct.

\smallskip

 Let $(\mathcal{C}, \otimes)$ be symmetric monoidal and  let $\mathcal{F}$
be in $\textbf{PFac}_X(\mathcal{C})$. Then the structure maps $\mathcal{F}(U)\to \mathcal{F}(V)$ for any open $U$ inside an open $V$ induces a functor $\gamma_{\mathcal{C}}:\textbf{PFac}_X(\mathcal{C})\longrightarrow \textbf{PcoShv}_X(\mathcal{C})$ where $\textbf{PcoShv}_X(\mathcal{C}):=\textbf{Fun}(\text{Open}(X), \mathcal{C})$
 is the $\infty$-category of \emph{precosheaves on $X$ with values in $\mathcal{C}$}. 

\begin{lemma}\label{L:cosheaves=Fact}
Let $(\mathcal{C}, \coprod)$ be an $\infty$-category with coproducts whose monoidal structure is given by the coproduct and $X$ be a topological space.
\begin{enumerate}
\item The functor $\gamma_{\mathcal{C}}:\textbf{PFac}_X(\mathcal{C})\longrightarrow \textbf{PcoShv}_X(\mathcal{C})$ is an equivalence.
\item If $X$ has a factorizing basis of opens\footnote{for instance when $X$ is Hausdorff}, then the functor $\gamma_{\mathcal{C}}:\textbf{PFac}_X(\mathcal{C})\longrightarrow \textbf{PcoShv}_X(\mathcal{C})$ restricts to an equivalence
$$\textbf{Fac}_X(\mathcal{C}) \stackrel{\simeq}\longrightarrow  \textbf{coShv}_X(\mathcal{C})$$ between factorization algebras on $X$ and the $\infty$-category $\textbf{coShv}_X(\mathcal{C})$ of (homotopy) cosheaves on $X$ with values in $\mathcal{C}$.
\item If $X$ is a manifold, then the above equivalence also induces an equivalence $\textbf{Fac}^{lc}_X(\mathcal{C}) \stackrel{\simeq}\longrightarrow  \textbf{coShv}^{lc}_X(\mathcal{C})$ between locally constant factorization algebras and locally constant cosheaves. 
\end{enumerate}
\end{lemma}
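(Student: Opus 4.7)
The plan is to leverage Lemma~\ref{L:EnAlgincoprod} (Eckmann–Hilton) in its local form: when the monoidal structure is the coproduct, the ``operadic'' data carried by a prefactorization algebra collapses to the purely functorial data of a precosheaf.

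For part (1), the universal property of the coproduct shows that, for pairwise disjoint opens $U_1, \dots, U_n \subset V$, a map $\coprod_i \mathcal{F}(U_i) \to \mathcal{F}(V)$ is the same data as the collection of maps $\mathcal{F}(U_i) \to \mathcal{F}(V)$, one for each $i$. The symmetry axiom for a prefactorization algebra becomes automatic (the indexing of a coproduct is symmetric), and the associativity diagram \eqref{eq:associativityPFact} translates directly into functoriality of the induced assignment $U \mapsto \mathcal{F}(U)$ along inclusions. Thus $\gamma_{\mathcal{C}}$ restricts to a bijection on structure maps, and I would exhibit an inverse functor: given a precosheaf $\mathcal{G}$, define a prefactorization algebra by $\rho_{U_1,\dots,U_n,V} := \coprod_i (\mathcal{G}(U_i \subset V))$, and check naturally (via universal property) that this defines a symmetric-monoidal inverse to $\gamma_{\mathcal{C}}$.

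For part (2), assume $X$ has a factorizing basis of opens. I must show that under $\gamma_{\mathcal{C}}$, the factorization axiom (Definition~\ref{D:FactAlgebra}) translates to the homotopy cosheaf condition. The strategy is to identify, for any factorizing cover $\mathcal{V}$ of an open $U$, the factorization \v{C}ech object $\check{C}(\mathcal{V}, \mathcal{F})$ with the standard cosheaf \v{C}ech object of $\gamma_{\mathcal{C}}(\mathcal{F})$ on the open cover $\mathcal{V}$. In the coproduct setting, the $n$-simplex of the factorization \v{C}ech is
\[
\coprod_{(\alpha_0,\dots,\alpha_n) \in P\mathcal{V}^{n+1}} \;\; \coprod_{(U_0,\dots,U_n) \in \alpha_0 \times \cdots \times \alpha_n} \mathcal{F}(U_0 \cap \cdots \cap U_n),
\]
whereas the cosheaf \v{C}ech in degree $n$ is $\coprod_{(U_0,\dots,U_n) \in \mathcal{V}^{n+1}} \mathcal{F}(U_0 \cap \cdots \cap U_n)$. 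The forgetful map $(\alpha_\bullet, U_\bullet) \mapsto U_\bullet$ has the canonical section $U_j \mapsto \alpha_j = \{U_j\}$, and each fiber admits $(\{U_j\})_j$ as an initial object, so this section is cofinal. Taking geometric realization, this identifies the two simplicial objects up to natural equivalence, compatibly with their augmentations to $\mathcal{F}(U)$. Consequently, $\mathcal{F}$ satisfies the factorization condition on every factorizing cover if and only if $\gamma_{\mathcal{C}}(\mathcal{F})$ satisfies homotopy descent on every open cover contained in the basis; and since factorizing covers form a basis of the usual topology on $X$, this is equivalent to the homotopy cosheaf condition.

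For part (3), local constancy on a manifold $X$ is phrased exactly in terms of the structure maps $\mathcal{F}(U) \to \mathcal{F}(V)$ for inclusions $U \hookrightarrow V$ of open disks. Since $\gamma_{\mathcal{C}}$ is the identity on this data, it preserves and reflects local constancy, whence the equivalence restricts as claimed.

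The main obstacle is the cofinality/descent argument in part (2): one must carefully track face and degeneracy maps through the identification, and verify that the ``pairwise disjoint'' constraint in $P\mathcal{V}$ does not obstruct cofinality of the forgetful functor at the simplicial (rather than merely objectwise) level. An alternative, perhaps cleaner route would be to recast both conditions as descent for the Grothendieck topology on $\mathrm{Open}(X)$ generated by (factorizing) open covers, reducing the equivalence to the statement that these two topologies coincide when a factorizing basis exists.
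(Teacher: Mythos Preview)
Your treatment of parts~(1) and~(3) is correct and matches the paper's (for~(3) your argument is in fact shorter than the paper's, which takes a detour through Theorem~\ref{P:En=Fact} and Lemma~\ref{L:EnAlgincoprod} to identify the local model with a constant cosheaf).

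For part~(2), however, your route diverges from the paper's and the cofinality step you flag is indeed a genuine gap. You attempt to compare the factorization \v{C}ech object on $\mathcal{V}$ directly with the cosheaf \v{C}ech object on the \emph{same} cover $\mathcal{V}$. Objectwise these are not equivalent: a fixed tuple $(U_0,\dots,U_n)\in\mathcal{V}^{n+1}$ contributes one summand on the cosheaf side but many on the factorization side (one for each $(\alpha_0,\dots,\alpha_n)$ with $U_j\in\alpha_j$). Your ``initial object in the fiber'' remark describes a section of a map of index \emph{sets}, but there is no evident categorical structure on those fibers making it cofinal in the sense needed to conclude an equivalence after geometric realization; the simplicial face maps mix the $\alpha$- and $U$-directions, and an argument tracking this carefully would essentially amount to producing an extra degeneracy on the fiber direction, which you have not done.

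The paper sidesteps this entirely with a cleaner observation: rather than comparing with the cosheaf \v{C}ech on $\mathcal{V}$, it compares with the cosheaf \v{C}ech on the \emph{coarser} cover $P\mathcal{V}=\{\,\bigcup_{U\in\alpha}U : \alpha\in P\mathcal{V}\,\}$. Both the factorization and cosheaf conditions force $\mathcal{F}(U_1\sqcup\cdots\sqcup U_k)\simeq\coprod_i\mathcal{F}(U_i)$ for disjoint opens; once this holds, the cosheaf \v{C}ech term $\gamma_{\mathcal{C}}(\mathcal{F})(\tilde\alpha_0\cap\cdots\cap\tilde\alpha_n)$ decomposes as $\coprod_{(U_j)\in\prod\alpha_j}\mathcal{F}(\bigcap_j U_j)$, which is \emph{literally} the factorization \v{C}ech term $\mathcal{F}(\alpha)$. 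So $\check{C}(\mathcal{V},\mathcal{F})=\check{C}^{\mathrm{cosheaf}}(P\mathcal{V},\gamma_{\mathcal{C}}(\mathcal{F}))$ on the nose, with no cofinality argument needed. One then concludes by noting that both factorization algebras and cosheaves are determined on a basis. Your closing suggestion (recasting both as descent for a Grothendieck topology) is in the right spirit and would also work, but the paper's identification via $P\mathcal{V}$ is the most direct implementation.
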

\begin{proof} Let $\mathcal{F}$ be in $\textbf{PFac}_X$ and  $U_1,\dots, U_r$ be open subsets of $V\in Open(X)$, which are pairwise disjoint.
 Let $\rho_{U_1,\dots, U_i,V}:\mathcal{F}(U_1)\coprod\dots \coprod \mathcal{F}(U_i)\to \mathcal{F}(V)$ be the structure map of $\mathcal{F}$. 
The associativity of the structure maps (diagram~\eqref{eq:associativityPFact}) shows that
the structure map $\rho_{U_j,V}: \mathcal{F}(U_j)\to \mathcal{F}(V)$ factors as 
$$ \xymatrix{ \mathcal{F}(U_j) \ar[rrrr]^{\rho_{U_j,V}} \ar[d]_{\cong} &&&& \mathcal{F}(V)  \\ 
\emptyset \coprod\dots \emptyset\coprod \mathcal{F}(U_j) \coprod \emptyset \dots \coprod\emptyset   \ar[rrrr]^{\quad\big(\coprod\limits_{k=1}^{j-1}\rho_{\emptyset,U_k}\big)\coprod id \coprod \big(\coprod\limits_{k=j+1}^{i}\rho_{\emptyset,U_k}\big) }  &&&& 
\mathcal{F}(U_1)\coprod\dots \coprod \mathcal{F}(U_i) \ar[u]_{\rho_{U_1,\dots, U_i,V}}.}$$
The universal property of the coproduct implies that the following diagram
\begin{equation}\label{eq:commutPfactPcosheaf}
\xymatrix{\mathcal{F}(U_1)\coprod\dots \coprod \mathcal{F}(U_i) \ar[rrr]^{\rho_{U_1,\dots, U_i,V}} \ar[d]_{\rho_{U_1,V}\coprod \dots \coprod \rho_{U_i,V}} &&&  \mathcal{F}(V)  \\
\mathcal{F}(V)\coprod\dots \coprod\mathcal{F}(V)  \ar[rrru]_{\; \coprod id_{\mathcal{F}(V)}} &&&}
\end{equation} is commutative. 
It follows that the structure maps are completely determined by the precosheaf structure. 
Conversely, any precosheaf on $\mathcal{C}$ gives rise functorially to a prefactorization algebra with structure maps given by the composition $$ \mathcal{F}(U_1)\coprod\dots \coprod \mathcal{F}(U_i)\longrightarrow \mathcal{F}(V)\coprod\dots \coprod\mathcal{F}(V)\stackrel{\coprod id_{\mathcal{F}(V)}}\longrightarrow \mathcal{F}(V)$$ yielding a functor $\theta_{\mathcal{C}}:\textbf{PcoShv}_X(\mathcal{C})\to \textbf{PFac}_X(\mathcal{C})$. 
The commutativity of diagram~\eqref{eq:commutPfactPcosheaf} implies that this functor $\theta_{\mathcal{C}}$ is inverse to $\gamma_{\mathcal{C}}:\textbf{PFac}_X(\mathcal{C})\to \textbf{PcoShv}_X(\mathcal{C})$. 

Note that both the cosheaf condition and the factorization algebra conditions implies that the canonical map $\mathcal{F}(U_1)\coprod \dots \coprod\mathcal{F}( U_i)\to \mathcal{F} (U_1\coprod\dots\coprod U_i)$ is an equivalence (of constant simplicial objects). 
Now, we can  identify the two gluing conditions. 
Since $\mathcal{U}$ is a (factorizing) cover of $V$, then $$\mathcal{P}(U):=\{\{U_1,\dots, U_k\}, \text{ which are pairwise disjoint}\}$$ is a cover of $V$.
 Further, if $\mathcal{F}\in \textbf{Fac}_X$,  the \v{C}ech complex $\check{C}(\mathcal{U},\mathcal{F})$ is precisely the (standard) \v{C}ech complex $\check{C}^{\text{cosheaf}}(P\mathcal{U},\gamma_{\mathcal{C}}(\mathcal{F}))$ of the cosheaf  $\gamma_{\mathcal{C}}(\mathcal{F})$ computed on the cover $P\mathcal{U}$ so that the map $\check{C}(\mathcal{U},\mathcal{F})\to \mathcal{F}(V)$ is an equivalence if and only if $\check{C}^{\text{cosheaf}}(P\mathcal{U},\gamma_{\mathcal{C}}(\mathcal{F}))\to \mathcal{F}(V)$ is an equivalence.

Assume $X$ has a factorizing basis of opens. Both factorization algebras and cosheaf are determined by their restriction on a basis of opens. It follows   that $\gamma_{\mathcal{C}}$ sends factorization algebras to cosheaves and $\theta_{\mathcal{C}}$ sends cosheaves to factorization algebras.

It remains to consider the locally constant condition when $X$ is a manifold, thus has a basis of euclidean neighborhood. 
On each euclidean neighborhood $D$,  by Theorem~\ref{P:En=Fact}, the restriction of $\mathcal{F}\in \textbf{Fac}^{lc}_X$ to $D$ is the factorization algebra given by an $E_n$-algebra $A\in E_n\textbf{-Alg}((\mathcal{C},\coprod))$ in $\mathcal{C}$.
  Lemma~\ref{L:EnAlgincoprod}  implies that $A$ is given by the trivial commutative algebra $\textit{triv}(C)$ associated to an object $C\in \mathcal{C}$. 
It follows from the identification of \v{C}ech complexes above, that $\mathcal{F}_{|D}$ is thus equivalent to the constant cosheaf on $D$ associated to the object $C$.
 The converse follows from the fact if $\mathcal{G}\in \textbf{coShv}^{lc}_X(\mathcal{C})$, then for any point $x\in X$, there is an euclidean neighborhood $D_x\cong \R^n$ on which $\mathcal{G}_{|D_x}$ is constant.
 The identification of the \v{C}ech complexes above implies that $\theta_{\mathcal{C}}(\mathcal{G}_{|D_x})$ is locally constant on $D_x$. Proposition~\ref{P:locallocallyconstant} implies that the factorization algebra  $\theta_{\mathcal{C}}(\mathcal{G})$ is locally constant on $X$, hence finishes the proof.
\qed\end{proof}

From the identification between cosheaves and factorization algebras, we deduce that factorization homology in $(\mathcal{C}, \coprod)$ agrees with homology with local coefficient:
\begin{proposition}\label{P:Factissheafhomology} Let $(\mathcal{C}, \coprod)$ be a $\infty$-category whose monoidal structure is given by the coproduct and $X$ a manifold. 
\begin{itemize}
\item There is an equivalence between  homology theories for $(X,TX)$-structured manifolds (Definition~\ref{D:HomologyforMfld}) and  $\textbf{coShv}^{lc}_X(\mathcal{C})$,  the ($\infty$-)category of locally constant cosheaves on $X$ with values in $\mathcal{C}$.
\item The above equivalence is given, for any $(X,TX)$-structured manifold $M$  and (homotopy) cosheaf $\mathcal{G}\in \textbf{coShv}^{lc}_X(\mathcal{C})$,   by $$\int_{M} \mathcal{G} := \mathbb{R}\Gamma(M, \mathcal{G})$$ the cosheaf homology of $M$ with values in the cosheaf $p^*(\mathcal{G})$ where $p:M\to X$ is the map defining the $(X,TX)$-structure.
\end{itemize}
\end{proposition}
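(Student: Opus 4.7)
The strategy is to chain together three equivalences already established in the paper: the uniqueness of homology theories (Theorem~\ref{T:UniquenessFactHom} together with Remark~\ref{R:facthomwithcoeff}), the identification of $\Disk_n^{(X,TX)}$-algebras with locally constant factorization algebras on $X$ (Theorem~\ref{T:Theorem6GTZ2}), and the cosheaf/factorization algebra correspondence of Lemma~\ref{L:cosheaves=Fact} which holds precisely because the monoidal structure on $\mathcal{C}$ is the coproduct.

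For the first bullet, I would argue: Remark~\ref{R:facthomwithcoeff} identifies the $\infty$-category of homology theories for $(X,TX)$-manifolds with $\Disk_n^{(X,TX)}\textbf{-Alg}(\mathcal{C})$; Theorem~\ref{T:Theorem6GTZ2} then identifies the latter with $\textbf{Fac}^{lc}_X(\mathcal{C})$; finally, Lemma~\ref{L:cosheaves=Fact}.3 gives the equivalence $\textbf{Fac}^{lc}_X(\mathcal{C}) \simeq \textbf{coShv}^{lc}_X(\mathcal{C})$. Composing yields the desired equivalence.

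For the second bullet, fix $A \in \Disk_n^{(X,TX)}\textbf{-Alg}(\mathcal{C})$ corresponding under this chain to the locally constant cosheaf $\mathcal{G}$ on $X$. Given an $(X,TX)$-manifold $M$ with structure map $p:M\to X$, the condition $TM\cong p^*TX$ forces $p$ to be étale, so pullback $p^*\mathcal{G}$ is a well-defined locally constant cosheaf on $M$. Under the equivalence $\textbf{Fac}^{lc}_M(\mathcal{C})\simeq \textbf{coShv}^{lc}_M(\mathcal{C})$, this $p^*\mathcal{G}$ corresponds to the locally constant factorization algebra $\mathcal{F}_A$ on $M$ constructed by Theorem~\ref{T:Theorem6GTZ2} from the pulled-back $(M,TM)$-structure. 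Then~\eqref{eq:facthom=Factalg} gives $\int_M A \simeq q_*(\mathcal{F}_A)$ with $q:M\to\{\mathrm{pt}\}$, which under the cosheaf equivalence is the derived colimit of $p^*\mathcal{G}$ over opens of $M$, i.e., the cosheaf homology $\mathbb{R}\Gamma(M,p^*\mathcal{G})$.

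The main obstacle is verifying the commutativity of the diagram of equivalences — specifically that the cosheaf on $M$ attached to $\mathcal{F}_A$ really is $p^*\mathcal{G}$. Since all three equivalences are uniquely determined by their values on a factorizing basis of (geodesic convex) disks stable under intersection, it suffices to check the identification on such disks: for a small disk $D\subset M$ we have $\mathcal{F}_A(D)\simeq A(\R^n)$ via the $(M,TM)$-structure pulled back along $p$, while $p^*\mathcal{G}(D)$ is the value of $\mathcal{G}$ on a disk neighborhood of $p(D)$, which again recovers the underlying object of $A$ through the $(X,TX)$-structure. The uniqueness in Proposition~\ref{P:extensionfrombasis} (and its cosheaf analogue) then propagates this disk-level agreement to a global equivalence, completing the identification.
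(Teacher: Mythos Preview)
Your proof is correct and follows essentially the same route as the paper: both chain together the uniqueness of homology theories (Theorem~\ref{T:UniquenessFactHom}/Remark~\ref{R:facthomwithcoeff}), the equivalence $\Disk_n^{(X,TX)}\textbf{-Alg}\simeq\textbf{Fac}^{lc}_X$ of Theorem~\ref{T:Theorem6GTZ2}, and Lemma~\ref{L:cosheaves=Fact} for the cosheaf comparison. The paper's argument is terser and does not spell out the disk-level verification or the \'etale observation that you include, but the underlying logic is the same.
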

\begin{proof}
The first claim is an immediate application of Lemma~\ref{L:cosheaves=Fact}.3 and Theorem~\ref{T:Theorem6GTZ2}. The latter result implies that factorization homology is computed by the \v{C}ech complex of the locally constant factorization algebra associated to a $\Disk_n^{(M,TM)}$-algebra given by the pullback along $p: M\to X$ of some $A\in \Disk_n^{(X,TX)}\textbf{-Alg}$. Now the second claims follows from Lemma~\ref{L:cosheaves=Fact}.2. 
\qed\end{proof}
\begin{remark} Factorization homology on a $(X,e)$-structured manifold  $M$  depends only on its value on open sub sets of $M$. Thus Proposition~\ref{P:Factissheafhomology}  
implies that, for any manifold $M$ and $A\in \Disk_n^{(X,e)}\textbf{-Alg}(\mathcal{C})$, factorization homology $\int_M A $ is given by cosheaf homology of the locally constant cosheaf $\mathcal{G}$ given by the image of $A$ under the functor $\Disk_n^{(X,e)}\textbf{-Alg}\to \Disk_n^{(M,TM)}\textbf{-Alg}$ (of Example~\ref{E:DisknAlg}) 
and the equivalence $\textbf{Fac}^{lc}_X(\mathcal{C}) \stackrel{\simeq}\longrightarrow  \textbf{coShv}^{lc}_X(\mathcal{C})$ of Lemma~\ref{L:cosheaves=Fact}. 
\end{remark}

Let $(\mathcal{C},\otimes)$ be a symmetric monoidal ($\infty$-)category. We say that a factorization algebra $\mathcal{F}$ on $X$  is \emph{commutative}
if each $\mathcal{F}(U)$ is given a structure of differential graded commutative (or $E_\infty$-) algebra and the structure maps are maps of algebras.
 In other words, the category of \emph{commutative factorization algebras} is $\textbf{Fac}_X(E_\infty\textbf{-Alg})$.

A peculiar property of (differential graded) commutative algebras is that their coproduct is given by their tensor product (that is the underlying tensor product in $\mathcal{C}$ endowed with its canonical algebra structure). 
From Lemma~\ref{L:cosheaves=Fact}, we obtain the following:
\begin{proposition}\label{P:ComFact=CoShvofAlg}
Let $(\mathcal{C},\otimes)$ be a symmetric monoidal ($\infty$-)category. The functor 
$$\gamma_{E_\infty\textbf{-Alg}(\mathcal{C})}: \textbf{Fac}_X(E_\infty\textbf{-Alg}(\mathcal{C})) \longrightarrow \textbf{coShv}_X(E_\infty\textbf{-Alg}(\mathcal{C}))$$ is an equivalence.
\end{proposition}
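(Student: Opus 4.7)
The plan is to reduce the statement to Lemma~\ref{L:cosheaves=Fact} applied to the $\infty$-category $\mathcal{D}:=E_\infty\textbf{-Alg}(\mathcal{C})$ equipped with its natural symmetric monoidal structure, which in this case is simultaneously a tensor product \emph{and} a coproduct. The key observation is the classical fact that, in any reasonable symmetric monoidal $\infty$-category $(\mathcal{C},\otimes)$, the underlying tensor product on $E_\infty$-algebras coincides with the coproduct of $E_\infty$-algebras. That is, if $A,B\in E_\infty\textbf{-Alg}(\mathcal{C})$, then $A\otimes B$ (with the componentwise multiplication $(a\otimes b)(a'\otimes b')=(aa')\otimes(bb')$) represents the coproduct in $E_\infty\textbf{-Alg}(\mathcal{C})$, the canonical maps being $A\simeq A\otimes 1\to A\otimes B$ and $B\simeq 1\otimes B\to A\otimes B$ via the units.

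First I would make explicit the identification of structures: a (pre)factorization algebra $\mathcal{F}\in\textbf{Fac}_X(E_\infty\textbf{-Alg}(\mathcal{C}))$ is by Definition~\ref{D:Prefact} a rule assigning to each open $U$ an object $\mathcal{F}(U)\in E_\infty\textbf{-Alg}(\mathcal{C})$ together with structure maps $\mathcal{F}(U_1)\otimes\cdots\otimes\mathcal{F}(U_n)\to\mathcal{F}(V)$ in $E_\infty\textbf{-Alg}(\mathcal{C})$ for disjoint $U_i\subset V$, subject to the usual associativity and the (homotopy) \v{C}ech descent on factorizing covers. Using the coproduct interpretation of $\otimes$ in $\mathcal{D}$, these structure maps become maps $\mathcal{F}(U_1)\coprod\cdots\coprod\mathcal{F}(U_n)\to\mathcal{F}(V)$ in $\mathcal{D}$, i.e.\ the data of a prefactorization algebra in the monoidal $\infty$-category $(\mathcal{D},\coprod)$.

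Next, I would invoke Lemma~\ref{L:cosheaves=Fact} directly for $\mathcal{D}=E_\infty\textbf{-Alg}(\mathcal{C})$ with monoidal structure given by its coproduct. Part 1 of that lemma gives the equivalence $\textbf{PFac}_X(\mathcal{D})\simeq\textbf{PcoShv}_X(\mathcal{D})$; part 2 gives the corresponding equivalence between factorization algebras and (homotopy) cosheaves. Under this identification, the functor $\gamma_{E_\infty\textbf{-Alg}(\mathcal{C})}$ (which merely restricts the structure maps of a factorization algebra to the inclusions $U\hookrightarrow V$) is precisely the comparison functor $\gamma_\mathcal{D}$ of Lemma~\ref{L:cosheaves=Fact}, and the argument there (the commutativity of diagram~\eqref{eq:commutPfactPcosheaf} forcing every structure map to factor through the coproduct) shows it is an equivalence, with inverse $\theta_\mathcal{D}$ given by composing the coproduct of the unary structure maps with the fold map in $\mathcal{D}$.

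The main conceptual step—really the only nontrivial point—is the identification of $\otimes$ with $\coprod$ in $E_\infty\textbf{-Alg}(\mathcal{C})$. This is standard but requires that the symmetric monoidal structure on $\mathcal{C}$ be compatible with the relevant colimits (geometric realizations and finite coproducts), so that $E_\infty\textbf{-Alg}(\mathcal{C})$ is genuinely cocomplete and the unit maps $A\to A\otimes B\leftarrow B$ exhibit the universal property of the coproduct. This is the only nontrivial compatibility hypothesis on $\mathcal{C}$ needed, and under it everything else is formal: the two descent conditions (factorizing covers for $\mathcal{F}$ and ordinary covers for $\gamma(\mathcal{F})$) match on the nose as in the proof of Lemma~\ref{L:cosheaves=Fact}.2, since in $(\mathcal{D},\coprod)$ the \v{C}ech complex of a cover $\mathcal{U}$ and its refinement $P\mathcal{U}$ by finite pairwise disjoint families compute the same homotopy colimit (any factorizing cover is cofinal among all covers because $\mathcal{D}$-valued cosheaves are determined by their values on arbitrarily fine covers).
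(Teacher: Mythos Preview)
Your proposal is correct and follows exactly the paper's approach: the paper derives the proposition directly from Lemma~\ref{L:cosheaves=Fact} after observing (via Proposition~\ref{P:tensor=coprod}) that the tensor product in $E_\infty\textbf{-Alg}(\mathcal{C})$ is the coproduct. Your additional remarks on the compatibility hypotheses needed for $\mathcal{C}$ are reasonable caveats but not part of the paper's argument.
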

In other words, commutative factorization algebras are cosheaves (in $E_\infty\textbf{-Alg}$).
\begin{remark}\label{R:noncommutativecosheaf}
In view of Proposition~\ref{P:ComFact=CoShvofAlg}, one can think general  factorization algebras as non-commutative cosheaves. 
\end{remark}
Combining Proposition~\ref{P:ComFact=CoShvofAlg}, Proposition~\ref{P:Factissheafhomology} and Theorem~\ref{T:Fact=CH}, we obtain:
\begin{corollary} Let $\mathcal{F}\in \textbf{Fac}_X^{lc}(E_\infty\textbf{-Alg})$ 
be a locally constant commutative factorization algebra on $X$.
Then $$\int_X{A} \, \cong \, \CH_X(\mathcal{F})$$ where $\CH_X(\mathcal{F})$ is the (derived) global section of the cosheaf which to any open $U$ included in an euclidean Disk $D$ associates the derived Hochschild homology $CH_U(\mathcal{F}(D))$.
\end{corollary}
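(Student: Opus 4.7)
The plan is to combine the three structural results immediately preceding the statement: Proposition~\ref{P:ComFact=CoShvofAlg}, Proposition~\ref{P:Factissheafhomology}, and Theorem~\ref{T:Fact=CH}. The key observation that makes them fit together is that in the symmetric monoidal $\infty$-category $(E_\infty\textbf{-Alg},\otimes)$, the tensor product \emph{is} the coproduct, so the hypothesis of Proposition~\ref{P:Factissheafhomology} is satisfied when we view $\mathcal{F}$ as taking values in $(E_\infty\textbf{-Alg},\coprod)$. In particular factorization homology with $E_\infty$-coefficients computes a cosheaf homology.

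First I would invoke Proposition~\ref{P:ComFact=CoShvofAlg} to identify the commutative factorization algebra $\mathcal{F}$ with a cosheaf $\widetilde{\mathcal{F}}$ on $X$ valued in $E_\infty\textbf{-Alg}$, and then Lemma~\ref{L:cosheaves=Fact}.3 (which applies in $(E_\infty\textbf{-Alg},\coprod)$) to conclude that $\widetilde{\mathcal{F}}$ is locally constant since $\mathcal{F}$ is. Next, Proposition~\ref{P:Factissheafhomology} (applied with $\mathcal{C}=E_\infty\textbf{-Alg}$) gives the first identification
\[
\int_X \mathcal{F} \;\simeq\; \mathbb{R}\Gamma\bigl(X,\widetilde{\mathcal{F}}\bigr).
\]

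To match this with $\CH_X(\mathcal{F})$, I would compute the right-hand side via the \v{C}ech complex of $\widetilde{\mathcal{F}}$ on a factorizing basis $\mathcal{U}$ consisting of euclidean disks (stable under finite intersection, for instance the basis of geodesic convex neighborhoods). On any such disk $D$, local constancy of $\mathcal{F}$ together with Theorem~\ref{P:En=Fact} (and its refinement Theorem~\ref{T:Theorem6GTZ2}) means that $\mathcal{F}_{|D}$ is equivalent to the constant (locally constant) factorization algebra on $D$ associated to the $E_\infty$-algebra $\mathcal{F}(D)$. For any open $U\subset D$, Theorem~\ref{T:Fact=CH} then yields
\[
\int_U \mathcal{F}_{|D} \;\simeq\; CH_U\bigl(\mathcal{F}(D)\bigr),
\]
and by the equivalence of the first step this is precisely the value of the cosheaf $\widetilde{\mathcal{F}}$ on $U$. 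Since a locally constant cosheaf is determined by its restriction to any factorizing basis (Proposition~\ref{P:extensionfrombasis} transported across Lemma~\ref{L:cosheaves=Fact}), these local identifications assemble to an equivalence of $\widetilde{\mathcal{F}}$ with the cosheaf $\CH_X(\mathcal{F})$ of the statement, whence $\mathbb{R}\Gamma(X,\widetilde{\mathcal{F}}) \simeq \CH_X(\mathcal{F})$.

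The one technical point that requires care is the functoriality of the identification $\widetilde{\mathcal{F}}_{|D}(U) \simeq CH_U(\mathcal{F}(D))$ in both $U$ and $D$: for $D'\subset D$ two euclidean disks, the structure map $\mathcal{F}(D')\to \mathcal{F}(D)$ must induce a map of cosheaves $CH_{\bullet}(\mathcal{F}(D'))\to CH_{\bullet}(\mathcal{F}(D))$ compatible with the structure maps of the original factorization algebra. This coherence is precisely controlled by the uniqueness clause in Theorem~\ref{T:UniquenessFactHom} (homology theories for spaces are essentially unique, and both sides give such a theory on $U\subset D$ with value $\mathcal{F}(D)$ at a point); granting this, the rest is a routine termwise comparison of \v{C}ech complexes.
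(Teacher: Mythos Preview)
Your approach is correct and matches the paper's: the corollary is stated immediately after the three results you invoke, with the paper simply saying ``Combining Proposition~\ref{P:ComFact=CoShvofAlg}, Proposition~\ref{P:Factissheafhomology} and Theorem~\ref{T:Fact=CH}, we obtain''. You have supplied exactly the details behind that combination; the only minor slip is that the uniqueness result you want for the coherence step is Theorem~\ref{T:derivedfunctor} (homology theories for \emph{spaces}) rather than Theorem~\ref{T:UniquenessFactHom} (homology theories for manifolds), since you are comparing two $E_\infty$-coefficient theories on contractible opens.
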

%%%%%%%%%%%%%%%%%%%%%%%%%%%%%%%%%%%%%%%%%%%%%%%%%%%%%%%%%%%
%%%%%%%%%%%%%%%%%%%%%%%%%%%%%%%%%%%%%%%%%%%%%%%%%%%%

\section{Complements on factorization algebras}\label{S:SomeProofs}
In this Section, we give several proofs of results, some of them probably known by the experts, about factorization algebras that  we have postponed and for which we do not know any reference in the literature.  

\subsection{Some proofs related to the locally constant condition and the pushforward}
\label{S:SomeProofsGen}

\paragraph{\textbf{\emph{Proof of Propositions~\ref{P:locallocallyconstant} and~\ref{P:locallocallyconstantStrat}.}}}

Let $U\subset D\subset M$ be an inclusion of open disks; we need to prove that $\mathcal{F}(U)\to \mathcal{F}(D)$ is a quasi-isomorphism. We can assume $D =\R^n$ (by composing with a homeomorphism); the proof in the stratified case is similar to the non-stratified one by replacing $\R^n$ with $\R^i\times [0,+\infty)^{n-i}$.
We first consider the case where $\mathcal{F}_{|U}$ is locally constant and further, that $U$ is an euclidean disk (with center $x$ and radius $r_0$). Denote $D(y,r)$ an euclidean open disk of center $y$ and radius $r>0$ and let 
$$T_+:=\sup(t\in\R, \mbox{ such that $\forall \, \frac{r_0}{2}\leq s < t$,  $\mathcal{F}(D(x,\frac{r_0}{2}) \to \mathcal{F}(D(x,s))$  is an equivalence}).$$By assumption $T_+\geq r_0$.  We  claim that $T=+\infty$. Indeed, let $T$ be finite and such that ,   $\mathcal{F}(D(x,\frac{r_0}{2}) \to \mathcal{F}(D(x,s))$  is an equivalence for all $s< T$. We will prove that $T$ can not be equal to $T_+$. Every point $y$ on the sphere of center $x$ and radius $T$ has a neighborhood in which $\mathcal{F}$ is locally constant. In particular, there is a number $\epsilon_y>0$,  an open angular sector $S_{[0,T+\epsilon_y)}$ of length $T+\epsilon_y$ and angle $\theta_y$ containing $y$ such that $\mathcal{F}_{|S_{(T-\epsilon_y, T+\epsilon_y)}}$ is locally constant. Here,  $S_{(\tau, \gamma)}$ denotes the restriction of the angular sector to the band containing numbers of radius lying in $(\tau, \gamma)$.

 We first note that $S_{[0,T+\epsilon_y)}$ has a factorizing cover $\mathcal{A}_y$ consisting of open angular sectors of the form $S_{(T-\tau, T+\epsilon_y)}$ ($0<\tau \leq\epsilon_y$)
 and $S_{[0, \kappa)}$ ($0<\kappa <T$); there is an induced similar cover $\mathcal{A}_y \cap U$  of  $S_{[0,T)}$ given by the angular sectors of the form  $S_{(T-\tau, T)}$  ($0<\tau \leq\epsilon_y$) and $S_{[0, \kappa)}$ ($0<\kappa <T$). The structure maps $\mathcal{F}(S_{(T-\tau, T)})\to \mathcal{F}(S_{(T-\tau, T+\epsilon_y)})$ 
 induce a map of \v{C}ech complex $\psi_y:\check{C}(\mathcal{A}_y\cap U, \mathcal{F}) \to \check{C}(\mathcal{A}_y, \mathcal{F})$ so that the following diagram is commutative:
\begin{equation} \label{eq:diagchechsector} \xymatrix{\check{C}(\mathcal{A}_y\cap U, \mathcal{F}) \ar[rr]^{\psi_y} \ar[d]_{\simeq} &&  \check{C}(\mathcal{A}_y, \mathcal{F}) 
\ar[d]^{\simeq} \\ \mathcal{F}(S_{[0,T)}) \ar[rr] && \mathcal{F}(S_{[0,T+\epsilon_y)})}.\end{equation}
Since the  map  $S_{(T-\tau, T+\epsilon_y)} \to S_{(T-\tau, T+\epsilon_y)}$ is the inclusion of a sub-disk inside a disk in $S_{(T-\epsilon_y, T+\epsilon_y)}$, it is a quasi-isomorphism and, thus, so is the map  $\psi_y:\check{C}(\mathcal{A}_y\cap U, \mathcal{F}) \to \check{C}(\mathcal{A}_y, \mathcal{F})$. 
It follows from diagram~\eqref{eq:diagchechsector} that the structure map $\mathcal{F}(S_{[0,T)})\to \mathcal{F}(S_{[0,T+\epsilon_y)})$ is a quasi-isomorphism. In the above proof, we could also have taken any angle $\theta \leq \theta_y$ or replaced $\epsilon_y$ by any $\epsilon \leq \epsilon_y$ without changing the result.

By compactness of the sphere of radius $T$, we can thus find an $\epsilon >0$ and a $\theta >0$ such that the 
structure map $\mathcal{F}(S\cap U) \to \mathcal{F}(S)$ is a quasi-isomorphism for any angular sector $S$ around $x$ of radius $r=T+\epsilon' <T+\epsilon$ and arc length $\phi_S<\theta$.
 The collection of such angular sectors $S$ is a (stable by intersection) factorizing basis of the disk $D(x,T+\epsilon')$ while the collection of sectors $S\cap U$ is a  (stable by intersection) factorizing basis of the disk $D(x,T)$. Further, we have proved that the structure maps $\mathcal{F}(S\cap U) \to \mathcal{F}(S)$ is a quasi-isomorphism for any such $S$.
 It follows that the map $\mathcal{F}(D(x,T)) \to \mathcal{F}(D(x,T+\epsilon')$ is a quasi-isomorphism (since again the induced map  in between the \v{C}ech complexes associated to this two covers is  a quasi-isomorphism). 
 It follows that $T_+>T$ for any finite $T$ hence is infinite as claimed above. 
 In particular,  the canonical map $\mathcal{F}(D(x,T)) \to \mathcal{F}(D(x, T+r))$ is a quasi-isomorphism for any $r\geq 0$.
 
Now, since the collection of disks of radius $T>0$ centered at $x$ is a factorizing cover of $\R^n$, we deduce that $\mathcal{F}(D(x,T))\to \mathcal{F}(\R^n)$ is a quasi-isomorphism. Indeed,   fix some $R>0$ and let $j_R: x+y\mapsto x+R/(R-|y|) y$ be the homothety centered at $x$ mapping $D(x,R)$ homeomorphically onto $\R^n$. The map $j_R$ is a bijection between the set $\mathcal{D}_R$ of (euclidean) sub-disks of $D(x,R)$ centered at $x$ and the set $\mathcal{D}$ of all (euclidean) disks of $\R^n$ centered at $x$.  
For any disk centered at $x$, the  inclusion $D(x,T) \hookrightarrow D(x,j_R(T))$ yields a quasi-isomorphism 
 $\mathcal{F}(D(x,T)) \to \mathcal{F}(D(x,j_R(T))$. If $\alpha= \{D(x,r_0), \dots, D(x, r_i)\} \in (P\mathcal{D}_R)^{i+1}$, we thus get a quasi-isomorphism
 \begin{multline*}\mathcal{F}(\alpha)\cong \mathcal{F}\Big(D\big(x,\min(r_j, j=0\dots i)\big)\Big)\\ \stackrel{\simeq} \longrightarrow  \mathcal{F}\Big(D\big(x,\min(j_R(r_j), j=0\dots i)\big)\Big) \cong \mathcal{F}(j_R(\alpha)).\end{multline*}
 Assembling those for all $\alpha$'s yields a quasi-isomorphism
 $\check{C}(\mathcal{D}_R,\mathcal{F}) \stackrel{\simeq}\longrightarrow  \check{C}(\mathcal{D},\mathcal{F})$ which fit into a commutative diagram
 $$\xymatrix{\check{C}(\mathcal{D}_R,\mathcal{F}) \ar[d]_{\simeq} \ar[rr]^{\simeq} && \check{C}(\mathcal{D},\mathcal{F})\ar[d]^{\simeq}  \\ \mathcal{F}(D(x,R))  \ar[rr] && \mathcal{F}(\R^n)} $$ whose vertical arrows are quasi-isomorphisms since $\mathcal{F}$ is a factorization algebra. It follows that the lower horizontal arrow is a quasi-isomorphism as claimed.
 
 \smallskip
 
We are left to prove the result for $U\hookrightarrow D=\R^n$  when $U$ is not necessarily  an euclidean disk.  Choose an euclidean open disk $\tilde{D}$  inside $U$ small enough so that $\mathcal{F}_{|\tilde{D}}$ is locally constant. Let $h:\tilde{D}\cong \R^n$ be an homothety (with the same center as $\tilde{D}$) identifying $\tilde{D}$ and $\R^n$. Then $\tilde{U}:=h^{-1}(U)\subset D \subset U$ is an open disk homothetic to $U$. 
So that by the above reasoning (after using an homeomorphism between $U$ and  an euclidean disk $\R^n$) we have that the structure map $\mathcal{F}(\tilde{U})\to \mathcal{F}(U)$ is a quasi-isomorphism as well.
Since $\mathcal{F}_{|\tilde{D}}$ is locally constant,
 the structure map $\mathcal{F}(\tilde{U}) \to \mathcal{F}(\tilde{D})$ is a quasi-isomorphism. Now,  Proposition~\ref{P:locallocallyconstant} follows from the commutative diagram  
$$\xymatrix{\mathcal{F}(\tilde{U}) \ar[r]^{\simeq} \ar[d]_{\simeq} & \mathcal{F}(U) \ar[rd] & \\   
\mathcal{F}(\tilde{D}) \ar[rr]^{\simeq} \ar[ru] && \mathcal{F}(D)}$$
which implies that the structure map $\mathcal{F}(U) \to \mathcal{F}(D)$ is a quasi-isomorphism.
\qed %\end{proof}
\paragraph{\textbf{\emph{Proof of Proposition~\ref{L:expFact}.}}} \label{Proof:expFact}
 First we check that if $\mathcal{F}$ is a factorization algebra on $X\times Y$ and $U\subset X$ is open, then $\underline{\pi_1}_*(\mathcal{F}(U))$ is a factorization algebra over $Y$.
 If $\mathcal{V}$ is a factorizing cover of an open set $V\subset Y$, then $\{U\}\times \mathcal{V}$ is a factorizing cover of $U\times V$ and the \v{C}ech complex $\check{C}\big(\mathcal{V}, \underline{\pi_1}_*\mathcal{F}(U)\big)$ is equal to $\check{C}(\{U\}\times \mathcal{V}, \mathcal{F})$.
Hence the natural map $\check{C}\big(\mathcal{V}, \underline{\pi_1}_*\mathcal{F}(U)\big)\to \underline{\pi_1}_*(\mathcal{F})(U,V)$ factors as  $$\check{C}\big(\mathcal{V}, \underline{\pi_1}_*\mathcal{F}(U)\big)=\check{C}(\{U\}\times \mathcal{V}, \mathcal{F})\to \mathcal{F}(U\times V)=\underline{\pi_1}_*(\mathcal{F})(U,V).$$ It is a quasi-isomorphism since $\mathcal{F}$ is a factorization algebra. 
We have proved that $\underline{\pi_1}_*(\mathcal{F})\in \textbf{PFac}_X(\textbf{Fac}_Y)$.
 To show that  $\underline{\pi_1}_*(\mathcal{F})\in \textbf{Fac}_X(\textbf{Fac}_Y)$, we only need to check that for every open $V\subset Y$, and any factorizing cover $\mathcal{U}$ of $U$, the natural map 
$\check{C}\big(\mathcal{U}, \underline{\pi_1}_*(\mathcal{F})( -, V)\big) \to \underline{\pi_1}_*(\mathcal{F})( U, V) $ is a quasi-isomorphism, which follows by the same argument.  Hence  $\underline{\pi_1}_*$ factors as a functor $\underline{\pi_1}_*: {\textbf{Fac}}_{X\times Y}\longrightarrow {\textbf{Fac}}_X(\textbf{Fac}_Y)$.

\smallskip 

When $\mathcal{F}$ is locally constant,  Proposition~\ref{P:pushforwardlc} applied to the first and second projection implies that 
$\underline{\pi_1}_*(\mathcal{F}) \in {\textbf{Fac}}^{lc}_X(\textbf{Fac}_Y^{lc})$.

Now we build an inverse of $\underline{\pi_1}_*$ in the locally constant case. Let  $\mathcal{B}$ be in $\textbf{Fac}_X(\textbf{Fac}_Y)$. 
 A (stable by finite intersection) basis of neighborhood 
of $X\times Y$ is given by the products $U\times V$, with $(U,V)  \in \mathcal{CV}(X)\times \mathcal{CV}(Y)$ where $\mathcal{CV}(X)$, $\mathcal{CV}(Y)$ are bounded geodesically convex neighborhoods (for some choice of Riemannian metric on $X$ and $Y$). 
Thus by \S~\ref{SS:extensionfrombasis},  in order to extend $\mathcal{B}$ to a factorization algebra on $X\times Y$, it is enough to prove that the rule $(U\times V)\mapsto \mathcal{B}(U)(V)$ (where $U\subset X$, $V\subset Y$) defines an $\mathcal{CV}(X)\times \mathcal{CV}(Y)$-factorization algebra.
 If $U\times V \in \mathcal{CV}(X)\times \mathcal{CV}(Y)$, 
then $U$ and $V$ are canonically homeomorphic to  $\R^n$ and $\R^m$ respectively. Now, the construction of the structure maps for opens in $\mathcal{CV}(U)\times \mathcal{CV}(V)$ restricts to proving the result for $\mathcal{B}_{|U\times V}\in \textbf{Fac}^{lc}_{\R^n}(\textbf{Fac}^{lc}_{\R^m})$. 
By Theorem~\ref{P:En=Fact}, $\textbf{Fac}_{\R^d} \cong E_d\textbf{-Alg}$, hence by Dunn Theorem~\ref{T:Dunn} below, 
we have that $\textbf{Fac}^{lc}_{\R^{n+m}}\stackrel{\underline{\pi_1}_*}\longrightarrow\textbf{Fac}^{lc}_{\R^n}(\textbf{Fac}^{lc}_{\R^m})$ is an equivalence which allows to define a $\mathcal{CV}(X)\times \mathcal{CV}(Y)$-factorization algebra structure associated to $\mathcal{B}$. 
We denote $j(\mathcal{B})\in \textbf{Fac}_{X\times Y}$ the induced factorization algebra on $X\times Y$.
 Note that $j(\mathcal{B})$ is locally constant, since, again, the question reduces to Dunn Theorem. 

It remains to prove that $j:\textbf{Fac}^{lc}_X(\textbf{Fac}_Y^{lc})\to \textbf{Fac}_{X\times Y}$ is a natural inverse  to  $\underline{\pi_1}_*$. This follows by uniqueness of the factorization algebra extending a factorization algebra on a factorizing basis, that is, Proposition~\ref{P:extensionfrombasis}.  
\qed 

\subsection{Complements on \S~\ref{SS:Fachalfline}}
Here we collect the proofs of statements relating factorization algebras and intervals.
\paragraph{\textbf{\emph{Proof of Proposition~\ref{P:BarInterval}.}}}\label{Proof:BarInterval}
The (sketch of) proof is extracted from the excision property for factorization algebras in~\cite{GTZ2}. By definition of a $\Disk_1^{fr}$-algebra, a factorization algebra $\mathcal{G}$ on $\R$ carries a structure of $\Disk_1^{fr}$-algebra (by simply restricting the value of $\mathcal{G}$ to open sub-intervals, just as in Remark~\ref{R:En=Fact}). 

Similarly, if $\mathcal{G}$ is a factorization algebra on $[0,+\infty)$,
 it carries a structure of a  $\Disk_1^{fr}$-algebra and a (pointed) right module over it, while a factorization algebra on $(-\infty, 0]$ carries the structure of a left (pointed) module over a $\Disk_1^{fr}$-algebra (see Definition~\ref{D:LandRMod}).
 It follows that a factorization algebra over the closed interval $[0,1]$ determines  an $E_1$-algebra $\mathcal{A}$ and pointed left module $\mathcal{M}^{\ell}$ and pointed right module $\mathcal{M}^{r}$ over $\mathcal{A}$.
 
By strictification we can replace the $E_1$-algebra and modules by differential graded associative ones so that we are left to the case of a factorization algebra  $\mathcal{F}$ on $[-1,1]$ which,  on the factorizing basis $\mathcal{I}$ of $[0,1]$,  is precisely the $\mathcal{I}$-prefactorization algebra $\mathcal{F}$ defined before the Proposition~\ref{P:BarInterval}.

Now, we are left to prove that, for any $A, M^{\ell}, M^{r}, m^r, m^{\ell}$,  $\mathcal{F}$ is a $\mathcal{I}$-factorization algebra, and then to compute its global section $\mathcal{F}([0,1])$. Theorem~\ref{P:En=Fact} implies  that the restriction $\mathcal{F}_A$ of $\mathcal{F}$ to $(0,1)$ is a factorization algebra. In order to conclude we only need to prove that the canonical maps  
$$\check{C}\big(\mathcal{U}_{[0,1)},\mathcal{F})\big)\longrightarrow \mathcal{F}([0,1))=M^{r}, \quad  \check{C}\big(\mathcal{U}_{(0,1]},\mathcal{F})\big)\longrightarrow\mathcal{F}((0,1])=M^{\ell}$$ $$\mbox{and} \quad \check{C}\big(\mathcal{U}_{[0,1]},\mathcal{F})\big)\longrightarrow \mathcal{F}([0,1])\cong M^{r}\mathop{\otimes}\limits^{\mathbb{L}}_{A} M^{\ell}$$ are quasi-isomorphisms. Here,  $\mathcal{U}_{[0,1]}$ is the factorizing covers given by all opens $U_t:=[0,1]\setminus \{t\}$ where $t\in [0,1]$ (in other words by the complement of a singleton). 
Similarly,  $\mathcal{U}_{[0,1)}$, $\mathcal{U}_{(0,1]}$ are respectively covers given by all opens $U_t^\ell:=([0,1)\setminus \{t\}$ where $t\in [0,1)$ and all opens $U_t^r:=(0,1]\setminus \{t\}$ where $t\in (0,1]$.

The proof in the 3 cases are essentially the same so we only consider the case of the opens $U_t$.  Since $ M^{r}\mathop{\otimes}\limits^{\mathbb{L}}_{A} M^{\ell} \cong M^{r}\mathop{\otimes}\limits_A B(A,A,A) \mathop{\otimes}_{A} M^{\ell}$ where $B(A,A,A)$ is the two-sided Bar construction of $A$, it is enough to prove the result for $M^{r}=M^{\ell}=A$ in which case we are left to prove that the canonical map $$\check{C}\big(\mathcal{U}_{[0,1]},\mathcal{F})\big)\longrightarrow A\mathop{\otimes}\limits^{\mathbb{L}}_{A} A \cong B(A,A,A) \stackrel{\simeq}\longrightarrow A$$ is an equivalence. 

Any two  open sets in $\mathcal{U}_{[0,1]}$  intersect non-trivially so that the set $P\mathcal{U}$ are singletons. We have $\mathcal{F}(U_t)\cong \mathcal{F}([-1,t)) \otimes \mathcal{F}((t,1])$ which is $A\otimes A$ if $t\neq \pm 1$ and is $A\otimes k$ or $k\otimes A$ if $t=1$ or $t=-1$. 
More generally, $$\mathcal{F}(U_{t_0},\dots,U_{t_n}, U_{\pm 1})\cong \mathcal{F}(U_{t_0},\dots,U_{t_n})\otimes k.$$ 
 Further, if $0<t_0<\cdots<t_n<1$, then $\mathcal{F}(U_{t_0},\dots,U_{t_n})\cong A\otimes A^{\otimes n}\otimes A$ and the structure map $\mathcal{F}(U_{t_0},\dots,U_{t_n}) \to \mathcal{F}(U_{t_0},\dots,\widehat{U_{t_i}},\dots, U_{t_n})$ is given by the multiplication $$a_0\otimes \cdots \otimes a_{n+1} \mapsto a_0\otimes \cdots (a_i a_{i+1})\otimes \cdots\otimes a_{n+1}.$$ This identifies the  \v{C}ech complex $\check{C}\big(\mathcal{U},\mathcal{F}\big)$ with a kind of  parametrized analogue of the standard two sided Bar construction with coefficients in $A$. 
We have  canonical maps $$\phi_{t}:\mathcal{F}(U_t)\cong \mathcal{F}([-1,t)) \otimes \mathcal{F}((t,1]) \to \mathcal{F}([-1,1))\otimes \mathcal{F}((-1,1])\cong A\otimes A\to A$$ 
induced by the multiplication in $A$.
The composition $\bigoplus\limits_{U_r, U_s \in P\mathcal{U}} \mathcal{F}(U_r, U_s)[1]\to \bigoplus\limits_{U_t \in P\mathcal{U}} \mathcal{F}(U_t)[0]\to A $ is the zero map so that we have a map of (total) chain complexes: $\eta:\check{C}\big(\mathcal{U},\mathcal{F}\big)\to A$.  In order to prove that $\eta$ is an equivalence, we consider the retract $\kappa: A\cong \mathcal{F}(U_1)\hookrightarrow \bigoplus\limits_{U_t \in P\mathcal{U}} \mathcal{F}(U_t)[0] \hookrightarrow \check{C}\big(\mathcal{U},\mathcal{F}\big)$ which satisfies $\eta\circ \kappa= id_A$. Let $h$ be the homotopy operator on $
\check{C}\big(\mathcal{U},\mathcal{F}\big)$ defined, on  $\mathcal{F}(U_{t_0},\dots,U_{t_n})[n]$, by
$$\sum_{i=0}^n (-1)^i s^{t_0,\dots, t_n}_i: \mathcal{F}(U_{t_0},\dots,U_{t_n})[n] \longrightarrow \bigoplus\limits_{U_{r_0},\dots U_{r_{n+1}} \in P\mathcal{U}} \mathcal{F}(U_{r_0},\dots, U_{r_{n+1}})[n+1] $$ 
where, for $0\le i\le n-1$, $s^{t_0,\dots, t_n}_i$ is defined as the suspension of the identity map $$\mathcal{F}(U_{t_0},\dots,U_{t_n})[n]\to \mathcal{F}(U_{t_0},\dots,U_{t_n})[n+1]\cong \mathcal{F}(U_{t_0},\dots,U_{t_i}, U_{t_i},\dots, U_{t_{n}})[n+1]$$ followed by the inclusion in the \v{C}ech complex.  

Similarly, the map $s^{t_0,\dots, t_n}_n$ is defined as  the suspension of the identity map 
$\mathcal{F}(U_{t_0},\dots, U_{t_n})[n]\to \mathcal{F}(U_{t_0},\dots,U_{t_n})[n+1]\cong \mathcal{F}(U_{t_0},\dots, U_{t_{n}}, U_{1})[n+1]$ (followed by the inclusion in the \v{C}ech complex). 
Note that $ d h+ hd = id -\kappa \circ \eta$ where $d$ is the total differential on $\check{C}\big(\mathcal{U},\mathcal{F}\big)$. It follows   that $\eta:\check{C}\big(\mathcal{U},\mathcal{F}\big) \to A $ is an equivalence. 
\qed 

\paragraph{\textbf{\emph{Proof of Corollary~\ref{C:FacXhalfline}. }}} The functor $\underline{\pi_1}_*:\textbf{Fac}^{lc}_{X\times [0, +\infty)} \to E_1\textbf{-RMod}(\textbf{Fac}^{lc}_X)$  is well-defined by Corollary~\ref{L:expFactStrat} and Proposition~\ref{P:halfline}. In order to check it is an equivalence, as in the proof of Proposition~\ref{L:expFact}, we only need to prove it when $X=\R^n$, that is that, if $\mathcal{F}\in \textbf{Fac}^{lc}_{[0,+\infty)}(\textbf{Fac}^{lc}_X)$, then it is in the essential image of  $\underline{\pi_1}_*$. 
By Proposition~\ref{L:expFact}, we can also assume that the restriction $\mathcal{F}_{|(0,+\infty)}$ is in $\underline{\pi_1}_*(\textbf{Fac}^{lc}_{\R^n \times (0,+\infty)})$.

Let $\mathcal{I}_{\varepsilon}$ be the factorizing cover of $[0,+\infty)$ consisting of all intervals with the restriction  that intervals containing $0$ are included in $[0,\varepsilon)$ note that $I_{\tau}\subset I_{\varepsilon}$ if $\epsilon>\tau$. We can replace $\mathcal{F}$ by its \v{C}ech complex on $\mathcal{I}_{\varepsilon}$ (for any $\varepsilon$) and thus by its limit over all $\varepsilon >0$, which we still denote $\mathcal{F}$. 
As in the proof of Proposition~\ref{L:expFact}, we only need to prove that  $(U,V)\mapsto \big(\mathcal{F}(V)\big)(U)$ extends as a factorization algebra relative to the factorizing basis of $\R^n \times [0,+\infty)$ consisting of cubes (with sides parallel to the axes).
The only difficulty is to define the prefactorization algebra structure on this basis (since we already know it is locally constant, and thus will extend into a factorization algebra). 
As noticed above, we already have such structure when no cubes intersect $\R^n \times \{0\}$.
 Given a  finite family of pairwise disjoint cubes  lying in a bigger cube $K\times [0, R)$ intersecting $\{0\}$, we can find $\varepsilon >0$ such that no cubes of the family lying in $\R^n\times (0,+\infty)$ lies in the band $\R^n\times [0,\varepsilon)$. The value of $\mathcal{F}$ on each square containing  $\R^n \times \{0\}$ can be computed using the \v{C}ech complex associated to $ I_{\varepsilon}$. This left us, in every such cube, with one term containing a summand  $[0,\tau)$ ($\tau\leq \epsilon$) and cubes in the complement. Now choosing the maximum of the possible $\tau$ allows to first maps $(\mathcal{F}(c))(d)$ to $\big(\mathcal{F}(\tau,R)\big)(K)$ for every cube $c\times d$  in $\R^n\times (\tau, R)$ (since we already have a factorization algebra on $\R^n\times (0,+\infty)$). Then to maps all other summands to terms of the form $\mathcal{F}([0,\tau))(d)$, then all of them in $\mathcal{F}([0,\tau))(K)$ and finally to evaluate the last two remaining summand in
 $\mathcal{F}( [0, R))(K)$ using the prefactorization algebra structure of  $\mathcal{F}$ with respect to intervals in $[0,\infty)$. This is essentially the same argument as in the proof of Corollary~\ref{C:EnModasFact}.
\qed

\subsection{Complements on \S~\ref{SS:EnModasFac}} \label{SS:AppEnModasFac}
Here we collect proofs of statements relating factorization algebras and $E_n$-modules.
\paragraph{\textbf{\emph{Proof of Theorem~\ref{P:EnModasFact}}}}
 The functoriality  is immediate from the construction.
Let ${\mathbf{Fin}_*}$ be the $\infty$-category associated to the category $Fin_*$ of pointed finite sets. If $\mathcal{O}$ is an operad, the $\infty$-category $\mathcal{O}\textbf{-Mod}_{A}$ of $\mathcal{O}$-modules\footnote{in $\hkmod$} over an $\mathcal{O}$-algebra $A$ is the category of $\mathcal{O}$-linear functors $\mathcal{O}\textbf{-Mod}_{A}:= \Map_{\textbf{O}}(\textbf{O}_*, \hkmod)$.  
Here, following the notations of \S~\ref{S:EnAlg}, $\textbf{O}$ is the $\infty$-categorical enveloppe of $\mathcal{O}$ as in~\cite{L-HA} and $\textbf{O}_* := \textbf{O}\times_{{\mathbf{Fin}}}{\mathbf{Fin}_*}$. 
There is an natural fibration $\pi_{\mathcal{O}}:\mathcal{O}\textbf{-Mod}\longrightarrow \mathcal{O}\textbf{-Alg}$ whose fiber at $A\in\mathcal{O}\textbf{-Alg}$ is $\mathcal{O}\textbf{-Mod}_{A}$.

Let $\mathcal{D}_{\textit{isk}}$ be the set of all open disks in $\R^n$. Recall from Remark~\ref{R:Ndisk(M)} that $\mathcal{D}_{\textit{isk}}$-prefactorization algebras are exactly algebras over the operad  $ \Disk(\R^n)$ and that locally constant $\mathcal{D}_{\textit{isk}}$-prefactorization algebras are the same as  locally constant factorization algebras on $\R^n$ (Proposition~\ref{P:Ndisk(M)}).
 The map of operad $\Disk(\R^n)\to \mathbb{E}_{\R^n}$ of~\cite[\S 5.2.4]{L-HA} induces a fully faithful functor $E_n\textbf{-Alg}\to \Disk(\R^n)\textbf{-Alg} $ and thus a  functor  
$$\tilde{\psi}:E_n\textbf{-Mod} \longrightarrow \Disk(\R^n)\textbf{-Mod}\longrightarrow  \Disk(\R^n)\textbf{-Alg}.$$  

The map $\tilde{\psi}$ satisfies that, for every convex subset $C\subset \R^n$, one has  $$\tilde{\psi}(M)(C)= M(C)=\mathcal{F}_M(C).$$ 
By definition, $\tilde{\psi}\circ can : E_n\textbf{-Alg} \to \Disk(\R^n)\textbf{-Alg}$ is the composition $$E_n\textbf{-Alg} \longrightarrow \textbf{Fac}_{\R^n}^{lc}\longrightarrow  \Disk(\R^n)\textbf{-Alg}.$$ Hence,  the commutativity of the diagram in the Theorem will follow automatically once we have proved  that $\tilde{\psi}$ factors as a composition of functors \begin{equation}\label{eq:psifactorsthrough}\tilde{\psi}:E_n\textbf{-Mod} \stackrel{\psi}\longrightarrow \textbf{Fac}^{lc}_{\R^n_*} \longrightarrow \Disk(\R^n)\textbf{-Alg}.\end{equation} 
Assuming for the moment that we have proved that $\tilde{\psi}$ factors through $\textbf{Fac}^{lc}_{\R^n_*}$, let us show that $\psi$ is fully faithful.  By definition of categories of modules, we have a commutative diagram 
$$ \xymatrix{ E_n\textbf{-Mod}\ar[d]_{\pi_{E_n}} \ar[rr] &&  \Disk(\R^n)\textbf{-Mod} \ar[d]^{\pi_{\Disk(\R^n)}} \\
E_n\textbf{-Alg}\ar[rr] &&\Disk(\R^n)\textbf{-Alg} }$$ whose bottom arrow is a fully faithful embedding by~\cite[\S 5.2.4]{L-HA}.
Since the mapping spaces of $\mathcal{F}\in\textbf{Fac}_X^{lc}$ are the mapping spaces of  the  underlying prefactorization algebra, the map $\textbf{Fac}_{\R^n_*}^{lc} \to  \Disk(\R^n)\textbf{-Alg}$ is fully faithful, 
and we are left to prove that $$\tilde{\psi}: \Map_{E_n\textbf{-Mod}}(M,N) \to \Map_{\Disk(\R^n)\textbf{-Alg}}(\tilde{\psi}(M), \tilde{\psi}(N))$$ is an equivalence for all $M\in E_n\textbf{-Mod}_A$ and $N\in E_n\textbf{-Mod}_B$. 
The fiber at (the image of) an $E_n$-algebra $A$ of $\Disk(\R^n)\textbf{-Mod} \to \Disk(\R^n)\textbf{-Alg} $ is the (homotopy) pullback $$\Disk(\R^n)\textbf{-Mod}_A:= \Disk(\R^n)\textbf{-Alg}^{\slash A} \times^{h}_{\Disk(\R^n\setminus\{0\})\textbf{-Alg}^{\slash A}} \text{Iso}_{\Disk(\R^n\setminus\{0\})\textbf{-Alg}}(A).  $$
Here we write $\Disk(\R^n)\textbf{-Alg}^{\slash A}$ for the $\infty$-category of $\Disk(\R^n)$-algebras under $A$ and $\text{Iso}_{\Disk(\R^n)\textbf{-Alg}}(A)$ its subcategory of objects $A\stackrel{f}\to B$ such that $f$ is an equivalence. 
 In plain english, $\Disk(\R^n)\textbf{-Mod}_A$ is the $\infty$-category  of maps  $A\stackrel{f}\to B$ (where $B$ runs through $\Disk(\R^n)\textbf{-Alg}$) whose restriction to $\R^n\setminus\{0\}$ is an equivalence. 

It is now sufficient to prove, given $E_n$-algebras $A$ and $B$ (identified with objects of $\textbf{Fac}_{\R^n}^{lc}$) and
 two locally constant  factorization algebras $\tilde{\psi}(M)$,   $\tilde{\psi}(N)$ on $\R^n_*$ together with two maps of factorizations algebras $f:A\to \tilde{\psi}(M)$, $g: B\to \tilde{\psi}(N)$ whose restrictions to $\R^n\setminus\{0\}$ are quasi-isomorphisms, that the canonical map 
\begin{multline}\label{eq:fullyfaithfulpsi} \Map_{\Disk(\R^n)\textbf{-Alg}}(A,B) {\times}^{h}_{\Map_{\Disk(\R^n)\textbf{-Alg}}(A, \tilde{\psi}(N))} \Map_{\Disk(\R^n)\textbf{-Alg}}( \tilde{\psi}(M), \tilde{\psi}(N))\\
 \longrightarrow\Map_{\Disk(\R^n)\textbf{-Alg}}( \tilde{\psi}(M), \tilde{\psi}(N)) \end{multline}
is an equivalence. This  pullback is the mapping space
 $ \Map_{\Disk(\R^n)\textbf{-Mod}}( \tilde{\psi}(M), \tilde{\psi}(N))$ and the maps to  $ \Map_{\Disk(\R^n)\textbf{-Alg}}(A, \tilde{\psi}(N))$ are induced by post-composition by $g$  and precomposition by $f$.
 
The fiber of the map~\eqref{eq:fullyfaithfulpsi} at $\Theta:  \tilde{\psi}(M) \to \tilde{\psi}(N))$ is the mapping space of $\Disk(\R^n)$-algebras  $A\stackrel{\tau}\to B$ such that, for any disk $U\subset \R^n\setminus\{0\}$, which is a sub-disk of a disk $D$ containing $0$,  the following diagram is commutative:
\begin{equation}\label{eq:diagramfullyfaithfulpsi} \xymatrix{A(D)  \ar[d]_{\tau(D)}& & A(U)  \ar[ll]^{\rho_{U,D}^A} \ar[rr]^{\simeq}_{f} \ar[d]_{\tau(U)} & & \tilde{\psi}(M)(U) \ar[d]^{\Theta(U)} \\
B(D) && B(U) \ar[ll]^{\rho_{U,D}^B} \ar[rr]^{\simeq}_{g} &&  \tilde{\psi}(N)(U) . } 
\end{equation}
Here $\rho^A_{U,D}$ and $\rho^B_{U,D}$ are the structure maps of the factorization algebras associated to $A$ and $B$.
The right hand square of Diagram~\eqref{eq:diagramfullyfaithfulpsi} shows that $\tau$ is uniquely  determined by $\Theta$ on every open disk in $\R^n\setminus \{0\}$. 

Since $A$ and $B$ are locally constant factorization algebras on $\R^n$,  the maps $\rho^A_{U,D}$ and $\rho^B_{U,D}$ are natural quasi-isomorphisms. It follows from the left hand square in Diagram~\eqref{eq:diagramfullyfaithfulpsi} that the restriction of $\tau$ to  $\R^n\setminus \{0\}$ also determines the map $\tau$ on $\R^n$.   Hence the map~\eqref{eq:fullyfaithfulpsi} is an equivalence which concludes the proof that $\psi$ is fully faithful.

\smallskip

It remains to  prove that $\tilde{\psi}$ factors through a functor $\psi$, that is that we have a composition as written in~\eqref{eq:psifactorsthrough}.  
This amounts to prove that  for any $M\in {E_n}\textbf{-Mod}_{A}$ (that is $M$ is an $E_n$-module over $A$), $\tilde{\psi}(M)$ is a locally constant factorization algebra on the stratified manifold given by the pointed disk $\R^n_*$.
Since the convex subsets are a factorizing basis stable by finite intersection, we only have to prove this result on the cover $\mathcal{CV}(\R^n)$ (by Proposition~\ref{P:extensionfrombasis} and Proposition~\ref{P:locallocallyconstant}).

 Note that if $V\in \mathcal{CV}(\R^n)$ is a subset of $\R^n\setminus\{0\}$, then $\psi(M)_{|V}$ lies in the essential image of $\psi\circ can (M)_{|V}$ where $\psi\circ can(M)$ is the functor inducing the equivalence between $E_n$-algebras and locally constant factorization algebras on $\R^n$ (Theorem~\ref{P:En=Fact}). 
We denote $\mathcal{F}_A:=\psi\circ can (A)$ the  locally constant factorization algebra on $\R^n$ induced by $A$.
In particular,  the canonical map  $$\check{C}(\mathcal{CV}(V), \mathcal{F}_M)= \check{C}(\mathcal{CV}(V), \mathcal{F}_A)\to \mathcal{F}_A(V)\cong\mathcal{F}_M(V)$$ is a quasi-isomorphism and further,  if $U\subset V$ is a sub disk, then $\mathcal{F}_M(U) \to \mathcal{F}_M(V)$ is a quasi-isomorphism. 

We are left to consider the case where $V$ is a convex set containing $0$. Let $\mathcal{U}_V$ be the cover of $V$ consisting of all open sets which contains $0$ and are a finite union of disjoint convex subsets of $V$.
 It is a factorizing cover, and, by construction, two open sets in $\mathcal{U}_V$ intersects non-trivially since they contain $0$. Hence $P\mathcal{U}_V=\mathcal{U}_V$. 
Since 
$\mathcal{U}_V\subset P\mathcal{CV}(V)$, we have a  diagram of short exact sequences of chain complexes
{\small$$\xymatrix{0\ar[r] & \check{C}(\mathcal{U}_V, \mathcal{F}_M)\ar[r]^{{ i_M\quad}} & \check{C}(\mathcal{CV}(V),\mathcal{F}_M) \ar[r] & \check{C}(\mathcal{CV}(V),\mathcal{F}_M)/\check{C}(\mathcal{U}_V,\mathcal{F}_M)\ar[r] \ar[d]^{\simeq}& 0 \\
0\ar[r] & \check{C}(\mathcal{U}_V, \mathcal{F}_A)\ar[r]^{i_A\quad} & \check{C}(\mathcal{CV}(V),\mathcal{F}_A) \ar[r] & \check{C}(\mathcal{CV}(V),\mathcal{F}_A)/\check{C}(\mathcal{U}_V,\mathcal{F}_A)\ar[r]& 0 } $$ }
where the vertical equivalence follows from the fact that $\mathcal{F}_M(U)\cong \mathcal{F}_A(U)$ if $U$ is a convex set not containing $0$. 
Moreover, since $\mathcal{U}_V$ is a factorizing cover of $V$ and $\mathcal{F}_A$ a factorization algebra,   $i_A$ is a quasi-isomorphism, hence $\check{C}(\mathcal{CV}(V),\mathcal{F}_A)/\check{C}(\mathcal{U}_V,\mathcal{F}_A)$ is acyclic. It follows that $i_M: \check{C}(\mathcal{U}_V, \mathcal{F}_M)\to \check{C}(\mathcal{CV}(V),\mathcal{F}_M) $ is a quasi-isomorphism as well.

We are left to prove that the canonical map $\check{C}(\mathcal{U}_V, \mathcal{F}_M)\to \mathcal{F}(M)\cong M$ is a quasi-isomorphism. 
Note that for any $U\in \mathcal{U}_V$, we have $\mathcal{F}_M(U) \cong M\otimes_{A}^{\mathbb{L}} \mathcal{F}_A(U)$.
We deduce that $\check{C}(\mathcal{U}_V, \mathcal{F}_M)\cong M\otimes_{A}^{\mathbb{L}} \check{C}(\mathcal{U}_V, \mathcal{F}_A)$ as well.
  The chain map $$ M\otimes_{A}^{\mathbb{L}} \check{C}(\mathcal{U}_V, \mathcal{F}_A)\cong \check{C}(\mathcal{U}_V, \mathcal{F}_M)\longrightarrow \mathcal{F}(M)\cong M\otimes_A^{\mathbb{L}} A$$ is an equivalence since it is obtained by tensoring (by $M$ over $A$) the quasi-isomorphism $\check{C}(\mathcal{U}_V, \mathcal{F}_A)\to \mathcal{F}_A(V)\cong A$ (which follows from the fact that $\mathcal{F}_A$ is a factorization algebra).  

It remains to prove that $\mathcal{F}_M(U)\to \mathcal{F}_M(V)$ is a quasi-isomorphism if both $U$, $V$ are convex subsets containing $0$. This is immediate since $M(U)\to M(V)$ is an equivalence by definition of an $E_n$-module over $A$.
\qed 

\paragraph{\textbf{\emph{Proof of Corollary~\ref{C:EnModasFact}.}}}
By Theorem~\ref{P:EnModasFact}, we have a commutative diagram 
$$  \xymatrix{ E_n\textbf{-Mod}\ar[d]_{\pi_{E_n}} \ar[rr] && \textbf{Fac}^{lc}_{\R^n_*}\times^{h}_{\textbf{Fac}^{lc}_{\R^n\setminus\{0\}}} \textbf{Fac}^{lc}_{\R^n}\ar[d] \\
E_n\textbf{-Alg}\ar[rr] && \textbf{Fac}^{lc}_{\R^n} }   $$ with fully faithful horizontal arrows.
Since $E_n\textbf{-Alg}\to \textbf{Fac}_{\R^n}^{lc}$ is an equivalence,
 we only need to prove that, for any $E_n$-algebra $A$, the induced fully faithful functor $E_n\textbf{-Mod}_A \longrightarrow \textbf{Fac}^{lc}_{\R^n_*}\times_{\textbf{Fac}^{lc}_{\R^n\setminus\{0\}}}\{A\}$ between the fibers is essentially surjective\footnote{that is we are left to prove Corollary~\ref{C:AEnModasFact}}.

 Let $\mathcal{M}$ be a locally constant factorization algebra on $\R^n_{*}$ such that $\mathcal{M}_{|\R^\setminus\{0\}}$ is equal to  $\mathcal{A}_{|\R^n\setminus\{0\}}$ where $\mathcal{A}$ is the  factorization algebra associated to $A$ (by Theorem~\ref{P:En=Fact}). 
 Recall that $N:\R^n\to [0,+\infty)$ is the euclidean norm map.
 Lemma~\ref{L:Normpreserveslc} implies that $N_*(\mathcal{M})$ is  is locally constant on the stratified half-line $[0,+\infty)$ 
and thus equivalent to a right module over the $E_1$-algebra $N_*(\mathcal{M})(\R^n\setminus\{0\})\cong \mathcal{A}(\R^n\setminus\{0\}) \cong \int_{S^{n-1}\times \R} A$.

By homeomorphism invariance of (locally constant) factorization algebras, we can replace $\R^n$ by the unit open disk $D^n$ of $\R^n$ in the  above analysis. We also denote $D^n_*$ the disk $D^n$ viewed as a pointed space with base point $0$.
We now use this observation to define a structure of $E_n$-module over $A$ on $M:=\mathcal{M}(D^n)=N_*(\mathcal{M})\big([0,1)\big)$. It amount to define, for any finite set $I$, continuous maps (compatible with the structure of the operad of little disks of dimension $n$)
\begin{multline*}
\text{Rect}_*\Big( D^n_*\coprod\Big(\coprod_{i\in I} D^n\Big), D^n_* \Big)   \longrightarrow \Map_{\hkmod}\big(M\otimes A^{\otimes I}, M\big)\\ \stackrel{\simeq}\longrightarrow \Map_{\hkmod}\Big(M\otimes \Big(\int_{D^n}A\Big)^{\otimes I}, M\Big)
\end{multline*}
where $\text{Rect}_*$ is the space of  rectilinear embeddings which maps the center of the first copy  $D^n_*$ to the center of $D^n_*$ (i.e. preserves the base point of $D^n$).
Let $I_N$ be the map that sends an element $f\in \text{Rect}_*\big( D^n\coprod\big(\coprod_{i\in I} D^n\big), D^n \big)$   to the smallest open sub-interval $I_N(f)\subset (0,1)$ which contains $N\big(f\big(\coprod_{i\in I} D^n\big)\big)$, that is the smallest interval that contains the image of the non-pointed disks. 
By definition $I_N$ is continuous (meaning the lower and the upper bound of $I_N(f)$ depends continuously of $f$) and its image is disjoint from the image $N(D^n_*)$ of the pointed copy of $D^n$.
 Similarly we define $r(f)$ to be the radius of $f(D^n_*)$. 
We have a continuous map $$\tilde{N}:\text{Rect}_*\Big( D^n_*\coprod\big(\coprod_{i\in I} D^n\big), D^n_* \Big) \longrightarrow \text{Rect}\Big([0,1)\coprod (0,1), [0,1)\Big) $$ 
given by $\tilde{N}(f)\big((0,1)\big)= I_N(f)$ and $\tilde{N}(f)\big([0,1)\big)= [0,r(f))$. 
Since $f(\coprod_{i\in I}D^n)\subset S^{n-1}\times (0,1)$ , we have the composition
\begin{multline*}\Upsilon:\text{Rect}_*\Big( D^n_*\coprod\big(\coprod_{i\in I} D^n\big), D^n_* \Big)    \longrightarrow\text{Rect}\Big(\coprod_{i\in I} D^n, S^{n-1}\times (0,1) \Big) \\  \longrightarrow \Map_{\hkmod}\left( \Big(\int_{D^n} A\Big)^{\otimes I}, \int_{S^{n-1}\times (0,1)} A\right)  \end{multline*}where the first map is  induced by the restriction  to $\coprod_{i\in I} D^n$ and the last one by functoriality of factorization homology with respect to embeddings.
We finally define 
\begin{multline*}
\mu:\text{Rect}_*\Big( D^n_*\coprod\big(\coprod_{i\in I} D^n\big), D^n_* \Big) \stackrel{\tilde{N}\times \Upsilon}\longrightarrow\\
 \text{Rect}\Big([0,1)\coprod (0,1), [0,1)\Big) \times \Map_{\hkmod}\left( \Big(\int_{D^n} \hspace{-.2pc}A\Big)^{\otimes I}, \int_{S^{n-1}\times (0,1)} \hspace{-.5pc} A\right)\longrightarrow\\
 \Map_{\hkmod}\left( M \otimes \int_{S^{n-1}\times (0,1)} \hspace{-.5pc}A , \, M\right)\times \Map_{\hkmod}\left( \Big(\int_{D^n} \hspace{-.2pc}A\Big)^{\otimes I}, \int_{S^{n-1}\times (0,1)} \hspace{-.5pc}A\right)\\
 \stackrel{id_M\otimes \circ}\longrightarrow\Map_{\hkmod}\left( M \otimes\Big(\int_{D^n} A\Big)^{\otimes I} , \,M\right)
\end{multline*} 
where the second  map is induced by the $E_1$-module structure of $M=N_*(\mathcal{M})([0,1))$ over $\int_{S^{n-1}\times (0,1)} A $ and the last one by composition.
That $\mu$ is compatible with the action of the little disks operad follows from the fact that  $\Upsilon$ is induced by the $E_n$-algebra structure of $A$ and $M$ is an $E_1$-module over $\int_{S^{n-1}\times (0,1)}A$.
Hence,  $M$ is in $E_n\textbf{-Mod}_A$.

 We now prove that the factorization algebra 
 $\psi(M)$ is $\mathcal{M}$.  For all euclidean disks $D$ centered at $0$, one has $\psi(M)(D)= \mu(\{D\hookrightarrow D^n\})(M)=\mathcal{M}(D)$ and further $\psi(M)(U)= \mathcal{A}(U)$ if $U$ is a disk that does not contain $0$. 
The $\mathcal{D}$-prefactorization algebra structure of $\psi(M)$ (where $\mathcal{D}$ is  the basis of opens consisting of all euclidean disks centered at $0$ and all those who do not contain $0$) is precisely given by $\mu$ according to the construction of $\psi$ (see Theorem~\ref{P:EnModasFact}). Hence, by Proposition~\ref{P:extensionfrombasis}, $\psi(M) \cong \mathcal{M}$ and the essential surjectivity follows.
\qed

\paragraph{\textbf{\emph{Proof of Proposition~\ref{P:BimodasFact}.}}}
 By~\cite[\S 4.3]{L-HA}, we have two functors $i_{\pm}:\textbf{BiMod}\to E_1\textbf{-Alg}$ and the (homotopy) fiber of $\textbf{BiMod}\stackrel{(i_{-}, i_+)}\longrightarrow E_1\textbf{-Alg}\times E_1\textbf{-Alg}$ at a point $(L,R)$ is the category of $(L,R)$-bimodules which is equivalent to the category $E_1\textbf{-LMod}_{L\otimes R^{op}}$. We have a factorization 
\begin{equation}\label{eq:factorizationofjv}\xymatrix{ \textbf{Fac}_{\R_{*}}^{lc}  \ar@/_/[rrrd]_{(j_{-}^*,j_+^*)\quad}\ar[rr]^{(j_{\pm}^*,(-N)_*)\qquad} && \textbf{BiMod}\ar[r]^{(i_{-}, i_+)}&  E_1\textbf{-Alg}\times E_1\textbf{-Alg} \\ && & \ar[u]_{\simeq}\textbf{Fac}^{lc}_{(-\infty,0)}\times \textbf{Fac}^{lc}_{(0,+\infty)}.}\end{equation}
We can assume that $L,R$ are strict and consider the fiber $$\big(\textbf{Fac}_{\R_{*}}^{lc}\big)_{L,R}:=\{\mathcal{F}_L, \mathcal{F}_R\}\times_{\textbf{Fac}^{lc}_{(-\infty,0)}\times \textbf{Fac}^{lc}_{(0,+\infty)}}\textbf{Fac}_{\R_{*}}^{lc} $$ of  $(j_{-}^*,j_+^*)$ at the pair of  factorization algebras $(\mathcal{F}_L, \mathcal{F}_R)$ on $(-\infty,0)$, $(0,+\infty)$ corresponding  to $L$, $R$ respectively (using Proposition~\ref{P:BarInterval}).
The  pushforward along the opposite of the euclidean norm map gives the functor
$(-N)_*:\big(\textbf{Fac}_{\R_{*}}^{lc}\big)_{L,R}\to E_1\textbf{-LMod}_{L\otimes R^{op}}$. 

We further have a locally constant factorization algebra $\mathcal{G}^{L,R}_{M}$ on $\R_*$ which is defined on the basis of disks by the same rule as for the open interval (for disks included in a component $\mathcal{R}\setminus\{0\}$) together with $\mathcal{G}_{M}^{L,R}(\alpha, \beta)=M$ for $\alpha<0<\beta$. 
For  $r<t_1<u_1\cdots <t_n<u_n<\alpha<0<\beta< x_{1}<y_{1}<\cdots <x_m<y_{m}<s$, the structure maps \begin{multline*}\Big(\bigotimes_{i=1\dots n}\mathcal{G}_{M}^{L,R}\big( (u_i, t_i)\big)\Big) \otimes \mathcal{G}_{M}^{L,R}\big((\alpha,\beta)\big) \otimes \Big(\bigotimes_{i=1\dots n}\mathcal{G}_{M}^{L,R}\big( (u_i, t_i)\big) \Big)\\
\cong\; L^{\otimes n} \otimes M\otimes R^{\otimes m}\longrightarrow M\cong  \mathcal{G}_{M}^{L,R}\big((r,s)\big)\end{multline*} are given by 
$\ell_1\otimes \cdots \otimes \ell_n\otimes a \otimes r_1\otimes \cdots \otimes r_n \mapsto (\ell_1\cdots \ell_n)\cdot a \cdot (r_1\cdots r_n)$.

One checks as in Proposition~\ref{P:BarInterval} that $\mathcal{G}_{M}^{L,R}$ is a locally constant factorization algebra on $\R_*$.
The induced functor $E_1\textbf{-LMod}_{L\otimes R^{op}}\to \textbf{Fac}_{\R_*}^{lc}$ is an inverse of $(-N)_*$. Thus the fiber $\big(\textbf{Fac}_{\R_{*}}^{lc}\big)_{L,R}$ of $(j_{-}^*,j_+^*)$ is equivalent to  $E_1\textbf{-LMod}_{L\otimes R^{op}}$. It now follows from diagram~\eqref{eq:factorizationofjv} that the functor $(j_{\pm}^*,(-N)_*):\textbf{Fac}_{\R_{*}}^{lc} \cong \textbf{BiMod}$  is an equivalence.
\qed 

\paragraph{\textbf{\emph{Proof of Proposition~\ref{P:envelopFac}.}}}

We define a functor $G: E_1\textbf{-RMod}_{\mathcal{A}(S^{n-1}\times \R)}\to\textbf{Fac}^{lc}_{\R^n_*}\times_{\textbf{Fac}^{lc}_{\R^n\setminus\{0\}}} \{\mathcal{A}\}$ (which will be an inverse of $N_*$) as follows. 
By Proposition~\ref{P:halfline} we have an equivalence $$E_1\textbf{-RMod}_{\mathcal{A}(S^{n-1}\times \R)}\,\cong\, \textbf{Fac}^{lc}_{[0,+\infty)}\times_{\textbf{Fac}^{lc}_{(0,+\infty)}} \{N_*(\mathcal{A})\}.$$
It is enough to define $G$  as a functor from $\textbf{Fac}^{lc}_{[0,+\infty)}\times_{\textbf{Fac}^{lc}_{(0,+\infty)}} \{N_*(\mathcal{A})\}$ to locally constant $\mathcal{U}$-factorization algebras, where $\mathcal{U}$ is  a (stable by finite intersections) factorizing basis of $\R^n$  (by Proposition~\ref{P:extensionfrombasis}).
We choose $\mathcal{U}$ to be the basis consisting of all euclidean disks centered at $0$ and all convex open subsets not containing $0$. 
  Let $\mathcal{R} \in \textbf{Fac}^{lc}_{[0,+\infty)}\times_{\textbf{Fac}^{lc}_{(0,+\infty)}} \{N_*(\mathcal{A})\}$. If $U\in \mathcal{U}$ does not contains $0$, then we set $G(\mathcal{R})(U)=\mathcal{A}(U)$ and structure maps on open sets in $\mathcal{U}$ not containing $0$ to be the one of $\mathcal{A}$; this defines  a  locally constant factorization algebra on $\R^n\setminus\{0\}$ since $\mathcal{A}$ does. 

We denote
 $D(0,r)$ the euclidean disk of radius $r>0$ and  set $G(\mathcal{R})(D(0,r))=\mathcal{R}([0,\epsilon))$.
Let  $D(0,r), U_1,\dots, U_i$ be pairwise subsets of $\mathcal{U}$ which are sub-sets  of an euclidean disk $D(0,s)$. Then, $U_1,\dots, U_i$  lies in $ S^{n-1}\times (r,s)$. Denoting respectively $\rho^{\mathcal{A}}$, $\rho^{\mathcal{R}}$ the structure maps of the factorization algebras $\mathcal{A}\in \textbf{Fac}^{lc}_{S^{n-1}\times (0,+\infty)}$ and $\mathcal{R}\in \textbf{Fac}^{lc}_{[0,+\infty)}$,   we have the following composition
\begin{multline}\label{eq:structuremapforG}
G(\mathcal{R})\big(D(0,r)\big)\otimes G(\mathcal{R})(U_1)\otimes \cdots\otimes G(\mathcal{R})(U_i) 
\cong
 \mathcal{R}\big([0,r)\big) \otimes \mathcal{A}(U_1) \otimes \cdots \otimes \mathcal{A}(U_i) 
\\ \stackrel{id\otimes {\rho}_{U_1,\dots, U_i, S^{n-1}\times (r,s)}^{A}} \longrightarrow \mathcal{R}\big([0,r)\big) \otimes \mathcal{A}\big(S^{n-1}\times (r,s)\big)  
\cong \mathcal{R}\big([0,r)\big)\otimes N_*(\mathcal{A})\big( (r,s)\big) \\
\stackrel{\rho^{\mathcal{R}}_{[0,r), (r,s), [0,s)}}\longrightarrow  \mathcal{R}\big([0,s)\big)
=G(\mathcal{R})\big(D(0,s)\big).
\end{multline}
The maps~\eqref{eq:structuremapforG} together with the structure maps of $\mathcal{A}_{|\R^n\setminus\{0\}}\cong \mathcal{R}_{|\R^n\setminus\{0\}}$ define the structure of a $\mathcal{U}$-factorization algebra since $\mathcal{R}$ and $\mathcal{A}$ are factorization algebras. 

The maps $G(\mathcal{R})\big(D(0,r)\big)\to G(\mathcal{R})\big(D(0,s)\big)$ are quasi-isomorphisms since $\mathcal{R}$ is locally constant.  Since the maps~\eqref{eq:structuremapforG} only depend on the structure maps of $\mathcal{R}$ and $\mathcal{A}$, the rule $\mathcal{R}\mapsto G(\mathcal{R})$ extends into a functor $$G:E_1\textbf{-RMod}_{\mathcal{A}(S^{n-1}\times \R)}\cong \textbf{Fac}^{lc}_{[0,+\infty)}\times_{\textbf{Fac}^{lc}_{(0,+\infty)}} \{N_*(\mathcal{A})\}\to\textbf{Fac}^{lc}_{\R^n_*}\times_{\textbf{Fac}^{lc}_{\R^n\setminus\{0\}}} \{\mathcal{A}\} .$$
In order to check that $N_*\circ G$ is equivalent to the identity functor of $E_1\textbf{-RMod}_{\mathcal{A}(S^{n-1}\times \R)}$ it is sufficient to check it on the basis of opens of $[0,+\infty)$ given by the open intervals and the half-closed intervals $[0,s)$ for which the result follows from the definition of the maps~\eqref{eq:structuremapforG}. Similarly, one can check that $G\circ N_*$ is  equivalent to the identity of $\textbf{Fac}^{lc}_{\R^n_*}\times_{\textbf{Fac}^{lc}_{\R^n\setminus\{0\}}} \{\mathcal{A}\} $ by checking it on the open cover $\mathcal{U}$. 
\qed
%%%%%%%%%%%%%%%%%%%%%%%%%%%%%%%%%%%%%%%%%%%%%%
%%%%%%%%%%%%%%%%%%%%%%%%%%%%%%%%%%%%%%%%%%%%%%%%

\section{Appendix}
 \label{S:operad}
In this appendix, we briefly  collect  several notions and results about
$\infty$-categories and ($\infty$-)operads and in particular the $E_n$-operad and its algebras and their modules.

\subsection{ $\infty$-category overview}\label{S:DKL}
There are several equivalent (see~\cite{Be1}) notions of (symmetric monoidal) $\infty$-categories and the reader shall feel free to use its favorite ones in these notes though we choose
\begin{definition}
In this paper, an $\infty$-category means a  complete Segal spaces~\cite{Re, L-TFT}. 
\end{definition}
Other appropriate models\footnote{Depending on the context some models are more natural to use than others} are given by Segal category~\cite{HiSi, ToVe-Segaltopoi} or Joyal quasi-categories~\cite{L-HT}. 
Almost all $\infty$-categories  in these notes  arise as some (derived) topological (or simplicial or dg) categories or localization of a category with weak equivalences. They carry along derived functors (such as derived homomorphisms) lifting the usual derived functors of usual derived categories. 
We recall below (examples~\ref{ex:inftyfromodel} and~\ref{ex:topologicalasinfty}) how to go  from a model or topological category to an $\infty$-category.

Following~\cite{Re, L-TFT}, a
 \emph{\textbf{Segal space}} is a functor $X_\bullet: \Delta^{op}\to \Top$, that is a simplicial  space\footnote{here a space can also mean a simplicial set and it is often technically easier to work in this setting}, which is Reedy fibrant (see~\cite{Ho}) and satisfies the  the condition that for every integers $n\geq 0$, the natural map  (induced by the face maps)
\begin{equation}\label{eq:defnofSegalcond} X_n \longrightarrow X_1\times_{X_0}\times X_1 \times_{X_0} \cdots\times_{X_0} X_1\end{equation}(where there is $n$ copies of $X_1$) is a weak homotopy equivalence. \footnote{Alternatively, one can work out an equivalent notion fo Segal spaces which forget about the Reedy fibrancy condition and replace  condition~\eqref{eq:defnofSegalcond} by the condition that the following natural
map is  is a weak homotopy equivalence:
 $$ X_n\longrightarrow \holim \big(X_1 \stackrel{d_0}\longrightarrow X_0 \stackrel{d_1}\longleftarrow X_1 \stackrel{d_0}\longrightarrow \dots
\stackrel{d_1}\longleftarrow X_1 \stackrel{d_0}\longrightarrow X_0 \stackrel{d_1}\longleftarrow X_1 \big).$$} 

Associated to a Segal space $X_\bullet$ is a (discrete) category $\textit{ho}(X_\bullet)$ with objects the points of $X_0$ and morphisms $\textit{ho}(X_\bullet)(a,b)=\pi_0\big(\{a\}\times_{X_0} X_1 \times_{X_0}\{b\}\big)$. We \emph{call $\textit{ho}(X_\bullet)$ the homotopy category of $X_\bullet$}.

A Segal space $X_\bullet$ is \emph{complete} if the canonical map $X_0\to \text{Iso}(X_1)$ is a weak equivalence, where $\text{Iso}(X_1)$ is the subspace of $X_1$ consisting of maps $f$ whose class $[f]\in \textit{ho}(X_\bullet)$ is invertible.

There is a simplicial closed model category structure, denoted $\SeSp$ on the category of simplicial spaces such that a fibrant object in  $\SeSp$ is precisely a Segal space. The category of simplicial spaces has another simplicial closed model structure, denoted $\CSS$, whose fibrant objects are precisely complete Segal spaces ~\cite[Theorem 7.2]{Re}.   Let $\mathbb{R}:\SeSp \to \SeSp$ be a fibrant replacement functor and $\widehat{\cdot}: SeSp \to \CSS $ be the completion functor that assigns to a Segal space $X_\bullet$ an equivalent complete Segal space $\widehat{X_\bullet}$.
 The composition $X_\bullet \mapsto \widehat{\mathbb{R}(X_\bullet)}$ gives  a fibrant replacement functor $L_{\CSS}$ from simplicial spaces to complete Segal spaces. 

\begin{example}[Discrete categories] \label{ex:discreteasinfty}
Let $\mathcal{C}$ be an ordinary category (which we also referred to as a discrete category since its $\Hom$-spaces are discrete). Its nerve is a Segal space which is not complete in general. However, one can form its \emph{classifying diagram}, abusively denoted $N(\mathcal{C})$ which is a complete Segal space~\cite{Re}. This is the $\infty$-category associated to $\mathcal{C}$.

By definition, the classifying diagram is the simplicial space 
 $n\mapsto \big(N(\mathcal{C})\big)_n:= N_\bullet(\text{Iso}(\mathcal{C}^{[n]}))$ given by the ordinary nerves (or classifying spaces) of   $\text{Iso}(\mathcal{C}^{[n]})$ the subcategories of isomorphisms of the categories of $n$-composables arrows in $\mathcal{C}$.
\end{example}

\begin{example}[\textbf{Topological category}] \label{ex:topologicalasinfty}
Let $T$ be a topological (or simplicial) category. Its  nerve $N_\bullet(T)$ is a simplicial space. Applying the complete Segal Space replacement functor we get the \emph{$\infty$-category  $T_\infty:= L_{\CSS}(N_\bullet(T))$ associated to $T$}.  

Note that there is a  model category structure on topological category which is Quillen equivalent\footnote{more precisely there is a zigzag of Quillen equivalences in between them; zigzag which goes through  the model category structure of Segal categories.} to $\CSS$, (\cite{Be1}). The functor $T\mapsto T_\infty$ realizes this equivalence. 
 If $T$ is a discrete topological category (in other words an usual category viewed as a topological category), then $ T_\infty$ is equivalent to the $\infty$-category $N(T)$ associated to $T$ in Example~\ref{ex:discreteasinfty} (\cite{Be2}). 
It is worth mentioning that the functor $T\mapsto T_\infty$ is the analogue for complete Segal spaces of the \emph{homotopy coherent nerve} (\cite{L-HT}) for quasi-categories, see~\cite{Be3} for a comparison.
\end{example}

\begin{example}[\textbf{The $\infty$-category of a model category}]\label{ex:inftyfromodel}  Let $\mathcal{M}$ be a model category and $\mathcal{W}$ be its subcategory of weak-equivalences. We denote $L^H(\mathcal{M},\mathcal{W})$ its \emph{hammock localization}, see \cite{DK}. 
One of the main property of $L^H(\mathcal{M},\mathcal{W})$ is that it is a simplicial category and that the (usual) category $\pi_0(L^H(\mathcal{M},\mathcal{W}))$ is the homotopy category of $\mathcal{M}$. Further,   every weak equivalence has a (weak) inverse in $L^H(\mathcal{M},\mathcal{W})$. 
If $\mathcal{M}$ is   a simplicial model category, then for every pair $(x,y)$ of objects the simplicial set of morphisms $\mathop{Hom}_{L^H(\mathcal{M},\mathcal{W})}(x,y)$ is naturally homotopy equivalent to the function complex $ \Map_{\mathcal{M}} (x,y)$. 

By construction, the nerve $N_\bullet(L^H(\mathcal{M},\mathcal{W}))$ is  a simplicial space. Applying  the complete Segal Space replacement functor we get  
\begin{proposition}\cite{Be1}The simplicial space $ \bm{L}_\infty(\mathcal{M}):= L_{\CSS}(N_\bullet(L^H(\mathcal{M},\mathcal{W})))$  is a complete Segal space, which is the $\infty$-category associated to $\mathcal{M}$.
\end{proposition}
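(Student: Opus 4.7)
The first assertion, that $\bm{L}_\infty(\mathcal{M})$ is a complete Segal space, is essentially tautological given the setup: since $L_{\CSS}$ has been introduced as a fibrant replacement functor from simplicial spaces to the model category $\CSS$ of complete Segal spaces (Reedy-fibrant simplicial spaces satisfying the Segal and completeness conditions), any simplicial space one feeds into $L_{\CSS}$ lands, by construction, in the full subcategory of complete Segal spaces. So the plan for part (i) is simply to apply $L_{\CSS}$ to the bisimplicial set $N_\bullet(L^H(\mathcal{M},\mathcal{W}))$ obtained by taking the level-wise nerve of the simplicial $\mathrm{Hom}$-spaces of the hammock localization (recall that $L^H(\mathcal{M},\mathcal{W})$ is a simplicial category by Dwyer--Kan).

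The substantive content is the second assertion, that this is \emph{the} $\infty$-category associated to $\mathcal{M}$. The plan here is to invoke Bergner's comparison theorem: the functor $T \mapsto T_\infty = L_{\CSS}(N_\bullet(T))$ used in Example~\ref{ex:topologicalasinfty} is (part of) a right Quillen equivalence between the model category of simplicial categories and $\CSS$. Applied to $T = L^H(\mathcal{M},\mathcal{W})$, this realizes $\bm{L}_\infty(\mathcal{M})$ as the image under this Quillen equivalence of the canonical simplicial categorical model of the homotopy theory of $\mathcal{M}$. The key features to verify (all of which follow from standard properties of hammock localization combined with the Quillen equivalence) are: (a) the homotopy category $\mathrm{ho}(\bm{L}_\infty(\mathcal{M}))$ in the sense of complete Segal spaces recovers $\pi_0(L^H(\mathcal{M},\mathcal{W})) = \mathrm{Ho}(\mathcal{M})$, the localization of $\mathcal{M}$ at its weak equivalences; and (b) when $\mathcal{M}$ is a simplicial model category, the mapping space $\mathrm{Map}_{\bm{L}_\infty(\mathcal{M})}(x,y)$ computed from $\bm{L}_\infty(\mathcal{M})_1 \times_{\bm{L}_\infty(\mathcal{M})_0 \times \bm{L}_\infty(\mathcal{M})_0}\{(x,y)\}$ is weakly equivalent to the Dwyer--Kan function complex, which, on cofibrant-fibrant objects, coincides with the simplicial enrichment of $\mathcal{M}$.

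The main obstacle — which we sidestep by citing \cite{Be1} — is precisely the construction and verification of the Quillen equivalence between simplicial categories and complete Segal spaces; once this is available, both assertions reduce to unwinding definitions. In particular, the Segal condition for $L_{\CSS}(N_\bullet(L^H(\mathcal{M},\mathcal{W})))$ reflects the fact that composition in a simplicial category is well-defined up to coherent homotopy, while the completeness condition reflects the invertibility in $L^H(\mathcal{M},\mathcal{W})$ of every weak equivalence of $\mathcal{M}$ — both built into the hammock localization construction.
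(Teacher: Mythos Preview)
The paper does not give its own proof of this proposition: it is stated with the citation \cite{Be1} and left at that. Your proposal is a correct unpacking of what that citation covers --- the first assertion is indeed immediate from the definition of $L_{\CSS}$ as a fibrant replacement in the complete Segal space model structure, and the second is precisely Bergner's Quillen equivalence between simplicial categories and complete Segal spaces applied to the Dwyer--Kan hammock localization --- so there is nothing to correct and your sketch supplies strictly more than the paper does.
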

Note that the above construction extends to any category with weak equivalences.

Also, the \emph{limit and colimit}  in the $\infty$-category $\bm{L}_{\infty}(\mathcal{M})$ associated to a closed model category $\mathcal{M}$ can be computed by the\emph{ homotopy limit and homotopy colimit} in $\mathcal{M}$, that is by using fibrant and cofibrant resolutions. The same is true for derived functors. For instance a right Quillen functor $f: \mathcal{M}\to \mathcal{N}$ has a lift $\mathbb{L}f:  \bm{L}_\infty(\mathcal{M})\to \bm{L}_\infty(\mathcal{N})$.
\end{example}
\begin{remark} \label{R:non-unique} There are other functors that yields a complete Segal space out 
of a model category. For instance, one can generalize the construction of Example~\ref{ex:discreteasinfty}. For $\mathcal{M}$ a model category and any integer $n$, let $\mathcal{M}^{[n]}$ be 
the (model) category of $n$-composables morphisms, that is the category of functors from the poset 
$[n]$ to $\mathcal{M}$. The \emph{classification diagram} of $\mathcal{M}$ is the simplicial space
 $n\mapsto N_\bullet(\mathcal{W}e(\mathcal{M}^{[n]}))$ where $\mathcal{W}e(\mathcal{M}^{[n]})$ is 
the subcategory of weak equivalences of $\mathcal{M}^{[n]}$. Then taking a \emph{Reedy} fibrant 
replacement yields another complete Segal space
 $N_\bullet(\mathcal{W}e(\mathcal{M}^{[n]}))^{f}$ (\cite[Theorem 6.2]{Be2}, \cite[Theorem 8.3]{Re}).
 It is known that the Segal space $N_\bullet(\mathcal{W}e(\mathcal{M}^{[n]}))^{f}$ is equivalent 
to $\bm{L}_\infty(\mathcal{M})=L_{\CSS}(N_\bullet(L^H(\mathcal{M},\mathcal{W})))$ (\cite{Be2}).
\end{remark}
\begin{definition}\label{Def:MapSpaceinftycat} The \emph{objects} of an $\infty$-category $\bm{C}$ are the points of $\bm{C}_0$.
By definition, an $\infty$-category has a \emph{space} (and not just a set) of morphisms $$\Map_{\bm{C}}(x,y):=  \{x\}\times^{h}_{\bm{C}_0}\bm{C}_1 \times^{h}_{\bm{C}_0} \{y\}$$ between two objects $x$ and $y$.
A morphism \emph{$f\in \Map_{\bm{C}}(x,y)$ is called an equivalence} if its image $[f]\in \Map_{\textit{ho}(\bm{C})}(x,y)$ is an isomorphism.
\end{definition}
From Example~\ref{ex:inftyfromodel}, we get  an \emph{$\infty$-category of $\infty$-categories}, denoted $\infty\textbf{-Cat}$, whose morphisms are  called $\infty$-\emph{functors} (or just functors for short). An equivalence of $\infty$-categories is an equivalence in $\infty\textbf{-Cat}$ in the sense of Definition~\ref{Def:MapSpaceinftycat}.

 The model category of complete Segal spaces is cartesian closed~\cite{Re} hence so is the $\infty$-category $\infty\textbf{-Cat}$. In particular, given two  $\infty$-categories $\mathcal{C}$, $\mathcal{D}$ we have an $\infty$-category $ \textbf{Fun}(\mathcal{C}, \mathcal{D})$ of functors\footnote{computed from the $\Hom$-space in the category of simplicial spaces using the fibrant replacement functor   $L_{\CSS}$} from $\mathcal{C}$ to 
$\mathcal{D}$.
There is an natural weak equivalence of spaces:
\begin{equation} \Map_{\infty\textbf{-Cat}}\big(\mathcal{B}, \textbf{Fun}(\mathcal{C}, \mathcal{D}\big)\stackrel{\simeq}\to \Map_{\infty\textbf{-Cat}}\big(\mathcal{B}\times \mathcal{C}, \mathcal{D}\big).
\end{equation} 
\begin{remark}[the case of simplicial model categories] \label{R:Top=Model} When $\mathcal{M}$ is a simplicial closed model category, there are natural equivalences (\cite{Re}) of spaces 
$$\Map_{\bm{L}_\infty(\mathcal{M})}(x,y) \cong \Map_{\big(L^H(\mathcal{M},\mathcal{W})\big)_\infty}(x,y)\cong \Map_{L^H(\mathcal{M},\mathcal{W})}(x,y)\cong \Map_{\mathcal{M}}(x,y)$$ where the right hand side is the function complex of $\mathcal{M}$ and $x,y$ two objects.  The first two equivalences also hold for general model categories (\cite{Be1}).  In particular, the two constructions of an $\infty$-category associated to a simplicial model category, either viewed as topological category as in Example~\ref{ex:topologicalasinfty}, or as a model category as in Example~\ref{ex:inftyfromodel}, are equivalent:
\begin{proposition}\label{P:Top=Model} Let $\mathcal{M}$ be a simplicial model category. Then $\mathcal{M}_\infty \cong \bm{L}_\infty(\mathcal{M})$.
\end{proposition}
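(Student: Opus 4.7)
The plan is to deduce the equivalence directly from the mapping space comparisons collected in Remark~\ref{R:Top=Model}, via the fact (recalled in Example~\ref{ex:topologicalasinfty}) that the functor $T\mapsto T_\infty$ sends Dwyer--Kan equivalences of simplicial categories to equivalences of complete Segal spaces, so that it suffices to exhibit a Dwyer--Kan equivalence between (the bifibrant part of) $\mathcal{M}$ and its hammock localization $L^H(\mathcal{M},\mathcal{W})$.

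First I would reduce to the case of bifibrant (i.e.\ cofibrant-fibrant) objects: let $\mathcal{M}^{cf}\subset \mathcal{M}$ denote the full simplicial subcategory on such objects. A choice of functorial bifibrant replacement in $\mathcal{M}$ gives a simplicial functor $\mathcal{M}\to\mathcal{M}^{cf}$ which is the identity on $\mathcal{M}^{cf}$ and becomes, after applying $L^H$, a Dwyer--Kan equivalence, since the weak equivalences of $\mathcal{M}$ become weakly invertible in the hammock localization. Hence it is enough to produce an equivalence $(\mathcal{M}^{cf})_\infty\simeq \bm{L}_\infty(\mathcal{M})$.

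Next I would construct the canonical comparison of simplicial categories $\Phi\colon \mathcal{M}^{cf}\to L^H(\mathcal{M},\mathcal{W})$ which is the identity on objects and, on morphism spaces, is the natural map $\Map_{\mathcal{M}}(x,y)\to \Map_{L^H(\mathcal{M},\mathcal{W})}(x,y)$ that views a simplicial homotopy as a hammock of length one whose backward edges are identities. By the Dwyer--Kan theorem recalled in Remark~\ref{R:Top=Model}, this map is a weak equivalence of simplicial sets for all bifibrant $x,y$, so $\Phi$ is homotopically fully faithful. Essential surjectivity on homotopy categories is automatic because every object of $\mathcal{M}$ (and hence of $L^H(\mathcal{M},\mathcal{W})$) is weakly equivalent to a bifibrant one. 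Thus $\Phi$ is a Dwyer--Kan equivalence of simplicial categories.

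Finally I would invoke the Quillen equivalence between simplicial categories and complete Segal spaces cited in Example~\ref{ex:topologicalasinfty}, under which the functor $T\mapsto T_\infty=L_{\CSS}(N_\bullet T)$ realizes the derived equivalence. It sends Dwyer--Kan equivalences to equivalences in $\CSS$, so the comparison $\Phi$ yields an equivalence
\[
(\mathcal{M}^{cf})_\infty \;=\; L_{\CSS}\bigl(N_\bullet(\mathcal{M}^{cf})\bigr) \;\stackrel{\simeq}{\longrightarrow}\; L_{\CSS}\bigl(N_\bullet(L^H(\mathcal{M},\mathcal{W}))\bigr) \;=\; \bm{L}_\infty(\mathcal{M}),
\]
and composing with the equivalence $\mathcal{M}_\infty\simeq(\mathcal{M}^{cf})_\infty$ from the first step produces the desired equivalence. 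The only real subtlety, and the step that requires care, is the first one: one must be careful that $\mathcal{M}_\infty$ as defined from the full simplicial category $\mathcal{M}$ computes the correct derived mapping spaces, which forces the restriction to bifibrant objects (since for non-bifibrant $x,y$ the simplicial set $\Map_\mathcal{M}(x,y)$ is not homotopy invariant); this is the technical point where one really uses that $\mathcal{M}$ is a simplicial \emph{model} category rather than just a simplicial category.
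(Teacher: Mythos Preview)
Your proposal is correct and follows the same approach the paper sketches: the paper does not give a detailed proof but simply records the chain of mapping-space equivalences $\Map_{\bm{L}_\infty(\mathcal{M})}(x,y)\simeq\Map_{L^H(\mathcal{M},\mathcal{W})}(x,y)\simeq\Map_{\mathcal{M}}(x,y)$ (citing Rezk and Bergner) and declares the proposition as a consequence. Your argument is a careful unpacking of exactly this, and your observation about restricting to bifibrant objects is a genuine technical point the paper leaves implicit.
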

\end{remark}

Let $\bm{I}$ be the $\infty$-category associated to the trivial category $\Delta^1= \{0\}\to \{1\}$ which has two objects and only one non-trivial morphism. 
We have two maps $i_0, i_1: \{pt\} \to \bm{I}$ from the trivial category to $\bm{I}$ which respectively maps the object $pt$ to $0$ and $1$.
\begin{definition}\label{D:overcategory} Let  $A$  be an object of an $\infty$-category $\mathcal{C}$.  The \emph{$\infty$-category $\mathcal{C}_{A}$} of objects over $A$  is the pullback
$$\xymatrix{\mathcal{C}_{A}\ar[r] \ar[d] & \textbf{Fun}(\bm{I}, \mathcal{C}) \ar[d]^{i_1^*} \\ \{A\} \ar[r] & \mathcal{C}. } $$
The \emph{$\infty$-category ${}_{A}\mathcal{C}$} of objects  under $A$ is the pullback
$$\xymatrix{\mathcal{C}_{A}\ar[r] \ar[d] & \textbf{Fun}(\bm{I}, \mathcal{C}) \ar[d]^{i_0^*} \\ \{A\} \ar[r] & \mathcal{C}. }$$
\end{definition}
Informally, the $\infty$-category $\mathcal{C}_{A}$ is just the category of objects $B\in \mathcal{C}$ equipped with a map $f:B\to A$ in $\mathcal{C}$.

\smallskip

There is a notion of symmetric monoidal $\infty$-category generalizing the classical notion for discrete categories. There are several equivalent way to define this notion, see~\cite{L-HA, L-HT, ToVe3} for details.
 Let $\Gamma$ be the skeleton of the category $\textit{Fin}_*$ of finite pointed sets, that is the subcategory spanned by the objects $n_{+}:=\{0,\dots, n\}$, $n\in \mathbb{N}$. For $i=1\dots n$, let $s_{i}: n_{+}\to 1_{+}$ be the map sending $i$ to $1$ and everything else to $0$.
\begin{definition}A symmetric monoidal $\infty$-category is a functor $T\in \textbf{Fun}(\Gamma, \infty\textbf{-Cat})$ such that the canonical map 
$T( n_{+})\stackrel{\prod_{i=0}^n s_i}\longrightarrow \big(T(1_{+})\big)^{n}$ is an equivalence.
The full sub-category of $\textbf{Fun}(\Gamma, \infty\textbf{-Cat})$ spanned by the symmetric monoidal categories is denoted $\infty\textbf{-Cat}^{\otimes}$. Its morphisms are called \emph{symmetric monoidal functors}. Further  $\infty\textbf{-Cat}^{\otimes}$  is enriched over $\infty\textbf{-Cat}$.

 A symmetric monoidal category $T:\Gamma\to \infty\textbf{-Cat}$ will usually be denoted as $(\bm{T}, \otimes)$ where $\bm{T}:=T(1)$.
 If $C:\Gamma\to \infty\textbf{-Cat}$ and $D:\Gamma\to \infty\textbf{-Cat}$ are symmetric monoidal categories, we will \emph{denote 
$\textbf{Fun}^{\otimes}(\mathcal{C},\mathcal{D})$ the $\infty$-categories of symmetric monoidal functors}.
\end{definition}
Equivalently,  a symmetric monoidal category is an $E_\infty$-algebra object in the $\infty$-category $\infty\textbf{-Cat}$.
An $(\infty$-)category with finite coproducts has a canonical structure of symmetric monoidal $\infty$-category and so does a category with finite products. 

\begin{example}[The $\infty$-category $\hTop$] \label{E:hsset} Applying the above procedure(Example~\ref{ex:inftyfromodel}) to the model category of simplicial sets, we obtain the $\infty$-category $\hsset$. Similarly, the model category of topological spaces yields the $\infty$-category $\hTop$ of topological spaces. By Remark~\ref{R:Top=Model}, we can also apply Example~\ref{ex:topologicalasinfty} to the standard enrichment of these categories into topological (or simplicial) categories to construct (equivalent) models of $\hsset$ and $\hTop$.

 Since the model categories $\sset$ and $\Top$ are Quillen equivalent~\cite{GoJa, Ho}, their associated $\infty$-categories are equivalent.  The left and right equivalences $|-|: \sset \stackrel{\sim}{\underset{\sim}{\rightleftarrows}} \Top :\Delta_\bullet(-)$  are respectively induced by the singular set and geometric realization functors. 
The \emph{disjoint union of simplicial sets and topological spaces make $\hsset$ and $\hTop$ into symmetric monoidal $\infty$-categories}.

The above analysis also holds for the pointed versions ${\hsset}_*$ and $\hTopp$ of the above $\infty$-categories (using the model categories of these pointed versions~\cite{Ho}).
\end{example}

\begin{example}[Chain complexes]\label{Ex:Chains(k)}  The model category of (unbounded) chain complexes over $k$ (say with the projective model structure)~\cite{Ho} yields the $\infty$-category of chain complexes $\hkmod$(Example~\ref{ex:inftyfromodel}). 
The mapping space between two chain complex $P_*, Q_*$  is equivalent to  the geometric realization of the simplicial set 
$n\mapsto \Hom_{\hkmod}(P_*\otimes C_\ast(\Delta^n), Q_*)$ where $\Hom_{\hkmod}$ stands for morphisms of chain complexes. 
It follows from Proposition~\ref{P:Top=Model}, that one can also use Example~\ref{ex:topologicalasinfty} applied to the category of chain complexes endowed with the above topological space of morphisms to define $\hkmod$.
In particular a chain homotopy between two chain maps $f,g\in \Map_{\hkmod}(P_*,Q_*)$ is a path in $\Map_{\hkmod}(P_*,Q_*)$. 

In fact,  $\textit{ho}(\hkmod)\cong \mathcal{D}(k)$ is the usual \emph{derived category of $k$-modules}.
 The (derived) tensor product over $k$ yields a symmetric monoidal structure to $\hkmod$ which will usually simply denote by $\otimes$. Note that $\hkmod$ \emph{is enriched over itself}, that is, for any $P_*, Q_* \in \hkmod$, there is an object $\mathbb{R}\Hom_{k}(P_*,Q_*)\in \hkmod$ together with an adjunction 
$$ \Map_{\hkmod}\big(P_*\otimes Q_*, R_*\big)  \, \cong\, \Map_{\hkmod}\big( P_*, \mathbb{R}\Hom_{k}(Q_*,R_*)\big).$$
The interested reader can refer to~\cite{GH, L-HA} for details  of $\infty$-categories enriched over $\infty$-categories and to~\cite{GM, DS} for  model categories enriched over symmetric monoidal closed model categories  (which is the case of the category of chain complexes). 
\end{example}

\begin{example}\label{E:CDGAMod} In characteristic zero, there is a standard closed model category structure on the category of commutative differential graded algebras (CDGA for short), see~\cite[Theorem 4.1.1]{Hi}. Its fibrations are epimorphisms and (weak) equivalences are quasi-isomorphisms (of CDGAs). We thus get the $\infty$-category $\hcdga$ of CDGAs.
The category $\hcdga$ also has a monoidal structure given by the (derived) tensor product (over $k$) of differential graded commutative algebras, which makes $\cdga$ a symmetric monoidal model category. 
Given $A,B \in \cdga$, the mapping space $\Map_{\hcdga}(A,B)$ is the (geometric realization of the) simplicial set of maps $[n]\mapsto \mathop{Hom}_{\text{dg-Algebras}}(A, B\otimes \Omega^*(\Delta^n))$ (where $\Omega^*(\Delta^n)$ is the CDGA of forms on the $n$-dimensional standard simplex and $\Hom_{\text{dg-Algebras}}$ is the module of differential graded algebras maps). 
It has thus a canonical enrichment over chain complexes. 

 The model categories  of left modules and commutative algebras over a CDGA $A$ yield the $\infty$-categories
$E_1\textbf{-LMod}_{A}$ and $\hcdga_{A}$ . The base change functor 
lifts to a functor of $\infty$-categories. Further, if $f: A\to B$ is a weak equivalence, the natural 
functor $f_*:E_1\textbf{-LMod}_{B} \to:E_1\textbf{-LMod}_{A}$ induces an equivalence $E_1\textbf{-LMod}_{B} \stackrel{\sim}\to E_1\textbf{-LMod}_{A}$ 
of $\infty$-categories since it is induced by a Quillen equivalence. 

Moreover, if $f:A\to B$ is a morphism of CDGAs, we get a natural functor 
$f^*:E_1\textbf{-LMod}_{A}\to E_1\textbf{-LMod}_{B},\, M\mapsto M\otimes^{\mathbb{L}}_A B$, which is an equivalence of $\infty$-categories 
when $f$ is a quasi-isomorphism, and is a (weak) inverse of $f_*$ (see~\cite{ToVe} or~\cite{KM}). 
The same results applies to monoids in
 $E_1\textbf{-LMod}_{A}$  that is to the categories of commutative differential graded $A$-algebras.
\end{example}

%%%%%%%%%%%%%

\subsection{$E_n$-algebras and $E_n$-modules} \label{S:EnAlg}

The classical definition of an $E_n$-algebra (in chain complexes) is an algebra over any $E_n$-operad in chain complexes, that is an operad weakly homotopy equivalent to the chains on the little ($n$-dimensional)  cubes operad $(\text{Cube}_{n}(r))_{r\geq 0}$ (\cite{May-gils}). 
Here $$\text{Cube}_n(r):=\text{Rect}\Big(\coprod_{i=1}^r (0,1)^n, (0,1)^n\Big)$$ is the space  of rectilinear embeddings of $r$-many disjoint copies of the unit open cube in itself. 
It is topologized as the subspace of the space of all continuous maps.
 By a\emph{ rectilinear embedding}, we mean a composition of a translation and dilatations in the direction given by a vector of the canonical basis of $\R^n$.
 In other words,  $\text{Cube}_{n}(r)$ is the configuration space of $r$-many disjoint open rectangles\footnote{more precisely rectangular parallelepiped in dimension bigger than 2} parallel to the axes lying in the unit open cube. The operad structure
$ \text{Cube}_{n}(r)\times \text{Cube}_{n}(k_1)\times \cdots \times \text{Cube}_{n}(k_r)\to \text{Cube}_{n}(k_1+\cdots+k_r)$ is simply given by composition of embeddings.

An \emph{$E_n$-algebra in chain complexes} is thus a chain complex $A$ together with  chain maps $\gamma_r: C_\ast(\text{Cube}_{n}(r))\otimes A^{\otimes r}\longrightarrow A$ compatible with the composition of operads~\cite{BV, May-gils, Fre-grtbook}. By definition of the operad $\text{Cube}_{n}$,  we are only considering (weakly) unital versions of $E_n$-algebras.

The model category of $E_n$-algebras gives rises to \emph{the $\infty$-category $ E_n\textbf{-Alg}$} of $E_n$-algebras in the symmetric monoidal $\infty$-category of differential graded $k$-modules. The symmetric structure of $\hkmod$ lifts to a a symmetric monoidal 
structure on $(E_n\textbf{-Alg},\otimes)$ given by the tensor product of the underlying chain complexes\footnote{other possible models for the symmetric monoidal $\infty$-category  $(E_n\textbf{-Alg},\otimes)$ are given by algebraic Hopf operads such as those arising from the filtration of the Barratt-Eccles operad in~\cite{BF}}. 

\medskip

One can extend the above notion  to define $E_n$-algebras with coefficient in any  symmetric monoidal $\infty$-category following~\cite{L-HA}. One way is to rewrite it  in terms of symmetric monoidal functor as follows. 
Any  topological (resp. simplicial) operad $\mathcal{O}$ defines a symmetric monoidal category, denoted $\mathbf{O}$, fibered over the category of pointed finite sets $\text{Fin}_*$. This category  $\mathbf{O}$ has the finite sets for objects.
 For any sets $n_{+}:=\{0,\dots n\}$, $m_{+}:=\{0,\dots, m\}$ (with base point $0$), its morphism space  $\mathbf{O}(n_{+},m_{+})$  (from $n_+$ to $m_+$) is   the disjoint union $\coprod_{f: n_{+}\to m_{+}}\prod_{i\in m_{+}}\mathcal{O}((f^{-1}(i))_{+})$ and the composition is induced by the operadic structure.
 The rule $n_+\otimes m_+= (n+m)_+$ makes canonically  $\mathbf{O}$ into a symmetric monoidal topological (resp. simplicial) category. We abusively denote $\mathbf{O}$ its associated $\infty$-category. Note that this construction extends to  colored operad and is a  special case of an $\infty$-operad \footnote{\emph{An $\infty$-operad} $\mathcal{O}^{\otimes}$ is a  $\infty$-category together with a functor $\mathcal{O}^\otimes \to N(Fin_\ast)$  satisfying a list of axioms, see \cite{L-HA}. It is to colored topological operads what $\infty$-categories are to topological categories.}.  
 
  Then, if $(\mathcal{C}, \otimes)$ is a symmetric monoidal $\infty$-category, \emph{a $\mathcal{O}$-algebra in $\mathcal{C}$ is a symmetric monoidal functor $A\in  \textbf{Fun}^{\otimes}(\mathbf{O},\mathcal{C})$}.
 We call $A(1_+)$  the underlying algebra object of $A$ and we usually  denote it simply by $A$.

\begin{definition}\cite{L-HA, F}\label{D:EnAlgebras} Let $(\mathcal{C}, \otimes)$ be  symmetric monoidal ($\infty$-)category. The $\infty$-category of \emph{$E_n$-algebras with values in $\mathcal{C}$} is
 $$E_n\textbf{-Alg}(\mathcal{C}):=  \textbf{Fun}^{\otimes}(\textbf{Cube}_n, \mathcal{C}).$$
Similarly  $E_n\textbf{-coAlg}(\mathcal{C}):=\textbf{Fun}^{\otimes}(\textbf{Cube}_n, \mathcal{C}^{op})$ is the category of \emph{$E_n$-coalgebras in $\mathcal{C}$}. We denote \emph{$Map_{E_{n}\textbf{-Alg}}(A,B)$ the mapping space  of  $E_n$-algebras maps from $A$ to $B$}.
\end{definition}
Note that Definition~\ref{D:EnAlgebras} is a definition of categories of (weakly) \emph{(co)unital} $E_n$-coalgebra objects.

One has an equivalence $E_n\textbf{-Alg}\cong E_n\textbf{-Alg}(\hkmod)$ of symmetric monoidal $\infty$-categories  (see~\cite{L-HA, F2}) where $E_n\textbf{-Alg}$ is the $\infty$-category associated to  algebras over the operad $\text{Cube}_{n}$ considered above.
It is clear from the above definition  that any ($\infty$-)operad $\mathbb{E}_n$ weakly homotopy equivalent (as an operad) to $\text{Cube}_n$ gives rise to an equivalent $\infty$-category of algebra. 
In particular, the inclusion of rectilinear embeddings into all framed embeddings gives us
 an alternative definition for $E_n$-algebras:
\begin{proposition}[\cite{L-HA}]\label{D:EnasDisk} Let $\text{\emph{Disk}}^{fr}_n$ be the category with objects the integers and morphism the spaces $\text{\emph{Disk}}^{fr}_n(k,\ell):= \text{\emph{Emb}}^{fr}(\coprod_{k} \R^n, \coprod_{\ell} \R^n)$  of framed embeddings of $k$ disjoint copies of a disk $\R^n$ into $\ell$ such copies (see Example~\ref{E:DisknAlg}).
 The natural map
$\textbf{Fun}^\otimes(\text{\emph{Disk}}^{fr}_n,\,\hkmod) \stackrel{\simeq}\longrightarrow E_n\textbf{-Alg} $
is an equivalence.
\end{proposition}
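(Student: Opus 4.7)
The statement asserts the familiar Boardman--Vogt-type equivalence between the little $n$-cubes operad and the operad of framed embeddings of disks, upgraded to the level of symmetric monoidal $\infty$-categories. The plan is to exhibit both categories as monoidal envelopes of weakly equivalent topological operads, and then invoke functoriality of $\textbf{Fun}^{\otimes}(-,\hkmod)$.

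First I would construct the natural map. Sending a rectilinear embedding $\coprod_{r}(0,1)^{n}\hookrightarrow (0,1)^{n}$ to the underlying framed embedding $\coprod_{r}\R^{n}\hookrightarrow \R^{n}$ (via a choice of homeomorphism $\R^{n}\cong (0,1)^{n}$ that is canonical up to contractible choice) defines a map of topological operads $\iota\colon \text{Cube}_{n}\to \mathcal{E}^{fr}_{n}$, where $\mathcal{E}^{fr}_{n}(r)=\Emb^{fr}(\coprod_{r}\R^{n},\R^{n})$. Taking symmetric monoidal envelopes (Appendix~\ref{S:EnAlg} and Remark~\ref{R:Ndisk(M)}) promotes $\iota$ to a symmetric monoidal functor of $\infty$-categories $\mathbf{\iota}\colon \mathbf{Cube}_{n}\to \mathbf{Disk}^{fr}_{n}$. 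Precomposition with $\mathbf{\iota}$ gives a functor $\mathbf{\iota}^{\ast}\colon \textbf{Fun}^{\otimes}(\mathbf{Disk}^{fr}_{n},\hkmod)\to \textbf{Fun}^{\otimes}(\mathbf{Cube}_{n},\hkmod)\cong E_{n}\textbf{-Alg}$, which is the map in the statement.

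Second, to show $\mathbf{\iota}^{\ast}$ is an equivalence it suffices to show that $\mathbf{\iota}$ is an equivalence of symmetric monoidal $\infty$-categories. Because both sides are monoidal envelopes, both object sets are $\mathbb{N}$ and the tensor structure is fixed on objects; thus the statement reduces to showing that $\iota$ is a levelwise weak equivalence of topological operads, i.e.\ that $\text{Cube}_{n}(r)\hookrightarrow \Emb^{fr}(\coprod_{r}\R^{n},\R^{n})$ is a weak homotopy equivalence for every $r\geq 0$.

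Third, I would prove this levelwise statement by comparing both spaces to the ordered configuration space $\mathrm{Conf}_{r}(\R^{n})$. On the cubes side, the map $c\colon \text{Cube}_{n}(r)\to \mathrm{Conf}_{r}(\R^{n})$ sending a tuple of rectangles to its centers is a fibration whose fiber over $(x_{1},\dots,x_{r})$ is star-shaped toward the configuration of very small concentric cubes; hence $c$ is a homotopy equivalence. On the framed-embedding side, the analogous map $e\colon \Emb^{fr}(\coprod_{r}\R^{n},\R^{n})\to \mathrm{Conf}_{r}(\R^{n})$ sending $f$ to $(f(0_{1}),\dots,f(0_{r}))$ is again a fibration, with fiber a product of $r$ copies of the space $\Emb^{fr}_{\ast}(\R^{n},\R^{n})$ of pointed framed self-embeddings. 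The scaling deformation $f_{t}(x)=t^{-1}f(tx)$ for $t\to \infty$, together with a compatible interpolation of the framing-homotopy data, contracts $\Emb^{fr}_{\ast}(\R^{n},\R^{n})$ onto the constant linear embedding with trivial framing homotopy, so $e$ is also a homotopy equivalence. Since $e\circ \iota_{r}=c$ by construction, two-out-of-three yields that $\iota_{r}$ is a weak homotopy equivalence.

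The main obstacle will be the careful handling of the framing data in step three: the scaling homotopy $f\mapsto f_{t}$ must be lifted to a homotopy of \emph{pairs} $(f,h)$, and one must verify that the resulting family of homotopies of trivializations is continuous as $t\to \infty$ and limits to a canonical path. This reduces to checking that the map $\Emb^{fr}_{\ast}(\R^{n},\R^{n})\to \mathrm{GL}_{n}(\R)^{I}$ recording the linearization and framing track has contractible domain, which can be done by an explicit double deformation—first contract $\Emb_{\ast}(\R^{n},\R^{n})$ onto $\mathrm{GL}_{n}(\R)$ via scaling, then contract the residual path space of framings using the connectedness of $\mathrm{GL}_{n}(\R)$ and the fact that any homotopy of trivializations of $\R^{n}\times \R^{n}$ is determined up to contractible ambiguity by its endpoints. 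Once this is in place, the argument assembles cleanly.
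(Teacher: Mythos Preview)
Your approach is correct and matches the paper's own justification: the paper does not give a detailed proof but simply observes (in the sentence immediately preceding the proposition) that any operad weakly equivalent to $\text{Cube}_n$ yields an equivalent $\infty$-category of algebras, and that the inclusion of rectilinear embeddings into framed embeddings is such an equivalence, deferring the details to \cite{L-HA}. Your plan is precisely a fleshing-out of that remark via the standard comparison with $\mathrm{Conf}_r(\R^n)$.

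One small slip: in your scaling homotopy $f_t(x)=t^{-1}f(tx)$ you want $t\to 0$, not $t\to\infty$, to recover the derivative $Df_0$ (for $t\to\infty$ the limit need not exist, e.g.\ take $f$ an embedding with bounded image). With that correction, the contractibility of $\Emb^{fr}_\ast(\R^n,\R^n)$ is most cleanly phrased as follows: it is the homotopy fiber over the identity of the derivative map $\Emb_\ast(\R^n,\R^n)\to \Map(\R^n,\mathrm{GL}_n(\R))\simeq \mathrm{GL}_n(\R)$, and since the scaling homotopy shows this derivative map is a weak equivalence, the homotopy fiber is contractible. This avoids the somewhat awkward ``double deformation'' you sketch at the end.
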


\begin{example}(Iterated loop spaces)\label{ex:loopspaces}
The standard examples\footnote{May\rq{}s recognition principle~\cite{May-gils} actually asserts that any $E_n$-algebra in $(\Top, \times)$ which is group-like is homotopy equivalent to such an iterated loop space} of $E_n$-algebras are given by iterated loop spaces. If $X$ is a pointed space, we denote $\Omega^n (X):=\Map_{*}(S^n, X)$ the set of all pointed maps from $S^n\cong I^n/\partial I^n$ to $X$, equipped with the compact-open topology.  
The pinching map~\eqref{eq:pinchcube} $pinch: \text{Cube}_n(r) \times S^n \longrightarrow  \bigvee_{i=1\dots r}\, S^n$ induces an $E_n$-algebra structure (in $(\Top, \times)$) given by 
$$\text{Cube}_n(r) \times\big(\Omega^n(X))\big)^r  \cong \text{Cube}_n(r) \times \Map_*(\bigvee_{i=1\dots r} S^n, X) \stackrel{pinch^*}\longrightarrow \Omega^n(X).$$
Since the construction is functorial in $X$, the singular chain complex $C_\ast(\Omega^n(X))$ is also an $E_n$-algebra in chain complexes, and further this structure is compatible with the $E_\infty$-coalgebra structure of $C_\ast(\Omega^n(X))$  (from Example~\ref{E:singularchainasEinfty}).
 Similarly, the singular cochain complex $C^\ast(\Omega^n(X))$ is an $E_n$-coalgebra in a way compatible with its $E_\infty$-algebra structure; that is an object of $E_n\textbf{-coAlg}(E_\infty\textbf{-Alg})$.
\end{example}

\begin{example}[$P_n$-algebras] \label{Ex:PnAlg} A standard result of Cohen~\cite{Co-littledisks} shows that, for $n\geq 2$, the homology of an $E_n$-algebra is a 
$P_n$-algebra (also see~\cite{Si, Fre-grtbook}). \emph{A $P_n$-algebra} is a graded vector space $A$  endowed with a degree $0$ multiplication with unit which makes $A$ a graded commutative algebra,  and a (cohomological) degree 1-n operation $[-,-]$ which makes $A[1-n]$  a graded Lie algebra. These operations are also required to satisfy the Leibniz rule
$[a\cdot b, c] = a[b,c] + (-1)^{|b|(|c|+1-n)}[a,c]\cdot b $.

 For $n=1$, $P_n$-algebras are just usual Poisson algebras while for $n=2$, they are Gerstenhaber algebras. 

In characteristic $0$, the operad $\text{Cube}_n$ is formal, thus equivalent as an operad to the operad governing $P_n$-algebras (for $n\geq 2$). It follows that $P_n$-algebras gives rise to $E_n$-algebras in that case, that is there is a functor\footnote{which is \emph{not} canonical, see \cite{Ta-formalityactionAssoc}}  $P_n\textbf{-Alg}\to E_n\textbf{-Alg}$.
\end{example}

 There  are natural maps (sometimes called the stabilization functors)
\begin{equation}\label{eq:towerofEnoperad}
 \text{Cube}_0 \longrightarrow \text{Cube}_1 \longrightarrow \text{Cube}_2 \longrightarrow \cdots 
\end{equation} (induced by taking products of cubes with the interval $(0,1)$).
It is a fact (\cite{May-gils, L-HA}) that the colimit of this diagram, denoted by $\mathbb{E}_\infty$ 
is equivalent to the commutative operad $\textit{Com}$ (whose associated symmetric monoidal $\infty$-category is $\mathbf{Fin}_*$).
\begin{definition}\label{D:EinftyAlg}
The($\infty$-) category of $E_\infty$-algebras with value in $\mathcal{C}$ is $E_\infty\textbf{-Alg}(\mathcal{C}):=\textbf{Fun}^{\otimes}(\textbf{Cube}_\infty, \mathcal{C})$. It is simply denoted $E_\infty\textbf{-Alg}$ if $(\mathcal{C},\otimes) =(\hkmod, \otimes)$.
 Similarly, the category of $E_\infty$-coalgebras. is $E_\infty\text{-coAlg}:=\textbf{Fun}^{\otimes}(\textbf{Cube}_\infty, \mathcal{C}^{op})$.
\end{definition}
Note that Definition~\ref{D:EinftyAlg} is a definition of (weakly) unital $E_\infty$-algebras.

\smallskip

The category $E_\infty\textbf{-Alg}$ is (equivalent to) the $\infty$-category associated to the model category of $\mathbb{E}_\infty$-algebras for any $E_\infty$-operad $\mathbb{E}_\infty$.

The natural map $\textbf{Fun}^{\otimes}(\mathbf{Fin}_*, \mathcal{C})\longrightarrow \textbf{Fun}^{\otimes}(\textbf{Cube}_\infty, \mathcal{C})=E_\infty\textbf{-Alg}$ is also an equivalence. 

For any $n\in \mathbb{N}-\{0\} \cup \{+\infty\}$,
 the map $\text{Cube}_1 \to \text{Cube}_n$ (from the nested sequence~\eqref{eq:towerofEnoperad}) induces a \emph{functor} $E_n\textbf{-Alg} \longrightarrow E_1\textbf{-Alg}$ 
which associates to  an $E_n$-algebra its underlying $E_1$-algebra structure.

\begin{example}[Singular (co)chains]\label{E:singularchainasEinfty} Let $X$ be a topological space. 
Its \emph{singular cochain complex} $C^\ast(X)$ \emph{has a natural structure of $E_\infty$-algebra}, 
whose underlying $E_1$-structure is given by the usual (strictly associative) cup-product (for instance see~\cite{M2}). 
The \emph{singular chains $C_\ast(X)$ have a natural structure of $E_\infty$-coalgebra} which is the predual of 
$(C^\ast(X),\cup)$.
 There are similar  constructions for simplicial sets $X_\com$ instead of spaces, 
see~\cite{BF}. We recall that $C^\ast(X)$ is the linear dual of the singular chain complex $C_\ast(X)$ with coefficient in $k$ which is the geometric realization (in the ordinary category of chain complexes) 
of the simplicial $k$-module $k[ \Delta_\bullet(X)]$ spanned by the singular set  
$\Delta_\bullet(X):=\{\Delta^\bullet\stackrel{f}\to X, f \mbox{ continuous}\}$. Here $\Delta^n$ is the standard $n$-dimensional simplex. 
\end{example}

\begin{remark}\label{R:Einftyistensored} The mapping space $Map_{E_{\infty}\textbf{-Alg}}(A,B)$ of  two $E_\infty$-algebras $A$, $B$ (in the model category of  $E_\infty$-algebras) is the (geometric realization of the) simplicial set $[n]\mapsto Hom_{E_\infty\textbf{-Alg}}\big(A, B\otimes C^\ast(\Delta^n)\big)$. 

 The $\infty$-category $E_\infty\text{-}Alg$ is enriched over $\hsset$ (hence $\hTop$ as well by Example~\ref{E:hsset})  and has all ($\infty$-)colimits.
 In particular, it is \emph{tensored} over $\hsset$, see~\cite{L-HT, L-HA} for details on tensored $\infty$-categories  or~\cite{EKMM, MCSV} in the context of topologically enriched model categories. 
We recall that it means that there is a functor $E_\infty\textbf{-Alg} \times \hsset \to E_\infty\textbf{-Alg}$, denoted $(A, X_\bullet)\mapsto A\boxtimes X_\bullet$, together with natural equivalences
 $$Map_{E_\infty\textbf{-Alg}}\big(A\boxtimes X_\bullet, B\big) \; \cong \; Map_{\hsset}\big(X_\com, Map_{E_\infty\textbf{-Alg}}\big(A, B\big)\big). $$

To compute explicitly this tensor, it is useful to know the following proposition.
 \begin{proposition}\label{P:tensor=coprod} Let $(\mathcal{C},\otimes)$ be a symmetric monoidal $\infty$-category. In the symmetric monoidal $\infty$-category ${E_\infty}\textbf{-Alg}(\mathcal{C})$, the tensor product is a coproduct. 
 \end{proposition}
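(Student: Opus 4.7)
The plan is to exploit the equivalence
$E_\infty\textbf{-Alg}(\mathcal{C})\simeq \textbf{Fun}^\otimes(\mathbf{Fin}_*,\mathcal{C})$ recalled just after
Definition~\ref{D:EinftyAlg}, together with the Eckmann--Hilton-like observation (dual to
Lemma~\ref{L:EnAlgincoprod}) that the wedge monoidal structure on $\mathbf{Fin}_*$, sending
$(n_+,m_+)\mapsto (n+m)_+$, \emph{coincides} with the categorical coproduct in $\mathbf{Fin}_*$.
Consequently the unit map $0_+\to 1_+$ and the fold map $2_+\to 1_+$ are morphisms in the symmetric
monoidal category $(\mathbf{Fin}_*,\vee)$, so applying any symmetric monoidal functor
$C:\mathbf{Fin}_*\to\mathcal{C}$ (i.e.\ an $E_\infty$-algebra) yields $E_\infty$-algebra morphisms
$1_C:\mathbf{1}_\mathcal{C}\to C$ and $\mu_C:C\otimes C\to C$. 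This last point — that the multiplication
is itself an algebra map — is where the $E_\infty$ assumption is really used and is the
reason the statement fails for $E_n$-algebras with $n$ finite.

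First I would construct the canonical maps
$\iota_A:= \mathrm{id}_A\otimes 1_B : A\to A\otimes B$ and
$\iota_B:= 1_A\otimes \mathrm{id}_B:B\to A\otimes B$, which are $E_\infty$-morphisms by the
previous paragraph. Given $E_\infty$-morphisms $f:A\to C$, $g:B\to C$, the combined map
$$f\vee g \,:=\, \mu_C\circ (f\otimes g)\,:\, A\otimes B\longrightarrow C$$
is again an $E_\infty$-morphism. Unitality of $C$ gives $(f\vee g)\circ\iota_A\simeq f$ and
$(f\vee g)\circ\iota_B\simeq g$. For uniqueness, given any $E_\infty$-morphism
$h:A\otimes B\to C$ satisfying the two triangles, the identity
$\mu_{A\otimes B}\circ(\iota_A\otimes\iota_B)\simeq \mathrm{id}_{A\otimes B}$
(from the unit axioms of $A$ and $B$) together with
$h\circ \mu_{A\otimes B}\simeq \mu_C\circ(h\otimes h)$
(because $h$ is an $E_\infty$-morphism) forces
$h\simeq \mu_C\circ (h\otimes h)\circ (\iota_A\otimes\iota_B)\simeq \mu_C\circ(f\otimes g)= f\vee g$.
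Assembling these identifications yields a natural equivalence of mapping spaces
$$\Map_{E_\infty\textbf{-Alg}(\mathcal{C})}(A\otimes B, C) \xrightarrow{\;\simeq\;}
\Map_{E_\infty\textbf{-Alg}(\mathcal{C})}(A, C)\times \Map_{E_\infty\textbf{-Alg}(\mathcal{C})}(B, C),$$
proving the claim.

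The main obstacle is $\infty$-categorical coherence: each \lq\lq{}$\simeq$\rq\rq{} in the argument
above really stands for a choice of homotopy, and one has to produce all higher coherences between
them rather than just point-set identities. The cleanest way around this is to work throughout
in $\textbf{Fun}^\otimes(\mathbf{Fin}_*,\mathcal{C})$ and observe that, because $\vee$ is the
categorical coproduct in $\mathbf{Fin}_*$, Day convolution of symmetric monoidal functors along
$\vee$ automatically computes the coproduct in $\textbf{Fun}^\otimes(\mathbf{Fin}_*,\mathcal{C})$,
and this Day convolution agrees with the pointwise tensor product under the equivalence with
$E_\infty\textbf{-Alg}(\mathcal{C})$. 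Alternatively, one may simply invoke the result of Lurie
\cite[\S 3.2]{L-HA} that finite coproducts in $E_\infty\textbf{-Alg}(\mathcal{C})$ are computed
by tensor products of the underlying objects, of which the present proposition is essentially a
restatement.
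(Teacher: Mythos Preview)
Your proposal is correct, and in fact more substantive than what the paper offers: the paper does not prove this proposition at all but simply refers the reader to \cite[Proposition~3.2.4.7]{L-HA} and \cite[Corollary~3.4]{KM}, with the remark that for $\mathcal{C}=\hkmod$ it follows from viewing $E_\infty$-algebras as commutative monoids. Your final ``alternatively'' is thus exactly the paper's entire treatment, while your main argument (the fold map $\mu_C$ being an $E_\infty$-morphism, hence $f\vee g:=\mu_C\circ(f\otimes g)$ giving the universal map) is the classical heuristic behind that reference, correctly flagged with the caveat that the higher coherences require the machinery of \cite{L-HA} or a Day-convolution packaging to be made rigorous.
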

 For a proof see Proposition 3.2.4.7 of \cite{L-HA} (or \cite[Corollary 3.4]{KM}); for $\mathcal{C}= \hkmod$, this essentially follows from the observation that an $E_\infty$-algebra is a commutative monoid in $(\hkmod,\otimes)$, see~\cite{L-HA} or \cite[Section 5.3]{KM}. In particular, Proposition~\ref{P:tensor=coprod} implies that,   for any finite set $I$,  $A^{\otimes I}$ has a natural structure of $E_\infty$-algebra. 
\end{remark}

\paragraph{\textbf{Modules over $E_n$-algebras.}} 

In this paragraph, we give a brief account of various categories of modules over $E_n$-algebras. Note that by definition (see below), the categories we considered are categories of \emph{pointed} modules. Roughly, an $A$-modules $M$ being pointed means it is equipped with a map $A\to M$. 

\smallskip

Let $Fin$, (resp. $Fin_\ast$) be the category of  (resp. pointed) finite sets. There is a forgetful functor $Fin_\ast \to Fin$ forgetting which point is the base point. There is also a functor $Fin \to Fin_\ast$ which adds an extra point called the base point. 
We write $\mathbf{Fin}$, ${\mathbf{Fin}_*}$  for the associated $\infty$-categories (see Example~\ref{ex:discreteasinfty}). 
Following~\cite{L-HA, F}, if $\mathcal{O}$ is a (coherent)  operad, \emph{the $\infty$-category $\mathcal{O}\textbf{-Mod}_{A}$ of $\mathcal{O}$-modules}\footnote{in $\hkmod$. 
Of course, similar construction hold with $\hkmod$ replaced by a symmetric monoidal $\infty$-category} \emph{over an $\mathcal{O}$-algebra $A$} is the category of $\mathcal{O}$-linear functors $\mathcal{O}\textbf{-Mod}_{A}:= \Map_{\mathbf{O}}(\mathbf{O}_*, \hkmod)$  where $\mathbf{O}$ is the ($\infty$-)category associated\footnote{in the paragraph above Definition~\ref{D:EnAlgebras}} to the operad $\mathcal{O}$   and $\mathbf{O}_* := \mathbf{O}\times_{{\mathbf{Fin}}}{\mathbf{Fin}_*}$ (also see~\cite{Fre-Mod} for similar constructions in the model category setting of topological operads).

The categories $\mathcal{O}\textbf{-Mod}_A$ for $A\in\mathcal{O}\textbf{-Alg}$ assemble to form an $\infty$-category $\mathcal{O}\textbf{-Mod}$ describing   pairs consisting of an $\mathcal{O}$-algebra and a module over it. More precisely,
there is an natural fibration $\pi_{\mathcal{O}}:\mathcal{O}\textbf{-Mod}\longrightarrow \mathcal{O}\textbf{-Alg}$ whose fiber at $A\in\mathcal{O}\textbf{-Alg}$ is $\mathcal{O}\textbf{-Mod}_{A}$.
When $\mathcal{O}$ is an $\mathbb{E}_n$-operad (that is an operad equivalent to $\text{Cube}_n$), we simply write $E_n$ instead of $\mathcal{O}$:
\begin{definition}\label{DEnModoverA}
Let $A$ be an $E_n$-algebra (in $\hkmod$). We denote  $E_n\textbf{-Mod}_A$ the $\infty$-category of (pointed) $E_n$-modules over $A$.  Since $\hkmod$ is bicomplete and enriched over itself,    $E_n\textbf{-Mod}_A$  is naturally enriched over $\hkmod$ as well.

We \emph{denote\footnote{The $\mathbb{R}$ in the notation is here to recall that this corresponds to a functor that can be computed as a derived functor associated to ordinary model categories using standard techniques of homological/homotopical algebras} $\mathbb{R}Hom_{A}^{E_n}(M,N) \in \hkmod$ the enriched mapping space of morphisms of $E_n$-modules over $A$.} Note that if $\mathbb{E}_n$ is  a cofibrant $E_n$-operad and further $M$, $N$ are modules over an $\mathbb{E}_n$-algebra $A$, then $\mathbb{R}Hom_{A}^{E_n}(M,N)$ is computed by $Hom_{\text{Mod}^{\mathbb{E}_n}_A}(Q(M), R(N))$. 
Here $Q(M)$ is a cofibrant replacement of $M$ and $R(N)$ a fibrant replacement of $N$ in the model category $\text{Mod}^{\mathbb{E}_n}_A$ of modules over the  $\mathbb{E}_n$-algebra $A$. In particular,  
$\mathbb{R}Hom_{A}^{E_n}(M,N)\cong Hom_{E_n\textbf{-Mod}_A}(M,N)$.
This follows from the fact that $E_n\textbf{-Mod}_A$ is equivalent to the $\infty$-category associated to the model category $\text{Mod}^{\mathbb{E}_n}_A$.

\smallskip

If $(\mathcal{C}, \otimes)$ is a symmetric monoidal ($\infty$-)category and $A\in E_n\textbf{-Alg}$, then we \emph{denote $E_n\textbf{-Mod}_A(\mathcal{C})$ the $\infty$-category of $E_n$-modules over $A$ (in $\mathcal{C}$)}.

We \emph{denote respectively  $E_n\textbf{-Mod}$} the $\infty$-category of all $E_n$-modules in $\hkmod$ \emph{and $E_n\textbf{-Mod}(\mathcal{C})$} the $\infty$-category of all $E_n$-modules in $(\mathcal{C}, \otimes)$.
\end{definition}
 By definition, the canonical functor\footnote{which essentially forget the module in the pair $(A,M)$} $\pi_{E_n}:E_n\textbf{-Mod}(\mathcal{C})\to E_n\textbf{-Alg}(\mathcal{C})$  gives rise, 
for any $E_n$-algebra $A$,  to a (homotopy) pullback square: 
\begin{equation}\label{eq:pullbackModAlg}
 \xymatrix{  E_n\textbf{-Mod}_{A}(\mathcal{C}) \ar[r] \ar[d] & E_n\textbf{-Mod}(\mathcal{C}) \ar[d]^{\pi_{E_n}} \\ \{A\}\ar[r] & E_n\textbf{-Alg}(\mathcal{C})}
\end{equation}
  Note that the functor $\pi_{E_n}$ is monoidal.
  
 We also have a canonical functor $can:E_n\textbf{-Alg} \to E_n\textbf{-Mod}$ induced by the tautological  module structure that any algebra has over itself. 
\begin{example}\label{E:E1andEinftyModules} 
If $A$ is a differential graded algebra,  $E_1\textbf{-Mod}_{A}$ is equivalent to the $\infty$-category of (pointed) $A$-bimodules. 
If $A$ is a CDGA,  $E_\infty\textbf{-Mod}_{A}$ is equivalent to the $\infty$-category of (pointed) left $A$-modules.
\end{example}
 
\begin{example}[\textbf{left and right modules}]\label{ex:LRModA} If $n=1$, we also have naturally defined $\infty$-categories of left and right modules over an $E_1$-algebra $A$ (as well as $\infty$-categories of all right modules and left modules). They are the immediate generalization of the ($\infty$-categories associated to the model) categories of pointed left and right differential graded modules over a differential graded associative unital algebra. We refer to~\cite{L-HA} for details.

\begin{definition}\label{D:LandRMod}{We write respectively $E_1\textbf{-LMod}_A(\mathcal{C})$, $E_1\textbf{-RMod}_A(\mathcal{C})$, $E_1\textbf{-LMod}(\mathcal{C})$ and $E_1\textbf{-RMod}(\mathcal{C})$ for the $\infty$-categories of left modules over a fixed $A$, right modules over $A$, and all left modules and all right modules (with values in $(\mathcal{C}, \otimes)$).

 If $\mathcal{C}=\hkmod$, we simply write $E_1\textbf{-LMod}_A$, $E_1\textbf{-RMod}_A$, $E_1\textbf{-LMod}$, $E_1\textbf{-RMod}$.}
Further, we will \emph{denote $\mathbb{R}Hom_{A}^{left}(M,N) \in \hkmod$ the enriched mapping space of morphisms of  left modules over $A$} (induced by the enrichment of $\hkmod$). In particular $\mathbb{R}Hom_{A}^{left}(M,N)\cong Hom_{E_1\textbf{-LMod}_A}(M,N)$.
 \end{definition}
There are  standard models for these categories. 
For instance, the  category of right modules over an $E_1$-algebra can be obtained by considering a colored operad $\text{Cube}^{right}_1$ obtained from the little interval operad  $\text{Cube}_1$ as follows. 
Denote $c$, $i$ the two colors. We define $\text{Cube}^{right}_1(\{X_j\}_{j=1}^r, i):= \text{Cube}_1(r)$  if all $X_j=i$. If $X_1=c$ and all others $X_j=i$, we  set $\text{Cube}^{right}_1(\{X_j\}_{j=1}^{r}, c):= \text{Rect}\big([0,1)\coprod \big(\coprod_{i=1}^r (0,1)\big), [0,1)\big)$ where $\text{Rect}$ is  the space of rectilinear embeddings (mapping $0$ to itself).  All other spaces of maps are empty.
Then the $\infty$-category associated to the category of $\text{Cube}^{right}_1$-algebras  is equivalent to $E_1\textbf{-RMod}$.
\end{example}

Let $A$ be an $E_1$-algebra, then the usual tensor product of right and left $A$-modules has a canonical lift
$$ -\mathop{\otimes}_{A}^{\mathbb{L}} - :  E_1\textbf{-RMod}_A \times E_1\textbf{-LMod}_A \longrightarrow \hkmod$$
which, for a differential graded associative algebra over a field $k$ is computed by the two-sided Bar construction.
 There is a similar derived functor $E_1\textbf{-RMod}_A(\mathcal{C}) \times E_1\textbf{-LMod}_A(\mathcal{C})  \longrightarrow \mathcal{C} $, still denoted $(R,L)\mapsto R\otimes_{A}^{\mathbb{L}}L$, whenever $(\mathcal{C},\otimes)$ is  a symmetric monoidal $\infty$-category with geometric realization and such that $\otimes$ preserves geometric realization in both variables, see~\cite{L-HA}. There are (derived) adjunction 
$$   \Map_{E_1\textbf{-LMod}_{A}}\big( P_*\otimes L, N\big)\;  \cong\;  \Map_{\hkmod} \big(P_*,\mathbb{R}\Hom_{A}^{left}(L,N) \big), $$  $$ \Map_{\hkmod}\big( R\mathop{\otimes}_{A}^{\mathbb{L}}L, N\big) \; \cong \;  \Map_{E_1\textbf{-LMod}_{A}}\big(L,\mathbb{R}\Hom_{k}(R,N) \big) $$
which relates the tensor product with the enriched mapping spaces of modules.

\end{document}